\titleformat{\paragraph}[runin]
{\normalfont\normalsize\bfseries}{\theparagraph}{1em}{}
\titleformat{\subparagraph}[runin]
{\normalfont\normalsize\bfseries}{\thesubparagraph}{1em}{}
\titlespacing*{\section} {0pt}{3.5ex plus 1ex minus .2ex}{2.3ex plus .2ex}
\titlespacing*{\subsection} {0pt}{3.25ex plus 1ex minus .2ex}{1.5ex plus .2ex}
\titlespacing*{\subsubsection}{0pt}{3.25ex plus 1ex minus .2ex}{1.5ex plus .2ex}
\titlespacing*{\paragraph} {0pt}{3.25ex plus 1ex minus .2ex}{1em}
\titlespacing*{\subparagraph} {\parindent}{3.25ex plus 1ex minus .2ex}{1em}
\newcommand{\Br}{\mathfrak{Br}}
\newcommand{\Bc}{\mathfrak{C}}
\newcommand{\Bo}{\mathfrak{Op}}
\newcommand{\MP}{\mathfrak{MP}}
\newcommand{\ov}{\overline}
\newcommand{\N}{\mathds{N}}
\newcommand{\Z}{\mathds{Z}}
\newcommand{\F}{\mathds{F}}
\newcommand{\C}{\mathds{C}}
\newcommand{\B}{\mathds{B}}
\newcommand{\D}{\mathds{D}}
\newcommand{\ot}{\otimes}
\newcommand{\Sym}{\mathbb{S}}
\newcommand{\Alt}{\mathbb{A}}
\newcommand{\GAP}{\textsf{GAP}}
\newcommand{\MAGMA}{\textsf{Magma}}
\DeclareMathOperator{\comp}{comp}
\DeclareMathOperator{\ev}{ev}
\DeclareMathOperator{\id}{id}
\DeclareMathOperator{\Aff}{Aff}
\DeclareMathOperator{\Hom}{Hom}
\DeclareMathOperator{\End}{End}
\DeclareMathOperator{\PSL}{PSL}
\DeclareMathOperator{\PSU}{PSU}
\DeclareMathOperator{\SL}{SL}
\DeclareMathOperator{\Sz}{Sz}
\DeclareMathOperator{\Sp}{Sp}
\DeclareMathOperator{\Ree}{R}
\DeclareMathOperator{\GO}{O}
\DeclareMathOperator{\ide}{id}
\DeclareMathOperator{\Aut}{Aut}
\DeclareMathOperator{\Inn}{Inn}
\DeclareMathOperator{\ima}{Im}
\DeclareMathOperator{\Hol}{Hol}
\DeclareMathOperator{\rad}{rad}
\DeclareMathOperator{\Soc}{Soc}
\DeclareMathOperator{\op}{op}
\numberwithin{table}{section}
\numberwithin{equation}{section}
\numberwithin{figure}{section}
\theoremstyle{plain}
\newtheorem{thm}{Theorem}[section]
\theoremstyle{plain}
\newtheorem{lem}[thm]{Lemma}
\newtheorem{defn}[thm]{Definition}
\newtheorem{cor}[thm]{Corollary}
\theoremstyle{plain}
\newtheorem{pro}[thm]{Proposition}
\newtheorem{notations}[thm]{Notations}
\newtheorem{example}[thm]{Example}
\theoremstyle{remark}
\newtheorem{rem}[thm]{Remark}
\theoremstyle{plain}
\newtheorem{exa}[thm]{Example}
\begin{document}

\title{Yang--Baxter operators in symmetric categories}

\begin{abstract}
We introduce non-degenerate solutions of the Yang--Baxter equation in the setting of symmetric monoidal categories. Our theory includes non-degenerate set-theoretical solutions as basic examples. However, infinite families of non-degenerate solutions (that are not of set-theoretical type) appear. As in the classical theory of Etingof, Schedler and Soloviev, non-degenerate solutions are classified in terms of invertible 1-cocycles. Braces and matched pairs of cocommutative Hopf algebras (or braiding operators) are also generalized to the context of symmetric monoidal categories and turn out to be equivalent to invertible
1-cocycles.
\end{abstract}

\author{Jorge A. Guccione}
\address{Departamento de Matem\'atica\\ Facultad de Ciencias Exactas y Naturales-UBA, Pabell\'on~1-Ciudad Universitaria\\ Intendente Guiraldes 2160 (C1428EGA) Buenos Aires, Argentina.}
\address{Instituto de Investigaciones Matem\'aticas ``Luis A. Santal\'o"\\ Pabell\'on~1-Ciudad Universitaria\\ Intendente Guiraldes 2160 (C1428EGA) Buenos Aires, Argentina.}
\email{vander@dm.uba.ar}

\author{Juan J. Guccione}
\address{Departamento de Matem\'atica\\ Facultad de Ciencias Exactas y Naturales-UBA, Pabell\'on~1-Ciudad Universitaria\\ Intendente Guiraldes 2160 (C1428EGA) Buenos Aires, Argentina.}
\address{Instituto Argentino de Matem\'atica-CONICET\\ Saavedra 15 3er piso\\ (C1083ACA) Buenos Aires, Argentina.}
\email{jjgucci@dm.uba.ar}

\author{Leandro Vendramin}
\address{Departamento de Matem\'atica\\ Facultad de Ciencias Exactas y Naturales-UBA, Pabell\'on~1-Ciudad Universitaria\\ Intendente Guiraldes 2160 (C1428EGA) Buenos Aires, Argentina.}
\address{Instituto de Investigaciones Matem\'aticas ``Luis A. Santal\'o"\\ Pabell\'on~1-Ciudad Universitaria\\ Intendente Guiraldes 2160 (C1428EGA) Buenos Aires, Argentina.}
\email{lvendramin@dm.uba.ar}

\thanks{Jorge A. Guccione and Juan J. Guccione were supported by UBACyT
20020150100153BA (UBA) and PIP 11220110100800CO (CONICET). Leandro Vendramin was
partially supported by CONICET, PICT-2014-1376, MATH-AmSud and ICTP}

\keywords{Yang-Baxter equation; coalgebras}
\subjclass[2010]{Primary 16T25; Secondary 16T15}

\maketitle

\setcounter{tocdepth}{1}
\tableofcontents

\section*{Introduction}

The celebrated Yang--Baxter equation was first established by Yang~\cite{MR0261870} in 1967 and then by Baxter~\cite{MR0290733} in 1972. Since then, more solutions of various forms of the Yang--Baxter equation have been constructed by physicists and mathematicians. In the past three decades this equation has been widely studied from different perspectives and attracted the attention of a wide range of mathematicians because of the applications to knot theory, representations of braid groups, Hopf algebras and quantum groups, operator theory, etc.

Let $V$ be a vector space and let $R\colon V\otimes V\to V\otimes V$ be a linear isomorphism. We say that $R$ satisfies the {\em Yang-Baxter equation} on $V$ if
\begin{equation*}
R_{12}\circ R_{13}\circ R_{23}=R_{23}\circ R_{13}\circ R_{12}\quad\text{in $\End(V\ot V\ot V)$,}
\end{equation*}
where $R_{ij}$ means $R$ acting in the $i$-th and $j$-th components. It is easy to check that this occurs if and only if $r\coloneqq \tau\circ R$, where $\tau\colon V\ot V \to V\ot V$ denotes the flip, satisfies the {\em braid equation}
\begin{equation}
r_{12}\circ r_{23}\circ r_{12}=r_{23}\circ r_{12}\circ r_{23}.\label{eq:trenzas}
\end{equation}

The importance of this equation in mathematics and physics led Drinfeld~\cite{MR1183474} to ask for the simplest family of solutions, the so-called set-theoretical (or combinatorial) solutions, i.e. pairs $(X,r)$, where $X$ is a set and $r\colon X\times X\to X\times X$ is an invertible map satisfying (\ref{eq:trenzas}).  Each one of these solutions yields in an evident way a linear solution on the vector space with basis~$X$.

This approach was first considered by Etingof, Schedler and Soloviev~\cite{MR1722951} and Gateva-Ivanova and Van den Bergh~\cite{MR1637256} for involutive solutions and by Lu, Yan and Zhu~\cite{MR1769723}, and Soloviev~\cite{MR1809284} for non-involutive solutions. Now it is known that there are several connections between solutions and bijective $1$-cocycles, Biberbach groups and groups of I-type, Garside structures, biracks, cyclic sets, braces, Hopf algebras, left symmetric algebras, etc.; see for example~\cite{AGV,MR3177933,MR3374524,GI,MR3558231,MR2278047,MR3291816,MR1809284}.

In this paper we study the set-type solutions of the braid equation in the context of symmetric tensor categories. The underlying idea is simple: to use cocommutative coalgebras in symmetric tensor categories to generalize sets. This basic idea leads us to infinite families of new examples and several new theoretical developments. Despite we are mainly interested in solutions lying in the category of vector spaces, we work in the most general setting  of symmetric categories. This is free of cost and has many potential applications.

In order to carry out this task we must translate to categorical language the notion of non-degenerate map. Although we follow the ideas of the proofs given in~\cite{MR1722951},~\cite{MR1769723} and~\cite{MR1809284}, our arguments are not always the ones given in those papers. For instance one cannot adapt the arguments of Proposition~1.8 of \cite{MR1769723} to prove our Proposition~\ref{pro:YB}, which is its categorical version.

Our solutions turn out to be classified in terms of invertible $1$-cocycles. This construction is similar to that of Lu, Yan and Zhu and has many different equivalent formulations including braces and matched pairs of coalgebras.

\medskip

The paper is organized as follows. In Section~\ref{preliminaries} we review the basic notions needed to deal with the universal construction of a solution. In Section~\ref{YB} we define the concept of non-degenerate solution and study their main properties. Although we are working in an arbitrary symmetric monoidal category~$\mathscr{C}$, we call a pair $(X,r)$ consisting of a cocommutative coalgebra in~$\mathscr{C}$ and a non-degenerate solution, a non-degenerate braided set. Section~\ref{derived} is devoted to studying racks in arbitrary tensor categories; we prove that non-degenerate solutions produce racks and study the representations of the braid group induced by each one of these solutions and its rack. In Section~\ref{section: Braiding operators, invertible cocycles and braces} we introduce the categories of braces, braided operators and invertible cocycles, and in Theorem~\ref{thm:equivalencias} we prove that they are equivalent. In Section~\ref{LYZ},  Theorem~\ref{thm:main}, we prove our main result: the existence of the universal solutions. Finally, in Section~\ref{examples} we study solutions over $k\oplus V$, where $k$ is a field, $V$ is the vector space of primitive elements and $1\in k$ is the unique group-like element.

\section{Preliminaries} \label{preliminaries}

\subsubsection*{Notations}
Throughout this paper $\mathscr{C}=(\mathcal{C},\ot,\mathds{1},\alpha,\lambda,\varrho, c)$ is a symmetric monoidal category. Given an object $X\in \mathcal{C}$, we let $X^n$ denote the $n$-fold tensor product of $X$. We assume that the reader is familiar with the notions of algebra, coalgebra, bialgebra and Hopf algebra in $\mathscr{C}$, and we  use the well known graphic calculus for symmetric monoidal categories. As usual the morphisms are composed from top to bottom and the tensor product of two morphisms will be represented by horizontal concatenation. The identity morphism of an object will be represented by a vertical line and the symmetry isomorphism $c$, by the diagram
\begin{tikzpicture}
\def\flip(#1,#2)[#3]{\draw (
#1+1*#3,#2) .. controls (#1+1*#3,#2-0.05*#3) and (#1+0.96*#3,#2-0.15*#3).. (#1+0.9*#3,#2-0.2*#3)
(#1+0.1*#3,#2-0.8*#3)--(#1+0.9*#3,#2-0.2*#3)
(#1,#2-1*#3) .. controls (#1,#2-0.95*#3) and (#1+0.04*#3,#2-0.85*#3).. (#1+0.1*#3,#2-0.8*#3)
(#1,#2) .. controls (#1,#2-0.05*#3) and (#1+0.04*#3,#2-0.15*#3).. (#1+0.1*#3,#2-0.2*#3)
(#1+0.1*#3,#2-0.2*#3) -- (#1+0.9*#3,#2-0.8*#3)
(#1+1*#3,#2-1*#3) .. controls (#1+1*#3,#2-0.95*#3) and (#1+0.96*#3,#2-0.85*#3).. (#1+0.9*#3,#2-0.8*#3)
}
\begin{scope}[xshift=0cm, yshift=0cm]
\flip(0,0)[0.28];
\end{scope}
\end{tikzpicture}. For each algebra $A$ the diagrams
$$
\begin{tikzpicture}[scale=0.395]
\def\mult(#1,#2)[#3]{\draw (#1,#2) .. controls (#1,#2-0.555*#3/2) and (#1+0.445*#3/2,#2-#3/2) .. (#1+#3/2,#2-#3/2) .. controls (#1+1.555*#3/2,#2-#3/2) and (#1+2*#3/2,#2-0.555*#3/2) .. (#1+2*#3/2,#2) (#1+#3/2,#2-#3/2) -- (#1+#3/2,#2-2*#3/2)}
\def\unit(#1,#2){\draw (#1,#2) circle[radius=2pt] (#1,#2-0.07) -- (#1,#2-1)}
\def\laction(#1,#2)[#3,#4]{\draw (#1,#2) .. controls (#1,#2-0.555*#4/2) and (#1+0.445*#4/2,#2-1*#4/2) .. (#1+1*#4/2,#2-1*#4/2) -- (#1+2*#4/2+#3*#4/2,#2-1*#4/2) (#1+2*#4/2+#3*#4/2,#2)--(#1+2*#4/2+#3*#4/2,#2-2*#4/2)}
\def\raction(#1,#2)[#3,#4]{\draw (#1,#2) -- (#1,#2-2*#4/2)  (#1,#2-1*#4/2)--(#1+1*#4/2+#3*#4/2,#2-1*#4/2) .. controls (#1+1.555*#4/2+#3*#4/2,#2-1*#4/2) and (#1+2*#4/2+#3*#4/2,#2-0.555*#4/2) .. (#1+2*#4/2+#3*#4/2,#2)}
\begin{scope}[xshift=0cm,yshift=0cm]
\mult(0,0)[1];
\end{scope}
\begin{scope}[xshift=0cm,yshift=0cm]
\node at (3.5,-0.5){\text{and}};
\end{scope}
\begin{scope}[xshift=0cm,yshift=0cm]
\unit(6,0);
\end{scope}
\end{tikzpicture}
$$
stand for the multiplication and the unit, respectively, and for each coalgebra $X$, the diagrams
$$
\begin{tikzpicture}[scale=0.395]
\def\comult(#1,#2)[#3]{\draw (#1,#2)-- (#1,#2-1*#3/2) (#1,#2-1*#3/2) .. controls (#1+0.555*#3/2,#2-1*#3/2) and (#1+1*#3/2,#2-1.445*#3/2) .. (#1+1*#3/2,#2-2*#3/2) (#1,#2-1*#3/2) .. controls (#1-0.555*#3/2,#2-1*#3/2) and (#1-1*#3/2,#2-1.445*#3/2) .. (#1-1*#3/2,#2-2*#3/2)}
\def\counit(#1,#2){\draw (#1,#2) -- (#1,#2-0.93) (#1,#2-1) circle[radius=2pt]}
\def\rcoaction(#1,#2)[#3,#4]{\draw (#1,#2)-- (#1,#2-2*#4/2) (#1,#2-1*#4/2) -- (#1+1*#4/2+#3*#4/2,#2-1*#4/2).. controls (#1+1.555*#4/2+#3*#4/2,#2-1*#4/2) and (#1+2*#4/2+#3*#4/2,#2-1.445*#4/2) .. (#1+2*#4/2+#3*#4/2,#2-2*#4/2)}
\begin{scope}[xshift=0cm,yshift=0cm]
\comult(0,0)[1];
\end{scope}
\begin{scope}[xshift=0cm,yshift=0cm]
\node at (3,-0.5){\text{and}};
\end{scope}
\begin{scope}[xshift=0cm,yshift=0cm]
\counit(5.5,0);
\end{scope}
\end{tikzpicture}
$$
stand for the comultiplication and the counit. Furthermore, each coalgebra morphism $f\colon X\to Y$, each coalgebra morphism $R\colon X^2\to X^2$ and each candidate $r\colon X^2 \to X^2$ to be a solution of the braid equation will be represent by the diagrams
$$
\begin{tikzpicture}[scale=0.395]
\def\counit(#1,#2){\draw (#1,#2) -- (#1,#2-0.93) (#1,#2-1) circle[radius=2pt]}
\def\comult(#1,#2)[#3,#4]{\draw (#1,#2) -- (#1,#2-0.5*#4) arc (90:0:0.5*#3 and 0.5*#4) (#1,#2-0.5*#4) arc (90:180:0.5*#3 and 0.5*#4)}
\def\laction(#1,#2)[#3,#4]{\draw (#1,#2) .. controls (#1,#2-0.555*#4/2) and (#1+0.445*#4/2,#2-1*#4/2) .. (#1+1*#4/2,#2-1*#4/2) -- (#1+2*#4/2+#3*#4/2,#2-1*#4/2) (#1+2*#4/2+#3*#4/2,#2)--(#1+2*#4/2+#3*#4/2,#2-2*#4/2)}
\def\map(#1,#2)[#3]{\draw (#1,#2-0.5)  node[name=nodemap,inner sep=0pt,  minimum size=10pt, shape=circle,draw]{$#3$} (#1,#2)-- (nodemap)  (nodemap)-- (#1,#2-1)}
\def\solbraid(#1,#2)[#3]{\draw (#1,#2-0.5)  node[name=nodemap,inner sep=0pt,  minimum size=9pt, shape=circle,draw]{$#3$}
(#1-0.5,#2) .. controls (#1-0.5,#2-0.15) and (#1-0.4,#2-0.2) .. (#1-0.3,#2-0.3) (#1-0.3,#2-0.3) -- (nodemap)
(#1+0.5,#2) .. controls (#1+0.5,#2-0.15) and (#1+0.4,#2-0.2) .. (#1+0.3,#2-0.3) (#1+0.3,#2-0.3) -- (nodemap)
(#1+0.5,#2-1) .. controls (#1+0.5,#2-0.85) and (#1+0.4,#2-0.8) .. (#1+0.3,#2-0.7) (#1+0.3,#2-0.7) -- (nodemap)
(#1-0.5,#2-1) .. controls (#1-0.5,#2-0.85) and (#1-0.4,#2-0.8) .. (#1-0.3,#2-0.7) (#1-0.3,#2-0.7) -- (nodemap)
}
\def\flip(#1,#2)[#3]{\draw (
#1+1*#3,#2) .. controls (#1+1*#3,#2-0.05*#3) and (#1+0.96*#3,#2-0.15*#3).. (#1+0.9*#3,#2-0.2*#3)
(#1+0.1*#3,#2-0.8*#3)--(#1+0.9*#3,#2-0.2*#3)
(#1,#2-1*#3) .. controls (#1,#2-0.95*#3) and (#1+0.04*#3,#2-0.85*#3).. (#1+0.1*#3,#2-0.8*#3)
(#1,#2) .. controls (#1,#2-0.05*#3) and (#1+0.04*#3,#2-0.15*#3).. (#1+0.1*#3,#2-0.2*#3)
(#1+0.1*#3,#2-0.2*#3) -- (#1+0.9*#3,#2-0.8*#3)
(#1+1*#3,#2-1*#3) .. controls (#1+1*#3,#2-0.95*#3) and (#1+0.96*#3,#2-0.85*#3).. (#1+0.9*#3,#2-0.8*#3)
}
\def\raction(#1,#2)[#3,#4]{\draw (#1,#2) -- (#1,#2-2*#4/2)  (#1,#2-1*#4/2)--(#1+1*#4/2+#3*#4/2,#2-1*#4/2) .. controls (#1+1.555*#4/2+#3*#4/2,#2-1*#4/2) and (#1+2*#4/2+#3*#4/2,#2-0.555*#4/2) .. (#1+2*#4/2+#3*#4/2,#2)}
\def\doublemap(#1,#2)[#3]{\draw (#1+0.5,#2-0.5) node [name=doublemapnode,inner xsep=0pt, inner ysep=0pt, minimum height=9pt, minimum width=25pt,shape=rectangle,draw,rounded corners] {$#3$} (#1,#2) .. controls (#1,#2-0.075) .. (doublemapnode) (#1+1,#2) .. controls (#1+1,#2-0.075).. (doublemapnode) (doublemapnode) .. controls (#1,#2-0.925)..(#1,#2-1) (doublemapnode) .. controls (#1+1,#2-0.925).. (#1+1,#2-1)}
\begin{scope}[xshift=0cm, yshift=0cm]
\map(0,0)[\scriptstyle f];
\end{scope}
\begin{scope}[xshift=0.7cm, yshift=-0.25cm]
\node at (0,-0.5){,};
\end{scope}
\begin{scope}[xshift=0cm, yshift=0cm]
\doublemap(3,0)[\scriptstyle R];
\end{scope}
\begin{scope}[xshift=0cm,yshift=0cm]
\node at (6.9,-0.5){\text{and}};
\end{scope}
\begin{scope}[xshift=0cm, yshift=0cm]
\solbraid(9.85,0)[\scriptstyle r];
\end{scope}
\begin{scope}[xshift=10.75cm, yshift=-0.25cm]
\node at (0,-0.5){,};
\end{scope}
\end{tikzpicture}
$$
respectively (note that we use different diagrams to represent $R$ and $r$ in spite of both are endomorphisms of $X^2$). Moreover, the first and second coordinate maps $\sigma$ and $\tau$ of~$r$  (see Definition~\ref{def: funciones coordenadas}), and their inverses $\sigma^{-1}$ and $\tau^{-1}$ (when $r$ is non-degenerate), will be represented by
$$
\begin{tikzpicture}[scale=0.395]
\def\counit(#1,#2){\draw (#1,#2) -- (#1,#2-0.93) (#1,#2-1) circle[radius=2pt]}
\def\comult(#1,#2)[#3,#4]{\draw (#1,#2) -- (#1,#2-0.5*#4) arc (90:0:0.5*#3 and 0.5*#4) (#1,#2-0.5*#4) arc (90:180:0.5*#3 and 0.5*#4)}
\def\laction(#1,#2)[#3,#4]{\draw (#1,#2) .. controls (#1,#2-0.555*#4/2) and (#1+0.445*#4/2,#2-1*#4/2) .. (#1+1*#4/2,#2-1*#4/2) -- (#1+2*#4/2+#3*#4/2,#2-1*#4/2) (#1+2*#4/2+#3*#4/2,#2)--(#1+2*#4/2+#3*#4/2,#2-2*#4/2)}
\def\map(#1,#2)[#3]{\draw (#1,#2-0.5)  node[name=nodemap,inner sep=0pt,  minimum size=10pt, shape=circle,draw]{$#3$} (#1,#2)-- (nodemap)  (nodemap)-- (#1,#2-1)}
\def\lactiontr(#1,#2)[#3,#4,#5]{\draw (#1,#2) .. controls (#1,#2-0.555*#4/2) and (#1+0.445*#4/2,#2-1*#4/2) .. (#1+1*#4/2,#2-1*#4/2) -- (#1+2*#4/2+#3*#4/2,#2-1*#4/2)  node [inner sep=0pt, minimum size=3pt,shape=isosceles triangle,fill, shape border rotate=#5] {} (#1+2*#4/2+#3*#4/2,#2) --(#1+2*#4/2+#3*#4/2,#2-2*#4/2)}
\def\ractiontr(#1,#2)[#3,#4,#5]{\draw (#1,#2) -- (#1,#2-2*#4/2)  (#1,#2-1*#4/2) node [inner sep=0pt, minimum size=3pt,shape=isosceles triangle,fill, shape border rotate=#5] {}  --(#1+1*#4/2+#3*#4/2,#2-1*#4/2) .. controls (#1+1.555*#4/2+#3*#4/2,#2-1*#4/2) and (#1+2*#4/2+#3*#4/2,#2-0.555*#4/2) .. (#1+2*#4/2+#3*#4/2,#2)  }
\def\solbraid(#1,#2)[#3]{\draw (#1,#2-0.5)  node[name=nodemap,inner sep=0pt,  minimum size=9pt, shape=circle,draw]{$#3$}
(#1-0.5,#2) .. controls (#1-0.5,#2-0.15) and (#1-0.4,#2-0.2) .. (#1-0.3,#2-0.3) (#1-0.3,#2-0.3) -- (nodemap)
(#1+0.5,#2) .. controls (#1+0.5,#2-0.15) and (#1+0.4,#2-0.2) .. (#1+0.3,#2-0.3) (#1+0.3,#2-0.3) -- (nodemap)
(#1+0.5,#2-1) .. controls (#1+0.5,#2-0.85) and (#1+0.4,#2-0.8) .. (#1+0.3,#2-0.7) (#1+0.3,#2-0.7) -- (nodemap)
(#1-0.5,#2-1) .. controls (#1-0.5,#2-0.85) and (#1-0.4,#2-0.8) .. (#1-0.3,#2-0.7) (#1-0.3,#2-0.7) -- (nodemap)
}
\def\flip(#1,#2)[#3]{\draw (
#1+1*#3,#2) .. controls (#1+1*#3,#2-0.05*#3) and (#1+0.96*#3,#2-0.15*#3).. (#1+0.9*#3,#2-0.2*#3)
(#1+0.1*#3,#2-0.8*#3)--(#1+0.9*#3,#2-0.2*#3)
(#1,#2-1*#3) .. controls (#1,#2-0.95*#3) and (#1+0.04*#3,#2-0.85*#3).. (#1+0.1*#3,#2-0.8*#3)
(#1,#2) .. controls (#1,#2-0.05*#3) and (#1+0.04*#3,#2-0.15*#3).. (#1+0.1*#3,#2-0.2*#3)
(#1+0.1*#3,#2-0.2*#3) -- (#1+0.9*#3,#2-0.8*#3)
(#1+1*#3,#2-1*#3) .. controls (#1+1*#3,#2-0.95*#3) and (#1+0.96*#3,#2-0.85*#3).. (#1+0.9*#3,#2-0.8*#3)
}
\def\raction(#1,#2)[#3,#4]{\draw (#1,#2) -- (#1,#2-2*#4/2)  (#1,#2-1*#4/2)--(#1+1*#4/2+#3*#4/2,#2-1*#4/2) .. controls (#1+1.555*#4/2+#3*#4/2,#2-1*#4/2) and (#1+2*#4/2+#3*#4/2,#2-0.555*#4/2) .. (#1+2*#4/2+#3*#4/2,#2)}
\def\doublemap(#1,#2)[#3]{\draw (#1+0.5,#2-0.5) node [name=doublemapnode,inner xsep=0pt, inner ysep=0pt, minimum height=9pt, minimum width=25pt,shape=rectangle,draw,rounded corners] {$#3$} (#1,#2) .. controls (#1,#2-0.075) .. (doublemapnode) (#1+1,#2) .. controls (#1+1,#2-0.075).. (doublemapnode) (doublemapnode) .. controls (#1,#2-0.925)..(#1,#2-1) (doublemapnode) .. controls (#1+1,#2-0.925).. (#1+1,#2-1)}
\begin{scope}[xshift=0cm, yshift=0cm]
\laction(0,0)[0,1];
\end{scope}
\begin{scope}[xshift=0cm,yshift=-0.4cm]
\node at (1.4,-0.4){,};
\end{scope}
\begin{scope}[xshift=0cm, yshift=0cm]
\raction(3.2,0)[0,1];
\end{scope}
\begin{scope}[xshift=4.5cm, yshift=-0.4cm]
\node at (0,-0.4){,};
\end{scope}
\begin{scope}[xshift=0cm, yshift=0cm]
\lactiontr(6.35,0)[0,1,90];
\end{scope}
\begin{scope}[xshift=0cm,yshift=0cm]
\node at (9.9,-0.4){\text{and}};
\end{scope}
\begin{scope}[xshift=0cm, yshift=0cm]
\ractiontr(12.25,0)[0,1,90];
\end{scope}
\begin{scope}[xshift=13.6cm, yshift=-0.4cm]
\node at (0,-0.4){,};
\end{scope}
\end{tikzpicture}
$$
respectively. However, in Section~\ref{section: Braiding operators, invertible cocycles and braces}, when we deal with braces and braiding operators, the maps $\sigma$ and $\tau$ will be named $\lambda$ and $\rho$, and they will be represented by
$$
\begin{tikzpicture}[scale=0.395]
\def\unit(#1,#2){\draw (#1,#2) circle[radius=2pt] (#1,#2-0.07) -- (#1,#2-1)}
\def\mult(#1,#2)[#3,#4]{\draw (#1,#2) arc (180:360:0.5*#3 and 0.5*#4) (#1+0.5*#3, #2-0.5*#4) -- (#1+0.5*#3,#2-#4)}
\def\counit(#1,#2){\draw (#1,#2) -- (#1,#2-0.93) (#1,#2-1) circle[radius=2pt]}
\def\comult(#1,#2)[#3,#4]{\draw (#1,#2) -- (#1,#2-0.5*#4) arc (90:0:0.5*#3 and 0.5*#4) (#1,#2-0.5*#4) arc (90:180:0.5*#3 and 0.5*#4)}
\def\laction(#1,#2)[#3,#4]{\draw (#1,#2) .. controls (#1,#2-0.555*#4/2) and (#1+0.445*#4/2,#2-1*#4/2) .. (#1+1*#4/2,#2-1*#4/2) -- (#1+2*#4/2+#3*#4/2,#2-1*#4/2) (#1+2*#4/2+#3*#4/2,#2)--(#1+2*#4/2+#3*#4/2,#2-2*#4/2)}
\def\map(#1,#2)[#3]{\draw (#1,#2-0.5)  node[name=nodemap,inner sep=0pt,  minimum size=10pt, shape=circle, draw]{$#3$} (#1,#2)-- (nodemap)  (nodemap)-- (#1,#2-1)}
\def\solbraid(#1,#2)[#3]{\draw (#1,#2-0.5)  node[name=nodemap,inner sep=0pt,  minimum size=9pt, shape=circle,draw]{$#3$}
(#1-0.5,#2) .. controls (#1-0.5,#2-0.15) and (#1-0.4,#2-0.2) .. (#1-0.3,#2-0.3) (#1-0.3,#2-0.3) -- (nodemap)
(#1+0.5,#2) .. controls (#1+0.5,#2-0.15) and (#1+0.4,#2-0.2) .. (#1+0.3,#2-0.3) (#1+0.3,#2-0.3) -- (nodemap)
(#1+0.5,#2-1) .. controls (#1+0.5,#2-0.85) and (#1+0.4,#2-0.8) .. (#1+0.3,#2-0.7) (#1+0.3,#2-0.7) -- (nodemap)
(#1-0.5,#2-1) .. controls (#1-0.5,#2-0.85) and (#1-0.4,#2-0.8) .. (#1-0.3,#2-0.7) (#1-0.3,#2-0.7) -- (nodemap)
}
\def\flip(#1,#2)[#3]{\draw (
#1+1*#3,#2) .. controls (#1+1*#3,#2-0.05*#3) and (#1+0.96*#3,#2-0.15*#3).. (#1+0.9*#3,#2-0.2*#3)
(#1+0.1*#3,#2-0.8*#3)--(#1+0.9*#3,#2-0.2*#3)
(#1,#2-1*#3) .. controls (#1,#2-0.95*#3) and (#1+0.04*#3,#2-0.85*#3).. (#1+0.1*#3,#2-0.8*#3)
(#1,#2) .. controls (#1,#2-0.05*#3) and (#1+0.04*#3,#2-0.15*#3).. (#1+0.1*#3,#2-0.2*#3)
(#1+0.1*#3,#2-0.2*#3) -- (#1+0.9*#3,#2-0.8*#3)
(#1+1*#3,#2-1*#3) .. controls (#1+1*#3,#2-0.95*#3) and (#1+0.96*#3,#2-0.85*#3).. (#1+0.9*#3,#2-0.8*#3)
}
\def\raction(#1,#2)[#3,#4]{\draw (#1,#2) -- (#1,#2-2*#4/2)  (#1,#2-1*#4/2)--(#1+1*#4/2+#3*#4/2,#2-1*#4/2) .. controls (#1+1.555*#4/2+#3*#4/2,#2-1*#4/2) and (#1+2*#4/2+#3*#4/2,#2-0.555*#4/2) .. (#1+2*#4/2+#3*#4/2,#2)}
\def\doublemap(#1,#2)[#3]{\draw (#1+0.5,#2-0.5) node [name=doublemapnode,inner xsep=0pt, inner ysep=0pt, minimum height=11pt, minimum width=23pt,shape=rectangle,draw,rounded corners] {$#3$} (#1,#2) .. controls (#1,#2-0.075) .. (doublemapnode) (#1+1,#2) .. controls (#1+1,#2-0.075).. (doublemapnode) (doublemapnode) .. controls (#1,#2-0.925)..(#1,#2-1) (doublemapnode) .. controls (#1+1,#2-0.925).. (#1+1,#2-1)}
\def\doublesinglemap(#1,#2)[#3]{\draw (#1+0.5,#2-0.5) node [name=doublesinglemapnode,inner xsep=0pt, inner ysep=0pt, minimum height=11pt, minimum width=23pt,shape=rectangle,draw,rounded corners] {$#3$} (#1,#2) .. controls (#1,#2-0.075) .. (doublesinglemapnode) (#1+1,#2) .. controls (#1+1,#2-0.075).. (doublesinglemapnode) (doublesinglemapnode)-- (#1+0.5,#2-1)}
\def\ractiontr(#1,#2)[#3,#4,#5]{\draw (#1,#2) -- (#1,#2-2*#4/2)  (#1,#2-1*#4/2) node [inner sep=0pt, minimum size=3pt,shape=isosceles triangle,fill, shape border rotate=#5] {}  --(#1+1*#4/2+#3*#4/2,#2-1*#4/2) .. controls (#1+1.555*#4/2+#3*#4/2,#2-1*#4/2) and (#1+2*#4/2+#3*#4/2,#2-0.555*#4/2) .. (#1+2*#4/2+#3*#4/2,#2)  }
\def\rack(#1,#2)[#3]{\draw (#1,#2-0.5)  node[name=nodemap,inner sep=0pt,  minimum size=7.5pt, shape=circle,draw]{$#3$} (#1-1,#2) .. controls (#1-1,#2-0.5) and (#1-0.5,#2-0.5) .. (nodemap) (#1,#2)-- (nodemap)  (nodemap)-- (#1,#2-1)}
\def\rackmenoslarge(#1,#2)[#3]{\draw (#1,#2-0.5)  node[name=nodemap,inner sep=0pt,  minimum size=7.5pt, shape=circle,draw]{$#3$} (#1-1.5,#2+0.5) .. controls (#1-1.5,#2-0.5) and (#1-0.5,#2-0.5) .. (nodemap) (#1,#2)-- (nodemap)  (nodemap)-- (#1,#2-1)}
\def\racklarge(#1,#2)[#3]{\draw (#1,#2-0.5)  node[name=nodemap,inner sep=0pt,  minimum size=7.5pt, shape=circle,draw]{$#3$} (#1-2,#2+0.5) .. controls (#1-2,#2-0.5) and (#1-0.5,#2-0.5) .. (nodemap) (#1,#2)-- (nodemap)  (nodemap)-- (#1,#2-1)}
\def\rackmaslarge(#1,#2)[#3]{\draw (#1,#2-0.5)  node[name=nodemap,inner sep=0pt,  minimum size=7.5pt, shape=circle,draw]{$#3$} (#1-2.5,#2+0.5) .. controls (#1-2.5,#2-0.5) and (#1-0.5,#2-0.5) .. (nodemap) (#1,#2)-- (nodemap)  (nodemap)-- (#1,#2-1)}
\def\rackextralarge(#1,#2)[#3]{\draw (#1,#2-0.75)  node[name=nodemap,inner sep=0pt,  minimum size=7.5pt, shape=circle, draw]{$#3$} (#1-3,#2+1) .. controls (#1-3,#2-0.75) and (#1-0.5,#2-0.75) .. (nodemap) (#1,#2)-- (nodemap)  (nodemap)-- (#1,#2-1.5)}
\def\lactionnamed(#1,#2)[#3,#4][#5]{\draw (#1 + 0.5*#3 + 0.5 + 0.5*#4, #2- 0.5*#3) node[name=nodemap,inner sep=0pt,  minimum size=8pt, shape=circle,draw]{$#5$} (#1,#2)  arc (180:270:0.5*#3) (#1 + 0.5*#3,#2- 0.5*#3) --  (nodemap) (#1 + 0.5*#3 + 0.5 + 0.5*#4, #2) --  (nodemap) (nodemap) -- (#1 + 0.5*#3 + 0.5 + 0.5*#4, #2-#3)}
\def\ractionnamed(#1,#2)[#3,#4][#5]{\draw  (#1 - 0.5*#3- 0.5 - 0.5*#4, #2- 0.5*#3)  node[name=nodemap,inner sep=0pt,  minimum size=8pt, shape=circle,draw]{$#5$} (#1 - 0.5*#3, #2- 0.5*#3)  arc (270:360:0.5*#3) (#1 - 0.5*#3, #2- 0.5*#3) -- (nodemap)(#1 - 0.5*#3- 0.5 - 0.5*#4, #2)-- (nodemap) (nodemap) -- (#1 - 0.5*#3- 0.5 - 0.5*#4, #2-#3)}
\def\multsubzero(#1,#2)[#3,#4]{\draw (#1+0.5*#3, #2-0.5*#4) node [name=nodemap,inner sep=0pt, minimum size=3pt,shape=circle,fill=white, draw]{} (#1,#2) arc (180:360:0.5*#3 and 0.5*#4) (#1+0.5*#3, #2-0.5*#4) -- (#1+0.5*#3,#2-#4)}
\begin{scope}[xshift=0cm, yshift=0cm]
\lactionnamed(0,0)[1,0][\scriptstyle \lambda];
\end{scope}
\begin{scope}[xshift=0cm,yshift=0cm]
\node at (3.7,-0.4){\text{and}};
\end{scope}
\begin{scope}[xshift=7.3cm, yshift=0cm]
\ractionnamed(0,0)[1,0][\scriptstyle \rho];
\end{scope}
\begin{scope}[xshift=0cm,yshift=-0.3cm]
\node at (7.5,-0.4){.};
\end{scope}
\end{tikzpicture}
$$
Finally, the diagram
$$
\begin{tikzpicture}[scale=0.395]
\def\rack(#1,#2)[#3]{\draw (#1,#2-0.5)  node[name=nodemap,inner sep=0pt,  minimum size=7.5pt, shape=circle,draw]{$#3$} (#1-1,#2) .. controls (#1-1,#2-0.5) and (#1-0.5,#2-0.5) .. (nodemap) (#1,#2)-- (nodemap)  (nodemap)-- (#1,#2-1)}
\begin{scope}[xshift=0cm, yshift=0cm]
\rack(0,0)[\scriptstyle \triangleright];
\end{scope}
\begin{scope}[xshift=0cm,yshift=-0.3cm]
\node at (0.5,-0.4){.};
\end{scope}
\end{tikzpicture}
$$
stands for the action $\triangleright$ of a rack~$(X,\triangleright)$.

\subsubsection*{General constructions}


For each coalgebra $X$ and each $n\in \mathds{N}_0$, we let $\Delta_n\colon X \to X^{n+1}$ denote the map recursively defined by

\begin{itemize}

\smallskip

\item[-] $\Delta_0=\ide$,

\smallskip

\item[-] $\Delta_{i+1}\coloneqq (\Delta\ot X^i)\circ \Delta_i$.

\end{itemize}

\begin{defn}\label{def: funciones coordenadas} Let $Y, X_1,\dots, X_n$ be coalgebras in $\mathscr{C}$ and let $f\colon Y\longrightarrow X_1 \ot \cdots \ot X_n$ be a morphism. The {\em coordinate maps} of $f$ are the maps $f_i\colon Y\to X_i$, defined by
$$
f_i\coloneqq (\epsilon^{i-1}\ot X_i\ot \epsilon^{n-i-1}) \circ f\qquad\text{($1\le i \le n$)}.
$$
\end{defn}

\begin{pro}\label{coro: morfismos de coalgebra al producto tensorial de n coalgebras} Let $Y, X_1,\dots, X_n$ be cocommutative coalgebras in $\mathscr{C}$. If
$$
f\colon Y\longrightarrow X_1 \ot \cdots \ot X_n
$$
is a coalgebra morphism, then the coordinate maps $f_i\colon Y\to X_i$, of $f$, are coalgebra morphisms. Morevover, $f=(f_1\ot \cdots \ot f_n)\circ \Delta_{n-1}$. Conversely, given coalgebra morphisms $f_i\colon Y\to X_i$, the map $f \coloneqq (f_1\ot \cdots \ot f_n)\circ \Delta_{n-1}$is a coalgebra morphism. So, the categorical product of a finite number of cocommutative coalgebras is their tensor product.
\end{pro}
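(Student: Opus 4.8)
\emph{Proof plan.} I would break the statement into four parts and treat them in order: (i) each coordinate map $f_i$ is a coalgebra morphism; (ii) the reconstruction formula $f=(f_1\ot\cdots\ot f_n)\circ\Delta_{n-1}$; (iii) the converse; (iv) the universal property. First I would record three standard facts valid in any symmetric monoidal category, none of which needs cocommutativity: the counit $\epsilon\colon X\to\mathds{1}$ is a coalgebra morphism; a tensor product $g_1\ot\cdots\ot g_n$ of coalgebra morphisms is again a coalgebra morphism (here the symmetry $c$ both builds the comultiplication of a tensor-product coalgebra and verifies compatibility); and for every coalgebra $X$ the iterated counit identity $(\epsilon^{i-1}\ot\id_X\ot\epsilon^{n-i})\circ\Delta_{n-1}=\id_X$ holds. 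With these, part (i) is immediate: writing $p_i$ for the projection $\epsilon^{i-1}\ot\id_{X_i}\ot\epsilon^{n-i}\colon Z\to X_i$ that applies the counit to every tensorand but the $i$-th (so that $f_i=p_i\circ f$ by Definition~\ref{def: funciones coordenadas}, where $Z\coloneqq X_1\ot\cdots\ot X_n$), the map $f_i=p_i\circ f$ is a composite of coalgebra morphisms.

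For part (ii), since $f$ is a coalgebra morphism its comultiplication-compatibility iterates to $f^{\ot n}\circ\Delta_{n-1}^{Y}=\Delta_{n-1}^{Z}\circ f$. Substituting $f_i=p_i\circ f$ I would rewrite
\[
(f_1\ot\cdots\ot f_n)\circ\Delta_{n-1}^{Y}=(p_1\ot\cdots\ot p_n)\circ f^{\ot n}\circ\Delta_{n-1}^{Y}=(p_1\ot\cdots\ot p_n)\circ\Delta_{n-1}^{Z}\circ f,
\]
so the claim reduces to the key identity $(p_1\ot\cdots\ot p_n)\circ\Delta_{n-1}^{Z}=\id_Z$. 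To prove it I would expand $\Delta_{n-1}^{Z}$ as a shuffle (built from $c$) of $\Delta_{n-1}^{X_1}\ot\cdots\ot\Delta_{n-1}^{X_n}$, observe that the $k$-th projection $p_k$ retains exactly the $k$-th comultiplication-copy of $X_k$ and kills every other copy by $\epsilon$, and then use naturality of $c$ with respect to the morphisms $\epsilon\colon X_j\to\mathds{1}$ to collapse all braidings touching a killed tensorand. Factor by factor what survives is $(\epsilon^{k-1}\ot\id_{X_k}\ot\epsilon^{n-k})\circ\Delta_{n-1}^{X_k}=\id_{X_k}$, giving $\id_Z$. This bookkeeping with the shuffle and the braidings is the main obstacle; I would carry it out in graphical calculus, or by induction on $n$ with the easy direct check $n=2$ as the base case.

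For the converse (iii), given coalgebra morphisms $f_i\colon Y\to X_i$ I would exhibit $f\coloneqq(f_1\ot\cdots\ot f_n)\circ\Delta_{n-1}$ as a composite of coalgebra morphisms. The tensor $f_1\ot\cdots\ot f_n\colon Y^n\to Z$ is one by the facts above; the point, and the only place the hypothesis is genuinely used, is that $\Delta_{n-1}\colon Y\to Y^n$ is a coalgebra morphism because $Y$ is cocommutative. I would first prove this for $\Delta=\Delta_1$: comultiplication-compatibility unwinds to $(Y\ot c\ot Y)\circ\Delta_3=\Delta_3$, whose left-hand side swaps the two inner tensorands of $\Delta_3$ and equals $\Delta_3$ exactly by cocommutativity, while counit-compatibility is automatic; iterating yields the claim for $\Delta_{n-1}$.

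Finally (iv), parts (i)--(iii) assemble into the universal property in the category of cocommutative coalgebras. The projections $p_i$ are coalgebra morphisms, and for any family $f_i\colon Y\to X_i$ the morphism $f$ of (iii) satisfies $p_i\circ f=f_i$ (again by the counit collapsing, now using $\epsilon_{X_j}\circ f_j=\epsilon_Y$), which gives existence; uniqueness follows from the reconstruction formula (ii), since any competitor $f'$ with $p_i\circ f'=f_i$ must equal $(f_1\ot\cdots\ot f_n)\circ\Delta_{n-1}=f$. Hence $Z=X_1\ot\cdots\ot X_n$, equipped with the $p_i$, is the categorical product of the $X_i$.
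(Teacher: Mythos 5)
Your proposal is correct, and there is in fact nothing in the paper to compare it against: the authors state this proposition without proof, as a standard preliminary fact, so your argument supplies a verification the paper omits. The four-part decomposition is the natural one, and you locate exactly where cocommutativity is used: not in (i) or (ii) --- the projections $p_i$ are coalgebra morphisms and the identity $(p_1\ot\cdots\ot p_n)\circ\Delta_{n-1}^{Z}=\id_{Z}$ holds for arbitrary coalgebras in a symmetric monoidal category --- but only in (iii), where cocommutativity of $Y$ is precisely what makes $\Delta\colon Y\to Y^{2}$, and hence $\Delta_{n-1}$, a coalgebra morphism. Of your two suggested routes for the key identity in (ii), the induction on $n$ is the cleaner: the reconstruction formula for $f\colon Y\to (X_1\ot\cdots\ot X_{n-1})\ot X_n$ follows from the two-factor case applied to the pair $(X_1\ot\cdots\ot X_{n-1},\,X_n)$ together with the inductive hypothesis, avoiding the shuffle bookkeeping entirely. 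Two small additions would make the write-up complete. First, the closing claim that $Z=X_1\ot\cdots\ot X_n$ is the categorical product \emph{in the category of cocommutative coalgebras} also requires that $Z$ itself be cocommutative, i.e. $c_{Z,Z}\circ\Delta_Z=\Delta_Z$; this is a routine consequence of naturality of $c$ and cocommutativity of the factors, but it should be recorded, since otherwise $Z$ is not an object of the category in which you assert a universal property. Second, your count $p_i=\epsilon^{i-1}\ot X_i\ot\epsilon^{n-i}$ is the correct one; Definition~\ref{def: funciones coordenadas} of the paper writes $\epsilon^{n-i-1}$ in the last slot, which is an off-by-one slip, so you were right not to copy it.
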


\begin{cor}\label{coro: igualdad de morfismos} Let $Y, X_1,\dots, X_n$ be cocommutative coalgebras in $\mathscr{C}$ and let
$$
f,g\colon Y\longrightarrow X_1 \ot \cdots \ot X_n
$$
be maps. If $f$ and $g$ are coalgebra morphisms, then $f=g$ if and only if $f_i=g_i$ for each $i$.
\end{cor}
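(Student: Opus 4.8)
The plan is to deduce this directly from Proposition~\ref{coro: morfismos de coalgebra al producto tensorial de n coalgebras}, so essentially no new work beyond bookkeeping is required; the content of the corollary is just the observation that the reconstruction formula proved there sets up a bijection between coalgebra morphisms $Y\to X_1\ot\cdots\ot X_n$ and tuples of coalgebra morphisms $(f_1,\dots,f_n)$.

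First I would dispose of the ``only if'' direction, which is trivial and uses nothing about cocommutativity or the coalgebra structure: by Definition~\ref{def: funciones coordenadas} each coordinate map is obtained from $f$ by postcomposition with the fixed morphism $\epsilon^{i-1}\ot X_i\ot\epsilon^{n-i-1}$, so $f=g$ immediately forces $f_i=(\epsilon^{i-1}\ot X_i\ot\epsilon^{n-i-1})\circ f=(\epsilon^{i-1}\ot X_i\ot\epsilon^{n-i-1})\circ g=g_i$ for every $i$.

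For the ``if'' direction I would invoke the reconstruction formula. Since $f$ and $g$ are coalgebra morphisms, Proposition~\ref{coro: morfismos de coalgebra al producto tensorial de n coalgebras} gives $f=(f_1\ot\cdots\ot f_n)\circ\Delta_{n-1}$ and $g=(g_1\ot\cdots\ot g_n)\circ\Delta_{n-1}$. Assuming $f_i=g_i$ for each $i$, functoriality of the tensor product yields $f_1\ot\cdots\ot f_n=g_1\ot\cdots\ot g_n$, and precomposing this common morphism with $\Delta_{n-1}$ shows $f=g$.

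I do not expect any genuine obstacle here: the only subtlety is that the backward implication relies essentially on cocommutativity (through the hypotheses of Proposition~\ref{coro: morfismos de coalgebra al producto tensorial de n coalgebras}, which guarantees both that the coordinate maps are themselves coalgebra morphisms and that the factorization through $\Delta_{n-1}$ holds), so I would make sure to flag that the proposition is being applied to both $f$ and $g$. Everything else is formal manipulation of the graphical calculus.
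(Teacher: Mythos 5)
Your proof is correct and follows exactly the route the paper intends: the corollary is stated without proof precisely because it is an immediate consequence of Proposition~\ref{coro: morfismos de coalgebra al producto tensorial de n coalgebras}, via the reconstruction formula $f=(f_1\ot\cdots\ot f_n)\circ\Delta_{n-1}$ applied to both $f$ and $g$. Your handling of both directions, including the remark that cocommutativity enters only through the hypotheses of that proposition, is accurate and complete.
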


Let $C$ be a coalgebra and let $A$ be an algebra. The set $\Hom_{\mathcal{C}}(C,A)$ is a monoid via the {\em convolution product}
$$
f*g \coloneqq m_A\circ (f\ot g)\circ \Delta_C.
$$
The unit of $\Hom_{\mathcal{C}}(C,A)$ is $\eta_A\circ \epsilon_C$. If $\psi\colon C'\to C$ and $\phi\colon A\to A'$ are morphisms of coalgebras and algebras respectively, then the function
$$
\theta_{\psi,\phi}\colon \Hom_{\mathcal{C}}(C,A)\longrightarrow \Hom_{\mathcal{C}}(C',A'),
$$
defined by $\theta_{\psi,\phi}(f):= \phi\circ f\circ \psi$ is a morphism of
monoids.

\smallskip

We will use that if $A$ is a Hopf algebra with antipode $S$ and $f\colon C\to A$ is a coalgebra morphism, then $f$ is convolution invertible with inverse $S\circ f$.

Next we recall without proof some well known results that we will need in Section~\ref{LYZ}. In the rest of this section we assume that $\mathscr{C}$ has countable colimits and that these colimits commute with the tensor product.

\begin{rem}\label{rem:producto_tensorial_colimite} If an arrow $\pi\colon X\to \widetilde{X}$ is the coequalizer of a family $(f_i\colon Z\to X)_{i\in I}$  of arrows, then $\pi\ot \pi\colon X^2\to \widetilde{X}^2$ is the colimit of the diagram made out by the maps $f_i\ot X$ and~$X\ot f_i$, with $i$ running on $I$.
\end{rem}

For each $Z$ in $\mathcal{C}$ the tensor algebra $T(Z)$ is by definition
$$
T(Z):= \coprod_{i\ge 0} Z^i,
$$
where $(Z^i)_{i\ge 0}$ is recursively defined by $Z^0:=\mathds{1}$ and $Z^{i+1}:=Z^i\ot Z$. The product on $T(Z)$ is induced by the canonical isomorphisms $Z^i\ot Z^j\simeq Z^{i+j}$. By the coherence theorem of Mac Lane, $T(Z)$ is an associative and unitary algebra. We let $\iota\colon Z\to T(Z)$ denote the canonical inclusion. The algebra $T(Z)$ satisfies the following universal property:
\begin{quotation}
For each associative and unitary algebra $A$ in $\mathscr{C}$ and each morphism $\varphi\colon Z\to A$ in $\mathscr{C}$, there is a unique algebra morphism $\ov{\varphi}\colon T(Z)\to A$ such that $\varphi = \ov{\varphi}\circ \iota$.
\end{quotation}

If $Z$ is a coalgebra in $\mathscr{C}$, then $T(Z)$ is a bialgebra in $\mathscr{C}$ via the comultiplication and counit induced by the morphisms $(\iota\ot \iota)\circ \Delta_Z$ and $\epsilon$.

Clearly if $Z$ is cocommutative, then so is $T(Z)$. The bialgebra $T(Z)$ satisfies the following universal property:
\begin{quotation}
For each bialgebra $A$ in $\mathscr{C}$ and each morphism $\varphi\colon Z\to A$ of coalgebras, there is a unique bialgebra morphism $\ov{\varphi}\colon T(Z)\to A$ such that $\varphi=\ov{\varphi}\circ\iota$.
\end{quotation}
\noindent Thus $T(Z)$ is the free bialgebra on the coalgebra $Z$. Our next purpose is to describe the free Hopf algebra on a cocommutative coalgebra $Z$ endowed with a coalgebra morphism $S\colon Z\to Z$. Given an arbitrary bialgebra $L$ in $\mathscr{C}$ we let $L^{\op}$ denotes the bialgebra with the same underlying coalgebra structure as $L$ and multiplication $m^{\op}:=m\circ c$.

\begin{pro}\label{coegalizador} Let $(L,m,\eta)$ be an associative and unitary algebra in $\mathscr{C}$ and let $(f_i\colon Z\to L)_{i\in I}$ be a finite family of morphisms in $\mathscr{C}$. For each $i$ let $\hat{f}_i\colon L\ot Z\ot L\to L$ be the morphism defined by $\hat{f}_i:= m\circ (m\ot L)\circ (L\ot f_i\ot L)$ and let $\pi\colon L\to \widetilde{L}$ be the coequalizer of $(\hat{f}_i)_{i\in I}$. There is a unique associative and unitary algebra structure on $\widetilde{L}$, such that $\pi$ is a morphism of algebras.
\end{pro}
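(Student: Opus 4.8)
The plan is to transport the algebra structure of $L$ along the coequalizer $\pi$, exploiting the colimit description of $\pi\ot\pi$ supplied by Remark~\ref{rem:producto_tensorial_colimite}. Since $\pi$ is required to be a morphism of algebras, its values $\widetilde{\eta}:=\pi\circ\eta$ and $\widetilde{m}\circ(\pi\ot\pi)=\pi\circ m$ are forced; I would simply take these as definitions and check that they make sense and satisfy the axioms. The unit $\widetilde{\eta}$ causes no trouble. For the multiplication, note that $\pi$ is a coequalizer hence an epimorphism, and because $-\ot L$ and $L\ot -$ preserve the relevant colimits, both $\pi\ot L$ and $\widetilde{L}\ot\pi$ are epimorphisms, so $\pi\ot\pi$ and $\pi^{\ot 3}$ are epimorphisms as well. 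By Remark~\ref{rem:producto_tensorial_colimite} applied to the family $(\hat{f}_i)_{i\in I}$, the morphism $\pi\ot\pi\colon L^2\to\widetilde{L}^2$ is the colimit of the diagram made out of the maps $\hat{f}_i\ot L$ and $L\ot\hat{f}_i$; consequently $\pi\circ m$ factors (uniquely) through $\pi\ot\pi$ precisely when it coequalizes each of these two families. This factorization produces the desired $\widetilde{m}$.

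The verification that $\pi\circ m$ coequalizes the diagram is the heart of the argument, and the step I expect to be the main obstacle. It rests on the two identities
\begin{align*}
m\circ(\hat{f}_i\ot L)&=\hat{f}_i\circ(L\ot Z\ot m),\\
m\circ(L\ot \hat{f}_i)&=\hat{f}_i\circ(m\ot Z\ot L),
\end{align*}
which follow directly from the definition $\hat{f}_i=m\circ(m\ot L)\circ(L\ot f_i\ot L)$ together with the associativity of $m$ (they express that multiplying the ``two-sided'' expression $a\,f_i(z)\,b$ on the right or left again has the two-sided form). Postcomposing these identities with $\pi$ and using that $\pi\circ\hat{f}_i=\pi\circ\hat{f}_j$ for all $i,j$ (the defining property of the coequalizer), the right-hand sides become independent of the index, so $\pi\circ m\circ(\hat{f}_i\ot L)$ and $\pi\circ m\circ(L\ot\hat{f}_i)$ do not depend on $i$. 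This is exactly the coequalizing condition required by the colimit, and it is where the specific form of $\hat{f}_i$ — encoding a genuinely two-sided set of relations — is used; it is what allows the quotient to inherit an algebra, rather than merely a coalgebra or module, structure.

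With $\widetilde{m}$ in hand, the remaining axioms follow by cancelling epimorphisms. For associativity I would precompose $\widetilde{m}\circ(\widetilde{m}\ot\widetilde{L})$ and $\widetilde{m}\circ(\widetilde{L}\ot\widetilde{m})$ with the epimorphism $\pi^{\ot 3}$ and repeatedly substitute $\widetilde{m}\circ(\pi\ot\pi)=\pi\circ m$, reducing both sides to $\pi\circ m\circ(m\ot L)$ and $\pi\circ m\circ(L\ot m)$, which agree by associativity of $m$. For the unit axioms I would precompose with the epimorphism $\mathds{1}\ot\pi$ (respectively $\pi\ot\mathds{1}$) and reduce to the unit axioms of $L$ together with the naturality of the unitors $\lambda$ and $\varrho$. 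Finally, uniqueness of the structure is immediate: any algebra structure making $\pi$ a morphism of algebras must satisfy $\widetilde{\eta}=\pi\circ\eta$ and $\widetilde{m}\circ(\pi\ot\pi)=\pi\circ m$, and since $\pi\ot\pi$ is an epimorphism the latter determines $\widetilde{m}$ completely. (Throughout I would suppress the associativity and unit constraints by the coherence theorem, reinstating them only where naturality is invoked.)
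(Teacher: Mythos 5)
Your proof is correct and complete: the paper in fact states Proposition~\ref{coegalizador} without proof (it is recalled as a well-known result), and your argument is precisely the intended one --- Remark~\ref{rem:producto_tensorial_colimite} is placed immediately beforehand exactly so that $\pi\ot\pi$ can be recognized as the colimit of the diagram formed by the maps $\hat{f}_i\ot L$ and $L\ot\hat{f}_i$, and your two associativity identities $m\circ(\hat{f}_i\ot L)=\hat{f}_i\circ(L\ot Z\ot m)$ and $m\circ(L\ot\hat{f}_i)=\hat{f}_i\circ(m\ot Z\ot L)$ are exactly what make $\pi\circ m$ a cocone over that diagram, hence factor it through $\pi\ot\pi$. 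The remaining bookkeeping (epimorphy of $\pi\ot\pi$ and $\pi^{\ot 3}$ via preservation of colimits by the tensor product, associativity, unitality via naturality of the unitors, and uniqueness) is also handled correctly.
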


The canonical map $\pi\colon L\to \widetilde{L}$ satisfies the following universal property:
\begin{quotation}
For each associative and unitary algebra $A$ in $\mathscr{C}$ and each algebra morphism $\varphi\colon L\to A$ such that $\varphi\circ f_i = \varphi\circ f_j$ for all $i,j\in I$, there exists a unique algebra morphism $\widetilde{\varphi}\colon \widetilde{L}\to A$ such that $\varphi = \widetilde{\varphi}\circ \pi$.
\end{quotation}

\begin{rem}\label{coegalizador'} Assume now that $(L,m,\eta,\Delta,\epsilon)$ is a bialgebra in $\mathscr{C}$ and that the $f_i$'s are coalgebra morphisms in $\mathscr{C}$. Then there is a unique bialgebra structure on $\widetilde{L}$ such that~$\pi$ is a morphism of bialgebras. Moreover $\pi$ has the following universal	property:
\begin{quotation}
For each bialgebra $H$ in $\mathscr{C}$ and each bialgebra morphism \hbox{$\varphi\colon L\to H$} such that $\varphi\circ f_i = \varphi\circ f_j$ for all $i,j\in I$, there exists a unique bialgebra	morphism $\widetilde{\varphi}\colon \widetilde{L}\to H$ such that $\varphi = \widetilde{\varphi}\circ \pi$.
\end{quotation}
\end{rem}

\begin{cor}\label{cociente de Hopf algebra} If $L$ is a Hopf algebra, then so is $\widetilde{L}$.
\end{cor}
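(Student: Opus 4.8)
The plan is to build an antipode on $\widetilde{L}$ by descending the antipode $S$ of $L$ along the canonical epimorphism $\pi\colon L\to\widetilde{L}$. By Remark~\ref{coegalizador'} we already know that $\widetilde{L}$ carries a bialgebra structure for which $\pi$ is a morphism of bialgebras, and coequalizers are epimorphisms, so $\pi$ is epi. Thus the only task is to produce a morphism $\widetilde{S}\colon\widetilde{L}\to\widetilde{L}$ satisfying the two antipode identities; the natural candidate is the unique morphism characterized by $\widetilde{S}\circ\pi=\pi\circ S$.

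I would first record two preliminary facts. Precomposing the coequalizer identity $\pi\circ\hat{f}_i=\pi\circ\hat{f}_j$ with $\eta\ot Z\ot\eta$ and using the unit axioms gives $\pi\circ f_i=\pi\circ f_j$ for all $i,j$. Next, since the antipode is an algebra anti-morphism and $m^{\op}=m\circ c$, the map $S\colon L\to L^{\op}$ is a morphism of algebras; as $\pi\colon L\to\widetilde{L}$ is a morphism of algebras, so is the same underlying morphism $\pi\colon L^{\op}\to\widetilde{L}^{\op}$. Hence the composite $\pi\circ S\colon L\to\widetilde{L}^{\op}$ is a morphism of algebras.

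The crux is to show that $\pi\circ S\circ f_i=\pi\circ S\circ f_j$ for all $i,j$, which is equivalent to saying that $\pi\circ S$ factors through $\pi$. I would \emph{not} try to verify directly that the relations defining $\widetilde{L}$ are stable under $S$, since this is not transparent; instead I argue through convolution. Because $L$ is a Hopf algebra and each $f_i$ is a morphism of coalgebras, $f_i$ is convolution invertible in $\Hom_{\mathcal{C}}(Z,L)$ with inverse $S\circ f_i$. Applying the monoid morphism $\theta_{\id_Z,\pi}\colon\Hom_{\mathcal{C}}(Z,L)\to\Hom_{\mathcal{C}}(Z,\widetilde{L})$, the element $\pi\circ f_i$ is convolution invertible with inverse $\pi\circ S\circ f_i$. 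Since $\pi\circ f_i=\pi\circ f_j$ and inverses in a monoid are unique, the two inverses agree, which is exactly the desired identity.

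Granting this, the morphism of algebras $\pi\circ S\colon L\to\widetilde{L}^{\op}$ satisfies $(\pi\circ S)\circ f_i=(\pi\circ S)\circ f_j$, so the universal property of Proposition~\ref{coegalizador} yields a unique algebra morphism $\widetilde{S}\colon\widetilde{L}\to\widetilde{L}^{\op}$ with $\widetilde{S}\circ\pi=\pi\circ S$. Finally I would verify the antipode identities after precomposing with the epimorphism $\pi$: using that $\pi$ is a morphism both of coalgebras and of algebras, together with $\widetilde{S}\circ\pi=\pi\circ S$, the map $m_{\widetilde{L}}\circ(\widetilde{S}\ot\id_{\widetilde{L}})\circ\Delta_{\widetilde{L}}\circ\pi$ collapses to $\pi\circ m_L\circ(S\ot\id_L)\circ\Delta_L=\pi\circ\eta_L\circ\epsilon_L=\eta_{\widetilde{L}}\circ\epsilon_{\widetilde{L}}\circ\pi$, and symmetrically for $m_{\widetilde{L}}\circ(\id_{\widetilde{L}}\ot\widetilde{S})\circ\Delta_{\widetilde{L}}\circ\pi$; cancelling the epimorphism $\pi$ shows that $\widetilde{S}$ is an antipode, so $\widetilde{L}$ is a Hopf algebra. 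The main obstacle is the factorization in the previous paragraph, and the point of the whole argument is to replace the opaque ideal-stability question by the uniqueness of convolution inverses.
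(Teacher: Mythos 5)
Your proof is correct. Note that the paper itself states Corollary~\ref{cociente de Hopf algebra} without proof (it appears among the results ``recalled without proof''), so there is nothing to compare line by line; but your construction is precisely the one the paper implicitly relies on, as can be seen from Proposition~\ref{const de antipoda}, where the antipode of the quotient is likewise characterized as the unique bialgebra morphism $\ov{S}\colon\ov{L}\to\ov{L}^{\op}$ with $\ov{S}\circ\pi=\pi^{\op}\circ S$. The only non-routine point in descending $S$ --- that $\pi\circ S$ coequalizes the $f_i$'s, so that the universal property of Proposition~\ref{coegalizador} applies with $A=\widetilde{L}^{\op}$ --- is handled correctly by your convolution argument: since each $f_i$ is a coalgebra morphism into the Hopf algebra $L$, the monoid morphism $\theta_{\id_Z,\pi}$ sends the convolution-invertible element $f_i$ to $\pi\circ f_i$ with inverse $\pi\circ S\circ f_i$, and the uniqueness of inverses in the monoid $\Hom_{\mathcal{C}}(Z,\widetilde{L})$ together with $\pi\circ f_i=\pi\circ f_j$ (obtained, as you note, by inserting units into the coequalized relations) forces $\pi\circ S\circ f_i=\pi\circ S\circ f_j$; the remaining verifications (that $\pi\circ S\colon L\to\widetilde{L}^{\op}$ is an algebra morphism, and that the antipode identities for $\widetilde{S}$ follow by cancelling the epimorphism $\pi$) are routine and carried out correctly.
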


Let $Z$ be a cocommutative coalgebra, let $S_Z\colon Z\to Z$ be a coalgebra morphism and let $L:=T(Z)$. Let $S\colon L\to L^{\op}$ be the bialgebra morphism induced by
$$
Z \xrightarrow[]{S_Z} Z \xrightarrow[]{\iota} L^{\op},
$$
where $\iota$ is the canonical inclusion. Let $f_1,f_2,f_3\colon Z\to L$ be the arrows defined by
$$
f_1:=\eta\circ \epsilon_Z, \quad f_2:= m\circ (S\ot L)\circ \Delta\circ \iota\quad\text{and}\quad f_3:= m \circ (L\ot S)\circ \Delta \circ \iota,
$$
and let $\pi\colon L\to \ov{L}$ be the coequalizer of $(\hat{f}_i)_{i\in \{1,2,3\}}$, where $\hat{f}_i$ is as in Proposition~\ref{coegalizador}. Since~$L$ is cocommutative, $f_1$, $f_2$ and $f_3$ are coalgebra morphisms. Consequently, by Remark~\ref{coegalizador'} there is a unique bialgebra structure on $\ov{L}$ such that $\pi$ is a morphism of bialgebras.

\begin{pro}\label{const de antipoda} There is a unique bialgebra morphism $\ov{S}\colon \ov{L}\to \ov{L}^{\op}$ such that $\ov{S}\circ \pi = \pi^{\op}\circ S$, where $\pi\colon L\to \ov{L}$ and $\pi^{\op}\colon L^{\op}\to \ov{L}^{\op}$ are the canonical morphisms. Moreover $\ov{L}$ is a cocommutative Hopf algebra with antipode $\ov{S}$.
\end{pro}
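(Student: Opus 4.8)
The plan is to build $\ov S$ from the universal property of the coequalizer $\pi$ (Remark~\ref{coegalizador'}) and then to check the antipode axioms by propagating them from the generators $\iota(Z)$ along the multiplication of $L=T(Z)$.

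Since $S\colon L\to L^{\op}$ and $\pi^{\op}\colon L^{\op}\to\ov L^{\op}$ are morphisms of bialgebras, so is $\varphi:=\pi^{\op}\circ S\colon L\to\ov L^{\op}$. By Remark~\ref{coegalizador'}, $\varphi$ factors uniquely as $\ov S\circ\pi$ for a bialgebra morphism $\ov S\colon\ov L\to\ov L^{\op}$ as soon as $\varphi\circ f_1=\varphi\circ f_2=\varphi\circ f_3$; as $\pi^{\op}$ and $\pi$ share their underlying arrow, this amounts to the equality of the three maps $\pi\circ S\circ f_i$. I would check that each one equals $\eta_{\ov L}\circ\epsilon_Z$. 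For $i=1$ this is immediate from $S\circ\eta=\eta$ and $\pi\circ\eta=\eta_{\ov L}$. For $i=2,3$ a short graphical computation, using that $S$ is an anti-algebra morphism, that $S\circ\iota=\iota\circ S_Z$, that $Z$ is cocommutative, and that $S_Z$ is a coalgebra morphism, rewrites these as $\pi\circ S\circ f_2=(\pi\circ f_3)\circ S_Z$ and $\pi\circ S\circ f_3=(\pi\circ f_2)\circ S_Z$. Since $\pi\circ f_1=\pi\circ f_2=\pi\circ f_3=\eta_{\ov L}\circ\epsilon_Z$ by the defining relations of $\pi$, and $\epsilon_Z\circ S_Z=\epsilon_Z$, all three coincide. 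Uniqueness of $\ov S$ is automatic, since $\pi$ is a coequalizer and hence an epimorphism.

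It remains to prove that $\ov S$ is an antipode, i.e. $\ov S*\id_{\ov L}=\eta_{\ov L}\circ\epsilon_{\ov L}=\id_{\ov L}*\ov S$. Because $\pi$ is a morphism of bialgebras and $\ov S\circ\pi=\pi^{\op}\circ S$, precomposing the first identity with $\pi$ gives $(\ov S*\id_{\ov L})\circ\pi=\pi\circ(S*\id_L)$; as $\pi$ is epi it is therefore enough to prove $\pi\circ(S*\id_L)=\eta_{\ov L}\circ\epsilon_L$, and symmetrically $\pi\circ(\id_L*S)=\eta_{\ov L}\circ\epsilon_L$. On generators these hold by construction, since $(S*\id_L)\circ\iota=f_2$ and $(\id_L*S)\circ\iota=f_3$, whence $\pi\circ(S*\id_L)\circ\iota=\pi\circ f_2=\eta_{\ov L}\circ\epsilon_Z$ and likewise for $f_3$.

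The main obstacle is that $P:=S*\id_L$ is \emph{not} an algebra morphism, so one cannot simply invoke the universal property of the free algebra $T(Z)$ to extend the identity from $\iota(Z)$ to all of $L$. Instead I would argue by induction on the tensor degree using $L=\coprod_{n\ge0}Z^n$. The bialgebra axioms together with the anti-multiplicativity of $S$ give the identity
\[
P\circ m_L=m_L\circ(m_L\ot\id_L)\circ(S\ot P\ot\id_L)\circ(c\ot\id_L)\circ(\id_L\ot\Delta_L),
\]
from which, using that $\pi$ is an algebra morphism, one extracts a \emph{propagation} property: if $\phi\colon V\to L$ and $\psi\colon W\to L$ are coalgebra morphisms with $\pi\circ P\circ\phi=\eta_{\ov L}\circ\epsilon_V$ and $\pi\circ P\circ\psi=\eta_{\ov L}\circ\epsilon_W$, then $m_L\circ(\phi\ot\psi)$ has the same property; indeed, composing the displayed identity with $\pi$ and applying $\pi\circ P\circ\phi=\eta_{\ov L}\circ\epsilon_V$ collapses the middle factor $P\circ\phi$ to $\epsilon_V$, leaving $\pi\circ P\circ\psi=\eta_{\ov L}\circ\epsilon_W$, so that $\pi\circ P\circ m_L\circ(\phi\ot\psi)=\eta_{\ov L}\circ\epsilon_{V\ot W}$. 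The canonical injections $\mu_n\colon Z^n\to L$ satisfy $\mu_0=\eta_L$, $\mu_1=\iota$ and $\mu_n=m_L\circ(\mu_1\ot\mu_{n-1})$; the cases $n=0,1$ are settled by the previous paragraph, so induction and the propagation property yield $\pi\circ P\circ\mu_n=\eta_{\ov L}\circ\epsilon_{Z^n}$ for every $n$. Since $L=\coprod_n Z^n$, this forces $\pi\circ P=\eta_{\ov L}\circ\epsilon_L$; the identity for $\id_L*S$ is entirely symmetric, so $\ov S$ is an antipode. Finally $\ov L$ is cocommutative: from $c\circ(\pi\ot\pi)=(\pi\ot\pi)\circ c$ and the cocommutativity of $L$ one gets $c\circ\Delta_{\ov L}\circ\pi=\Delta_{\ov L}\circ\pi$, and $\pi$ epi gives $c\circ\Delta_{\ov L}=\Delta_{\ov L}$. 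Hence $\ov L$ is a cocommutative Hopf algebra.
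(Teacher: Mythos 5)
Your proof is correct, but there is nothing in the paper to compare it against: Proposition~\ref{const de antipoda} sits in the block of preliminaries that the authors explicitly state they ``recall without proof'', so your argument supplies a proof that the paper omits. Both halves of your argument are sound. For the existence of $\ov S$, the reduction to showing $\pi\circ S\circ f_i=\eta_{\ov L}\circ\epsilon_Z$ for $i=1,2,3$ is right, and the key identities $S\circ f_2=f_3\circ S_Z$ and $S\circ f_3=f_2\circ S_Z$ do follow from exactly the ingredients you list (anti-multiplicativity of $S$, $S\circ\iota=\iota\circ S_Z$, cocommutativity of $Z$, and $S_Z$ being a coalgebra map); one line worth adding is that the relations $\pi\circ f_1=\pi\circ f_2=\pi\circ f_3$ you invoke are obtained from the defining coequalizer relations $\pi\circ\hat f_i=\pi\circ\hat f_j$ by precomposing with $\eta\ot Z\ot\eta$. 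For the antipode, your displayed identity $P\circ m_L=m_L\circ(m_L\ot\id_L)\circ(S\ot P\ot\id_L)\circ(c\ot\id_L)\circ(\id_L\ot\Delta_L)$ is exactly the bialgebra compatibility $\Delta\circ m=(m\ot m)\circ(\id\ot c\ot\id)\circ(\Delta\ot\Delta)$ combined with $S\circ m=m\circ c\circ(S\ot S)$ and naturality of $c$, and the propagation-plus-induction over $L=\coprod_{n\ge 0}Z^n$ is the correct substitute for the unavailable freeness argument, since $P=S*\id_L$ is not an algebra morphism; identifying this obstacle and circumventing it is the genuine content of the proposition. Two routine points should be made explicit to close the write-up: the canonical injections $\mu_n\colon Z^n\to L$ are coalgebra morphisms (this is what licenses $\Delta_L\circ\mu_{n-1}=(\mu_{n-1}\ot\mu_{n-1})\circ\Delta_{Z^{n-1}}$ inside the propagation step), and the case $n=0$ reads $\pi\circ P\circ\eta_L=\eta_{\ov L}$, which follows from $S\circ\eta=\eta$. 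With those lines added, the proof is complete.
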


Let $j\colon Z\to \ov{L}$ be the morphism defined by $j:=\pi\circ \iota$, where $\iota\colon Z\to L$ is the canonical inclusion. The cocommutative Hopf algebra $\ov{L}$ has the following universal property:
\begin{quotation}
For each Hopf algebra $H$ in $\mathscr{C}$ and each coalgebra morphism \mbox{$\varphi\colon Z\to H$} in~$\mathscr{C}$ such that $S_H\circ \varphi = \varphi\circ S_Z$, there exists a unique morphism of Hopf algebras $\ov{\varphi}\colon \ov{L}\to H$ such that $\ov{\varphi} \circ j = \varphi$.
\end{quotation}

\begin{rem}\label{Z especial} Assume that $Z=X\coprod SX$, where $X$ and $SX$ are coalgebras and that there is a coalgebra isomorphism $S\colon X\to SX$ such that $S_Z$ is the map induced by~$S$ and $S^{-1}$ (thus, $S_Z^2=\ide$). Let $\iota_X\colon X\to Z$ be the canonical arrow. From the above universal property it follows that:
\begin{quotation}
For each Hopf algebra $H$ in $\mathscr{C}$ and each coalgebra morphism \mbox{$\varphi\colon X\to H$} in~$\mathscr{C}$, there exists a unique morphism of Hopf algebras $\ov{\varphi}\colon \ov{L}\to H$ such that $\ov{\varphi} \circ \ov{\jmath}=\varphi$ , where $\ov{\jmath}\coloneqq j\circ \iota_X = \varphi$.
\end{quotation}
So, $\ov{\jmath}\colon X\to \ov{L}$ is the free Hopf algebra over the coalgebra $X$.
\end{rem}

\section{Braided and symmetric coalgebras}\label{YB}

In this section we introduce the notion of non-degenerate braided set in $\mathscr{C}$ and we begin the study of its properties. In the sequel $X$ is a cocommutative coalgebra in~$\mathscr{C}$ and $r$ is a coalgebra automorphism of $X^2$.

Recall from Definition~\ref{def: funciones coordenadas} that the first and second coordinate maps of $r$ are the coalgebra maps $\sigma\coloneqq (X\ot \epsilon)\circ r$ and $\tau\coloneqq (\epsilon \ot X)\circ r$, respectively.

\begin{rem}\label{algunas formulas} From the fact that $r$ is compatible with the comultiplication map of $X^2$ it follows immediately that
\begin{enumerate}
\item $(r\otimes \tau)\circ \Delta_{X^2} = (X\otimes \Delta)\circ r$,

\smallskip

\item $(\tau \otimes r) \circ \Delta_{X^2} = (c\otimes X)\circ (X\otimes \Delta)\circ r$,

\smallskip

\item $(\sigma \otimes r) \circ \Delta_{X^2} = (\Delta\otimes X)\circ r$,

\smallskip

\item $(r\otimes \sigma)\circ \Delta_{X^2} = (X\otimes c)\circ (\Delta\otimes X)\circ r$.

\end{enumerate}
\end{rem}

\begin{defn}\label{braided, nodegenerado, involutivo} We will say that $(X,r)$ is a {\em braided set} if $r$ satisfies the braid equation
\begin{equation}\label{ec de trenzas}
r_{12}\circ r_{23} \circ r_{12} = r_{23}\circ r_{12} \circ r_{23},
\end{equation}
where $r_{12}\coloneqq r\ot X$ and $r_{23}\coloneqq X \ot r$; we will say that it is {\em involutive} if
\begin{equation}\label{involutiva}
r^2=\ide,
\end{equation}
that is, if $r$ is an involutive arrow; and we will say that it is {\em non-degenerate} if there exist maps $\sigma^{-1}\colon X^2 \to X$ and $\tau^{-1}\colon X^2 \to X$ such that
\begin{align}
&\sigma^{-1}\circ (X\ot \sigma)\circ (\Delta\ot X)= \sigma\circ (X\ot \sigma^{-1})\circ (\Delta\ot X)= \epsilon \ot X\label{no deg a izq}
\shortintertext{and}
&\tau^{-1}\circ (\tau \ot X)\circ (X \ot \Delta)= \tau \circ (\tau^{-1} \ot X)\circ (X \ot \Delta)= X \ot \epsilon.\label{no deg a der}
\end{align}
If $(X,r)$ is a non-degenerate pair, then we will say that $r$ is non-degenerate.
\end{defn}

\begin{pro}\label{no degenerado} The isomorphism $r$ is non-degenerate if and only if the maps $(X \ot \sigma)\circ (\Delta \ot X)$ and $(\tau \ot X)\circ (X\ot \Delta)$ are isomorphisms. Moreover, their compositional inverses are the maps $(X \ot \sigma^{-1})\circ (\Delta \ot X)$ and $(\tau^{-1} \ot X)\circ (X\ot \Delta)$, respectively.
\end{pro}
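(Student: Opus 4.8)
The plan is to treat the two named maps symmetrically and to reduce everything to the two identities in \eqref{no deg a izq} and \eqref{no deg a der}. Write $L\coloneqq(X\ot\sigma)\circ(\Delta\ot X)$ and $R\coloneqq(\tau\ot X)\circ(X\ot\Delta)$. First I would record the structural facts that make these maps tractable: by Proposition~\ref{coro: morfismos de coalgebra al producto tensorial de n coalgebras} the coordinate maps $\sigma$ and $\tau$ of the coalgebra automorphism $r$ are coalgebra morphisms, and since $X$ is cocommutative $\Delta\colon X\to X^2$ is a coalgebra morphism as well; hence $L$ and $R$ are coalgebra endomorphisms of $X^2$. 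A short computation with the counit then shows that $L$ has coordinate maps $X\ot\epsilon$ and $\sigma$, while $R$ has coordinate maps $\tau$ and $\epsilon\ot X$. In particular, writing $p\coloneqq X\ot\epsilon$, bifunctoriality and the counit axiom give the identity $(X\ot g)\circ(\Delta\ot X)=(p\ot g)\circ\Delta_{X^2}$ for every coalgebra morphism $g\colon X^2\to X$, so that $g\mapsto(X\ot g)\circ(\Delta\ot X)$ recovers $g$ as a second coordinate; this injectivity is what I will use to transport identities back and forth.

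For the forward implication (and the `moreover'), assume $r$ is non-degenerate and set $L'\coloneqq(X\ot\sigma^{-1})\circ(\Delta\ot X)$. The computation I would carry out is: slide the inner comultiplication of $L'$ past $\sigma$ using bifunctoriality, and merge the two resulting comultiplications by coassociativity, to factor $L'\circ L=\bigl(X\ot[\sigma^{-1}\circ(X\ot\sigma)\circ(\Delta\ot X)]\bigr)\circ(\Delta\ot X)$. The bracketed map is exactly the left-hand side of the first equality in \eqref{no deg a izq}, so it equals $\epsilon\ot X$, and the counit law then collapses the whole expression to $\ide$. The mirror computation, using the second equality in \eqref{no deg a izq}, gives $L\circ L'=\ide$. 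Thus $L$ is invertible with $L^{-1}=L'=(X\ot\sigma^{-1})\circ(\Delta\ot X)$, and the identical argument for $R$ with \eqref{no deg a der} gives $R^{-1}=(\tau^{-1}\ot X)\circ(X\ot\Delta)$.

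For the converse, assume $L$ and $R$ are isomorphisms. Since $L$ is a coalgebra automorphism, $L^{-1}$ is a coalgebra morphism; from $p\circ L=p$ (the first coordinate computed above) I get $p\circ L^{-1}=p$, so $L^{-1}$ has first coordinate $p=X\ot\epsilon$. I then define $\sigma^{-1}\coloneqq(\epsilon\ot X)\circ L^{-1}$, its second coordinate, and reconstruct $L^{-1}=(p\ot\sigma^{-1})\circ\Delta_{X^2}=(X\ot\sigma^{-1})\circ(\Delta\ot X)$ via Proposition~\ref{coro: morfismos de coalgebra al producto tensorial de n coalgebras}. Postcomposing $L^{-1}\circ L=\ide$ and $L\circ L^{-1}=\ide$ with $\epsilon\ot X$ then yields precisely the two equalities of \eqref{no deg a izq}; the symmetric construction with $\tau^{-1}\coloneqq(X\ot\epsilon)\circ R^{-1}$ yields \eqref{no deg a der}. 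Hence $r$ is non-degenerate, with the $\sigma^{-1},\tau^{-1}$ just produced, and the compositional inverses are as claimed.

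I expect the main obstacle to be the forward-direction bookkeeping: verifying the factorization $L'\circ L=(X\ot[\cdots])\circ(\Delta\ot X)$ requires sliding comultiplications past $\sigma$ and collapsing a triple comultiplication by coassociativity, which is exactly the kind of step that is transparent in graphical calculus but error-prone in symbols. Once that factorization is in place, both implications are short: the forward one is a direct substitution of \eqref{no deg a izq}, and the converse is formal, resting only on the coordinate-map description of coalgebra morphisms into a tensor product and on the fact that the inverse of a coalgebra automorphism is again a coalgebra morphism.
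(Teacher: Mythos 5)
Your proof is correct, and the forward direction coincides with the paper's own argument: you use bifunctoriality and coassociativity to merge the two comultiplications and then substitute the defining identities \eqref{no deg a izq} and \eqref{no deg a der}, exactly as in the paper's diagrams. Where you diverge is in the converse. The paper defines $\sigma^{-1}\coloneqq(\epsilon\ot X)\circ\mathrm{Inv}$ and then recovers $\mathrm{Inv}=(X\ot\sigma^{-1})\circ(\Delta\ot X)$ by invoking that $\mathrm{Inv}$ is \emph{left colinear} (the inverse of a left-comodule isomorphism is again a comodule map, where $X^2$ carries the comodule structure $\Delta\ot X$); this uses only the comodule structure and needs no cocommutativity at that step. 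You instead observe that $(X\ot\sigma)\circ(\Delta\ot X)$ is a coalgebra automorphism of the cocommutative coalgebra $X^2$, so its inverse is again a coalgebra morphism, compute that its first coordinate is $X\ot\epsilon$, and reconstruct it from its coordinate maps via Proposition~\ref{coro: morfismos de coalgebra al producto tensorial de n coalgebras}. The two mechanisms are different but both legitimate, and yours has a concrete payoff: since $\sigma^{-1}$ and $\tau^{-1}$ arise as coordinate maps of coalgebra isomorphisms, you get at once that they are coalgebra morphisms, which is precisely the content (and the proof) of the paper's Corollary~\ref{sigma^-1 y tau^-1 son morfismos de coalgebras}; so your converse absorbs that corollary, at the mild cost of leaning on cocommutativity of $X^2$ and the coordinate machinery where the paper only needs colinearity.
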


\begin{proof} If there exists $\sigma^{-1}$, then
$$
\begin{tikzpicture}[scale=0.395]
\def\counit(#1,#2){\draw (#1,#2) -- (#1,#2-0.93) (#1,#2-1) circle[radius=2pt]}
\def\comult(#1,#2)[#3,#4]{\draw (#1,#2) -- (#1,#2-0.5*#4) arc (90:0:0.5*#3 and 0.5*#4) (#1,#2-0.5*#4) arc (90:180:0.5*#3 and 0.5*#4)}
\def\laction(#1,#2)[#3,#4]{\draw (#1,#2) .. controls (#1,#2-0.555*#4/2) and (#1+0.445*#4/2,#2-1*#4/2) .. (#1+1*#4/2,#2-1*#4/2) -- (#1+2*#4/2+#3*#4/2,#2-1*#4/2) (#1+2*#4/2+#3*#4/2,#2)--(#1+2*#4/2+#3*#4/2,#2-2*#4/2)}
\def\lactiontr(#1,#2)[#3,#4,#5]{\draw (#1,#2) .. controls (#1,#2-0.555*#4/2) and (#1+0.445*#4/2,#2-1*#4/2) .. (#1+1*#4/2,#2-1*#4/2) -- (#1+2*#4/2+#3*#4/2,#2-1*#4/2)  node [inner sep=0pt, minimum size=3pt,shape=isosceles triangle,fill, shape border rotate=#5] {} (#1+2*#4/2+#3*#4/2,#2) --(#1+2*#4/2+#3*#4/2,#2-2*#4/2)}
\begin{scope}[xshift=0cm, yshift=-0.5cm]
\comult(0,0)[1,1]; \lactiontr(0,-2)[1,1,90]; \laction(0.5,-1)[0,1]; \draw (1.5,0) -- (1.5,-1); \comult(-0.5,-1)[1,1]; \draw (-1,-2) -- (-1,-3);
\end{scope}
\begin{scope}[xshift=2.1cm, yshift=-1.5cm]
\node at (0,-0.5){=};
\end{scope}
\begin{scope}[xshift=3.6cm, yshift=0cm]
\comult(-0.5,0)[1,1]; \comult(0,-1)[1,1]; \lactiontr(-0.5,-3)[2,1,90]; \laction(0.5,-2)[0,1]; \draw (1.5,0) -- (1.5,-2);  \draw (-1,-1) -- (-1,-4); \draw (-0.5,-2) -- (-0.5,-3);
\end{scope}
\begin{scope}[xshift=6.6cm, yshift=-1.5cm]
\node at (0,-0.5){$= \ide_{X^2}$};
\end{scope}
\begin{scope}[xshift=9.9cm, yshift=-1.5cm]
\node at (0,-0.5){and};
\end{scope}
\begin{scope}[xshift=13.1cm, yshift=-0.5cm]
\comult(0,0)[1,1]; \laction(0,-2)[1,1]; \lactiontr(0.5,-1)[0,1,90]; \draw (1.5,0) -- (1.5,-1); \comult(-0.5,-1)[1,1]; \draw (-1,-2) -- (-1,-3);
\end{scope}
\begin{scope}[xshift=15.2cm, yshift=-1.5cm]
\node at (0,-0.5){=};
\end{scope}
\begin{scope}[xshift=16.8cm, yshift=0cm]
\comult(-0.5,0)[1,1]; \comult(0,-1)[1,1]; \lactiontr(-0.5,-3)[2,1,90]; \laction(0.5,-2)[0,1]; \draw (1.5,0) -- (1.5,-2);  \draw (-1,-1) -- (-1,-4); \draw (-0.5,-2) -- (-0.5,-3);
\end{scope}
\begin{scope}[xshift=19.95cm, yshift=-1.5cm]
\node at (0,-0.5){$= \ide_{X^2}$.};
\end{scope}
\end{tikzpicture}
$$
Conversely, if $(X\ot \sigma)\circ (\Delta\ot X)$ is invertible with inverse $\mathrm{Inv}$, then
$$
\begin{tikzpicture}[scale=0.395]
\def\counit(#1,#2){\draw (#1,#2) -- (#1,#2-0.93) (#1,#2-1) circle[radius=2pt]}
\def\comult(#1,#2)[#3,#4]{\draw (#1,#2) -- (#1,#2-0.5*#4) arc (90:0:0.5*#3 and 0.5*#4) (#1,#2-0.5*#4) arc (90:180:0.5*#3 and 0.5*#4)}
\def\laction(#1,#2)[#3,#4]{\draw (#1,#2) .. controls (#1,#2-0.555*#4/2) and (#1+0.445*#4/2,#2-1*#4/2) .. (#1+1*#4/2,#2-1*#4/2) -- (#1+2*#4/2+#3*#4/2,#2-1*#4/2) (#1+2*#4/2+#3*#4/2,#2)--(#1+2*#4/2+#3*#4/2,#2-2*#4/2)}
\def\lactiontr(#1,#2)[#3,#4,#5]{\draw (#1,#2) .. controls (#1,#2-0.555*#4/2) and (#1+0.445*#4/2,#2-1*#4/2) .. (#1+1*#4/2,#2-1*#4/2) -- (#1+2*#4/2+#3*#4/2,#2-1*#4/2)  node [inner sep=0pt, minimum size=3pt,shape=isosceles triangle,fill, shape border rotate=#5] {} (#1+2*#4/2+#3*#4/2,#2) --(#1+2*#4/2+#3*#4/2,#2-2*#4/2)}
\def\doublemap(#1,#2)[#3]{\draw (#1+0.5,#2-0.5) node [name=doublemapnode,inner xsep=0pt, inner ysep=0pt, minimum height=10pt, minimum width=23pt,shape=rectangle,draw,rounded corners] {$#3$} (#1,#2) .. controls (#1,#2-0.075) .. (doublemapnode) (#1+1,#2) .. controls (#1+1,#2-0.075).. (doublemapnode) (doublemapnode) .. controls (#1,#2-0.925)..(#1,#2-1) (doublemapnode) .. controls (#1+1,#2-0.925).. (#1+1,#2-1)}
\begin{scope}[xshift=0cm, yshift=0cm]
\laction(0.5,-1)[0,1]; \comult(0,0)[1,1]; \draw (1.5,0) -- (1.5,-2); \draw (1.5,-2) -- (1.5,-2.5); \doublemap(0.5,-2.5)[\scriptstyle \mathrm{Inv}];  \counit(0.5,-3.5); \draw (1.5,-3.5) -- (1.5,-4.5); \draw (-0.5,-1) .. controls (-0.5,-1.5) and (0.5,-2) .. (0.5,-2.5);
\end{scope}
\begin{scope}[xshift=3.6cm, yshift=-1.7cm]
\node at (0,-0.5){$= \epsilon \ot X$};
\end{scope}
\begin{scope}[xshift=6.95cm, yshift=-1.7cm]
\node at (0,-0.5){and};
\end{scope}
\begin{scope}[xshift=9.7cm, yshift=0cm]
\laction(-0.5,-3.5)[2,1]; \comult(0,0)[1,1]; \draw (1.5,0) -- (1.5,-1); \doublemap(0.5,-1)[\scriptstyle \mathrm{Inv}]; \counit(0.5,-2); \draw (-0.5,-1) -- (-0.5,-3.5); \draw (1.5,-2) -- (1.5,-3.5);
\end{scope}
\begin{scope}[xshift=12.3cm, yshift=-1.7cm]
\node at (0,-0.5){=};
\end{scope}
\begin{scope}[xshift=12.85cm, yshift=-0.25cm]
\comult(0.5,-1)[1,1]; \draw (1.5,-1) -- (1.5,-2); \draw (0,-2) -- (0,-3); \draw (1,-2) -- (1,-3); \doublemap(0.5,0)[\scriptstyle \mathrm{Inv}]; \counit(0,-3); \draw (1.5,-2) .. controls (1.5,-2.5) and (2,-2.5) .. (2,-3); \laction(1,-3)[0,1];
\end{scope}
\begin{scope}[xshift=16.65cm, yshift=-1.7cm]
\node at (0,-0.5){$= \epsilon \ot X$,};
\end{scope}
\end{tikzpicture}
$$
since $\mathrm{Inv}$ is left colinear. For $\tau$ the proof is similar.
\end{proof}

\begin{cor}\label{sigma^-1 y tau^-1 son morfismos de coalgebras} If $r$ is non-degenerate, then $\sigma^{-1}$ and $\tau^{-1}$ are the unique maps satisfying conditions~\eqref{no deg a izq} and~\eqref{no deg a der} and they are morphisms of coalgebras.
\end{cor}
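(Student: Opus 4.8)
The plan is to deduce both assertions from Proposition~\ref{no degenerado}, which already exhibits $L:=(X\ot\sigma)\circ(\Delta\ot X)$ and $R:=(\tau\ot X)\circ(X\ot\Delta)$ as isomorphisms with explicit compositional inverses. The key elementary fact I would record first is that $\sigma^{-1}$ is recovered from the map $(X\ot\sigma^{-1})\circ(\Delta\ot X)=L^{-1}$ by postcomposition with $\epsilon\ot X$: using the counit axiom $(\epsilon\ot X)\circ\Delta=\ide$ one checks
\[
(\epsilon\ot X)\circ(X\ot\sigma^{-1})\circ(\Delta\ot X)=\sigma^{-1},
\]
and symmetrically $(X\ot\epsilon)\circ(\tau^{-1}\ot X)\circ(X\ot\Delta)=\tau^{-1}$. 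Thus each of $\sigma^{-1},\tau^{-1}$ is the postcomposition of a counit with the inverse of a fixed isomorphism.

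For uniqueness I would take any map $\sigma'$ satisfying~\eqref{no deg a izq}. Running the "if" direction of the proof of Proposition~\ref{no degenerado} with $\sigma'$ in the role of $\sigma^{-1}$ shows that $(X\ot\sigma')\circ(\Delta\ot X)$ is a two-sided compositional inverse of $L$. Since $L$ is built solely from $\sigma$, $\Delta$ and identities, its inverse is unique, so $(X\ot\sigma')\circ(\Delta\ot X)=(X\ot\sigma^{-1})\circ(\Delta\ot X)=L^{-1}$; postcomposing with $\epsilon\ot X$ and invoking the recovery formula yields $\sigma'=\sigma^{-1}$. The same argument with $R$ settles the uniqueness of $\tau^{-1}$.

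For the coalgebra property I would use the factorization $\sigma^{-1}=(\epsilon\ot X)\circ L^{-1}$. Here $\sigma=(X\ot\epsilon)\circ r$ is a coalgebra morphism by Proposition~\ref{coro: morfismos de coalgebra al producto tensorial de n coalgebras}, since it is a coordinate map of the coalgebra automorphism $r$ of the cocommutative coalgebra $X^2$; consequently $L=(X\ot\sigma)\circ(\Delta\ot X)$ is a composite of coalgebra morphisms and hence a coalgebra isomorphism. The inverse of a coalgebra isomorphism is again a coalgebra morphism, as one sees formally by conjugating the comultiplication and counit compatibilities of $L$ by $L^{-1}$; so $L^{-1}$ is a coalgebra morphism, and since $\epsilon\ot X$ is one as well, $\sigma^{-1}$ is a coalgebra morphism. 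Repeating the argument verbatim with $R=(\tau\ot X)\circ(X\ot\Delta)$ and the factorization $\tau^{-1}=(X\ot\epsilon)\circ R^{-1}$ shows that $\tau^{-1}$ is a coalgebra morphism.

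I expect the only delicate points to be bookkeeping rather than conceptual: verifying the recovery formula through the counit axiom, and explicitly noting the (standard but worth stating) fact that the compositional inverse of a coalgebra isomorphism automatically respects the comultiplication and the counit. Once these are in place, the corollary is a direct consequence of Propositions~\ref{no degenerado} and~\ref{coro: morfismos de coalgebra al producto tensorial de n coalgebras}.
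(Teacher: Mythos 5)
Your proof is correct and follows essentially the same route as the paper: the paper's (one-line) argument is precisely that $\sigma^{-1}$ and $\tau^{-1}$ are coordinate maps of the inverses of the coalgebra isomorphisms $(X\ot\sigma)\circ(\Delta\ot X)$ and $(\tau\ot X)\circ(X\ot\Delta)$, which is exactly your factorization $\sigma^{-1}=(\epsilon\ot X)\circ L^{-1}$, $\tau^{-1}=(X\ot\epsilon)\circ R^{-1}$ combined with Propositions~\ref{no degenerado} and~\ref{coro: morfismos de coalgebra al producto tensorial de n coalgebras}. You merely spell out the details the paper leaves implicit (the counit recovery formula, uniqueness of compositional inverses, and that the inverse of a coalgebra isomorphism is again a coalgebra morphism), all of which are correct.
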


\begin{proof} It follows since $\sigma^{-1}$ and $\tau^{-1}$ are coordinate maps of the inverse maps of the coalgebra isomorphisms $(X \ot \sigma)\circ (\Delta \ot X)$ and $(\tau \ot X)\circ (X\ot \Delta)$, respectively.
\end{proof}

\begin{rem}\label{prop de involutiva y de braided} If $(X,r)$ is involutive, then
$$
\tau\circ r=\epsilon\ot X\quad\text{and}\quad \sigma \circ r =X\ot \epsilon.
$$
On the other hand, from Corollary~\ref{coro: igualdad de morfismos} one obtains immediately that $(X,r)$ is braided if and only if
\begin{align*}
&(\tau \otimes X)\circ (X\otimes r) \circ (r\otimes X) = r\circ (\tau \otimes X) \circ (X\otimes r),\\
&(\sigma \otimes X)\circ (X\otimes r) \circ (r\otimes X) = (X \otimes \tau)\circ (r\otimes X) \circ (X\otimes r)
\shortintertext{and}
&(X\otimes \sigma)\circ (r\otimes X) \circ (X\otimes r) = r\circ (X\otimes \sigma) \circ (r\otimes X),
\end{align*}
and that this occurs if and only if
\begin{align*}
&\tau\circ (\tau\ot X)= \tau\circ (\tau\ot X)\circ (X\ot r),\\
& \sigma \circ (X\ot\sigma)= \sigma\circ (X\ot \sigma)\circ (r\ot X)
\shortintertext{and}
& \tau\circ (X\ot \sigma) \circ (r\ot X) = \sigma\circ (\tau\ot X)\circ (X\ot r) && \text{(linking relation).}
\end{align*}
The last three conditions are geometrically represented by the diagrams
$$
\begin{tikzpicture}[scale=0.395]
\def\mult(#1,#2)[#3,#4]{\draw (#1,#2) arc (180:360:0.5*#3 and 0.5*#4) (#1+0.5*#3, #2-0.5*#4) -- (#1+0.5*#3,#2-#4)}
\def\counit(#1,#2){\draw (#1,#2) -- (#1,#2-0.93) (#1,#2-1) circle[radius=2pt]}
\def\comult(#1,#2)[#3,#4]{\draw (#1,#2) -- (#1,#2-0.5*#4) arc (90:0:0.5*#3 and 0.5*#4) (#1,#2-0.5*#4) arc (90:180:0.5*#3 and 0.5*#4)}
\def\laction(#1,#2)[#3,#4]{\draw (#1,#2) .. controls (#1,#2-0.555*#4/2) and (#1+0.445*#4/2,#2-1*#4/2) .. (#1+1*#4/2,#2-1*#4/2) -- (#1+2*#4/2+#3*#4/2,#2-1*#4/2) (#1+2*#4/2+#3*#4/2,#2)--(#1+2*#4/2+#3*#4/2,#2-2*#4/2)}
\def\map(#1,#2)[#3]{\draw (#1,#2-0.5)  node[name=nodemap,inner sep=0pt,  minimum size=10pt, shape=circle, draw]{$#3$} (#1,#2)-- (nodemap)  (nodemap)-- (#1,#2-1)}
\def\solbraid(#1,#2)[#3]{\draw (#1,#2-0.5)  node[name=nodemap,inner sep=0pt,  minimum size=9pt, shape=circle,draw]{$#3$}
(#1-0.5,#2) .. controls (#1-0.5,#2-0.15) and (#1-0.4,#2-0.2) .. (#1-0.3,#2-0.3) (#1-0.3,#2-0.3) -- (nodemap)
(#1+0.5,#2) .. controls (#1+0.5,#2-0.15) and (#1+0.4,#2-0.2) .. (#1+0.3,#2-0.3) (#1+0.3,#2-0.3) -- (nodemap)
(#1+0.5,#2-1) .. controls (#1+0.5,#2-0.85) and (#1+0.4,#2-0.8) .. (#1+0.3,#2-0.7) (#1+0.3,#2-0.7) -- (nodemap)
(#1-0.5,#2-1) .. controls (#1-0.5,#2-0.85) and (#1-0.4,#2-0.8) .. (#1-0.3,#2-0.7) (#1-0.3,#2-0.7) -- (nodemap)
}
\def\flip(#1,#2)[#3]{\draw (
#1+1*#3,#2) .. controls (#1+1*#3,#2-0.05*#3) and (#1+0.96*#3,#2-0.15*#3).. (#1+0.9*#3,#2-0.2*#3)
(#1+0.1*#3,#2-0.8*#3)--(#1+0.9*#3,#2-0.2*#3)
(#1,#2-1*#3) .. controls (#1,#2-0.95*#3) and (#1+0.04*#3,#2-0.85*#3).. (#1+0.1*#3,#2-0.8*#3)
(#1,#2) .. controls (#1,#2-0.05*#3) and (#1+0.04*#3,#2-0.15*#3).. (#1+0.1*#3,#2-0.2*#3)
(#1+0.1*#3,#2-0.2*#3) -- (#1+0.9*#3,#2-0.8*#3)
(#1+1*#3,#2-1*#3) .. controls (#1+1*#3,#2-0.95*#3) and (#1+0.96*#3,#2-0.85*#3).. (#1+0.9*#3,#2-0.8*#3)
}
\def\raction(#1,#2)[#3,#4]{\draw (#1,#2) -- (#1,#2-2*#4/2)  (#1,#2-1*#4/2)--(#1+1*#4/2+#3*#4/2,#2-1*#4/2) .. controls (#1+1.555*#4/2+#3*#4/2,#2-1*#4/2) and (#1+2*#4/2+#3*#4/2,#2-0.555*#4/2) .. (#1+2*#4/2+#3*#4/2,#2)}
\def\doublemap(#1,#2)[#3]{\draw (#1+0.5,#2-0.5) node [name=doublemapnode,inner xsep=0pt, inner ysep=0pt, minimum height=11pt, minimum width=23pt,shape=rectangle,draw,rounded corners] {$#3$} (#1,#2) .. controls (#1,#2-0.075) .. (doublemapnode) (#1+1,#2) .. controls (#1+1,#2-0.075).. (doublemapnode) (doublemapnode) .. controls (#1,#2-0.925)..(#1,#2-1) (doublemapnode) .. controls (#1+1,#2-0.925).. (#1+1,#2-1)}
\def\doublesinglemap(#1,#2)[#3]{\draw (#1+0.5,#2-0.5) node [name=doublesinglemapnode,inner xsep=0pt, inner ysep=0pt, minimum height=11pt, minimum width=23pt,shape=rectangle,draw,rounded corners] {$#3$} (#1,#2) .. controls (#1,#2-0.075) .. (doublesinglemapnode) (#1+1,#2) .. controls (#1+1,#2-0.075).. (doublesinglemapnode) (doublesinglemapnode)-- (#1+0.5,#2-1)}
\def\ractiontr(#1,#2)[#3,#4,#5]{\draw (#1,#2) -- (#1,#2-2*#4/2)  (#1,#2-1*#4/2) node [inner sep=0pt, minimum size=3pt,shape=isosceles triangle,fill, shape border rotate=#5] {}  --(#1+1*#4/2+#3*#4/2,#2-1*#4/2) .. controls (#1+1.555*#4/2+#3*#4/2,#2-1*#4/2) and (#1+2*#4/2+#3*#4/2,#2-0.555*#4/2) .. (#1+2*#4/2+#3*#4/2,#2)  }
\def\rack(#1,#2)[#3]{\draw (#1,#2-0.5)  node[name=nodemap,inner sep=0pt,  minimum size=7.5pt, shape=circle,draw]{$#3$} (#1-1,#2) .. controls (#1-1,#2-0.5) and (#1-0.5,#2-0.5) .. (nodemap) (#1,#2)-- (nodemap)  (nodemap)-- (#1,#2-1)}
\def\rackmenoslarge(#1,#2)[#3]{\draw (#1,#2-0.5)  node[name=nodemap,inner sep=0pt,  minimum size=7.5pt, shape=circle,draw]{$#3$} (#1-1.5,#2+0.5) .. controls (#1-1.5,#2-0.5) and (#1-0.5,#2-0.5) .. (nodemap) (#1,#2)-- (nodemap)  (nodemap)-- (#1,#2-1)}
\def\racklarge(#1,#2)[#3]{\draw (#1,#2-0.5)  node[name=nodemap,inner sep=0pt,  minimum size=7.5pt, shape=circle,draw]{$#3$} (#1-2,#2+0.5) .. controls (#1-2,#2-0.5) and (#1-0.5,#2-0.5) .. (nodemap) (#1,#2)-- (nodemap)  (nodemap)-- (#1,#2-1)}
\def\rackmaslarge(#1,#2)[#3]{\draw (#1,#2-0.5)  node[name=nodemap,inner sep=0pt,  minimum size=7.5pt, shape=circle,draw]{$#3$} (#1-2.5,#2+0.5) .. controls (#1-2.5,#2-0.5) and (#1-0.5,#2-0.5) .. (nodemap) (#1,#2)-- (nodemap)  (nodemap)-- (#1,#2-1)}
\def\rackextralarge(#1,#2)[#3]{\draw (#1,#2-0.75)  node[name=nodemap,inner sep=0pt,  minimum size=7.5pt, shape=circle, draw]{$#3$} (#1-3,#2+1) .. controls (#1-3,#2-0.75) and (#1-0.5,#2-0.75) .. (nodemap) (#1,#2)-- (nodemap)  (nodemap)-- (#1,#2-1.5)}
\def\lactionnamed(#1,#2)[#3,#4][#5]{\draw (#1 + 0.5*#3 + 0.5 + 0.5*#4, #2- 0.5*#3) node[name=nodemap,inner sep=0pt,  minimum size=8pt, shape=circle,draw]{$#5$} (#1,#2)  arc (180:270:0.5*#3) (#1 + 0.5*#3,#2- 0.5*#3) --  (nodemap) (#1 + 0.5*#3 + 0.5 + 0.5*#4, #2) --  (nodemap) (nodemap) -- (#1 + 0.5*#3 + 0.5 + 0.5*#4, #2-#3)}
\def\ractionnamed(#1,#2)[#3,#4][#5]{\draw  (#1 - 0.5*#3- 0.5 - 0.5*#4, #2- 0.5*#3)  node[name=nodemap,inner sep=0pt,  minimum size=8pt, shape=circle,draw]{$#5$} (#1 - 0.5*#3, #2- 0.5*#3)  arc (270:360:0.5*#3) (#1 - 0.5*#3, #2- 0.5*#3) -- (nodemap)(#1 - 0.5*#3- 0.5 - 0.5*#4, #2)-- (nodemap) (nodemap) -- (#1 - 0.5*#3- 0.5 - 0.5*#4, #2-#3)}
\def\multsubzero(#1,#2)[#3,#4]{\draw (#1+0.5*#3, #2-0.5*#4) node [name=nodemap,inner sep=0pt, minimum size=3pt,shape=circle,fill=white, draw]{} (#1,#2) arc (180:360:0.5*#3 and 0.5*#4) (#1+0.5*#3, #2-0.5*#4) -- (#1+0.5*#3,#2-#4)}
\begin{scope}[xshift=0cm, yshift=-0.5cm]
\draw(0,0) -- (0,-2); \raction(0,0)[0,1]; \draw(2,0) -- (2,-1); \raction(0,-1)[1,1.335];
\end{scope}
\begin{scope}[xshift=2.55cm, yshift=-1.2cm]
\node at (0,-0.5){=};
\end{scope}
\begin{scope}[xshift=3.05cm, yshift=-0cm]
\draw(0,0) -- (0,-3); \raction(0,-1)[0,1]; \draw(2,-1) -- (2,-2); \raction(0,-2)[1,1.335];  \solbraid(1.5,0)[\scriptstyle r];
\end{scope}
\begin{scope}[xshift=5.4cm, yshift=-1.2cm]
\node at (0,-0.5){,};
\end{scope}
\begin{scope}[xshift=7.5cm, yshift=-0.5cm]
\draw(2,0) -- (2,-2); \laction(1,0)[0,1]; \draw(0,0) -- (0,-1); \laction(0,-1)[1,1.335];
\end{scope}
\begin{scope}[xshift=10.05cm, yshift=-1.2cm]
\node at (0,-0.5){=};
\end{scope}
\begin{scope}[xshift=10.55cm, yshift=-0cm]
\draw(2,0) -- (2,-3); \laction(1,-1)[0,1]; \draw(0,-1) -- (0,-2); \laction(0,-2)[1,1.335];  \solbraid(0.5,0)[\scriptstyle r];
\end{scope}
\begin{scope}[xshift=15.6cm, yshift=-1.2cm]
\node at (0,-0.5){and};
\end{scope}
\begin{scope}[xshift=18.6cm, yshift=-0cm]
 \solbraid(0.5,0)[\scriptstyle r];  \draw(0,-1) -- (0,-2); \draw(2,0) -- (2,-1); \laction(1,-1)[0,1]; \raction(0,-2)[1,1.335];
\end{scope}
\begin{scope}[xshift=21.1cm, yshift=-1.2cm]
\node at (0,-0.5){=};
\end{scope}
\begin{scope}[xshift=21.6cm, yshift=-0cm]
 \solbraid(1.5,0)[\scriptstyle r];  \draw(2,-1) -- (2,-2); \draw(0,0) -- (0,-1); \raction(0,-1)[0,1]; \laction(0,-2)[1,1.335];
\end{scope}
\begin{scope}[xshift=24cm, yshift=-1.2cm]
\node at (0,-0.5){.};
\end{scope}
\end{tikzpicture}
$$

\end{rem}

\begin{defn}\label{morf de function-like} Let $r'$ be a coalgebra automorphism of ${X'}^2$. A coalgebra map $\phi\colon X \to X'$ is a {\em homomorphism} from $r$ to $r'$ if $r'\circ (\phi\ot \phi)= (\phi\ot \phi) \circ r$.
\end{defn}

Let $n>1$. Recall that the braid group $B_n$ is the group generated by elements $b_1,\dots b_{n-1}$ subject to the relations
$$
b_ib_j=b_jb_i\quad \text{if $|i-j|>1$}\qquad\text{and}\qquad b_ib_{i+1}b_i=b_{i+1}b_ib_{i+1}\quad\text{for $i<n-1$}.
$$
Recall also that the symmetric group $S_n$ is the quotient of $B_n$ by the relations~$b_i^2=1$.

\begin{rem} The pair $(X,r)$ is a braided set if and only if the assignment $b_i \mapsto r_{i,i+1}$ extends to an action of $B_n$ on $X^n$, and it is an involutive braided set if and only if the assignment $b_i \mapsto r_{i,i+1}$ extends to an action of $S_n$ on $X^n$.
\end{rem}

\begin{defn}\label{Rmatriz} The $\Ree$-matrix of a map $r$ is the map $R\coloneqq c\circ r$.
\end{defn}

\begin{pro}\label{quantum YB} The pair $(X,r)$ is a braided set if and only if $R$ satisfies the quantum Yang-Baxter equation
\begin{equation}\label{QYB}
R_{12}\circ R_{13}\circ R_{23} = R_{23}\circ R_{13}\circ R_{12},
\end{equation}
and $(X,r)$ is an involutive braided set if, in addition to \eqref{QYB}, $R$ satisfies the {\em unitary condition} $R_{21}\circ R = \ide_{X^2}$, where $R_{21}\coloneqq c\circ R \circ c = r\circ c$.
\end{pro}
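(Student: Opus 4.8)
The plan is to transport the classical equivalence between the braid equation and the quantum Yang--Baxter equation to the symmetric monoidal setting, replacing the flip by the symmetry $c$. Nothing about the coalgebra structure or cocommutativity of $X$ is needed: the argument is a formal manipulation of morphisms governed by naturality of~$c$. Since $R=c\circ r$ and $c\circ c=\ide_{X^2}$, we have $r=c\circ R$, whence $r_{12}=c_{12}\circ R_{12}$ and $r_{23}=c_{23}\circ R_{23}$, where $c_{12}\coloneqq c\ot X$ and $c_{23}\coloneqq X\ot c$. The main tool is the action of the symmetric group on $X^3$ provided by the symmetry: $c_{12}$ and $c_{23}$ are involutions and satisfy the hexagon identity $c_{12}\circ c_{23}\circ c_{12}=c_{23}\circ c_{12}\circ c_{23}$, whose common value is the symmetry $c_{13}$ interchanging the two outer tensor factors. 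By naturality of~$c$, conjugation by $c_{12}$ and $c_{23}$ permutes the positions on which $R$ acts; in particular $R_{13}=c_{12}\circ R_{23}\circ c_{12}=c_{23}\circ R_{12}\circ c_{23}$, and this yields the rewriting rules
\[
R_{12}\circ c_{23}=c_{23}\circ R_{13},\quad R_{23}\circ c_{12}=c_{12}\circ R_{13},\quad R_{13}\circ c_{12}=c_{12}\circ R_{23},\quad R_{13}\circ c_{23}=c_{23}\circ R_{12}.
\]

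First I would substitute $r_{12}=c_{12}\circ R_{12}$ and $r_{23}=c_{23}\circ R_{23}$ into each side of the braid equation~\eqref{ec de trenzas} and push every occurrence of the symmetry to the left using the four rules above. A direct rewriting gives
\[
r_{12}\circ r_{23}\circ r_{12}=c_{13}\circ R_{23}\circ R_{13}\circ R_{12}\quad\text{and}\quad r_{23}\circ r_{12}\circ r_{23}=c_{13}\circ R_{12}\circ R_{13}\circ R_{23}.
\]
Because $c_{13}$ is an isomorphism, it may be cancelled, so \eqref{ec de trenzas} holds if and only if $R_{23}\circ R_{13}\circ R_{12}=R_{12}\circ R_{13}\circ R_{23}$, which is exactly~\eqref{QYB}. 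This settles the first assertion.

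For the involutive case I would argue that the unitary condition is, given~\eqref{QYB}, nothing but involutivity in disguise. Indeed, from $R=c\circ r$ and $c\circ c=\ide_{X^2}$ one gets $R_{21}=c\circ R\circ c=r\circ c$, so
\[
R_{21}\circ R=(r\circ c)\circ(c\circ r)=r\circ r=r^2 .
\]
Hence $R_{21}\circ R=\ide_{X^2}$ is literally the condition $r^2=\ide$. Combining this with the first part, $(X,r)$ is an involutive braided set if and only if $R$ satisfies~\eqref{QYB} together with the unitary condition.

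I expect the only real difficulty to be organizational rather than conceptual: fixing an unambiguous convention for $R_{13}$ (equivalently, for $c_{13}$) and checking that the four conjugation relations and the hexagon identity $c_{12}\circ c_{23}\circ c_{12}=c_{23}\circ c_{12}\circ c_{23}=c_{13}$ indeed hold. All of these are instances of naturality of the symmetry and follow from Mac Lane's coherence theorem, so once the convention is pinned down the computation reduces to the bookkeeping displayed above.
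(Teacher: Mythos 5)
Your proposal is correct: the conjugation identities you list are exactly the naturality instances of $c$ needed, your two displayed rewritings of the braid words are valid, and $R_{21}\circ R=r^2$ disposes of the involutive case. The paper's own proof is simply ``This is clear,'' and your computation is precisely the standard argument being alluded to, so you have taken the same route and merely supplied the details.
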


\begin{proof} This is clear.
\end{proof}

\begin{rem}\label{formula para R} An easy calculation shows that $R= (\tau\ot \sigma) \circ \Delta_{X^2}$.
\end{rem}

\begin{pro} If $(X,r)$ is a non-degenerate pair, then
$$
(\sigma^{-1}\ot \tau^{-1}) \circ (X\ot r \ot X) \circ(\Delta\ot \Delta)=c.
$$
\end{pro}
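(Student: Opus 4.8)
The plan is to show that both sides of the claimed identity are coalgebra morphisms $X^2\to X^2$ and then to invoke Corollary~\ref{coro: igualdad de morfismos}, which reduces the equality to checking that the respective coordinate maps coincide. Write $E \coloneqq (\sigma^{-1}\otimes\tau^{-1})\circ(X\otimes r\otimes X)\circ(\Delta\otimes\Delta)$ for the left-hand side. Each factor of $E$ is a coalgebra morphism: $\Delta$ is one because $X$ is cocommutative, $r$ is a coalgebra automorphism by hypothesis, and $\sigma^{-1}$, $\tau^{-1}$ are coalgebra morphisms by Corollary~\ref{sigma^-1 y tau^-1 son morfismos de coalgebras}; hence so is their composite $E$. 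The symmetry $c$ is of course a coalgebra morphism, with coordinate maps $c_1=\epsilon\otimes X$ and $c_2=X\otimes\epsilon$. Thus it suffices to prove $E_1=\epsilon\otimes X$ and $E_2=X\otimes\epsilon$.

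For the first coordinate map I would compute $E_1=(X\otimes\epsilon)\circ E$. Since $\tau^{-1}$ is a coalgebra morphism, $\epsilon\circ\tau^{-1}=\epsilon\otimes\epsilon$, so that $(X\otimes\epsilon)\circ(\sigma^{-1}\otimes\tau^{-1})=\sigma^{-1}\otimes\epsilon\otimes\epsilon$. The two counits now land on the second output leg of $r$ and on the right-hand tensorand produced by the second $\Delta$. Applying $(X\otimes\epsilon)\circ r=\sigma$ collapses the copy of $r$ to $\sigma$, while the counit axiom $(X\otimes\epsilon)\circ\Delta=\ide$ removes the residual comultiplication on the right. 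This yields $E_1=\sigma^{-1}\circ(X\otimes\sigma)\circ(\Delta\otimes X)$, which equals $\epsilon\otimes X$ by the left non-degeneracy relation~\eqref{no deg a izq}.

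The computation of $E_2=(\epsilon\otimes X)\circ E$ is entirely symmetric. Here $\epsilon\circ\sigma^{-1}=\epsilon\otimes\epsilon$ (again because $\sigma^{-1}$ is a coalgebra morphism), so the two counits land on the first output leg of $r$ and on the left tensorand of the first $\Delta$; using $(\epsilon\otimes X)\circ r=\tau$ together with the counit axiom I would obtain $E_2=\tau^{-1}\circ(\tau\otimes X)\circ(X\otimes\Delta)$, which equals $X\otimes\epsilon$ by the right non-degeneracy relation~\eqref{no deg a der}. Since $E_1=c_1$ and $E_2=c_2$, Corollary~\ref{coro: igualdad de morfismos} gives $E=c$.

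The only delicate point --- and the step I expect to absorb most of the work in a graphical proof --- is the careful bookkeeping of which counit is applied to which leg of $r$ and of the comultiplications: one must use repeatedly that $\sigma^{-1}$, $\tau^{-1}$ and $r$ preserve the counit, and that the legs $(\Delta\otimes X)$, $(X\otimes\Delta)$ collapse under a counit, in order to recognize the two resulting expressions as \emph{exactly} the left-hand sides of~\eqref{no deg a izq} and~\eqref{no deg a der}. Everything else is a formal consequence of Corollary~\ref{coro: igualdad de morfismos} together with the non-degeneracy relations.
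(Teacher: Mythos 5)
Your proof is correct, but it follows a genuinely different route from the paper's. The paper proves the identity by one direct diagrammatic computation: it expands $r$ as $(\sigma\ot\tau)\circ\Delta_{X^2}$ (using that $r$ is a coalgebra morphism), regroups the four comultiplications by coassociativity and cocommutativity, and then applies both non-degeneracy relations \eqref{no deg a izq} and \eqref{no deg a der} inside a single chain of diagrams to arrive at $c$. You instead observe that both sides are coalgebra morphisms $X^2\to X^2$ (for the left-hand side this uses Corollary~\ref{sigma^-1 y tau^-1 son morfismos de coalgebras}, together with cocommutativity of $X$, which is what makes $\Delta$ a coalgebra map), and then invoke Corollary~\ref{coro: igualdad de morfismos} to reduce the identity to the equality of coordinate maps; each coordinate computation is a short counit chase --- using $\epsilon\circ\sigma^{-1}=\epsilon\circ\tau^{-1}=\epsilon\ot\epsilon$, $(X\ot\epsilon)\circ r=\sigma$, $(\epsilon\ot X)\circ r=\tau$ and the counit axiom --- ending in exactly one of \eqref{no deg a izq} or \eqref{no deg a der}, and your identification $c_1=\epsilon\ot X$, $c_2=X\ot\epsilon$ is right. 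Your reduction buys modularity and shorter computations: there is no need to expand $r$ or to shuffle comultiplications, since applying a counit to one tensor factor kills half of the diagram. The paper's computation, in exchange, is self-contained: it never needs to verify that the two sides are coalgebra morphisms, nor to appeal to the categorical-product property of cocommutative coalgebras, and cocommutativity enters only as an explicit rearrangement step. Both arguments rest on the same essential input, namely the non-degeneracy relations.
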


\begin{proof} We have
$$
\begin{tikzpicture}[scale=0.395]
\def\counit(#1,#2){\draw (#1,#2) -- (#1,#2-0.93) (#1,#2-1) circle[radius=2pt]}
\def\comult(#1,#2)[#3,#4]{\draw (#1,#2) -- (#1,#2-0.5*#4) arc (90:0:0.5*#3 and 0.5*#4) (#1,#2-0.5*#4) arc (90:180:0.5*#3 and 0.5*#4)}
\def\laction(#1,#2)[#3,#4]{\draw (#1,#2) .. controls (#1,#2-0.555*#4/2) and (#1+0.445*#4/2,#2-1*#4/2) .. (#1+1*#4/2,#2-1*#4/2) -- (#1+2*#4/2+#3*#4/2,#2-1*#4/2) (#1+2*#4/2+#3*#4/2,#2)--(#1+2*#4/2+#3*#4/2,#2-2*#4/2)}
\def\lactiontr(#1,#2)[#3,#4,#5]{\draw (#1,#2) .. controls (#1,#2-0.555*#4/2) and (#1+0.445*#4/2,#2-1*#4/2) .. (#1+1*#4/2,#2-1*#4/2) -- (#1+2*#4/2+#3*#4/2,#2-1*#4/2)  node [inner sep=0pt, minimum size=3pt,shape=isosceles triangle,fill, shape border rotate=#5] {} (#1+2*#4/2+#3*#4/2,#2) --(#1+2*#4/2+#3*#4/2,#2-2*#4/2)}
\def\doublemap(#1,#2)[#3]{\draw (#1+0.5,#2-0.5) node [name=doublemapnode,inner xsep=0pt, inner ysep=0pt, minimum height=11pt, minimum width=23pt,shape=rectangle,draw,rounded corners] {$#3$} (#1,#2) .. controls (#1,#2-0.075) .. (doublemapnode) (#1+1,#2) .. controls (#1+1,#2-0.075).. (doublemapnode) (doublemapnode) .. controls (#1,#2-0.925)..(#1,#2-1) (doublemapnode) .. controls (#1+1,#2-0.925).. (#1+1,#2-1)}
\def\solbraid(#1,#2)[#3]{\draw (#1,#2-0.5)  node[name=nodemap,inner sep=0pt,  minimum size=9pt, shape=circle,draw]{$#3$}
(#1-0.5,#2) .. controls (#1-0.5,#2-0.15) and (#1-0.4,#2-0.2) .. (#1-0.3,#2-0.3) (#1-0.3,#2-0.3) -- (nodemap)
(#1+0.5,#2) .. controls (#1+0.5,#2-0.15) and (#1+0.4,#2-0.2) .. (#1+0.3,#2-0.3) (#1+0.3,#2-0.3) -- (nodemap)
(#1+0.5,#2-1) .. controls (#1+0.5,#2-0.85) and (#1+0.4,#2-0.8) .. (#1+0.3,#2-0.7) (#1+0.3,#2-0.7) -- (nodemap)
(#1-0.5,#2-1) .. controls (#1-0.5,#2-0.85) and (#1-0.4,#2-0.8) .. (#1-0.3,#2-0.7) (#1-0.3,#2-0.7) -- (nodemap)
}
\def\ractiontr(#1,#2)[#3,#4,#5]{\draw (#1,#2) -- (#1,#2-2*#4/2)  (#1,#2-1*#4/2) node [inner sep=0pt, minimum size=3pt,shape=isosceles triangle,fill, shape border rotate=#5] {}  --(#1+1*#4/2+#3*#4/2,#2-1*#4/2) .. controls (#1+1.555*#4/2+#3*#4/2,#2-1*#4/2) and (#1+2*#4/2+#3*#4/2,#2-0.555*#4/2) .. (#1+2*#4/2+#3*#4/2,#2)  }
\def\flip(#1,#2)[#3]{\draw (
#1+1*#3,#2) .. controls (#1+1*#3,#2-0.05*#3) and (#1+0.96*#3,#2-0.15*#3).. (#1+0.9*#3,#2-0.2*#3)
(#1+0.1*#3,#2-0.8*#3)--(#1+0.9*#3,#2-0.2*#3)
(#1,#2-1*#3) .. controls (#1,#2-0.95*#3) and (#1+0.04*#3,#2-0.85*#3).. (#1+0.1*#3,#2-0.8*#3)
(#1,#2) .. controls (#1,#2-0.05*#3) and (#1+0.04*#3,#2-0.15*#3).. (#1+0.1*#3,#2-0.2*#3)
(#1+0.1*#3,#2-0.2*#3) -- (#1+0.9*#3,#2-0.8*#3)
(#1+1*#3,#2-1*#3) .. controls (#1+1*#3,#2-0.95*#3) and (#1+0.96*#3,#2-0.85*#3).. (#1+0.9*#3,#2-0.8*#3)
}
\def\raction(#1,#2)[#3,#4]{\draw (#1,#2) -- (#1,#2-2*#4/2)  (#1,#2-1*#4/2)--(#1+1*#4/2+#3*#4/2,#2-1*#4/2) .. controls (#1+1.555*#4/2+#3*#4/2,#2-1*#4/2) and (#1+2*#4/2+#3*#4/2,#2-0.555*#4/2) .. (#1+2*#4/2+#3*#4/2,#2)}
\begin{scope}[xshift=0cm, yshift=-1.3cm]
\comult(0,0)[1,1]; \comult(2,0)[1,1]; \draw (-0.5,-1) -- (-0.5,-2); \solbraid(1,-1)[\scriptstyle r]; \draw (2.5,-1) -- (2.5,-2); \lactiontr(-0.5,-2)[0,1,90]; \ractiontr(1.5,-2)[0,1,90];
\end{scope}
\begin{scope}[xshift=3cm, yshift=-2.3cm]
\node at (0,-0.5){=};
\end{scope}
\begin{scope}[xshift=4.3cm, yshift=0cm]
\comult(0,0)[1.5,1.5]; \comult(0.75,-1.5)[1,1]; \comult(3.5,0)[1.5,1.5]; \comult(2.75,-1.5)[1,1]; \flip(1.25,-2.5)[1]; \draw (-0.75,-1.5) -- (-0.75,-4.5); \draw (0.25,-2.5) -- (0.25,-3.5); \draw (3.25,-2.5) -- (3.25,-3.5); \draw (4.25,-1.5) -- (4.25,-4.5); \laction(0.25,-3.5)[0,1]; \raction(2.25,-3.5)[0,1];  \lactiontr(-0.75,-4.5)[2,1,90]; \ractiontr(2.25,-4.5)[2,1,90];
\end{scope}
\begin{scope}[xshift=9cm, yshift=-2.3cm]
\node at (0,-0.5){=};
\end{scope}
\begin{scope}[xshift=10.8cm, yshift=0cm]
\comult(0,0)[1.5,1.5]; \comult(-0.75,-1.5)[1,1]; \comult(2.5,0)[1.5,1.5]; \comult(3.25,-1.5)[1,1]; \flip(0.75,-2.5)[1]; \draw (-1.25,-2.5) -- (-1.25,-4.5); \draw (-0.25,-2.5) -- (-0.25,-3.5); \draw (2.75,-2.5) -- (2.75,-3.5); \draw (3.75,-2.5) -- (3.75,-4.5); \laction(-0.25,-3.5)[0,1]; \raction(1.75,-3.5)[0,1];  \lactiontr(-1.25,-4.5)[2,1,90]; \ractiontr(1.75,-4.5)[2,1,90]; \draw (1.75,-1.5) -- (1.75,-2.5); \draw (0.75,-1.5) -- (0.75,-2.5);
\end{scope}
\begin{scope}[xshift=15.4cm, yshift=-2.3cm]
\node at (0,-0.5){$= c$,};
\end{scope}
\end{tikzpicture}
$$
as desired.
\end{proof}

\begin{defn}\label{transposicones en variables} Assume that $(X,r)$ is a non-degenerate pair. The {\em transpositions of $R$ and $R_{21}$ in the first and second variables} are the maps~$R^{t_1}$, $R^{t_2}$, $R_{21}^{t_1}$ and $R_{21}^{t_2}$,   defined by requiring that
\begin{align*}
& R^{t_1}\circ (\tau \ot X) \circ (X\ot\Delta) = (X\ot \sigma)\circ (\Delta \ot X), \\
& R^{t_2}\circ (X\ot \sigma) \circ (\Delta \ot X) = (\tau \ot X)\circ (X\ot \Delta), \\
& R^{t_1}_{21}\circ (\sigma \ot X)\circ (X\ot c)\circ (\Delta\ot X)= (X\ot\tau) \circ (c\ot X)\circ (X\ot \Delta)
\shortintertext{and}
& R^{t_2}_{21}\circ (X\ot\tau) \circ (c\ot X)\circ (X\ot \Delta) = (\sigma \ot X)\circ (X\ot c)\circ (\Delta\ot X).
\end{align*}
\end{defn}

\begin{exa} Let $\mathscr{C}$ be the category of sets and functions. Let $(X,r)$ be a non-degenerate set-theoretical solution. Assume that $r(x,y)=(\sigma_x(y),\tau_y(x))$. Then $R^{t_1}(\tau_y(x),y)=(x,\sigma_x(y))$ and $R^{t_2}(x,\sigma_x(y))=(\tau_y(x),y)$. These maps were considered in~\cite[Proposition 1.3]{MR1722951} and in~\cite[Lemma 7]{MR1769723}.
\end{exa}

\begin{rem}\label{Rt2=Rt1a la-1} Note that
$$
R^{t_1}\circ R^{t_2} = R^{t_2}\circ R^{t_1} = R^{t_1}_{21}\circ R^{t_2}_{21} = R^{t_2}_{21}\circ R^{t_1}_{21} = \ide_{X^2}.
$$
Thus, $R^{t_1}$, $R^{t_2}$, $R^{t_1}_{21}$ and $R^{t_2}_{21}$ are isomorphisms.
\end{rem}

\begin{pro} Let $r$ be as in Definition~\ref{transposicones en variables}. If $(X,r)$ is an involutive pair, then $R^{t_1}\circ R^{t_1}_{21}= R^{t_2}\circ R^{t_2}_{21}=\ide_{X^2}$.
\end{pro}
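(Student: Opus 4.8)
The plan is to reduce the whole statement to two identities that encode the involutivity of $r$ at the level of the coordinate maps, and to dispose of the ``$21$''--transposes by pure naturality. Throughout write
$A\coloneqq (\tau\ot X)\circ(X\ot\Delta)$ and $B\coloneqq (X\ot\sigma)\circ(\Delta\ot X)$, which by Proposition~\ref{no degenerado} are isomorphisms with inverses $(\tau^{-1}\ot X)\circ(X\ot\Delta)$ and $(X\ot\sigma^{-1})\circ(\Delta\ot X)$. By Definition~\ref{transposicones en variables} we then have $R^{t_1}=B\circ A^{-1}$ and $R^{t_2}=A\circ B^{-1}$, and by Remark~\ref{Rt2=Rt1a la-1} these are mutually inverse. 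Since all four transposes are isomorphisms, it suffices to prove the first equality $R^{t_1}\circ R^{t_1}_{21}=\ide_{X^2}$: it forces $R^{t_1}_{21}=(R^{t_1})^{-1}=R^{t_2}$, whence $R^{t_2}_{21}=(R^{t_1}_{21})^{-1}=(R^{t_2})^{-1}=R^{t_1}$ and therefore $R^{t_2}\circ R^{t_2}_{21}=R^{t_2}\circ R^{t_1}=\ide_{X^2}$.

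The core of the argument is the pair of involutive identities
\begin{equation*}
c\circ A = A\circ r \qquad\text{and}\qquad c\circ B = B\circ r .
\end{equation*}
To obtain the second one I would insert Remark~\ref{algunas formulas}(3), namely $(\Delta\ot X)\circ r=(\sigma\ot r)\circ\Delta_{X^2}$, into $B\circ r=(X\ot\sigma)\circ(\Delta\ot X)\circ r$, getting $B\circ r=(\sigma\ot(\sigma\circ r))\circ\Delta_{X^2}$; the involutivity relation $\sigma\circ r=X\ot\epsilon$ from Remark~\ref{prop de involutiva y de braided} then collapses this to $(\sigma\ot X\ot\epsilon)\circ\Delta_{X^2}$, which is exactly $c\circ B$. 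The first identity follows symmetrically from Remark~\ref{algunas formulas}(1), $(X\ot\Delta)\circ r=(r\ot\tau)\circ\Delta_{X^2}$, together with $\tau\circ r=\epsilon\ot X$, giving $A\circ r=(\epsilon\ot X\ot\tau)\circ\Delta_{X^2}=c\circ A$. Because $A$ and $B$ are invertible these read as $A^{-1}\circ c\circ A=r=B^{-1}\circ c\circ B$, which is precisely the categorical shadow of the two scalar relations $\sigma_{\sigma_x(y)}(\tau_y(x))=x$ and $\tau_{\tau_y(x)}(\sigma_x(y))=y$.

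Next I would handle the ``$21$''--transposes by naturality of the symmetry combined with cocommutativity of $X$. Writing $A'\coloneqq(\sigma\ot X)\circ(X\ot c)\circ(\Delta\ot X)$ and $B'\coloneqq(X\ot\tau)\circ(c\ot X)\circ(X\ot\Delta)$ for the maps appearing in Definition~\ref{transposicones en variables}, the hexagon axiom $c_{X,X^2}=(X\ot c)\circ(c\ot X)$ together with $c\circ\Delta=\Delta$ yields $c_{X,X^2}\circ(\Delta\ot X)=(X\ot c)\circ(\Delta\ot X)$; feeding this into the naturality square of $c$ relative to $\sigma$ gives $A'=c\circ B$, and the analogous computation with $c_{X^2,X}=(c\ot X)\circ(X\ot c)$ relative to $\tau$ gives $B'=c\circ A$. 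Note that $A'$ is an isomorphism, being the composite of isomorphisms $c$ and $B$.

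Finally I would assemble the pieces. The defining relation $R^{t_1}_{21}\circ A'=B'$ and the invertibility of $A'$ reduce the claim to checking $R^{t_1}\circ R^{t_1}_{21}\circ A'=A'$, and this is a direct chain:
\begin{align*}
R^{t_1}\circ R^{t_1}_{21}\circ A' &= R^{t_1}\circ B' = R^{t_1}\circ c\circ A = R^{t_1}\circ A\circ r\\
&= B\circ r = c\circ B = A',
\end{align*}
where the steps use in turn $B'=c\circ A$, the involutive identity $c\circ A=A\circ r$, the defining relation $R^{t_1}\circ A=B$, the involutive identity $B\circ r=c\circ B$, and $A'=c\circ B$. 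Cancelling the isomorphism $A'$ gives $R^{t_1}\circ R^{t_1}_{21}=\ide_{X^2}$, and the second equality follows as explained in the first paragraph. I expect the genuine obstacle to lie in the two involutive identities $c\circ A=A\circ r$ and $c\circ B=B\circ r$: these are the only place where $r^2=\ide$ is actually used, and establishing them rigorously in $\mathscr{C}$ (rather than in the category of sets) requires careful bookkeeping of the compatibilities of Remark~\ref{algunas formulas} with $\Delta_{X^2}$ and of the counit collapses, best carried out in the graphical calculus. The naturality identities $A'=c\circ B$ and $B'=c\circ A$ are routine but must be stated with the cocommutativity step $c\circ\Delta=\Delta$ made explicit, rather than dismissed as evident.
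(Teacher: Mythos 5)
Your proof is correct: the defining relations give $R^{t_1}\circ A=B$ and $R^{t_1}_{21}\circ A'=B'$, the conjugation identities $A\circ r=c\circ A$ and $B\circ r=c\circ B$ follow from Remark~\ref{algunas formulas}(1),(3) together with $\sigma\circ r=X\ot\epsilon$ and $\tau\circ r=\epsilon\ot X$ of Remark~\ref{prop de involutiva y de braided}, the identities $A'=c\circ B$ and $B'=c\circ A$ follow from naturality of $c$ and cocommutativity, and your six-step chain then closes correctly; the reduction of the second equality to the first via Remark~\ref{Rt2=Rt1a la-1} is also sound. Your organization, however, is genuinely different from the paper's. The paper goes the other way around: it first reduces, via Remark~\ref{Rt2=Rt1a la-1}, to proving the identifications $R^{t_1}_{21}=R^{t_2}$ and $R^{t_2}_{21}=R^{t_1}$, and then establishes the first one by two graphical chains anchored on the expansion $\ide=(\sigma\ot\tau)\circ(r\ot r)\circ\Delta_{X^2}$ and Remark~\ref{algunas formulas}(2),(4), ending in the reduction $(X\ot\tau)\circ(\tau\ot r)\circ\Delta_{X^2}=(\tau\ot X)\circ(X\ot\Delta)$ --- which, once unwound through Remark~\ref{algunas formulas}(2), is exactly your identity $B'\circ r=A$, i.e. $c\circ A=A\circ r$. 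So the computational core is the same fact, but you prove the composite identity $R^{t_1}\circ R^{t_1}_{21}=\ide_{X^2}$ directly and obtain $R^{t_1}_{21}=R^{t_2}$, $R^{t_2}_{21}=R^{t_1}$ as corollaries, whereas the paper proves those identifications first and gets the composite identity for free. What your packaging buys is brevity and a clean separation of concerns: naming $A$, $B$, $A'$, $B'$ and isolating the two involutivity identities $c\circ A=A\circ r$, $c\circ B=B\circ r$ replaces the paper's diagram chains by a one-line composition, with cocommutativity quarantined inside $A'=c\circ B$, $B'=c\circ A$. What the paper's order buys is that the intermediate fact $R^{t_1}_{21}=R^{t_2}$, $R^{t_2}_{21}=R^{t_1}$ --- arguably the real content, and what is actually used later --- is stated and proved up front rather than extracted afterwards; the two arguments are otherwise of equal strength and use the same hypotheses.
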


\begin{proof} By Remark~\ref{Rt2=Rt1a la-1} it suffices to prove that $R^{t_1}_{21}=R^{t_2}$ and $R^{t_2}_{21}=R^{t_1}$. We only treat with the first equality, because the second one is similar. Since
$$
\ide = r\circ r = (\sigma\ot\tau)\circ \Delta_{X^2}\circ r = (\sigma\ot\tau)\circ (r\ot r) \circ \Delta_{X^2},
$$
we have
$$
\begin{tikzpicture}[scale=0.395]
\def\counit(#1,#2){\draw (#1,#2) -- (#1,#2-0.93) (#1,#2-1) circle[radius=2pt]}
\def\comult(#1,#2)[#3,#4]{\draw (#1,#2) -- (#1,#2-0.5*#4) arc (90:0:0.5*#3 and 0.5*#4) (#1,#2-0.5*#4) arc (90:180:0.5*#3 and 0.5*#4)}
\def\laction(#1,#2)[#3,#4]{\draw (#1,#2) .. controls (#1,#2-0.555*#4/2) and (#1+0.445*#4/2,#2-1*#4/2) .. (#1+1*#4/2,#2-1*#4/2) -- (#1+2*#4/2+#3*#4/2,#2-1*#4/2) (#1+2*#4/2+#3*#4/2,#2)--(#1+2*#4/2+#3*#4/2,#2-2*#4/2)}
\def\map(#1,#2)[#3]{\draw (#1,#2-0.5)  node[name=nodemap,inner sep=0pt,  minimum size=10pt, shape=circle,draw]{$#3$} (#1,#2)-- (nodemap)  (nodemap)-- (#1,#2-1)}
\def\solbraid(#1,#2)[#3]{\draw (#1,#2-0.5)  node[name=nodemap,inner sep=0pt,  minimum size=9pt, shape=circle,draw]{$#3$}
(#1-0.5,#2) .. controls (#1-0.5,#2-0.15) and (#1-0.4,#2-0.2) .. (#1-0.3,#2-0.3) (#1-0.3,#2-0.3) -- (nodemap)
(#1+0.5,#2) .. controls (#1+0.5,#2-0.15) and (#1+0.4,#2-0.2) .. (#1+0.3,#2-0.3) (#1+0.3,#2-0.3) -- (nodemap)
(#1+0.5,#2-1) .. controls (#1+0.5,#2-0.85) and (#1+0.4,#2-0.8) .. (#1+0.3,#2-0.7) (#1+0.3,#2-0.7) -- (nodemap)
(#1-0.5,#2-1) .. controls (#1-0.5,#2-0.85) and (#1-0.4,#2-0.8) .. (#1-0.3,#2-0.7) (#1-0.3,#2-0.7) -- (nodemap)
}
\def\flip(#1,#2)[#3]{\draw (
#1+1*#3,#2) .. controls (#1+1*#3,#2-0.05*#3) and (#1+0.96*#3,#2-0.15*#3).. (#1+0.9*#3,#2-0.2*#3)
(#1+0.1*#3,#2-0.8*#3)--(#1+0.9*#3,#2-0.2*#3)
(#1,#2-1*#3) .. controls (#1,#2-0.95*#3) and (#1+0.04*#3,#2-0.85*#3).. (#1+0.1*#3,#2-0.8*#3)
(#1,#2) .. controls (#1,#2-0.05*#3) and (#1+0.04*#3,#2-0.15*#3).. (#1+0.1*#3,#2-0.2*#3)
(#1+0.1*#3,#2-0.2*#3) -- (#1+0.9*#3,#2-0.8*#3)
(#1+1*#3,#2-1*#3) .. controls (#1+1*#3,#2-0.95*#3) and (#1+0.96*#3,#2-0.85*#3).. (#1+0.9*#3,#2-0.8*#3)
}
\def\raction(#1,#2)[#3,#4]{\draw (#1,#2) -- (#1,#2-2*#4/2)  (#1,#2-1*#4/2)--(#1+1*#4/2+#3*#4/2,#2-1*#4/2) .. controls (#1+1.555*#4/2+#3*#4/2,#2-1*#4/2) and (#1+2*#4/2+#3*#4/2,#2-0.555*#4/2) .. (#1+2*#4/2+#3*#4/2,#2)}
\begin{scope}[xshift=0cm, yshift=-3.1cm]
\comult(0,0)[1,1]; \laction(0.5,-1)[0,1]; \laction(0.5,-1)[0,1]; \draw (1.5,0) -- (1.5,-1); \draw (-0.5,-1) -- (-0.5,-2);
\end{scope}
\begin{scope}[xshift=2.1cm, yshift=-3.6cm]
\node at (0,-0.5){=};
\end{scope}
\begin{scope}[xshift=2.8cm, yshift=-1cm]
\comult(0.5,0)[1,1]; \comult(2.5,0)[1,1]; \draw (0,-1) -- (0,-2);  \draw (3,-1) -- (3,-2);\flip(1,-1)[1]; \solbraid(0.5,-2)[\scriptstyle r]; \solbraid(2.5,-2)[\scriptstyle r]; \laction(0,-3)[0,1]; \raction(2,-3)[0,1]; \comult(1,-4)[1,1]; \laction(1.5,-5)[0,1]; \draw (2,-4) .. controls (2,-4.5) and (2.5,-4.5) .. (2.5,-5); \draw (0.5,-5) -- (0.5,-6);
\end{scope}
\begin{scope}[xshift=6.4cm, yshift=-3.6cm]
\node at (0,-0.5){=};
\end{scope}
\begin{scope}[xshift=7.1cm, yshift=0cm]
\comult(1.5,0)[2,2]; \draw (0.5,-2) -- (0.5,-3); \comult(0.5,-3)[1,1]; \comult(2.5,-3)[1,1]; \flip(2.5,-2)[1]; \draw (4,0) -- (4,-1); \comult(4,-1)[1,1]; \flip(1,-4)[1]; \draw (0,-4) -- (0,-5); \solbraid(0.5,-5)[\scriptstyle r]; \draw (3,-4) -- (3,-5); \solbraid(2.5,-5)[\scriptstyle r]; \draw (3.5,-5) .. controls (3.5,-6) and (4,-6) .. (4,-7); \solbraid(4,-3)[\scriptstyle r]; \draw (4.5,-2) -- (4.5,-3); \laction(0,-6)[0,1]; \draw (1,-7) -- (1,-8); \laction(2,-6)[0,1]; \raction(3.5,-4)[0,1]; \laction(3,-7)[0,1];
\end{scope}
\begin{scope}[xshift=12.2cm, yshift=-3.6cm]
\node at (0,-0.5){=};
\end{scope}
\begin{scope}[xshift=12.8cm, yshift=-2cm]
\comult(0.5,0)[1,1]; \comult(2.5,0)[1,1]; \draw (0,-1) -- (0,-2);  \draw (3,-1) -- (3,-2);\flip(1,-1)[1]; \solbraid(0.5,-2)[\scriptstyle r]; \laction(0,-3)[0,1]; \laction(2,-2)[0,1]; \draw (3,-3) -- (3,-4);
\end{scope}
\begin{scope}[xshift=16.4cm, yshift=-3.6cm]
\node at (0,-0.5){=};
\end{scope}
\begin{scope}[xshift=17.1cm, yshift=-2cm]
\solbraid(1,0)[\scriptstyle r]; \comult(0.5,-1)[1,1]; \flip(1,-2)[1]; \draw (1.5,-1) .. controls (1.5,-1.5) and (2,-1.5) .. (2,-2); \draw (0,-2) -- (0,-3);  \laction(0,-3)[0,1]; \draw (2,-3) -- (2,-4);
\end{scope}
\begin{scope}[xshift=19.4cm, yshift=-3.6cm]
\node at (0,-0.5){,};
\end{scope}
\end{tikzpicture}
$$
where we had used the fact that $\sigma\circ r$ is a coalgebra homomorphism and Remark~\ref{algunas formulas}(4). On the other hand, from the definition of~$R^{t_1}_{21}$ it follows immediately that
$$
R^{t_1}_{21} \circ (\sigma \ot X)\circ (X\ot c)\circ (\Delta\ot X)= (X\ot\tau) \circ (c\ot X)\circ (X\ot \Delta).
$$
Combining this with the previous equality and Remark~\ref{algunas formulas}(2), we obtain
$$
\begin{tikzpicture}[scale=0.395]
\def\counit(#1,#2){\draw (#1,#2) -- (#1,#2-0.93) (#1,#2-1) circle[radius=2pt]}
\def\comult(#1,#2)[#3,#4]{\draw (#1,#2) -- (#1,#2-0.5*#4) arc (90:0:0.5*#3 and 0.5*#4) (#1,#2-0.5*#4) arc (90:180:0.5*#3 and 0.5*#4)}
\def\laction(#1,#2)[#3,#4]{\draw (#1,#2) .. controls (#1,#2-0.555*#4/2) and (#1+0.445*#4/2,#2-1*#4/2) .. (#1+1*#4/2,#2-1*#4/2) -- (#1+2*#4/2+#3*#4/2,#2-1*#4/2) (#1+2*#4/2+#3*#4/2,#2)--(#1+2*#4/2+#3*#4/2,#2-2*#4/2)}
\def\map(#1,#2)[#3]{\draw (#1,#2-0.5)  node[name=nodemap,inner sep=0pt,  minimum size=10pt, shape=circle,draw]{$#3$} (#1,#2)-- (nodemap)  (nodemap)-- (#1,#2-1)}
\def\solbraid(#1,#2)[#3]{\draw (#1,#2-0.5)  node[name=nodemap,inner sep=0pt,  minimum size=9pt, shape=circle,draw]{$#3$}
(#1-0.5,#2) .. controls (#1-0.5,#2-0.15) and (#1-0.4,#2-0.2) .. (#1-0.3,#2-0.3) (#1-0.3,#2-0.3) -- (nodemap)
(#1+0.5,#2) .. controls (#1+0.5,#2-0.15) and (#1+0.4,#2-0.2) .. (#1+0.3,#2-0.3) (#1+0.3,#2-0.3) -- (nodemap)
(#1+0.5,#2-1) .. controls (#1+0.5,#2-0.85) and (#1+0.4,#2-0.8) .. (#1+0.3,#2-0.7) (#1+0.3,#2-0.7) -- (nodemap)
(#1-0.5,#2-1) .. controls (#1-0.5,#2-0.85) and (#1-0.4,#2-0.8) .. (#1-0.3,#2-0.7) (#1-0.3,#2-0.7) -- (nodemap)
}
\def\flip(#1,#2)[#3]{\draw (
#1+1*#3,#2) .. controls (#1+1*#3,#2-0.05*#3) and (#1+0.96*#3,#2-0.15*#3).. (#1+0.9*#3,#2-0.2*#3)
(#1+0.1*#3,#2-0.8*#3)--(#1+0.9*#3,#2-0.2*#3)
(#1,#2-1*#3) .. controls (#1,#2-0.95*#3) and (#1+0.04*#3,#2-0.85*#3).. (#1+0.1*#3,#2-0.8*#3)
(#1,#2) .. controls (#1,#2-0.05*#3) and (#1+0.04*#3,#2-0.15*#3).. (#1+0.1*#3,#2-0.2*#3)
(#1+0.1*#3,#2-0.2*#3) -- (#1+0.9*#3,#2-0.8*#3)
(#1+1*#3,#2-1*#3) .. controls (#1+1*#3,#2-0.95*#3) and (#1+0.96*#3,#2-0.85*#3).. (#1+0.9*#3,#2-0.8*#3)
}
\def\raction(#1,#2)[#3,#4]{\draw (#1,#2) -- (#1,#2-2*#4/2)  (#1,#2-1*#4/2)--(#1+1*#4/2+#3*#4/2,#2-1*#4/2) .. controls (#1+1.555*#4/2+#3*#4/2,#2-1*#4/2) and (#1+2*#4/2+#3*#4/2,#2-0.555*#4/2) .. (#1+2*#4/2+#3*#4/2,#2)}
\def\doublemap(#1,#2)[#3]{\draw (#1+0.5,#2-0.5) node [name=doublemapnode,inner xsep=0pt, inner ysep=0pt, minimum height=9.85pt, minimum width=23pt,shape=rectangle,draw,rounded corners] {$#3$} (#1,#2) .. controls (#1,#2-0.075) .. (doublemapnode) (#1+1,#2) .. controls (#1+1,#2-0.075).. (doublemapnode) (doublemapnode) .. controls (#1,#2-0.925)..(#1,#2-1) (doublemapnode) .. controls (#1+1,#2-0.925).. (#1+1,#2-1)}
\begin{scope}[xshift=0cm, yshift=-0.8cm]
\comult(0.5,0)[1,1]; \laction(1,-1)[0,1]; \draw (2,0) -- (2,-1);
\draw (0,-1) .. controls (0,-1.5) and (1,-2) .. (1,-2.5); \draw (2,-2) -- (2,-2.5); \doublemap(1,-2.5)[\scriptstyle R_{21}^{t_1}];
\end{scope}
\begin{scope}[xshift=2.8cm, yshift=-2.1cm]
\node at (0,-0.5){=};
\end{scope}
\begin{scope}[xshift=3.4cm, yshift=0cm]
\solbraid(1,0)[\scriptstyle r]; \comult(0.5,-1)[1,1]; \flip(1,-2)[1]; \draw (1.5,-1) .. controls (1.5,-1.5) and (2,-1.5) .. (2,-2); \draw (0,-2) -- (0,-3);  \laction(0,-3)[0,1]; \draw (2,-3) -- (2,-4); \doublemap(1,-4)[\scriptstyle R_{21}^{t_1}];
\end{scope}
\begin{scope}[xshift=6.2cm, yshift=-2.1cm]
\node at (0,-0.5){=};
\end{scope}
\begin{scope}[xshift=6.8cm, yshift=-0.5cm]
\solbraid(1,0)[\scriptstyle r]; \comult(1.5,-1)[1,1]; \flip(0,-2)[1]; \draw (0.5,-1) .. controls (0.5,-1.5) and (0,-1.5) .. (0,-2); \draw (2,-2) -- (2,-3);  \raction(1,-3)[0,1]; \draw (0,-3) -- (0,-4);
\end{scope}
\begin{scope}[xshift=9.4cm, yshift=-2.1cm]
\node at (0,-0.5){=};
\end{scope}
\begin{scope}[xshift=10.2cm, yshift=-0.5cm]
\comult(0.5,0)[1,1]; \comult(2.5,0)[1,1]; \flip(1,-1)[1]; \draw (0,-1) -- (0,-2); \draw (3,-1) -- (3,-2);\raction(0,-2)[0,1]; \solbraid(2.5,-2)[\scriptstyle r]; \raction(2,-3)[0,1]; \draw (0,-3) -- (0,-4);
\end{scope}
\begin{scope}[xshift=13.45cm, yshift=-2.1cm]
\node at (0,-0.5){.};
\end{scope}
\end{tikzpicture}
$$
Therefore, we are reduced to prove that
$$
(X \ot \tau) \circ (\tau \ot r) \circ \Delta_{X^2}=(\tau \ot X)\circ (X \ot \Delta).
$$
But this follows immediately applying  $X\ot \epsilon\ot X$ to  the equality
$$
(\tau \ot r^2)\circ \Delta_{X^2} = (\tau \ot X^2)\circ \Delta_{X^2},
$$
since $(\epsilon\ot X)\circ r= \tau$.
\end{proof}

\section{Derived solutions}
In this section we study the relation between non-degenerate braided sets and left racks. It is evident that each definition and result has a symmetric right hand one. Since we will do not mention these ones explicitly, we will write rack and derived map instead use the most correct terminology left rack and left derived map. In the case of set-theoretical solutions derived solutions were first considered in~\cite{MR1809284}. Racks in categories were studied in~\cite{MR3105304}.

\label{derived}
\begin{defn}\label{rack} A {\em left rack in $\mathscr{C}$} is a cocommutative coalgebra $X$ in $\mathscr{C}$ endowed with a coalgebra map $\triangleright\colon X^2 \to X$ that satisfies:

\begin{enumerate}

\smallskip

\item $\triangleright\circ (X \ot \triangleright) = \triangleright\circ (\triangleright\ot \triangleright)\circ (X\ot c\ot X)\circ (\Delta \ot X \ot X)$,

\smallskip

\item there exists a map $\bar{\triangleright}\colon X^2\to X$ such that
\begin{equation}\label{un rak esno deg a izq}
\qquad\quad \bar{\triangleright}\circ (X\ot \triangleright)\circ (\Delta\ot X)= \triangleright\circ (X\ot \bar{\triangleright})\circ (\Delta\ot X)= \epsilon \ot X.
\end{equation}
\end{enumerate}
\end{defn}

\begin{example}\label{rack trivial}
Each coalgebra $X$ in $\mathscr{C}$ is a rack, named {\em rack trivial on $X$}, via the {\em trivial rack} $\triangleright\coloneqq \epsilon \ot X$.
\end{example}

\begin{rem}\label{propiedades de bartriangleleft} Arguing as in Proposition~\ref{no degenerado} and Corollary~\ref{sigma^-1 y tau^-1 son morfismos de coalgebras} it is easy to see that $\bar{\triangleright}$ is the unique map satisfying condition~\eqref{un rak esno deg a izq}, that $(X\ot \triangleright)\circ (\Delta\ot X)$ is invertible with inverse $(X\ot \bar{\triangleright})\circ (\Delta\ot X)$ and that~$\bar{\triangleright}$ is a coalgebra homomorphism.
\end{rem}

Let $X$ be a cocommutative coalgebra in $\mathscr{C}$. Given a map $\triangleright\colon X^2\to X$, we let $r_{\triangleright}$ denote the map defined by
$$
r_{\triangleright}\coloneqq (\triangleright \ot X)\circ (X\ot c)\circ (\Delta\ot X).
$$
It is clear that $r_{\triangleright}$ is a coalgebra homomorphism if and only if $\triangleright$ is.

\begin{pro}\label{solucion tipo rack} The pair $(X, r_{\triangleright})$ is a non-degenerate braided set if and only if $(X,\triangleright)$ is a rack.
\end{pro}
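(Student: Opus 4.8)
The plan is to compute the two coordinate maps of $r_\triangleright$ and then to match, one by one, the defining properties of a non-degenerate braided set against the two rack axioms of Definition~\ref{rack}. First I would record that, for $r_\triangleright=(\triangleright\ot X)\circ(X\ot c)\circ(\Delta\ot X)$, the first coordinate map is $\sigma=(X\ot\epsilon)\circ r_\triangleright=\triangleright$, while the second is $\tau=(\epsilon\ot X)\circ r_\triangleright=X\ot\epsilon$; the evaluation of $\tau$ uses the counit axiom together with the fact that $\triangleright$ is a coalgebra morphism (so that $\epsilon\circ\triangleright=\epsilon\ot\epsilon$), which in turn is equivalent to $r_\triangleright$ being a coalgebra morphism, as observed just before the statement. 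I would also note the identity
\[
r_\triangleright=c\circ(X\ot\triangleright)\circ(\Delta\ot X),
\]
a direct consequence of the naturality of $c$ and the cocommutativity of $X$. Since $c$ is invertible, this reduces the invertibility of $r_\triangleright$ to that of $T:=(X\ot\triangleright)\circ(\Delta\ot X)$.

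Next I would settle non-degeneracy. Because $\tau=X\ot\epsilon$, the counit axiom gives $(\tau\ot X)\circ(X\ot\Delta)=\ide_{X^2}$, so the right condition \eqref{no deg a der} holds trivially with $\tau^{-1}:=\tau$. Substituting $\sigma=\triangleright$ into the left condition \eqref{no deg a izq} turns it verbatim into the rack axiom \eqref{un rak esno deg a izq}, with $\sigma^{-1}$ in the role of $\bar\triangleright$; thus a map $\sigma^{-1}$ as in Definition~\ref{braided, nodegenerado, involutivo} exists if and only if a map $\bar\triangleright$ as in Definition~\ref{rack} exists, and then $\sigma^{-1}=\bar\triangleright$. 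Finally, combining the factorization of the previous paragraph with Remark~\ref{propiedades de bartriangleleft} (and its converse, obtained as in Proposition~\ref{no degenerado}), the existence of $\bar\triangleright$ is equivalent to the invertibility of $T$, hence of $r_\triangleright$; this is what certifies that $r_\triangleright$ is a coalgebra automorphism once $(X,\triangleright)$ is a rack.

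It then remains to identify the braid equation with self-distributivity, for which I would use the reformulation of Remark~\ref{prop de involutiva y de braided}: $(X,r_\triangleright)$ is braided if and only if
\[
\tau\circ(\tau\ot X)=\tau\circ(\tau\ot X)\circ(X\ot r_\triangleright),\qquad \sigma\circ(X\ot\sigma)=\sigma\circ(X\ot\sigma)\circ(r_\triangleright\ot X),
\]
together with the linking relation $\tau\circ(X\ot\sigma)\circ(r_\triangleright\ot X)=\sigma\circ(\tau\ot X)\circ(X\ot r_\triangleright)$. Since $\tau=X\ot\epsilon$ and $r_\triangleright$ preserves the counit, both $\tau\circ(\tau\ot X)$ and $\tau\circ(X\ot\sigma)$ reduce to the first projection $X\ot\epsilon\ot\epsilon$; a short computation then shows that the first displayed condition is an identity and that both sides of the linking relation reduce to $\triangleright\ot\epsilon$, so these two hold automatically. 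For the remaining condition, unfolding $r_\triangleright\ot X=(\triangleright\ot X\ot X)\circ(X\ot c\ot X)\circ(\Delta\ot X\ot X)$ and using $\sigma=\triangleright$ turns it into
\[
\triangleright\circ(X\ot\triangleright)=\triangleright\circ(\triangleright\ot\triangleright)\circ(X\ot c\ot X)\circ(\Delta\ot X\ot X),
\]
which is precisely the self-distributivity axiom of Definition~\ref{rack}.

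Putting the three steps together yields both implications at once: $(X,r_\triangleright)$ is a non-degenerate braided set exactly when self-distributivity holds and $\bar\triangleright$ exists, that is, exactly when $(X,\triangleright)$ is a rack. The coordinate-map identities are routine; the step demanding the most care is the reduction of the braid equation, where one must exploit the triviality of $\tau$ to see that the first braid relation and the linking relation are automatic, so that the whole braid equation is carried by the single self-distributivity identity. In the backward direction one must also not forget to certify that $r_\triangleright$ is genuinely invertible, which is exactly what the factorization $r_\triangleright=c\circ T$ provides.
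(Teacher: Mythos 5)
Your proof is correct and follows essentially the same route as the paper's: both directions rest on identifying $\sigma=\triangleright$ and $\tau=X\ot\epsilon$, matching the left non-degeneracy condition \eqref{no deg a izq} verbatim with the rack axiom \eqref{un rak esno deg a izq}, and checking the braid equation coordinate-wise (via Corollary~\ref{coro: igualdad de morfismos}) so that only the self-distributivity identity survives. The only cosmetic difference is the invertibility step, where your factorization $r_\triangleright=c\circ(X\ot\triangleright)\circ(\Delta\ot X)$ lets you quote Remark~\ref{propiedades de bartriangleleft} directly, instead of the paper's two explicit diagrammatic verifications of the inverse.
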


\begin{proof} Assume that $(X, r_{\triangleright})$ is a non-degenerate braided set. Then $\triangleright$ is a coalgebra homomorphism and there exists $\bar{\triangleright}$ satisfying~\eqref{un rak esno deg a izq}, since $\triangleright=\sigma_{r_{\triangleright}}$. So, in order to prove that $(X,\triangleright)$ is a rack we only must check that $\triangleright$ fulfills condition~(1) of Definition~\ref{rack}. But this follows immediately applying $X\ot \epsilon \ot \epsilon$ to the equality
$$
{r_{\triangleright}}_{_{12}}\circ {r_{\triangleright}}_{_{23}}\circ {r_{\triangleright}}_{_{12}}={r_{\triangleright}}_{_{23}}\circ {r_{\triangleright}}_{_{12}}\circ {r_{\triangleright}}_{_{23}}.
$$
Conversely, if $(X,\triangleright)$ is a rack, then $r_{\triangleright}$ is a coalgebra homomorphism and the equalities
$$
\begin{tikzpicture}[scale=0.395]
\def\counit(#1,#2){\draw (#1,#2) -- (#1,#2-0.93) (#1,#2-1) circle[radius=2pt]}
\def\comult(#1,#2)[#3,#4]{\draw (#1,#2) -- (#1,#2-0.5*#4) arc (90:0:0.5*#3 and 0.5*#4) (#1,#2-0.5*#4) arc (90:180:0.5*#3 and 0.5*#4)}
\def\laction(#1,#2)[#3,#4]{\draw (#1,#2) .. controls (#1,#2-0.555*#4/2) and (#1+0.445*#4/2,#2-1*#4/2) .. (#1+1*#4/2,#2-1*#4/2) -- (#1+2*#4/2+#3*#4/2,#2-1*#4/2) (#1+2*#4/2+#3*#4/2,#2)--(#1+2*#4/2+#3*#4/2,#2-2*#4/2)}
\def\lactiontr(#1,#2)[#3,#4,#5]{\draw (#1,#2) .. controls (#1,#2-0.555*#4/2) and (#1+0.445*#4/2,#2-1*#4/2) .. (#1+1*#4/2,#2-1*#4/2) -- (#1+2*#4/2+#3*#4/2,#2-1*#4/2)  node [inner sep=0pt, minimum size=3pt,shape=isosceles triangle,fill, shape border rotate=#5] {} (#1+2*#4/2+#3*#4/2,#2) --(#1+2*#4/2+#3*#4/2,#2-2*#4/2)}
\def\doublemap(#1,#2)[#3]{\draw (#1+0.5,#2-0.5) node [name=doublemapnode,inner xsep=0pt, inner ysep=0pt, minimum height=11pt, minimum width=23pt,shape=rectangle,draw,rounded corners] {$#3$} (#1,#2) .. controls (#1,#2-0.075) .. (doublemapnode) (#1+1,#2) .. controls (#1+1,#2-0.075).. (doublemapnode) (doublemapnode) .. controls (#1,#2-0.925)..(#1,#2-1) (doublemapnode) .. controls (#1+1,#2-0.925).. (#1+1,#2-1)}
\def\solbraid(#1,#2)[#3]{\draw (#1,#2-0.5)  node[name=nodemap,inner sep=0pt,  minimum size=9pt, shape=circle,draw]{$#3$}
(#1-0.5,#2) .. controls (#1-0.5,#2-0.15) and (#1-0.4,#2-0.2) .. (#1-0.3,#2-0.3) (#1-0.3,#2-0.3) -- (nodemap)
(#1+0.5,#2) .. controls (#1+0.5,#2-0.15) and (#1+0.4,#2-0.2) .. (#1+0.3,#2-0.3) (#1+0.3,#2-0.3) -- (nodemap)
(#1+0.5,#2-1) .. controls (#1+0.5,#2-0.85) and (#1+0.4,#2-0.8) .. (#1+0.3,#2-0.7) (#1+0.3,#2-0.7) -- (nodemap)
(#1-0.5,#2-1) .. controls (#1-0.5,#2-0.85) and (#1-0.4,#2-0.8) .. (#1-0.3,#2-0.7) (#1-0.3,#2-0.7) -- (nodemap)
}
\def\ractiontr(#1,#2)[#3,#4,#5]{\draw (#1,#2) -- (#1,#2-2*#4/2)  (#1,#2-1*#4/2) node [inner sep=0pt, minimum size=3pt,shape=isosceles triangle,fill, shape border rotate=#5] {}  --(#1+1*#4/2+#3*#4/2,#2-1*#4/2) .. controls (#1+1.555*#4/2+#3*#4/2,#2-1*#4/2) and (#1+2*#4/2+#3*#4/2,#2-0.555*#4/2) .. (#1+2*#4/2+#3*#4/2,#2)  }
\def\flip(#1,#2)[#3]{\draw (
#1+1*#3,#2) .. controls (#1+1*#3,#2-0.05*#3) and (#1+0.96*#3,#2-0.15*#3).. (#1+0.9*#3,#2-0.2*#3)
(#1+0.1*#3,#2-0.8*#3)--(#1+0.9*#3,#2-0.2*#3)
(#1,#2-1*#3) .. controls (#1,#2-0.95*#3) and (#1+0.04*#3,#2-0.85*#3).. (#1+0.1*#3,#2-0.8*#3)
(#1,#2) .. controls (#1,#2-0.05*#3) and (#1+0.04*#3,#2-0.15*#3).. (#1+0.1*#3,#2-0.2*#3)
(#1+0.1*#3,#2-0.2*#3) -- (#1+0.9*#3,#2-0.8*#3)
(#1+1*#3,#2-1*#3) .. controls (#1+1*#3,#2-0.95*#3) and (#1+0.96*#3,#2-0.85*#3).. (#1+0.9*#3,#2-0.8*#3)
}
\def\raction(#1,#2)[#3,#4]{\draw (#1,#2) -- (#1,#2-2*#4/2)  (#1,#2-1*#4/2)--(#1+1*#4/2+#3*#4/2,#2-1*#4/2) .. controls (#1+1.555*#4/2+#3*#4/2,#2-1*#4/2) and (#1+2*#4/2+#3*#4/2,#2-0.555*#4/2) .. (#1+2*#4/2+#3*#4/2,#2)}
\def\rack(#1,#2)[#3]{\draw (#1,#2-0.5)  node[name=nodemap,inner sep=0pt,  minimum size=7.5pt, shape=circle,draw]{$#3$} (#1-1,#2) .. controls (#1-1,#2-0.5) and (#1-0.5,#2-0.5) .. (nodemap) (#1,#2)-- (nodemap)  (nodemap)-- (#1,#2-1)}
\def\racklarge(#1,#2)[#3]{\draw (#1,#2-0.5)  node[name=nodemap,inner sep=0pt,  minimum size=7.5pt, shape=circle,draw]{$#3$} (#1-2,#2+0.5) .. controls (#1-2,#2-0.5) and (#1-0.5,#2-0.5) .. (nodemap) (#1,#2)-- (nodemap)  (nodemap)-- (#1,#2-1)}
\begin{scope}[xshift=0cm, yshift=-0.5cm]
\comult(0,0)[1,1];  \draw (1.5,0) -- (1.5,-1); \draw (-0.5,-1) -- (-0.5,-2); \flip(0.5,-1)[1]; \rack(0.5,-2)[\scriptstyle \triangleright];  \draw (1.5,-2) -- (1.5,-3); \flip(0.5,-3)[1]; \comult(0.5,-4)[1,1]; \draw (1.5,-4) .. controls (1.5,-4.5) and (2,-4.5) .. (2,-5); \rack(2,-5)[\scriptstyle \bar{\triangleright}]; \draw (0,-5) -- (0,-6);
\end{scope}
\begin{scope}[xshift=2.6cm, yshift=-3.05cm]
\node at (0,-0.5){=};
\end{scope}
\begin{scope}[xshift=3.7cm, yshift=0cm]
\comult(0,0)[1,1];  \draw (1.5,0) -- (1.5,-1); \draw (-0.5,-1) -- (-0.5,-3); \flip(0.5,-1)[1]; \flip(0.5,-2)[1]; \flip(-0.5,-3)[1]; \draw (1.5,-3) -- (1.5,-4); \rack(1.5,-4)[\scriptstyle \triangleright]; \draw (-0.5,-4) .. controls (-0.5,-4.5) and (0,-4.5) .. (0,-5); \comult(0,-5)[1,1]; \rack(1.5,-6)[\scriptstyle \bar{\triangleright}];  \draw (1.5,-5) -- (1.5,-6); \draw (-0.5,-6) -- (-0.5,-7);
\end{scope}
\begin{scope}[xshift=6cm, yshift=-3.05cm]
\node at (0,-0.5){=};
\end{scope}
\begin{scope}[xshift=7cm, yshift=-1.5cm]
\comult(0,0)[1,1];  \draw (1.5,0) -- (1.5,-1);  \rack(1.5,-1)[\scriptstyle \triangleright]; \comult(0,-2)[1,1]; \rack(1.5,-3)[\scriptstyle \bar{\triangleright}]; \draw (-0.5,-1) .. controls (-0.5,-1.5) and (0,-1.5) .. (0,-2); \draw (1.5,-2) -- (1.5,-3); \draw (-0.5,-3) -- (-0.5,-3); \draw (-0.5,-3) -- (-0.5,-4);
\end{scope}
\begin{scope}[xshift=9.3cm, yshift=-3.05cm]
\node at (0,-0.5){=};
\end{scope}
\begin{scope}[xshift=10.4cm, yshift=-1.5cm]
\comult(0,0)[1,1];  \draw (2,0) -- (2,-2);  \rack(2,-2)[\scriptstyle \triangleright]; \comult(0.5,-1)[1,1]; \racklarge(2,-3)[\scriptstyle \bar{\triangleright}]; \draw (0,-2) -- (0,-2.5); \draw (-0.5,-1) -- (-0.5,-4);
\end{scope}
\begin{scope}[xshift=14.3cm, yshift=-3.05cm]
\node at (0,-0.5){$=\ide_{X^2}$};
\end{scope}
\end{tikzpicture}
$$
and
$$
\begin{tikzpicture}[scale=0.395]
\def\counit(#1,#2){\draw (#1,#2) -- (#1,#2-0.93) (#1,#2-1) circle[radius=2pt]}
\def\comult(#1,#2)[#3,#4]{\draw (#1,#2) -- (#1,#2-0.5*#4) arc (90:0:0.5*#3 and 0.5*#4) (#1,#2-0.5*#4) arc (90:180:0.5*#3 and 0.5*#4)}
\def\laction(#1,#2)[#3,#4]{\draw (#1,#2) .. controls (#1,#2-0.555*#4/2) and (#1+0.445*#4/2,#2-1*#4/2) .. (#1+1*#4/2,#2-1*#4/2) -- (#1+2*#4/2+#3*#4/2,#2-1*#4/2) (#1+2*#4/2+#3*#4/2,#2)--(#1+2*#4/2+#3*#4/2,#2-2*#4/2)}
\def\lactiontr(#1,#2)[#3,#4,#5]{\draw (#1,#2) .. controls (#1,#2-0.555*#4/2) and (#1+0.445*#4/2,#2-1*#4/2) .. (#1+1*#4/2,#2-1*#4/2) -- (#1+2*#4/2+#3*#4/2,#2-1*#4/2)  node [inner sep=0pt, minimum size=3pt,shape=isosceles triangle,fill, shape border rotate=#5] {} (#1+2*#4/2+#3*#4/2,#2) --(#1+2*#4/2+#3*#4/2,#2-2*#4/2)}
\def\doublemap(#1,#2)[#3]{\draw (#1+0.5,#2-0.5) node [name=doublemapnode,inner xsep=0pt, inner ysep=0pt, minimum height=11pt, minimum width=23pt,shape=rectangle,draw,rounded corners] {$#3$} (#1,#2) .. controls (#1,#2-0.075) .. (doublemapnode) (#1+1,#2) .. controls (#1+1,#2-0.075).. (doublemapnode) (doublemapnode) .. controls (#1,#2-0.925)..(#1,#2-1) (doublemapnode) .. controls (#1+1,#2-0.925).. (#1+1,#2-1)}
\def\solbraid(#1,#2)[#3]{\draw (#1,#2-0.5)  node[name=nodemap,inner sep=0pt,  minimum size=9pt, shape=circle,draw]{$#3$}
(#1-0.5,#2) .. controls (#1-0.5,#2-0.15) and (#1-0.4,#2-0.2) .. (#1-0.3,#2-0.3) (#1-0.3,#2-0.3) -- (nodemap)
(#1+0.5,#2) .. controls (#1+0.5,#2-0.15) and (#1+0.4,#2-0.2) .. (#1+0.3,#2-0.3) (#1+0.3,#2-0.3) -- (nodemap)
(#1+0.5,#2-1) .. controls (#1+0.5,#2-0.85) and (#1+0.4,#2-0.8) .. (#1+0.3,#2-0.7) (#1+0.3,#2-0.7) -- (nodemap)
(#1-0.5,#2-1) .. controls (#1-0.5,#2-0.85) and (#1-0.4,#2-0.8) .. (#1-0.3,#2-0.7) (#1-0.3,#2-0.7) -- (nodemap)
}
\def\ractiontr(#1,#2)[#3,#4,#5]{\draw (#1,#2) -- (#1,#2-2*#4/2)  (#1,#2-1*#4/2) node [inner sep=0pt, minimum size=3pt,shape=isosceles triangle,fill, shape border rotate=#5] {}  --(#1+1*#4/2+#3*#4/2,#2-1*#4/2) .. controls (#1+1.555*#4/2+#3*#4/2,#2-1*#4/2) and (#1+2*#4/2+#3*#4/2,#2-0.555*#4/2) .. (#1+2*#4/2+#3*#4/2,#2)  }
\def\flip(#1,#2)[#3]{\draw (
#1+1*#3,#2) .. controls (#1+1*#3,#2-0.05*#3) and (#1+0.96*#3,#2-0.15*#3).. (#1+0.9*#3,#2-0.2*#3)
(#1+0.1*#3,#2-0.8*#3)--(#1+0.9*#3,#2-0.2*#3)
(#1,#2-1*#3) .. controls (#1,#2-0.95*#3) and (#1+0.04*#3,#2-0.85*#3).. (#1+0.1*#3,#2-0.8*#3)
(#1,#2) .. controls (#1,#2-0.05*#3) and (#1+0.04*#3,#2-0.15*#3).. (#1+0.1*#3,#2-0.2*#3)
(#1+0.1*#3,#2-0.2*#3) -- (#1+0.9*#3,#2-0.8*#3)
(#1+1*#3,#2-1*#3) .. controls (#1+1*#3,#2-0.95*#3) and (#1+0.96*#3,#2-0.85*#3).. (#1+0.9*#3,#2-0.8*#3)
}
\def\raction(#1,#2)[#3,#4]{\draw (#1,#2) -- (#1,#2-2*#4/2)  (#1,#2-1*#4/2)--(#1+1*#4/2+#3*#4/2,#2-1*#4/2) .. controls (#1+1.555*#4/2+#3*#4/2,#2-1*#4/2) and (#1+2*#4/2+#3*#4/2,#2-0.555*#4/2) .. (#1+2*#4/2+#3*#4/2,#2)}
\def\rack(#1,#2)[#3]{\draw (#1,#2-0.5)  node[name=nodemap,inner sep=0pt,  minimum size=7.5pt, shape=circle,draw]{$#3$} (#1-1,#2) .. controls (#1-1,#2-0.5) and (#1-0.5,#2-0.5) .. (nodemap) (#1,#2)-- (nodemap)  (nodemap)-- (#1,#2-1)}
\def\racklarge(#1,#2)[#3]{\draw (#1,#2-0.5)  node[name=nodemap,inner sep=0pt,  minimum size=7.5pt, shape=circle,draw]{$#3$} (#1-2,#2+0.5) .. controls (#1-2,#2-0.5) and (#1-0.5,#2-0.5) .. (nodemap) (#1,#2)-- (nodemap)  (nodemap)-- (#1,#2-1)}
\begin{scope}[xshift=0.2cm, yshift=-0.5cm]
\flip(0.5,0)[1]; \comult(0.5,-1)[1,1]; \draw (1.5,-1) .. controls (1.5,-1.5) and (2,-1.5) .. (2,-2); \draw (0,-2) .. controls (0,-2.5) and (0.5,-2.5) .. (0.5,-3); \rack(2,-2)[\scriptstyle \bar{\triangleright}]; \comult(0.5,-3)[1,1]; \flip(1,-4)[1]; \rack(1,-5)[\scriptstyle \triangleright]; \draw (2,-3) -- (2,-4); \draw (0,-4) -- (0,-5); \draw (2,-5) -- (2,-6);
\end{scope}
\begin{scope}[xshift=2.9cm, yshift=-3.05cm]
\node at (0,-0.5){=};
\end{scope}
\begin{scope}[xshift=3.5cm, yshift=0cm]
\flip(0.5,0)[1]; \comult(0.5,-1)[1,1]; \draw (1.5,-1) .. controls (1.5,-1.5) and (2,-1.5) .. (2,-2); \draw (0,-2) .. controls (0,-2.5) and (0.5,-2.5) .. (0.5,-3); \rack(2,-2)[\scriptstyle \bar{\triangleright}]; \comult(0.5,-3)[1,1]; \flip(0,-4)[1]; \flip(1,-5)[1]; \rack(1,-6)[\scriptstyle \triangleright]; \draw (2,-3) -- (2,-5); \draw (0,-5) -- (0,-6); \draw (2,-6) -- (2,-7);
\end{scope}
\begin{scope}[xshift=6.3cm, yshift=-3.05cm]
\node at (0,-0.5){=};
\end{scope}
\begin{scope}[xshift=7.4cm, yshift=0cm]
\flip(0,0)[1]; \comult(0,-1)[1,1]; \draw (1,-1) .. controls (1,-1.5) and (1.5,-1.5) .. (1.5,-2);  \rack(1.5,-2)[\scriptstyle \bar{\triangleright}]; \comult(0,-3)[1,1]; \rack(1.5,-4)[\scriptstyle \triangleright]; \draw (-0.5,-2) .. controls (-0.5,-2.5) and (0,-2.5) .. (0,-3); \draw (1.5,-3) -- (1.5,-4); \draw (-0.5,-4) .. controls (-0.5,-4.5) and (0,-4.5) .. (0,-5); \draw (0,-5) -- (0,-6); \draw (1.5,-5) .. controls (1.5,-5.5) and (1,-5.5) .. (1,-6); \flip(0,-6)[1];
\end{scope}
\begin{scope}[xshift=9.7cm, yshift=-3.05cm]
\node at (0,-0.5){=};
\end{scope}
\begin{scope}[xshift=10.8cm, yshift=0cm]
\flip(0,0)[1]; \comult(0,-1)[1,1]; \draw (1,-1) .. controls (1,-2) and (2,-2) .. (2,-3);  \rack(2,-3)[\scriptstyle \triangleright]; \comult(0.5,-2)[1,1]; \racklarge(2,-4)[\scriptstyle \bar{\triangleright}]; \draw (0,-3) -- (0,-3.5); \draw (-0.5,-2) -- (-0.5,-4); \draw (-0.5,-4) .. controls (-0.5,-5) and (0.5,-5) .. (0.5,-6);  \draw (2,-5) .. controls (2,-5.5) and (1.5,-5.5) .. (1.5,-6); \flip(0.5,-6)[1];
\end{scope}
\begin{scope}[xshift=14.6cm, yshift=-3.05cm]
\node at (0,-0.5){$=\ide_{X^2}$};
\end{scope}
\end{tikzpicture}
$$
show that $r_{\triangleright}$ is invertible. It remains to check that $r_{\triangleright}$ satisfies the braid equation. But this follows easily from Corollary~\ref{coro: igualdad de morfismos}.
\end{proof}

\begin{defn}\label{solucion asociada} Let $r$ be a coalgebra automorphism of $X^2$. Assume that $(X,r)$ is non-degenerate.  The {\em derived map of $r$} is the map $s\colon X^2\to X^2$ defined by
$$
s\coloneqq (\tau \ot X) \circ (X\ot \Delta) \circ c \circ (X \ot \sigma) \circ (X \ot \tau^{-1} \ot X) \circ (\Delta\ot \Delta).
$$
If necessary we will write $s_r$ instead of $s$.
\end{defn}

\begin{lem}\label{le previo} The arrow
\begin{equation}\label{lactionractioninvcomultotcomult}
(X\ot \sigma)\circ (X\ot \tau^{-1}\ot X) \circ (\Delta\ot \Delta)
\end{equation}
is an isomorphism of coalgebras.
\end{lem}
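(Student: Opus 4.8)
The plan is to exhibit $\Phi := (X\ot \sigma)\circ (X\ot \tau^{-1}\ot X)\circ (\Delta\ot \Delta)$ as a composite of three coalgebra automorphisms of $X^2$, so that it is automatically a coalgebra automorphism. Set $T := (\tau\ot X)\circ (X\ot \Delta)$ and $T^{-1} := (\tau^{-1}\ot X)\circ (X\ot \Delta)$; by Proposition~\ref{no degenerado}, $T$ is an isomorphism of coalgebras with compositional inverse $T^{-1}$, and $T^{-1}$ is a coalgebra morphism because $\tau^{-1}$ is one (Corollary~\ref{sigma^-1 y tau^-1 son morfismos de coalgebras}). Since $r$ is a coalgebra automorphism by hypothesis and $c$ is one as well, it suffices to prove the single identity
\begin{equation*}
c\circ \Phi = r\circ T^{-1},
\end{equation*}
for then $\Phi = c\circ r\circ T^{-1}$ (equivalently $\Phi = R\circ T^{-1}$, with $R$ the $\Ree$-matrix of Definition~\ref{Rmatriz}) is a composite of coalgebra automorphisms of $X^2$.

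To prove this identity I would invoke Corollary~\ref{coro: igualdad de morfismos}: both $c\circ \Phi$ and $r\circ T^{-1}$ are morphisms of coalgebras (each is a composite of the coalgebra morphisms $\Delta$, $c$, $r$, $\sigma$ and $\tau^{-1}$), so it is enough to check that their coordinate maps coincide. Since $c$ interchanges the two coordinate maps, the coordinates of $c\circ \Phi$ are $\Phi_2$ and $\Phi_1$. Using that $\sigma$ and $\tau^{-1}$ are counital, a direct computation collapses the outer $\Delta\ot\Delta$ to $X\ot\Delta$ and yields $\Phi_1 = (X\ot \epsilon)\circ \Phi = X\ot \epsilon$ and $\Phi_2 = (\epsilon\ot X)\circ \Phi = \sigma\circ T^{-1}$. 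On the other side, recalling $\sigma = (X\ot \epsilon)\circ r$ and $\tau = (\epsilon\ot X)\circ r$, we get $(r\circ T^{-1})_1 = \sigma\circ T^{-1}$ and $(r\circ T^{-1})_2 = \tau\circ T^{-1}$. The crucial simplification is
\begin{equation*}
\tau\circ T^{-1} = \tau\circ (\tau^{-1}\ot X)\circ (X\ot \Delta) = X\ot \epsilon,
\end{equation*}
which is exactly the second non-degeneracy identity~\eqref{no deg a der}. Hence the first coordinates of both sides equal $\sigma\circ T^{-1}$ and the second coordinates both equal $X\ot \epsilon$, so the two morphisms coincide.

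Putting this together, $\Phi = c\circ r\circ T^{-1}$ is a composite of coalgebra isomorphisms and therefore an isomorphism of coalgebras, as claimed. The only non-formal input is the coordinate computation, whose heart is the cancellation $\tau\circ T^{-1} = X\ot\epsilon$ supplied by~\eqref{no deg a der}; everything else is bookkeeping with counits and the fact that $c$ transposes coordinate maps. I expect the main (mild) obstacle to be the clean verification of $\Phi_2 = \sigma\circ T^{-1}$ — i.e. confirming that applying $\epsilon$ to the first tensor factor reduces $\Delta\ot\Delta$ to $X\ot\Delta$ — but this is a routine counit manipulation rather than a genuine difficulty, and it is what makes the whole argument avoid any explicit use of the inverse braiding $r^{-1}$.
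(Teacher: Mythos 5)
Your proof is correct, and it rests on exactly the same key formula as the paper's: writing $\Phi$ for the arrow in question and $T:=(\tau\ot X)\circ (X\ot\Delta)$, both arguments come down to $\Phi = R\circ T^{-1}$ (the paper phrases this as: $\Phi$ is invertible with inverse $T\circ R^{-1}$). What differs is how that formula is verified. The paper proves $\Phi\circ T = R$ by a direct diagrammatic computation, using coassociativity, cocommutativity, naturality of $c$ and the non-degeneracy relations, and then deduces that $\Phi$ is a coalgebra morphism \emph{from} the factorization, since $R$, $\tau$ and $\Delta$ are (so it never needs to cite that $\tau^{-1}$ is a coalgebra map). You instead observe at the outset that $\Phi$ — hence $c\circ\Phi$ — and $r\circ T^{-1}$ are already coalgebra morphisms, being composites of $\Delta$, $c$, $r$, $\sigma$ and $\tau^{-1}$ (the last by Corollary~\ref{sigma^-1 y tau^-1 son morfismos de coalgebras}); this observation already disposes of the ``morphism of coalgebras'' half of the lemma, leaving invertibility as the only real content. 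You then invoke Corollary~\ref{coro: igualdad de morfismos} to reduce the identity $c\circ\Phi = r\circ T^{-1}$ to a comparison of coordinate maps, which, after routine counit bookkeeping, collapses to the single non-degeneracy relation $\tau\circ T^{-1}=X\ot\epsilon$ from~\eqref{no deg a der}; your coordinate computations ($\Phi_1=X\ot\epsilon$, $\Phi_2=\sigma\circ T^{-1}$) are all correct. So the decomposition is the paper's, but the verification technique is genuinely different: yours trades the diagram chase for the coordinate-map criterion, which isolates exactly where non-degeneracy enters and makes the argument essentially computation-free; the paper's chase, in exchange, produces the explicit inverse $T\circ R^{-1}$ directly and keeps the proof self-contained with respect to the properties of $\tau^{-1}$. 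Both are sound; yours is arguably the cleaner organization of the same idea.
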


\begin{proof} The equalities
$$
\begin{tikzpicture}[scale=0.395]
\def\counit(#1,#2){\draw (#1,#2) -- (#1,#2-0.93) (#1,#2-1) circle[radius=2pt]}
\def\comult(#1,#2)[#3,#4]{\draw (#1,#2) -- (#1,#2-0.5*#4) arc (90:0:0.5*#3 and 0.5*#4) (#1,#2-0.5*#4) arc (90:180:0.5*#3 and 0.5*#4)}
\def\laction(#1,#2)[#3,#4]{\draw (#1,#2) .. controls (#1,#2-0.555*#4/2) and (#1+0.445*#4/2,#2-1*#4/2) .. (#1+1*#4/2,#2-1*#4/2) -- (#1+2*#4/2+#3*#4/2,#2-1*#4/2) (#1+2*#4/2+#3*#4/2,#2)--(#1+2*#4/2+#3*#4/2,#2-2*#4/2)}
\def\lactiontr(#1,#2)[#3,#4,#5]{\draw (#1,#2) .. controls (#1,#2-0.555*#4/2) and (#1+0.445*#4/2,#2-1*#4/2) .. (#1+1*#4/2,#2-1*#4/2) -- (#1+2*#4/2+#3*#4/2,#2-1*#4/2)  node [inner sep=0pt, minimum size=3pt,shape=isosceles triangle,fill, shape border rotate=#5] {} (#1+2*#4/2+#3*#4/2,#2) --(#1+2*#4/2+#3*#4/2,#2-2*#4/2)}
\def\doublemap(#1,#2)[#3]{\draw (#1+0.5,#2-0.5) node [name=doublemapnode,inner xsep=0pt, inner ysep=0pt, minimum height=11pt, minimum width=25pt,shape=rectangle,draw,rounded corners] {$#3$} (#1,#2) .. controls (#1,#2-0.075) .. (doublemapnode) (#1+1,#2) .. controls (#1+1,#2-0.075).. (doublemapnode) (doublemapnode) .. controls (#1,#2-0.925)..(#1,#2-1) (doublemapnode) .. controls (#1+1,#2-0.925).. (#1+1,#2-1)}
\def\solbraid(#1,#2)[#3]{\draw (#1,#2-0.5)  node[name=nodemap,inner sep=0pt,  minimum size=9pt, shape=circle,draw]{$#3$}
(#1-0.5,#2) .. controls (#1-0.5,#2-0.15) and (#1-0.4,#2-0.2) .. (#1-0.3,#2-0.3) (#1-0.3,#2-0.3) -- (nodemap)
(#1+0.5,#2) .. controls (#1+0.5,#2-0.15) and (#1+0.4,#2-0.2) .. (#1+0.3,#2-0.3) (#1+0.3,#2-0.3) -- (nodemap)
(#1+0.5,#2-1) .. controls (#1+0.5,#2-0.85) and (#1+0.4,#2-0.8) .. (#1+0.3,#2-0.7) (#1+0.3,#2-0.7) -- (nodemap)
(#1-0.5,#2-1) .. controls (#1-0.5,#2-0.85) and (#1-0.4,#2-0.8) .. (#1-0.3,#2-0.7) (#1-0.3,#2-0.7) -- (nodemap)
}
\def\ractiontr(#1,#2)[#3,#4,#5]{\draw (#1,#2) -- (#1,#2-2*#4/2)  (#1,#2-1*#4/2) node [inner sep=0pt, minimum size=3pt,shape=isosceles triangle,fill, shape border rotate=#5] {}  --(#1+1*#4/2+#3*#4/2,#2-1*#4/2) .. controls (#1+1.555*#4/2+#3*#4/2,#2-1*#4/2) and (#1+2*#4/2+#3*#4/2,#2-0.555*#4/2) .. (#1+2*#4/2+#3*#4/2,#2)  }
\def\flip(#1,#2)[#3]{\draw (
#1+1*#3,#2) .. controls (#1+1*#3,#2-0.05*#3) and (#1+0.96*#3,#2-0.15*#3).. (#1+0.9*#3,#2-0.2*#3)
(#1+0.1*#3,#2-0.8*#3)--(#1+0.9*#3,#2-0.2*#3)
(#1,#2-1*#3) .. controls (#1,#2-0.95*#3) and (#1+0.04*#3,#2-0.85*#3).. (#1+0.1*#3,#2-0.8*#3)
(#1,#2) .. controls (#1,#2-0.05*#3) and (#1+0.04*#3,#2-0.15*#3).. (#1+0.1*#3,#2-0.2*#3)
(#1+0.1*#3,#2-0.2*#3) -- (#1+0.9*#3,#2-0.8*#3)
(#1+1*#3,#2-1*#3) .. controls (#1+1*#3,#2-0.95*#3) and (#1+0.96*#3,#2-0.85*#3).. (#1+0.9*#3,#2-0.8*#3)
}
\def\raction(#1,#2)[#3,#4]{\draw (#1,#2) -- (#1,#2-2*#4/2)  (#1,#2-1*#4/2)--(#1+1*#4/2+#3*#4/2,#2-1*#4/2) .. controls (#1+1.555*#4/2+#3*#4/2,#2-1*#4/2) and (#1+2*#4/2+#3*#4/2,#2-0.555*#4/2) .. (#1+2*#4/2+#3*#4/2,#2)}
\begin{scope}[xshift=0cm, yshift=-0.8cm]
\draw (0,0) -- (0,-2); \comult(1.5,0)[1,1]; \comult(0,-2)[1,1]; \comult(2,-2)[1,1]; \raction(0,-1)[0,1]; \draw (2,-1) -- (2,-2); \ractiontr(0.5,-3)[0,1,90]; \draw (-0.5,-3) -- (-0.5,-6); \draw (2.5,-3) -- (2.5,-4); \laction(0.5,-4)[0,2];
\end{scope}
\begin{scope}[xshift=3cm, yshift=-3.3cm]
\node at (0,-0.5){=};
\end{scope}
\begin{scope}[xshift=4.15cm, yshift=0cm]
\draw (0,0) -- (0,-1.5); \comult(2.75,0)[1.5,1.5]; \comult(0,-1.5)[1,1]; \comult(2,-1.5)[1,1]; \flip(0.5,-2.5)[1]; \draw (-0.5,-2.5) -- (-0.5,-3.5); \draw (2.5,-2.5) -- (2.5,-3.5); \raction(-0.5,-3.5)[0,1]; \raction(1.5,-3.5)[0,1]; \draw (3.5,-1.5) -- (3.5,-3.5); \laction(1.5,-5.5)[0.5,2];
\comult(3.5,-3)[1,1]; \ractiontr(1.5,-4)[0,1.5,90]; \draw (4,-4) -- (4,-5.5); \draw (-0.5,-4) -- (-0.5,-7.5);
\end{scope}
\begin{scope}[xshift=8.7cm, yshift=-3.3cm]
\node at (0,-0.5){=};
\end{scope}
\begin{scope}[xshift=10.15cm, yshift=-0.5cm]
\comult(0,0)[1.5,1.5]; \comult(2.5,0)[1.5,1.5]; \flip(0.75,-1.5)[1]; \comult(3.25,-1.5)[1,1]; \raction(1.75,-2.5)[0,1]; \comult(3.75,-2.5)[1,1];
\ractiontr(1.75,-3.5)[1,1,90]; \laction(1.75,-4.5)[0.5,2]; \draw (4.25,-3.5) -- (4.25,-4.5); \draw (-0.75,-1.5) -- (-0.75,-2.5); \raction(-0.75,-2.5)[1,1]; \draw (-0.75,-2.5) -- (-0.75,-6.5);
\end{scope}
\begin{scope}[xshift=15cm, yshift=-3.3cm]
\node at (0,-0.5){=};
\end{scope}
\begin{scope}[xshift=16cm, yshift=-2.25cm]
\comult(0,0)[1,1]; \comult(2,0)[1,1]; \flip(0.5,-1)[1]; \draw (-0.5,-1) -- (-0.5,-2); \draw (2.5,-1) -- (2.5,-2); \raction(-0.5,-2)[0,1]; \laction(1.5,-2)[0,1];
\end{scope}
\begin{scope}[xshift=19.5cm, yshift=-3.3cm]
\node at (0,-0.5){$= R$};
\end{scope}
\end{tikzpicture}
$$
show that the arrow~\eqref{lactionractioninvcomultotcomult} is invertible with inverse
$$
(\tau\ot X) \circ (X \ot \Delta)\circ R^{-1}.
$$
Therefore it is also a morphisms of coalgebras, since $R$, $\tau$ and $\Delta$ are.
\end{proof}

\begin{pro}\label{derivado es iso de coalgebras} The derived map of $r$ is an isomorphism of coalgebras.
\end{pro}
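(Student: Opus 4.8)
The plan is to read off the definition of $s$ as a composite of three morphisms, each of which is separately an isomorphism of coalgebras, and then invoke the fact that coalgebra isomorphisms are closed under composition. Concretely, I would group the defining expression
$$
s= (\tau \ot X) \circ (X\ot \Delta) \circ c \circ (X \ot \sigma) \circ (X \ot \tau^{-1} \ot X) \circ (\Delta\ot \Delta)
$$
as $s = A \circ c \circ L$, where
$$
L\coloneqq (X\ot \sigma)\circ (X\ot \tau^{-1}\ot X) \circ (\Delta\ot \Delta)
\qquad\text{and}\qquad
A\coloneqq (\tau \ot X)\circ (X\ot \Delta).
$$

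First I would observe that $L$ is precisely the arrow treated in Lemma~\ref{le previo}, and is therefore an isomorphism of coalgebras. Next, the middle factor $c$ is the symmetry of $\mathscr{C}$, which is always an isomorphism of coalgebras on $X^2$. So it only remains to handle the leftmost factor $A$. Here I would use Proposition~\ref{no degenerado}: since $r$ is non-degenerate, the map $(\tau\ot X)\circ (X\ot\Delta)$ is an isomorphism, with compositional inverse $(\tau^{-1}\ot X)\circ(X\ot\Delta)$. Moreover $A$ is a morphism of coalgebras, being a composite of coalgebra morphisms: $\tau$ is a coordinate map of $r$ and hence a coalgebra morphism, while $\Delta\colon X\to X^2$ is a coalgebra morphism because $X$ is cocommutative. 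Consequently $A^{-1}$ is a coalgebra morphism as well, so $A$ is a coalgebra isomorphism.

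Finally, since each of $A$, $c$ and $L$ is an isomorphism of coalgebras, their composite $s = A\circ c\circ L$ is an isomorphism of coalgebras, as claimed. I do not expect any genuine obstacle here: all the analytic content has been pushed into Lemma~\ref{le previo} and Proposition~\ref{no degenerado}, so the argument is purely a matter of recognizing the factorization and checking that the only non-obvious factor, $A$, is a coalgebra morphism (which rests on the cocommutativity of $X$ making $\Delta$ a coalgebra map). The mildest point to be careful about is keeping track of the composition order in the graphical (top-to-bottom) convention, ensuring that the three groups of arrows really do assemble into $A\circ c\circ L$.
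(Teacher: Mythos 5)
Your proof is correct and follows essentially the same route as the paper: the paper also factors $s$ into the arrow of Lemma~\ref{le previo} followed by $(\tau\ot X)\circ (X\ot \Delta)\circ c$, which it declares an isomorphism of coalgebras via Proposition~\ref{no degenerado}. Your only (harmless) deviation is splitting off the symmetry $c$ as a separate factor and spelling out explicitly why $(\tau\ot X)\circ(X\ot\Delta)$ is a coalgebra morphism whose inverse is again a coalgebra morphism, details the paper leaves implicit.
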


\begin{proof} By Proposition~\ref{no degenerado} we know that $(\tau\ot X)\circ (X \ot \Delta) \circ c$ is an isomorphism of coalgebras. So, the proposition follows immediately from Lemma~\ref{le previo}.
\end{proof}

\begin{lem}\label{cs es de comodulos} The map $c\circ s$ is a morphisms of left comodules (where $X^2$ is considered as a left comodule via $\Delta\ot X$).
\end{lem}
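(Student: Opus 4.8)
The plan is to reduce the colinearity condition to a single, easily verifiable identity about the second coordinate map of $s$. Write $g\coloneqq c\circ s$. Since $s$ is an isomorphism of coalgebras by Proposition~\ref{derivado es iso de coalgebras} and $c$ is the symmetry, $g$ is a coalgebra automorphism of $X^2$. To say that $g$ is a morphism of left comodules for the coaction $\Delta\ot X$ means exactly that
$$(\Delta\ot X)\circ g = (X\ot g)\circ(\Delta\ot X),$$
an equality of two coalgebra morphisms from $X^2$ to $X\ot X^2=X^3$ (here I use that $X$ is cocommutative, so $\Delta$, and hence $\Delta\ot X$, is a coalgebra map). By Corollary~\ref{coro: igualdad de morfismos} it therefore suffices to check that the three coordinate maps $X^2\to X$ of the two sides coincide.

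Next I would compute those coordinate maps using only the counit axioms and the fact that $g$ preserves the counit. Writing $g_1\coloneqq(X\ot\epsilon)\circ g$ and $g_2\coloneqq(\epsilon\ot X)\circ g$, a short calculation with the counit axiom gives the coordinates of $(\Delta\ot X)\circ g$ as $(g_1,g_1,g_2)$, while the coordinates of $(X\ot g)\circ(\Delta\ot X)$ come out as $(X\ot\epsilon,\,g_1,\,g_2)$; the first of these collapses to $X\ot\epsilon$ precisely because $(\epsilon\ot\epsilon)\circ g=\epsilon\ot\epsilon$. Hence the second and third coordinates match automatically, and the whole statement reduces to the single identity $g_1=X\ot\epsilon$. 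Since $g_1=(X\ot\epsilon)\circ c\circ s=(\epsilon\ot X)\circ s=s_2$, it is equivalent to show that the second coordinate map of $s$ is the first projection.

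Finally I would verify $s_2=X\ot\epsilon$ directly from Definition~\ref{solucion asociada}. The key ingredient is that $\sigma$, $\tau$ and $\tau^{-1}$ are morphisms of coalgebras (the last one by Corollary~\ref{sigma^-1 y tau^-1 son morfismos de coalgebras}), so that $\epsilon\circ\tau=\epsilon\circ\sigma=\epsilon\circ\tau^{-1}=\epsilon\ot\epsilon$. Precomposing the defining expression $s=(\tau\ot X)\circ(X\ot\Delta)\circ c\circ(X\ot\sigma)\circ(X\ot\tau^{-1}\ot X)\circ(\Delta\ot\Delta)$ with $\epsilon\ot X$, the counit first kills the $\tau$-leg (yielding $\epsilon\ot X$ after one use of the counit axiom for $\Delta$), then after the flip $c$ it kills the $\sigma$-leg and the $\tau^{-1}$-leg in turn, and repeated applications of the counit axiom collapse the two remaining comultiplications, leaving exactly $X\ot\epsilon$. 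I expect the only real work to be this bookkeeping; conceptually there is no obstacle once one observes that every map occurring in $s$ respects the counit. As a consistency check, in the category of sets one computes $s(x,y)=\bigl(\tau_x(\sigma_{\tau_y^{-1}(x)}(y)),\,x\bigr)$, whose second coordinate is indeed $x$, so that $c\circ s$ leaves the comodule label untouched.
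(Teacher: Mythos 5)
Your proof is correct and follows essentially the same route as the paper: both arguments hinge on establishing the single identity $(X\ot \epsilon)\circ c\circ s=(\epsilon\ot X)\circ s=X\ot \epsilon$ (deduced, exactly as you do, from the fact that $\sigma$, $\tau$ and $\tau^{-1}$ are coalgebra, hence counit-preserving, maps) and then exploiting that $c\circ s$ is a coalgebra morphism to obtain colinearity. The only difference is organizational: the paper carries out the second step by a direct diagrammatic counit-insertion computation using $\Delta_{X^2}\circ(c\circ s)=((c\circ s)\ot(c\circ s))\circ\Delta_{X^2}$, whereas you package the same computation through Corollary~\ref{coro: igualdad de morfismos} by matching coordinate maps, which is a harmless repackaging.
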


\begin{proof} Using that $\sigma$, $\tau$ and $\tau^{-1}$ are coalgebra maps it follows easily that
$$
(X\ot \epsilon)\circ c\circ s = (\epsilon\ot X)\circ s = X\ot \epsilon.
$$
Combining this with the fact that $\hat{s}:= c\circ s$ is a coalgebra map, we obtain that
$$
\begin{tikzpicture}[scale=0.395, baseline=(current  bounding  box.center)]
\def\counit(#1,#2){\draw (#1,#2) -- (#1,#2-0.93) (#1,#2-1) circle[radius=2pt]}
\def\comult(#1,#2)[#3,#4]{\draw (#1,#2) -- (#1,#2-0.5*#4) arc (90:0:0.5*#3 and 0.5*#4) (#1,#2-0.5*#4) arc (90:180:0.5*#3 and 0.5*#4)}
\def\laction(#1,#2)[#3,#4]{\draw (#1,#2) .. controls (#1,#2-0.555*#4/2) and (#1+0.445*#4/2,#2-1*#4/2) .. (#1+1*#4/2,#2-1*#4/2) -- (#1+2*#4/2+#3*#4/2,#2-1*#4/2) (#1+2*#4/2+#3*#4/2,#2)--(#1+2*#4/2+#3*#4/2,#2-2*#4/2)}
\def\lactiontr(#1,#2)[#3,#4,#5]{\draw (#1,#2) .. controls (#1,#2-0.555*#4/2) and (#1+0.445*#4/2,#2-1*#4/2) .. (#1+1*#4/2,#2-1*#4/2) -- (#1+2*#4/2+#3*#4/2,#2-1*#4/2)  node [inner sep=0pt, minimum size=3pt,shape=isosceles triangle,fill, shape border rotate=#5] {} (#1+2*#4/2+#3*#4/2,#2) --(#1+2*#4/2+#3*#4/2,#2-2*#4/2)}
\def\doublemap(#1,#2)[#3]{\draw (#1+0.5,#2-0.5) node [name=doublemapnode,inner xsep=0pt, inner ysep=0pt, minimum height=11pt, minimum width=23pt,shape=rectangle,draw,rounded corners] {$#3$} (#1,#2) .. controls (#1,#2-0.075) .. (doublemapnode) (#1+1,#2) .. controls (#1+1,#2-0.075).. (doublemapnode) (doublemapnode) .. controls (#1,#2-0.925)..(#1,#2-1) (doublemapnode) .. controls (#1+1,#2-0.925).. (#1+1,#2-1)}
\def\solbraid(#1,#2)[#3]{\draw (#1,#2-0.5)  node[name=nodemap,inner sep=0pt,  minimum size=9pt, shape=circle,draw]{$#3$}
(#1-0.5,#2) .. controls (#1-0.5,#2-0.15) and (#1-0.4,#2-0.2) .. (#1-0.3,#2-0.3) (#1-0.3,#2-0.3) -- (nodemap)
(#1+0.5,#2) .. controls (#1+0.5,#2-0.15) and (#1+0.4,#2-0.2) .. (#1+0.3,#2-0.3) (#1+0.3,#2-0.3) -- (nodemap)
(#1+0.5,#2-1) .. controls (#1+0.5,#2-0.85) and (#1+0.4,#2-0.8) .. (#1+0.3,#2-0.7) (#1+0.3,#2-0.7) -- (nodemap)
(#1-0.5,#2-1) .. controls (#1-0.5,#2-0.85) and (#1-0.4,#2-0.8) .. (#1-0.3,#2-0.7) (#1-0.3,#2-0.7) -- (nodemap)
}
\def\ractiontr(#1,#2)[#3,#4,#5]{\draw (#1,#2) -- (#1,#2-2*#4/2)  (#1,#2-1*#4/2) node [inner sep=0pt, minimum size=3pt,shape=isosceles triangle,fill, shape border rotate=#5] {}  --(#1+1*#4/2+#3*#4/2,#2-1*#4/2) .. controls (#1+1.555*#4/2+#3*#4/2,#2-1*#4/2) and (#1+2*#4/2+#3*#4/2,#2-0.555*#4/2) .. (#1+2*#4/2+#3*#4/2,#2)  }
\def\flip(#1,#2)[#3]{\draw (
#1+1*#3,#2) .. controls (#1+1*#3,#2-0.05*#3) and (#1+0.96*#3,#2-0.15*#3).. (#1+0.9*#3,#2-0.2*#3)
(#1+0.1*#3,#2-0.8*#3)--(#1+0.9*#3,#2-0.2*#3)
(#1,#2-1*#3) .. controls (#1,#2-0.95*#3) and (#1+0.04*#3,#2-0.85*#3).. (#1+0.1*#3,#2-0.8*#3)
(#1,#2) .. controls (#1,#2-0.05*#3) and (#1+0.04*#3,#2-0.15*#3).. (#1+0.1*#3,#2-0.2*#3)
(#1+0.1*#3,#2-0.2*#3) -- (#1+0.9*#3,#2-0.8*#3)
(#1+1*#3,#2-1*#3) .. controls (#1+1*#3,#2-0.95*#3) and (#1+0.96*#3,#2-0.85*#3).. (#1+0.9*#3,#2-0.8*#3)}
\def\raction(#1,#2)[#3,#4]{\draw (#1,#2) -- (#1,#2-2*#4/2)  (#1,#2-1*#4/2)--(#1+1*#4/2+#3*#4/2,#2-1*#4/2) .. controls (#1+1.555*#4/2+#3*#4/2,#2-1*#4/2) and (#1+2*#4/2+#3*#4/2,#2-0.555*#4/2) .. (#1+2*#4/2+#3*#4/2,#2)}
\begin{scope}[xshift=2.5cm, yshift=-1.5cm]
\solbraid(0.5,0)[\scriptstyle \hat{s}]; \comult(0,-1)[1,1]; \draw (1,-1)-- (1,-2);
\end{scope}
\begin{scope}[xshift=4cm, yshift=-2.05cm]
\node at (0,-0.5){=};
\end{scope}
\begin{scope}[xshift=5.7cm, yshift=0cm]
\solbraid(0.5,0)[\scriptstyle \hat{s}]; \draw (1,-1) .. controls (1,-1.5) and (1.5,-1.5) .. (1.5,-2);  \draw (0,-1) .. controls (0,-1.5) and (-0.5,-1.5) .. (-0.5,-2); \comult(-0.5,-2)[1,1]; \comult(1.5,-2)[1,1]; \flip(0,-3)[1]; \draw (-1,-3)-- (-1,-5); \draw (1,-4)-- (1,-5); \draw (2,-3)-- (2,-5); \counit(0,-4);
\end{scope}
\begin{scope}[xshift=8cm, yshift=-2.05cm]
\node at (0,-0.5){=};
\end{scope}
\begin{scope}[xshift=9.6cm, yshift=-0.5cm]
\comult(-0.5,0)[1,1]; \comult(1.5,0)[1,1]; \flip(0,-1)[1]; \draw (-1,-1)-- (-1,-2); \draw (2,-1)-- (2,-2); \solbraid(-0.5,-2)[\scriptstyle \hat{s}];
\solbraid(1.5,-2)[\scriptstyle \hat{s}]; \counit(0,-3); \draw (-1,-3)-- (-1,-4); \draw (1,-3)-- (1,-4);  \draw (2,-3)-- (2,-4);
\end{scope}
\begin{scope}[xshift=12.1cm, yshift=-2.05cm]
\node at (0,-0.5){=};
\end{scope}
\begin{scope}[xshift=13.1cm, yshift=-1.5cm]
\solbraid(1,-1)[\scriptstyle \hat{s}]; \comult(0,0)[1,1]; \draw (-0.5,-1)-- (-0.5,-2); \draw (1.5,0)-- (1.5,-1);
\end{scope}
\begin{scope}[xshift=15cm, yshift=-2.05cm]
\node at (0,-0.5){,};
\end{scope}
\end{tikzpicture}
$$
as desired.
\end{proof}

\begin{pro}\label{formula para s} The derived map $s$ of $r$ satisfies $s=(\triangleright \ot X)\circ (X\ot c)\circ (\Delta\ot X)$, where $\triangleright :=(X \ot \epsilon) \circ s$.
\end{pro}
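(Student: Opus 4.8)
The plan is to use the coordinate-map machinery of Proposition~\ref{coro: morfismos de coalgebra al producto tensorial de n coalgebras} to reconstruct $s$ from its two coordinate maps and then recognise the resulting expression as $r_{\triangleright}$. First I would record that $s$ is a coalgebra morphism (Proposition~\ref{derivado es iso de coalgebras}), so that by Proposition~\ref{coro: morfismos de coalgebra al producto tensorial de n coalgebras} it is determined by its coordinate maps $\sigma_s$ and $\tau_s$ through $s = (\sigma_s \ot \tau_s)\circ \Delta_{X^2}$, and moreover these coordinates are themselves coalgebra morphisms. By the very definition of $\triangleright$ we have $\sigma_s = (X\ot \epsilon)\circ s = \triangleright$, so the first coordinate is exactly $\triangleright$.

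The only genuine input is the value of the second coordinate: I claim $\tau_s = (\epsilon \ot X)\circ s = X\ot \epsilon$. This is precisely the identity $(\epsilon \ot X)\circ s = X\ot \epsilon$ already obtained inside the proof of Lemma~\ref{cs es de comodulos}, which follows from the fact that $\sigma$, $\tau$ and $\tau^{-1}$ are coalgebra maps together with the counit axioms. Alternatively one recomputes it directly by pushing $\epsilon$ through the defining composite of $s$, collapsing $\epsilon\circ \tau$, $\epsilon\circ \sigma$ and $\epsilon\circ \tau^{-1}$ into counits and then repeatedly applying the counit axiom.

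With both coordinates in hand, Proposition~\ref{coro: morfismos de coalgebra al producto tensorial de n coalgebras} yields
$$
s = \bigl(\triangleright \ot (X\ot \epsilon)\bigr)\circ \Delta_{X^2}.
$$
It then remains to carry out the routine string-diagram simplification of the right-hand side: expanding $\Delta_{X^2} = (X\ot c\ot X)\circ (\Delta \ot \Delta)$ by cocommutativity and using the counit axiom to absorb the $\epsilon$ in the second tensorand reduces it to $(\triangleright \ot X)\circ (X\ot c)\circ (\Delta \ot X) = r_{\triangleright}$. Equivalently, one can bypass this rewriting by comparing $s$ with $r_{\triangleright}$ directly via Corollary~\ref{coro: igualdad de morfismos}, checking that the coordinate maps of $r_{\triangleright}$ are $\triangleright$ and $X\ot \epsilon$ and hence coincide with those of $s$.

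Either route makes clear that there is no substantial obstacle here: once the second coordinate of $s$ has been identified, the statement is a bookkeeping consequence of the coordinate-map formalism, the real computational content having been established already in Lemma~\ref{cs es de comodulos}.
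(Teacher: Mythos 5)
Your proof is correct, but it follows a genuinely different route from the paper's. The paper proves Proposition~\ref{formula para s} by exploiting the full content of Lemma~\ref{cs es de comodulos}: it first combines the counit axiom with the left colinearity of $c\circ s$ to obtain $c\circ s=(X\ot \triangleright)\circ(\Delta\ot X)$, and then composes with $c$ and uses naturality of the symmetry together with cocommutativity of $\Delta$ to rewrite this as $(\triangleright\ot X)\circ(X\ot c)\circ(\Delta\ot X)$. You bypass the colinearity statement entirely: the only thing you borrow from Lemma~\ref{cs es de comodulos} is the identity $(\epsilon\ot X)\circ s=X\ot\epsilon$ established inside its proof, and you then invoke the coordinate reconstruction of Proposition~\ref{coro: morfismos de coalgebra al producto tensorial de n coalgebras} --- legitimately, since $s$ is a coalgebra morphism by Proposition~\ref{derivado es iso de coalgebras} and $X^2$ is cocommutative --- to get $s=(\triangleright\ot(X\ot\epsilon))\circ\Delta_{X^2}$, after which the identification with $r_{\triangleright}$ is bookkeeping. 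Both arguments ultimately rest on the same two facts ($s$ is a coalgebra morphism and its second coordinate is trivial), but your packaging through the categorical-product formalism is shorter and makes the formal character of the statement transparent; the paper's detour through colinearity is not wasted, however, since Lemma~\ref{cs es de comodulos} is used again in the proof of Theorem~\ref{s es sol de yb}. Two small points: the identity $\Delta_{X^2}=(X\ot c\ot X)\circ(\Delta\ot\Delta)$ is just the definition of the tensor-product coalgebra and needs no cocommutativity; and in your alternative ending via Corollary~\ref{coro: igualdad de morfismos} you should record that $r_{\triangleright}$ is itself a coalgebra morphism, which holds because $\triangleright$, being a coordinate map of the coalgebra morphism $s$, is one.
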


\begin{proof} Since by Lemma~\ref{cs es de comodulos}
$$
\begin{tikzpicture}[scale=0.395, baseline=(current  bounding  box.center)]
\def\counit(#1,#2){\draw (#1,#2) -- (#1,#2-0.93) (#1,#2-1) circle[radius=2pt]}
\def\comult(#1,#2)[#3,#4]{\draw (#1,#2) -- (#1,#2-0.5*#4) arc (90:0:0.5*#3 and 0.5*#4) (#1,#2-0.5*#4) arc (90:180:0.5*#3 and 0.5*#4)}
\def\laction(#1,#2)[#3,#4]{\draw (#1,#2) .. controls (#1,#2-0.555*#4/2) and (#1+0.445*#4/2,#2-1*#4/2) .. (#1+1*#4/2,#2-1*#4/2) -- (#1+2*#4/2+#3*#4/2,#2-1*#4/2) (#1+2*#4/2+#3*#4/2,#2)--(#1+2*#4/2+#3*#4/2,#2-2*#4/2)}
\def\lactiontr(#1,#2)[#3,#4,#5]{\draw (#1,#2) .. controls (#1,#2-0.555*#4/2) and (#1+0.445*#4/2,#2-1*#4/2) .. (#1+1*#4/2,#2-1*#4/2) -- (#1+2*#4/2+#3*#4/2,#2-1*#4/2)  node [inner sep=0pt, minimum size=3pt,shape=isosceles triangle,fill, shape border rotate=#5] {} (#1+2*#4/2+#3*#4/2,#2) --(#1+2*#4/2+#3*#4/2,#2-2*#4/2)}
\def\doublemap(#1,#2)[#3]{\draw (#1+0.5,#2-0.5) node [name=doublemapnode,inner xsep=0pt, inner ysep=0pt, minimum height=11pt, minimum width=23pt,shape=rectangle,draw,rounded corners] {$#3$} (#1,#2) .. controls (#1,#2-0.075) .. (doublemapnode) (#1+1,#2) .. controls (#1+1,#2-0.075).. (doublemapnode) (doublemapnode) .. controls (#1,#2-0.925)..(#1,#2-1) (doublemapnode) .. controls (#1+1,#2-0.925).. (#1+1,#2-1)}
\def\solbraid(#1,#2)[#3]{\draw (#1,#2-0.5)  node[name=nodemap,inner sep=0pt,  minimum size=9pt, shape=circle,draw]{$#3$}
(#1-0.5,#2) .. controls (#1-0.5,#2-0.15) and (#1-0.4,#2-0.2) .. (#1-0.3,#2-0.3) (#1-0.3,#2-0.3) -- (nodemap)
(#1+0.5,#2) .. controls (#1+0.5,#2-0.15) and (#1+0.4,#2-0.2) .. (#1+0.3,#2-0.3) (#1+0.3,#2-0.3) -- (nodemap)
(#1+0.5,#2-1) .. controls (#1+0.5,#2-0.85) and (#1+0.4,#2-0.8) .. (#1+0.3,#2-0.7) (#1+0.3,#2-0.7) -- (nodemap)
(#1-0.5,#2-1) .. controls (#1-0.5,#2-0.85) and (#1-0.4,#2-0.8) .. (#1-0.3,#2-0.7) (#1-0.3,#2-0.7) -- (nodemap)
}
\def\ractiontr(#1,#2)[#3,#4,#5]{\draw (#1,#2) -- (#1,#2-2*#4/2)  (#1,#2-1*#4/2) node [inner sep=0pt, minimum size=3pt,shape=isosceles triangle,fill, shape border rotate=#5] {}  --(#1+1*#4/2+#3*#4/2,#2-1*#4/2) .. controls (#1+1.555*#4/2+#3*#4/2,#2-1*#4/2) and (#1+2*#4/2+#3*#4/2,#2-0.555*#4/2) .. (#1+2*#4/2+#3*#4/2,#2)  }
\def\flip(#1,#2)[#3]{\draw (
#1+1*#3,#2) .. controls (#1+1*#3,#2-0.05*#3) and (#1+0.96*#3,#2-0.15*#3).. (#1+0.9*#3,#2-0.2*#3)
(#1+0.1*#3,#2-0.8*#3)--(#1+0.9*#3,#2-0.2*#3)
(#1,#2-1*#3) .. controls (#1,#2-0.95*#3) and (#1+0.04*#3,#2-0.85*#3).. (#1+0.1*#3,#2-0.8*#3)
(#1,#2) .. controls (#1,#2-0.05*#3) and (#1+0.04*#3,#2-0.15*#3).. (#1+0.1*#3,#2-0.2*#3)
(#1+0.1*#3,#2-0.2*#3) -- (#1+0.9*#3,#2-0.8*#3)
(#1+1*#3,#2-1*#3) .. controls (#1+1*#3,#2-0.95*#3) and (#1+0.96*#3,#2-0.85*#3).. (#1+0.9*#3,#2-0.8*#3)}
\def\raction(#1,#2)[#3,#4]{\draw (#1,#2) -- (#1,#2-2*#4/2)  (#1,#2-1*#4/2)--(#1+1*#4/2+#3*#4/2,#2-1*#4/2) .. controls (#1+1.555*#4/2+#3*#4/2,#2-1*#4/2) and (#1+2*#4/2+#3*#4/2,#2-0.555*#4/2) .. (#1+2*#4/2+#3*#4/2,#2)}
\begin{scope}[xshift=0cm, yshift=-1cm]
\solbraid(0.5,0)[\scriptstyle s]; \flip(0,-1)[1];
\end{scope}
\begin{scope}[xshift=1.5cm, yshift=-1.55cm]
\node at (0,-0.5){=};
\end{scope}
\begin{scope}[xshift=2.5cm, yshift=0cm]
\solbraid(0.5,0)[\scriptstyle s]; \flip(0,-1)[1]; \comult(0,-2)[1,1]; \draw (1,-2)-- (1,-4); \draw (-0.5,-3)-- (-0.5,-4); \counit(0.5,-3);
\end{scope}
\begin{scope}[xshift=4cm, yshift=-1.55cm]
\node at (0,-0.5){=};
\end{scope}
\begin{scope}[xshift=5.05cm, yshift=0cm]
\solbraid(1,-1)[\scriptstyle s]; \flip(0.5,-2)[1]; \comult(0,0)[1,1]; \draw (-0.5,-1)-- (-0.5,-3); \draw (-0.5,-3)-- (-0.5,-4); \counit(0.5,-3); \draw (1.5,0)-- (1.5,-1); \draw (1.5,-3)-- (1.5,-4);
\end{scope}
\begin{scope}[xshift=7.1cm, yshift=-1.55cm]
\node at (0,-0.5){=};
\end{scope}
\begin{scope}[xshift=8.1cm, yshift=-0.5cm]
\solbraid(1,-1)[\scriptstyle s]; \comult(0,0)[1,1]; \draw (-0.5,-1)-- (-0.5,-3); \counit(1.5,-2); \draw (0.5,-2)-- (0.5,-3); \draw (1.5,0)-- (1.5,-1);
\end{scope}
\begin{scope}[xshift=10cm, yshift=-1.55cm]
\node at (0,-0.5){,};
\end{scope}
\end{tikzpicture}
$$
we have
$$
\begin{tikzpicture}[scale=0.395, baseline=(current  bounding  box.center)]
\def\counit(#1,#2){\draw (#1,#2) -- (#1,#2-0.93) (#1,#2-1) circle[radius=2pt]}
\def\comult(#1,#2)[#3,#4]{\draw (#1,#2) -- (#1,#2-0.5*#4) arc (90:0:0.5*#3 and 0.5*#4) (#1,#2-0.5*#4) arc (90:180:0.5*#3 and 0.5*#4)}
\def\laction(#1,#2)[#3,#4]{\draw (#1,#2) .. controls (#1,#2-0.555*#4/2) and (#1+0.445*#4/2,#2-1*#4/2) .. (#1+1*#4/2,#2-1*#4/2) -- (#1+2*#4/2+#3*#4/2,#2-1*#4/2) (#1+2*#4/2+#3*#4/2,#2)--(#1+2*#4/2+#3*#4/2,#2-2*#4/2)}
\def\lactiontr(#1,#2)[#3,#4,#5]{\draw (#1,#2) .. controls (#1,#2-0.555*#4/2) and (#1+0.445*#4/2,#2-1*#4/2) .. (#1+1*#4/2,#2-1*#4/2) -- (#1+2*#4/2+#3*#4/2,#2-1*#4/2)  node [inner sep=0pt, minimum size=3pt,shape=isosceles triangle,fill, shape border rotate=#5] {} (#1+2*#4/2+#3*#4/2,#2) --(#1+2*#4/2+#3*#4/2,#2-2*#4/2)}
\def\doublemap(#1,#2)[#3]{\draw (#1+0.5,#2-0.5) node [name=doublemapnode,inner xsep=0pt, inner ysep=0pt, minimum height=11pt, minimum width=23pt,shape=rectangle,draw,rounded corners] {$#3$} (#1,#2) .. controls (#1,#2-0.075) .. (doublemapnode) (#1+1,#2) .. controls (#1+1,#2-0.075).. (doublemapnode) (doublemapnode) .. controls (#1,#2-0.925)..(#1,#2-1) (doublemapnode) .. controls (#1+1,#2-0.925).. (#1+1,#2-1)}
\def\solbraid(#1,#2)[#3]{\draw (#1,#2-0.5)  node[name=nodemap,inner sep=0pt,  minimum size=9pt, shape=circle,draw]{$#3$}
(#1-0.5,#2) .. controls (#1-0.5,#2-0.15) and (#1-0.4,#2-0.2) .. (#1-0.3,#2-0.3) (#1-0.3,#2-0.3) -- (nodemap)
(#1+0.5,#2) .. controls (#1+0.5,#2-0.15) and (#1+0.4,#2-0.2) .. (#1+0.3,#2-0.3) (#1+0.3,#2-0.3) -- (nodemap)
(#1+0.5,#2-1) .. controls (#1+0.5,#2-0.85) and (#1+0.4,#2-0.8) .. (#1+0.3,#2-0.7) (#1+0.3,#2-0.7) -- (nodemap)
(#1-0.5,#2-1) .. controls (#1-0.5,#2-0.85) and (#1-0.4,#2-0.8) .. (#1-0.3,#2-0.7) (#1-0.3,#2-0.7) -- (nodemap)
}
\def\ractiontr(#1,#2)[#3,#4,#5]{\draw (#1,#2) -- (#1,#2-2*#4/2)  (#1,#2-1*#4/2) node [inner sep=0pt, minimum size=3pt,shape=isosceles triangle,fill, shape border rotate=#5] {}  --(#1+1*#4/2+#3*#4/2,#2-1*#4/2) .. controls (#1+1.555*#4/2+#3*#4/2,#2-1*#4/2) and (#1+2*#4/2+#3*#4/2,#2-0.555*#4/2) .. (#1+2*#4/2+#3*#4/2,#2)  }
\def\rack(#1,#2)[#3]{\draw (#1,#2-0.5)  node[name=nodemap,inner sep=0pt,  minimum size=7.5pt, shape=circle,draw]{$#3$} (#1-1,#2) .. controls (#1-1,#2-0.5) and (#1-0.5,#2-0.5) .. (nodemap) (#1,#2)-- (nodemap)  (nodemap)-- (#1,#2-1)}
\def\flip(#1,#2)[#3]{\draw (
#1+1*#3,#2) .. controls (#1+1*#3,#2-0.05*#3) and (#1+0.96*#3,#2-0.15*#3).. (#1+0.9*#3,#2-0.2*#3)
(#1+0.1*#3,#2-0.8*#3)--(#1+0.9*#3,#2-0.2*#3)
(#1,#2-1*#3) .. controls (#1,#2-0.95*#3) and (#1+0.04*#3,#2-0.85*#3).. (#1+0.1*#3,#2-0.8*#3)
(#1,#2) .. controls (#1,#2-0.05*#3) and (#1+0.04*#3,#2-0.15*#3).. (#1+0.1*#3,#2-0.2*#3)
(#1+0.1*#3,#2-0.2*#3) -- (#1+0.9*#3,#2-0.8*#3)
(#1+1*#3,#2-1*#3) .. controls (#1+1*#3,#2-0.95*#3) and (#1+0.96*#3,#2-0.85*#3).. (#1+0.9*#3,#2-0.8*#3)}
\def\raction(#1,#2)[#3,#4]{\draw (#1,#2) -- (#1,#2-2*#4/2)  (#1,#2-1*#4/2)--(#1+1*#4/2+#3*#4/2,#2-1*#4/2) .. controls (#1+1.555*#4/2+#3*#4/2,#2-1*#4/2) and (#1+2*#4/2+#3*#4/2,#2-0.555*#4/2) .. (#1+2*#4/2+#3*#4/2,#2)}
\def\doublesinglemap(#1,#2)[#3]{\draw (#1+0.5,#2-0.5) node [name=doublesinglemapnode,inner xsep=0pt, inner ysep=0pt, minimum height=11pt, minimum width=23pt,shape=rectangle,draw,rounded corners] {$#3$} (#1,#2) .. controls (#1,#2-0.075) .. (doublesinglemapnode) (#1+1,#2) .. controls (#1+1,#2-0.075).. (doublesinglemapnode) (doublesinglemapnode)-- (#1+0.5,#2-1)}
\begin{scope}[xshift=1.5cm, yshift=-2.55cm]
\node at (0,-0.5){$s=$};
\end{scope}
\begin{scope}[xshift=2.8cm, yshift=-1.5cm]
\solbraid(1,-1)[\scriptstyle s]; \comult(0,0)[1,1];  \counit(1.5,-2); \draw (1.5,0)-- (1.5,-1); \flip(-0.5,-2)[1]; \draw (-0.5,-1)-- (-0.5,-2);
\end{scope}
\begin{scope}[xshift=4.8cm, yshift=-2.55cm]
\node at (0,-0.5){=};
\end{scope}
\begin{scope}[xshift=5.8cm, yshift=-0.5cm]
\solbraid(0,-3)[\scriptstyle s]; \comult(0,0)[1,1];  \flip(-0.5,-1)[1]; \flip(0.5,-2)[1]; \counit(0.5,-4); \draw (-0.5,-2)-- (-0.5,-3); \draw (-0.5,-4)-- (-0.5,-5); \draw (1.5,0)-- (1.5,-2); \draw (1.5,-3)-- (1.5,-5);
\end{scope}
\begin{scope}[xshift=7.95cm, yshift=-2.55cm]
\node at (0,-0.5){=};
\end{scope}
\begin{scope}[xshift=9.1cm, yshift=-1cm]
\solbraid(0,-2)[\scriptstyle s]; \comult(0,0)[1,1]; \flip(0.5,-1)[1]; \counit(0.5,-3); \draw (-0.5,-1)-- (-0.5,-2); \draw (-0.5,-3)-- (-0.5,-4);
\draw (1.5,0)-- (1.5,-1); \draw (1.5,-2)-- (1.5,-4);
\end{scope}
\begin{scope}[xshift=11.2cm, yshift=-2.55cm]
\node at (0,-0.5){=};
\end{scope}
\begin{scope}[xshift=12.5cm, yshift=-1.5cm]
\comult(0,0)[1,1]; \flip(0.5,-1)[1]; \rack(0.5,-2)[\scriptstyle \triangleright]; \draw (-0.5,-1)-- (-0.5,-2); \draw (1.5,0)-- (1.5,-1); \draw (1.5,-2)-- (1.5,-3);
\end{scope}
\begin{scope}[xshift=14.3cm, yshift=-2.55cm]
\node at (0,-0.5){,};
\end{scope}
\end{tikzpicture}
$$
as desired.
\end{proof}

\begin{rem}\label{el rack asociado} From Proposition~\ref{formula para s} it follows that $s=r_{\triangleright}$, where $\triangleright \coloneqq (X\ot \epsilon)\circ s$. In the following subsection we will see that $(X,s)$ is a non-degenerate braided set. By Proposition~\ref{solucion tipo rack} this implies that $(X,\triangleright)$ is a rack.
\end{rem}

\subsection{The guitar map}
We now define a remarkable map that allows us to study braid group representations related to braidings. In the case of set-theoretical solutions, this map was considered in~\cite{MR1722951,MR3558231,MR1769723,MR2906433,MR1809284}.

In this subsection, for each $i,j\in \mathds{N}$ with $(i,j)\ne (1,1)$ we set $c_{ij}\coloneqq c_{X^i,X^j}$ (we keep the notation $c$, which we have been using so far, for $c_{X,X}$).

\begin{notations}\label{notacion alfan Qn} Let $r$ be a coalgebra automorphism of $X^2$. In the sequel, for each $n\ge 2$, we let $\alpha_n,Q_n \colon X^n\to X^n$ denote the maps recursively defined by

\begin{itemize}

\smallskip

\item[-] $\alpha_2= Q_2\coloneqq (\tau\ot X)\circ (X\ot \Delta)$,

\smallskip

\item[-] $\alpha_{n+1} \coloneqq (\tau \ot X^n) \circ (X \ot c_{n-1,1} \ot X) \circ (\alpha_n \ot \Delta)$,

\smallskip

\item[-] $Q_{n+1} \coloneqq (\tau \ot Q_n) \circ (X \ot c_{n-1,1} \ot X) \circ (X^n \ot \Delta)$.

\end{itemize}
\end{notations}

\begin{defn}\label{map Jn} For each $n\in\mathds{N}$,  we define the map $J_n\colon X^n\to X^n$ recursively~by:

\begin{itemize}

\smallskip

\item[-] $J_1 \coloneqq \ide_X$,

\smallskip

\item[-] $J_{n+1}\coloneqq (X\ot J_n)\circ \alpha_{n+1}$.

\end{itemize}
\end{defn}

\begin{rem}\label{Jn es inversible} Let $n\ge 2$. A direct computation
proves that
$$
(c_{n-1,1} \ot X) \circ (X^{n-1} \ot \Delta) = (X\ot c_{1,n-1}) \circ (\Delta\ot X^{n-1}) \circ c_{n-1,1}.
$$
Using this fact it is easy to see that
\begin{itemize}
\item[-] $\alpha_{n+1} = (X\ot c_{1,n-1})\circ (\alpha_2 \ot X^{n-1}) \circ (X \ot c_{n-1,1}) \circ (\alpha_n \ot X)$,

\item[-] $Q_{n+1} = (X \ot Q_n) \circ (X \ot c_{1,n-1}) \circ (Q_2\ot X^{n-1})\circ (X \ot c_{n-1,1})$.
\end{itemize}
Thus if $r$ is non-degenerate, then $\alpha_n$ and $Q_n$ are invertible for all~$n$. Consequently, under this condition~$J_n$ is also.
\end{rem}

\begin{pro}\label{Jn con Qn} The equality $J_n= Q_n\circ (J_{n-1}\ot X)$ holds for each $n\ge 2$.
\end{pro}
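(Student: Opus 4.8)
The plan is to prove the identity by induction on $n$, unfolding both $J_{n+1}$ and $Q_{n+1}\circ(J_n\ot X)$ with the recursive definitions of Notations~\ref{notacion alfan Qn} and Definition~\ref{map Jn}, and checking that the two sides reduce to one and the same expression.

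For the base case $n=2$ I would simply note that, since $J_1=\ide_X$ and $\alpha_2=Q_2$, we have $J_2=(X\ot J_1)\circ\alpha_2=\alpha_2=Q_2$, while $Q_2\circ(J_1\ot X)=Q_2$; so both sides agree.

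For the inductive step, assume $J_n=Q_n\circ(J_{n-1}\ot X)$. First I would rewrite the left-hand side: using $J_{n+1}=(X\ot J_n)\circ\alpha_{n+1}$, the recursion for $\alpha_{n+1}$, and the interchange law (bifunctoriality of $\ot$) in the form $(X\ot J_n)\circ(\tau\ot X^n)=\tau\ot J_n$, one obtains
\[
J_{n+1}=(\tau\ot J_n)\circ(X\ot c_{n-1,1}\ot X)\circ(\alpha_n\ot\Delta).
\]
For the right-hand side I would expand $J_n\ot X=(X\ot J_{n-1}\ot X)\circ(\alpha_n\ot X)$ via the defining recursion $J_n=(X\ot J_{n-1})\circ\alpha_n$, and then compute $Q_{n+1}\circ(X\ot J_{n-1}\ot X)$ from the recursion for $Q_{n+1}$. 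The crucial manipulations are: (i) $J_{n-1}$ commutes past the comultiplication $X^n\ot\Delta$, as they act on disjoint tensor factors; (ii) $J_{n-1}$ can be moved across $X\ot c_{n-1,1}\ot X$ by naturality of the braiding $c$, which shifts it onto the factors to the right; and (iii) the inductive hypothesis collapses $Q_n\circ(J_{n-1}\ot X)$ back to $J_n$. Together these give
\[
Q_{n+1}\circ(X\ot J_{n-1}\ot X)=(\tau\ot J_n)\circ(X\ot c_{n-1,1}\ot X)\circ(X^n\ot\Delta).
\]
Precomposing with $\alpha_n\ot X$ and using $(X^n\ot\Delta)\circ(\alpha_n\ot X)=\alpha_n\ot\Delta$ (again disjoint factors) produces exactly the expression found for $J_{n+1}$, closing the induction.

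The main obstacle I anticipate is purely bookkeeping: the map $J_{n-1}$ sits on the \emph{middle} tensor factors, and one must track precisely how its position shifts as it is pushed through the braiding $c_{n-1,1}$ and the doubling $\Delta$, so that the block $Q_n\circ(J_{n-1}\ot X)$ appears in the correct range of factors and the inductive hypothesis applies verbatim. Once the naturality of $c$ and the interchange law are invoked with the correct factorizations, the two sides coincide on the nose, and no auxiliary relation beyond the inductive hypothesis is needed.
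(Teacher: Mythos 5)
Your proof is correct and is essentially the same as the paper's: both argue by induction on $n$ with the same base case, and the inductive step uses exactly the same ingredients — the recursive definitions of $\alpha_{n+1}$, $Q_{n+1}$ and $J_{n+1}$, the interchange law, naturality of the symmetry $c$, and the inductive hypothesis. The only difference is presentational: you reduce both sides to the common expression $(\tau\ot J_n)\circ(X\ot c_{n-1,1}\ot X)\circ(\alpha_n\ot\Delta)$, whereas the paper writes a single chain of equalities from $J_{n+1}$ to $Q_{n+1}\circ(J_n\ot X)$.
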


\begin{proof} For $n=2$ this is true by definition. Suppose that it is true for~$n$. Then,
\begin{align*}
J_{n+1} & = (X\ot J_n) \circ \alpha_{n+1}\\[0.7pt]
& = (X\ot Q_n)\circ (X\ot J_{n-1}\ot X) \circ \alpha_{n+1}\\[0.7pt]
& = (X\ot Q_n)\circ (X\ot J_{n-1}\ot X) \circ (\tau \ot X^n) \circ (X \ot c_{n-1,1} \ot X)\circ (\alpha_n \ot \Delta)\\[0.7pt]
& = (\tau \ot Q_n)\circ (X \ot c_{n-1,1} \ot X) \circ (X^n \ot \Delta)\circ (X\ot J_{n-1}\ot X) \circ (\alpha_n\ot X)\\[0.7pt]
& = (\tau \ot Q_n)\circ (X \ot c_{n-1,1} \ot X) \circ (X^n \ot \Delta)\circ (J_n\ot X)\\[0.7pt]
& = Q_{n+1}\circ (J_n\ot X),
\end{align*}
as desired.
\end{proof}

\begin{pro}\label{prop auxiliar} Let $(X,r)$ be a non-degenerate braided set and let $\triangleright$ be as in Remark~\ref{el rack asociado}. The following equality holds:
$$
\tau\circ r= \triangleright \circ (\tau \ot X) \circ (X \ot \Delta).
$$
\end{pro}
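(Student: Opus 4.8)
The plan is to reduce the statement to a single conjugation identity for the derived map, after which the proposition falls out by applying a counit. Write $\beta \coloneqq (\tau \ot X)\circ (X\ot \Delta)$ for the isomorphism appearing on the right-hand side; this is the map $\alpha_2 = Q_2$ of Notations~\ref{notacion alfan Qn}, which is invertible by Proposition~\ref{no degenerado}. I claim that the derived map satisfies
$$
s\circ \beta = \beta \circ r, \qquad\text{equivalently}\qquad s = \beta \circ r \circ \beta^{-1}.
$$
Granting this, the conclusion is immediate: applying $X\ot \epsilon$ to both sides and using that $(X\ot\epsilon)\circ s = \triangleright$ by the very definition of $\triangleright$ (Remark~\ref{el rack asociado}), while $(X\ot \epsilon)\circ \beta = \tau$ by the counit axiom, one obtains $\triangleright \circ \beta = (X\ot\epsilon)\circ \beta \circ r = \tau \circ r$, which is exactly the asserted equality $\tau\circ r = \triangleright \circ (\tau \ot X)\circ (X\ot \Delta)$.

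It remains to establish the conjugation identity. Recalling Definition~\ref{solucion asociada}, the derived map factors as $s = \beta \circ c \circ g$, where $g \coloneqq (X\ot \sigma)\circ (X\ot \tau^{-1}\ot X)\circ (\Delta\ot \Delta)$. By Lemma~\ref{le previo} the map $g$ is invertible with inverse $(\tau\ot X)\circ (X\ot \Delta)\circ R^{-1} = \beta\circ R^{-1}$; hence $g = R\circ \beta^{-1}$. Substituting this into the factorization gives $s = \beta\circ c\circ R\circ \beta^{-1}$. Finally, since $R = c\circ r$ (Definition~\ref{Rmatriz}) and $c$ is a symmetry, so that $c\circ c = \ide_{X^2}$, we have $c\circ R = c\circ c\circ r = r$, and therefore $s = \beta\circ r\circ \beta^{-1}$, as claimed.

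The computation is essentially forced once the conjugation formula is recognized, and the only genuine input is the invertibility assertion of Lemma~\ref{le previo}, which is already available. I expect the main (minor) subtlety to be bookkeeping: confirming the direction of the inverse produced by Lemma~\ref{le previo}, so that one gets $g = R\circ\beta^{-1}$ rather than its reciprocal, and tracking the cancellation $c\circ R = r$; everything else is a direct application of the counit identities. It is worth noting that this argument uses neither the braid equation nor involutivity — only non-degeneracy and cocommutativity enter — so the hypothesis that $(X,r)$ be braided, while harmless, is not actually needed for this particular identity.
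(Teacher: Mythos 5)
Your proof is correct, but it takes a genuinely different route from the paper's. The paper proves the identity by a direct diagrammatic computation: it expands $\triangleright \circ (\tau \ot X)\circ (X\ot \Delta)$ using $\triangleright = (X\ot \epsilon)\circ s$ and the definition of the derived map, then simplifies step by step (coassociativity, the fact that $\tau$ is a coalgebra morphism, and non-degeneracy to cancel $\tau^{-1}$ against $\tau$), passing through the $\Ree$-matrix and arriving at $\tau\circ r$. You instead isolate the single conjugation identity $s=\beta\circ r\circ\beta^{-1}$, where $\beta=(\tau\ot X)\circ(X\ot\Delta)$, and obtain it purely formally from three available facts: $s=\beta\circ c\circ g$ (Definition~\ref{solucion asociada}), $g=R\circ\beta^{-1}$ (from Lemma~\ref{le previo}), and $c\circ R=r$; the proposition then drops out by applying $X\ot\epsilon$. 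Two remarks on what this buys and what it assumes. First, the inverse formula you invoke, $g^{-1}=(\tau\ot X)\circ(X\ot\Delta)\circ R^{-1}$, is established inside the \emph{proof} of Lemma~\ref{le previo} (whose statement only asserts that $g$ is a coalgebra isomorphism), so strictly you are citing that proof's computation $g\circ\beta=R$; this is mathematically legitimate, and it lets you avoid any new diagram manipulation. Second, your intermediate identity $s\circ\beta=\beta\circ r$ is exactly the base case $J_2\circ r=s\circ J_2$ ($n=2$, $i=1$) of Theorem~\ref{relacion entre Jn r y s}, which the paper later deduces \emph{from} Proposition~\ref{prop auxiliar}; your organization reverses that logical order and would yield the paper's base case for free. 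Your closing observation is also accurate: neither your argument nor, in fact, the paper's own computation uses the braid equation, so the identity holds for any non-degenerate pair.
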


\begin{proof} Let $R$ be the $\Ree$-matrix of $r$.  Since $\triangleright = (X\ot \epsilon)\circ s$ (where $s$ is the derived map of $r$ introduced in Definition~\ref{solucion asociada}), $\tau$ is a morphisms of coalgebras, $\Delta$ is coassociative and $(X,r)$ is non-degenerate, we have
$$
\begin{tikzpicture}[scale=0.395]
\def\counit(#1,#2){\draw (#1,#2) -- (#1,#2-0.93) (#1,#2-1) circle[radius=2pt]}
\def\comult(#1,#2)[#3,#4]{\draw (#1,#2) -- (#1,#2-0.5*#4) arc (90:0:0.5*#3 and 0.5*#4) (#1,#2-0.5*#4) arc (90:180:0.5*#3 and 0.5*#4)}
\def\laction(#1,#2)[#3,#4]{\draw (#1,#2) .. controls (#1,#2-0.555*#4/2) and (#1+0.445*#4/2,#2-1*#4/2) .. (#1+1*#4/2,#2-1*#4/2) -- (#1+2*#4/2+#3*#4/2,#2-1*#4/2) (#1+2*#4/2+#3*#4/2,#2)--(#1+2*#4/2+#3*#4/2,#2-2*#4/2)}
\def\lactiontr(#1,#2)[#3,#4,#5]{\draw (#1,#2) .. controls (#1,#2-0.555*#4/2) and (#1+0.445*#4/2,#2-1*#4/2) .. (#1+1*#4/2,#2-1*#4/2) -- (#1+2*#4/2+#3*#4/2,#2-1*#4/2)  node [inner sep=0pt, minimum size=3pt,shape=isosceles triangle,fill, shape border rotate=#5] {} (#1+2*#4/2+#3*#4/2,#2) --(#1+2*#4/2+#3*#4/2,#2-2*#4/2)}
\def\doublemap(#1,#2)[#3]{\draw (#1+0.5,#2-0.5) node [name=doublemapnode,inner xsep=0pt, inner ysep=0pt, minimum height=9pt, minimum width=23pt,shape=rectangle,draw,rounded corners] {$#3$} (#1,#2) .. controls (#1,#2-0.075) .. (doublemapnode) (#1+1,#2) .. controls (#1+1,#2-0.075).. (doublemapnode) (doublemapnode) .. controls (#1,#2-0.925)..(#1,#2-1) (doublemapnode) .. controls (#1+1,#2-0.925).. (#1+1,#2-1)}
\def\solbraid(#1,#2)[#3]{\draw (#1,#2-0.5)  node[name=nodemap,inner sep=0pt,  minimum size=9pt, shape=circle,draw]{$#3$}
(#1-0.5,#2) .. controls (#1-0.5,#2-0.15) and (#1-0.4,#2-0.2) .. (#1-0.3,#2-0.3) (#1-0.3,#2-0.3) -- (nodemap)
(#1+0.5,#2) .. controls (#1+0.5,#2-0.15) and (#1+0.4,#2-0.2) .. (#1+0.3,#2-0.3) (#1+0.3,#2-0.3) -- (nodemap)
(#1+0.5,#2-1) .. controls (#1+0.5,#2-0.85) and (#1+0.4,#2-0.8) .. (#1+0.3,#2-0.7) (#1+0.3,#2-0.7) -- (nodemap)
(#1-0.5,#2-1) .. controls (#1-0.5,#2-0.85) and (#1-0.4,#2-0.8) .. (#1-0.3,#2-0.7) (#1-0.3,#2-0.7) -- (nodemap)
}
\def\ractiontr(#1,#2)[#3,#4,#5]{\draw (#1,#2) -- (#1,#2-2*#4/2)  (#1,#2-1*#4/2) node [inner sep=0pt, minimum size=3pt,shape=isosceles triangle,fill, shape border rotate=#5] {}  --(#1+1*#4/2+#3*#4/2,#2-1*#4/2) .. controls (#1+1.555*#4/2+#3*#4/2,#2-1*#4/2) and (#1+2*#4/2+#3*#4/2,#2-0.555*#4/2) .. (#1+2*#4/2+#3*#4/2,#2)  }
\def\flip(#1,#2)[#3]{\draw (
#1+1*#3,#2) .. controls (#1+1*#3,#2-0.05*#3) and (#1+0.96*#3,#2-0.15*#3).. (#1+0.9*#3,#2-0.2*#3)
(#1+0.1*#3,#2-0.8*#3)--(#1+0.9*#3,#2-0.2*#3)
(#1,#2-1*#3) .. controls (#1,#2-0.95*#3) and (#1+0.04*#3,#2-0.85*#3).. (#1+0.1*#3,#2-0.8*#3)
(#1,#2) .. controls (#1,#2-0.05*#3) and (#1+0.04*#3,#2-0.15*#3).. (#1+0.1*#3,#2-0.2*#3)
(#1+0.1*#3,#2-0.2*#3) -- (#1+0.9*#3,#2-0.8*#3)
(#1+1*#3,#2-1*#3) .. controls (#1+1*#3,#2-0.95*#3) and (#1+0.96*#3,#2-0.85*#3).. (#1+0.9*#3,#2-0.8*#3)
}
\def\raction(#1,#2)[#3,#4]{\draw (#1,#2) -- (#1,#2-2*#4/2)  (#1,#2-1*#4/2)--(#1+1*#4/2+#3*#4/2,#2-1*#4/2) .. controls (#1+1.555*#4/2+#3*#4/2,#2-1*#4/2) and (#1+2*#4/2+#3*#4/2,#2-0.555*#4/2) .. (#1+2*#4/2+#3*#4/2,#2)}
\def\rack(#1,#2)[#3]{\draw (#1,#2-0.5)  node[name=nodemap,inner sep=0pt,  minimum size=7.5pt, shape=circle,draw]{$#3$} (#1-1,#2) .. controls (#1-1,#2-0.5) and (#1-0.5,#2-0.5) .. (nodemap) (#1,#2)-- (nodemap)  (nodemap)-- (#1,#2-1)}
\def\racklarge(#1,#2)[#3]{\draw (#1,#2-0.5)  node[name=nodemap,inner sep=0pt,  minimum size=7.5pt, shape=circle,draw]{$#3$} (#1-2,#2+0.5) .. controls (#1-2,#2-0.5) and (#1-0.5,#2-0.5) .. (nodemap) (#1,#2)-- (nodemap)  (nodemap)-- (#1,#2-1)}
\begin{scope}[xshift=0cm, yshift=-3.45cm]
\comult(1.5,0)[1,1];  \raction(0,-1)[0,1]; \draw (2,-1) -- (2,-2.5); \draw (0,0) -- (0,-1); \racklarge(2,-2.5)[\scriptstyle \triangleright];
\end{scope}
\begin{scope}[xshift=2.7cm, yshift=-4.75cm]
\node at (0,-0.5){=};
\end{scope}
\begin{scope}[xshift=3.8cm, yshift=-1.25cm]
\comult(1.5,0)[1,1];  \raction(0,-1)[0,1]; \draw (2,-1) -- (2,-2); \draw (0,0) -- (0,-1); \comult(0,-2)[1,1]; \comult(2,-2)[1,1]; \ractiontr(0.5,-3)[0,1,90]; \laction(0.5,-4)[0,2]; \draw (2.5,-3) -- (2.5,-4); \flip(1.5,-6)[1]; \draw (-0.5,-3) -- (-0.5,-4);   \draw (-0.5,-4) .. controls (-0.5,-5) and (1.5,-5.5) .. (1.5,-6); \raction(1.5,-7)[0,1];
\end{scope}
\begin{scope}[xshift=6.8cm, yshift=-4.75cm]
\node at (0,-0.5){=};
\end{scope}
\begin{scope}[xshift=7.4cm, yshift=-0.75cm]
\comult(0.5,-1)[1,1]; \comult(2.5,-1)[1,1]; \comult(3,0)[1,1]; \draw (0.5,0) -- (0.5,-1); \flip(1,-2)[1]; \raction(0,-3)[0,1]; \raction(2,-3)[0,1]; \draw (0,-2) -- (0,-3); \draw (3,-2) -- (3,-3); \draw (3.5,-1) -- (3.5,-3.5); \comult(3.5,-3)[1,1]; \ractiontr(2,-4)[0,1,90]; \laction(2,-5)[0,2]; \draw (4,-4) -- (4,-5);\flip(3,-7)[1]; \draw (0,-4) .. controls (0,-6) and (3,-6.5) .. (3,-7); \raction(3,-8)[0,1];
\end{scope}
\begin{scope}[xshift=11.95cm, yshift=-4.75cm]
\node at (0,-0.5){=};
\end{scope}
\begin{scope}[xshift=12.5cm, yshift=0cm]
\comult(0.5,-1)[1,1];  \comult(3,0)[2,2];\comult(4,-2)[1,1]; \flip(1,-2)[1]; \comult(3.5,-3)[1,1]; \raction(2,-4)[0,1]; \ractiontr(2,-4.5)[0,2,90]; \laction(2,-6.5)[0.5,2]; \draw (4,-4) -- (4,-4.5); \draw (4.5,-3) -- (4.5,-6.5); \draw (2,-3) -- (2,-4); \raction(0,-3)[0,1]; \draw (0,-2) -- (0,-3); \draw (0,-4) .. controls (0,-7.5) and (3.5,-8) .. (3.5,-8.5); \flip(3.5,-8.5)[1]; \raction(3.5,-9.5)[0,1];
\end{scope}
\begin{scope}[xshift=17.55cm, yshift=-4.75cm]
\node at (0,-0.5){=};
\end{scope}
\begin{scope}[xshift=18.40cm, yshift=-3.7cm]
\doublemap(0,0)[\scriptstyle R]; \flip(0,-1)[1]; \raction(0,-2)[0,1];
\end{scope}
\begin{scope}[xshift=20.35cm, yshift=-4.75cm]
\node at (0,-0.5){=};
\end{scope}
\begin{scope}[xshift=21cm, yshift=-4.2cm]
\solbraid(0.5,0)[\scriptstyle r]; \raction(0,-1)[0,1];
\end{scope}
\begin{scope}[xshift=22.40cm, yshift=-4.75cm]
\node at (0,-0.5){,};
\end{scope}
\end{tikzpicture}
$$
as desired.
\end{proof}

\begin{pro}\label{tau es morfismo de comodulos} Let $(X,r)$ and $\triangleright$ be as in Proposition~\ref{prop auxiliar}. Then
\begin{equation*}\label{eq: tau es morfismo de comodulos}
\tau\circ (\triangleright \ot X) = \triangleright \circ (\tau \ot \tau) \circ (X \ot c \ot X) \circ (X^2 \ot \Delta).
\end{equation*}
\end{pro}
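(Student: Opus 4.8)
The plan is to deduce the asserted equality from the braid equation by precomposing with an isomorphism that absorbs the awkward outer $\triangleright$ on the left. Recall from Notations~\ref{notacion alfan Qn} the map $\alpha_2=(\tau\ot X)\circ(X\ot\Delta)$. By Proposition~\ref{no degenerado} (equivalently Remark~\ref{Jn es inversible}) $\alpha_2$ is an isomorphism, hence so is $\alpha_2\ot X$; since an iso is in particular epic, it suffices to verify the identity after composing both sides on the right with $\alpha_2\ot X$. On the left-hand side this is immediate: $(\triangleright\ot X)\circ(\alpha_2\ot X)=(\triangleright\circ\alpha_2)\ot X$, and Proposition~\ref{prop auxiliar} gives $\triangleright\circ\alpha_2=\tau\circ r$, so the left-hand side turns into $\tau\circ(\tau\ot X)\circ(r\ot X)$.

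For the right-hand side I would first use coassociativity and cocommutativity of $X$, together with the fact that $\tau=(\epsilon\ot X)\circ r$ is a coalgebra morphism (Proposition~\ref{coro: morfismos de coalgebra al producto tensorial de n coalgebras}), to reorganize the comultiplications coming from $\alpha_2\ot X$ and from $X^2\ot\Delta$, and then run the following chain, in which each step is an instance of a categorical identity already available (I write elements as in the set-theoretic model, $r(x,y)=(\sigma(x,y),\tau(x,y))$, only as a mnemonic for the diagrams):
\begin{align*}
\tau(\tau(x,y),z)\triangleright\tau(y,z)
&=\tau\bigl(\tau(x,\sigma(y,z)),\tau(y,z)\bigr)\triangleright\tau(y,z)\\
&=\tau\Bigl(\sigma\bigl(\tau(x,\sigma(y,z)),\tau(y,z)\bigr),\tau\bigl(\tau(x,\sigma(y,z)),\tau(y,z)\bigr)\Bigr)\\
&=\tau\Bigl(\tau\bigl(\sigma(x,y),\sigma(\tau(x,y),z)\bigr),\tau(\tau(x,y),z)\Bigr)\\
&=\tau\bigl(\tau(\sigma(x,y),\tau(x,y)),z\bigr).
\end{align*}
The first equality is the first braid relation of Remark~\ref{prop de involutiva y de braided}, applied to the first entry of $\triangleright$ so as to align its second entry $\tau(y,z)$ with the inner $\tau$. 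The second equality is Proposition~\ref{prop auxiliar} in the form $\triangleright\circ(\tau\ot X)\circ(X\ot\Delta)=\tau\circ r$, legitimate once $\tau(y,z)$ has been split into two copies through the coalgebra-morphism property of $\tau$, so that the diagram literally exhibits the pattern $\triangleright\circ(\tau\ot X)\circ(X\ot\Delta)$. In the third equality the first argument is rewritten by the linking relation and the second by the first braid relation of Remark~\ref{prop de involutiva y de braided}; the last equality is one more application of the first braid relation, with $\sigma(x,y),\tau(x,y)$ in place of $x,y$. The final expression equals $\tau\circ(\tau\ot X)\circ(r\ot X)$, so both composites agree and the proposition follows.

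I expect the main obstacle to be purely organizational rather than conceptual: managing the comultiplications produced by $\alpha_2\ot X$ and by $X^2\ot\Delta$ and arranging the bracketings so that the configuration matches exactly the left-hand side of Proposition~\ref{prop auxiliar}. All the genuine content is already packaged in the three relations of Remark~\ref{prop de involutiva y de braided} and in Proposition~\ref{prop auxiliar}; once the coassociativity and cocommutativity bookkeeping is carried out, the graphical derivation is the short chain displayed above.
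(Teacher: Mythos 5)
Your proposal is correct and is essentially the paper's own argument: the paper likewise cancels the invertible map $((\tau\ot X)\circ(X\ot\Delta))\ot X$ and transforms the right-hand side into $\tau\circ(\tau\ot X)\circ(r\ot X)$ through exactly your chain — first braid relation, splitting $\tau(y,z)$ via the coalgebra-morphism property of $\tau$ (Remark~\ref{algunas formulas}(1)), Proposition~\ref{prop auxiliar}, and then the relations of Remark~\ref{prop de involutiva y de braided} again. The only cosmetic differences are that the paper packages your third step (linking relation on the first argument, braid relation on the second) as the single identity $r\circ(\tau\ot X)\circ(X\ot r)=(\tau\ot X)\circ(X\ot r)\circ(r\ot X)$, and carries out the Sweedler bookkeeping in graphical calculus rather than in element notation.
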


\begin{proof} By Remarks~\ref{prop de involutiva y de braided} and~\ref{algunas formulas}(1), the fact that $(X,r)$ is a non-degenerate braided set and Proposition~\ref{prop auxiliar}, we have
$$
\begin{tikzpicture}[scale=0.395]
\def\counit(#1,#2){\draw (#1,#2) -- (#1,#2-0.93) (#1,#2-1) circle[radius=2pt]}
\def\comult(#1,#2)[#3,#4]{\draw (#1,#2) -- (#1,#2-0.5*#4) arc (90:0:0.5*#3 and 0.5*#4) (#1,#2-0.5*#4) arc (90:180:0.5*#3 and 0.5*#4)}
\def\laction(#1,#2)[#3,#4]{\draw (#1,#2) .. controls (#1,#2-0.555*#4/2) and (#1+0.445*#4/2,#2-1*#4/2) .. (#1+1*#4/2,#2-1*#4/2) -- (#1+2*#4/2+#3*#4/2,#2-1*#4/2) (#1+2*#4/2+#3*#4/2,#2)--(#1+2*#4/2+#3*#4/2,#2-2*#4/2)}
\def\lactiontr(#1,#2)[#3,#4,#5]{\draw (#1,#2) .. controls (#1,#2-0.555*#4/2) and (#1+0.445*#4/2,#2-1*#4/2) .. (#1+1*#4/2,#2-1*#4/2) -- (#1+2*#4/2+#3*#4/2,#2-1*#4/2)  node [inner sep=0pt, minimum size=3pt,shape=isosceles triangle,fill, shape border rotate=#5] {} (#1+2*#4/2+#3*#4/2,#2) --(#1+2*#4/2+#3*#4/2,#2-2*#4/2)}
\def\doublemap(#1,#2)[#3]{\draw (#1+0.5,#2-0.5) node [name=doublemapnode,inner xsep=0pt, inner ysep=0pt, minimum height=11pt, minimum width=23pt,shape=rectangle,draw,rounded corners] {$#3$} (#1,#2) .. controls (#1,#2-0.075) .. (doublemapnode) (#1+1,#2) .. controls (#1+1,#2-0.075).. (doublemapnode) (doublemapnode) .. controls (#1,#2-0.925)..(#1,#2-1) (doublemapnode) .. controls (#1+1,#2-0.925).. (#1+1,#2-1)}
\def\solbraid(#1,#2)[#3]{\draw (#1,#2-0.5)  node[name=nodemap,inner sep=0pt,  minimum size=9pt, shape=circle,draw]{$#3$}
(#1-0.5,#2) .. controls (#1-0.5,#2-0.15) and (#1-0.4,#2-0.2) .. (#1-0.3,#2-0.3) (#1-0.3,#2-0.3) -- (nodemap)
(#1+0.5,#2) .. controls (#1+0.5,#2-0.15) and (#1+0.4,#2-0.2) .. (#1+0.3,#2-0.3) (#1+0.3,#2-0.3) -- (nodemap)
(#1+0.5,#2-1) .. controls (#1+0.5,#2-0.85) and (#1+0.4,#2-0.8) .. (#1+0.3,#2-0.7) (#1+0.3,#2-0.7) -- (nodemap)
(#1-0.5,#2-1) .. controls (#1-0.5,#2-0.85) and (#1-0.4,#2-0.8) .. (#1-0.3,#2-0.7) (#1-0.3,#2-0.7) -- (nodemap)
}
\def\ractiontr(#1,#2)[#3,#4,#5]{\draw (#1,#2) -- (#1,#2-2*#4/2)  (#1,#2-1*#4/2) node [inner sep=0pt, minimum size=3pt,shape=isosceles triangle,fill, shape border rotate=#5] {}  --(#1+1*#4/2+#3*#4/2,#2-1*#4/2) .. controls (#1+1.555*#4/2+#3*#4/2,#2-1*#4/2) and (#1+2*#4/2+#3*#4/2,#2-0.555*#4/2) .. (#1+2*#4/2+#3*#4/2,#2)  }
\def\flip(#1,#2)[#3]{\draw (
#1+1*#3,#2) .. controls (#1+1*#3,#2-0.05*#3) and (#1+0.96*#3,#2-0.15*#3).. (#1+0.9*#3,#2-0.2*#3)
(#1+0.1*#3,#2-0.8*#3)--(#1+0.9*#3,#2-0.2*#3)
(#1,#2-1*#3) .. controls (#1,#2-0.95*#3) and (#1+0.04*#3,#2-0.85*#3).. (#1+0.1*#3,#2-0.8*#3)
(#1,#2) .. controls (#1,#2-0.05*#3) and (#1+0.04*#3,#2-0.15*#3).. (#1+0.1*#3,#2-0.2*#3)
(#1+0.1*#3,#2-0.2*#3) -- (#1+0.9*#3,#2-0.8*#3)
(#1+1*#3,#2-1*#3) .. controls (#1+1*#3,#2-0.95*#3) and (#1+0.96*#3,#2-0.85*#3).. (#1+0.9*#3,#2-0.8*#3)
}
\def\raction(#1,#2)[#3,#4]{\draw (#1,#2) -- (#1,#2-2*#4/2)  (#1,#2-1*#4/2)--(#1+1*#4/2+#3*#4/2,#2-1*#4/2) .. controls (#1+1.555*#4/2+#3*#4/2,#2-1*#4/2) and (#1+2*#4/2+#3*#4/2,#2-0.555*#4/2) .. (#1+2*#4/2+#3*#4/2,#2)}
\def\rack(#1,#2)[#3]{\draw (#1,#2-0.5)  node[name=nodemap,inner sep=0pt,  minimum size=7.5pt, shape=circle,draw]{$#3$} (#1-1,#2) .. controls (#1-1,#2-0.5) and (#1-0.5,#2-0.5) .. (nodemap) (#1,#2)-- (nodemap)  (nodemap)-- (#1,#2-1)}
\def\racklarge(#1,#2)[#3]{\draw (#1,#2-0.5)  node[name=nodemap,inner sep=0pt,  minimum size=7.5pt, shape=circle,draw]{$#3$} (#1-2,#2+0.5) .. controls (#1-2,#2-0.5) and (#1-0.5,#2-0.5) .. (nodemap) (#1,#2)-- (nodemap)  (nodemap)-- (#1,#2-1)}
\def\rackextralarge(#1,#2)[#3]{\draw (#1,#2-0.5)  node[name=nodemap,inner sep=0pt,  minimum size=7.5pt, shape=circle,draw]{$#3$} (#1-3,#2+1) .. controls (#1-3,#2-0.5) and (#1-0.5,#2-0.5) .. (nodemap) (#1,#2)-- (nodemap)  (nodemap)-- (#1,#2-1)}
\begin{scope}[xshift=0cm, yshift=-0.75cm]
\draw (0,0) -- (0,-1); \comult(1.5,0)[1,1];  \comult(3.5,0)[1,1]; \raction(0,-1)[0,1]; \raction(0,-2)[0.665,1.5]; \flip(2,-1)[1]; \draw (4,-1) -- (4,-2); \raction(3,-2)[0,1]; \rackextralarge(3,-5)[\scriptstyle \triangleright]; \draw (0,-3) -- (0,-4); \draw (3,-3) -- (3,-5);
\end{scope}
\begin{scope}[xshift=4.45cm, yshift=-3.25cm]
\node at (0,-0.5){=};
\end{scope}
\begin{scope}[xshift=5cm, yshift=0cm]
\draw (0,0) -- (0,-3); \comult(1.5,0)[1,1];  \comult(3.5,0)[1,1];  \flip(2,-1)[1]; \draw (4,-1) -- (4,-2); \draw (1,-1) -- (1,-2); \raction(3,-2)[0,1]; \solbraid(1.5,-2)[\scriptstyle r]; \raction(0,-3)[0,1]; \raction(0,-4)[0.665,1.5]; \draw (2,-3) -- (2,-4); \rackextralarge(3,-6.5)[\scriptstyle \triangleright]; \draw (3,-3) -- (3,-6.5);
\end{scope}
\begin{scope}[xshift=9.5cm, yshift=-3.25cm]
\node at (0,-0.5){=};
\end{scope}
\begin{scope}[xshift=10.1cm, yshift=-1.2cm]
\draw (0,0) -- (0,-1); \solbraid(1.5,0)[\scriptstyle r]; \raction(0,-1)[0,1]; \comult(2,-1)[1,1]; \raction(0,-2)[1,1];
\racklarge(2,-4)[\scriptstyle \triangleright]; \draw (2.5,-2) .. controls (2.5,-2.5) and (2,-3.5) .. (2,-4); \draw (0,-3) -- (0,-3.5);
\end{scope}
\begin{scope}[xshift=13.15cm, yshift=-3.25cm]
\node at (0,-0.5){=};
\end{scope}
\begin{scope}[xshift=13.75cm, yshift=-1.5cm]
\draw (0,0) -- (0,-1); \solbraid(1.5,0)[\scriptstyle r]; \raction(0,-1)[0,1]; \draw (0,-2) -- (0,-2.5); \solbraid(0.5,-2.5)[\scriptstyle r];  \draw (2,-1) .. controls (2,-1.5) and (1,-2) .. (1,-2.5); \raction(0,-3.5)[0,1];
\end{scope}
\begin{scope}[xshift=16.3cm, yshift=-3.25cm]
\node at (0,-0.5){=};
\end{scope}
\begin{scope}[xshift=16.8cm, yshift=-1.5cm]
\draw (2,0) -- (2,-1); \solbraid(0.5,0)[\scriptstyle r]; \solbraid(1.5,-1)[\scriptstyle r]; \draw (0,-1) -- (0,-2);  \raction(0,-2)[0,1]; \draw (2,-2) -- (2,-2.5); \raction(0,-2.5)[0,2];
\end{scope}
\begin{scope}[xshift=19.3cm, yshift=-3.25cm]
\node at (0,-0.5){=};
\end{scope}
\begin{scope}[xshift=19.8cm, yshift=-2.25cm]
\draw (1.5,0) -- (1.5,-1.5); \solbraid(0.5,0)[\scriptstyle r]; \raction(0,-1)[0,1]; \raction(0,-1.5)[0,1.5];
\end{scope}
\begin{scope}[xshift=21.7cm, yshift=-3.25cm]
\node at (0,-0.5){=};
\end{scope}
\begin{scope}[xshift=22.3cm, yshift=-1.45cm]
\comult(1.5,0)[1,1];  \raction(0,-1)[0,1]; \draw (2,-1) -- (2,-2.5); \draw (0,0) -- (0,-1); \racklarge(2,-2.5)[\scriptstyle \triangleright]; \draw (3,0) -- (3,-3.5); \raction(2,-3.5)[0,1];
\end{scope}
\begin{scope}[xshift=25.6cm, yshift=-3.25cm]
\node at (0,-0.5){.};
\end{scope}
\end{tikzpicture}
$$
Since $(\tau \ot X) \circ (X \ot \Delta)$ is invertible, this finishes the proof.
\end{proof}

\begin{pro}\label{compatibilidad de s y tau} Let $(X,r)$ and $\triangleright$ be as in Proposition~\ref{prop auxiliar}. Then
$$
\tilde{\tau}_2 \circ (s \ot X) = s \circ \tilde{\tau}_2.
$$
where $\tilde{\tau}_2:=(\tau \ot \tau) \circ (X \ot c \ot X) \circ (X^2 \ot \Delta)$ and $s$ is the derived map of $r$.
\end{pro}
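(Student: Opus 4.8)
The plan is to reduce the identity to a comparison of coordinate maps. Both sides are morphisms $X^3\to X^2$, and since $s$ is an isomorphism of coalgebras (Proposition~\ref{derivado es iso de coalgebras}) while $\tilde{\tau}_2=(\tau\ot\tau)\circ(X\ot c\ot X)\circ(X^2\ot\Delta)$ is a composite of coalgebra morphisms (the comultiplication is a coalgebra map because $X$ is cocommutative, and $c$ and $\tau$ are coalgebra maps), both $\tilde{\tau}_2\circ(s\ot X)$ and $s\circ\tilde{\tau}_2$ are coalgebra morphisms. Hence, by Corollary~\ref{coro: igualdad de morfismos}, it suffices to show that their first and second coordinate maps agree.

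First I would record the coordinate maps of $s$ itself. By Proposition~\ref{formula para s}, $s=(\triangleright\ot X)\circ(X\ot c)\circ(\Delta\ot X)$ with $\triangleright=(X\ot\epsilon)\circ s$; using that $\triangleright$ is a coalgebra map (so $\epsilon\circ\triangleright=\epsilon\ot\epsilon$) a short computation gives $(X\ot\epsilon)\circ s=\triangleright$ and $(\epsilon\ot X)\circ s=X\ot\epsilon$. I would likewise need the two partial counits of $\tilde{\tau}_2$: using $\epsilon\circ\tau=\epsilon\ot\epsilon$ one obtains $(X\ot\epsilon)\circ\tilde{\tau}_2=\tau\circ(X\ot\epsilon\ot X)$ and $(\epsilon\ot X)\circ\tilde{\tau}_2=\tau\circ(\epsilon\ot X^2)$.

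For the second coordinate the computation is then immediate: $(\epsilon\ot X)\circ s\circ\tilde{\tau}_2=(X\ot\epsilon)\circ\tilde{\tau}_2=\tau\circ(X\ot\epsilon\ot X)$, while $(\epsilon\ot X)\circ\tilde{\tau}_2\circ(s\ot X)=\tau\circ(\epsilon\ot X^2)\circ(s\ot X)=\tau\circ(X\ot\epsilon\ot X)$, the last step because $(\epsilon\ot X^2)\circ(s\ot X)=\bigl((\epsilon\ot X)\circ s\bigr)\ot X=(X\ot\epsilon)\ot X$. For the first coordinate, $(X\ot\epsilon)\circ s\circ\tilde{\tau}_2=\triangleright\circ\tilde{\tau}_2$, while $(X\ot\epsilon)\circ\tilde{\tau}_2\circ(s\ot X)=\tau\circ(X\ot\epsilon\ot X)\circ(s\ot X)=\tau\circ(\triangleright\ot X)$, the last equality because $(X\ot\epsilon\ot X)\circ(s\ot X)=\bigl((X\ot\epsilon)\circ s\bigr)\ot X=\triangleright\ot X$.

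The main obstacle, and the only place where the braided/rack structure really enters, is the first coordinate: I must know that $\tau\circ(\triangleright\ot X)=\triangleright\circ\tilde{\tau}_2$. But this is exactly Proposition~\ref{tau es morfismo de comodulos}, whose right-hand side equals $\triangleright\circ\tilde{\tau}_2$ by the definition of $\tilde{\tau}_2$. With this input both coordinate maps of the two sides coincide, and Corollary~\ref{coro: igualdad de morfismos} yields the claim. Everything else is bookkeeping with counits and the interchange law for tensor products, so no string-diagram chase beyond what is already packaged in Propositions~\ref{formula para s} and~\ref{tau es morfismo de comodulos} is required.
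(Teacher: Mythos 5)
Your proof is correct, but it follows a genuinely different route from the paper's. The paper proves the identity by a direct string-diagram computation: it expands both sides using $s=(\triangleright\ot X)\circ(X\ot c)\circ(\Delta\ot X)$, derives one chain of diagram equalities for each side of the identity (using Proposition~\ref{tau es morfismo de comodulos} and the cocommutativity of $X$), and then glues the two chains together by observing that their meeting point is the same expression because $\tau$ is a coalgebra homomorphism. You instead note that both $\tilde{\tau}_2\circ(s\ot X)$ and $s\circ\tilde{\tau}_2$ are coalgebra morphisms $X^3\to X^2$ (using Proposition~\ref{derivado es iso de coalgebras} for $s$ and cocommutativity for $\Delta$), and then invoke Corollary~\ref{coro: igualdad de morfismos} to reduce the claim to equality of the two coordinate maps: the second coordinates agree by pure counit bookkeeping, via $(\epsilon\ot X)\circ s=X\ot\epsilon$ (a fact the paper itself records in the proof of Lemma~\ref{cs es de comodulos}), and the first coordinates agree precisely by Proposition~\ref{tau es morfismo de comodulos} combined with $(X\ot\epsilon)\circ s=\triangleright$ from Proposition~\ref{formula para s}. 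Both arguments hinge on the same key lemma, Proposition~\ref{tau es morfismo de comodulos}; what your version buys is brevity and a clear isolation of that lemma as the only substantive input, whereas the paper's version is a self-contained diagram chase that never needs the categorical-product property of cocommutative coalgebras.
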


\begin{proof} On one hand, by Proposition~\ref{tau es morfismo de comodulos} and the fact that $X$ is a cocommutative coalgebra, we have
\begin{equation}\label{ecu1}
\begin{tikzpicture}[scale=0.395, baseline=(current  bounding  box.center)]
\def\counit(#1,#2){\draw (#1,#2) -- (#1,#2-0.93) (#1,#2-1) circle[radius=2pt]}
\def\comult(#1,#2)[#3,#4]{\draw (#1,#2) -- (#1,#2-0.5*#4) arc (90:0:0.5*#3 and 0.5*#4) (#1,#2-0.5*#4) arc (90:180:0.5*#3 and 0.5*#4)}
\def\laction(#1,#2)[#3,#4]{\draw (#1,#2) .. controls (#1,#2-0.555*#4/2) and (#1+0.445*#4/2,#2-1*#4/2) .. (#1+1*#4/2,#2-1*#4/2) -- (#1+2*#4/2+#3*#4/2,#2-1*#4/2) (#1+2*#4/2+#3*#4/2,#2)--(#1+2*#4/2+#3*#4/2,#2-2*#4/2)}
\def\lactiontr(#1,#2)[#3,#4,#5]{\draw (#1,#2) .. controls (#1,#2-0.555*#4/2) and (#1+0.445*#4/2,#2-1*#4/2) .. (#1+1*#4/2,#2-1*#4/2) -- (#1+2*#4/2+#3*#4/2,#2-1*#4/2)  node [inner sep=0pt, minimum size=3pt,shape=isosceles triangle,fill, shape border rotate=#5] {} (#1+2*#4/2+#3*#4/2,#2) --(#1+2*#4/2+#3*#4/2,#2-2*#4/2)}
\def\doublemap(#1,#2)[#3]{\draw (#1+0.5,#2-0.5) node [name=doublemapnode,inner xsep=0pt, inner ysep=0pt, minimum height=11pt, minimum width=23pt,shape=rectangle,draw,rounded corners] {$#3$} (#1,#2) .. controls (#1,#2-0.075) .. (doublemapnode) (#1+1,#2) .. controls (#1+1,#2-0.075).. (doublemapnode) (doublemapnode) .. controls (#1,#2-0.925)..(#1,#2-1) (doublemapnode) .. controls (#1+1,#2-0.925).. (#1+1,#2-1)}
\def\solbraid(#1,#2)[#3]{\draw (#1,#2-0.5)  node[name=nodemap,inner sep=0pt,  minimum size=9pt, shape=circle,draw]{$#3$}
(#1-0.5,#2) .. controls (#1-0.5,#2-0.15) and (#1-0.4,#2-0.2) .. (#1-0.3,#2-0.3) (#1-0.3,#2-0.3) -- (nodemap)
(#1+0.5,#2) .. controls (#1+0.5,#2-0.15) and (#1+0.4,#2-0.2) .. (#1+0.3,#2-0.3) (#1+0.3,#2-0.3) -- (nodemap)
(#1+0.5,#2-1) .. controls (#1+0.5,#2-0.85) and (#1+0.4,#2-0.8) .. (#1+0.3,#2-0.7) (#1+0.3,#2-0.7) -- (nodemap)
(#1-0.5,#2-1) .. controls (#1-0.5,#2-0.85) and (#1-0.4,#2-0.8) .. (#1-0.3,#2-0.7) (#1-0.3,#2-0.7) -- (nodemap)
}
\def\ractiontr(#1,#2)[#3,#4,#5]{\draw (#1,#2) -- (#1,#2-2*#4/2)  (#1,#2-1*#4/2) node [inner sep=0pt, minimum size=3pt,shape=isosceles triangle,fill, shape border rotate=#5] {}  --(#1+1*#4/2+#3*#4/2,#2-1*#4/2) .. controls (#1+1.555*#4/2+#3*#4/2,#2-1*#4/2) and (#1+2*#4/2+#3*#4/2,#2-0.555*#4/2) .. (#1+2*#4/2+#3*#4/2,#2)  }
\def\flip(#1,#2)[#3]{\draw (
#1+1*#3,#2) .. controls (#1+1*#3,#2-0.05*#3) and (#1+0.96*#3,#2-0.15*#3).. (#1+0.9*#3,#2-0.2*#3)
(#1+0.1*#3,#2-0.8*#3)--(#1+0.9*#3,#2-0.2*#3)
(#1,#2-1*#3) .. controls (#1,#2-0.95*#3) and (#1+0.04*#3,#2-0.85*#3).. (#1+0.1*#3,#2-0.8*#3)
(#1,#2) .. controls (#1,#2-0.05*#3) and (#1+0.04*#3,#2-0.15*#3).. (#1+0.1*#3,#2-0.2*#3)
(#1+0.1*#3,#2-0.2*#3) -- (#1+0.9*#3,#2-0.8*#3)
(#1+1*#3,#2-1*#3) .. controls (#1+1*#3,#2-0.95*#3) and (#1+0.96*#3,#2-0.85*#3).. (#1+0.9*#3,#2-0.8*#3)
}
\def\raction(#1,#2)[#3,#4]{\draw (#1,#2) -- (#1,#2-2*#4/2)  (#1,#2-1*#4/2)--(#1+1*#4/2+#3*#4/2,#2-1*#4/2) .. controls (#1+1.555*#4/2+#3*#4/2,#2-1*#4/2) and (#1+2*#4/2+#3*#4/2,#2-0.555*#4/2) .. (#1+2*#4/2+#3*#4/2,#2)}
\def\rack(#1,#2)[#3]{\draw (#1,#2-0.5)  node[name=nodemap,inner sep=0pt,  minimum size=7.5pt, shape=circle,draw]{$#3$} (#1-1,#2) .. controls (#1-1,#2-0.5) and (#1-0.5,#2-0.5) .. (nodemap) (#1,#2)-- (nodemap)  (nodemap)-- (#1,#2-1)}
\def\racklarge(#1,#2)[#3]{\draw (#1,#2-0.5)  node[name=nodemap,inner sep=0pt,  minimum size=7.5pt, shape=circle,draw]{$#3$} (#1-2,#2+0.5) .. controls (#1-2,#2-0.5) and (#1-0.5,#2-0.5) .. (nodemap) (#1,#2)-- (nodemap)  (nodemap)-- (#1,#2-1)}
\def\rackextralarge(#1,#2)[#3]{\draw (#1,#2-0.5)  node[name=nodemap,inner sep=0pt,  minimum size=7.5pt, shape=circle,draw]{$#3$} (#1-3,#2+1) .. controls (#1-3,#2-0.5) and (#1-0.5,#2-0.5) .. (nodemap) (#1,#2)-- (nodemap)  (nodemap)-- (#1,#2-1)}
\begin{scope}[xshift=0cm, yshift=-2.25cm]
\comult(0.5,0)[1,1]; \draw (2,0) -- (2,-1); \draw (0,-1) -- (0,-2); \flip(1,-1)[1]; \rack(1,-2)[\scriptstyle \triangleright]; \flip(2,-2)[1]; \comult(3.5,-1)[1,1]; \draw (4,-2) -- (4,-3); \raction(3,-3)[0,1]; \raction(1,-3)[0,1]; \draw (3.5,0) -- (3.5,-1);
\end{scope}
\begin{scope}[xshift=4.55cm, yshift=-3.8cm]
\node at (0,-0.5){=};
\end{scope}
\begin{scope}[xshift=5.1cm, yshift=-0.25cm]
\comult(0.5,0)[1,1]; \draw (0,-1) -- (0,-5.5); \raction(0,-5.5)[0,1]; \draw (1,-2) -- (1,-4.5); \flip(1,-4.5)[1]; \flip(1,-1)[1]; \flip(2.5,-2.5)[1]; \comult(4,-1.5)[1,1]; \draw (4,0) -- (4,-1.5);  \draw (4.5,-2.5) -- (4.5,-3.5); \draw (2,-2) .. controls (2,-2.25) and (2.5, -2.25).. (2.5,-2.5); \raction(3.5,-3.5)[0,1]; \comult(2.5,-3.5)[1,1]; \draw (3,-4.5) -- (3,-5.5); \raction(2,-5.5)[0,1]; \racklarge(2,-7)[\scriptstyle \triangleright]; \draw (2,-6.5) -- (2,-7); \draw (2,0) -- (2,-1); \draw (3.5,-4.5) -- (3.5,-8);
\end{scope}
\begin{scope}[xshift=10.1cm, yshift=-3.8cm]
\node at (0,-0.5){=};
\end{scope}
\begin{scope}[xshift=10.7cm, yshift=-0.8cm]
\comult(0.5,0)[1,1]; \draw (0,-1) -- (0,-4.5); \raction(0,-4.5)[0,1]; \draw (1,-2) -- (1,-3.5); \flip(1,-3.5)[1]; \flip(1,-1)[1]; \comult(3.5,-1.5)[1,1]; \comult(4.25,0)[1.5,1.5]; \draw (5,-1.5) -- (5,-2.5); \flip(4,-2.5)[1]; \draw (2,-2) -- (2,-2.5); \flip(2,-2.5)[1]; \flip(3,-3.5)[1]; \draw (5,-3.5) -- (5,-4.5); \raction(4,-4.5)[0,1]; \raction(2,-4.5)[0,1]; \racklarge(2,-6)[\scriptstyle \triangleright];  \draw (2,-5.5) -- (2,-6);  \draw (4,-5) -- (4,-7); \draw (2,0) -- (2,-1);
\end{scope}
\begin{scope}[xshift=16.25cm, yshift=-3.8cm]
\node at (0,-0.5){=};
\end{scope}
\begin{scope}[xshift=16.8cm, yshift=0cm]
\flip(0,0)[1]; \draw (1,-1) .. controls (1,-1.25) and (1.5,-1.25) .. (1.5,-1.5); \comult(1.5,-1.5)[1,1]; \flip(2,-2.5)[1]; \comult(3.5,-1.5)[1,1];  \comult(4,-0.5)[1,1]; \draw (4,0) -- (4,-0.5); \draw (1,-2.5) -- (1,-3.5); \draw (4,-2.5) -- (4,-3.5); \raction(1,-3.5)[0,1]; \raction(3,-3.5)[0,1]; \flip(0,-4.5)[1]; \draw (0,-1) -- (0,-4.5); \draw (4.5,-1.5) -- (4.5,-4); \draw (4.5,-4) .. controls (4.5,-4.25) and (4,-4.25) .. (4,-4.5); \flip(3,-4.5)[1]; \raction(1,-5.5)[0,2]; \draw (0,-5.5) -- (0,-7.5); \rack(1,-7.5)[\scriptstyle \triangleright]; \draw (4,-5.5) -- (4,-8.5);
\end{scope}
\begin{scope}[xshift=21.6cm, yshift=-3.8cm]
\node at (0,-0.5){.};
\end{scope}
\end{tikzpicture}
\end{equation}
On the other hand,
\begin{equation}\label{ecu2}
\begin{tikzpicture}[scale=0.395, baseline=(current  bounding  box.center)]
\def\counit(#1,#2){\draw (#1,#2) -- (#1,#2-0.93) (#1,#2-1) circle[radius=2pt]}
\def\comult(#1,#2)[#3,#4]{\draw (#1,#2) -- (#1,#2-0.5*#4) arc (90:0:0.5*#3 and 0.5*#4) (#1,#2-0.5*#4) arc (90:180:0.5*#3 and 0.5*#4)}
\def\laction(#1,#2)[#3,#4]{\draw (#1,#2) .. controls (#1,#2-0.555*#4/2) and (#1+0.445*#4/2,#2-1*#4/2) .. (#1+1*#4/2,#2-1*#4/2) -- (#1+2*#4/2+#3*#4/2,#2-1*#4/2) (#1+2*#4/2+#3*#4/2,#2)--(#1+2*#4/2+#3*#4/2,#2-2*#4/2)}
\def\lactiontr(#1,#2)[#3,#4,#5]{\draw (#1,#2) .. controls (#1,#2-0.555*#4/2) and (#1+0.445*#4/2,#2-1*#4/2) .. (#1+1*#4/2,#2-1*#4/2) -- (#1+2*#4/2+#3*#4/2,#2-1*#4/2)  node [inner sep=0pt, minimum size=3pt,shape=isosceles triangle,fill, shape border rotate=#5] {} (#1+2*#4/2+#3*#4/2,#2) --(#1+2*#4/2+#3*#4/2,#2-2*#4/2)}
\def\doublemap(#1,#2)[#3]{\draw (#1+0.5,#2-0.5) node [name=doublemapnode,inner xsep=0pt, inner ysep=0pt, minimum height=11pt, minimum width=23pt,shape=rectangle,draw,rounded corners] {$#3$} (#1,#2) .. controls (#1,#2-0.075) .. (doublemapnode) (#1+1,#2) .. controls (#1+1,#2-0.075).. (doublemapnode) (doublemapnode) .. controls (#1,#2-0.925)..(#1,#2-1) (doublemapnode) .. controls (#1+1,#2-0.925).. (#1+1,#2-1)}
\def\solbraid(#1,#2)[#3]{\draw (#1,#2-0.5)  node[name=nodemap,inner sep=0pt,  minimum size=9pt, shape=circle,draw]{$#3$}
(#1-0.5,#2) .. controls (#1-0.5,#2-0.15) and (#1-0.4,#2-0.2) .. (#1-0.3,#2-0.3) (#1-0.3,#2-0.3) -- (nodemap)
(#1+0.5,#2) .. controls (#1+0.5,#2-0.15) and (#1+0.4,#2-0.2) .. (#1+0.3,#2-0.3) (#1+0.3,#2-0.3) -- (nodemap)
(#1+0.5,#2-1) .. controls (#1+0.5,#2-0.85) and (#1+0.4,#2-0.8) .. (#1+0.3,#2-0.7) (#1+0.3,#2-0.7) -- (nodemap)
(#1-0.5,#2-1) .. controls (#1-0.5,#2-0.85) and (#1-0.4,#2-0.8) .. (#1-0.3,#2-0.7) (#1-0.3,#2-0.7) -- (nodemap)
}
\def\ractiontr(#1,#2)[#3,#4,#5]{\draw (#1,#2) -- (#1,#2-2*#4/2)  (#1,#2-1*#4/2) node [inner sep=0pt, minimum size=3pt,shape=isosceles triangle,fill, shape border rotate=#5] {}  --(#1+1*#4/2+#3*#4/2,#2-1*#4/2) .. controls (#1+1.555*#4/2+#3*#4/2,#2-1*#4/2) and (#1+2*#4/2+#3*#4/2,#2-0.555*#4/2) .. (#1+2*#4/2+#3*#4/2,#2)  }
\def\flip(#1,#2)[#3]{\draw (
#1+1*#3,#2) .. controls (#1+1*#3,#2-0.05*#3) and (#1+0.96*#3,#2-0.15*#3).. (#1+0.9*#3,#2-0.2*#3)
(#1+0.1*#3,#2-0.8*#3)--(#1+0.9*#3,#2-0.2*#3)
(#1,#2-1*#3) .. controls (#1,#2-0.95*#3) and (#1+0.04*#3,#2-0.85*#3).. (#1+0.1*#3,#2-0.8*#3)
(#1,#2) .. controls (#1,#2-0.05*#3) and (#1+0.04*#3,#2-0.15*#3).. (#1+0.1*#3,#2-0.2*#3)
(#1+0.1*#3,#2-0.2*#3) -- (#1+0.9*#3,#2-0.8*#3)
(#1+1*#3,#2-1*#3) .. controls (#1+1*#3,#2-0.95*#3) and (#1+0.96*#3,#2-0.85*#3).. (#1+0.9*#3,#2-0.8*#3)
}
\def\raction(#1,#2)[#3,#4]{\draw (#1,#2) -- (#1,#2-2*#4/2)  (#1,#2-1*#4/2)--(#1+1*#4/2+#3*#4/2,#2-1*#4/2) .. controls (#1+1.555*#4/2+#3*#4/2,#2-1*#4/2) and (#1+2*#4/2+#3*#4/2,#2-0.555*#4/2) .. (#1+2*#4/2+#3*#4/2,#2)}
\def\rack(#1,#2)[#3]{\draw (#1,#2-0.5)  node[name=nodemap,inner sep=0pt,  minimum size=7.5pt, shape=circle,draw]{$#3$} (#1-1,#2) .. controls (#1-1,#2-0.5) and (#1-0.5,#2-0.5) .. (nodemap) (#1,#2)-- (nodemap)  (nodemap)-- (#1,#2-1)}
\def\racklarge(#1,#2)[#3]{\draw (#1,#2-0.5)  node[name=nodemap,inner sep=0pt,  minimum size=7.5pt, shape=circle,draw]{$#3$} (#1-2,#2+0.5) .. controls (#1-2,#2-0.5) and (#1-0.5,#2-0.5) .. (nodemap) (#1,#2)-- (nodemap)  (nodemap)-- (#1,#2-1)}
\def\rackextralarge(#1,#2)[#3]{\draw (#1,#2-0.5)  node[name=nodemap,inner sep=0pt,  minimum size=7.5pt, shape=circle,draw]{$#3$} (#1-3,#2+1) .. controls (#1-3,#2-0.5) and (#1-0.5,#2-0.5) .. (nodemap) (#1,#2)-- (nodemap)  (nodemap)-- (#1,#2-1)}
\begin{scope}[xshift=0cm, yshift=-0.8cm]
\flip(0,0)[1];  \draw (1,-1) .. controls (1,-1.25) and (1.5,-1.25) .. (1.5,-1.5); \raction(1.5,-1.5)[0,1]; \draw (0,-1) -- (0,-3.5); \comult(1.5,-2.5)[1,1]; \comult(3,-0.5)[1,1]; \draw (3,0) -- (3,-0.5); \draw (3.5,-1.5) .. controls (3.5,-2) and (3,-3) .. (3,-3.5); \flip(0,-3.5)[1]; \flip(2,-3.5)[1]; \raction(1,-4.5)[0,1];  \draw (3,-4.5) -- (3,-6.5); \draw (0,-4.5) -- (0,-5.5); \rack(1,-5.5)[\scriptstyle \triangleright];
\end{scope}
\begin{scope}[xshift=4.05cm, yshift=-3.55cm]
\node at (0,-0.5){=};
\end{scope}
\begin{scope}[xshift=5.1cm, yshift=0cm]
\flip(0,0)[1]; \raction(1,-1)[0,1]; \draw (0,-1) -- (0,-2);  \comult(2.5,0)[1,1]; \flip(0,-2)[1]; \draw (1,-3) .. controls (1,-3.25) and (1.5,-3.25) .. (1.5,-3.5);  \comult(0,-3)[1,1]; \draw (1.5,-3.5) -- (1.5,-4); \flip(0.5,-4)[1]; \flip(1.5,-5)[1]; \draw (3,-1) .. controls (3,-1.5) and (2.5,-4.5) .. (2.5,-5); \draw (-0.5,-4) -- (-0.5,-7);  \draw (0.5,-5) -- (0.5,-6); \raction(0.5,-6)[0,1]; \rack(0.5,-7)[\scriptstyle \triangleright]; \draw (2.5,-6) -- (2.5,-8);
\end{scope}
\begin{scope}[xshift=8.75cm, yshift=-3.55cm]
\node at (0,-0.5){=};
\end{scope}
\begin{scope}[xshift=10.35cm, yshift=-0.5cm]
\flip(0,0)[1]; \raction(1,-1)[0,1]; \draw (0,-1) -- (0,-2);  \comult(2.5,0)[1,1]; \flip(0,-2)[1]; \raction(1,-3)[0,2]; \draw (0,-3) .. controls (0,-3.5) and (-0.5,-3.5) .. (-0.5,-4); \comult(-0.5,-4)[1,1]; \flip(0,-5)[1]; \draw (-1,-5) -- (-1,-6); \rack(0,-6)[\scriptstyle \triangleright]; \draw (1,-6) -- (1,-7); \draw (3,-1) -- (3,-3);
\end{scope}
\begin{scope}[xshift=13.9cm, yshift=-3.55cm]
\node at (0,-0.5){=};
\end{scope}
\begin{scope}[xshift=15cm, yshift=-1cm]
\draw (0,0) -- (0,-2); \draw (1,0) -- (1,-1); \comult(2.5,0)[1,1]; \flip(1,-1)[1]; \draw (3,-1) -- (3,-2); \raction(0,-2)[0,1]; \raction(2,-2)[0,1]; \comult(0,-3)[1,1]; \draw (2,-3) .. controls (2,-3.5) and (1.5,-3.5) .. (1.5,-4); \draw (-0.5,-4) -- (-0.5,-5); \flip(0.5,-4)[1]; \rack(0.5,-5)[\scriptstyle \triangleright]; \draw (1.5,-5) -- (1.5,-6);
\end{scope}
\begin{scope}[xshift=18.3cm, yshift=-3.55cm]
\node at (0,-0.5){.};
\end{scope}
\end{tikzpicture}
\end{equation}
This finishes the proof, because the expression at the right hand of the last equality in~\eqref{ecu1} equals to the expression at the left hand of the first equality in~\eqref{ecu2}, since $\tau$ is a coalgebra homomorphism.
\end{proof}

\begin{lem}\label{alpha n con ri i>1} If $(X,r)$ is a braided set, then
$$
\alpha_n \circ r_{i,i+1} = r_{i,i+1}\circ \alpha_n \quad\text{for all $n>2$ and all $1<i<n$.}
$$
\end{lem}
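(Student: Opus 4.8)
The plan is to argue by induction on $n$, using the recursive definition
$$\alpha_{n+1} = (\tau \otimes X^n) \circ (X \otimes c_{n-1,1} \otimes X) \circ (\alpha_n \otimes \Delta)$$
from Notation~\ref{notacion alfan Qn}. When proving the statement for $\alpha_{n+1}$ one must treat $r_{i,i+1}$ for all $2 \le i \le n$, and I would split this into an \emph{interior} range $2\le i\le n-1$, handled by the inductive hypothesis, and the \emph{boundary} value $i=n$, handled directly. The smallest instance, $\alpha_3$ against $r_{2,3}$, is exactly the boundary computation with $n=2$ (the interior range being empty there), so no separate base case is needed.

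For the interior case $2\le i\le n-1$, the map $r_{i,i+1}$ acts inside the first $n$ tensorands and leaves the last one untouched, so on $X^{n+1}$ it equals $r_{i,i+1}\otimes X$. First I would slide it through $\alpha_n\otimes\Delta$ using the inductive hypothesis $\alpha_n\circ r_{i,i+1}=r_{i,i+1}\circ\alpha_n$ (legitimate precisely because $2\le i\le n-1$), turning it into $r_{i,i+1}\otimes X^2$ acting on $X^{n+2}$. Since $c$ is natural and $r$ acts on two adjacent factors of the block $X^{n-1}$ that is braided across a single factor by $X\otimes c_{n-1,1}\otimes X$, this $r$ merely has its index shifted, becoming $r_{i+1,i+2}$; finally, as $i+1\ge 3$, it does not meet the two factors consumed by $\tau\otimes X^n$ and descends back to $r_{i,i+1}$ on $X^{n+1}$. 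Chaining these three commutations gives $\alpha_{n+1}\circ r_{i,i+1}=r_{i,i+1}\circ\alpha_{n+1}$.

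For the boundary case $i=n$, the map $r_{n,n+1}$ straddles the last factor of the $\alpha_n$-block (which feeds the $\tau$-comb) and the factor on which $\Delta$ acts, so the inductive hypothesis no longer applies and the braided-set hypothesis must enter. Here I would expand $\alpha_{n+1}$ and use the comultiplication compatibilities of Remark~\ref{algunas formulas}, in particular $(r\otimes\tau)\circ\Delta_{X^2}=(X\otimes\Delta)\circ r$, to commute $r$ past the comultiplications that duplicate the two rightmost strands, separating the copy feeding the outputs at positions $n,n+1$ from the copy feeding the two outermost $\tau$-vertices. On that second copy the configuration is exactly $\tau\circ(\tau\otimes X)\circ(X\otimes r)$, and the braid relation $\tau\circ(\tau\otimes X)\circ(X\otimes r)=\tau\circ(\tau\otimes X)$ of Remark~\ref{prop de involutiva y de braided} absorbs $r$ into the comb; everything below those two vertices involves only strands $2,\dots,n-1$ and is the same sub-diagram on both sides. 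Matching the outputs (positions $2,\dots,n-1$ are inert, and positions $n,n+1$ carry a single $r$ on either side) then yields the claim.

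The main obstacle is this boundary case. Although the only structural inputs are the one braid identity for $\tau$ and the comultiplication formula for $r$, the delicate part is keeping the braidings $c_{n-1,1}$ and the strand-copies produced by the nested $\Delta$'s correctly accounted for, so that $r$ is moved to exactly the spot where Remark~\ref{prop de involutiva y de braided} applies while its second copy is retained on the output strands. I expect this bookkeeping to be carried out most safely in the graphical calculus rather than symbolically.
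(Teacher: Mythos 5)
Your proposal is correct and follows essentially the same route as the paper's proof: the interior case $1<i<n-1$ is dispatched by the inductive hypothesis plus naturality of $c$ (with the index shift $r_{i,i+1}\mapsto r_{i+1,i+2}$ across $c_{n-2,1}$ and back down across $\tau$), and the boundary case is handled by combining the compatibility of $r$ with the comultiplication with the braid identity $\tau\circ(\tau\ot X)\circ(X\ot r)=\tau\circ(\tau\ot X)$ of Remark~\ref{prop de involutiva y de braided}. The only difference is organizational: the paper proves the boundary case $i=n-1$ for every $n$ before starting the induction, by rewriting $\alpha_n$ through $\tau_2=\tau\circ(\tau\ot X)$ and $\alpha_{n-2}\ot\Delta_{X^2}$ so that $r$ meets $\Delta_{X^2}$ head-on, whereas you fold the boundary computation into each inductive step; the two arrangements are interchangeable.
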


\begin{proof} We first consider the case $i=n-1$. Let $\tau_2\colon X^3\to X$ be the map defined by
$$
\tau_2:= \tau\circ (\tau\ot X).
$$
It is easy to see that
$$
\alpha_n = (\tau_2\ot X^{n-1})\circ (X\ot c_{n-3,2}\ot X^2)\circ (\alpha_{n-2}\ot \Delta_{X^2}).
$$
By Remark~\ref{prop de involutiva y de braided} we know that $\tau_2\circ r = \tau_2$. Since, moreover $r$ is a coalgebra homo\-morphism,
\begin{align*}
\alpha_n \circ r_{n-1,n} & = (\tau_2\ot X^{n-1})\circ (X\ot c_{n-3,2}\ot X^2)\circ (\alpha_{n-2}\ot \Delta_{X^2})\circ (X^{n-2}\ot r)\\[0.7pt]
& = (\tau_2\ot X^{n-1})\circ (X\ot c_{n-3,2}\ot X^2)\circ (\alpha_{n-2}\ot r\ot r)\circ (X^{n-2}\ot \Delta_{X^2})\\[0.7pt]
& = (\tau_2\ot X^{n-1})\circ (X\ot c_{n-3,2}\ot X^2)\circ (X^{n-2}\ot r\ot r)\circ (\alpha_{n-2}\ot \Delta_{X^2})\\[0.7pt]
& = ((\tau_2\circ r)\ot X^{n-1})\circ (X\ot c_{n-3,2}\ot X^2)\circ (X^n\ot r)\circ (\alpha_{n-2}\ot \Delta_{X^2})\\[0.7pt]
& = (X^{n-2}\ot r)\circ (\tau_2\ot X^{n-1})\circ (X\ot c_{n-3,2}\ot X^2)\circ (\alpha_{n-2}\ot \Delta_{X^2})\\[0.7pt]
&=r_{n-1,n}\circ \alpha_n.
\end{align*}
We now proceed by induction on $n$. If $n=3$, then necessarily $i = 2 = n-1$, and so in this case the lemma is true as we already have proven. Suppose now that $n>3$ and the lemma is true for $n-1$. For every $m>i$ we will write $r^m_{i,i+1}$ in order to indicate that the domain of $r_{i,i+1}$ is $X^m$. We can assume that $1<i<n-1$. By inductive hypothesis, we have
\begin{align*}
\alpha_n \circ r_{i,i+1}^n & = (\tau \ot X^{n-1}) \circ (X \ot c_{n-2,1} \ot X) \circ (\alpha_{n-1}\ot \Delta)\circ r_{i,i+1}^n\\[0.7pt]
& = (\tau \ot X^{n-1}) \circ (X \ot c_{n-2,1} \ot X) \circ (\alpha_{n-1}\ot X^2)\circ (r_{i,i+1}^{n-1}\ot\Delta)\\[0.7pt]
& = (\tau \ot X^{n-1}) \circ (X \ot c_{n-2,1} \ot X) \circ (r_{i,i+1}^{n-1}\ot X^2)\circ (\alpha_{n-1}\ot\Delta)\\[0.7pt]
& = (\tau \ot X^{n-1}) \circ (r_{i+1,i+2}^n\ot X)\circ (X \ot c_{n-2,1}\ot X)\circ (\alpha_{n-1}\ot\Delta)\\[0.7pt]
& = r_{i,i+1}^n \circ (\tau \ot X^{n-1}) \circ (X \ot c_{n-2,1} \ot X) \circ (\alpha_{n-1} \ot \Delta) \\[0.7pt]
& = r_{i,i+1}^n \circ \alpha_n,
\end{align*}
as desired.
\end{proof}

\begin{thm}\label{relacion entre Jn r y s} If $(X,r)$ is a non-degenerate braided set, then
$$
J_n \circ r_{i,i+1} = s_{i,i+1} \circ J_n\quad\text{for all $n \ge 2$ and $i<n$,}
$$
where $s$ is the derived map of $r$.
\end{thm}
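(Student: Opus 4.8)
The plan is to argue by induction on $n$, treating the base case $n=2$ separately and, in the inductive step, splitting the range $i<n$ into the \emph{interior} indices $1<i<n$ and the boundary index $i=1$. These two families are governed by the two recursive descriptions of $J_n$: the definition $J_n=(X\ot J_{n-1})\circ\alpha_n$ (Definition~\ref{map Jn}) and the identity $J_n=Q_n\circ(J_{n-1}\ot X)$ (Proposition~\ref{Jn con Qn}). For the base case $n=2$ the only index is $i=1$ and $J_2=\alpha_2=(\tau\ot X)\circ(X\ot\Delta)$, so I must check $J_2\circ r=s\circ J_2$. Both sides are coalgebra endomorphisms of $X^2$ (recall that $s$ is a coalgebra isomorphism by Proposition~\ref{derivado es iso de coalgebras}), so by Corollary~\ref{coro: igualdad de morfismos} it suffices to compare their two coordinate maps. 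Using $(X\ot\epsilon)\circ J_2=\tau$, the first coordinates become $\tau\circ r$ and $\triangleright\circ(\tau\ot X)\circ(X\ot\Delta)$, which agree by Proposition~\ref{prop auxiliar} together with $\triangleright=(X\ot\epsilon)\circ s$; the second coordinates both collapse to $\tau$, using $(\epsilon\ot X)\circ J_2=\epsilon\ot X$ and the relation $(\epsilon\ot X)\circ s=X\ot\epsilon$ recorded in the proof of Lemma~\ref{cs es de comodulos}.

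For the interior indices $1<i<n$ I would use $J_n=(X\ot J_{n-1})\circ\alpha_n$. By Lemma~\ref{alpha n con ri i>1}, $\alpha_n$ commutes with $r_{i,i+1}$ in this range, and since $i\ge 2$ we may write $r_{i,i+1}=X\ot r_{i-1,i}$ and $s_{i,i+1}=X\ot s_{i-1,i}$. Thus
\[
J_n\circ r_{i,i+1}=(X\ot J_{n-1})\circ r_{i,i+1}\circ\alpha_n=\bigl(X\ot(J_{n-1}\circ r_{i-1,i})\bigr)\circ\alpha_n,
\]
and the inductive hypothesis $J_{n-1}\circ r_{i-1,i}=s_{i-1,i}\circ J_{n-1}$ (legitimate since $i-1<n-1$) rewrites the right hand side as $s_{i,i+1}\circ(X\ot J_{n-1})\circ\alpha_n=s_{i,i+1}\circ J_n$, as required.

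For the remaining index $i=1$ (with $n\ge 3$) I would instead use $J_n=Q_n\circ(J_{n-1}\ot X)$. Since $r_{1,2}$ acts only on the first two tensor factors, $J_n\circ r_{1,2}=Q_n\circ\bigl((J_{n-1}\circ r_{1,2})\ot X\bigr)$, where on the right $r_{1,2}$ is taken on $X^{n-1}$; the inductive hypothesis for $n-1$ then gives $J_n\circ r_{1,2}=Q_n\circ s_{1,2}\circ(J_{n-1}\ot X)$. Consequently the whole case reduces to the auxiliary commutation
\[
Q_n\circ s_{1,2}=s_{1,2}\circ Q_n\qquad(n\ge 3),
\]
after which $J_n\circ r_{1,2}=s_{1,2}\circ Q_n\circ(J_{n-1}\ot X)=s_{1,2}\circ J_n$.

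This auxiliary commutation is the crux, and I expect it to be the main obstacle. The cleanest route is again Corollary~\ref{coro: igualdad de morfismos}: both composites are coalgebra endomorphisms of $X^n$, so it is enough to match all $n$ coordinate maps. The coordinate maps of $s$ are $\triangleright=(X\ot\epsilon)\circ s$ and $X\ot\epsilon=(\epsilon\ot X)\circ s$, while a short induction on the recursion for $Q_n$ shows that the $k$-th coordinate of $Q_n$ is $\tau$ applied to the $k$-th and $n$-th factors for $k<n$, and the projection onto the $n$-th factor for $k=n$. Feeding these in, every coordinate except the first is pure bookkeeping---using that $s$ fixes the factors of index $\ge 3$ and has second coordinate the projection onto the first factor---and the first coordinate reduces exactly to $\tau\circ(\triangleright\ot X)=\triangleright\circ(\tau\ot\tau)\circ(X\ot c\ot X)\circ(X^2\ot\Delta)$, which is Proposition~\ref{tau es morfismo de comodulos}. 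Alternatively, one can prove the commutation by induction on $n$ starting from the decomposition $Q_3=(\tilde{\tau}_2\ot X)\circ(X^2\ot\Delta)$, with $\tilde{\tau}_2$ as in Proposition~\ref{compatibilidad de s y tau}; then $Q_3\circ s_{1,2}=s_{1,2}\circ Q_3$ is precisely the identity $\tilde{\tau}_2\circ(s\ot X)=s\circ\tilde{\tau}_2$ of that proposition. The real difficulty is organizing the diagrammatic manipulations so that the braidings $c_{n-1,1}$ occurring in the recursion for $Q_n$ do not entangle the first two strands with the rest, thereby isolating the single nontrivial input, Proposition~\ref{tau es morfismo de comodulos}, from the surrounding bookkeeping.
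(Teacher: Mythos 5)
Your proposal is correct, and its inductive skeleton is the same as the paper's: the base case $n=2$; the interior indices handled through $J_n=(X\ot J_{n-1})\circ\alpha_n$ together with Lemma~\ref{alpha n con ri i>1}; and the index $i=1$ handled through $J_n=Q_n\circ(J_{n-1}\ot X)$ from Proposition~\ref{Jn con Qn}. The genuine difference is in how you discharge the two nontrivial steps. The paper proves $J_2\circ r=s\circ J_2$ by a direct diagrammatic computation (concluded via Proposition~\ref{formula para s}), and for $i=1$ it never isolates the commutation $Q_n\circ s_{12}=s_{12}\circ Q_n$ as a lemma: it argues inline, using the decomposition $Q_{n+1}=(\tilde{\tau}_2\ot Q_{n-1})\circ(X^2\ot c_{n-2,1}\ot X)\circ(X^n\ot\Delta)$ and sliding $s$ across via Proposition~\ref{compatibilidad de s y tau}. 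You instead invoke Corollary~\ref{coro: igualdad de morfismos} twice: since $J_2$, $Q_n$, $r$, $s$ and all relevant composites are coalgebra morphisms, it suffices to match coordinate maps; the base case then reduces at once to Proposition~\ref{prop auxiliar}, and your description of the coordinates of $Q_n$ ($\tau$ applied to the $k$-th and $n$-th factors for $k<n$, the projection onto the last factor for $k=n$) is correct and makes the commutation with $s_{12}$ reduce to Proposition~\ref{tau es morfismo de comodulos} in the first coordinate and to trivial identities in the remaining ones. This route is shorter, bypasses Proposition~\ref{compatibilidad de s y tau} altogether (you use directly the identity that proposition is built on), and in fact dissolves the difficulty you flag in your last sentence: once you pass to coordinate maps, the braidings $c_{n-1,1}$ in the recursion for $Q_n$ never need to be disentangled by hand. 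What the paper's inline route buys in exchange is that it only requires the recursive $\tilde{\tau}_2$-decomposition of $Q_{n+1}$, not the full list of its coordinate maps, and it introduces no auxiliary lemma.
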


\begin{proof} We proceed by induction on $n$. Assume first that $n=2$ and $i=1$. By Remark~\ref{algunas formulas}(1), the fact that $\Delta$ is cocommutative and $\tau^{-1}$ is a coalgebra homomorphism, and Propositions~\ref{no degenerado} and~\ref{prop auxiliar}, we have:
$$
\begin{tikzpicture}[scale=0.395]
\def\counit(#1,#2){\draw (#1,#2) -- (#1,#2-0.93) (#1,#2-1) circle[radius=2pt]}
\def\comult(#1,#2)[#3,#4]{\draw (#1,#2) -- (#1,#2-0.5*#4) arc (90:0:0.5*#3 and 0.5*#4) (#1,#2-0.5*#4) arc (90:180:0.5*#3 and 0.5*#4)}
\def\laction(#1,#2)[#3,#4]{\draw (#1,#2) .. controls (#1,#2-0.555*#4/2) and (#1+0.445*#4/2,#2-1*#4/2) .. (#1+1*#4/2,#2-1*#4/2) -- (#1+2*#4/2+#3*#4/2,#2-1*#4/2) (#1+2*#4/2+#3*#4/2,#2)--(#1+2*#4/2+#3*#4/2,#2-2*#4/2)}
\def\lactiontr(#1,#2)[#3,#4,#5]{\draw (#1,#2) .. controls (#1,#2-0.555*#4/2) and (#1+0.445*#4/2,#2-1*#4/2) .. (#1+1*#4/2,#2-1*#4/2) -- (#1+2*#4/2+#3*#4/2,#2-1*#4/2)  node [inner sep=0pt, minimum size=3pt,shape=isosceles triangle,fill, shape border rotate=#5] {} (#1+2*#4/2+#3*#4/2,#2) --(#1+2*#4/2+#3*#4/2,#2-2*#4/2)}
\def\doublemap(#1,#2)[#3]{\draw (#1+0.5,#2-0.5) node [name=doublemapnode,inner xsep=0pt, inner ysep=0pt, minimum height=11pt, minimum width=23pt,shape=rectangle,draw,rounded corners] {$#3$} (#1,#2) .. controls (#1,#2-0.075) .. (doublemapnode) (#1+1,#2) .. controls (#1+1,#2-0.075).. (doublemapnode) (doublemapnode) .. controls (#1,#2-0.925)..(#1,#2-1) (doublemapnode) .. controls (#1+1,#2-0.925).. (#1+1,#2-1)}
\def\solbraid(#1,#2)[#3]{\draw (#1,#2-0.5)  node[name=nodemap,inner sep=0pt,  minimum size=9pt, shape=circle,draw]{$#3$}
(#1-0.5,#2) .. controls (#1-0.5,#2-0.15) and (#1-0.4,#2-0.2) .. (#1-0.3,#2-0.3) (#1-0.3,#2-0.3) -- (nodemap)
(#1+0.5,#2) .. controls (#1+0.5,#2-0.15) and (#1+0.4,#2-0.2) .. (#1+0.3,#2-0.3) (#1+0.3,#2-0.3) -- (nodemap)
(#1+0.5,#2-1) .. controls (#1+0.5,#2-0.85) and (#1+0.4,#2-0.8) .. (#1+0.3,#2-0.7) (#1+0.3,#2-0.7) -- (nodemap)
(#1-0.5,#2-1) .. controls (#1-0.5,#2-0.85) and (#1-0.4,#2-0.8) .. (#1-0.3,#2-0.7) (#1-0.3,#2-0.7) -- (nodemap)
}
\def\ractiontr(#1,#2)[#3,#4,#5]{\draw (#1,#2) -- (#1,#2-2*#4/2)  (#1,#2-1*#4/2) node [inner sep=0pt, minimum size=3pt,shape=isosceles triangle,fill, shape border rotate=#5] {}  --(#1+1*#4/2+#3*#4/2,#2-1*#4/2) .. controls (#1+1.555*#4/2+#3*#4/2,#2-1*#4/2) and (#1+2*#4/2+#3*#4/2,#2-0.555*#4/2) .. (#1+2*#4/2+#3*#4/2,#2)  }
\def\flip(#1,#2)[#3]{\draw (
#1+1*#3,#2) .. controls (#1+1*#3,#2-0.05*#3) and (#1+0.96*#3,#2-0.15*#3).. (#1+0.9*#3,#2-0.2*#3)
(#1+0.1*#3,#2-0.8*#3)--(#1+0.9*#3,#2-0.2*#3)
(#1,#2-1*#3) .. controls (#1,#2-0.95*#3) and (#1+0.04*#3,#2-0.85*#3).. (#1+0.1*#3,#2-0.8*#3)
(#1,#2) .. controls (#1,#2-0.05*#3) and (#1+0.04*#3,#2-0.15*#3).. (#1+0.1*#3,#2-0.2*#3)
(#1+0.1*#3,#2-0.2*#3) -- (#1+0.9*#3,#2-0.8*#3)
(#1+1*#3,#2-1*#3) .. controls (#1+1*#3,#2-0.95*#3) and (#1+0.96*#3,#2-0.85*#3).. (#1+0.9*#3,#2-0.8*#3)
}
\def\raction(#1,#2)[#3,#4]{\draw (#1,#2) -- (#1,#2-2*#4/2)  (#1,#2-1*#4/2)--(#1+1*#4/2+#3*#4/2,#2-1*#4/2) .. controls (#1+1.555*#4/2+#3*#4/2,#2-1*#4/2) and (#1+2*#4/2+#3*#4/2,#2-0.555*#4/2) .. (#1+2*#4/2+#3*#4/2,#2)}
\def\rack(#1,#2)[#3]{\draw (#1,#2-0.5)  node[name=nodemap,inner sep=0pt,  minimum size=7.5pt, shape=circle,draw]{$#3$} (#1-1,#2) .. controls (#1-1,#2-0.5) and (#1-0.5,#2-0.5) .. (nodemap) (#1,#2)-- (nodemap)  (nodemap)-- (#1,#2-1)}
\def\racklarge(#1,#2)[#3]{\draw (#1,#2-0.5)  node[name=nodemap,inner sep=0pt,  minimum size=7.5pt, shape=circle,draw]{$#3$} (#1-2,#2+0.5) .. controls (#1-2,#2-0.5) and (#1-0.5,#2-0.5) .. (nodemap) (#1,#2)-- (nodemap)  (nodemap)-- (#1,#2-1)}
\begin{scope}[xshift=0cm, yshift=-3.03cm]
\solbraid(0.5,0)[\scriptstyle r]; \draw (1,-1) .. controls (1,-1.5) and (1.5,-1.5) .. (1.5,-2); \comult(1.5,-2)[1,1]; \raction(0,-3)[0,1]; \draw (0,-1) -- (0,-3); \draw (2,-3) -- (2,-4);
\end{scope}
\begin{scope}[xshift=2.5cm, yshift=-4.6cm]
\node at (0,-0.5){=};
\end{scope}
\begin{scope}[xshift=3.1cm, yshift=-2.5cm]
\draw (0.5,0) -- (0.5,-1); \comult(0.5,-1)[1,1]; \comult(2.5,0)[1,1]; \draw (0,-2) -- (0,-3); \draw (3,-2) -- (3,-3); \flip(1,-2)[1]; \raction(0,-4)[0,1];  \solbraid(0.5,-3)[\scriptstyle r];  \draw (2,-4) -- (2,-5); \raction(2,-3)[0,1]; \flip(2,-1)[1];
\end{scope}
\begin{scope}[xshift=6.6cm, yshift=-4.6cm]
\node at (0,-0.5){=};
\end{scope}
\begin{scope}[xshift=7.7cm, yshift=-0.8cm]
\draw (0,0) -- (0,-1); \comult(1.5,0)[1,1]; \raction(0,-1)[0,1]; \comult(2,-1)[1,1]; \ractiontr(0,-2)[1,1,90]; \comult(0,-3)[1,1]; \comult(2.5,-2)[1,1]; \draw (2,-3) -- (2,-4); \draw (3,-3) -- (3,-3.5);  \raction(0.5,-4)[1,1]; \flip(0.5,-5.5)[1]; \draw (3,-3.5) .. controls (3,-4.5) and (1.5,-5) .. (1.5,-5.5); \draw (0.5,-5) -- (0.5,-5.5); \draw (-0.5,-4) -- (-0.5,-6.5); \solbraid(0,-6.5)[\scriptstyle r]; \raction(-0.5,-7.5)[0,1];  \draw (1.5,-6.5) -- (1.5,-8.5);
\end{scope}
\begin{scope}[xshift=11.2cm, yshift=-4.6cm]
\node at (0,-0.5){=};
\end{scope}
\begin{scope}[xshift=11.7cm, yshift=0cm]
\comult(0.5,-4)[1,1]; \comult(3.25,-2.5)[1.5,1.5];\comult(2.5,-4)[1,1];\draw (3,-5) -- (3,-6); \comult(4.25,-0.5)[2,2]; \draw (5.25,-2.5) -- (5.25,-6.5); \comult(3.25,1.5)[2,2];\draw (0.5,1.5) -- (0.5,-4); \raction(0.5,-0.5)[0.325,1.5]; \flip(1,-5)[1]; \draw (0,-5) -- (0,-6); \ractiontr(0,-6)[0,1,90]; \ractiontr(2,-6)[0,1,90]; \raction(2,-6.5)[0,2]; \draw (4,-4) -- (4,-6.5); \draw (5.25,-6.5) .. controls (5.25,-7) and (3,-8) .. (3,-8.5); \flip(2,-8.5)[1]; \draw (0,-7) .. controls (0,-8) and (1,-8.5) .. (1,-9.5); \solbraid(1.5,-9.5)[\scriptstyle r];\raction(1,-10.5)[0,1]; \draw (3,-9.5) -- (3,-11.5);
\end{scope}
\begin{scope}[xshift=17.4cm, yshift=-4.6cm]
\node at (0,-0.5){=};
\end{scope}
\begin{scope}[xshift=17.9cm, yshift=0.5cm]
\comult(0.5,-2)[1,1]; \comult(2,0)[1,1]; \raction(0.5,-1)[0,1]; \draw (2.5,-1) -- (2.5,-4); \draw (0.5,0) -- (0.5,-2); \flip(1.5,-4)[1]; \draw (1,-3) .. controls (1,-3.5) and (1.5,-3.5) .. (1.5,-4); \draw (2.5,-5) .. controls (2.5,-5.5) and (3,-6) .. (3,-6.5); \comult(1.5,-5)[1,1]; \draw (0,-3) -- (0,-8.5); \ractiontr(0,-6)[0,1,90]; \draw (2,-6) .. controls (2,-7) and (1.5,-7) .. (1.5,-7.5); \draw (0,-7) -- (0,-8);  \draw (3,-6.5) -- (3,-11); \comult(1.5,-7.5)[1,1];  \raction(0,-8.5)[0,1]; \draw (2,-8.5) -- (2,-10); \draw (0,-8.5) -- (0,-9.5); \racklarge(2,-10)[\scriptstyle \triangleright];
\end{scope}
\begin{scope}[xshift=21.4cm, yshift=-4.6cm]
\node at (0,-0.5){=};
\end{scope}
\begin{scope}[xshift=22.65cm, yshift=-2.05cm]
\comult(0.5,-2)[1,1]; \comult(2,0)[1,1]; \raction(0.5,-1)[0,1]; \draw (2.5,-1) -- (2.5,-4); \draw (0.5,0) -- (0.5,-2); \flip(1.5,-4)[1]; \draw (2.5,-5) -- (2.5,-6); \draw (1,-3) .. controls (1,-3.5) and (1.5,-3.5) .. (1.5,-4); \draw (0,-3) .. controls (0,-3.5) and (-0.5,-4) .. (-0.5,-4.5);
\racklarge(1.5,-5)[\scriptstyle \triangleright];
\end{scope}
\begin{scope}[xshift=25.35cm, yshift=-4.6cm]
\node at (0,-0.5){.};
\end{scope}
\end{tikzpicture}
$$
By Proposition~\ref{formula para s}, this proves that $J_2\circ r = s\circ J_2$. Assume now that, by inductive hypothesis, $J_n \circ r_{12}= s_{12}\circ J_n$. Let $\tilde{\tau}_2$ be as in Proposition~\ref{compatibilidad de s y tau}. It is easy to see that
$$
Q_{n+1} = (\tilde{\tau}_2\ot Q_{n-1})\circ (X^2\ot c_{n-2,1}\ot X)\circ (X^n\ot \Delta).
$$
Combining this with Propositions~\ref{Jn con Qn} and~\ref{compatibilidad de s y tau}, we obtain
\begin{align*}
J_{n+1}\circ r_{12} & = Q_{n+1}\circ (J_n \ot X)\circ (r\ot X^{n-1})\\[0.7pt]
& = (\tilde{\tau}_2\ot Q_{n-1})\circ (X^2\ot c_{n-2,1}\ot X)\circ (X^n\ot \Delta)\circ (J_n \ot X)\circ (r\ot X^{n-1}) \\[0.7pt]
& = (\tilde{\tau}_2\ot Q_{n-1})\circ (X^2\ot c_{n-2,1}\ot X)\circ (X^n\ot \Delta)\circ (s\ot X^{n-1})\circ (J_n \ot X)\\[0.7pt]
& = (\tilde{\tau}_2\ot Q_{n-1})\circ (s\ot c_{n-2,1}\ot X)\circ (X^n\ot \Delta)\circ (J_n \ot X)\\[0.7pt]
& = (s\ot X^{n-1})\circ (\tilde{\tau}_2\ot Q_{n-1})\circ (X^2\ot c_{n-2,1}\ot X)\circ (X^n\ot \Delta)\circ (J_n \ot X)\\[0.7pt]
& = (s\ot X^{n-1})\circ Q_{n+1}\circ (J_n \ot X)\\
& = s_{12}\circ J_{n+1},
\end{align*}
Finally, assume that the result is true for $n$ and that $1<i\le n$. As in the proof of Lemma~\ref{alpha n con ri i>1}, for every $m>j$ we will write $r^m_{j,j+1}$ in order to indicate that the domain of $r_{j,j+1}$ is $X^m$. By inductive hypothesis and Lemma~\ref{alpha n con ri i>1},
\begin{align*}
J_{n+1}\circ r_{i,i+1}^{n+1} & = (X\ot J_n) \circ \alpha_{n+1} \circ r_{i,i+1}^{n+1}\\
& = (X\ot J_n) \circ r_{i,i+1}^{n+1} \circ \alpha_{n+1}\\
& = (X\ot J_n) \circ (X\ot r_{i-1,i}^n) \circ \alpha_{n+1}\\
& = (X\ot r_{i-1,i}^n) \circ (X\ot J_n) \circ \alpha_{n+1}\\
& = r_{i,i+1}^{n+1} \circ J_{n+1},
\end{align*}
as we want.
\end{proof}

\begin{thm}\label{s es sol de yb} If $(X,r)$ is a non-degenerate braided set, then so is $(X,s)$.
\end{thm}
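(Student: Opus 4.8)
The plan is to obtain both defining properties of a non-degenerate braided set for $(X,s)$ from the machinery already in place, avoiding any fresh diagrammatic check of the braid relation. First note that $s$ is a genuine coalgebra automorphism of $X^2$ by Proposition~\ref{derivado es iso de coalgebras}, so it is a legitimate candidate for a braided structure. Since $(X,r)$ is non-degenerate, Remark~\ref{Jn es inversible} makes the guitar map $J_3\colon X^3\to X^3$ invertible. Applying Theorem~\ref{relacion entre Jn r y s} with $n=3$ and $i\in\{1,2\}$ gives $s_{12}=J_3\circ r_{12}\circ J_3^{-1}$ and $s_{23}=J_3\circ r_{23}\circ J_3^{-1}$; that is, $s_{12}$ and $s_{23}$ are conjugate to $r_{12}$ and $r_{23}$ by one and the same isomorphism.

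The braid equation for $s$ is then immediate by conjugating that of $r$:
\begin{align*}
s_{12}\circ s_{23}\circ s_{12} &= J_3\circ (r_{12}\circ r_{23}\circ r_{12})\circ J_3^{-1}\\
&= J_3\circ (r_{23}\circ r_{12}\circ r_{23})\circ J_3^{-1}=s_{23}\circ s_{12}\circ s_{23},
\end{align*}
the intermediate $J_3^{-1}\circ J_3$ factors cancelling and the middle equality being~\eqref{ec de trenzas} for $r$. Hence $(X,s)$ is a braided set.

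For non-degeneracy I would apply the criterion of Proposition~\ref{no degenerado}, which reduces matters to the invertibility of $(X\ot\sigma_s)\circ(\Delta\ot X)$ and $(\tau_s\ot X)\circ(X\ot\Delta)$ for the coordinate maps $\sigma_s,\tau_s$ of $s$. By Proposition~\ref{formula para s} we have $\sigma_s=(X\ot\epsilon)\circ s=\triangleright$, while the computation in Lemma~\ref{cs es de comodulos} gives $\tau_s=(\epsilon\ot X)\circ s=X\ot\epsilon$. The second map is therefore $(\tau_s\ot X)\circ(X\ot\Delta)=(X\ot\epsilon\ot X)\circ(X\ot\Delta)=\ide_{X^2}$, which is trivially invertible; and a single use of cocommutativity identifies the first map with $c\circ s$, via $(X\ot\triangleright)\circ(\Delta\ot X)=c\circ s$. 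Since $s$ is a coalgebra isomorphism and $c$ is an isomorphism, $c\circ s$ is invertible, so Proposition~\ref{no degenerado} produces the maps $\sigma_s^{-1},\tau_s^{-1}$ and shows that $s$ is non-degenerate. The only non-formal ingredient here is the identity $(X\ot\triangleright)\circ(\Delta\ot X)=c\circ s$, which unwinds directly from the definition of $s$ after commuting the two legs of $\Delta$; all the genuine difficulty has been front-loaded into Theorem~\ref{relacion entre Jn r y s}, which is precisely why the braid relation for $s$ can be transported from that of $r$ purely by conjugation.
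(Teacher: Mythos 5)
Your proof is correct, and it splits into two halves of different character relative to the paper. The braid-equation half is exactly the paper's argument: both you and the paper conjugate the braid relation for $r$ by the guitar map $J_3$, invertible by Remark~\ref{Jn es inversible}, using Theorem~\ref{relacion entre Jn r y s}. The non-degeneracy half is where you genuinely diverge. The paper verifies Definition~\ref{braided, nodegenerado, involutivo} directly: from Proposition~\ref{derivado es iso de coalgebras} and Lemma~\ref{cs es de comodulos} it extracts the colinearity relation $(X\ot s^{-1})\circ(X\ot c)\circ(\Delta\ot X)=(\Delta\ot X)\circ s^{-1}\circ c$ and then runs two explicit diagram computations with $\bar s=s^{-1}$ to exhibit maps satisfying conditions~\eqref{no deg a izq} and~\eqref{no deg a der}, finishing with $(\epsilon\ot X)\circ s=X\ot\epsilon$. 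You instead invoke the converse direction of the criterion in Proposition~\ref{no degenerado}, applied to the coalgebra automorphism $s$: since $\sigma_s=\triangleright$ by definition and $\tau_s=X\ot\epsilon$ (a fact established inside the proof of Lemma~\ref{cs es de comodulos}), the two relevant maps are $(\tau_s\ot X)\circ(X\ot\Delta)=\ide_{X^2}$ and $(X\ot\sigma_s)\circ(\Delta\ot X)=c\circ s$, the latter identification following from Proposition~\ref{formula para s} together with naturality of $c$ and cocommutativity, and both are visibly invertible because $s$ is a coalgebra isomorphism. This is shorter and avoids any fresh diagrammatic verification; what it buys is bought by leaning on Proposition~\ref{no degenerado} (whose converse direction secretly uses left colinearity of the inverse map, automatic from coassociativity) — the paper's computation is essentially an inlined, explicit version of that same criterion. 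One small imprecision: the identity $(X\ot\triangleright)\circ(\Delta\ot X)=c\circ s$ unwinds from the formula of Proposition~\ref{formula para s}, not directly from Definition~\ref{solucion asociada}; since you cite that proposition anyway, this is harmless.
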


\begin{proof} By Theorem~\ref{relacion entre Jn r y s}, we have
$$
s_{12} \circ s_{23} \circ s_{12}= J_3\circ r_{12} \circ r_{23} \circ r_{12} \circ J_3^{-1} = J_3\circ r_{23} \circ r_{12} \circ r_{23} \circ J_3^{-1} = s_{23} \circ s_{12} \circ s_{23}.
$$
Thus $s$ satisfies the braid equation. Moreover, by Proposition~\ref{derivado es iso de coalgebras} and Lemma~\ref{cs es de comodulos}, we know that $s$ is invertible and that
$$
(X\ot s^{-1}) \circ (X\ot c) \circ (\Delta\ot X) = (\Delta \ot X) \circ s^{-1} \circ c.
$$
Using again Lemma~\ref{cs es de comodulos} and this equality we obtain that
$$
\begin{tikzpicture}[scale=0.395]
\def\counit(#1,#2){\draw (#1,#2) -- (#1,#2-0.93) (#1,#2-1) circle[radius=2pt]}
\def\comult(#1,#2)[#3,#4]{\draw (#1,#2) -- (#1,#2-0.5*#4) arc (90:0:0.5*#3 and 0.5*#4) (#1,#2-0.5*#4) arc (90:180:0.5*#3 and 0.5*#4)}
\def\laction(#1,#2)[#3,#4]{\draw (#1,#2) .. controls (#1,#2-0.555*#4/2) and (#1+0.445*#4/2,#2-1*#4/2) .. (#1+1*#4/2,#2-1*#4/2) -- (#1+2*#4/2+#3*#4/2,#2-1*#4/2) (#1+2*#4/2+#3*#4/2,#2)--(#1+2*#4/2+#3*#4/2,#2-2*#4/2)}
\def\lactiontr(#1,#2)[#3,#4,#5]{\draw (#1,#2) .. controls (#1,#2-0.555*#4/2) and (#1+0.445*#4/2,#2-1*#4/2) .. (#1+1*#4/2,#2-1*#4/2) -- (#1+2*#4/2+#3*#4/2,#2-1*#4/2)  node [inner sep=0pt, minimum size=3pt,shape=isosceles triangle,fill, shape border rotate=#5] {} (#1+2*#4/2+#3*#4/2,#2) --(#1+2*#4/2+#3*#4/2,#2-2*#4/2)}
\def\doublemap(#1,#2)[#3]{\draw (#1+0.5,#2-0.5) node [name=doublemapnode,inner xsep=0pt, inner ysep=0pt, minimum height=11pt, minimum width=23pt,shape=rectangle,draw,rounded corners] {$#3$} (#1,#2) .. controls (#1,#2-0.075) .. (doublemapnode) (#1+1,#2) .. controls (#1+1,#2-0.075).. (doublemapnode) (doublemapnode) .. controls (#1,#2-0.925)..(#1,#2-1) (doublemapnode) .. controls (#1+1,#2-0.925).. (#1+1,#2-1)}
\def\solbraid(#1,#2)[#3]{\draw (#1,#2-0.5)  node[name=nodemap,inner sep=0pt,  minimum size=9pt, shape=circle,draw]{$#3$}
(#1-0.5,#2) .. controls (#1-0.5,#2-0.15) and (#1-0.4,#2-0.2) .. (#1-0.3,#2-0.3) (#1-0.3,#2-0.3) -- (nodemap)
(#1+0.5,#2) .. controls (#1+0.5,#2-0.15) and (#1+0.4,#2-0.2) .. (#1+0.3,#2-0.3) (#1+0.3,#2-0.3) -- (nodemap)
(#1+0.5,#2-1) .. controls (#1+0.5,#2-0.85) and (#1+0.4,#2-0.8) .. (#1+0.3,#2-0.7) (#1+0.3,#2-0.7) -- (nodemap)
(#1-0.5,#2-1) .. controls (#1-0.5,#2-0.85) and (#1-0.4,#2-0.8) .. (#1-0.3,#2-0.7) (#1-0.3,#2-0.7) -- (nodemap)
}
\def\ractiontr(#1,#2)[#3,#4,#5]{\draw (#1,#2) -- (#1,#2-2*#4/2)  (#1,#2-1*#4/2) node [inner sep=0pt, minimum size=3pt,shape=isosceles triangle,fill, shape border rotate=#5] {}  --(#1+1*#4/2+#3*#4/2,#2-1*#4/2) .. controls (#1+1.555*#4/2+#3*#4/2,#2-1*#4/2) and (#1+2*#4/2+#3*#4/2,#2-0.555*#4/2) .. (#1+2*#4/2+#3*#4/2,#2)  }
\def\flip(#1,#2)[#3]{\draw (
#1+1*#3,#2) .. controls (#1+1*#3,#2-0.05*#3) and (#1+0.96*#3,#2-0.15*#3).. (#1+0.9*#3,#2-0.2*#3)
(#1+0.1*#3,#2-0.8*#3)--(#1+0.9*#3,#2-0.2*#3)
(#1,#2-1*#3) .. controls (#1,#2-0.95*#3) and (#1+0.04*#3,#2-0.85*#3).. (#1+0.1*#3,#2-0.8*#3)
(#1,#2) .. controls (#1,#2-0.05*#3) and (#1+0.04*#3,#2-0.15*#3).. (#1+0.1*#3,#2-0.2*#3)
(#1+0.1*#3,#2-0.2*#3) -- (#1+0.9*#3,#2-0.8*#3)
(#1+1*#3,#2-1*#3) .. controls (#1+1*#3,#2-0.95*#3) and (#1+0.96*#3,#2-0.85*#3).. (#1+0.9*#3,#2-0.8*#3)
}
\def\raction(#1,#2)[#3,#4]{\draw (#1,#2) -- (#1,#2-2*#4/2)  (#1,#2-1*#4/2)--(#1+1*#4/2+#3*#4/2,#2-1*#4/2) .. controls (#1+1.555*#4/2+#3*#4/2,#2-1*#4/2) and (#1+2*#4/2+#3*#4/2,#2-0.555*#4/2) .. (#1+2*#4/2+#3*#4/2,#2)}
\def\rack(#1,#2)[#3]{\draw (#1,#2-0.5)  node[name=nodemap,inner sep=0pt,  minimum size=7.5pt, shape=circle,draw]{$#3$} (#1-1,#2) .. controls (#1-1,#2-0.5) and (#1-0.5,#2-0.5) .. (nodemap) (#1,#2)-- (nodemap)  (nodemap)-- (#1,#2-1)}
\def\racklarge(#1,#2)[#3]{\draw (#1,#2-0.5)  node[name=nodemap,inner sep=0pt,  minimum size=7.5pt, shape=circle,draw]{$#3$} (#1-2,#2+0.5) .. controls (#1-2,#2-0.5) and (#1-0.5,#2-0.5) .. (nodemap) (#1,#2)-- (nodemap)  (nodemap)-- (#1,#2-1)}
\begin{scope}[xshift=0cm, yshift=-0.9cm]
\comult(0.5,0)[1,1]; \draw (0,-1) -- (0,-2);  \solbraid(1.5,-1)[\scriptstyle s]; \flip(0,-2)[1]; \solbraid(0.5,-3)[\scriptstyle \bar{s}]; \draw (2,-2) -- (2,-4); \counit(2,-4); \counit(0,-4); \draw (1,-4) -- (1,-5); \draw (2,0) -- (2,-1);
\end{scope}
\begin{scope}[xshift=2.55cm, yshift=-2.75cm]
\node at (0,-0.5){=};
\end{scope}
\begin{scope}[xshift=3.15cm, yshift=0cm]
\solbraid(0.5,0)[\scriptstyle s]; \draw (1,-1) .. controls (1,-1.25) and (1.5,-1.25) .. (1.5,-1.5);  \comult(1.5,-1.5)[1,1]; \flip(0,-2.5)[1]; \flip(0,-3.5)[1]; \counit(2,-5.5); \solbraid(0.5,-4.5)[\scriptstyle \bar{s}]; \counit(0,-5.5); \draw (2,-2.5) -- (2,-5.5); \draw (1,-5.5) -- (1,-6.5); \draw (0,-1) -- (0,-2.5);
\end{scope}
\begin{scope}[xshift=6.8cm, yshift=-2.75cm]
\node at (0,-0.5){$= \epsilon \ot X$};
\end{scope}
\begin{scope}[xshift=10.2cm, yshift=-2.75cm]
\node at (0,-0.5){and};
\end{scope}
\begin{scope}[xshift=12.4cm, yshift=-0.25cm]
\comult(0.5,0)[1,1]; \flip(1,-1)[1]; \draw (0,-1) -- (0,-3); \solbraid(1.5,-2)[\scriptstyle \bar{s}]; \flip(0,-3)[1]; \solbraid(1.5,-4)[\scriptstyle s]; \draw (0,-4) -- (0,-5); \draw (1,-5) -- (1,-6); \counit(0,-5); \counit(2,-5); \draw (2,0) -- (2,-1); \draw (2,-3) -- (2,-4);
\end{scope}
\begin{scope}[xshift=14.95cm, yshift=-2.75cm]
\node at (0,-0.5){=};
\end{scope}
\begin{scope}[xshift=15.5cm, yshift=0cm]
\flip(0.5,0)[1]; \solbraid(1,-1)[\scriptstyle \bar{s}]; \comult(0.5,-2)[1,1]; \flip(0,-3)[1]; \draw (1.5,-2) .. controls (1.5,-2.25) and (2,-2.25) .. (2,-2.5); \draw (2,-2.5) -- (2,-4); \draw (0,-4) -- (0,-5); \counit(0,-5); \solbraid(1.5,-4)[\scriptstyle s]; \draw (1,-5) -- (1,-6); \counit(2,-5);
\end{scope}
\begin{scope}[xshift=19.3cm, yshift=-2.75cm]
\node at (0,-0.5){$= \epsilon \ot X$,};
\end{scope}
\end{tikzpicture}
$$
where $\bar{s}$ denotes $s^{-1}$. Since $(\epsilon\ot X)\circ s\! = \!X\ot \epsilon$ this proves that $(X,s)$ is a non-degenerate pair.
\end{proof}

\begin{rem}
By Proposition~\ref{prop auxiliar}, if $(X,r)$ is a non-degenerate involutive braided set, then $\triangleright= \epsilon\ot X$ or, equivalently, $s=c$. Conversely, if $s=c$, then from Proposition~\ref{relacion entre Jn r y s} we obtain $J_2\circ r^2\circ J_2^{-1}= c^2=\ide$, which implies that $r$ is involutive.
\end{rem}

\section{Braces, invertible cocycles and braiding operators}\label{section: Braiding operators, invertible cocycles and braces}
In this section we adapt the notions of brace, braiding operator and invertible cocycle to the setting of symmetric monoidal category, and we prove that the obtained categories are equivalent. In the diagrammatic proofs the map $m_{\circ}$ will be  represented by the symbol
\begin{tikzpicture} \def\multsubzero(#1,#2)[#3,#4]{\draw (#1+0.5*#3, #2-0.5*#4) node
[name=nodemap,inner sep=0pt, minimum size=3pt,shape=circle,fill=white,
draw]{} (#1,#2) arc (180:360:0.5*#3 and 0.5*#4) (#1+0.5*#3, #2-0.5*#4) --
(#1+0.5*#3,#2-#4)} \begin{scope}[xshift=0cm, yshift=0cm]
\multsubzero(0,0)[0.29,0.29];
\end{scope}
\end{tikzpicture}.

\subsection{Braces}

Braces were introduced by Rump in~\cite{MR2278047} to study involutive set-theoretical solutions. Skew braces are generalizations useful for studying non-involutive set-theoretical solutions~\cite{GV}.

\begin{defn}\label{braces} A \emph{brace} in $\mathscr{C}$ is a pair $(A,m,\eta,\Delta,\epsilon,S)$ and $(A,m_{\circ},\eta_{\circ},\Delta,\epsilon,T)$, of cocommutative Hopf algebras in $\mathscr{C}$ with the same comultiplication, such that
\begin{equation}\label{ecuabraces}
m_{\circ}\circ(A\otimes m)=m\circ(m_{\circ}\otimes \lambda)\circ (A\otimes c\otimes A)\circ (\Delta\otimes A^2),
\end{equation}
where $\lambda\coloneqq m\circ (S\otimes m_{\circ})\circ (\Delta\otimes A)$. This brace will be denoted by $(A,m,m_{\circ})$. A \emph{morphism of braces} is a map in $\mathscr{C}$ that is a Hopf algebra morphism for both Hopf algebra structures.
\end{defn}

\begin{rem} It is easy to see that in any brace in $\mathscr{C}$ one has $\eta=\eta_\circ$.
\end{rem}

Fix a cocommutative Hopf algebra $A=(A,m,\eta,\Delta,\epsilon,S)$. We let $\Br(A)$ denote the full subcategory of the category of braces in $\mathscr{C}$ with objects $(A,m,m_{\circ})$. Note that here $m$ is fixed, but $m_{\circ}$ is not.

\smallskip

Let $H$ be a Hopf algebra in $\mathscr{C}$ and let $A$ be an object in $\mathcal{C}$. Recall that $A$ is a left $H$-module in $\mathscr{C}$ via a map $\lambda\colon H\otimes A\longrightarrow A$, named the {\em left action of $H$ on $A$}, if
$$
\lambda\circ(\eta\otimes A)=\id\quad \text{and} \quad \lambda\circ(m\otimes A)=\lambda\circ(H\otimes\lambda).
$$
Recall also that an algebra $A$ in $\mathscr{C}$ is a left $H$-module algebra if it is a left $H$-module such that
$$
\lambda\circ (H\otimes \eta)=\epsilon\otimes\eta \quad \text{and} \quad \lambda\circ (H\otimes m)=m\circ (\lambda\otimes\lambda)\circ (H\otimes c\otimes A)\circ (\Delta\otimes A^2),
$$
where $\lambda$ is the action, and that a coalgebra $A$ in $\mathscr{C}$ is a left $H$-module coalgebra if $A$ is a left $H$-module such that
$$
\Delta\circ\lambda=(\lambda\otimes\lambda)\circ\Delta_{H\otimes C}\quad \text{and} \quad  \epsilon\circ \lambda=\epsilon\otimes\epsilon,
$$
where $\lambda$ is the action. Recall finally that $A$ is a right $H$-module in $\mathscr{C}$ via a map $\rho\colon A\otimes H\longrightarrow A$, named the {\em right action of $H$ on $A$}, if it is a left $H$-module in $\mathscr{C}$ via $\rho\circ c$. The notions of right $H$-module-algebra and right $H$-module coalgebra are defined in the same way.

\begin{pro}\label{pro:modulo} If $(A,m,m_{\circ})$ is a brace in $\mathscr{C}$, then the algebra $(A,m)$ is a left $(A,m_{\circ},\Delta)$-module algebra and module coalgebra via $\lambda\coloneqq m\circ (S\otimes m_{\circ})\circ (\Delta\otimes A)$.
\end{pro}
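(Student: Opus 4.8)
The plan is to verify, one by one, the six conditions bundled into the statement: that $\lambda$ makes $(A,m)$ a left $(A,m_\circ,\Delta)$-module (unit and associativity of the action), a module algebra (the action annihilates $\eta$ and is multiplicative), and a module coalgebra (the action is comultiplicative and counital). Throughout I would use the explicit formula $\lambda=m\circ(S\ot m_\circ)\circ(\Delta\ot A)$ appearing in Definition~\ref{braces}, the brace relation~\eqref{ecuabraces}, the Hopf algebra axioms of both $(A,m,\eta,\Delta,\epsilon,S)$ and $(A,m_\circ,\eta_\circ,\Delta,\epsilon,T)$, the equality $\eta=\eta_\circ$, and cocommutativity of $\Delta$. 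For readability I would write the diagrammatic composites in Sweedler-type notation, e.g.\ $\lambda(a\ot b)=S(a_{(1)})\,(a_{(2)}\circ b)$ with $\circ=m_\circ$, this being a harmless shorthand for the corresponding morphisms.

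Five of the six conditions are essentially formal. The two module coalgebra conditions follow at once from the observation that $\lambda$ is a morphism of coalgebras from the tensor product coalgebra $A\ot A$ to $A$: the maps $\Delta\ot A$, $m_\circ$ and $m$ are coalgebra morphisms because $\Delta$ is the common comultiplication of two bialgebra structures, and $S$ is a coalgebra morphism precisely because $\Delta$ is cocommutative (so $\Delta\circ S=(S\ot S)\circ\Delta$); hence so is their composite $\lambda$, which gives both $\Delta\circ\lambda=(\lambda\ot\lambda)\circ\Delta_{A\ot A}$ and $\epsilon\circ\lambda=\epsilon\ot\epsilon$. The module algebra multiplicativity $\lambda\circ(A\ot m)=m\circ(\lambda\ot\lambda)\circ(A\ot c\ot A)\circ(\Delta\ot A^2)$ is, after substituting the formula for $\lambda$, exactly the brace relation~\eqref{ecuabraces}: expanding $a_{(2)}\circ(xy)$ by~\eqref{ecuabraces} yields $S(a_{(1)})\,(a_{(2)}\circ x)\,S(a_{(3)})\,(a_{(4)}\circ y)$, which coassociativity matches termwise with the right hand side. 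Finally, $\lambda\circ(\eta\ot A)=\id$ uses $\Delta\circ\eta=\eta\ot\eta$, $S\circ\eta=\eta$ and the unit axioms, while $\lambda\circ(A\ot\eta)=\epsilon\ot\eta$ uses the unit axiom for $m_\circ$ (here $\eta_\circ=\eta$) followed by the antipode axiom $m\circ(S\ot A)\circ\Delta=\eta\circ\epsilon$.

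The genuinely hard condition, and the step I expect to be the main obstacle, is associativity of the action, $\lambda\circ(m_\circ\ot A)=\lambda\circ(A\ot\lambda)$. Unfolding the right hand side gives $S(a_{(1)})\,\big(a_{(2)}\circ(S(b_{(1)})\,(b_{(2)}\circ c))\big)$; applying~\eqref{ecuabraces} to the inner $\circ$-product and then associativity of $m_\circ$ rewrites it as $S(a_{(1)})\,(a_{(2)}\circ S(b_{(1)}))\,S(a_{(3)})\,\big((a_{(4)}\circ b_{(2)})\circ c\big)$, whereas the left hand side, using that $m_\circ$ is a coalgebra morphism, is $S(a_{(1)}\circ b_{(1)})\,\big((a_{(2)}\circ b_{(2)})\circ c\big)$. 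Since the trailing factor agrees on both sides up to relabelling by coassociativity, the whole identity reduces to the twisted antipode relation $S(a\circ b)=S(a_{(1)})\,(a_{(2)}\circ S(b))\,S(a_{(3)})$.

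To prove this last relation I would argue in the convolution monoid $\Hom_{\mathcal C}(A\ot A,A)$, taken with the tensor product coalgebra on $A\ot A$ and the product $m$. Denote by $g$ the coalgebra morphism $g(a\ot b)=S(a_{(1)})\,(a_{(2)}\circ S(b))\,S(a_{(3)})$. The only real computation is that $m_\circ$ and $g$ are convolution inverse: first, expanding $a\circ(b_{(1)}\,S(b_{(2)}))$ by~\eqref{ecuabraces} and using $b_{(1)}\,S(b_{(2)})=\eta\epsilon(b)$ together with $a\circ\eta=a$ gives the identity $(a_{(1)}\circ b_{(1)})\,S(a_{(2)})\,(a_{(3)}\circ S(b_{(2)}))=\epsilon(b)\,a$; feeding this into one more antipode application yields $m_\circ * g=\eta\circ\epsilon$. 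On the other hand, since $m_\circ$ is a coalgebra morphism into the Hopf algebra $(A,m,S)$, the convolution-inverse property recalled in Section~\ref{preliminaries} says that $m_\circ$ is convolution invertible with two-sided inverse $S\circ m_\circ$. Uniqueness of convolution inverses then forces $g=S\circ m_\circ$, which is precisely the twisted antipode relation and completes the proof. The one delicate point running through all of this is the bookkeeping of the several copies of $a$ created by the iterated comultiplications, so that cocommutativity and coassociativity can be invoked at the right places; this is handled most transparently in the graphical calculus.
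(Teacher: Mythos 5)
Your proposal is correct and takes essentially the same route as the paper: both dispose of the coalgebra-morphism, unit, and multiplicativity conditions exactly as you do, and both reduce associativity of the action to an auxiliary antipode identity proved by a convolution-invertibility argument in $\Hom(A^2,A)$, namely the identity $m_\circ\star\bigl(\lambda\circ(A\ot S)\bigr)=A\ot\epsilon$ (your first displayed computation, which is precisely the paper's first step) combined with the fact that $S\circ m_\circ$ is the two-sided convolution inverse of the coalgebra morphism $m_\circ$. Your twisted antipode relation is an equivalent reformulation of the paper's auxiliary equality~\eqref{auxiliar} (each is obtained from the other by convolving with one more antipode leg), so the two proofs differ only cosmetically.
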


\begin{proof} Since $\lambda$ is a composition of coalgebra morphisms, it is a coalgebra morphism. Moreover, it is easy to check that $\lambda$ is unitary and that $\lambda\circ (A\ot \eta) = \epsilon\ot \eta$. Consider $\Hom(A^3,A)$ endowed with the convolution product associated with the comultiplication $\Delta_{A^3}$ and the multiplication $m$. Multiplying by $S\ot \epsilon^2$ on the left the equality that appears in Definition~\ref{braces} we obtain that $\lambda$ is compatible with $m$. Next we prove the associativity of the action. We claim that
\begin{equation}\label{auxiliar}
m\circ (S\circ m_\circ\otimes A)\circ (A\otimes c)\circ (\Delta\otimes A) = \lambda \circ (A\otimes S).
\end{equation}
Consider $\Hom(A^2,A)$ endowed with the convolution product associated with the comultiplication $\Delta_{A^2}$ and the multiplication $m$. Let
$$
f\coloneqq m\circ (S\circ m_{\circ}\ot A)\circ (A\ot c)\circ (\Delta\ot A)\quad\text{and}\quad  g\coloneqq \lambda \circ (A\otimes S).
$$
We must show that $f=g$. By equality~\eqref{ecuabraces} we have $m_0\star g = A\ot \epsilon$. So
$$
f = (S\circ m_{\circ})\star (A\ot \epsilon) = (S\circ m_{\circ})\star m_{\circ}\star g = g,
$$
where the first equality is trivial and the third one follows from the fact that $S\circ m_0$ is the convolution inverse of $m_{\circ}$ (because  $m_{\circ}$ is a coalgebra morphism). This finishes the proof of the claim. The associativity of the action follows now from the fact that
$$
\begin{tikzpicture}[scale=0.40]
\def\mult(#1,#2)[#3,#4]{\draw (#1,#2) arc (180:360:0.5*#3 and 0.5*#4) (#1+0.5*#3, #2-0.5*#4) -- (#1+0.5*#3,#2-#4)}
\def\counit(#1,#2){\draw (#1,#2) -- (#1,#2-0.93) (#1,#2-1) circle[radius=2pt]}
\def\comult(#1,#2)[#3,#4]{\draw (#1,#2) -- (#1,#2-0.5*#4) arc (90:0:0.5*#3 and 0.5*#4) (#1,#2-0.5*#4) arc (90:180:0.5*#3 and 0.5*#4)}
\def\laction(#1,#2)[#3,#4]{\draw (#1,#2) .. controls (#1,#2-0.555*#4/2) and (#1+0.445*#4/2,#2-1*#4/2) .. (#1+1*#4/2,#2-1*#4/2) -- (#1+2*#4/2+#3*#4/2,#2-1*#4/2) (#1+2*#4/2+#3*#4/2,#2)--(#1+2*#4/2+#3*#4/2,#2-2*#4/2)}
\def\map(#1,#2)[#3]{\draw (#1,#2-0.5)  node[name=nodemap,inner sep=0pt,  minimum size=10pt, shape=circle, draw]{$#3$} (#1,#2)-- (nodemap)  (nodemap)-- (#1,#2-1)}
\def\solbraid(#1,#2)[#3]{\draw (#1,#2-0.5)  node[name=nodemap,inner sep=0pt,  minimum size=9pt, shape=circle,draw]{$#3$}
(#1-0.5,#2) .. controls (#1-0.5,#2-0.15) and (#1-0.4,#2-0.2) .. (#1-0.3,#2-0.3) (#1-0.3,#2-0.3) -- (nodemap)
(#1+0.5,#2) .. controls (#1+0.5,#2-0.15) and (#1+0.4,#2-0.2) .. (#1+0.3,#2-0.3) (#1+0.3,#2-0.3) -- (nodemap)
(#1+0.5,#2-1) .. controls (#1+0.5,#2-0.85) and (#1+0.4,#2-0.8) .. (#1+0.3,#2-0.7) (#1+0.3,#2-0.7) -- (nodemap)
(#1-0.5,#2-1) .. controls (#1-0.5,#2-0.85) and (#1-0.4,#2-0.8) .. (#1-0.3,#2-0.7) (#1-0.3,#2-0.7) -- (nodemap)
}
\def\flip(#1,#2)[#3]{\draw (
#1+1*#3,#2) .. controls (#1+1*#3,#2-0.05*#3) and (#1+0.96*#3,#2-0.15*#3).. (#1+0.9*#3,#2-0.2*#3)
(#1+0.1*#3,#2-0.8*#3)--(#1+0.9*#3,#2-0.2*#3)
(#1,#2-1*#3) .. controls (#1,#2-0.95*#3) and (#1+0.04*#3,#2-0.85*#3).. (#1+0.1*#3,#2-0.8*#3)
(#1,#2) .. controls (#1,#2-0.05*#3) and (#1+0.04*#3,#2-0.15*#3).. (#1+0.1*#3,#2-0.2*#3)
(#1+0.1*#3,#2-0.2*#3) -- (#1+0.9*#3,#2-0.8*#3)
(#1+1*#3,#2-1*#3) .. controls (#1+1*#3,#2-0.95*#3) and (#1+0.96*#3,#2-0.85*#3).. (#1+0.9*#3,#2-0.8*#3)
}
\def\raction(#1,#2)[#3,#4]{\draw (#1,#2) -- (#1,#2-2*#4/2)  (#1,#2-1*#4/2)--(#1+1*#4/2+#3*#4/2,#2-1*#4/2) .. controls (#1+1.555*#4/2+#3*#4/2,#2-1*#4/2) and (#1+2*#4/2+#3*#4/2,#2-0.555*#4/2) .. (#1+2*#4/2+#3*#4/2,#2)}
\def\doublemap(#1,#2)[#3]{\draw (#1+0.5,#2-0.5) node [name=doublemapnode,inner xsep=0pt, inner ysep=0pt, minimum height=11pt, minimum width=23pt,shape=rectangle,draw,rounded corners] {$#3$} (#1,#2) .. controls (#1,#2-0.075) .. (doublemapnode) (#1+1,#2) .. controls (#1+1,#2-0.075).. (doublemapnode) (doublemapnode) .. controls (#1,#2-0.925)..(#1,#2-1) (doublemapnode) .. controls (#1+1,#2-0.925).. (#1+1,#2-1)}
\def\doublesinglemap(#1,#2)[#3]{\draw (#1+0.5,#2-0.5) node [name=doublesinglemapnode,inner xsep=0pt, inner ysep=0pt, minimum height=11pt, minimum width=23pt,shape=rectangle,draw,rounded corners] {$#3$} (#1,#2) .. controls (#1,#2-0.075) .. (doublesinglemapnode) (#1+1,#2) .. controls (#1+1,#2-0.075).. (doublesinglemapnode) (doublesinglemapnode)-- (#1+0.5,#2-1)}
\def\ractiontr(#1,#2)[#3,#4,#5]{\draw (#1,#2) -- (#1,#2-2*#4/2)  (#1,#2-1*#4/2) node [inner sep=0pt, minimum size=3pt,shape=isosceles triangle,fill, shape border rotate=#5] {}  --(#1+1*#4/2+#3*#4/2,#2-1*#4/2) .. controls (#1+1.555*#4/2+#3*#4/2,#2-1*#4/2) and (#1+2*#4/2+#3*#4/2,#2-0.555*#4/2) .. (#1+2*#4/2+#3*#4/2,#2)  }
\def\rack(#1,#2)[#3]{\draw (#1,#2-0.5)  node[name=nodemap,inner sep=0pt,  minimum size=7.5pt, shape=circle,draw]{$#3$} (#1-1,#2) .. controls (#1-1,#2-0.5) and (#1-0.5,#2-0.5) .. (nodemap) (#1,#2)-- (nodemap)  (nodemap)-- (#1,#2-1)}
\def\rackmenoslarge(#1,#2)[#3]{\draw (#1,#2-0.5)  node[name=nodemap,inner sep=0pt,  minimum size=7.5pt, shape=circle,draw]{$#3$} (#1-1.5,#2+0.5) .. controls (#1-1.5,#2-0.5) and (#1-0.5,#2-0.5) .. (nodemap) (#1,#2)-- (nodemap)  (nodemap)-- (#1,#2-1)}
\def\racklarge(#1,#2)[#3]{\draw (#1,#2-0.5)  node[name=nodemap,inner sep=0pt,  minimum size=7.5pt, shape=circle,draw]{$#3$} (#1-2,#2+0.5) .. controls (#1-2,#2-0.5) and (#1-0.5,#2-0.5) .. (nodemap) (#1,#2)-- (nodemap)  (nodemap)-- (#1,#2-1)}
\def\rackmaslarge(#1,#2)[#3]{\draw (#1,#2-0.5)  node[name=nodemap,inner sep=0pt,  minimum size=7.5pt, shape=circle,draw]{$#3$} (#1-2.5,#2+0.5) .. controls (#1-2.5,#2-0.5) and (#1-0.5,#2-0.5) .. (nodemap) (#1,#2)-- (nodemap)  (nodemap)-- (#1,#2-1)}
\def\rackextralarge(#1,#2)[#3]{\draw (#1,#2-0.75)  node[name=nodemap,inner sep=0pt,  minimum size=7.5pt, shape=circle, draw]{$#3$} (#1-3,#2+1) .. controls (#1-3,#2-0.75) and (#1-0.5,#2-0.75) .. (nodemap) (#1,#2)-- (nodemap)  (nodemap)-- (#1,#2-1.5)}
\def\lactionnamed(#1,#2)[#3,#4][#5]{\draw (#1 + 0.5*#3 + 0.5 + 0.5*#4, #2- 0.5*#3) node[name=nodemap,inner sep=0pt,  minimum size=8pt, shape=circle,draw]{$#5$} (#1,#2)  arc (180:270:0.5*#3) (#1 + 0.5*#3,#2- 0.5*#3) --  (nodemap) (#1 + 0.5*#3 + 0.5 + 0.5*#4, #2) --  (nodemap) (nodemap) -- (#1 + 0.5*#3 + 0.5 + 0.5*#4, #2-#3)}
\def\ractionnamed(#1,#2)[#3,#4][#5]{\draw  (#1 - 0.5*#3- 0.5 - 0.5*#4, #2- 0.5*#3)  node[name=nodemap,inner sep=0pt,  minimum size=8pt, shape=circle,draw]{$#5$} (#1 - 0.5*#3, #2- 0.5*#3)  arc (270:360:0.5*#3) (#1 - 0.5*#3, #2- 0.5*#3) -- (nodemap)(#1 - 0.5*#3- 0.5 - 0.5*#4, #2)-- (nodemap) (nodemap) -- (#1 - 0.5*#3- 0.5 - 0.5*#4, #2-#3)}
\def\multsubzero(#1,#2)[#3,#4]{\draw (#1+0.5*#3, #2-0.5*#4) node [name=nodemap,inner sep=0pt, minimum size=3pt,shape=circle,fill=white, draw]{} (#1,#2) arc (180:360:0.5*#3 and 0.5*#4) (#1+0.5*#3, #2-0.5*#4) -- (#1+0.5*#3,#2-#4)}
\begin{scope}[xshift=-3.3cm, yshift=-3.05cm]
\multsubzero(-0.5,0)[1,1]; \draw (1.5,0) -- (1.5,-1);
\lactionnamed(0,-1)[2,0][\scriptstyle \lambda];
\end{scope}
\begin{scope}[xshift=-1cm, yshift=-4.1cm]
\node at (0,-0.5){=};
\end{scope}
\begin{scope}[xshift=0cm, yshift=-2.3cm]
\multsubzero(0,0)[1,1]; \draw (2,0) -- (2,-2);
\comult(0.5,-1)[1,1];
\map(0,-2)[\scriptstyle S]; \multsubzero(1,-2)[1,1];
\mult(0,-3)[1.5,1.5];
\end{scope}
\begin{scope}[xshift=2.6cm, yshift=-4.1cm]
\node at (0,-0.5){=};
\end{scope}
\begin{scope}[xshift=2.9cm, yshift=-1.7cm]
\comult(1,0)[1,1]; \comult(3,0)[1,1]; \draw (4.5,0) -- (4.5,-1);
\draw (0.5,-1) -- (0.5,-2); \flip(1.5,-1)[1]; \multsubzero(3.5,-1)[1,1];
\multsubzero(0.5,-2)[1,1]; \multsubzero(2.5,-2)[1.5,1];
\map(1,-3)[\scriptstyle S];\draw (3.25,-3) -- (3.25,-4);
\mult(1,-4)[2.25,1.75];
\end{scope}
\begin{scope}[xshift=8cm, yshift=-4.1cm]
\node at (0,-0.5){=};
\end{scope}
\begin{scope}[xshift=8.3cm, yshift=0cm]
\comult(1,0)[1,1]; \comult(3.5,0)[1,1]; \draw (5,0) -- (5,-1);
\draw (0.5,-1) -- (0.5,-2.5); \flip(1.5,-1)[1.5]; \multsubzero(4,-1)[1,1];
\draw (4.5,-2) -- (4.5,-3.5);
\multsubzero(0.5,-2.5)[1,1]; \comult(3,-2.5)[1,1];
\draw (1,-3.5) -- (1,-4.5);
\map(1,-4.5)[\scriptstyle S];\comult(2.5,-3.5)[1,1];
\multsubzero(3.5,-3.5)[1,1];
\draw (1,-5.5) -- (1,-6.5);  \draw (2,-4.5) -- (2,-5.5); \map(3,-4.5)[\scriptstyle S]; \draw (4,-4.5) -- (4,-7.5);
\mult(2,-5.5)[1,1];
\mult(1,-6.5)[1.5,1];
\mult(1.75,-7.5)[2.25,1.75];
\end{scope}
\begin{scope}[xshift=13.8cm, yshift=-4.1cm]
\node at (0,-0.5){=};
\end{scope}
\begin{scope}[xshift=14.5cm, yshift=-0.25cm]
\comult(1.25,0)[1.5,1]; \comult(4,0)[1,1]; \draw (5.5,0) -- (5.5,-1);
\draw (0.5,-1) -- (0.5,-2); \flip(2,-1)[1.5]; \multsubzero(4.5,-1)[1,1];
\comult(0.5,-2)[1,1]; \draw (2,-2.5) -- (2,-3); \draw (5,-2) -- (5,-3.5);
\draw (0,-3) -- (0,-4); \flip(1,-3)[1]; \comult(3.5,-2.5)[1,1]; \map(3,-3.5)[\scriptstyle S];  \multsubzero(4,-3.5)[1,1];
\multsubzero(0,-4)[1,1]; \draw (2,-4) -- (2,-6); \map(0.5,-5)[\scriptstyle S];
\draw (3,-4.5) -- (3,-6); \draw (4.5,-4.5) -- (4.5,-6);
\mult(0.5,-6)[1.5,1]; \mult(3,-6)[1.5,1];
\mult(1.25,-7)[2.5,1.75];
\end{scope}
\begin{scope}[xshift=20.3cm, yshift=-4.1cm]
\node at (0,-0.5){,};
\end{scope}
\end{tikzpicture}
$$
where the first equality holds by definition; the second one, since $m_{\circ}$ is a coalgebra morphism; the third one, since $S$ is the antipode of $(A,m,\Delta)$; and the last one, since $c$ in natural, $\Delta$ is coassociative and $m$ is associative; and
$$
\begin{tikzpicture}[scale=0.40]
\def\mult(#1,#2)[#3,#4]{\draw (#1,#2) arc (180:360:0.5*#3 and 0.5*#4) (#1+0.5*#3, #2-0.5*#4) -- (#1+0.5*#3,#2-#4)}
\def\counit(#1,#2){\draw (#1,#2) -- (#1,#2-0.93) (#1,#2-1) circle[radius=2pt]}
\def\comult(#1,#2)[#3,#4]{\draw (#1,#2) -- (#1,#2-0.5*#4) arc (90:0:0.5*#3 and 0.5*#4) (#1,#2-0.5*#4) arc (90:180:0.5*#3 and 0.5*#4)}
\def\laction(#1,#2)[#3,#4]{\draw (#1,#2) .. controls (#1,#2-0.555*#4/2) and (#1+0.445*#4/2,#2-1*#4/2) .. (#1+1*#4/2,#2-1*#4/2) -- (#1+2*#4/2+#3*#4/2,#2-1*#4/2) (#1+2*#4/2+#3*#4/2,#2)--(#1+2*#4/2+#3*#4/2,#2-2*#4/2)}
\def\map(#1,#2)[#3]{\draw (#1,#2-0.5)  node[name=nodemap,inner sep=0pt,  minimum size=10pt, shape=circle, draw]{$#3$} (#1,#2)-- (nodemap)  (nodemap)-- (#1,#2-1)}
\def\solbraid(#1,#2)[#3]{\draw (#1,#2-0.5)  node[name=nodemap,inner sep=0pt,  minimum size=9pt, shape=circle,draw]{$#3$}
(#1-0.5,#2) .. controls (#1-0.5,#2-0.15) and (#1-0.4,#2-0.2) .. (#1-0.3,#2-0.3) (#1-0.3,#2-0.3) -- (nodemap)
(#1+0.5,#2) .. controls (#1+0.5,#2-0.15) and (#1+0.4,#2-0.2) .. (#1+0.3,#2-0.3) (#1+0.3,#2-0.3) -- (nodemap)
(#1+0.5,#2-1) .. controls (#1+0.5,#2-0.85) and (#1+0.4,#2-0.8) .. (#1+0.3,#2-0.7) (#1+0.3,#2-0.7) -- (nodemap)
(#1-0.5,#2-1) .. controls (#1-0.5,#2-0.85) and (#1-0.4,#2-0.8) .. (#1-0.3,#2-0.7) (#1-0.3,#2-0.7) -- (nodemap)
}
\def\flip(#1,#2)[#3]{\draw (
#1+1*#3,#2) .. controls (#1+1*#3,#2-0.05*#3) and (#1+0.96*#3,#2-0.15*#3).. (#1+0.9*#3,#2-0.2*#3)
(#1+0.1*#3,#2-0.8*#3)--(#1+0.9*#3,#2-0.2*#3)
(#1,#2-1*#3) .. controls (#1,#2-0.95*#3) and (#1+0.04*#3,#2-0.85*#3).. (#1+0.1*#3,#2-0.8*#3)
(#1,#2) .. controls (#1,#2-0.05*#3) and (#1+0.04*#3,#2-0.15*#3).. (#1+0.1*#3,#2-0.2*#3)
(#1+0.1*#3,#2-0.2*#3) -- (#1+0.9*#3,#2-0.8*#3)
(#1+1*#3,#2-1*#3) .. controls (#1+1*#3,#2-0.95*#3) and (#1+0.96*#3,#2-0.85*#3).. (#1+0.9*#3,#2-0.8*#3)
}
\def\raction(#1,#2)[#3,#4]{\draw (#1,#2) -- (#1,#2-2*#4/2)  (#1,#2-1*#4/2)--(#1+1*#4/2+#3*#4/2,#2-1*#4/2) .. controls (#1+1.555*#4/2+#3*#4/2,#2-1*#4/2) and (#1+2*#4/2+#3*#4/2,#2-0.555*#4/2) .. (#1+2*#4/2+#3*#4/2,#2)}
\def\doublemap(#1,#2)[#3]{\draw (#1+0.5,#2-0.5) node [name=doublemapnode,inner xsep=0pt, inner ysep=0pt, minimum height=11pt, minimum width=23pt,shape=rectangle,draw,rounded corners] {$#3$} (#1,#2) .. controls (#1,#2-0.075) .. (doublemapnode) (#1+1,#2) .. controls (#1+1,#2-0.075).. (doublemapnode) (doublemapnode) .. controls (#1,#2-0.925)..(#1,#2-1) (doublemapnode) .. controls (#1+1,#2-0.925).. (#1+1,#2-1)}
\def\doublesinglemap(#1,#2)[#3]{\draw (#1+0.5,#2-0.5) node [name=doublesinglemapnode,inner xsep=0pt, inner ysep=0pt, minimum height=11pt, minimum width=23pt,shape=rectangle,draw,rounded corners] {$#3$} (#1,#2) .. controls (#1,#2-0.075) .. (doublesinglemapnode) (#1+1,#2) .. controls (#1+1,#2-0.075).. (doublesinglemapnode) (doublesinglemapnode)-- (#1+0.5,#2-1)}
\def\ractiontr(#1,#2)[#3,#4,#5]{\draw (#1,#2) -- (#1,#2-2*#4/2)  (#1,#2-1*#4/2) node [inner sep=0pt, minimum size=3pt,shape=isosceles triangle,fill, shape border rotate=#5] {}  --(#1+1*#4/2+#3*#4/2,#2-1*#4/2) .. controls (#1+1.555*#4/2+#3*#4/2,#2-1*#4/2) and (#1+2*#4/2+#3*#4/2,#2-0.555*#4/2) .. (#1+2*#4/2+#3*#4/2,#2)  }
\def\rack(#1,#2)[#3]{\draw (#1,#2-0.5)  node[name=nodemap,inner sep=0pt,  minimum size=7.5pt, shape=circle,draw]{$#3$} (#1-1,#2) .. controls (#1-1,#2-0.5) and (#1-0.5,#2-0.5) .. (nodemap) (#1,#2)-- (nodemap)  (nodemap)-- (#1,#2-1)}
\def\rackmenoslarge(#1,#2)[#3]{\draw (#1,#2-0.5)  node[name=nodemap,inner sep=0pt,  minimum size=7.5pt, shape=circle,draw]{$#3$} (#1-1.5,#2+0.5) .. controls (#1-1.5,#2-0.5) and (#1-0.5,#2-0.5) .. (nodemap) (#1,#2)-- (nodemap)  (nodemap)-- (#1,#2-1)}
\def\racklarge(#1,#2)[#3]{\draw (#1,#2-0.5)  node[name=nodemap,inner sep=0pt,  minimum size=7.5pt, shape=circle,draw]{$#3$} (#1-2,#2+0.5) .. controls (#1-2,#2-0.5) and (#1-0.5,#2-0.5) .. (nodemap) (#1,#2)-- (nodemap)  (nodemap)-- (#1,#2-1)}
\def\rackmaslarge(#1,#2)[#3]{\draw (#1,#2-0.5)  node[name=nodemap,inner sep=0pt,  minimum size=7.5pt, shape=circle,draw]{$#3$} (#1-2.5,#2+0.5) .. controls (#1-2.5,#2-0.5) and (#1-0.5,#2-0.5) .. (nodemap) (#1,#2)-- (nodemap)  (nodemap)-- (#1,#2-1)}
\def\rackextralarge(#1,#2)[#3]{\draw (#1,#2-0.75)  node[name=nodemap,inner sep=0pt,  minimum size=7.5pt, shape=circle, draw]{$#3$} (#1-3,#2+1) .. controls (#1-3,#2-0.75) and (#1-0.5,#2-0.75) .. (nodemap) (#1,#2)-- (nodemap)  (nodemap)-- (#1,#2-1.5)}
\def\lactionnamed(#1,#2)[#3,#4][#5]{\draw (#1 + 0.5*#3 + 0.5 + 0.5*#4, #2- 0.5*#3) node[name=nodemap,inner sep=0pt,  minimum size=8pt, shape=circle,draw]{$#5$} (#1,#2)  arc (180:270:0.5*#3) (#1 + 0.5*#3,#2- 0.5*#3) --  (nodemap) (#1 + 0.5*#3 + 0.5 + 0.5*#4, #2) --  (nodemap) (nodemap) -- (#1 + 0.5*#3 + 0.5 + 0.5*#4, #2-#3)}
\def\ractionnamed(#1,#2)[#3,#4][#5]{\draw  (#1 - 0.5*#3- 0.5 - 0.5*#4, #2- 0.5*#3)  node[name=nodemap,inner sep=0pt,  minimum size=8pt, shape=circle,draw]{$#5$} (#1 - 0.5*#3, #2- 0.5*#3)  arc (270:360:0.5*#3) (#1 - 0.5*#3, #2- 0.5*#3) -- (nodemap)(#1 - 0.5*#3- 0.5 - 0.5*#4, #2)-- (nodemap) (nodemap) -- (#1 - 0.5*#3- 0.5 - 0.5*#4, #2-#3)}
\def\multsubzero(#1,#2)[#3,#4]{\draw (#1+0.5*#3, #2-0.5*#4) node [name=nodemap,inner sep=0pt, minimum size=3pt,shape=circle,fill=white, draw]{} (#1,#2) arc (180:360:0.5*#3 and 0.5*#4) (#1+0.5*#3, #2-0.5*#4) -- (#1+0.5*#3,#2-#4)}
\begin{scope}[xshift=0cm, yshift=-3.4cm]
\lactionnamed(0.5,0)[1,0][\scriptstyle \lambda]; \draw (0,0) -- (0,-1);
\lactionnamed(0,-1)[1,1][\scriptstyle \lambda];
\end{scope}
\begin{scope}[xshift=2.4cm, yshift=-3.95cm]
\node at (0,-0.5){=};
\end{scope}
\begin{scope}[xshift=3.9cm, yshift=-1.7cm]
\draw (0,0) -- (0,-2); \comult(1.725,0)[1,1]; \draw (3.25,0) -- (3.25,-1);
\map(1.25,-1)[\scriptstyle S]; \multsubzero(2.25,-1)[1,1];
\comult(0,-2)[1,1]; \mult(1.25,-2)[1.5,1];
\map(-0.5,-3)[\scriptstyle S]; \multsubzero(0.5,-3)[1.5,1];
\mult(-0.5,-4)[1.725,1.5];
\end{scope}
\begin{scope}[xshift=7.6cm, yshift=-3.95cm]
\node at (0,-0.5){=};
\end{scope}
\begin{scope}[xshift=8.6cm, yshift=0cm]
\comult(0.75,0)[1.5,1]; \comult(3.5,0)[1,1]; \draw (5,0) -- (5,-1);
\draw (0,-1) -- (0,-4); \comult(1.5,-1)[1,1]; \map(3,-1)[\scriptstyle S]; \multsubzero(4,-1)[1,1];
\draw (1,-2) -- (1,-3); \flip(2,-2)[1]; \draw (4.5,-2) -- (4.5,-4);
\multsubzero(1,-3)[1,1]; \comult(3,-3)[1,1];
\map(0,-4)[\scriptstyle S]; \draw (1.5,-4) -- (1.5,-6); \map(2.5,-4)[\scriptstyle S]; \multsubzero(3.5,-4)[1,1];
\draw (0,-5) -- (0,-7); \mult(2.5,-5)[1.5,1];
\mult(1.5,-6)[1.725,1];
\mult(0,-7)[2.375,2];
\end{scope}
\begin{scope}[xshift=14.1cm, yshift=-3.95cm]
\node at (0,-0.5){=};
\end{scope}
\begin{scope}[xshift=15.1cm, yshift=-0.75cm]
\comult(1.25,0)[1.5,1]; \comult(4,0)[1,1]; \draw (5.5,0) -- (5.5,-1);
\draw (0.5,-1) -- (0.5,-1.5); \flip(2,-1)[1.5]; \multsubzero(4.5,-1)[1,1];
\comult(0.5,-1.5)[1,1]; \draw (5,-2) -- (5,-3.5);
\map(0,-2.5)[\scriptstyle S];\draw (1,-2.5) -- (1,-3.5);\map(2,-2.5)[\scriptstyle S]; \comult(3.5,-2.5)[1,1];
\draw (0,-3.5) -- (0,-4.5); \multsubzero(1,-3.5)[1,1]; \map(3,-3.5)[\scriptstyle S]; \multsubzero(4,-3.5)[1,1];
\mult(0,-4.5)[1.5,1]; \mult(3,-4.5)[1.5,1];
\mult(0.75,-5.5)[3,2];
\end{scope}
\begin{scope}[xshift=21.1cm, yshift=-3.95cm]
\node at (0,-0.5){=};
\end{scope}
\begin{scope}[xshift=21.8cm, yshift=-0.25cm]
\comult(1.25,0)[1.5,1]; \comult(4,0)[1,1]; \draw (5.5,0) -- (5.5,-1);
\draw (0.5,-1) -- (0.5,-2); \flip(2,-1)[1.5]; \multsubzero(4.5,-1)[1,1];
\comult(0.5,-2)[1,1]; \draw (2,-2.5) -- (2,-3); \draw (5,-2) -- (5,-3.5);
\draw (0,-3) -- (0,-4); \flip(1,-3)[1]; \comult(3.5,-2.5)[1,1]; \map(3,-3.5)[\scriptstyle S];  \multsubzero(4,-3.5)[1,1];
\multsubzero(0,-4)[1,1]; \draw (2,-4) -- (2,-6); \map(0.5,-5)[\scriptstyle S];
\draw (3,-4.5) -- (3,-6); \draw (4.5,-4.5) -- (4.5,-6);
\mult(0.5,-6)[1.5,1]; \mult(3,-6)[1.5,1];
\mult(1.25,-7)[2.5,1.5];
\end{scope}
\begin{scope}[xshift=27.5cm, yshift=-3.95cm]
\node at (0,-0.5){,};
\end{scope}
\end{tikzpicture}
$$
where the first equality holds by definition; the second one, by~\eqref{ecuabraces}; the third one, since $c$ in natural, $\Delta$ is coassociative and $m$ is associative; and the last one, by~\eqref{auxiliar}.
\end{proof}

\begin{rem}\label{rem:productos} It follows from the definition that in any brace $(A,m,m_{\circ})$
\begin{align}
\label{eq:aob} & m_\circ=m\circ (A\otimes\lambda)\circ (\Delta\otimes A).
\shortintertext{Using this formula and Proposition~\ref{pro:modulo} one obtains that}
\label{eq:ab} & m=m_\circ\circ (A\otimes\lambda)\circ (A\otimes T\otimes A)\circ (\Delta\otimes A).
\end{align}
\end{rem}

\begin{rem}\label{rem:brace} In Proposition~\ref{pro:modulo} we proved in particular that
\begin{equation}\label{eq:brace}
\lambda\circ (A\otimes m)=m\circ(\lambda\otimes\lambda)\circ(A\otimes c\otimes A)\circ (\Delta\otimes A^2).			
\end{equation}
Conversely, a pair $(A,\Delta,\epsilon,m,\eta,S)$ and $(A,\Delta,\epsilon,m_{\circ},\eta_{\circ}, T)$ of cocommutative Hopf algebras with the same comultiplication, such that the map $\lambda\colon A\otimes A\to A$, given by
\[
\lambda\coloneqq m\circ  (S\otimes m_{\circ})\circ (\Delta\otimes A),
\]
satisfies Equality~\eqref{eq:brace}, is a brace in $\mathscr{C}$ (consequently, $\eta=\eta_{\circ}$).
\end{rem}

\begin{pro}\label{pro:rho} If $(A,m,m_{\circ})$ is a brace in $\mathscr{C}$, then $A$ is a right $(A,m_{\circ})$-mod\-ule via $\rho\coloneqq m_{\circ}\circ(T\otimes A)\circ (\lambda\otimes m_\circ)\circ\Delta_{A^2}$. Furthermore $\rho$ is a coalgebra morphism such that $\rho\circ (\eta\ot A)=\eta\ot \epsilon$.
\end{pro}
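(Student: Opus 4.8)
The plan is to check, in turn, that $\rho$ is a coalgebra morphism, that it satisfies the two unit identities, and that it is a right action; the last point is the substantive one.

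That $\rho$ is a coalgebra morphism is immediate, exactly as for $\lambda$ in Proposition~\ref{pro:modulo}: the map $\Delta_{A^2}$ is a coalgebra morphism because $A$, and hence $A^2$, is cocommutative; $\lambda$ is one by Proposition~\ref{pro:modulo}; the antipode $T$ is one because $A$ is cocommutative; and $m_{\circ}$ is one because $(A,m_{\circ},\Delta)$ is a bialgebra. Hence $\rho=m_{\circ}\circ(T\ot A)\circ(\lambda\ot m_{\circ})\circ\Delta_{A^2}$ is a composite of coalgebra morphisms. The right-action axioms unwind to $\rho\circ(A\ot\eta)=\id$ and $\rho\circ(\rho\ot A)=\rho\circ(A\ot m_{\circ})$. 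The unit axiom $\rho\circ(A\ot\eta)=\id$ I would obtain from $\lambda\circ(A\ot\eta)=\epsilon\ot\eta$ (Proposition~\ref{pro:modulo}), the unit law of $m_{\circ}$, and $T\circ\eta=\eta$; and the extra identity $\rho\circ(\eta\ot A)=\eta\ot\epsilon$ from the unitarity $\lambda\circ(\eta\ot A)=\id$, the unit law of $m_{\circ}$, and the antipode relation $m_{\circ}\circ(T\ot A)\circ\Delta=\eta_{\circ}\circ\epsilon=\eta\circ\epsilon$. Both are short manipulations.

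For the associativity $\rho\circ(\rho\ot A)=\rho\circ(A\ot m_{\circ})$ I would avoid a frontal chase and argue by cancellation. First I would record the defining relation $m_{\circ}\circ(\lambda\ot\rho)\circ\Delta_{A^2}=m_{\circ}$; it follows because $\lambda$ is a coalgebra morphism (so that $\Delta\circ\lambda=(\lambda\ot\lambda)\circ\Delta_{A^2}$), whence the factor $\lambda_{(1)}\circ T(\lambda_{(2)})$ collapses by the antipode relation for $m_{\circ}$ and the counits disappear, leaving $m_{\circ}$. Now set $L\coloneqq\lambda\circ(A\ot m_{\circ})\colon A^3\to A$. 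Being a composite of coalgebra morphisms, $L$ is convolution invertible for the convolution product on $\Hom(A^3,A)$ determined by $m_{\circ}$ and $\Delta_{A^3}$, with inverse $T\circ L$. It therefore suffices to show that convolving $L$ on the left with each of $\rho\circ(\rho\ot A)$ and $\rho\circ(A\ot m_{\circ})$ produces the triple product $m_{\circ}\circ(m_{\circ}\ot A)$; convolving afterwards with $T\circ L$ cancels $L$ and gives the claim. For $\rho\circ(A\ot m_{\circ})$ this is immediate: since $m_{\circ}$ is a coalgebra morphism one may pull $A\ot m_{\circ}$ out of $\Delta_{A^3}$ and reduce to $m_{\circ}\circ(\lambda\ot\rho)\circ\Delta_{A^2}\circ(A\ot m_{\circ})=m_{\circ}\circ(A\ot m_{\circ})$ by the defining relation.

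The hard part is the same statement for $\rho\circ(\rho\ot A)$, and I expect this to be the main obstacle. Here I would mirror the set-theoretic skew-brace computation: abbreviating $v=\lambda(a,b)$ and $w=(\lambda\circ(A\ot\lambda))(a,b,c)$, the cocycle identity for the action (deduced from~\eqref{eq:aob}, the associativity of $m_{\circ}$, and the module associativity $\lambda\circ(m_{\circ}\ot A)=\lambda\circ(A\ot\lambda)$ of Proposition~\ref{pro:modulo}) rewrites $L$ as ``$v+w$'', while two uses of module associativity collapse $\lambda_{\rho(a,b)}(c)$ to ``$\lambda_{T(v)}(w)$''. Using~\eqref{eq:aob} once more together with module associativity and $\lambda\circ(\eta\ot A)=\id$ yields the key reduction ``$v+w=v\circ\lambda_{T(v)}(w)$'', after which the antipode relation $v\circ T(v)=\eta\circ\epsilon$ for $m_{\circ}$ leaves precisely $m_{\circ}\circ(m_{\circ}\ot A)$. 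The delicate point throughout is bookkeeping the several strands created by the comultiplications as they pass through $T$ and both multiplications; cocommutativity of $\Delta$ is what licenses the reorderings at each step. Together with the unit axiom, this establishes that $A$ is a right $(A,m_{\circ})$-module via $\rho$.
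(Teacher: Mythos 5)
Your proposal is correct; the coalgebra-morphism claim and the two unit identities are handled exactly as in the paper (composition of coalgebra morphisms, plus a direct computation from Proposition~\ref{pro:modulo}), but for the substantive point --- the action axiom $\rho\circ(\rho\ot A)=\rho\circ(A\ot m_{\circ})$ --- you take a genuinely different route. The paper argues frontally: taking $m=m_{\circ}\circ(A\ot\lambda)\circ(A\ot T\ot A)\circ(\Delta\ot A)$ from Remark~\ref{rem:productos} as its pivot, it runs two parallel string-diagram computations, one for $\rho\circ(\rho\ot A)$ and one for $\rho\circ(A\ot m_{\circ})$, each using the definition of $\rho$, the coalgebra-morphism property and associativity of $m_{\circ}$, naturality of $c$, the module-algebra and action properties of $\lambda$ from Proposition~\ref{pro:modulo}, and the antimorphism properties of $T$, until both reach the same normal form. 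You instead cancel: $L\coloneqq\lambda\circ(A\ot m_{\circ})$ is a coalgebra morphism into the Hopf algebra $(A,m_{\circ},\Delta,T)$, hence convolution invertible in $\Hom(A^3,A)$ (convolution taken with respect to $m_{\circ}$ and $\Delta_{A^3}$), so it suffices to show that left-convolving $L$ with either side of the axiom yields $m_{\circ}\circ(m_{\circ}\ot A)$; this reduces everything to the single identity $m_{\circ}\circ(\lambda\ot\rho)\circ\Delta_{A^2}=m_{\circ}$ together with~\eqref{eq:aob} and Proposition~\ref{pro:modulo}. This cancellation device is precisely what the paper itself uses inside the proof of Proposition~\ref{pro:modulo} (to establish~\eqref{auxiliar}) and again to show that braiding operators satisfy the Yang--Baxter equation, so your argument is in the paper's own idiom, merely deployed where the authors chose a direct chase. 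What your route buys is structural economy: associativity of $\rho$ follows from one identity plus invertibility rather than two long diagram computations. The cost is that your ``hard part'' is written in Sweedler-style element notation, which in an abstract symmetric category is only shorthand: each step --- the cocycle rewriting of $L$ as $v\cdot w$, the collapse $\lambda_{\rho(a,b)}(c)=\lambda_{T(v)}(w)$ via two uses of module associativity, the reduction $v\cdot w=v\circ\lambda_{T(v)}(w)$, and the final cancellation of the coalgebra morphism $(a,b,c)\mapsto\lambda_{T(v)}(w)$ against its composite with $T$ --- must be expanded into a morphism identity. They all do expand correctly, each being licensed by cocommutativity, coassociativity, naturality of $c$, and the fact that $\lambda$, $\rho$, $m_{\circ}$ and $T$ are coalgebra morphisms, so there is no gap; the difference from the paper is one of method and granularity, not of substance.
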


\begin{proof} A direct computation using Proposition~\ref{pro:modulo} shows that
$$
\rho\circ (A\ot \eta)=\ide\quad\text{and} \quad\rho\circ (\eta\ot A)=\eta \ot \epsilon.
$$
Moreover, $\rho$ is a coalgebra morphism because it is a composition of coalgebra morphisms. Since
$$
m=m_{\circ} \circ (A\ot \lambda)\circ (A\ot T\ot A) \circ (\Delta\ot A),
$$
in order to prove that $\rho$ is an action it suffices to note that
$$
\begin{tikzpicture}[scale=0.414]
\def\mult(#1,#2)[#3,#4]{\draw (#1,#2) arc (180:360:0.5*#3 and 0.5*#4) (#1+0.5*#3,
#2-0.5*#4) -- (#1+0.5*#3,#2-#4)}
\def\counit(#1,#2){\draw (#1,#2) -- (#1,#2-0.93) (#1,#2-1) circle[radius=2pt]}
\def\comult(#1,#2)[#3,#4]{\draw (#1,#2) -- (#1,#2-0.5*#4) arc (90:0:0.5*#3 and
0.5*#4) (#1,#2-0.5*#4) arc (90:180:0.5*#3 and 0.5*#4)}
\def\laction(#1,#2)[#3,#4]{\draw (#1,#2) .. controls (#1,#2-0.555*#4/2) and
(#1+0.445*#4/2,#2-1*#4/2) .. (#1+1*#4/2,#2-1*#4/2) -- (#1+2*#4/2+#3*#4/2,#2-1*#4/2)
(#1+2*#4/2+#3*#4/2,#2)--(#1+2*#4/2+#3*#4/2,#2-2*#4/2)}
\def\map(#1,#2)[#3]{\draw (#1,#2-0.5)  node[name=nodemap,inner sep=0pt,  minimum
size=10pt, shape=circle, draw]{$#3$} (#1,#2)-- (nodemap)  (nodemap)-- (#1,#2-1)}
\def\solbraid(#1,#2)[#3]{\draw (#1,#2-0.5)  node[name=nodemap,inner sep=0pt,
minimum size=9pt, shape=circle,draw]{$#3$}
(#1-0.5,#2) .. controls (#1-0.5,#2-0.15) and (#1-0.4,#2-0.2) .. (#1-0.3,#2-0.3)
(#1-0.3,#2-0.3) -- (nodemap)
(#1+0.5,#2) .. controls (#1+0.5,#2-0.15) and (#1+0.4,#2-0.2) .. (#1+0.3,#2-0.3)
(#1+0.3,#2-0.3) -- (nodemap)
(#1+0.5,#2-1) .. controls (#1+0.5,#2-0.85) and (#1+0.4,#2-0.8) .. (#1+0.3,#2-0.7)
(#1+0.3,#2-0.7) -- (nodemap)
(#1-0.5,#2-1) .. controls (#1-0.5,#2-0.85) and (#1-0.4,#2-0.8) .. (#1-0.3,#2-0.7)
(#1-0.3,#2-0.7) -- (nodemap)
}
\def\flip(#1,#2)[#3]{\draw (
#1+1*#3,#2) .. controls (#1+1*#3,#2-0.05*#3) and (#1+0.96*#3,#2-0.15*#3)..
(#1+0.9*#3,#2-0.2*#3)
(#1+0.1*#3,#2-0.8*#3)--(#1+0.9*#3,#2-0.2*#3)
(#1,#2-1*#3) .. controls (#1,#2-0.95*#3) and (#1+0.04*#3,#2-0.85*#3)..
(#1+0.1*#3,#2-0.8*#3)
(#1,#2) .. controls (#1,#2-0.05*#3) and (#1+0.04*#3,#2-0.15*#3).. (#1+0.1*#3,#2-0.2*#3)
(#1+0.1*#3,#2-0.2*#3) -- (#1+0.9*#3,#2-0.8*#3)
(#1+1*#3,#2-1*#3) .. controls (#1+1*#3,#2-0.95*#3) and (#1+0.96*#3,#2-0.85*#3)..
(#1+0.9*#3,#2-0.8*#3)
}
\def\raction(#1,#2)[#3,#4]{\draw (#1,#2) -- (#1,#2-2*#4/2)
(#1,#2-1*#4/2)--(#1+1*#4/2+#3*#4/2,#2-1*#4/2) .. controls
(#1+1.555*#4/2+#3*#4/2,#2-1*#4/2) and (#1+2*#4/2+#3*#4/2,#2-0.555*#4/2) ..
(#1+2*#4/2+#3*#4/2,#2)}
\def\doublemap(#1,#2)[#3]{\draw (#1+0.5,#2-0.5) node [name=doublemapnode,inner
xsep=0pt, inner ysep=0pt, minimum height=11pt, minimum
width=23pt,shape=rectangle,draw,rounded corners] {$#3$} (#1,#2) .. controls
(#1,#2-0.075) .. (doublemapnode) (#1+1,#2) .. controls (#1+1,#2-0.075)..
(doublemapnode) (doublemapnode) .. controls (#1,#2-0.925)..(#1,#2-1) (doublemapnode)
.. controls (#1+1,#2-0.925).. (#1+1,#2-1)}
\def\doublesinglemap(#1,#2)[#3]{\draw (#1+0.5,#2-0.5) node
[name=doublesinglemapnode,inner xsep=0pt, inner ysep=0pt, minimum height=11pt,
minimum width=23pt,shape=rectangle,draw,rounded corners] {$#3$} (#1,#2) .. controls
(#1,#2-0.075) .. (doublesinglemapnode) (#1+1,#2) .. controls (#1+1,#2-0.075)..
(doublesinglemapnode) (doublesinglemapnode)-- (#1+0.5,#2-1)}
\def\ractiontr(#1,#2)[#3,#4,#5]{\draw (#1,#2) -- (#1,#2-2*#4/2)  (#1,#2-1*#4/2) node
[inner sep=0pt, minimum size=3pt,shape=isosceles triangle,fill, shape border
rotate=#5] {}  --(#1+1*#4/2+#3*#4/2,#2-1*#4/2) .. controls
(#1+1.555*#4/2+#3*#4/2,#2-1*#4/2) and (#1+2*#4/2+#3*#4/2,#2-0.555*#4/2) ..
(#1+2*#4/2+#3*#4/2,#2)  }
\def\rack(#1,#2)[#3]{\draw (#1,#2-0.5)  node[name=nodemap,inner sep=0pt,  minimum
size=7.5pt, shape=circle,draw]{$#3$} (#1-1,#2) .. controls (#1-1,#2-0.5) and
(#1-0.5,#2-0.5) .. (nodemap) (#1,#2)-- (nodemap)  (nodemap)-- (#1,#2-1)}
\def\rackmenoslarge(#1,#2)[#3]{\draw (#1,#2-0.5)  node[name=nodemap,inner sep=0pt,
minimum size=7.5pt, shape=circle,draw]{$#3$} (#1-1.5,#2+0.5) .. controls
(#1-1.5,#2-0.5) and (#1-0.5,#2-0.5) .. (nodemap) (#1,#2)-- (nodemap)  (nodemap)--
(#1,#2-1)}
\def\racklarge(#1,#2)[#3]{\draw (#1,#2-0.5)  node[name=nodemap,inner sep=0pt,
minimum size=7.5pt, shape=circle,draw]{$#3$} (#1-2,#2+0.5) .. controls (#1-2,#2-0.5)
and (#1-0.5,#2-0.5) .. (nodemap) (#1,#2)-- (nodemap)  (nodemap)-- (#1,#2-1)}
\def\rackmaslarge(#1,#2)[#3]{\draw (#1,#2-0.5)  node[name=nodemap,inner sep=0pt,
minimum size=7.5pt, shape=circle,draw]{$#3$} (#1-2.5,#2+0.5) .. controls
(#1-2.5,#2-0.5) and (#1-0.5,#2-0.5) .. (nodemap) (#1,#2)-- (nodemap)  (nodemap)--
(#1,#2-1)}
\def\rackextralarge(#1,#2)[#3]{\draw (#1,#2-0.75)  node[name=nodemap,inner sep=0pt,
minimum size=7.5pt, shape=circle, draw]{$#3$} (#1-3,#2+1) .. controls (#1-3,#2-0.75)
and (#1-0.5,#2-0.75) .. (nodemap) (#1,#2)-- (nodemap)  (nodemap)-- (#1,#2-1.5)}
\def\lactionnamed(#1,#2)[#3,#4][#5]{\draw (#1 + 0.5*#3 + 0.5 + 0.5*#4, #2- 0.5*#3)
node[name=nodemap,inner sep=0pt,  minimum size=8pt, shape=circle,draw]{$#5$} (#1,#2)
 arc (180:270:0.5*#3) (#1 + 0.5*#3,#2- 0.5*#3) --  (nodemap) (#1 + 0.5*#3 + 0.5 +
0.5*#4, #2) --  (nodemap) (nodemap) -- (#1 + 0.5*#3 + 0.5 + 0.5*#4, #2-#3)}
\def\ractionnamed(#1,#2)[#3,#4][#5]{\draw  (#1 - 0.5*#3- 0.5 - 0.5*#4, #2- 0.5*#3)
node[name=nodemap,inner sep=0pt,  minimum size=8pt, shape=circle,draw]{$#5$} (#1 -
0.5*#3, #2- 0.5*#3)  arc (270:360:0.5*#3) (#1 - 0.5*#3, #2- 0.5*#3) -- (nodemap)(#1
- 0.5*#3- 0.5 - 0.5*#4, #2)-- (nodemap) (nodemap) -- (#1 - 0.5*#3- 0.5 - 0.5*#4,
#2-#3)}
\def\multsubzero(#1,#2)[#3,#4]{\draw (#1+0.5*#3, #2-0.5*#4) node [name=nodemap,inner
sep=0pt, minimum size=3pt,shape=circle,fill=white, draw]{} (#1,#2) arc
(180:360:0.5*#3 and 0.5*#4) (#1+0.5*#3, #2-0.5*#4) -- (#1+0.5*#3,#2-#4)}
\begin{scope}[xshift=0cm, yshift=-5.8cm]
\ractionnamed(1,0)[1,0][\scriptstyle \rho]; \draw (1.5,0) -- (1.5,-1);
\ractionnamed(1.5,-1)[1,1][\scriptstyle \rho];
\end{scope}
\begin{scope}[xshift=2cm, yshift=-6.3cm]
\node at (0,-0.5){=};
\end{scope}
\begin{scope}[xshift=2.5cm, yshift=-1.3cm]
\comult(0.5,0)[1,1]; \comult(2.5,0)[1,1]; \draw (3.75,0) -- (3.75,-5.5); \draw
(0,-1) -- (0,-2); \flip(1,-1)[1]; \draw (3,-1) -- (3,-2);
\lactionnamed(0,-2)[1,0][\scriptstyle \lambda]; \multsubzero(2,-2)[1,1];
\map(1,-3)[\scriptstyle T]; \draw (2.5,-3) -- (2.5,-4); \multsubzero(1,-4)[1.5,1.5];
\comult(1.75,-5.5)[1,1]; \comult(3.75,-5.5)[1,1]; \draw (1.25,-6.5) -- (1.25,-7.5);
\flip(2.25,-6.5)[1]; \draw (4.25,-6.5) -- (4.25,-7.5);
\lactionnamed(1.25,-7.5)[1,0][\scriptstyle \lambda]; \multsubzero(3.25,-7.5)[1,1];
\map(2.25,-8.5)[\scriptstyle T]; \draw (3.75,-8.5) -- (3.75,-9.5);
\multsubzero(2.25,-9.5)[1.5,1.5];
\end{scope}
\begin{scope}[xshift=7.3cm, yshift=-6.3cm]
\node at (0,-0.5){=};
\end{scope}
\begin{scope}[xshift=7.9cm, yshift=-1cm]
\comult(0.5,0)[1,1]; \comult(2.5,0)[1,1]; \draw (5,0) -- (5,-4); \draw (0,-1) --
(0,-2); \flip(1,-1)[1]; \draw (3,-1) -- (3,-2);
\lactionnamed(0,-2)[1,0][\scriptstyle \lambda]; \multsubzero(2,-2)[1,1];
\map(1,-3)[\scriptstyle T]; \draw (2.5,-3) .. controls (2.5,-3.5) and (3,-3.5)..
(3,-4); \comult(1,-4)[1,1]; \comult(3,-4)[1,1]; \draw (0.5,-5) -- (0.5,-6);
\flip(1.5,-5)[1]; \draw (4.5,-6) -- (4.5,-7);
\flip(3.5,-5)[1]; \multsubzero(0.5,-6)[1,1]; \multsubzero(3.5,-7)[1,1];
\comult(5,-4)[1,1]; \flip(2.5,-6)[1]; \draw (1,-7) -- (1,-8);  \draw (2.5,-7) --
(2.5,-8); \lactionnamed(1,-8)[1,1][\scriptstyle \lambda]; \multsubzero(4,-8)[1.5,1];
\draw (5.5,-5) -- (5.5,-8); \map(2.5,-9)[\scriptstyle T];
\multsubzero(2.5,-10)[2.25,1.5]; \draw (4.75,-9) -- (4.75,-10);
\end{scope}
\begin{scope}[xshift=13.9cm, yshift=-6.3cm]
\node at (0,-0.5){=};
\end{scope}
\begin{scope}[xshift=14cm, yshift=-0.2cm]
\comult(0.5,0)[1,1]; \comult(2.5,0)[1,1]; \draw (5,0) -- (5,-4); \draw (0,-1) --
(0,-2); \flip(1,-1)[1]; \draw (3,-1) -- (3,-2);
\lactionnamed(0,-2)[1,0][\scriptstyle \lambda]; \multsubzero(2,-2)[1,1];
\comult(1,-4)[1,1]; \draw (2.5,-3) .. controls (2.5,-3.5) and (3,-3.5).. (3,-4);
\comult(3,-4)[1,1]; \comult(5,-4)[1,1]; \map(1,-3)[\scriptstyle T];
\flip(1.5,-5)[1]; \flip(3.5,-5)[1]; \flip(2.5,-6)[1]; \draw (1.5,-6) -- (1.5,-7);
\draw (0.5,-5) -- (0.5,-8);
\draw (5.5,-5) -- (5.5,-6); \multsubzero(4.5,-6)[1,1];
\lactionnamed(1.5,-7)[1,0][\scriptstyle \lambda];
\lactionnamed(0.5,-8)[1.5,1.5][\scriptstyle \lambda]; \draw (3.5,-7) -- (3.5,-10.5);
\map(2.5,-9.5)[\scriptstyle T];
\multsubzero(2.5,-10.5)[1,1];  \draw (5,-7) -- (5,-11.5); \multsubzero(3,-11.5)[2,1.5];
\end{scope}
\begin{scope}[xshift=20cm, yshift=-6.3cm]
\node at (0,-0.5){=};
\end{scope}
\begin{scope}[xshift=20.5cm, yshift=0cm]
\comult(0.5,0)[1,1]; \comult(2.5,0)[1,1]; \draw (5,0) -- (5,-4); \draw (0,-1) --
(0,-2); \flip(1,-1)[1]; \draw (3,-1) -- (3,-2);
\lactionnamed(0,-2)[1,0][\scriptstyle \lambda];
\multsubzero(2,-2)[1,1];\comult(1,-3)[1,1]; \draw (2.5,-3) .. controls (2.5,-3.5)
and (3,-3.5).. (3,-4); \comult(3,-4)[1,1];  \map(1.5,-4)[\scriptstyle T];
\comult(5,-4)[1,1];  \draw (1.5,-5) -- (1.5,-7); \draw (2.5,-5) -- (2.5,-6);
\flip(3.5,-5)[1]; \draw (5.5,-5) -- (5.5,-6);\lactionnamed(2.5,-6)[1,0][\scriptstyle
\lambda]; \multsubzero(4.5,-6)[1,1]; \draw (0.5,-4) -- (0.5,-8.5);
\lactionnamed(1.5,-7)[1.5,1.5][\scriptstyle \lambda]; \multsubzero(0.5,-8.5)[3,2.5];
\map(2,-11)[\scriptstyle T];  \draw (5,-7) .. controls (5,-9) and (4,-10).. (4,-12);
\multsubzero(2,-12)[2,1.5];
\end{scope}
\begin{scope}[xshift=26.3cm, yshift=-6.3cm]
\node at (0,-0.5){,};
\end{scope}
\end{tikzpicture}
$$
where the first equality holds by definition; the second one, since $m_{\circ}$ is a coalgebra morphism and $c$ is a natural isomorphism; the third one, since $m_{\circ}$ is associative and $\lambda$ is an action; and the fourth one, since $T$ is a coalgebra and an algebra antimorphism and $c$ is a natural isomorphism; and that
$$
\begin{tikzpicture}[scale=0.414]
\def\mult(#1,#2)[#3,#4]{\draw (#1,#2) arc (180:360:0.5*#3 and 0.5*#4) (#1+0.5*#3, #2-0.5*#4) -- (#1+0.5*#3,#2-#4)}
\def\counit(#1,#2){\draw (#1,#2) -- (#1,#2-0.93) (#1,#2-1) circle[radius=2pt]}
\def\comult(#1,#2)[#3,#4]{\draw (#1,#2) -- (#1,#2-0.5*#4) arc (90:0:0.5*#3 and 0.5*#4) (#1,#2-0.5*#4) arc (90:180:0.5*#3 and 0.5*#4)}
\def\laction(#1,#2)[#3,#4]{\draw (#1,#2) .. controls (#1,#2-0.555*#4/2) and (#1+0.445*#4/2,#2-1*#4/2) .. (#1+1*#4/2,#2-1*#4/2) -- (#1+2*#4/2+#3*#4/2,#2-1*#4/2) (#1+2*#4/2+#3*#4/2,#2)--(#1+2*#4/2+#3*#4/2,#2-2*#4/2)}
\def\map(#1,#2)[#3]{\draw (#1,#2-0.5)  node[name=nodemap,inner sep=0pt,  minimum size=10pt, shape=circle, draw]{$#3$} (#1,#2)-- (nodemap)  (nodemap)-- (#1,#2-1)}
\def\solbraid(#1,#2)[#3]{\draw (#1,#2-0.5)  node[name=nodemap,inner sep=0pt,  minimum size=9pt, shape=circle,draw]{$#3$}
(#1-0.5,#2) .. controls (#1-0.5,#2-0.15) and (#1-0.4,#2-0.2) .. (#1-0.3,#2-0.3) (#1-0.3,#2-0.3) -- (nodemap)
(#1+0.5,#2) .. controls (#1+0.5,#2-0.15) and (#1+0.4,#2-0.2) .. (#1+0.3,#2-0.3) (#1+0.3,#2-0.3) -- (nodemap)
(#1+0.5,#2-1) .. controls (#1+0.5,#2-0.85) and (#1+0.4,#2-0.8) .. (#1+0.3,#2-0.7) (#1+0.3,#2-0.7) -- (nodemap)
(#1-0.5,#2-1) .. controls (#1-0.5,#2-0.85) and (#1-0.4,#2-0.8) .. (#1-0.3,#2-0.7) (#1-0.3,#2-0.7) -- (nodemap)
}
\def\flip(#1,#2)[#3]{\draw (
#1+1*#3,#2) .. controls (#1+1*#3,#2-0.05*#3) and (#1+0.96*#3,#2-0.15*#3).. (#1+0.9*#3,#2-0.2*#3)
(#1+0.1*#3,#2-0.8*#3)--(#1+0.9*#3,#2-0.2*#3)
(#1,#2-1*#3) .. controls (#1,#2-0.95*#3) and (#1+0.04*#3,#2-0.85*#3).. (#1+0.1*#3,#2-0.8*#3)
(#1,#2) .. controls (#1,#2-0.05*#3) and (#1+0.04*#3,#2-0.15*#3).. (#1+0.1*#3,#2-0.2*#3)
(#1+0.1*#3,#2-0.2*#3) -- (#1+0.9*#3,#2-0.8*#3)
(#1+1*#3,#2-1*#3) .. controls (#1+1*#3,#2-0.95*#3) and (#1+0.96*#3,#2-0.85*#3).. (#1+0.9*#3,#2-0.8*#3)
}
\def\raction(#1,#2)[#3,#4]{\draw (#1,#2) -- (#1,#2-2*#4/2)  (#1,#2-1*#4/2)--(#1+1*#4/2+#3*#4/2,#2-1*#4/2) .. controls (#1+1.555*#4/2+#3*#4/2,#2-1*#4/2) and (#1+2*#4/2+#3*#4/2,#2-0.555*#4/2) .. (#1+2*#4/2+#3*#4/2,#2)}
\def\doublemap(#1,#2)[#3]{\draw (#1+0.5,#2-0.5) node [name=doublemapnode,inner xsep=0pt, inner ysep=0pt, minimum height=11pt, minimum width=23pt,shape=rectangle,draw,rounded corners] {$#3$} (#1,#2) .. controls (#1,#2-0.075) .. (doublemapnode) (#1+1,#2) .. controls (#1+1,#2-0.075).. (doublemapnode) (doublemapnode) .. controls (#1,#2-0.925)..(#1,#2-1) (doublemapnode) .. controls (#1+1,#2-0.925).. (#1+1,#2-1)}
\def\doublesinglemap(#1,#2)[#3]{\draw (#1+0.5,#2-0.5) node [name=doublesinglemapnode,inner xsep=0pt, inner ysep=0pt, minimum height=11pt, minimum width=23pt,shape=rectangle,draw,rounded corners] {$#3$} (#1,#2) .. controls (#1,#2-0.075) .. (doublesinglemapnode) (#1+1,#2) .. controls (#1+1,#2-0.075).. (doublesinglemapnode) (doublesinglemapnode)-- (#1+0.5,#2-1)}
\def\ractiontr(#1,#2)[#3,#4,#5]{\draw (#1,#2) -- (#1,#2-2*#4/2)  (#1,#2-1*#4/2) node [inner sep=0pt, minimum size=3pt,shape=isosceles triangle,fill, shape border rotate=#5] {}  --(#1+1*#4/2+#3*#4/2,#2-1*#4/2) .. controls (#1+1.555*#4/2+#3*#4/2,#2-1*#4/2) and (#1+2*#4/2+#3*#4/2,#2-0.555*#4/2) .. (#1+2*#4/2+#3*#4/2,#2)  }
\def\rack(#1,#2)[#3]{\draw (#1,#2-0.5)  node[name=nodemap,inner sep=0pt,  minimum size=7.5pt, shape=circle,draw]{$#3$} (#1-1,#2) .. controls (#1-1,#2-0.5) and (#1-0.5,#2-0.5) .. (nodemap) (#1,#2)-- (nodemap)  (nodemap)-- (#1,#2-1)}
\def\rackmenoslarge(#1,#2)[#3]{\draw (#1,#2-0.5)  node[name=nodemap,inner sep=0pt,  minimum size=7.5pt, shape=circle,draw]{$#3$} (#1-1.5,#2+0.5) .. controls (#1-1.5,#2-0.5) and (#1-0.5,#2-0.5) .. (nodemap) (#1,#2)-- (nodemap)  (nodemap)-- (#1,#2-1)}
\def\racklarge(#1,#2)[#3]{\draw (#1,#2-0.5)  node[name=nodemap,inner sep=0pt,  minimum size=7.5pt, shape=circle,draw]{$#3$} (#1-2,#2+0.5) .. controls (#1-2,#2-0.5) and (#1-0.5,#2-0.5) .. (nodemap) (#1,#2)-- (nodemap)  (nodemap)-- (#1,#2-1)}
\def\rackmaslarge(#1,#2)[#3]{\draw (#1,#2-0.5)  node[name=nodemap,inner sep=0pt,  minimum size=7.5pt, shape=circle,draw]{$#3$} (#1-2.5,#2+0.5) .. controls (#1-2.5,#2-0.5) and (#1-0.5,#2-0.5) .. (nodemap) (#1,#2)-- (nodemap)  (nodemap)-- (#1,#2-1)}
\def\rackextralarge(#1,#2)[#3]{\draw (#1,#2-0.75)  node[name=nodemap,inner sep=0pt,  minimum size=7.5pt, shape=circle, draw]{$#3$} (#1-3,#2+1) .. controls (#1-3,#2-0.75) and (#1-0.5,#2-0.75) .. (nodemap) (#1,#2)-- (nodemap)  (nodemap)-- (#1,#2-1.5)}
\def\lactionnamed(#1,#2)[#3,#4][#5]{\draw (#1 + 0.5*#3 + 0.5 + 0.5*#4, #2- 0.5*#3) node[name=nodemap,inner sep=0pt,  minimum size=8pt, shape=circle,draw]{$#5$} (#1,#2)  arc (180:270:0.5*#3) (#1 + 0.5*#3,#2- 0.5*#3) --  (nodemap) (#1 + 0.5*#3 + 0.5 + 0.5*#4, #2) --  (nodemap) (nodemap) -- (#1 + 0.5*#3 + 0.5 + 0.5*#4, #2-#3)}
\def\ractionnamed(#1,#2)[#3,#4][#5]{\draw  (#1 - 0.5*#3- 0.5 - 0.5*#4, #2- 0.5*#3)  node[name=nodemap,inner sep=0pt,  minimum size=8pt, shape=circle,draw]{$#5$} (#1 - 0.5*#3, #2- 0.5*#3)  arc (270:360:0.5*#3) (#1 - 0.5*#3, #2- 0.5*#3) -- (nodemap)(#1 - 0.5*#3- 0.5 - 0.5*#4, #2)-- (nodemap) (nodemap) -- (#1 - 0.5*#3- 0.5 - 0.5*#4, #2-#3)}
\def\multsubzero(#1,#2)[#3,#4]{\draw (#1+0.5*#3, #2-0.5*#4) node [name=nodemap,inner sep=0pt, minimum size=3pt,shape=circle,fill=white, draw]{} (#1,#2) arc (180:360:0.5*#3 and 0.5*#4) (#1+0.5*#3, #2-0.5*#4) -- (#1+0.5*#3,#2-#4)}
\begin{scope}[xshift=0cm, yshift=-4.05cm]
\draw (0,0) -- (0,-1); \multsubzero(0.5,0)[1,1]; \ractionnamed(1,-1)[1,0][\scriptstyle \rho];
\end{scope}
\begin{scope}[xshift=2cm, yshift=-4.55cm]
\node at (0,-0.5){=};
\end{scope}
\begin{scope}[xshift=2.5cm, yshift=-1.75cm]
\draw (0.5,0) -- (0.5,-1); \multsubzero(2,0)[1,1]; \comult(0.5,-1)[1,1]; \comult(2.5,-1)[1,1]; \draw (0,-2) -- (0,-3); \flip(1,-2)[1]; \draw (3,-2) -- (3,-3); \lactionnamed(0,-3)[1,0][\scriptstyle \lambda]; \multsubzero(2,-3)[1,1]; \map(1,-4)[\scriptstyle T]; \draw (2.5,-4) -- (2.5,-5); \multsubzero(1,-5)[1.5,1.5];
\end{scope}
\begin{scope}[xshift=6cm, yshift=-4.55cm]
\node at (0,-0.5){=};
\end{scope}
\begin{scope}[xshift=6.5cm, yshift=-0.75cm]
\comult(0.5,0)[1,1]; \comult(2.5,0)[1,1]; \comult(4,0)[1,1]; \draw (0,-1) -- (0,-2); \draw (0,-2) .. controls (0,-3.5) and (1,-3.5) .. (1,-5); \flip(1,-1)[1]; \draw (3,-1) -- (3,-2); \draw (3.5,-1) -- (3.5,-3); \draw (4.5,-1) -- (4.5,-4); \draw (1,-2) -- (1,-2.5); \draw (1,-2.5) .. controls (1,-3.25) and (1.5,-3.25) .. (1.5,-4); \multsubzero(2,-2)[1,1]; \flip(2.5,-3)[1]; \multsubzero(1.5,-4)[1,1]; \lactionnamed(1,-5)[1,0][\scriptstyle \lambda]; \multsubzero(3.5,-4)[1,1]; \map(2,-6)[\scriptstyle T]; \multsubzero(2,-7)[2,1.5]; \draw (4,-5) -- (4,-7);
\end{scope}
\begin{scope}[xshift=11.5cm, yshift=-4.55cm]
\node at (0,-0.5){=};
\end{scope}
\begin{scope}[xshift=12cm, yshift=0cm]
\comult(0.5,0)[1,1]; \comult(2.5,0)[1,1]; \comult(4,0)[1,1]; \draw (0,-1) -- (0,-6.5);  \draw (3,-1) -- (3,-2); \draw (3.5,-1) -- (3.5,-3); \draw (4.5,-1) -- (4.5,-4);  \flip(1,-1)[1];  \multsubzero(2,-2)[1,1]; \flip(2.5,-3)[1]; \multsubzero(3.5,-4)[1,1]; \comult(1,-2)[1,1];  \draw (1.5,-3) -- (1.5,-4); \draw (0.5,-3) -- (0.5,-5); \lactionnamed(1.5,-4)[1,0][\scriptstyle \lambda]; \mult(0.5,-5)[2,1.5]; \lactionnamed(0,-6.5)[1,1][\scriptstyle \lambda]; \multsubzero(1.5,-8.5)[2,1.5]; \draw (4,-5) .. controls (4,-6.25) and (3.5,-7.25) ..  (3.5,-8.5); \map(1.5,-7.5)[\scriptstyle T];
\end{scope}
\begin{scope}[xshift=17cm, yshift=-4.55cm]
\node at (0,-0.5){=};
\end{scope}
\begin{scope}[xshift=17.5cm, yshift=0cm]
\comult(0.5,0)[1,1]; \comult(2.5,0)[1,1]; \draw (0,-1) -- (0,-2);  \draw (3,-1) -- (3,-2); \flip(1,-1)[1]; \draw (4.5,0) -- (4.5,-3); \lactionnamed(0,-2)[1,0][\scriptstyle \lambda]; \multsubzero(2,-2)[1,1]; \comult(2.5,-3)[1,1]; \comult(4.5,-3)[1,1]; \draw (2,-4) -- (2,-5);  \draw (5,-4) -- (5,-5); \flip(3,-4)[1]; \lactionnamed(2,-5)[1,0][\scriptstyle \lambda]; \multsubzero(4,-5)[1,1]; \draw (1,-3) -- (1,-6); \mult(1,-6)[2,1.5]; \map(2,-7.5)[\scriptstyle T]; \multsubzero(2,-8.5)[2,1.5]; \draw (4.5,-6) .. controls (4.5,-7.25) and (4,-7.25) ..  (4,-8.5);
\end{scope}
\begin{scope}[xshift=22.8cm, yshift=-4.55cm]
\node at (0,-0.5){,};
\end{scope}
\end{tikzpicture}
$$
where the first equality holds by definition; the second one, since $m_{\circ}$ is a coalgebra morphism, $c$ is a natural isomorphism and $m_{\circ}$ is associative; the third one, by~Remark~\ref{rem:productos}; and the last one, since $(A,m)$ is an $(A,m_{\circ},\Delta)$-module algebra, $\Delta_{A^2}$ is coassociative and $m_{\circ}$ is a coalgebra morphism.
\end{proof}

\begin{pro}\label{pro:rho'} For each brace $(A,m,m_{\circ})$ in $\mathscr{C}$ the following equalities hold:
$$
m_{\circ}\circ (\rho\otimes T)\circ (A\otimes \Delta) = m_{\circ}\circ (T\ot A)\circ (\lambda\ot A)\circ(A\ot c)\circ (\Delta\ot A)= T\circ m\circ (T\ot A).
$$
\end{pro}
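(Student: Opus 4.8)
My plan is to prove the two equalities in turn, working throughout with the standard Hopf-algebra identities for the cocommutative Hopf algebra $(A,m_{\circ},\eta,\Delta,\epsilon,T)$: namely $T^2=\ide$, $\Delta\circ T=(T\ot T)\circ\Delta$, $T\circ m_{\circ}=m_{\circ}\circ(T\ot T)\circ c$, and the antipode axiom $m_{\circ}\circ(A\ot T)\circ\Delta=\eta\circ\epsilon$ (recall from the remark after Definition~\ref{braces} that $\eta=\eta_{\circ}$). I will freely use the description $\rho=m_{\circ}\circ(T\ot A)\circ(\lambda\ot m_{\circ})\circ\Delta_{A^2}$ from Proposition~\ref{pro:rho}, together with $\Delta_{A^2}=(A\ot c\ot A)\circ(\Delta\ot\Delta)$, the module-algebra identities of Proposition~\ref{pro:modulo}, and the product formulas of Remark~\ref{rem:productos}.

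For the first equality (the left-hand and middle expressions), I would substitute the definition of $\rho$ into $m_{\circ}\circ(\rho\ot T)\circ(A\ot\Delta)$ and expand $\Delta_{A^2}$. Reassociating the two outer occurrences of $m_{\circ}$ and applying coassociativity of $\Delta$, the three legs of $b$ rearrange so that two of them meet in the shape $m_{\circ}\circ(A\ot T)\circ\Delta$; this block collapses to $\eta\circ\epsilon$ by the antipode axiom, after which a counit absorbs the resulting trivial leg. What survives is precisely $m_{\circ}\circ(T\ot A)\circ(\lambda\ot A)\circ(A\ot c)\circ(\Delta\ot A)$. I expect this to be the main obstacle: the computation hinges on arranging the diagram so that the antipode cancellation is applied to the correct pair of $b$-legs (the leg produced by the internal $m_{\circ}$ inside $\rho$ against the leg carrying the outer $T$), and the bookkeeping of coassociativity and cocommutativity must be done carefully, even though no single step is deep.

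For the second equality (middle equals right), I would first observe that, since $T^2=\ide$, it suffices to prove $T\circ(\text{middle})=m\circ(T\ot A)$. Composing the middle expression with $T$ and using that $T$ is an $m_{\circ}$-algebra antimorphism, the naturality of $c$, and $T^2=\ide$, one rewrites $T\circ(\text{middle})$ as $m_{\circ}\circ(T\ot A)\circ(A\ot\lambda)\circ c_{A^2,A}\circ(A\ot c)\circ(\Delta\ot A)$. On the other side, I would compute $m\circ(T\ot A)$ directly from formula~\eqref{eq:ab}, using $\Delta\circ T=(T\ot T)\circ\Delta$ and $T^2=\ide$ to simplify the two antipodes created by precomposition with $T\ot A$. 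Both maps then reduce to the same shape, namely $T$ of one leg of $a$ composed under $m_{\circ}$ with $\lambda$ of the other leg of $a$ and of $b$, so they are identified by a single application of cocommutativity of $\Delta$ interchanging the two legs of $a$. This gives $\text{middle}=T\circ m\circ(T\ot A)=\text{right}$ and completes the proof.
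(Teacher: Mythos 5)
Your proposal is correct and follows essentially the same route as the paper's proof: expand $\rho$ via Proposition~\ref{pro:rho}, use associativity of $m_{\circ}$, coassociativity of $\Delta$ and the antipode axiom to collapse the left-hand map onto the middle map, and then identify the middle map with $T\circ m\circ(T\ot A)$ using the algebra/coalgebra antihomomorphism properties of $T$, cocommutativity (naturality of $c$) and formula~\eqref{eq:ab}. The only cosmetic difference is in the second equality, where you pre-compose with $T$ and invoke $T^2=\ide$ explicitly before comparing both sides against a common expression, whereas the paper rewrites the middle expression directly — a manipulation that implicitly rests on the same double-antipode cancellation, so this is a reorganization rather than a genuinely different argument.
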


\begin{proof} In fact, we have
$$
\begin{tikzpicture}[scale=0.414]
\def\unit(#1,#2){\draw (#1,#2) circle[radius=2pt] (#1,#2-0.07) -- (#1,#2-1)}
\def\mult(#1,#2)[#3,#4]{\draw (#1,#2) arc (180:360:0.5*#3 and 0.5*#4) (#1+0.5*#3, #2-0.5*#4) -- (#1+0.5*#3,#2-#4)}
\def\counit(#1,#2){\draw (#1,#2) -- (#1,#2-0.93) (#1,#2-1) circle[radius=2pt]}
\def\comult(#1,#2)[#3,#4]{\draw (#1,#2) -- (#1,#2-0.5*#4) arc (90:0:0.5*#3 and 0.5*#4) (#1,#2-0.5*#4) arc (90:180:0.5*#3 and 0.5*#4)}
\def\laction(#1,#2)[#3,#4]{\draw (#1,#2) .. controls (#1,#2-0.555*#4/2) and (#1+0.445*#4/2,#2-1*#4/2) .. (#1+1*#4/2,#2-1*#4/2) -- (#1+2*#4/2+#3*#4/2,#2-1*#4/2) (#1+2*#4/2+#3*#4/2,#2)--(#1+2*#4/2+#3*#4/2,#2-2*#4/2)}
\def\map(#1,#2)[#3]{\draw (#1,#2-0.5)  node[name=nodemap,inner sep=0pt,  minimum size=10pt, shape=circle, draw]{$#3$} (#1,#2)-- (nodemap)  (nodemap)-- (#1,#2-1)}
\def\solbraid(#1,#2)[#3]{\draw (#1,#2-0.5)  node[name=nodemap,inner sep=0pt,  minimum size=9pt, shape=circle,draw]{$#3$}
(#1-0.5,#2) .. controls (#1-0.5,#2-0.15) and (#1-0.4,#2-0.2) .. (#1-0.3,#2-0.3) (#1-0.3,#2-0.3) -- (nodemap)
(#1+0.5,#2) .. controls (#1+0.5,#2-0.15) and (#1+0.4,#2-0.2) .. (#1+0.3,#2-0.3) (#1+0.3,#2-0.3) -- (nodemap)
(#1+0.5,#2-1) .. controls (#1+0.5,#2-0.85) and (#1+0.4,#2-0.8) .. (#1+0.3,#2-0.7) (#1+0.3,#2-0.7) -- (nodemap)
(#1-0.5,#2-1) .. controls (#1-0.5,#2-0.85) and (#1-0.4,#2-0.8) .. (#1-0.3,#2-0.7) (#1-0.3,#2-0.7) -- (nodemap)
}
\def\flip(#1,#2)[#3]{\draw (
#1+1*#3,#2) .. controls (#1+1*#3,#2-0.05*#3) and (#1+0.96*#3,#2-0.15*#3).. (#1+0.9*#3,#2-0.2*#3)
(#1+0.1*#3,#2-0.8*#3)--(#1+0.9*#3,#2-0.2*#3)
(#1,#2-1*#3) .. controls (#1,#2-0.95*#3) and (#1+0.04*#3,#2-0.85*#3).. (#1+0.1*#3,#2-0.8*#3)
(#1,#2) .. controls (#1,#2-0.05*#3) and (#1+0.04*#3,#2-0.15*#3).. (#1+0.1*#3,#2-0.2*#3)
(#1+0.1*#3,#2-0.2*#3) -- (#1+0.9*#3,#2-0.8*#3)
(#1+1*#3,#2-1*#3) .. controls (#1+1*#3,#2-0.95*#3) and (#1+0.96*#3,#2-0.85*#3).. (#1+0.9*#3,#2-0.8*#3)
}
\def\raction(#1,#2)[#3,#4]{\draw (#1,#2) -- (#1,#2-2*#4/2)  (#1,#2-1*#4/2)--(#1+1*#4/2+#3*#4/2,#2-1*#4/2) .. controls (#1+1.555*#4/2+#3*#4/2,#2-1*#4/2) and (#1+2*#4/2+#3*#4/2,#2-0.555*#4/2) .. (#1+2*#4/2+#3*#4/2,#2)}
\def\doublemap(#1,#2)[#3]{\draw (#1+0.5,#2-0.5) node [name=doublemapnode,inner xsep=0pt, inner ysep=0pt, minimum height=11pt, minimum width=23pt,shape=rectangle,draw,rounded corners] {$#3$} (#1,#2) .. controls (#1,#2-0.075) .. (doublemapnode) (#1+1,#2) .. controls (#1+1,#2-0.075).. (doublemapnode) (doublemapnode) .. controls (#1,#2-0.925)..(#1,#2-1) (doublemapnode) .. controls (#1+1,#2-0.925).. (#1+1,#2-1)}
\def\doublesinglemap(#1,#2)[#3]{\draw (#1+0.5,#2-0.5) node [name=doublesinglemapnode,inner xsep=0pt, inner ysep=0pt, minimum height=11pt, minimum width=23pt,shape=rectangle,draw,rounded corners] {$#3$} (#1,#2) .. controls (#1,#2-0.075) .. (doublesinglemapnode) (#1+1,#2) .. controls (#1+1,#2-0.075).. (doublesinglemapnode) (doublesinglemapnode)-- (#1+0.5,#2-1)}
\def\ractiontr(#1,#2)[#3,#4,#5]{\draw (#1,#2) -- (#1,#2-2*#4/2)  (#1,#2-1*#4/2) node [inner sep=0pt, minimum size=3pt,shape=isosceles triangle,fill, shape border rotate=#5] {}  --(#1+1*#4/2+#3*#4/2,#2-1*#4/2) .. controls (#1+1.555*#4/2+#3*#4/2,#2-1*#4/2) and (#1+2*#4/2+#3*#4/2,#2-0.555*#4/2) .. (#1+2*#4/2+#3*#4/2,#2)  }
\def\rack(#1,#2)[#3]{\draw (#1,#2-0.5)  node[name=nodemap,inner sep=0pt,  minimum size=7.5pt, shape=circle,draw]{$#3$} (#1-1,#2) .. controls (#1-1,#2-0.5) and (#1-0.5,#2-0.5) .. (nodemap) (#1,#2)-- (nodemap)  (nodemap)-- (#1,#2-1)}
\def\rackmenoslarge(#1,#2)[#3]{\draw (#1,#2-0.5)  node[name=nodemap,inner sep=0pt,  minimum size=7.5pt, shape=circle,draw]{$#3$} (#1-1.5,#2+0.5) .. controls (#1-1.5,#2-0.5) and (#1-0.5,#2-0.5) .. (nodemap) (#1,#2)-- (nodemap)  (nodemap)-- (#1,#2-1)}
\def\racklarge(#1,#2)[#3]{\draw (#1,#2-0.5)  node[name=nodemap,inner sep=0pt,  minimum size=7.5pt, shape=circle,draw]{$#3$} (#1-2,#2+0.5) .. controls (#1-2,#2-0.5) and (#1-0.5,#2-0.5) .. (nodemap) (#1,#2)-- (nodemap)  (nodemap)-- (#1,#2-1)}
\def\rackmaslarge(#1,#2)[#3]{\draw (#1,#2-0.5)  node[name=nodemap,inner sep=0pt,  minimum size=7.5pt, shape=circle,draw]{$#3$} (#1-2.5,#2+0.5) .. controls (#1-2.5,#2-0.5) and (#1-0.5,#2-0.5) .. (nodemap) (#1,#2)-- (nodemap)  (nodemap)-- (#1,#2-1)}
\def\rackextralarge(#1,#2)[#3]{\draw (#1,#2-0.75)  node[name=nodemap,inner sep=0pt,  minimum size=7.5pt, shape=circle, draw]{$#3$} (#1-3,#2+1) .. controls (#1-3,#2-0.75) and (#1-0.5,#2-0.75) .. (nodemap) (#1,#2)-- (nodemap)  (nodemap)-- (#1,#2-1.5)}
\def\lactionnamed(#1,#2)[#3,#4][#5]{\draw (#1 + 0.5*#3 + 0.5 + 0.5*#4, #2- 0.5*#3) node[name=nodemap,inner sep=0pt,  minimum size=8pt, shape=circle,draw]{$#5$} (#1,#2)  arc (180:270:0.5*#3) (#1 + 0.5*#3,#2- 0.5*#3) --  (nodemap) (#1 + 0.5*#3 + 0.5 + 0.5*#4, #2) --  (nodemap) (nodemap) -- (#1 + 0.5*#3 + 0.5 + 0.5*#4, #2-#3)}
\def\ractionnamed(#1,#2)[#3,#4][#5]{\draw  (#1 - 0.5*#3- 0.5 - 0.5*#4, #2- 0.5*#3)  node[name=nodemap,inner sep=0pt,  minimum size=8pt, shape=circle,draw]{$#5$} (#1 - 0.5*#3, #2- 0.5*#3)  arc (270:360:0.5*#3) (#1 - 0.5*#3, #2- 0.5*#3) -- (nodemap)(#1 - 0.5*#3- 0.5 - 0.5*#4, #2)-- (nodemap) (nodemap) -- (#1 - 0.5*#3- 0.5 - 0.5*#4, #2-#3)}
\def\multsubzero(#1,#2)[#3,#4]{\draw (#1+0.5*#3, #2-0.5*#4) node [name=nodemap,inner sep=0pt, minimum size=3pt,shape=circle,fill=white, draw]{} (#1,#2) arc (180:360:0.5*#3 and 0.5*#4) (#1+0.5*#3, #2-0.5*#4) -- (#1+0.5*#3,#2-#4)}
\begin{scope}[xshift=0cm, yshift=-3.75cm]
\draw (0,0) -- (0,-1);\comult(1.5,0)[1,1]; \ractionnamed(1,-1)[1,0][\scriptstyle \rho]; \map(2,-1)[\scriptstyle T]; \multsubzero(0,-2)[2,1.5];
\end{scope}
\begin{scope}[xshift=2.85cm, yshift=-5.05cm]
\node at (0,-0.5){=};
\end{scope}
\begin{scope}[xshift=3.4cm, yshift=-2cm]
\draw (0.5,0) -- (0.5,-1);
\comult(3,0)[1,1]; \comult(0.5,-1)[1,1]; \comult(2.5,-1)[1,1]; \draw (0,-2) -- (0,-3); \flip(1,-2)[1]; \draw (3,-2) -- (3,-3); \lactionnamed(0,-3)[1,0][\scriptstyle \lambda]; \multsubzero(2,-3)[1,1];  \map(1,-4)[\scriptstyle T]; \draw (3.5,-1) -- (3.5,-3.5); \map(3.5,-3.5)[\scriptstyle T]; \multsubzero(2.5,-4.5)[1,1]; \draw (1,-5) -- (1,-5.5); \draw (2.5,-4) -- (2.5,-4.5);  \multsubzero(1,-5.5)[2,1.5];
\end{scope}
\begin{scope}[xshift=7.7cm, yshift=-5.05cm]
\node at (0,-0.5){=};
\end{scope}
\begin{scope}[xshift=8.2cm, yshift=-2.5cm]
\draw (0.5,0) -- (0.5,-1); \comult(0.5,-1)[1,1]; \draw (2,0) -- (2,-2); \flip(1,-2)[1]; \draw (0,-2) -- (0,-3); \lactionnamed(0,-3)[1,0][\scriptstyle \lambda]; \draw (2,-3) -- (2,-5); \map(1,-4)[\scriptstyle T]; \multsubzero(1,-5)[1,1];
\end{scope}
\begin{scope}[xshift=10.7cm, yshift=-5.05cm]
\node at (0,-0.5){=};
\end{scope}
\begin{scope}[xshift=11.2cm, yshift=-3.2cm]
\map(0.5,1)[\scriptstyle T];
\comult(0.5,0)[1,1]; \draw (2,1) -- (2,-2); \draw (0,-1) -- (0,-3); \lactionnamed(1,-2)[1,0][\scriptstyle \lambda];  \map(1,-1)[\scriptstyle T]; \multsubzero(0,-3)[2,1.5]; \map(1,-4.5)[\scriptstyle T];
\end{scope}
\begin{scope}[xshift=14cm, yshift=-5.05cm]
\node at (0,-0.5){=};
\end{scope}
\begin{scope}[xshift=14.5cm, yshift=-5cm]
\map(0,1)[\scriptstyle T]; \draw (1,1) -- (1,0);
\mult(0,-0)[1,1];  \map(0.5,-1)[\scriptstyle T];
\end{scope}
\begin{scope}[xshift=15.75cm, yshift=-5.05cm]
\node at (0,-0.5){,};
\end{scope}
\end{tikzpicture}
$$
where the first equality follows from the definition of $\rho$; the second one, since $\Delta$ is coassociative, $m_{\circ}$ is associative and $T$ is the antipode of $(A,m_{\circ},\Delta)$; the third one, since $c$ is a natural isomorphism and $T$ is an algebra and a coalgebra antihomomorphism; and the last one, from equality~\eqref{eq:ab}.
\end{proof}

\subsection{Invertible cocycles}

In~\cite{MR1722951,MR1809284} set-theoretical solutions of the Yang--Baxter equations were classified in terms of bijective $1$-cocycles. In
this subsection we adapt this concept to the setting of symmetric categories.

\begin{defn} Let $H$ and $A$ be cocommutative Hopf algebras in $\mathscr{C}$ an let $\lambda$ be a left action of $H$ on $A$. Assume that $A$ is a left $H$-module algebra and a left $H$-module coalgebra via $\lambda$.  An \emph{invertible $1$-cocycle} is a coalgebra isomorphism $\pi\colon H\to A$ such that
\begin{equation}\label{formula de brace}
\pi\circ m=m\circ(\pi\otimes\lambda)\circ(\Delta\otimes\pi).
\end{equation}
It is easy to see that this equality is equivalent to
\begin{equation}\label{formula equivalente a la de brace}
\lambda\circ (H\otimes \pi) = m\circ (S\otimes \pi)\circ (\pi\otimes m)\circ (\Delta\ot H).
\end{equation}

A \emph{morphism} from an invertible $1$-cocycle $\pi\colon H\to A$ to an invertible $1$-cocycle $\xi\colon K\to B$ is a pair $(f,g)$ of Hopf algebra morphism, $f\colon H\to K$ and $g\colon A\to B$, such that
\[
\xi\circ f=g\circ \pi\quad\text{and}\quad g\circ \lambda=\lambda\circ(f\otimes g),
\]
where $\lambda$ denotes the actions.
\end{defn}

Fix a cocommutative Hopf algebra $A$ in $\mathscr{C}$. We let $\Bc(A)$ denote the full sub\-cat\-e\-go\-ry of invertible $1$-cocycles in $\mathscr{C}$ whose objects are the invertible $1$-cocycles with codomain~$A$.

\subsection{Braiding operators}

In~\cite{MR1769723} Lu, Yan and Zhu introduced braiding operators to study non-degenerate set-theoretical solutions. Takeuchi noticed that braiding operators are equivalent to certain matched pairs of groups, see~\cite{MR2024436}. We now extend these ideas to our general
setting of non-degenerate solutions in symmetric categories.

\begin{defn} A \emph{braiding operator} in $\mathscr{C}$ is a pair $(A,r)$, where $A$ is a cocommutative Hopf algebra in $\mathscr{C}$ and $r\colon A\otimes A\to A\otimes A$ is a coalgebra isomorphism such that the following equalities hold:
\begin{align}
\label{eq:bo1}&m\circ r=m,\\
\label{eq:bo2}&r\circ (m\otimes A)=(A\otimes m)\circ (r\otimes A)\circ (A\otimes r),\\
\label{eq:bo3}&r\circ (A\otimes m)=(m\otimes A)\circ (A\otimes r)\circ (r\otimes A),\\
\label{eq:bo4}&r\circ (\eta\otimes A)=A\otimes\eta,\\
\label{eq:bo5}&r\circ (A\otimes\eta)=\eta\otimes A,
\end{align}
where $m$ is the multiplication map of $A$ and $\eta$ is the unit of $A$. A \emph{morphism} from a braiding operator $(A,r)$ to a braiding operator $(K,s)$ is Hopf algebra morphism $f\colon A\to K$ such that $(f\otimes f)\circ r=s\circ (f\otimes f)$.
\end{defn}

Given a braiding operator $(A,r)$ we will denote by $\lambda$ and $\rho$ the first and second coordinates of $r$ respectively.

\begin{pro}\label{lem:matched_pair} Let $A$ be a Hopf algebra, let $r\colon A^2\to A^2$ be a coalgebra isomorphism and let $\lambda$ and $\rho$ be the first and second coordinates of $r$. Then, conditions~\eqref{eq:bo2}--\eqref{eq:bo4} are satisfied if and only if $\lambda$ is a left action, $\rho$ is a right action and the following equalities hold:
\begin{enumerate}

\item $\lambda\circ (A\otimes m)=m\circ (A\otimes\lambda)\circ (r\otimes A)$.

\smallskip

\item $\rho\circ (m\otimes A)=m\circ (\rho\otimes A)\circ (A\otimes r)$.

\smallskip

\item $\lambda\circ (A\otimes\eta)=\eta\circ\epsilon$.

\smallskip

\item $\rho\circ(\eta\otimes A)=\eta\circ\epsilon$.
\end{enumerate}
\end{pro}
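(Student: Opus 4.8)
The plan is to exploit the fact that, since $A$ is cocommutative, a morphism into $A^2$ is a coalgebra morphism determined by its two coordinate maps (Proposition~\ref{coro: morfismos de coalgebra al producto tensorial de n coalgebras} and Corollary~\ref{coro: igualdad de morfismos}). All of $m$, $\eta$ and $r$ are coalgebra morphisms, so both sides of \eqref{eq:bo2}, \eqref{eq:bo3}, \eqref{eq:bo4} are coalgebra morphisms into $A^2$, and each of these three equations is equivalent to the conjunction of its two coordinate equations. So the first step is to compute these coordinates.

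Applying $A\otimes\epsilon$ and $\epsilon\otimes A$, and repeatedly using $\epsilon\circ m=\epsilon\otimes\epsilon$ together with $\lambda=(A\otimes\epsilon)\circ r$ and $\rho=(\epsilon\otimes A)\circ r$, I expect \eqref{eq:bo2} to split as
$$\lambda\circ(m\otimes A)=\lambda\circ(A\otimes\lambda)\quad\text{and}\quad \rho\circ(m\otimes A)=m\circ(\rho\otimes A)\circ(A\otimes r),$$
equation \eqref{eq:bo3} to split as
$$\lambda\circ(A\otimes m)=m\circ(A\otimes\lambda)\circ(r\otimes A)\quad\text{and}\quad \rho\circ(A\otimes m)=\rho\circ(\rho\otimes A),$$
and equation \eqref{eq:bo4} to split as
$$\lambda\circ(\eta\otimes A)=\id_A\quad\text{and}\quad \rho\circ(\eta\otimes A)=\eta\circ\epsilon.$$
These are, respectively, associativity of the left action, condition~(2), condition~(1), associativity of the right action, the unit axiom of the left action, and condition~(4). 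Reassembling coordinate maps then gives the reverse implication of the proposition immediately (it uses only these six identities), and shows that \eqref{eq:bo2}--\eqref{eq:bo4} are equivalent to this list of six identities.

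For the forward implication the remaining task is to produce the two conditions that are invisible at the coordinate level, namely condition~(3), $\lambda\circ(A\otimes\eta)=\eta\circ\epsilon$, and the unit axiom of the right action, $\rho\circ(A\otimes\eta)=\id_A$; this is exactly where the hypothesis that $r$ is an \emph{isomorphism} is needed. For~(3) I would precompose condition~(1) with $A\otimes A\otimes\eta$: the left side collapses to $\lambda$ via $m\circ(A\otimes\eta)=\id$, while the right side becomes $m\circ(A\otimes\nu)\circ r$ with $\nu:=\lambda\circ(A\otimes\eta)$. Since also $\lambda=(A\otimes\epsilon)\circ r$, right-cancelling the isomorphism $r$ gives $(A\otimes\epsilon)=m\circ(A\otimes\nu)$, and precomposing with $\eta\otimes A$ yields $\nu=\eta\circ\epsilon$, which is~(3).

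The hard part will be the right unit axiom, which cannot be reached by merely cancelling $r$. Writing $\psi:=\rho\circ(A\otimes\eta)$, I would first precompose the associativity of $\rho$ with $A\otimes A\otimes\eta$ to obtain $\rho=\psi\circ\rho$, and then with $A\otimes\eta$ to obtain $\psi\circ\psi=\psi$; thus $\psi$ is idempotent. Separately, the coalgebra morphism $r\circ(A\otimes\eta)$ has coordinates $\lambda\circ(A\otimes\eta)=\eta\circ\epsilon$ (by~(3)) and $\psi$, so by Proposition~\ref{coro: morfismos de coalgebra al producto tensorial de n coalgebras} and cocommutativity it equals $(\eta\circ\epsilon\otimes\psi)\circ\Delta=(\eta\otimes A)\circ\psi$. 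Being a composite of the split monomorphism $A\otimes\eta$ with the isomorphism $r$, it is a monomorphism, and since $\eta\otimes A$ is a split monomorphism this forces $\psi$ to be a monomorphism. A monic idempotent is the identity, so $\psi=\id_A$, giving the right unit axiom. I anticipate that this idempotent/monomorphism step—and, more generally, keeping track of precisely how invertibility of $r$ upgrades the six coordinate identities to the full list of eight—will be the only non-routine point; the coordinate computations in the first two paragraphs are mechanical.
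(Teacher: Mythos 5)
Your proof is correct, and its core is the same coordinatewise argument that constitutes the paper's entire proof, which is literally the single line ``By Corollary~\ref{coro: igualdad de morfismos}'': all maps appearing in \eqref{eq:bo2}--\eqref{eq:bo4} are coalgebra morphisms into $A^2$, so, $A$ being cocommutative, each equation is equivalent to the pair of equations obtained by applying $A\otimes\epsilon$ and $\epsilon\otimes A$, and your six coordinate identities are exactly the right ones (in particular the second coordinate of \eqref{eq:bo3} is $\rho\circ(A\otimes m)=\rho\circ(\rho\otimes A)$, the standard right-action multiplicativity, which is indeed what the equations force). Where you go beyond the paper is in observing that two of the identities asserted in the proposition, namely condition~(3) and the unit axiom $\rho\circ(A\otimes\eta)=\id_A$ of the right action, are \emph{not} coordinates of \eqref{eq:bo2}--\eqref{eq:bo4} (they are coordinates of \eqref{eq:bo5}, which is not assumed), so the ``only if'' direction needs a separate argument exploiting the invertibility of $r$; the paper's one-line proof passes over this in silence. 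Your two derivations are sound: cancelling the isomorphism $r$ after precomposing condition~(1) with $A^2\otimes\eta$ gives $m\circ(A\otimes\nu)=A\otimes\epsilon$, whence $\nu=\eta\circ\epsilon$; and for $\psi\coloneqq\rho\circ(A\otimes\eta)$ the chain $\rho=\psi\circ\rho$, then $\psi\circ\psi=\psi$, then $r\circ(A\otimes\eta)=((\eta\circ\epsilon)\otimes\psi)\circ\Delta=(\eta\otimes A)\circ\psi$ (coordinate decomposition plus condition~(3)), then monicity of $r\circ(A\otimes\eta)$ and hence of $\psi$, then cancellation of the monomorphism $\psi$ in $\psi\circ\psi=\psi\circ\id_A$, is valid at every step. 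So the proposal is a correct, and in fact more complete, version of the paper's argument. The only caveat is one of conventions and lies in the paper rather than in your proof: the paper's definition of ``right action'' (a left action via $\rho\circ c$) literally yields the reversed axiom $\rho\circ(A\otimes m)=\rho\circ(\rho\otimes A)\circ(A\otimes c)$, whereas the axiom actually forced by \eqref{eq:bo3}, and the one you correctly use, is the unreversed one.
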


\begin{proof} By Corollary~\ref{coro: igualdad de morfismos}.
\end{proof}

Fix a cocommutative Hopf algebra $A$ in $\mathscr{C}$. We let $\Bo(A)$ denote the full subcategory of braiding operators $(A,r)$ with underlying Hopf algebra $A$.

\smallskip
The proof of the following theorem is based on~\cite[Theorem 1]{MR1769723}.

\begin{thm} If $(A,r)$ is a braiding operator in $\mathscr{C}$, then $r$ is a solution of the Yang--Baxter equation.	
\end{thm}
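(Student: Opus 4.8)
The plan is to verify the braid equation \eqref{ec de trenzas} by comparing the two coalgebra endomorphisms
$$
L:=(r\otimes A)\circ(A\otimes r)\circ(r\otimes A)\quad\text{and}\quad R:=(A\otimes r)\circ(r\otimes A)\circ(A\otimes r)
$$
of $A^3$. Since $r$ is a coalgebra morphism, so are $L$ and $R$, and therefore by Corollary~\ref{coro: igualdad de morfismos} it suffices to show that the three coordinate maps of $L$ and $R$ agree. Throughout I write $\lambda$ and $\rho$ for the first and second coordinates of $r$, and I freely invoke Proposition~\ref{lem:matched_pair}: since $(A,r)$ is a braiding operator it satisfies \eqref{eq:bo2}--\eqref{eq:bo4}, so $\lambda$ is a left action, $\rho$ is a right action, and the compatibilities $(1)$--$(4)$ there hold. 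The remaining ingredient is \eqref{eq:bo1}, used in the form $m\circ r=m$.

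For the first coordinate I would compute $(A\otimes\epsilon\otimes\epsilon)\circ L$ and $(A\otimes\epsilon\otimes\epsilon)\circ R$ by pushing the counits through each tensor factor, using $(A\otimes\epsilon)\circ r=\lambda$ and $(\epsilon\otimes\epsilon)\circ r=\epsilon\otimes\epsilon$. This gives $R_1=\lambda\circ(A\otimes\lambda)$ at once, whereas $L_1=\lambda\circ(A\otimes\lambda)\circ(r\otimes A)$; the latter collapses to the same map, because the left-action axiom reads $\lambda\circ(A\otimes\lambda)=\lambda\circ(m\otimes A)$, and then $\lambda\circ(m\otimes A)\circ(r\otimes A)=\lambda\circ((m\circ r)\otimes A)=\lambda\circ(m\otimes A)$ by \eqref{eq:bo1}. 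Thus $L_1=R_1=:g$ with $g=\lambda\circ(A\otimes\lambda)$. The third coordinate is treated symmetrically, using $(\epsilon\otimes A)\circ r=\rho$, the right-action axiom and \eqref{eq:bo1}, to obtain $L_3=R_3=\rho\circ(A\otimes m)=\rho\circ(\rho\otimes A)$.

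The middle coordinate is the crux, and here no direct rewriting is available; instead I would argue by cancellation in the convolution monoid $\Hom_{\mathcal{C}}(A^3,A)$ (with product induced by $\Delta_{A^3}$ and $m$). The key point is that, because $L$ and $R$ are coalgebra maps and $\epsilon$ annihilates the third coordinate, one has $(m\otimes\epsilon)\circ L=L_1*L_2$ and $(m\otimes\epsilon)\circ R=R_1*R_2$. I would then check that both of these equal $\lambda\circ(A\otimes m)$. For $R$ this uses $(m\otimes\epsilon)\circ(A\otimes r)=m\circ(A\otimes\lambda)$, followed by compatibility $(1)$ of Proposition~\ref{lem:matched_pair} in the form $m\circ(A\otimes\lambda)\circ(r\otimes A)=\lambda\circ(A\otimes m)$, and finally \eqref{eq:bo1}. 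For $L$ one first observes $(m\otimes\epsilon)\circ(r\otimes A)=m\otimes\epsilon$ (again by \eqref{eq:bo1}) and then applies the same two steps. Hence $L_1*L_2=R_1*R_2$.

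To conclude, recall that $g=L_1=R_1$ is a coalgebra morphism into the Hopf algebra $A$, hence convolution-invertible with inverse $S\circ g$. Cancelling $g$ on the left of $g*L_2=L_1*L_2=R_1*R_2=g*R_2$ yields $L_2=R_2$; together with $L_1=R_1$ and $L_3=R_3$, Corollary~\ref{coro: igualdad de morfismos} gives $L=R$, i.e.\ $r$ satisfies the braid equation. The only genuinely delicate step is the middle coordinate: one must recognize that multiplying the first two output legs produces the convolution product $L_1*L_2$, so that the braid relation in that slot can be \emph{extracted} by inverting $g$ rather than proved by an unavailable direct computation.
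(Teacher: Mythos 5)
Your proof is correct and takes essentially the same route as the paper's: both decompose the two composites into coordinate maps via Proposition~\ref{coro: morfismos de coalgebra al producto tensorial de n coalgebras}, verify the outer coordinates directly from \eqref{eq:bo1}--\eqref{eq:bo3} (equivalently, the action formulation of Proposition~\ref{lem:matched_pair}), and recover the middle coordinate by cancelling convolution-invertible coalgebra maps, concluding with Corollary~\ref{coro: igualdad de morfismos}. The only cosmetic difference is that the paper multiplies all three output legs (obtaining $f_1\star f_2\star f_3=m\circ(m\otimes A)$ on both sides and cancelling $f_1$ and $f_3$), whereas you apply $m\otimes\epsilon$ so that only the single cancellation of $L_1=R_1$ is needed.
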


\begin{proof} Since $r_{12}\circ r_{23}\circ r_{12}$ and $r_{23}\circ r_{12}\circ r_{23}$ are coalgebra morphisms, by Proposition~\ref{coro: morfismos de coalgebra al producto tensorial de n coalgebras} we can write
\begin{align}
& r_{12}\circ r_{23}\circ r_{12} = (f_1\otimes f_2\otimes f_3)\circ
(\Delta_{A^3}\otimes A^3) \circ \Delta_{A^3}\label{ecua1}
\shortintertext{and}
& r_{23}\circ r_{12}\circ r_{23} = (g_1\otimes g_2\otimes g_3)\circ
(\Delta_{A^3}\otimes A^3) \circ \Delta_{A^3},\label{ecua2}
\end{align}	
where the $f_i$ and the $g_i$ are the coordinate maps of $r_{12}\circ r_{23}\circ r_{12}$ and $r_{23}\circ
r_{12}\circ r_{23}$, respectively. The following computation shows that $f_3=g_3$:
$$
\begin{tikzpicture}[scale=0.41]
\def\mult(#1,#2)[#3,#4]{\draw (#1,#2) arc (180:360:0.5*#3 and 0.5*#4) (#1+0.5*#3, #2-0.5*#4) -- (#1+0.5*#3,#2-#4)}
\def\counit(#1,#2){\draw (#1,#2) -- (#1,#2-0.93) (#1,#2-1) circle[radius=2pt]}
\def\comult(#1,#2)[#3,#4]{\draw (#1,#2) -- (#1,#2-0.5*#4) arc (90:0:0.5*#3 and 0.5*#4) (#1,#2-0.5*#4) arc (90:180:0.5*#3 and 0.5*#4)}
\def\laction(#1,#2)[#3,#4]{\draw (#1,#2) .. controls (#1,#2-0.555*#4/2) and (#1+0.445*#4/2,#2-1*#4/2) .. (#1+1*#4/2,#2-1*#4/2) -- (#1+2*#4/2+#3*#4/2,#2-1*#4/2) (#1+2*#4/2+#3*#4/2,#2)--(#1+2*#4/2+#3*#4/2,#2-2*#4/2)}
\def\map(#1,#2)[#3]{\draw (#1,#2-0.5)  node[name=nodemap,inner sep=0pt,  minimum size=10pt, shape=circle, draw]{$#3$} (#1,#2)-- (nodemap)  (nodemap)-- (#1,#2-1)}
\def\solbraid(#1,#2)[#3]{\draw (#1,#2-0.5)  node[name=nodemap,inner sep=0pt,  minimum size=9pt, shape=circle,draw]{$#3$}
(#1-0.5,#2) .. controls (#1-0.5,#2-0.15) and (#1-0.4,#2-0.2) .. (#1-0.3,#2-0.3) (#1-0.3,#2-0.3) -- (nodemap)
(#1+0.5,#2) .. controls (#1+0.5,#2-0.15) and (#1+0.4,#2-0.2) .. (#1+0.3,#2-0.3) (#1+0.3,#2-0.3) -- (nodemap)
(#1+0.5,#2-1) .. controls (#1+0.5,#2-0.85) and (#1+0.4,#2-0.8) .. (#1+0.3,#2-0.7) (#1+0.3,#2-0.7) -- (nodemap)
(#1-0.5,#2-1) .. controls (#1-0.5,#2-0.85) and (#1-0.4,#2-0.8) .. (#1-0.3,#2-0.7) (#1-0.3,#2-0.7) -- (nodemap)
}
\def\flip(#1,#2)[#3]{\draw (
#1+1*#3,#2) .. controls (#1+1*#3,#2-0.05*#3) and (#1+0.96*#3,#2-0.15*#3).. (#1+0.9*#3,#2-0.2*#3)
(#1+0.1*#3,#2-0.8*#3)--(#1+0.9*#3,#2-0.2*#3)
(#1,#2-1*#3) .. controls (#1,#2-0.95*#3) and (#1+0.04*#3,#2-0.85*#3).. (#1+0.1*#3,#2-0.8*#3)
(#1,#2) .. controls (#1,#2-0.05*#3) and (#1+0.04*#3,#2-0.15*#3).. (#1+0.1*#3,#2-0.2*#3)
(#1+0.1*#3,#2-0.2*#3) -- (#1+0.9*#3,#2-0.8*#3)
(#1+1*#3,#2-1*#3) .. controls (#1+1*#3,#2-0.95*#3) and (#1+0.96*#3,#2-0.85*#3).. (#1+0.9*#3,#2-0.8*#3)
}
\def\raction(#1,#2)[#3,#4]{\draw (#1,#2) -- (#1,#2-2*#4/2)  (#1,#2-1*#4/2)--(#1+1*#4/2+#3*#4/2,#2-1*#4/2) .. controls (#1+1.555*#4/2+#3*#4/2,#2-1*#4/2) and (#1+2*#4/2+#3*#4/2,#2-0.555*#4/2) .. (#1+2*#4/2+#3*#4/2,#2)}
\def\doublemap(#1,#2)[#3]{\draw (#1+0.5,#2-0.5) node [name=doublemapnode,inner xsep=0pt, inner ysep=0pt, minimum height=11pt, minimum width=23pt,shape=rectangle,draw,rounded corners] {$#3$} (#1,#2) .. controls (#1,#2-0.075) .. (doublemapnode) (#1+1,#2) .. controls (#1+1,#2-0.075).. (doublemapnode) (doublemapnode) .. controls (#1,#2-0.925)..(#1,#2-1) (doublemapnode) .. controls (#1+1,#2-0.925).. (#1+1,#2-1)}
\def\doublesinglemap(#1,#2)[#3]{\draw (#1+0.5,#2-0.5) node [name=doublesinglemapnode,inner xsep=0pt, inner ysep=0pt, minimum height=11pt, minimum width=23pt,shape=rectangle,draw,rounded corners] {$#3$} (#1,#2) .. controls (#1,#2-0.075) .. (doublesinglemapnode) (#1+1,#2) .. controls (#1+1,#2-0.075).. (doublesinglemapnode) (doublesinglemapnode)-- (#1+0.5,#2-1)}
\def\ractiontr(#1,#2)[#3,#4,#5]{\draw (#1,#2) -- (#1,#2-2*#4/2)  (#1,#2-1*#4/2) node [inner sep=0pt, minimum size=3pt,shape=isosceles triangle,fill, shape border rotate=#5] {}  --(#1+1*#4/2+#3*#4/2,#2-1*#4/2) .. controls (#1+1.555*#4/2+#3*#4/2,#2-1*#4/2) and (#1+2*#4/2+#3*#4/2,#2-0.555*#4/2) .. (#1+2*#4/2+#3*#4/2,#2)  }
\def\rack(#1,#2)[#3]{\draw (#1,#2-0.5)  node[name=nodemap,inner sep=0pt,  minimum size=7.5pt, shape=circle,draw]{$#3$} (#1-1,#2) .. controls (#1-1,#2-0.5) and (#1-0.5,#2-0.5) .. (nodemap) (#1,#2)-- (nodemap)  (nodemap)-- (#1,#2-1)}
\def\rackmenoslarge(#1,#2)[#3]{\draw (#1,#2-0.5)  node[name=nodemap,inner sep=0pt,  minimum size=7.5pt, shape=circle,draw]{$#3$} (#1-1.5,#2+0.5) .. controls (#1-1.5,#2-0.5) and (#1-0.5,#2-0.5) .. (nodemap) (#1,#2)-- (nodemap)  (nodemap)-- (#1,#2-1)}
\def\racklarge(#1,#2)[#3]{\draw (#1,#2-0.5)  node[name=nodemap,inner sep=0pt,  minimum size=7.5pt, shape=circle,draw]{$#3$} (#1-2,#2+0.5) .. controls (#1-2,#2-0.5) and (#1-0.5,#2-0.5) .. (nodemap) (#1,#2)-- (nodemap)  (nodemap)-- (#1,#2-1)}
\def\rackmaslarge(#1,#2)[#3]{\draw (#1,#2-0.5)  node[name=nodemap,inner sep=0pt,  minimum size=7.5pt, shape=circle,draw]{$#3$} (#1-2.5,#2+0.5) .. controls (#1-2.5,#2-0.5) and (#1-0.5,#2-0.5) .. (nodemap) (#1,#2)-- (nodemap)  (nodemap)-- (#1,#2-1)}
\def\rackextralarge(#1,#2)[#3]{\draw (#1,#2-0.75)  node[name=nodemap,inner sep=0pt,  minimum size=7.5pt, shape=circle, draw]{$#3$} (#1-3,#2+1) .. controls (#1-3,#2-0.75) and (#1-0.5,#2-0.75) .. (nodemap) (#1,#2)-- (nodemap)  (nodemap)-- (#1,#2-1.5)}
\def\lactionnamed(#1,#2)[#3,#4][#5]{\draw (#1 + 0.5*#3 + 0.5 + 0.5*#4, #2- 0.5*#3) node[name=nodemap,inner sep=0pt,  minimum size=8pt, shape=circle,draw]{$#5$} (#1,#2)  arc (180:270:0.5*#3) (#1 + 0.5*#3,#2- 0.5*#3) --  (nodemap) (#1 + 0.5*#3 + 0.5 + 0.5*#4, #2) --  (nodemap) (nodemap) -- (#1 + 0.5*#3 + 0.5 + 0.5*#4, #2-#3)}
\def\ractionnamed(#1,#2)[#3,#4][#5]{\draw  (#1 - 0.5*#3- 0.5 - 0.5*#4, #2- 0.5*#3)  node[name=nodemap,inner sep=0pt,  minimum size=8pt, shape=circle,draw]{$#5$} (#1 - 0.5*#3, #2- 0.5*#3)  arc (270:360:0.5*#3) (#1 - 0.5*#3, #2- 0.5*#3) -- (nodemap)(#1 - 0.5*#3- 0.5 - 0.5*#4, #2)-- (nodemap) (nodemap) -- (#1 - 0.5*#3- 0.5 - 0.5*#4, #2-#3)}
\def\multsubzero(#1,#2)[#3,#4]{\draw (#1+0.5*#3, #2-0.5*#4) node [name=nodemap,inner sep=0pt, minimum size=3pt,shape=circle,fill=white, draw]{} (#1,#2) arc (180:360:0.5*#3 and 0.5*#4) (#1+0.5*#3, #2-0.5*#4) -- (#1+0.5*#3,#2-#4)}
\begin{scope}[xshift=0cm, yshift=-0.5cm]
\solbraid(0.5,0)[\scriptstyle r]; \draw (2,0) -- (2,-1); \solbraid(1.5,-1)[\scriptstyle r]; \draw (0,-1) -- (0,-2);  \solbraid(0.5,-2)[\scriptstyle r]; \counit(0,-3); \counit(1,-3); \draw (2,-2) -- (2,-4);
\end{scope}
\begin{scope}[xshift=2.55cm, yshift=-2.1cm]
\node at (0,-0.5){=};
\end{scope}
\begin{scope}[xshift=3.1cm, yshift=-0.5cm]
\solbraid(0.5,0)[\scriptstyle r]; \draw (2,0) -- (2,-1); \solbraid(1.5,-1)[\scriptstyle r]; \draw (0,-1) -- (0,-2); \mult(0,-2)[1,1]; \draw (2,-2) -- (2,-4); \counit(0.5,-3);
\end{scope}
\begin{scope}[xshift=5.65cm, yshift=-2.1cm]
\node at (0,-0.5){=};
\end{scope}
\begin{scope}[xshift=6.15cm, yshift=-1cm]
\draw (0,0) -- (0,-1); \mult(0.5,0)[1,1]; \solbraid(0.5,-1)[\scriptstyle r];  \counit(0,-2); \draw (1,-2) -- (1,-3);
\end{scope}
\begin{scope}[xshift=8.2cm, yshift=-2.1cm]
\node at (0,-0.5){=};
\end{scope}
\begin{scope}[xshift=8.7cm, yshift=-0.5cm]
\draw (0,0) -- (0,-2); \solbraid(1,0)[\scriptstyle r]; \mult(0.5,-1)[1,1]; \solbraid(0.5,-2)[\scriptstyle r]; \counit(0,-3); \draw (1,-3) -- (1,-4);
\end{scope}
\begin{scope}[xshift=10.7cm, yshift=-2.1cm]
\node at (0,-0.5){=};
\end{scope}
\begin{scope}[xshift=11.3cm, yshift=-0cm]
\draw (0,0) -- (0,-1); \solbraid(1.5,0)[\scriptstyle r]; \solbraid(0.5,-1)[\scriptstyle r]; \draw (2,-1) -- (2,-2); \solbraid(1.5,-2)[\scriptstyle r]; \draw (0,-2) -- (0,-3); \mult(0,-3)[1,1]; \draw (2,-3) -- (2,-5); \counit(0.5,-4);
\end{scope}
\begin{scope}[xshift=13.8cm, yshift=-2.1cm]
\node at (0,-0.5){=};
\end{scope}
\begin{scope}[xshift=14.3cm, yshift=-0.5cm]
\draw (0,0) -- (0,-1); \solbraid(1.5,0)[\scriptstyle r]; \solbraid(0.5,-1)[\scriptstyle r]; \draw (2,-1) -- (2,-2); \solbraid(1.5,-2)[\scriptstyle r]; \draw (0,-2) -- (0,-3);  \draw (2,-3) -- (2,-4); \counit(0,-3); \counit(1,-3);
\end{scope}
\begin{scope}[xshift=16.55cm, yshift=-2.1cm]
\node at (0,-0.5){.};
\end{scope}
\end{tikzpicture}
$$
By symmetry, $f_1=g_1$. Using Equalities~\eqref{ecua1} and~\eqref{ecua2}, the associativity of $m$ and that $m \circ r=m$, it is easy to check that
$$
f_1\star f_2\star f_3=m\circ (m\otimes A)=f_1\star g_2\star f_3,
$$
where $\star$ denotes the convolution product in~$\Hom(A^3,A)$. Since~$f_1$ and~$f_3$ are convolution invertible (because they are coalgebra morphisms), $f_2=g_2$.
\end{proof}

\begin{pro}\label{braiding operators y antipodas} For each braiding operator $(A,r)$ in $\mathscr{C}$, we have
\begin{align*}
&(A\otimes r)\circ (r\otimes A)\circ (A\otimes S\otimes \id)\circ(A\otimes\Delta) =(S\otimes A^2)\circ(\Delta\otimes A)\circ r\circ J_2^{-1}
\shortintertext{and}
&(r\otimes A)\circ (A\otimes r)\circ (A\otimes S\otimes A)\circ(\Delta\otimes A) =(A^2\otimes S)\circ(A\otimes\Delta)\circ r\circ K_2^{-1},
\end{align*}
where $J_2\coloneqq (\rho \ot A)\circ (A\ot \Delta)$  and $K_2\coloneqq (A\otimes\lambda)\circ (\Delta\otimes A)$.
\end{pro}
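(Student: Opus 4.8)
The plan is to prove the first equality and to obtain the second from it by the evident left--right symmetry. Write $\lambda$ and $\rho$ for the coordinate maps of $r$; by Proposition~\ref{coro: morfismos de coalgebra al producto tensorial de n coalgebras} we have $r=(\lambda\otimes\rho)\circ\Delta_{A^2}$, and by Proposition~\ref{lem:matched_pair} the map $\lambda$ is a left action, $\rho$ is a right action, and the compatibilities
\[
\lambda\circ(A\otimes m)=m\circ(A\otimes\lambda)\circ(r\otimes A),\qquad \lambda\circ(A\otimes\eta)=\eta\circ\epsilon
\]
hold (conditions~(1) and~(3)). Since $J_2=(\rho\otimes A)\circ(A\otimes\Delta)$ and $K_2=(A\otimes\lambda)\circ(\Delta\otimes A)$ are precisely the two maps whose invertibility characterizes non-degeneracy in Proposition~\ref{no degenerado}, and since $\rho$ and $\lambda$ are unital actions, a direct check shows that $J_2$ and $K_2$ are isomorphisms (their inverses are obtained by inserting an antipode, e.g.\ $J_2^{-1}=\bigl(\rho\circ(A\otimes S)\otimes A\bigr)\circ(A\otimes\Delta)$), so $J_2^{-1}$ and $K_2^{-1}$ are defined. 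Composing the first equality on the right with $J_2$, it becomes equivalent to
\[
\Phi:=(A\otimes r)\circ(r\otimes A)\circ(A\otimes S\otimes A)\circ(A\otimes\Delta)\circ J_2=(S\otimes A^2)\circ(\Delta\otimes A)\circ r=:\Psi .
\]
Both $\Phi$ and $\Psi$ are composites of coalgebra morphisms, hence coalgebra morphisms $A^2\to A^3$, so by Corollary~\ref{coro: igualdad de morfismos} it suffices to compare their three coordinate maps. A short computation with $\epsilon\circ S=\epsilon$ and the counit axioms gives the coordinates of $\Psi$ as $(S\circ\lambda,\ \lambda,\ \rho)$.

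For $\Phi$ the second and third coordinates are routine. Killing the two copies of $r$ against the counits (via $\epsilon\circ\lambda=\epsilon\otimes\epsilon$ and $\epsilon\circ\rho=\epsilon\otimes\epsilon$) reduces the third coordinate of $\Phi$ to $\rho\bigl(\rho(\rho(x,y_{(1)}),S(y_{(2)})),y_{(3)}\bigr)$ and the second to $\lambda\bigl(\rho(\rho(x,y_{(1)}),S(y_{(2)})),y_{(3)}\bigr)$. In both cases the inner factor collapses by the right-action law $\rho\circ(\rho\otimes A)=\rho\circ(A\otimes m)$ to $\rho(x,y_{(1)}S(y_{(2)}))$, and then the antipode identity $m\circ(A\otimes S)\circ\Delta=\eta\circ\epsilon$ together with $\rho\circ(A\otimes\eta)=\id$ leaves $\rho$ and $\lambda$ respectively, matching $\Psi$.

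The only real content is the first coordinate, which works out to $g:=\lambda\circ(\rho\otimes S)\circ(A\otimes\Delta)$, so the heart of the proof is the identity $g=S\circ\lambda$. I would establish this in the convolution monoid $\Hom(A^2,A)$ associated with $\Delta_{A^2}$ and $m$: as $\lambda$ is a coalgebra morphism, its convolution inverse is $S\circ\lambda$, so it is enough to show $\lambda\star g=\eta\circ\epsilon_{A^2}$. Expanding $\lambda\star g$ and applying condition~(1) of Proposition~\ref{lem:matched_pair} in the Sweedler form $\lambda(a_{(1)},b_{(1)})\,\lambda(\rho(a_{(2)},b_{(2)}),c)=\lambda(a,bc)$ rewrites $\lambda\star g$ as $\lambda\circ\bigl(A\otimes(m\circ(A\otimes S)\circ\Delta)\bigr)=\lambda\circ\bigl(A\otimes(\eta\circ\epsilon)\bigr)=\eta\circ\epsilon_{A^2}$, using the antipode relation and condition~(3). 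Hence $g=S\circ\lambda$, the three coordinates of $\Phi$ and $\Psi$ agree, and the first equality follows. I expect this convolution step to be the main obstacle: the point is to line up the comultiplications so that condition~(1) applies (which is why recording $r=(\lambda\otimes\rho)\circ\Delta_{A^2}$ at the start is essential) and to recognize $g$ as the convolution inverse of $\lambda$, rather than attempting a direct diagrammatic identification of $g$ with $S\circ\lambda$.

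The second equality is proved by the mirror argument: one composes on the right with $K_2$, reduces by Corollary~\ref{coro: igualdad de morfismos} to comparing coordinate maps, and runs the same three computations with the roles of $\lambda$ and $\rho$ interchanged, using the left-action law for $\lambda$ and condition~(2) of Proposition~\ref{lem:matched_pair} in place of condition~(1).
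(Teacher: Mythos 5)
Your proof is correct, and it follows the same skeleton as the paper's: compose on the right with $J_2$, observe that both sides become coalgebra morphisms $A^2\to A^3$, factor them through their coordinate maps via Proposition~\ref{coro: morfismos de coalgebra al producto tensorial de n coalgebras}, compare coordinates using Corollary~\ref{coro: igualdad de morfismos}, and settle the one coordinate that resists direct rewriting by a convolution-invertibility argument. The difference lies in the division of labor, and it is worth recording. The paper checks the first and second coordinates directly and then pins down the third from the identity $f_1\star f_2\star f_3=(\epsilon\otimes A)\circ r=g_1\star g_2\star g_3$, which it derives from associativity of $m$ and the braiding-operator axiom $m\circ r=m$; since $f_1$ and $f_2$ are convolution invertible, $f_3=g_3$ follows. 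You instead check the second and third coordinates directly (they collapse by the right-action law for $\rho$ and the antipode identity) and isolate the first coordinate $f_1=\lambda\circ(\rho\otimes S)\circ(A\otimes\Delta)$ as the genuine content, identifying it with $S\circ\lambda$ by proving $\lambda\star f_1=\eta\circ\epsilon_{A^2}$ from Proposition~\ref{lem:matched_pair}(1) and then invoking that the convolution inverse of the coalgebra morphism $\lambda$ is $S\circ\lambda$. Your split is arguably the sharper one: the equality of first coordinates, which the paper dismisses as ``a direct computation,'' is precisely the step that requires the matched-pair compatibility together with a convolution-inverse argument, and you make that explicit. A further small merit of your write-up is that you verify the invertibility of $J_2$ and $K_2$ by exhibiting explicit inverses, a fact the statement presupposes and the paper's proof uses tacitly. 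Both routes rest on the same two pillars (coordinatewise comparison of coalgebra morphisms, and convolution invertibility of coalgebra maps into a Hopf algebra), so neither is materially longer than the other; your Sweedler-style computations all translate verbatim into the diagrammatic calculus the paper uses.
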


\begin{proof} By symmetry it suffices to prove the first equality. Let
\begin{align*}
&F\coloneqq (A\otimes r)\circ (r\otimes A)\circ (A\otimes S\otimes \id)\circ(A\otimes\Delta)\circ J_2
\shortintertext{and}
& G\coloneqq (S\otimes A^2)\circ(\Delta\otimes A)\circ r.
\end{align*}
We must prove that $F = G$. By Proposition~\ref{coro: morfismos de coalgebra al producto tensorial de n coalgebras} we know that
\begin{align}
& F=(f_1\ot f_2\ot f_3)\circ (\Delta_{A^2}\ot A)\circ
\Delta_{A^2}\label{ecua3}
\shortintertext{and}
&G=(g_1\ot g_2\ot g_3)\circ (\Delta_{A^2}\ot A)\circ
\Delta_{A^2},\label{ecua4}
\end{align}
where $f_1$, $f_2$ and $f_3$ are the coordinate maps of $F$ and $g_1$, $g_2$ and $g_3$ are the coordinate maps of $G$. A direct computation shows that $f_1 = g_1$ and $f_2 = g_2$. On the other hand, using equalities~\eqref{ecua1} and~\eqref{ecua2}, the associativity of $m$ and that $m\circ r=m$ we obtain that
$$
f_1\star f_2\star f_3 =(\epsilon\ot A)\circ r = g_1\star g_2\star g_3,
$$
where $\star$ denotes the convolution product in $\Hom(A^2,A)$. Since $f_1$ and $f_2$ are convolution invertible (because they are coalgebra morphisms),~$f_3=g_3$. Thus, $F=G$ by Corollary~\ref{coro: igualdad de morfismos}.
\end{proof}

\begin{cor}\label{braiding operators y antipodas 2} For each braiding operator $(A,r)$ in $\mathscr{C}$ it is true that
\begin{align*}
& r\circ (S\ot A)= (A\ot S)\circ c \circ R^{t_2},\\
& r\circ (A\ot S)= (S\ot A)\circ c \circ R^{t_1}
\shortintertext{and}
& r\circ (S\ot S)= (S\ot S)\circ r^{-1}.
\end{align*}
\end{cor}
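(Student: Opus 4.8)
The plan is to reduce each of the three identities to a comparison of coordinate maps via Corollary~\ref{coro: igualdad de morfismos}, and then to settle the resulting coordinate identities by the same convolution technique used in Propositions~\ref{lem:matched_pair} and~\ref{braiding operators y antipodas}. Throughout I use that $A$ is cocommutative, so its antipode $S$ is a coalgebra morphism with $S^2=\id$; hence every map occurring in the three equalities is a coalgebra endomorphism of $A^2$, and $c$, $r$, $R^{t_1}$, $R^{t_2}$ are coalgebra isomorphisms. I also record, from Definition~\ref{transposicones en variables} (with $\sigma=\lambda$, $\tau=\rho$), that $R^{t_2}\circ K_2=J_2$ and $R^{t_1}\circ J_2=K_2$, where $J_2=(\rho\ot A)\circ(A\ot\Delta)$ and $K_2=(A\ot\lambda)\circ(\Delta\ot A)$; in particular $R^{t_2}=J_2\circ K_2^{-1}$ and $R^{t_1}=K_2\circ J_2^{-1}$.

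For the first equality I would first rewrite it, using $R^{t_2}=J_2\circ K_2^{-1}$, as the equivalent $A^2\to A^2$ identity $r\circ(S\ot A)\circ K_2=(A\ot S)\circ c\circ J_2$. Both sides are coalgebra morphisms, so by Corollary~\ref{coro: igualdad de morfismos} it suffices to check their two coordinate maps. A short computation shows that the right-hand side has first coordinate $\epsilon\ot A$ and second coordinate $S\circ\rho$. For the left-hand side, the first coordinate $\lambda\circ(S\ot A)\circ K_2$ collapses directly: since $\lambda$ is a unital associative left action (Proposition~\ref{lem:matched_pair}), it equals $\lambda\circ(m\ot A)\circ(S\ot A\ot A)\circ(\Delta\ot A)$, and $m\circ(S\ot A)\circ\Delta=\eta\circ\epsilon$ together with $\lambda\circ(\eta\ot A)=\id$ finishes it. The second coordinate $\rho\circ(S\ot A)\circ K_2$ is handled by convolution in $\Hom(A^2,A)$: using the matched-pair compatibility $\rho\circ(m\ot A)=m\circ(\rho\ot A)\circ(A\ot r)$ of Proposition~\ref{lem:matched_pair} together with $\rho\circ(\eta\ot A)=\eta\circ\epsilon$ and the antipode axiom, one obtains $\bigl[\rho\circ(S\ot A)\circ K_2\bigr]\star\rho=\eta\circ\epsilon_{A^2}$; since $\rho$ is a coalgebra morphism it is convolution invertible with inverse $S\circ\rho$, so uniqueness gives $\rho\circ(S\ot A)\circ K_2=S\circ\rho$. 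This proves the first equality, and the second is entirely symmetric, exchanging the roles of $\lambda$ and $\rho$ and of $J_2$ and $K_2$, using parts (1) and (3) of Proposition~\ref{lem:matched_pair} and the convolution inverse $S\circ\lambda$ of $\lambda$.

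For the third equality I would use that $S\ot S$ is an involution and $r$ is invertible to pass to the equivalent form $r\circ(S\ot S)\circ r=S\ot S$, and then compare coordinate maps once more with Corollary~\ref{coro: igualdad de morfismos}. The first and second coordinates of $S\ot S$ are $(x,y)\mapsto S(x)\epsilon(y)$ and $(x,y)\mapsto\epsilon(x)S(y)$, while those of $r\circ(S\ot S)\circ r$ are $\lambda\circ(S\ot S)\circ r$ and $\rho\circ(S\ot S)\circ r$; each of these is shown to be the claimed convolution inverse of a coordinate projection by the same antipode-and-convolution argument, using the two action laws and the matched-pair relations. Alternatively, one can combine the first two equalities: solving them for $r$ and $r^{-1}$ and using $(R^{t_2})^{-1}=R^{t_1}$ (Remark~\ref{Rt2=Rt1a la-1}) reduces the third equality to a single identity relating $R^{t_1}$, $c$ and $S\ot S$.

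I expect the main obstacle to be the third equality. Unlike the first two, where $S$ sits on a single tensor factor and one application of $m\circ(S\ot A)\circ\Delta=\eta\circ\epsilon$ triggers the collapse, here $S$ appears on both factors and is threaded through $r$, so the convolution bookkeeping involves two interlocking antipode cancellations; one must pair $\rho\circ(S\ot S)\circ r$ (resp. $\lambda\circ(S\ot S)\circ r$) with exactly the right coordinate projection so that the surviving terms reassemble into $S\circ\rho$ (resp. $S\circ\lambda$). Contracting the wrong tensor slot yields only a tautology for $r$ rather than the desired antipode formula, which is the pitfall to avoid.
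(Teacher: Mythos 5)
Your proofs of the first two equalities are correct, but they take a different route from the paper's. The paper obtains the second coordinate of $r\circ(S\ot A)$ by applying $\epsilon\ot\epsilon\ot A$ to the second identity of Proposition~\ref{braiding operators y antipodas}, computes the first coordinate directly from the action axioms, and concludes with Corollary~\ref{coro: igualdad de morfismos}. You instead precompose with the invertible map $K_2$ (using $R^{t_2}=J_2\circ K_2^{-1}$) and prove both coordinate identities from scratch: the first collapses via $\lambda\circ(A\ot\lambda)=\lambda\circ(m\ot A)$ and the antipode axiom, and for the second you show that $\rho\circ(S\ot A)\circ K_2$ is a convolution inverse of $\rho$ using Proposition~\ref{lem:matched_pair}(2) and (4), hence equals $S\circ\rho$. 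I checked this convolution step and it is sound; it amounts to inlining the argument by which the paper proves Proposition~\ref{braiding operators y antipodas} itself, which buys you a self-contained proof at the cost of some length, while the paper's version is shorter because it reuses that proposition. The symmetric argument for the second equality goes through verbatim with $\lambda,\rho$ and $J_2,K_2$ exchanged.

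The third equality is where your proposal has a genuine gap, and the obstruction is not the bookkeeping you flagged: the identity $r\circ(S\ot S)=(S\ot S)\circ r^{-1}$, equivalently your reformulation $r\circ(S\ot S)\circ r=S\ot S$, is false in general, so neither of your routes can close. Concretely, let $A=kG$ for a nonabelian group $G$ with the conjugation braiding $r(x\ot y)=xyx^{-1}\ot x$ on group-likes; conditions \eqref{eq:bo1}--\eqref{eq:bo5} are immediate, yet $r\circ(S\ot S)(x\ot y)=x^{-1}y^{-1}x\ot x^{-1}$ whereas $(S\ot S)\circ r^{-1}(x\ot y)=y^{-1}\ot y^{-1}x^{-1}y$, and these agree only when $G$ is abelian; the coordinate identities you propose to verify (e.g. $\lambda\circ(S\ot S)\circ r=S\circ(A\ot\epsilon)$) fail in the same example. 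What your alternative route actually produces --- and this is exactly the paper's own proof, namely combining the first two equalities with $R^{t_1}=(R^{t_2})^{-1}$ from Remark~\ref{Rt2=Rt1a la-1} --- is a $c$-twisted identity: solving the first two equalities for $R^{t_2}=c\circ(A\ot S)\circ r\circ(S\ot A)$ and $R^{t_1}=c\circ(S\ot A)\circ r\circ(A\ot S)$ and inserting them into $R^{t_1}\circ R^{t_2}=\ide_{A^2}$ yields $r\circ c\circ(S\ot S)\circ r=c\circ(S\ot S)$, i.e. $r\circ(S\ot S)=(S\ot S)\circ c\circ r^{-1}\circ c$. So the printed statement is off by conjugation with $c$; the twisted form is the one consistent with the definition of $r_4$ in~\eqref{eq:r4} and with the way the corollary is used in the proof of Theorem~\ref{thm:main}. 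The fix is therefore not a cleverer convolution pairing but proving (and stating) the twisted identity instead.
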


\begin{proof} Applying $\epsilon \ot \epsilon\ot A$  to the second equality in Proposition~\ref{braiding operators y antipodas} and using Proposition~\ref{no degenerado} and the definition of $R^{t_2}$ we obtain that
$$
(\epsilon \ot A)\circ r\circ (S\ot A)= S\circ \rho \circ (A\ot \lambda^{-1})\circ (\Delta \ot A) = (\epsilon \ot A)\circ (A\ot S) \circ c\circ R^{t_2}.
$$
On the other hand
$$
(A\ot \epsilon)\circ r\circ (S\ot A)= \lambda\circ (S \ot A)= \lambda^{-1}= (A\ot \epsilon)\circ (A\ot S)\circ c\circ R^{t_2},
$$
where the first equality holds by the definition of $\lambda$; the second one, because $\lambda$ is an action (see Lemma~\ref{lem:matched_pair}); the third one, because $\epsilon \circ \rho = \epsilon\ot \epsilon$; and the last one, by the definition of $R^{t_2}$. So, by Corollary~\ref{coro: igualdad de morfismos} the first equality in the statement is true. The second one can be proved in a similar way. The last equality follows easily from the first two equalities and the fact that $R^{t_1}$ is the compositional inverse of $R^{t_2}$.
\end{proof}

\subsection{Braces, braiding operators and invertible cocycles}\label{Braces, etcetera}
Throughout this subsection for each braiding operator $(A,r)$ in $\mathscr{C}$ we will denote by~$m_{\circ}$ and $T$ the multiplication map and the antipode of $A$, respectively.

\begin{thm} \label{thm:equivalencias} For each cocommutative Hopf algebra $A$ in $\mathscr{C}$, the categories $\Br(A)$, $\Bc(A)$ and $\Bo(A)$ are equivalent.
\end{thm}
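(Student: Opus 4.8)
The plan is to prove the three equivalences by exhibiting explicit functors and checking that they are mutually quasi-inverse, the workhorse throughout being Corollary~\ref{coro: igualdad de morfismos}, which reduces every identity between coalgebra morphisms into $A^2$ to the corresponding identities between their coordinate maps, together with the structural results already at hand: Propositions~\ref{pro:modulo},~\ref{pro:rho} and~\ref{pro:rho'} for the module/action data attached to a brace, Proposition~\ref{lem:matched_pair} for the matched-pair reformulation of the braiding-operator axioms, and Remarks~\ref{rem:productos} and~\ref{rem:brace} for the mutual determination of $m$ and $m_{\circ}$. It suffices to establish $\Br(A)\simeq\Bc(A)$ and $\Br(A)\simeq\Bo(A)$ separately, since equivalence is transitive.

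For $\Br(A)\simeq\Bc(A)$ I would send a brace $(A,m,m_{\circ})$ to the cocycle $\id_A\colon (A,m_{\circ},\Delta,\epsilon,T)\to(A,m,\Delta,\epsilon,S)$, equipped with the left $(A,m_{\circ})$-module-algebra and module-coalgebra structure $\lambda$ of Proposition~\ref{pro:modulo}; for $\pi=\id_A$ the cocycle identity~\eqref{formula de brace} is precisely equality~\eqref{eq:aob}, so this is a well-defined invertible $1$-cocycle. In the reverse direction, an invertible $1$-cocycle $\pi\colon H\to A$ is sent to the brace obtained by transporting the multiplication of $H$ to $A$, that is $m_{\circ}:=\pi\circ m_H\circ(\pi^{-1}\otimes\pi^{-1})$; formula~\eqref{formula equivalente a la de brace} identifies the given action with $m\circ(S\otimes m_{\circ})\circ(\Delta\otimes A)$, whence~\eqref{eq:brace} holds and Remark~\ref{rem:brace} guarantees that $(A,m,m_{\circ})$ is a brace. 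On morphisms a brace morphism $f$ goes to the pair $(f,f)$ and conversely; essential surjectivity and full faithfulness follow because the coalgebra isomorphism $\pi$ lets one replace any cocycle by an isomorphic one with $H=A$ and $\pi=\id_A$, which is exactly the image of a brace.

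For $\Br(A)\simeq\Bo(A)$ I would attach to a brace $(A,m,m_{\circ})$ the coalgebra isomorphism $r:=(\lambda\otimes\rho)\circ\Delta_{A^2}$ of $A^2$, i.e. the unique coalgebra morphism (Proposition~\ref{coro: morfismos de coalgebra al producto tensorial de n coalgebras}) whose coordinate maps are the left action $\lambda$ (Proposition~\ref{pro:modulo}) and the right action $\rho$ (Proposition~\ref{pro:rho}), regarded as a candidate braiding operator on the circle Hopf algebra $(A,m_{\circ})$. Axioms~\eqref{eq:bo4} and~\eqref{eq:bo5} reduce, via Corollary~\ref{coro: igualdad de morfismos}, to the unit conditions furnished by Propositions~\ref{pro:modulo} and~\ref{pro:rho}; axioms~\eqref{eq:bo2} and~\eqref{eq:bo3} are the content of Proposition~\ref{lem:matched_pair} once its conditions~(1)--(4) are verified, which come from the module-algebra compatibility of $\lambda$ and from the defining formula for $\rho$; and~\eqref{eq:bo1}, the identity $m_{\circ}\circ r=m_{\circ}$, is a direct computation from $\rho=m_{\circ}\circ(T\otimes A)\circ(\lambda\otimes m_{\circ})\circ\Delta_{A^2}$ and Proposition~\ref{pro:rho'}. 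Conversely a braiding operator $(A,r)$, whose multiplication we call $m_{\circ}$ and whose first coordinate is $\lambda$, is sent to the brace $(A,m,m_{\circ})$ with $m$ defined by the right-hand side of~\eqref{eq:ab}, Remark~\ref{rem:brace} again yielding the brace axiom. These two assignments are mutually inverse: $r$ is recovered from its coordinates by Corollary~\ref{coro: igualdad de morfismos}, the second coordinate $\rho$ is determined by $\lambda$, $m_{\circ}$ and $T$ through Proposition~\ref{pro:rho'}, and $m$ and $m_{\circ}$ determine one another given $\lambda$ by Remark~\ref{rem:productos}.

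The main obstacle I expect is the braiding-operator direction of the second equivalence, namely proving that the coalgebra map $r=(\lambda\otimes\rho)\circ\Delta_{A^2}$ built from a brace genuinely satisfies all of~\eqref{eq:bo1}--\eqref{eq:bo3}. Here the diagrammatic manipulations are the longest: conditions~(1) and~(2) of Proposition~\ref{lem:matched_pair} intertwine $m$, $m_{\circ}$, $\lambda$ and $r$ itself, so their verification must be organized as convolution-product arguments in $\Hom(A^3,A)$ in the style of Propositions~\ref{pro:modulo} and~\ref{pro:rho}, using cocommutativity, the antipode relations and the naturality of $c$; and the unitary-type identity~\eqref{eq:bo1} requires simultaneously the formula for $\rho$ and the identities of Proposition~\ref{pro:rho'}. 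Once these computations are in place, checking functoriality on morphisms and the naturality of the units and counits of the two equivalences is routine, and the proof concludes.
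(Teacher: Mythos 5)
Your overall architecture coincides with the paper's: the equivalence $\Br(A)\simeq\Bc(A)$ via the identity cocycle in one direction and transport of structure along $\pi$ in the other, and the functor $\Br(A)\to\Bo(A)$ given by $r=(\lambda\otimes\rho)\circ\Delta_{A^2}$ and checked through Proposition~\ref{lem:matched_pair}, are exactly what the paper does, and those parts of your plan are sound.

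The genuine gap is in the direction $\Bo(A)\to\Br(A)$. You define $m$ by the right-hand side of~\eqref{eq:ab} and assert that Remark~\ref{rem:brace} "again yields the brace axiom." But Remark~\ref{rem:brace} takes as hypothesis a \emph{pair of cocommutative Hopf algebras} $(A,\Delta,\epsilon,m,\eta,S)$ and $(A,\Delta,\epsilon,m_{\circ},\eta_{\circ},T)$; starting from a braiding operator only the second structure is given. That the formula $m\coloneqq m_{\circ}\circ(A\otimes\lambda)\circ(A\otimes T\otimes A)\circ(\Delta\otimes A)$ is associative and unital, and that $S\coloneqq\lambda\circ(A\otimes T)\circ\Delta$ is an antipode for it, is precisely what has to be proved: unlike the cocycle case, $m$ is not obtained by transporting structure along an isomorphism but by a formula mixing $m_{\circ}$, $\lambda$ and $T$, and nothing about it is automatic. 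In the paper this is the content of Lemma~\ref{lem:bop->brace} and Theorem~\ref{lem:bop->Hopf}, whose proofs (associativity of $m$, the antipode property of $S$, and finally equality~\eqref{ecuabraces}) are among the heaviest computations of the section and rest on auxiliary identities such as $m_{\circ}\circ(\rho\otimes T)\circ(A\otimes\Delta)=T\circ m\circ(T\otimes A)$ from Lemma~\ref{lem:bop->brace}(4). Even granting the Hopf structure, applying Remark~\ref{rem:brace} still requires checking that the first coordinate $\lambda$ of $r$ equals $m\circ(S\otimes m_{\circ})\circ(\Delta\otimes A)$ and satisfies~\eqref{eq:brace} for the new $m$ — verifications your plan omits. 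Relatedly, you locate "the main obstacle" in the brace-to-braiding-operator direction; in fact the reverse direction carries at least as much weight, and your mutual-inverse argument (recovering $\rho$ from $\lambda$, $m_{\circ}$ and $T$, and $m$ from $m_{\circ}$) presupposes this missing machinery — the paper settles it via Lemma~\ref{lem:bop->brace}(5) together with~\eqref{eq:aob} and~\eqref{eq:ab} only after the Hopf structure has been established.
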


We shall need the following lemmas.

\begin{lem}\label{lem:bop->brace} Let $(A,r)$ be a braiding operator in $\mathscr{C}$.  Set
$$
m\coloneqq m_{\circ}\circ(A\otimes\lambda)\circ (A\otimes T\otimes A)\circ(\Delta\otimes A).
$$
The following facts hold:
\begin{enumerate}

\item $\lambda$ and $\rho$ are coalgebra morphisms.

\smallskip

\item $m_{\circ} = m\circ (A\otimes \lambda)\circ (\Delta\otimes A)$.

\smallskip

\item $\rho = m_{\circ}\circ (\lambda\otimes A)\circ (\Delta\otimes m_{\circ})\circ\Delta_{A^2}$.

\smallskip

\item $m_{\circ}\circ (\rho\otimes T)\circ (A\otimes \Delta) = T\circ m\circ (T\ot A)$.

\smallskip

\item Let $(A,r')$ be a braiding operator with underlying Hopf algebra $A$. If the first coordinate maps of $r$ and $r'$ coincide, then $r=r'$.

\end{enumerate}

\end{lem}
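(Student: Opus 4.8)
The plan is to treat the five assertions in order, exploiting throughout that the coordinate maps of a coalgebra morphism into a tensor product are themselves coalgebra morphisms. Assertion~(1) is immediate: since $r\colon A^2\to A^2$ is a coalgebra isomorphism and $A$ is cocommutative, Proposition~\ref{coro: morfismos de coalgebra al producto tensorial de n coalgebras} shows that its coordinate maps $\lambda$ and $\rho$ are coalgebra morphisms, and moreover $r=(\lambda\otimes\rho)\circ\Delta_{A^2}$. Combining this factorization with \eqref{eq:bo1} gives $m_\circ\circ(\lambda\otimes\rho)\circ\Delta_{A^2}=m_\circ$; that is, $\lambda\star\rho=m_\circ$ in the convolution monoid $\Hom(A^2,A)$ associated with $\Delta_{A^2}$ and $m_\circ$. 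I will also invoke Lemma~\ref{lem:matched_pair} (whose hypotheses \eqref{eq:bo2}--\eqref{eq:bo4} hold for any braiding operator) to know that $\lambda$ is a left action and $\rho$ a right action of $(A,m_\circ)$, with $\lambda\circ(\eta_\circ\otimes A)=\id$.

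For assertion~(2) I would substitute the definition of $m$ into $m\circ(A\otimes\lambda)\circ(\Delta\otimes A)$ and simplify diagrammatically. In Sweedler-style notation, writing $x\triangleright y$ for $\lambda$ and $\cdot$ for $m_\circ$, the right-hand side reads $a_{(1)}\cdot\bigl(T(a_{(2)})\triangleright(a_{(3)}\triangleright b)\bigr)$; the associativity of the action collapses $T(a_{(2)})\triangleright(a_{(3)}\triangleright b)$ to $(T(a_{(2)})\cdot a_{(3)})\triangleright b$, the antipode axiom $m_\circ\circ(T\otimes A)\circ\Delta=\eta_\circ\circ\epsilon$ turns $T(a_{(2)})\cdot a_{(3)}$ into a unit, and the unit axiom of the action then leaves $a\cdot b=m_\circ(a\otimes b)$. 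Thus $m_\circ=m\circ(A\otimes\lambda)\circ(\Delta\otimes A)$.

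Assertion~(3) I would obtain by solving $\lambda\star\rho=m_\circ$ for $\rho$. Because $\lambda$ is a coalgebra morphism into the Hopf algebra $(A,m_\circ,\Delta)$, it is convolution invertible with inverse $T\circ\lambda$, so $\rho=(T\circ\lambda)\star m_\circ$, which is the desired formula expressing $\rho$ through $\lambda$, $m_\circ$ and $T$. Granting this, assertion~(4) is a short antipode computation: expanding $m_\circ\circ(\rho\otimes T)\circ(A\otimes\Delta)$ via the formula for $\rho$, the factor $b_{(2)}\cdot T(b_{(3)})$ collapses by $\id\star T=\eta_\circ\epsilon$, leaving $T(a_{(1)}\triangleright b)\cdot a_{(2)}$; expanding the right-hand side $T\circ m\circ(T\otimes A)$ using the definition of $m$, that $T$ is an algebra and coalgebra antimorphism, and that $A$ cocommutative forces $T^2=\id$ together with cocommutativity, yields the same expression. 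Finally, assertion~(5) follows from Corollary~\ref{coro: igualdad de morfismos}: two coalgebra morphisms $A^2\to A^2$ agree once their coordinate maps do, and since $\rho$ is determined by $\lambda$ and $m_\circ$ through assertion~(3) (equivalently through $\lambda\star\rho=m_\circ$), equality of the first coordinates forces $\rho=\rho'$, whence $r=r'$.

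I expect the main obstacle to be assertions~(2) and~(4), where two different products $m$ and $m_\circ$, the action $\lambda$, and the antipode $T$ interact simultaneously; the key to keeping the diagrammatic manipulations under control is to recognize each expression as a convolution, so that the antipode axioms can be applied to collapse the relevant strands. The remaining bookkeeping—coassociativity, naturality of $c$, and the module axioms—is routine.
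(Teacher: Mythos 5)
Your proposal is correct and follows essentially the same route as the paper: your item (1) is the paper's observation via Proposition~\ref{coro: morfismos de coalgebra al producto tensorial de n coalgebras}, your Sweedler computation for item (2) is exactly the paper's diagram chain (expand $m$, use associativity of the action from Proposition~\ref{lem:matched_pair}, then the antipode axiom and unitality), your convolution argument for item (3) is precisely the paper's chain — insert $T(\lambda_{(1)})\cdot\lambda_{(2)}=\eta_{\circ}\circ\epsilon$, regroup by coassociativity, recognize $m_{\circ}\circ r$ and apply \eqref{eq:bo1} — repackaged as inverting $\lambda$ in the convolution monoid $\Hom(A^2,A)$, your item (4) mimics Proposition~\ref{pro:rho'} just as the paper prescribes (including the implicit use of $T^2=\id$ for cocommutative Hopf algebras), and your item (5) is verbatim the paper's argument via Corollary~\ref{coro: igualdad de morfismos}. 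One remark: the formula you derive, $\rho=(T\circ\lambda)\star m_{\circ}=m_{\circ}\circ(T\otimes A)\circ(\lambda\otimes m_{\circ})\circ\Delta_{A^2}$, is the one that the paper's own proof of item (3) (and Proposition~\ref{pro:rho}) actually establishes; the expression printed in the statement of item (3) does not typecheck as written and should be read as this formula.
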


\begin{proof} In order to prove that $\lambda$ and $\rho$ are coalgebra morphisms it suffices to note that they are  composition of coalgebra morphisms.  The equalities
$$
\begin{tikzpicture}[scale=0.414]
\def\mult(#1,#2)[#3,#4]{\draw (#1,#2) arc (180:360:0.5*#3 and 0.5*#4) (#1+0.5*#3, #2-0.5*#4) -- (#1+0.5*#3,#2-#4)}
\def\counit(#1,#2){\draw (#1,#2) -- (#1,#2-0.93) (#1,#2-1) circle[radius=2pt]}
\def\comult(#1,#2)[#3,#4]{\draw (#1,#2) -- (#1,#2-0.5*#4) arc (90:0:0.5*#3 and 0.5*#4) (#1,#2-0.5*#4) arc (90:180:0.5*#3 and 0.5*#4)}
\def\laction(#1,#2)[#3,#4]{\draw (#1,#2) .. controls (#1,#2-0.555*#4/2) and (#1+0.445*#4/2,#2-1*#4/2) .. (#1+1*#4/2,#2-1*#4/2) -- (#1+2*#4/2+#3*#4/2,#2-1*#4/2) (#1+2*#4/2+#3*#4/2,#2)--(#1+2*#4/2+#3*#4/2,#2-2*#4/2)}
\def\map(#1,#2)[#3]{\draw (#1,#2-0.5)  node[name=nodemap,inner sep=0pt,  minimum size=10pt, shape=circle, draw]{$#3$} (#1,#2)-- (nodemap)  (nodemap)-- (#1,#2-1)}
\def\solbraid(#1,#2)[#3]{\draw (#1,#2-0.5)  node[name=nodemap,inner sep=0pt,  minimum size=9pt, shape=circle,draw]{$#3$}
(#1-0.5,#2) .. controls (#1-0.5,#2-0.15) and (#1-0.4,#2-0.2) .. (#1-0.3,#2-0.3) (#1-0.3,#2-0.3) -- (nodemap)
(#1+0.5,#2) .. controls (#1+0.5,#2-0.15) and (#1+0.4,#2-0.2) .. (#1+0.3,#2-0.3) (#1+0.3,#2-0.3) -- (nodemap)
(#1+0.5,#2-1) .. controls (#1+0.5,#2-0.85) and (#1+0.4,#2-0.8) .. (#1+0.3,#2-0.7) (#1+0.3,#2-0.7) -- (nodemap)
(#1-0.5,#2-1) .. controls (#1-0.5,#2-0.85) and (#1-0.4,#2-0.8) .. (#1-0.3,#2-0.7) (#1-0.3,#2-0.7) -- (nodemap)
}
\def\flip(#1,#2)[#3]{\draw (
#1+1*#3,#2) .. controls (#1+1*#3,#2-0.05*#3) and (#1+0.96*#3,#2-0.15*#3).. (#1+0.9*#3,#2-0.2*#3)
(#1+0.1*#3,#2-0.8*#3)--(#1+0.9*#3,#2-0.2*#3)
(#1,#2-1*#3) .. controls (#1,#2-0.95*#3) and (#1+0.04*#3,#2-0.85*#3).. (#1+0.1*#3,#2-0.8*#3)
(#1,#2) .. controls (#1,#2-0.05*#3) and (#1+0.04*#3,#2-0.15*#3).. (#1+0.1*#3,#2-0.2*#3)
(#1+0.1*#3,#2-0.2*#3) -- (#1+0.9*#3,#2-0.8*#3)
(#1+1*#3,#2-1*#3) .. controls (#1+1*#3,#2-0.95*#3) and (#1+0.96*#3,#2-0.85*#3).. (#1+0.9*#3,#2-0.8*#3)
}
\def\raction(#1,#2)[#3,#4]{\draw (#1,#2) -- (#1,#2-2*#4/2)  (#1,#2-1*#4/2)--(#1+1*#4/2+#3*#4/2,#2-1*#4/2) .. controls (#1+1.555*#4/2+#3*#4/2,#2-1*#4/2) and (#1+2*#4/2+#3*#4/2,#2-0.555*#4/2) .. (#1+2*#4/2+#3*#4/2,#2)}
\def\doublemap(#1,#2)[#3]{\draw (#1+0.5,#2-0.5) node [name=doublemapnode,inner xsep=0pt, inner ysep=0pt, minimum height=11pt, minimum width=23pt,shape=rectangle,draw,rounded corners] {$#3$} (#1,#2) .. controls (#1,#2-0.075) .. (doublemapnode) (#1+1,#2) .. controls (#1+1,#2-0.075).. (doublemapnode) (doublemapnode) .. controls (#1,#2-0.925)..(#1,#2-1) (doublemapnode) .. controls (#1+1,#2-0.925).. (#1+1,#2-1)}
\def\doublesinglemap(#1,#2)[#3]{\draw (#1+0.5,#2-0.5) node [name=doublesinglemapnode,inner xsep=0pt, inner ysep=0pt, minimum height=11pt, minimum width=23pt,shape=rectangle,draw,rounded corners] {$#3$} (#1,#2) .. controls (#1,#2-0.075) .. (doublesinglemapnode) (#1+1,#2) .. controls (#1+1,#2-0.075).. (doublesinglemapnode) (doublesinglemapnode)-- (#1+0.5,#2-1)}
\def\ractiontr(#1,#2)[#3,#4,#5]{\draw (#1,#2) -- (#1,#2-2*#4/2)  (#1,#2-1*#4/2) node [inner sep=0pt, minimum size=3pt,shape=isosceles triangle,fill, shape border rotate=#5] {}  --(#1+1*#4/2+#3*#4/2,#2-1*#4/2) .. controls (#1+1.555*#4/2+#3*#4/2,#2-1*#4/2) and (#1+2*#4/2+#3*#4/2,#2-0.555*#4/2) .. (#1+2*#4/2+#3*#4/2,#2)  }
\def\rack(#1,#2)[#3]{\draw (#1,#2-0.5)  node[name=nodemap,inner sep=0pt,  minimum size=7.5pt, shape=circle,draw]{$#3$} (#1-1,#2) .. controls (#1-1,#2-0.5) and (#1-0.5,#2-0.5) .. (nodemap) (#1,#2)-- (nodemap)  (nodemap)-- (#1,#2-1)}
\def\rackmenoslarge(#1,#2)[#3]{\draw (#1,#2-0.5)  node[name=nodemap,inner sep=0pt,  minimum size=7.5pt, shape=circle,draw]{$#3$} (#1-1.5,#2+0.5) .. controls (#1-1.5,#2-0.5) and (#1-0.5,#2-0.5) .. (nodemap) (#1,#2)-- (nodemap)  (nodemap)-- (#1,#2-1)}
\def\racklarge(#1,#2)[#3]{\draw (#1,#2-0.5)  node[name=nodemap,inner sep=0pt,  minimum size=7.5pt, shape=circle,draw]{$#3$} (#1-2,#2+0.5) .. controls (#1-2,#2-0.5) and (#1-0.5,#2-0.5) .. (nodemap) (#1,#2)-- (nodemap)  (nodemap)-- (#1,#2-1)}
\def\rackmaslarge(#1,#2)[#3]{\draw (#1,#2-0.5)  node[name=nodemap,inner sep=0pt,  minimum size=7.5pt, shape=circle,draw]{$#3$} (#1-2.5,#2+0.5) .. controls (#1-2.5,#2-0.5) and (#1-0.5,#2-0.5) .. (nodemap) (#1,#2)-- (nodemap)  (nodemap)-- (#1,#2-1)}
\def\rackextralarge(#1,#2)[#3]{\draw (#1,#2-0.75)  node[name=nodemap,inner sep=0pt,  minimum size=7.5pt, shape=circle, draw]{$#3$} (#1-3,#2+1) .. controls (#1-3,#2-0.75) and (#1-0.5,#2-0.75) .. (nodemap) (#1,#2)-- (nodemap)  (nodemap)-- (#1,#2-1.5)}
\def\lactionnamed(#1,#2)[#3,#4][#5]{\draw (#1 + 0.5*#3 + 0.5 + 0.5*#4, #2- 0.5*#3) node[name=nodemap,inner sep=0pt,  minimum size=8pt, shape=circle,draw]{$#5$} (#1,#2)  arc (180:270:0.5*#3) (#1 + 0.5*#3,#2- 0.5*#3) --  (nodemap) (#1 + 0.5*#3 + 0.5 + 0.5*#4, #2) --  (nodemap) (nodemap) -- (#1 + 0.5*#3 + 0.5 + 0.5*#4, #2-#3)}
\def\ractionnamed(#1,#2)[#3,#4][#5]{\draw  (#1 - 0.5*#3- 0.5 - 0.5*#4, #2- 0.5*#3)  node[name=nodemap,inner sep=0pt,  minimum size=8pt, shape=circle,draw]{$#5$} (#1 - 0.5*#3, #2- 0.5*#3)  arc (270:360:0.5*#3) (#1 - 0.5*#3, #2- 0.5*#3) -- (nodemap)(#1 - 0.5*#3- 0.5 - 0.5*#4, #2)-- (nodemap) (nodemap) -- (#1 - 0.5*#3- 0.5 - 0.5*#4, #2-#3)}
\def\multsubzero(#1,#2)[#3,#4]{\draw (#1+0.5*#3, #2-0.5*#4) node [name=nodemap,inner sep=0pt, minimum size=3pt,shape=circle,fill=white, draw]{} (#1,#2) arc (180:360:0.5*#3 and 0.5*#4) (#1+0.5*#3, #2-0.5*#4) -- (#1+0.5*#3,#2-#4)}
\begin{scope}[xshift=0cm, yshift=-2.25cm]
\comult(0.5,0)[1,1]; \draw (0,-1) -- (0,-2); \draw (2,0) -- (2,-1);  \lactionnamed(1,-1)[1,0][\scriptstyle \lambda]; \mult(0,-2)[2,1.5];
\end{scope}
\begin{scope}[xshift=2.8cm, yshift=-3.55cm]
\node at (0,-0.5){=};
\end{scope}
\begin{scope}[xshift=3.3cm, yshift=-0.5cm]
\comult(1,0)[1,1]; \draw (0.5,-1) -- (0.5,-2); \draw (2.5,0) -- (2.5,-1);  \lactionnamed(1.5,-1)[1,0][\scriptstyle \lambda];\comult(0.5,-2)[1,1]; \draw (0,-3) -- (0,-5); \map(1,-3)[\scriptstyle T]; \draw (2.5,-2) -- (2.5,-4); \lactionnamed(1,-4)[1,1][\scriptstyle \lambda]; \multsubzero(0,-5)[2.5,2];
\end{scope}
\begin{scope}[xshift=6.6cm, yshift=-3.55cm]
\node at (0,-0.5){=};
\end{scope}
\begin{scope}[xshift=7.15cm, yshift=-0.25cm]
\comult(0.75,0)[1.5,1.5]; \draw (0,-1.5) -- (0,-5); \draw (3,0) -- (3,-2.5); \comult(1.5,-1.5)[1,1]; \lactionnamed(2,-2.5)[1,0][\scriptstyle \lambda]; \map(1,-2.5)[\scriptstyle T];  \lactionnamed(1,-3.5)[1.5,1.5][\scriptstyle \lambda]; \multsubzero(0,-5)[3,2.5];
\end{scope}
\begin{scope}[xshift=11cm, yshift=-3.55cm]
\node at (0,-0.5){=};
\end{scope}
\begin{scope}[xshift=11.5cm, yshift=0cm]
\comult(0.75,0)[1.5,1.5]; \draw (0,-1.5) -- (0,-5.5); \draw (3,0) -- (3,-4.5); \comult(1.5,-1.5)[1,1];  \map(1,-2.5)[\scriptstyle T]; \draw (2,-2.5) -- (2,-3.5); \multsubzero(1,-3.5)[1,1]; \lactionnamed(1.5,-4.5)[1,1][\scriptstyle \lambda]; \multsubzero(0,-5.5)[3,2.5];
\end{scope}
\begin{scope}[xshift=15.9cm, yshift=-3.55cm]
\node at (0,-0.5){$= m_{\circ}$,};
\end{scope}
\end{tikzpicture}
$$
prove that the equality in item~(2) holds. Item~(3) is true, since
$$
\begin{tikzpicture}[scale=0.414]
\def\unit(#1,#2){\draw (#1,#2) circle[radius=2pt] (#1,#2-0.07) -- (#1,#2-1)}
\def\mult(#1,#2)[#3,#4]{\draw (#1,#2) arc (180:360:0.5*#3 and 0.5*#4) (#1+0.5*#3, #2-0.5*#4) -- (#1+0.5*#3,#2-#4)}
\def\counit(#1,#2){\draw (#1,#2) -- (#1,#2-0.93) (#1,#2-1) circle[radius=2pt]}
\def\comult(#1,#2)[#3,#4]{\draw (#1,#2) -- (#1,#2-0.5*#4) arc (90:0:0.5*#3 and 0.5*#4) (#1,#2-0.5*#4) arc (90:180:0.5*#3 and 0.5*#4)}
\def\laction(#1,#2)[#3,#4]{\draw (#1,#2) .. controls (#1,#2-0.555*#4/2) and (#1+0.445*#4/2,#2-1*#4/2) .. (#1+1*#4/2,#2-1*#4/2) -- (#1+2*#4/2+#3*#4/2,#2-1*#4/2) (#1+2*#4/2+#3*#4/2,#2)--(#1+2*#4/2+#3*#4/2,#2-2*#4/2)}
\def\map(#1,#2)[#3]{\draw (#1,#2-0.5)  node[name=nodemap,inner sep=0pt,  minimum size=10pt, shape=circle, draw]{$#3$} (#1,#2)-- (nodemap)  (nodemap)-- (#1,#2-1)}
\def\solbraid(#1,#2)[#3]{\draw (#1,#2-0.5)  node[name=nodemap,inner sep=0pt,  minimum size=9pt, shape=circle,draw]{$#3$}
(#1-0.5,#2) .. controls (#1-0.5,#2-0.15) and (#1-0.4,#2-0.2) .. (#1-0.3,#2-0.3) (#1-0.3,#2-0.3) -- (nodemap)
(#1+0.5,#2) .. controls (#1+0.5,#2-0.15) and (#1+0.4,#2-0.2) .. (#1+0.3,#2-0.3) (#1+0.3,#2-0.3) -- (nodemap)
(#1+0.5,#2-1) .. controls (#1+0.5,#2-0.85) and (#1+0.4,#2-0.8) .. (#1+0.3,#2-0.7) (#1+0.3,#2-0.7) -- (nodemap)
(#1-0.5,#2-1) .. controls (#1-0.5,#2-0.85) and (#1-0.4,#2-0.8) .. (#1-0.3,#2-0.7) (#1-0.3,#2-0.7) -- (nodemap)
}
\def\flip(#1,#2)[#3]{\draw (
#1+1*#3,#2) .. controls (#1+1*#3,#2-0.05*#3) and (#1+0.96*#3,#2-0.15*#3).. (#1+0.9*#3,#2-0.2*#3)
(#1+0.1*#3,#2-0.8*#3)--(#1+0.9*#3,#2-0.2*#3)
(#1,#2-1*#3) .. controls (#1,#2-0.95*#3) and (#1+0.04*#3,#2-0.85*#3).. (#1+0.1*#3,#2-0.8*#3)
(#1,#2) .. controls (#1,#2-0.05*#3) and (#1+0.04*#3,#2-0.15*#3).. (#1+0.1*#3,#2-0.2*#3)
(#1+0.1*#3,#2-0.2*#3) -- (#1+0.9*#3,#2-0.8*#3)
(#1+1*#3,#2-1*#3) .. controls (#1+1*#3,#2-0.95*#3) and (#1+0.96*#3,#2-0.85*#3).. (#1+0.9*#3,#2-0.8*#3)
}
\def\raction(#1,#2)[#3,#4]{\draw (#1,#2) -- (#1,#2-2*#4/2)  (#1,#2-1*#4/2)--(#1+1*#4/2+#3*#4/2,#2-1*#4/2) .. controls (#1+1.555*#4/2+#3*#4/2,#2-1*#4/2) and (#1+2*#4/2+#3*#4/2,#2-0.555*#4/2) .. (#1+2*#4/2+#3*#4/2,#2)}
\def\doublemap(#1,#2)[#3]{\draw (#1+0.5,#2-0.5) node [name=doublemapnode,inner xsep=0pt, inner ysep=0pt, minimum height=11pt, minimum width=23pt,shape=rectangle,draw,rounded corners] {$#3$} (#1,#2) .. controls (#1,#2-0.075) .. (doublemapnode) (#1+1,#2) .. controls (#1+1,#2-0.075).. (doublemapnode) (doublemapnode) .. controls (#1,#2-0.925)..(#1,#2-1) (doublemapnode) .. controls (#1+1,#2-0.925).. (#1+1,#2-1)}
\def\doublesinglemap(#1,#2)[#3]{\draw (#1+0.5,#2-0.5) node [name=doublesinglemapnode,inner xsep=0pt, inner ysep=0pt, minimum height=11pt, minimum width=23pt,shape=rectangle,draw,rounded corners] {$#3$} (#1,#2) .. controls (#1,#2-0.075) .. (doublesinglemapnode) (#1+1,#2) .. controls (#1+1,#2-0.075).. (doublesinglemapnode) (doublesinglemapnode)-- (#1+0.5,#2-1)}
\def\ractiontr(#1,#2)[#3,#4,#5]{\draw (#1,#2) -- (#1,#2-2*#4/2)  (#1,#2-1*#4/2) node [inner sep=0pt, minimum size=3pt,shape=isosceles triangle,fill, shape border rotate=#5] {}  --(#1+1*#4/2+#3*#4/2,#2-1*#4/2) .. controls (#1+1.555*#4/2+#3*#4/2,#2-1*#4/2) and (#1+2*#4/2+#3*#4/2,#2-0.555*#4/2) .. (#1+2*#4/2+#3*#4/2,#2)  }
\def\rack(#1,#2)[#3]{\draw (#1,#2-0.5)  node[name=nodemap,inner sep=0pt,  minimum size=7.5pt, shape=circle,draw]{$#3$} (#1-1,#2) .. controls (#1-1,#2-0.5) and (#1-0.5,#2-0.5) .. (nodemap) (#1,#2)-- (nodemap)  (nodemap)-- (#1,#2-1)}
\def\rackmenoslarge(#1,#2)[#3]{\draw (#1,#2-0.5)  node[name=nodemap,inner sep=0pt,  minimum size=7.5pt, shape=circle,draw]{$#3$} (#1-1.5,#2+0.5) .. controls (#1-1.5,#2-0.5) and (#1-0.5,#2-0.5) .. (nodemap) (#1,#2)-- (nodemap)  (nodemap)-- (#1,#2-1)}
\def\racklarge(#1,#2)[#3]{\draw (#1,#2-0.5)  node[name=nodemap,inner sep=0pt,  minimum size=7.5pt, shape=circle,draw]{$#3$} (#1-2,#2+0.5) .. controls (#1-2,#2-0.5) and (#1-0.5,#2-0.5) .. (nodemap) (#1,#2)-- (nodemap)  (nodemap)-- (#1,#2-1)}
\def\rackmaslarge(#1,#2)[#3]{\draw (#1,#2-0.5)  node[name=nodemap,inner sep=0pt,  minimum size=7.5pt, shape=circle,draw]{$#3$} (#1-2.5,#2+0.5) .. controls (#1-2.5,#2-0.5) and (#1-0.5,#2-0.5) .. (nodemap) (#1,#2)-- (nodemap)  (nodemap)-- (#1,#2-1)}
\def\rackextralarge(#1,#2)[#3]{\draw (#1,#2-0.75)  node[name=nodemap,inner sep=0pt,  minimum size=7.5pt, shape=circle, draw]{$#3$} (#1-3,#2+1) .. controls (#1-3,#2-0.75) and (#1-0.5,#2-0.75) .. (nodemap) (#1,#2)-- (nodemap)  (nodemap)-- (#1,#2-1.5)}
\def\lactionnamed(#1,#2)[#3,#4][#5]{\draw (#1 + 0.5*#3 + 0.5 + 0.5*#4, #2- 0.5*#3) node[name=nodemap,inner sep=0pt,  minimum size=8pt, shape=circle,draw]{$#5$} (#1,#2)  arc (180:270:0.5*#3) (#1 + 0.5*#3,#2- 0.5*#3) --  (nodemap) (#1 + 0.5*#3 + 0.5 + 0.5*#4, #2) --  (nodemap) (nodemap) -- (#1 + 0.5*#3 + 0.5 + 0.5*#4, #2-#3)}
\def\ractionnamed(#1,#2)[#3,#4][#5]{\draw  (#1 - 0.5*#3- 0.5 - 0.5*#4, #2- 0.5*#3)  node[name=nodemap,inner sep=0pt,  minimum size=8pt, shape=circle,draw]{$#5$} (#1 - 0.5*#3, #2- 0.5*#3)  arc (270:360:0.5*#3) (#1 - 0.5*#3, #2- 0.5*#3) -- (nodemap)(#1 - 0.5*#3- 0.5 - 0.5*#4, #2)-- (nodemap) (nodemap) -- (#1 - 0.5*#3- 0.5 - 0.5*#4, #2-#3)}
\def\multsubzero(#1,#2)[#3,#4]{\draw (#1+0.5*#3, #2-0.5*#4) node [name=nodemap,inner sep=0pt, minimum size=3pt,shape=circle,fill=white, draw]{} (#1,#2) arc (180:360:0.5*#3 and 0.5*#4) (#1+0.5*#3, #2-0.5*#4) -- (#1+0.5*#3,#2-#4)}
\begin{scope}[xshift=0cm, yshift=-5.25cm]
\draw (0,0) -- (0,-1); \draw (1,0) -- (1,-1);
\ractionnamed(1,-1)[1,0][\scriptstyle \rho];
\end{scope}
\begin{scope}[xshift=1.55cm, yshift=-5.8cm]
\node at (0,-0.5){=};
\end{scope}
\begin{scope}[xshift=2.2cm, yshift=-1.7cm]
\comult(0.5,-1)[1,1]; \comult(2.5,-1)[1,1]; \draw (0,-2) -- (0,-3); \flip(1,-2)[1]; \draw (3,-2) -- (3,-3); \lactionnamed(0,-3)[1,0][\scriptstyle \lambda]; \ractionnamed(3,-3)[1,0][\scriptstyle \rho]; \draw (2,-4) -- (2,-7);
\comult(1,-4)[1,1];
\map(0.5,-5)[\scriptstyle T]; \draw (1.5,-5) -- (1.5,-6);
\multsubzero(0.5,-6)[1,1];
\multsubzero(1,-7)[1,1];
\end{scope}
\begin{scope}[xshift=5.65cm, yshift=-5.8cm]
\node at (0,-0.5){=};
\end{scope}
\begin{scope}[xshift=6.3cm, yshift=0cm]
\comult(4.25,-1.5)[1.5,1.5]; \flip(2.5,-3)[1]; \comult(1.5,-1.5)[2,1.5];
\draw (0.5,-3) -- (0.5,-4); \comult(0.5,-4)[1,1]; \comult(2.5,-4)[1,1]; \draw (0,-5) -- (0,-6); \flip(1,-5)[1]; \draw (3,-5) -- (3,-6);
\draw (3.5,-4) .. controls (3.5,-4.25) and (4,-4.25) .. (4,-4.5); \draw (4,-4.5) -- (4,-5.5);
\draw (5,-3) -- (5,-5.5); \lactionnamed(0,-6)[1,0][\scriptstyle \lambda]; \lactionnamed(2,-6)[1,0][\scriptstyle \lambda]; \ractionnamed(5,-5.5)[1,0][\scriptstyle \rho]; \map(1,-7)[\scriptstyle T]; \draw (3,-7) -- (3,-8); \multsubzero(1,-8)[2,1.5];
\draw (4,-6.5) -- (4,-9.5); \multsubzero(2,-9.5)[2,1.5];
\end{scope}
\begin{scope}[xshift=11.6cm, yshift=-5.85cm]
\node at (0,-0.5){=};
\end{scope}
\begin{scope}[xshift=12.35cm, yshift=-2.4cm]
\comult(0.5,-1)[1,1]; \comult(2.5,-1)[1,1]; \draw (0,-2) -- (0,-3); \flip(1,-2)[1]; \draw (3,-2) -- (3,-3); \lactionnamed(0,-3)[1,0][\scriptstyle \lambda]; \solbraid(2.5,-3)[\scriptstyle r]; \multsubzero(2,-4)[1,1];  \map(1,-4)[\scriptstyle T];
\multsubzero(1,-5)[1.5,1.5];
\end{scope}
\begin{scope}[xshift=15.9cm, yshift=-5.8cm]
\node at (0,-0.5){=};
\end{scope}
\begin{scope}[xshift=16.3cm, yshift=-2.4cm]
\comult(0.5,-1)[1,1]; \comult(2.5,-1)[1,1]; \draw (0,-2) -- (0,-3); \flip(1,-2)[1]; \draw (3,-2) -- (3,-3); \lactionnamed(0,-3)[1,0][\scriptstyle \lambda]; \multsubzero(2,-3)[1,1]; \draw (2.5,-4) -- (2.5,-5); \map(1,-4)[\scriptstyle T];
\multsubzero(1,-5)[1.5,1.5];
\end{scope}
\end{tikzpicture}
$$
where the first equality follows using that $\lambda$ is a coalgebra morphism and $T$ is the antipode of $(A,m_{\circ},\Delta)$; the second one, using again that $\lambda$ is a coalgebra morphism; the third one, using that $\Delta_{A^2}$ is coassociative, $m_{\circ}$ is associative and Proposition~\ref{coro: morfismos de coalgebra al producto tensorial de n coalgebras}; and the last one holds since $m\circ r=m$. Item~(4) follows now mimicking the proof of Proposition~\ref{pro:rho'}. Finally we prove item~(5). By item~(3) if the first coordinates of $r$ and $r'$ coincide, then the second coordinates of $r$ and $r'$ also coincide. Consequently, by Corollary~\ref{coro: igualdad de morfismos}, we have $r'=r$.
\end{proof}

\begin{thm}\label{lem:bop->Hopf} For each braiding operator $(A,r)$ in $\mathscr{C}$, the tuple $(A,m,\eta,\Delta,\epsilon,S)$, where $m$ is as in Lemma~\ref{lem:bop->brace},  $\eta\coloneqq\eta_{\circ}$ and $S\coloneqq \lambda\circ(A\otimes T)\circ\Delta$, is a Hopf algebra. Moreover $(A,m,m_{\circ})$ is a brace.
\end{thm}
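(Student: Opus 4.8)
The plan is to first upgrade $(A,m,\eta,\Delta,\epsilon)$ to a Hopf algebra and then to recognize the resulting pair as a brace by means of Remark~\ref{rem:brace}. The coalgebra structure $(A,\Delta,\epsilon)$ is the one carried by the braiding operator, so it is cocommutative and nothing has to be checked there. Since $\eta=\eta_{\circ}$ is group-like and $m=m_{\circ}\circ(A\ot\lambda)\circ(A\ot T\ot A)\circ(\Delta\ot A)$ is a composite of the coalgebra morphisms $m_{\circ}$, $\lambda$, $T$ and $\Delta$ (recall that $T$ is a coalgebra morphism because $A$ is cocommutative, and $\lambda$ is one by item~(1) of Lemma~\ref{lem:bop->brace}), both $\eta$ and $m$ are coalgebra morphisms; hence once $m$ is shown to be associative and unital we automatically obtain a bialgebra. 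The unit axioms $m\circ(\eta\ot A)=A=m\circ(A\ot\eta)$ are immediate: the first uses that $\eta$ is group-like, that $T\circ\eta=\eta$ and the unitality of the action $\lambda$, while the second uses condition~(3) of Lemma~\ref{lem:matched_pair} ($\lambda\circ(A\ot\eta)=\eta\circ\epsilon$) together with the unit of $m_{\circ}$. The two substantial points are the associativity of $m$ and the brace identity~\eqref{eq:brace}, and I expect \emph{associativity to be the main obstacle}: unlike in the set-theoretic treatment of~\cite{MR1769723}, one cannot argue elementwise, so the crux is to package the computation as a single identity precomposed with an invertible map and then cancel it.

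I would establish~\eqref{eq:brace} first, since it does not require associativity. Writing $r=(\lambda\ot\rho)\circ\Delta_{A^2}$ and solving the identity $m_{\circ}\circ r=m_{\circ}$ of~\eqref{eq:bo1} for its second coordinate by left translation with the antipode $T$ of $(A,m_{\circ})$, one obtains $\rho(a\ot b)=T(\lambda(a_{(1)}\ot b_{(1)}))\circ a_{(2)}\circ b_{(2)}$ (in Sweedler-type notation, with $\circ=m_{\circ}$). Expanding both members of~\eqref{eq:brace} by the definition of $m$, using that $\lambda$ is a left action and a coalgebra morphism, and invoking the matched-pair relation~(1) of Lemma~\ref{lem:matched_pair} (which controls $\lambda\circ(A\ot m_{\circ})$), both sides reduce to the same expression once the formula for $\rho$ and the antipode axiom for $T$ are applied; no property of $m$ beyond its definition enters here.

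With~\eqref{eq:brace} available, associativity of $m$ follows from that of $m_{\circ}$. Applying item~(2) of Lemma~\ref{lem:bop->brace}, namely $m_{\circ}=m\circ(A\ot\lambda)\circ(\Delta\ot A)$, to both factors of each side of $m_{\circ}\circ(m_{\circ}\ot A)=m_{\circ}\circ(A\ot m_{\circ})$, and then using that $\lambda$ is an action (to collapse $\lambda\circ(m_{\circ}\ot A)$) on the left and~\eqref{eq:brace} (to expand $\lambda\circ(A\ot m)$) on the right, both members take the shapes
\[
m\circ(m\ot A)\circ G_3 \quad\text{and}\quad m\circ(A\ot m)\circ G_3,
\]
where $G_3\colon A^3\to A^3$ is given by $G_3(a\ot b\ot c)=a_{(1)}\ot\lambda(a_{(2)}\ot b_{(1)})\ot\lambda(a_{(3)}\ot\lambda(b_{(2)}\ot c))$. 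The map $G_3$ is an isomorphism, because it is a composite of maps of the form $K_2:=(A\ot\lambda)\circ(\Delta\ot A)$ acting on two tensor factors (the diagonal part being handled with cocommutativity), and $K_2$ is invertible with inverse $(A\ot\lambda)\circ(A\ot T\ot A)\circ(\Delta\ot A)$, as one checks at once from the action property of $\lambda$ and the antipode $T$. Cancelling the isomorphism $G_3$ yields $m\circ(m\ot A)=m\circ(A\ot m)$, so $(A,m,\eta,\Delta,\epsilon)$ is a bialgebra.

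It then remains to treat the antipode and to conclude. For $S=\lambda\circ(A\ot T)\circ\Delta$ the identity $m\circ(A\ot S)\circ\Delta=\eta\circ\epsilon$ is a short computation from the definition of $m$, the action property of $\lambda$, the antipode axiom for $T$ and the unitality of $\lambda$; the opposite identity $m\circ(S\ot A)\circ\Delta=\eta\circ\epsilon$ is proved in the same spirit, now using item~(4) of Lemma~\ref{lem:bop->brace} to rewrite the combination $T\circ m\circ(T\ot A)$ that occurs. This shows that $(A,m,\eta,\Delta,\epsilon,S)$ is a cocommutative Hopf algebra with the same comultiplication as $(A,m_{\circ},\eta_{\circ},\Delta,\epsilon,T)$. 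Finally, to invoke Remark~\ref{rem:brace} I would verify that the brace action attached to these two structures, $m\circ(S\ot m_{\circ})\circ(\Delta\ot A)$, coincides with the first coordinate $\lambda$: substituting item~(2) of Lemma~\ref{lem:bop->brace} and using the now-available associativity of $m$ together with the antipode $S$, the expression telescopes to $\lambda$. Since $\lambda$ satisfies~\eqref{eq:brace}, Remark~\ref{rem:brace} applies and shows that $(A,m,m_{\circ})$ is a brace.
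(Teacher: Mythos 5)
Your proposal is correct, but it reaches the theorem by a genuinely different route than the paper. The paper proves associativity of $m$ by one long direct diagrammatic computation (using, among other things, Lemma~\ref{lem:bop->brace}(4)); it then handles the antipode by a convolution-monoid trick: only the right-hand identity $\ide\star S=\eta\circ\epsilon$ is computed, and two-sidedness follows from $S\star S^2=(\ide\star S)\circ S=\eta\circ\epsilon$ together with the standard monoid argument; finally it verifies the brace axiom~\eqref{ecuabraces} by yet another direct computation. You instead prove~\eqref{eq:brace} for the first coordinate $\lambda$ \emph{before} knowing that $m$ is associative — and this step is sound: expanding both sides with the definition of $m$, Lemma~\ref{lem:matched_pair}(1), the action axioms, the convolution formula for $\rho$ and the antipode axiom for $T$, both members collapse to $\lambda(a_{(1)}\ot b_{(1)})\circ\lambda\bigl(T(\lambda(a_{(2)}\ot b_{(2)}))\ot\lambda(a_{(3)}\ot c)\bigr)$. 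You then transport associativity from $m_{\circ}$ to $m$ by cancelling the isomorphism $G_3$, which is legitimate because $G_3$ factors as a composite of invertible $K_2$-type maps (with cocommutativity absorbing the Sweedler bookkeeping), and you obtain the brace structure by identifying $m\circ(S\ot m_{\circ})\circ(\Delta\ot A)$ with $\lambda$ and invoking Remark~\ref{rem:brace}, instead of verifying~\eqref{ecuabraces} from scratch. What your route buys: the hardest diagram chase is replaced by a transport-of-structure argument, and the logical order (cocycle-type identity first, associativity as a corollary) exposes why the theorem is true. What the paper's route buys: its antipode step is slicker, and indeed the left-hand identity $m\circ(S\ot A)\circ\Delta=\eta\circ\epsilon$ is the one under-detailed spot in your outline — the appeal to Lemma~\ref{lem:bop->brace}(4) is plausible but not spelled out; it does follow by a direct computation using~\eqref{eq:brace}, $T^2=\ide$ and the action axioms, or you could simply borrow the paper's trick, since you already have the right-hand identity and $S$ is a coalgebra morphism.
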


\begin{proof} By Proposition~\ref{lem:matched_pair} we know that $\lambda$ is an action. A direct computation using this and that $(A,\Delta,m_{\circ},T)$ is a Hopf algebra shows that $m\circ(\eta\otimes A)=\id$, while from Proposition~\ref{lem:matched_pair}(3) follows easily that $m\circ(A\otimes\eta)=\id$. Moreover, since $m$ is a composition of coalgebra morphisms, we have
\begin{equation}\label{ecuacion0}
\Delta\circ m=(m\otimes m)\circ \Delta_{A^2}\qquad \text{and} \qquad\epsilon\circ m=\epsilon\otimes\epsilon.
\end{equation}
It remains to prove that $m$ is associative and~$S$ is the antipode of~$(A,\Delta,m)$. Using the definition of~$m$, Proposition~\ref{lem:matched_pair}(1), the associativity of $m_{\circ}$, Proposition~\ref{coro: morfismos de coalgebra al producto tensorial de n coalgebras}, that $T$ is a coalgebra homomorphism, the coassociativity of $\Delta$, that $\lambda$ is an action, Lemma~\ref{lem:bop->brace}(4) and the first equality in~\eqref{ecuacion0}, we obtain that
$$
\begin{tikzpicture}[scale=0.414]
\def\unit(#1,#2){\draw (#1,#2) circle[radius=2pt] (#1,#2-0.07) -- (#1,#2-1)}
\def\mult(#1,#2)[#3,#4]{\draw (#1,#2) arc (180:360:0.5*#3 and 0.5*#4) (#1+0.5*#3, #2-0.5*#4) -- (#1+0.5*#3,#2-#4)}
\def\counit(#1,#2){\draw (#1,#2) -- (#1,#2-0.93) (#1,#2-1) circle[radius=2pt]}
\def\comult(#1,#2)[#3,#4]{\draw (#1,#2) -- (#1,#2-0.5*#4) arc (90:0:0.5*#3 and 0.5*#4) (#1,#2-0.5*#4) arc (90:180:0.5*#3 and 0.5*#4)}
\def\laction(#1,#2)[#3,#4]{\draw (#1,#2) .. controls (#1,#2-0.555*#4/2) and (#1+0.445*#4/2,#2-1*#4/2) .. (#1+1*#4/2,#2-1*#4/2) -- (#1+2*#4/2+#3*#4/2,#2-1*#4/2) (#1+2*#4/2+#3*#4/2,#2)--(#1+2*#4/2+#3*#4/2,#2-2*#4/2)}
\def\map(#1,#2)[#3]{\draw (#1,#2-0.5)  node[name=nodemap,inner sep=0pt,  minimum size=10pt, shape=circle, draw]{$#3$} (#1,#2)-- (nodemap)  (nodemap)-- (#1,#2-1)}
\def\solbraid(#1,#2)[#3]{\draw (#1,#2-0.5)  node[name=nodemap,inner sep=0pt,  minimum size=9pt, shape=circle,draw]{$#3$}
(#1-0.5,#2) .. controls (#1-0.5,#2-0.15) and (#1-0.4,#2-0.2) .. (#1-0.3,#2-0.3) (#1-0.3,#2-0.3) -- (nodemap)
(#1+0.5,#2) .. controls (#1+0.5,#2-0.15) and (#1+0.4,#2-0.2) .. (#1+0.3,#2-0.3) (#1+0.3,#2-0.3) -- (nodemap)
(#1+0.5,#2-1) .. controls (#1+0.5,#2-0.85) and (#1+0.4,#2-0.8) .. (#1+0.3,#2-0.7) (#1+0.3,#2-0.7) -- (nodemap)
(#1-0.5,#2-1) .. controls (#1-0.5,#2-0.85) and (#1-0.4,#2-0.8) .. (#1-0.3,#2-0.7) (#1-0.3,#2-0.7) -- (nodemap)
}
\def\flip(#1,#2)[#3]{\draw (
#1+1*#3,#2) .. controls (#1+1*#3,#2-0.05*#3) and (#1+0.96*#3,#2-0.15*#3).. (#1+0.9*#3,#2-0.2*#3)
(#1+0.1*#3,#2-0.8*#3)--(#1+0.9*#3,#2-0.2*#3)
(#1,#2-1*#3) .. controls (#1,#2-0.95*#3) and (#1+0.04*#3,#2-0.85*#3).. (#1+0.1*#3,#2-0.8*#3)
(#1,#2) .. controls (#1,#2-0.05*#3) and (#1+0.04*#3,#2-0.15*#3).. (#1+0.1*#3,#2-0.2*#3)
(#1+0.1*#3,#2-0.2*#3) -- (#1+0.9*#3,#2-0.8*#3)
(#1+1*#3,#2-1*#3) .. controls (#1+1*#3,#2-0.95*#3) and (#1+0.96*#3,#2-0.85*#3).. (#1+0.9*#3,#2-0.8*#3)
}
\def\raction(#1,#2)[#3,#4]{\draw (#1,#2) -- (#1,#2-2*#4/2)  (#1,#2-1*#4/2)--(#1+1*#4/2+#3*#4/2,#2-1*#4/2) .. controls (#1+1.555*#4/2+#3*#4/2,#2-1*#4/2) and (#1+2*#4/2+#3*#4/2,#2-0.555*#4/2) .. (#1+2*#4/2+#3*#4/2,#2)}
\def\doublemap(#1,#2)[#3]{\draw (#1+0.5,#2-0.5) node [name=doublemapnode,inner xsep=0pt, inner ysep=0pt, minimum height=11pt, minimum width=23pt,shape=rectangle,draw,rounded corners] {$#3$} (#1,#2) .. controls (#1,#2-0.075) .. (doublemapnode) (#1+1,#2) .. controls (#1+1,#2-0.075).. (doublemapnode) (doublemapnode) .. controls (#1,#2-0.925)..(#1,#2-1) (doublemapnode) .. controls (#1+1,#2-0.925).. (#1+1,#2-1)}
\def\doublesinglemap(#1,#2)[#3]{\draw (#1+0.5,#2-0.5) node [name=doublesinglemapnode,inner xsep=0pt, inner ysep=0pt, minimum height=11pt, minimum width=23pt,shape=rectangle,draw,rounded corners] {$#3$} (#1,#2) .. controls (#1,#2-0.075) .. (doublesinglemapnode) (#1+1,#2) .. controls (#1+1,#2-0.075).. (doublesinglemapnode) (doublesinglemapnode)-- (#1+0.5,#2-1)}
\def\ractiontr(#1,#2)[#3,#4,#5]{\draw (#1,#2) -- (#1,#2-2*#4/2)  (#1,#2-1*#4/2) node [inner sep=0pt, minimum size=3pt,shape=isosceles triangle,fill, shape border rotate=#5] {}  --(#1+1*#4/2+#3*#4/2,#2-1*#4/2) .. controls (#1+1.555*#4/2+#3*#4/2,#2-1*#4/2) and (#1+2*#4/2+#3*#4/2,#2-0.555*#4/2) .. (#1+2*#4/2+#3*#4/2,#2)  }
\def\rack(#1,#2)[#3]{\draw (#1,#2-0.5)  node[name=nodemap,inner sep=0pt,  minimum size=7.5pt, shape=circle,draw]{$#3$} (#1-1,#2) .. controls (#1-1,#2-0.5) and (#1-0.5,#2-0.5) .. (nodemap) (#1,#2)-- (nodemap)  (nodemap)-- (#1,#2-1)}
\def\rackmenoslarge(#1,#2)[#3]{\draw (#1,#2-0.5)  node[name=nodemap,inner sep=0pt,  minimum size=7.5pt, shape=circle,draw]{$#3$} (#1-1.5,#2+0.5) .. controls (#1-1.5,#2-0.5) and (#1-0.5,#2-0.5) .. (nodemap) (#1,#2)-- (nodemap)  (nodemap)-- (#1,#2-1)}
\def\racklarge(#1,#2)[#3]{\draw (#1,#2-0.5)  node[name=nodemap,inner sep=0pt,  minimum size=7.5pt, shape=circle,draw]{$#3$} (#1-2,#2+0.5) .. controls (#1-2,#2-0.5) and (#1-0.5,#2-0.5) .. (nodemap) (#1,#2)-- (nodemap)  (nodemap)-- (#1,#2-1)}
\def\rackmaslarge(#1,#2)[#3]{\draw (#1,#2-0.5)  node[name=nodemap,inner sep=0pt,  minimum size=7.5pt, shape=circle,draw]{$#3$} (#1-2.5,#2+0.5) .. controls (#1-2.5,#2-0.5) and (#1-0.5,#2-0.5) .. (nodemap) (#1,#2)-- (nodemap)  (nodemap)-- (#1,#2-1)}
\def\rackextralarge(#1,#2)[#3]{\draw (#1,#2-0.75)  node[name=nodemap,inner sep=0pt,  minimum size=7.5pt, shape=circle, draw]{$#3$} (#1-3,#2+1) .. controls (#1-3,#2-0.75) and (#1-0.5,#2-0.75) .. (nodemap) (#1,#2)-- (nodemap)  (nodemap)-- (#1,#2-1.5)}
\def\lactionnamed(#1,#2)[#3,#4][#5]{\draw (#1 + 0.5*#3 + 0.5 + 0.5*#4, #2- 0.5*#3) node[name=nodemap,inner sep=0pt,  minimum size=8pt, shape=circle,draw]{$#5$} (#1,#2)  arc (180:270:0.5*#3) (#1 + 0.5*#3,#2- 0.5*#3) --  (nodemap) (#1 + 0.5*#3 + 0.5 + 0.5*#4, #2) --  (nodemap) (nodemap) -- (#1 + 0.5*#3 + 0.5 + 0.5*#4, #2-#3)}
\def\ractionnamed(#1,#2)[#3,#4][#5]{\draw  (#1 - 0.5*#3- 0.5 - 0.5*#4, #2- 0.5*#3)  node[name=nodemap,inner sep=0pt,  minimum size=8pt, shape=circle,draw]{$#5$} (#1 - 0.5*#3, #2- 0.5*#3)  arc (270:360:0.5*#3) (#1 - 0.5*#3, #2- 0.5*#3) -- (nodemap)(#1 - 0.5*#3- 0.5 - 0.5*#4, #2)-- (nodemap) (nodemap) -- (#1 - 0.5*#3- 0.5 - 0.5*#4, #2-#3)}
\def\multsubzero(#1,#2)[#3,#4]{\draw (#1+0.5*#3, #2-0.5*#4) node [name=nodemap,inner sep=0pt, minimum size=3pt,shape=circle,fill=white, draw]{} (#1,#2) arc (180:360:0.5*#3 and 0.5*#4) (#1+0.5*#3, #2-0.5*#4) -- (#1+0.5*#3,#2-#4)}
\begin{scope}[xshift=0cm, yshift=-3.75cm]
\draw (0,0) -- (0,-1); \mult(0.5,0)[1,1]; \mult(0,-1)[1,1];
\end{scope}
\begin{scope}[xshift=1.95cm, yshift=-4.3cm]
\node at (0,-0.5){=};
\end{scope}
\begin{scope}[xshift=2.45cm, yshift=-1cm]
\draw (0.5,0) -- (0.5,-3); \draw (1,-1) -- (1,-3); \comult(1.5,0)[1,1]; \map(2,-1)[\scriptstyle T]; \draw (3,-0) -- (3,-2); \lactionnamed(2,-2)[1,0][\scriptstyle \lambda]; \multsubzero(1,-3)[2,1.5]; \comult(0.5,-3)[1,1]; \map(1,-4)[\scriptstyle T];  \lactionnamed(1,-5)[1,0][\scriptstyle \lambda]; \draw (2,-4.5) -- (2,-5); \draw (0,-4) -- (0,-6); \multsubzero(0,-6)[2,1.5];
\end{scope}
\begin{scope}[xshift=6.2cm, yshift=-4.3cm]
\node at (0,-0.5){=};
\end{scope}
\begin{scope}[xshift=6.65cm, yshift=-1cm]
\comult(0.5,0)[1,1]; \map(1,-1)[\scriptstyle T]; \comult(2.5,0)[1,1]; \map(3,-1)[\scriptstyle T]; \draw (4,-0) -- (4,-2); \lactionnamed(3,-2)[1,0][\scriptstyle \lambda]; \draw (2,-1) -- (2,-2); \solbraid(1.5,-2)[\scriptstyle r]; \draw(4,-3) .. controls (4,-3.5) and (3.5,-3.5) .. (3.5,-4); \draw (0,-1) -- (0,-3); \multsubzero(0,-3)[1,1]; \draw (2,-3) -- (2,-4); \lactionnamed(2,-4)[1,1][\scriptstyle \lambda]; \draw (0.5,-4) -- (0.5,-5); \multsubzero(0.5,-5)[3,2.5];
\end{scope}
\begin{scope}[xshift=11.4cm, yshift=-4.3cm]
\node at (0,-0.5){=};
\end{scope}
\begin{scope}[xshift=11.9cm, yshift=-0cm]
\comult(1.25,0)[1.5,1.5]; \comult(0.5,-1.5)[1,1]; \map(1,-2.5)[\scriptstyle T]; \comult(3.75,0)[1.5,1.5]; \comult(4.5,-1.5)[1,1]; \flip(2,-1.5)[1]; \map(3,-2.5)[\scriptstyle T]; \map(5,-2.5)[\scriptstyle T]; \draw (5.75,0) -- (5.75,-4.5); \draw (0,-2.5) -- (0,-4.5); \draw (2,-2.5) -- (2,-3.5); \lactionnamed(1,-3.5)[1,0][\scriptstyle \lambda]; \draw (4,-2.5) -- (4,-3.5); \ractionnamed(4,-3.5)[1,0][\scriptstyle \rho]; \draw (5,-3.5) -- (5,-4.5); \multsubzero(0,-4.5)[2,1.5]; \multsubzero(3,-4.5)[2,1.5]; \lactionnamed(4,-6)[1,0.5][\scriptstyle \lambda]; \draw(5.75,-4.5) .. controls (5.75,-5.25) and (5.25,-5.25) .. (5.25,-6); \draw(1,-6) .. controls (1,-6.5) and (2.5,-7) .. (2.5,-7.5); \mult(2.5,-7.5)[2.5,2]; \draw(5.25,-7) .. controls (5.25,-7.25) and (5,-7.25) .. (5,-7.5); \multsubzero(2.5,-7.5)[2.5,2];
\end{scope}
\begin{scope}[xshift=18.12cm, yshift=-4.3cm]
\node at (0,-0.5){=};
\end{scope}
\begin{scope}[xshift=18.55cm, yshift=-1.25cm]
\comult(0.5,0)[1,1]; \comult(2.5,0)[1,1]; \draw (0,-1) -- (0,-2); \flip(1,-1)[1]; \draw (3,-1) -- (3,-2); \mult(0,-2)[1,1]; \mult(2,-2)[1,1]; \draw (0.5,-3) -- (0.5,-4); \map(2.5,-3)[\scriptstyle T]; \draw (3.5,0) -- (3.5,-4); \lactionnamed(2.5,-4)[1,0][\scriptstyle \lambda]; \draw(0.5,-4) .. controls (0.5,-4.5) and (1,-4.5) .. (1,-5); \multsubzero(1,-5)[2.5,2];
\end{scope}
\begin{scope}[xshift=22.8cm, yshift=-4.3cm]
\node at (0,-0.5){=};
\end{scope}
\begin{scope}[xshift=23.3cm, yshift=-2cm]
\mult(0,0)[1,1]; \comult(0.5,-1)[1,1]; \draw (0,-2) -- (0,-4); \map(1,-2)[\scriptstyle T]; \draw (2,0) -- (2,-3); \lactionnamed(1,-3)[1,0][\scriptstyle \lambda];\multsubzero(0,-4)[2,1.5];
\end{scope}
\begin{scope}[xshift=26cm, yshift=-4.3cm]
\node at (0,-0.5){=};
\end{scope}
\begin{scope}[xshift=26.45cm, yshift=-3.75cm]
\draw (1.5,0) -- (1.5,-1); \mult(0,0)[1,1]; \mult(0.5,-1)[1,1];
\end{scope}
\begin{scope}[xshift=28.2cm, yshift=-4.3cm]
\node at (0,-0.5){,};
\end{scope}
\end{tikzpicture}
$$
which proves that $m$ is associative. Let us check that~$S$ is the antipode of $(A,m,\Delta)$. By the definitions of~$m$ and~$S$, the coassociativity of $\Delta$ and the facts that $\lambda$ is an action and $T$ is the antipode of $(A,m_{\circ},\Delta)$, we have
$$.
\begin{tikzpicture}[scale=0.414]
\def\mult(#1,#2)[#3,#4]{\draw (#1,#2) arc (180:360:0.5*#3 and 0.5*#4) (#1+0.5*#3, #2-0.5*#4) -- (#1+0.5*#3,#2-#4)}
\def\counit(#1,#2){\draw (#1,#2) -- (#1,#2-0.93) (#1,#2-1) circle[radius=2pt]}
\def\comult(#1,#2)[#3,#4]{\draw (#1,#2) -- (#1,#2-0.5*#4) arc (90:0:0.5*#3 and 0.5*#4) (#1,#2-0.5*#4) arc (90:180:0.5*#3 and 0.5*#4)}
\def\laction(#1,#2)[#3,#4]{\draw (#1,#2) .. controls (#1,#2-0.555*#4/2) and (#1+0.445*#4/2,#2-1*#4/2) .. (#1+1*#4/2,#2-1*#4/2) -- (#1+2*#4/2+#3*#4/2,#2-1*#4/2) (#1+2*#4/2+#3*#4/2,#2)--(#1+2*#4/2+#3*#4/2,#2-2*#4/2)}
\def\map(#1,#2)[#3]{\draw (#1,#2-0.5)  node[name=nodemap,inner sep=0pt,  minimum size=10pt, shape=circle, draw]{$#3$} (#1,#2)-- (nodemap)  (nodemap)-- (#1,#2-1)}
\def\solbraid(#1,#2)[#3]{\draw (#1,#2-0.5)  node[name=nodemap,inner sep=0pt,  minimum size=9pt, shape=circle,draw]{$#3$}
(#1-0.5,#2) .. controls (#1-0.5,#2-0.15) and (#1-0.4,#2-0.2) .. (#1-0.3,#2-0.3) (#1-0.3,#2-0.3) -- (nodemap)
(#1+0.5,#2) .. controls (#1+0.5,#2-0.15) and (#1+0.4,#2-0.2) .. (#1+0.3,#2-0.3) (#1+0.3,#2-0.3) -- (nodemap)
(#1+0.5,#2-1) .. controls (#1+0.5,#2-0.85) and (#1+0.4,#2-0.8) .. (#1+0.3,#2-0.7) (#1+0.3,#2-0.7) -- (nodemap)
(#1-0.5,#2-1) .. controls (#1-0.5,#2-0.85) and (#1-0.4,#2-0.8) .. (#1-0.3,#2-0.7) (#1-0.3,#2-0.7) -- (nodemap)
}
\def\flip(#1,#2)[#3]{\draw (
#1+1*#3,#2) .. controls (#1+1*#3,#2-0.05*#3) and (#1+0.96*#3,#2-0.15*#3).. (#1+0.9*#3,#2-0.2*#3)
(#1+0.1*#3,#2-0.8*#3)--(#1+0.9*#3,#2-0.2*#3)
(#1,#2-1*#3) .. controls (#1,#2-0.95*#3) and (#1+0.04*#3,#2-0.85*#3).. (#1+0.1*#3,#2-0.8*#3)
(#1,#2) .. controls (#1,#2-0.05*#3) and (#1+0.04*#3,#2-0.15*#3).. (#1+0.1*#3,#2-0.2*#3)
(#1+0.1*#3,#2-0.2*#3) -- (#1+0.9*#3,#2-0.8*#3)
(#1+1*#3,#2-1*#3) .. controls (#1+1*#3,#2-0.95*#3) and (#1+0.96*#3,#2-0.85*#3).. (#1+0.9*#3,#2-0.8*#3)
}
\def\raction(#1,#2)[#3,#4]{\draw (#1,#2) -- (#1,#2-2*#4/2)  (#1,#2-1*#4/2)--(#1+1*#4/2+#3*#4/2,#2-1*#4/2) .. controls (#1+1.555*#4/2+#3*#4/2,#2-1*#4/2) and (#1+2*#4/2+#3*#4/2,#2-0.555*#4/2) .. (#1+2*#4/2+#3*#4/2,#2)}
\def\doublemap(#1,#2)[#3]{\draw (#1+0.5,#2-0.5) node [name=doublemapnode,inner xsep=0pt, inner ysep=0pt, minimum height=11pt, minimum width=23pt,shape=rectangle,draw,rounded corners] {$#3$} (#1,#2) .. controls (#1,#2-0.075) .. (doublemapnode) (#1+1,#2) .. controls (#1+1,#2-0.075).. (doublemapnode) (doublemapnode) .. controls (#1,#2-0.925)..(#1,#2-1) (doublemapnode) .. controls (#1+1,#2-0.925).. (#1+1,#2-1)}
\def\doublesinglemap(#1,#2)[#3]{\draw (#1+0.5,#2-0.5) node [name=doublesinglemapnode,inner xsep=0pt, inner ysep=0pt, minimum height=11pt, minimum width=23pt,shape=rectangle,draw,rounded corners] {$#3$} (#1,#2) .. controls (#1,#2-0.075) .. (doublesinglemapnode) (#1+1,#2) .. controls (#1+1,#2-0.075).. (doublesinglemapnode) (doublesinglemapnode)-- (#1+0.5,#2-1)}
\def\ractiontr(#1,#2)[#3,#4,#5]{\draw (#1,#2) -- (#1,#2-2*#4/2)  (#1,#2-1*#4/2) node [inner sep=0pt, minimum size=3pt,shape=isosceles triangle,fill, shape border rotate=#5] {}  --(#1+1*#4/2+#3*#4/2,#2-1*#4/2) .. controls (#1+1.555*#4/2+#3*#4/2,#2-1*#4/2) and (#1+2*#4/2+#3*#4/2,#2-0.555*#4/2) .. (#1+2*#4/2+#3*#4/2,#2)  }
\def\rack(#1,#2)[#3]{\draw (#1,#2-0.5)  node[name=nodemap,inner sep=0pt,  minimum size=7.5pt, shape=circle,draw]{$#3$} (#1-1,#2) .. controls (#1-1,#2-0.5) and (#1-0.5,#2-0.5) .. (nodemap) (#1,#2)-- (nodemap)  (nodemap)-- (#1,#2-1)}
\def\rackmenoslarge(#1,#2)[#3]{\draw (#1,#2-0.5)  node[name=nodemap,inner sep=0pt,  minimum size=7.5pt, shape=circle,draw]{$#3$} (#1-1.5,#2+0.5) .. controls (#1-1.5,#2-0.5) and (#1-0.5,#2-0.5) .. (nodemap) (#1,#2)-- (nodemap)  (nodemap)-- (#1,#2-1)}
\def\racklarge(#1,#2)[#3]{\draw (#1,#2-0.5)  node[name=nodemap,inner sep=0pt,  minimum size=7.5pt, shape=circle,draw]{$#3$} (#1-2,#2+0.5) .. controls (#1-2,#2-0.5) and (#1-0.5,#2-0.5) .. (nodemap) (#1,#2)-- (nodemap)  (nodemap)-- (#1,#2-1)}
\def\rackmaslarge(#1,#2)[#3]{\draw (#1,#2-0.5)  node[name=nodemap,inner sep=0pt,  minimum size=7.5pt, shape=circle,draw]{$#3$} (#1-2.5,#2+0.5) .. controls (#1-2.5,#2-0.5) and (#1-0.5,#2-0.5) .. (nodemap) (#1,#2)-- (nodemap)  (nodemap)-- (#1,#2-1)}
\def\rackextralarge(#1,#2)[#3]{\draw (#1,#2-0.75)  node[name=nodemap,inner sep=0pt,  minimum size=7.5pt, shape=circle, draw]{$#3$} (#1-3,#2+1) .. controls (#1-3,#2-0.75) and (#1-0.5,#2-0.75) .. (nodemap) (#1,#2)-- (nodemap)  (nodemap)-- (#1,#2-1.5)}
\def\lactionnamed(#1,#2)[#3,#4][#5]{\draw (#1 + 0.5*#3 + 0.5 + 0.5*#4, #2- 0.5*#3) node[name=nodemap,inner sep=0pt,  minimum size=8pt, shape=circle,draw]{$#5$} (#1,#2)  arc (180:270:0.5*#3) (#1 + 0.5*#3,#2- 0.5*#3) --  (nodemap) (#1 + 0.5*#3 + 0.5 + 0.5*#4, #2) --  (nodemap) (nodemap) -- (#1 + 0.5*#3 + 0.5 + 0.5*#4, #2-#3)}
\def\ractionnamed(#1,#2)[#3,#4][#5]{\draw  (#1 - 0.5*#3- 0.5 - 0.5*#4, #2- 0.5*#3)  node[name=nodemap,inner sep=0pt,  minimum size=8pt, shape=circle,draw]{$#5$} (#1 - 0.5*#3, #2- 0.5*#3)  arc (270:360:0.5*#3) (#1 - 0.5*#3, #2- 0.5*#3) -- (nodemap)(#1 - 0.5*#3- 0.5 - 0.5*#4, #2)-- (nodemap) (nodemap) -- (#1 - 0.5*#3- 0.5 - 0.5*#4, #2-#3)}
\def\multsubzero(#1,#2)[#3,#4]{\draw (#1+0.5*#3, #2-0.5*#4) node [name=nodemap,inner sep=0pt, minimum size=3pt,shape=circle,fill=white, draw]{} (#1,#2) arc (180:360:0.5*#3 and 0.5*#4) (#1+0.5*#3, #2-0.5*#4) -- (#1+0.5*#3,#2-#4)}
\begin{scope}[xshift=0cm, yshift=-2.75cm]
\comult(0.5,0)[1,1]; \draw (0,-1) -- (0,-2); \map(1,-1)[\scriptstyle S];  \mult(0,-2)[1,1];
\end{scope}
\begin{scope}[xshift=1.9cm, yshift=-3.8cm]
\node at (0,-0.5){=};
\end{scope}
\begin{scope}[xshift=2.4cm, yshift=-1.25cm]
\comult(1.25,0)[1.5,1.5]; \comult(0.5,-1.5)[1,1]; \map(2,-1.5)[\scriptstyle S]; \map(1,-2.5)[\scriptstyle T]; \draw (2,-2.5) -- (2,-3.5);
\lactionnamed(1,-3.5)[1,0][\scriptstyle \lambda]; \draw (0,-2.5) -- (0,-4.5); \multsubzero(0,-4.5)[2,1.5];
\end{scope}
\begin{scope}[xshift=5.3cm, yshift=-3.8cm]
\node at (0,-0.5){=};
\end{scope}
\begin{scope}[xshift=5.8cm, yshift=-0.75cm]
\comult(1.5,0)[2,1.5]; \comult(0.5,-1.5)[1,1]; \comult(2.5,-1.5)[1,1]; \lactionnamed(2,-3.5)[1,0][\scriptstyle \lambda]; \map(3,-2.5)[\scriptstyle T]; \draw (2,-2.5) -- (2,-3.5); \map(1,-2.5)[\scriptstyle T]; \draw (1,-3.5) .. controls (1,-4) and (1.5,-4) .. (1.5,-4.5); \lactionnamed(1.5,-4.5)[1,1][\scriptstyle \lambda]; \draw (0,-2.5) -- (0,-3.5); \draw (0,-3.5) .. controls (0,-4.5) and (1,-4.5) .. (1,-5.5); \multsubzero(1,-5.5)[2,1.5];
\end{scope}
\begin{scope}[xshift=9.7cm, yshift=-3.8cm]
\node at (0,-0.5){=};
\end{scope}
\begin{scope}[xshift=10.2cm, yshift=0cm]
\comult(1,0)[2,1.5]; \comult(2,-1.5)[1.5,1.5]; \comult(1.25,-3)[1,1]; \draw (0,-1.5) -- (0,-5.5); \map(0.75,-4)[\scriptstyle T]; \draw (1.75,-4) -- (1.75,-5); \multsubzero(0.75,-5)[1,1]; \draw (2.75,-3) -- (2.75,-4); \map(2.75,-4)[\scriptstyle T]; \lactionnamed(1.25,-6)[1,1][\scriptstyle \lambda]; \draw (2.75,-5) -- (2.75,-6); \multsubzero(0.75,-7)[2,1.5]; \draw (0,-5.5) .. controls (0,-6.25) and (0.75,-6.25) .. (0.75,-7);
\end{scope}
\begin{scope}[xshift=13.8cm, yshift=-3.8cm]
\node at (0,-0.5){=};
\end{scope}
\begin{scope}[xshift=14.3cm, yshift=-2.75cm]
\comult(0.5,0)[1,1]; \draw (0,-1) -- (0,-2); \map(1,-1)[\scriptstyle T];  \multsubzero(0,-2)[1,1];
\end{scope}
\begin{scope}[xshift=17.1cm, yshift=-3.8cm]
\node at (0,-0.5){$=\eta\circ \epsilon$.};
\end{scope}
\end{tikzpicture}
$$
So, $S$ is a right inverse of $\ide_A$ respect to the convolution product constructed from $\Delta$ and $m$. In order to finish the proof it is enough to show that $S$ is right invertible respect to the same convolution product. But this is true since
$$
S\star S^2 = (\ide\star S)\circ S = \eta\circ\epsilon\circ S = \eta\circ \epsilon,
$$
where the first and last equality follow from the fact that $S$ is a coalgebra homomorphism. Finally equality~\eqref{ecuabraces} is true, because
$$
\begin{tikzpicture}[scale=0.414]
\def\mult(#1,#2)[#3,#4]{\draw (#1,#2) arc (180:360:0.5*#3 and 0.5*#4) (#1+0.5*#3, #2-0.5*#4) -- (#1+0.5*#3,#2-#4)}
\def\counit(#1,#2){\draw (#1,#2) -- (#1,#2-0.93) (#1,#2-1) circle[radius=2pt]}
\def\comult(#1,#2)[#3,#4]{\draw (#1,#2) -- (#1,#2-0.5*#4) arc (90:0:0.5*#3 and 0.5*#4) (#1,#2-0.5*#4) arc (90:180:0.5*#3 and 0.5*#4)}
\def\laction(#1,#2)[#3,#4]{\draw (#1,#2) .. controls (#1,#2-0.555*#4/2) and (#1+0.445*#4/2,#2-1*#4/2) .. (#1+1*#4/2,#2-1*#4/2) -- (#1+2*#4/2+#3*#4/2,#2-1*#4/2) (#1+2*#4/2+#3*#4/2,#2)--(#1+2*#4/2+#3*#4/2,#2-2*#4/2)}
\def\map(#1,#2)[#3]{\draw (#1,#2-0.5)  node[name=nodemap,inner sep=0pt,  minimum size=10pt, shape=circle, draw]{$#3$} (#1,#2)-- (nodemap)  (nodemap)-- (#1,#2-1)}
\def\solbraid(#1,#2)[#3]{\draw (#1,#2-0.5)  node[name=nodemap,inner sep=0pt,  minimum size=9pt, shape=circle,draw]{$#3$}
(#1-0.5,#2) .. controls (#1-0.5,#2-0.15) and (#1-0.4,#2-0.2) .. (#1-0.3,#2-0.3) (#1-0.3,#2-0.3) -- (nodemap)
(#1+0.5,#2) .. controls (#1+0.5,#2-0.15) and (#1+0.4,#2-0.2) .. (#1+0.3,#2-0.3) (#1+0.3,#2-0.3) -- (nodemap)
(#1+0.5,#2-1) .. controls (#1+0.5,#2-0.85) and (#1+0.4,#2-0.8) .. (#1+0.3,#2-0.7) (#1+0.3,#2-0.7) -- (nodemap)
(#1-0.5,#2-1) .. controls (#1-0.5,#2-0.85) and (#1-0.4,#2-0.8) .. (#1-0.3,#2-0.7) (#1-0.3,#2-0.7) -- (nodemap)
}
\def\flip(#1,#2)[#3]{\draw (
#1+1*#3,#2) .. controls (#1+1*#3,#2-0.05*#3) and (#1+0.96*#3,#2-0.15*#3).. (#1+0.9*#3,#2-0.2*#3)
(#1+0.1*#3,#2-0.8*#3)--(#1+0.9*#3,#2-0.2*#3)
(#1,#2-1*#3) .. controls (#1,#2-0.95*#3) and (#1+0.04*#3,#2-0.85*#3).. (#1+0.1*#3,#2-0.8*#3)
(#1,#2) .. controls (#1,#2-0.05*#3) and (#1+0.04*#3,#2-0.15*#3).. (#1+0.1*#3,#2-0.2*#3)
(#1+0.1*#3,#2-0.2*#3) -- (#1+0.9*#3,#2-0.8*#3)
(#1+1*#3,#2-1*#3) .. controls (#1+1*#3,#2-0.95*#3) and (#1+0.96*#3,#2-0.85*#3).. (#1+0.9*#3,#2-0.8*#3)
}
\def\raction(#1,#2)[#3,#4]{\draw (#1,#2) -- (#1,#2-2*#4/2)  (#1,#2-1*#4/2)--(#1+1*#4/2+#3*#4/2,#2-1*#4/2) .. controls (#1+1.555*#4/2+#3*#4/2,#2-1*#4/2) and (#1+2*#4/2+#3*#4/2,#2-0.555*#4/2) .. (#1+2*#4/2+#3*#4/2,#2)}
\def\doublemap(#1,#2)[#3]{\draw (#1+0.5,#2-0.5) node [name=doublemapnode,inner xsep=0pt, inner ysep=0pt, minimum height=11pt, minimum width=23pt,shape=rectangle,draw,rounded corners] {$#3$} (#1,#2) .. controls (#1,#2-0.075) .. (doublemapnode) (#1+1,#2) .. controls (#1+1,#2-0.075).. (doublemapnode) (doublemapnode) .. controls (#1,#2-0.925)..(#1,#2-1) (doublemapnode) .. controls (#1+1,#2-0.925).. (#1+1,#2-1)}
\def\doublesinglemap(#1,#2)[#3]{\draw (#1+0.5,#2-0.5) node [name=doublesinglemapnode,inner xsep=0pt, inner ysep=0pt, minimum height=11pt, minimum width=23pt,shape=rectangle,draw,rounded corners] {$#3$} (#1,#2) .. controls (#1,#2-0.075) .. (doublesinglemapnode) (#1+1,#2) .. controls (#1+1,#2-0.075).. (doublesinglemapnode) (doublesinglemapnode)-- (#1+0.5,#2-1)}
\def\ractiontr(#1,#2)[#3,#4,#5]{\draw (#1,#2) -- (#1,#2-2*#4/2)  (#1,#2-1*#4/2) node [inner sep=0pt, minimum size=3pt,shape=isosceles triangle,fill, shape border rotate=#5] {}  --(#1+1*#4/2+#3*#4/2,#2-1*#4/2) .. controls (#1+1.555*#4/2+#3*#4/2,#2-1*#4/2) and (#1+2*#4/2+#3*#4/2,#2-0.555*#4/2) .. (#1+2*#4/2+#3*#4/2,#2)  }
\def\rack(#1,#2)[#3]{\draw (#1,#2-0.5)  node[name=nodemap,inner sep=0pt,  minimum size=7.5pt, shape=circle,draw]{$#3$} (#1-1,#2) .. controls (#1-1,#2-0.5) and (#1-0.5,#2-0.5) .. (nodemap) (#1,#2)-- (nodemap)  (nodemap)-- (#1,#2-1)}
\def\rackmenoslarge(#1,#2)[#3]{\draw (#1,#2-0.5)  node[name=nodemap,inner sep=0pt,  minimum size=7.5pt, shape=circle,draw]{$#3$} (#1-1.5,#2+0.5) .. controls (#1-1.5,#2-0.5) and (#1-0.5,#2-0.5) .. (nodemap) (#1,#2)-- (nodemap)  (nodemap)-- (#1,#2-1)}
\def\racklarge(#1,#2)[#3]{\draw (#1,#2-0.5)  node[name=nodemap,inner sep=0pt,  minimum size=7.5pt, shape=circle,draw]{$#3$} (#1-2,#2+0.5) .. controls (#1-2,#2-0.5) and (#1-0.5,#2-0.5) .. (nodemap) (#1,#2)-- (nodemap)  (nodemap)-- (#1,#2-1)}
\def\rackmaslarge(#1,#2)[#3]{\draw (#1,#2-0.5)  node[name=nodemap,inner sep=0pt,  minimum size=7.5pt, shape=circle,draw]{$#3$} (#1-2.5,#2+0.5) .. controls (#1-2.5,#2-0.5) and (#1-0.5,#2-0.5) .. (nodemap) (#1,#2)-- (nodemap)  (nodemap)-- (#1,#2-1)}
\def\rackextralarge(#1,#2)[#3]{\draw (#1,#2-0.75)  node[name=nodemap,inner sep=0pt,  minimum size=7.5pt, shape=circle, draw]{$#3$} (#1-3,#2+1) .. controls (#1-3,#2-0.75) and (#1-0.5,#2-0.75) .. (nodemap) (#1,#2)-- (nodemap)  (nodemap)-- (#1,#2-1.5)}
\def\lactionnamed(#1,#2)[#3,#4][#5]{\draw (#1 + 0.5*#3 + 0.5 + 0.5*#4, #2- 0.5*#3) node[name=nodemap,inner sep=0pt,  minimum size=8pt, shape=circle,draw]{$#5$} (#1,#2)  arc (180:270:0.5*#3) (#1 + 0.5*#3,#2- 0.5*#3) --  (nodemap) (#1 + 0.5*#3 + 0.5 + 0.5*#4, #2) --  (nodemap) (nodemap) -- (#1 + 0.5*#3 + 0.5 + 0.5*#4, #2-#3)}
\def\ractionnamed(#1,#2)[#3,#4][#5]{\draw  (#1 - 0.5*#3- 0.5 - 0.5*#4, #2- 0.5*#3)  node[name=nodemap,inner sep=0pt,  minimum size=8pt, shape=circle,draw]{$#5$} (#1 - 0.5*#3, #2- 0.5*#3)  arc (270:360:0.5*#3) (#1 - 0.5*#3, #2- 0.5*#3) -- (nodemap)(#1 - 0.5*#3- 0.5 - 0.5*#4, #2)-- (nodemap) (nodemap) -- (#1 - 0.5*#3- 0.5 - 0.5*#4, #2-#3)}
\def\multsubzero(#1,#2)[#3,#4]{\draw (#1+0.5*#3, #2-0.5*#4) node [name=nodemap,inner sep=0pt, minimum size=3pt,shape=circle,fill=white, draw]{} (#1,#2) arc (180:360:0.5*#3 and 0.5*#4) (#1+0.5*#3, #2-0.5*#4) -- (#1+0.5*#3,#2-#4)}
\begin{scope}[xshift=0cm, yshift=-3.7cm]
\draw (0,0) -- (0,-1); \mult(1,0)[1,1];  \multsubzero(0,-1)[1.5,1.5];
\end{scope}
\begin{scope}[xshift=2.6cm, yshift=-4.5cm]
\node at (0,-0.5){=};
\end{scope}
\begin{scope}[xshift=3.8cm, yshift=-2cm]
\draw (-0.5,0) -- (-0.5,-4.5); \comult(1,0)[1,1]; \draw (2.5,0) -- (2.5,-2);
\draw (0.5,-1) -- (0.5,-3); \map(1.5,-1)[\scriptstyle T];
\lactionnamed(1.5,-2)[1,0][\scriptstyle \lambda];
\multsubzero(0.5,-3)[2,1.5];
\multsubzero(-0.5,-4.5)[2,1.5];
\end{scope}
\begin{scope}[xshift=7.2cm, yshift=-4.5cm]
\node at (0,-0.5){=};
\end{scope}
\begin{scope}[xshift=8.3cm, yshift=-2.5cm]
\draw (-0.5,0) -- (-0.5,-2); \comult(1,0)[1,1]; \draw (2.5,0) -- (2.5,-2);
\draw (0.5,-1) -- (0.5,-2);\map(1.5,-1)[\scriptstyle T];
\multsubzero(-0.5,-2)[1,1]; \lactionnamed(1.5,-2)[1,0][\scriptstyle \lambda];
\multsubzero(0,-3)[2.5,2];
\end{scope}
\begin{scope}[xshift=11.6cm, yshift=-4.5cm]
\node at (0,-0.5){=};
\end{scope}
\begin{scope}[xshift=12.7cm, yshift=-1.5cm]
\draw (-0.5,0) -- (-0.5,-1); \comult(1,0)[1,1]; \draw (2.5,0) -- (2.5,-2);
\multsubzero(-0.5,-1)[1,1];\map(1.5,-1)[\scriptstyle T];
\lactionnamed(1.5,-2)[1,0][\scriptstyle \lambda];
\comult(0,-2)[1,1]; \lactionnamed(0.5,-3)[2,1][\scriptstyle \lambda];
\draw (-0.5,-3) -- (-0.5,-5);
\mult(-0.5,-5)[3,2];
\end{scope}
\begin{scope}[xshift=16cm, yshift=-4.5cm]
\node at (0,-0.5){=};
\end{scope}
\begin{scope}[xshift=16.6cm, yshift=0cm]
\draw (0.5,0) -- (0.5,-1); \comult(3,0)[1,1]; \draw (4.5,0) -- (4.5,-5.5);
\comult(0.5,-1)[1,1]; \comult(2.5,-1)[1,1];
\draw (0,-2) -- (0,-3); \flip(1,-2)[1]; \draw (3,-2) -- (3,-3); \multsubzero(0,-3)[1,1]; \multsubzero(2,-3)[1,1]; \draw (0.5,-4) -- (0.5,-7.5); \draw (3.5,-1) -- (3.5,-3.5); \map(3.5,-3.5)[\scriptstyle T]; \multsubzero(2.5,-4.5)[1,1]; \lactionnamed(3,-5.5)[2,0][\scriptstyle \lambda]; \draw (2.5,-4) -- (2.5,-4.5);  \mult(0.5,-7.5)[4,2.5];
\end{scope}
\begin{scope}[xshift=21.8cm, yshift=-4.5cm]
\node at (0,-0.5){=};
\end{scope}
\begin{scope}[xshift=22.8cm, yshift=-2.7cm]
\comult(0,0)[1,1]; \draw (1.5,0) -- (1.5,-1);  \draw (2.5,0) -- (2.5,-2);
\draw (-0.5,-1) -- (-0.5,-2); \flip(0.5,-1)[1];
\multsubzero(-0.5,-2)[1,1]; \lactionnamed(1.5,-2)[1,0][\scriptstyle \lambda];
\mult(0,-3)[2.5,1.5];
\end{scope}
\end{tikzpicture}
$$
where the first equality holds by the definition of $m$; the second one, by the associativity of $m_{\circ}$; the third, by Lemma~\ref{lem:bop->brace}(2); the fourth one, since $m_{\circ}$ is a coalgebra morphism and $\lambda$ is an action; and the last one, since $\Delta$ is coassociative, $m_{\circ}$ is associative and $T$ is the antipode of $(A,m_{\circ},\Delta)$.
\end{proof}

\begin{proof}[Proof of Theorem~\ref{thm:equivalencias}] We first prove the equivalence between $\Br(A)$ and $\Bc(A)$. Let $(A,m,m_{\circ})$ be a brace in $\mathscr{C}$. By Proposition~\ref{pro:modulo}, we know that $(A,m)$ is a left $(A,m_{\circ},\Delta)$-module algebra and a left $(A,m_{\circ},\Delta)$-module coalgebra  via a same action $\lambda$. Moreover, Equality~\eqref{eq:aob} shows that $\id\colon (A,m_{\circ})\to (A,m)$ is an invertible $1$-cocycle. Furthermore, if $f$ is a morphism between braces over $A$ in $\mathscr{C}$, then $(f,f)$ is a morphism of invertible $1$-cocycles, since $f$ is a Hopf algebra morphism for both structures. Clearly the correspondence $F\colon\Br(A)\to\Bc(A)$ given by $F(A,m,m_{\circ})\coloneqq (A,m_{\circ})\xrightarrow{\id} (A,m)$ and $F(f)\coloneqq(f,f)$ is a functor. Conversely let $\pi\colon H\to A$ be an invertible $1$-cocycle. We define a new Hopf algebra structure over $A$ with multiplication
\[
m_\circ\coloneqq \pi\circ m\circ (\pi^{-1}\otimes\pi^{-1}),
\]
and the same coalgebra structure as that of $A$. The antipode is
$$
T\coloneqq \pi\circ S_H\circ\pi^{-1}.
$$
Clearly $\lambda_A\coloneqq \lambda\circ(\pi^{-1}\otimes A)$ turns $A$ into a left $(A,m_{\circ},\Delta)$-module algebra. Furthermore, using Equality~\eqref{formula equivalente a la de brace} it is easy to check that
\[
\lambda_A=m\circ (S\otimes m_\circ)\circ (\Delta\otimes A).
\]
So, by Remark~\ref{rem:brace} the tuple $(A,m,m_{\circ})$ is a brace in $\mathscr{C}$. For a morphism $(f,g)$ from $\pi\colon H\to A$ to $\xi\colon
K\to A$, the calculation
\begin{align*}
g\circ m_{\circ}&=g\circ\pi\circ m_H\circ(\pi^{-1}\otimes\pi^{-1})\\
&=\xi\circ f\circ m_H\circ (\pi^{-1}\otimes\pi^{-1})\\
&=\xi\circ m_K\circ (f\otimes f)\circ (\pi^{-1}\otimes\pi^{-1})\\
&=\xi\circ m_K\circ (\xi^{-1}\otimes \xi^{-1})\circ (g\otimes g)\\
&=m_{\circ}\circ (g\otimes g)
\end{align*}
shows that $g$ is brace morphism. Thus the correspondence $G\colon \Bc(A)\to\Br(A)$ given by $G(H\xrightarrow{\pi} A)\coloneqq (A,m,m_{\circ})$ and $G(f,g)\coloneqq g$ is a functor. Clearly $G\circ F=\id_{\Br(A)}$ and $F\circ G\simeq\id_{\Bc(A)}$.

\smallskip

Now we prove the equivalence between $\Br(A)$ and $\Bo(A)$. Let $(A,m,m_{\circ})$ be a brace in $\mathscr{C}$ and let $r\colon A\otimes A\to A\otimes A$ be given by $r\coloneqq (\lambda\otimes\rho)\circ \Delta_{A^2}$, where $\lambda$ and $\rho$ are the maps introduced in Propositions~\ref{pro:modulo} and~\ref{pro:rho}, respectively. Clearly $r$ is a coalgebra morphism. We claim that $r$ is a braiding operator of $(A,m_{\circ},\eta,\Delta,\epsilon,T)$. First note that
\begin{equation}\label{pepito}
\begin{tikzpicture}[scale=0.41, baseline=(current  bounding  box.center)]
\def\mult(#1,#2)[#3,#4]{\draw (#1,#2) arc (180:360:0.5*#3 and 0.5*#4) (#1+0.5*#3, #2-0.5*#4) -- (#1+0.5*#3,#2-#4)}
\def\counit(#1,#2){\draw (#1,#2) -- (#1,#2-0.93) (#1,#2-1) circle[radius=2pt]}
\def\comult(#1,#2)[#3,#4]{\draw (#1,#2) -- (#1,#2-0.5*#4) arc (90:0:0.5*#3 and 0.5*#4) (#1,#2-0.5*#4) arc (90:180:0.5*#3 and 0.5*#4)}
\def\laction(#1,#2)[#3,#4]{\draw (#1,#2) .. controls (#1,#2-0.555*#4/2) and (#1+0.445*#4/2,#2-1*#4/2) .. (#1+1*#4/2,#2-1*#4/2) -- (#1+2*#4/2+#3*#4/2,#2-1*#4/2) (#1+2*#4/2+#3*#4/2,#2)--(#1+2*#4/2+#3*#4/2,#2-2*#4/2)}
\def\map(#1,#2)[#3]{\draw (#1,#2-0.5)  node[name=nodemap,inner sep=0pt,  minimum size=10pt, shape=circle, draw]{$#3$} (#1,#2)-- (nodemap)  (nodemap)-- (#1,#2-1)}
\def\solbraid(#1,#2)[#3]{\draw (#1,#2-0.5)  node[name=nodemap,inner sep=0pt,  minimum size=9pt, shape=circle,draw]{$#3$}
(#1-0.5,#2) .. controls (#1-0.5,#2-0.15) and (#1-0.4,#2-0.2) .. (#1-0.3,#2-0.3) (#1-0.3,#2-0.3) -- (nodemap)
(#1+0.5,#2) .. controls (#1+0.5,#2-0.15) and (#1+0.4,#2-0.2) .. (#1+0.3,#2-0.3) (#1+0.3,#2-0.3) -- (nodemap)
(#1+0.5,#2-1) .. controls (#1+0.5,#2-0.85) and (#1+0.4,#2-0.8) .. (#1+0.3,#2-0.7) (#1+0.3,#2-0.7) -- (nodemap)
(#1-0.5,#2-1) .. controls (#1-0.5,#2-0.85) and (#1-0.4,#2-0.8) .. (#1-0.3,#2-0.7) (#1-0.3,#2-0.7) -- (nodemap)
}
\def\flip(#1,#2)[#3]{\draw (
#1+1*#3,#2) .. controls (#1+1*#3,#2-0.05*#3) and (#1+0.96*#3,#2-0.15*#3).. (#1+0.9*#3,#2-0.2*#3)
(#1+0.1*#3,#2-0.8*#3)--(#1+0.9*#3,#2-0.2*#3)
(#1,#2-1*#3) .. controls (#1,#2-0.95*#3) and (#1+0.04*#3,#2-0.85*#3).. (#1+0.1*#3,#2-0.8*#3)
(#1,#2) .. controls (#1,#2-0.05*#3) and (#1+0.04*#3,#2-0.15*#3).. (#1+0.1*#3,#2-0.2*#3)
(#1+0.1*#3,#2-0.2*#3) -- (#1+0.9*#3,#2-0.8*#3)
(#1+1*#3,#2-1*#3) .. controls (#1+1*#3,#2-0.95*#3) and (#1+0.96*#3,#2-0.85*#3).. (#1+0.9*#3,#2-0.8*#3)
}
\def\raction(#1,#2)[#3,#4]{\draw (#1,#2) -- (#1,#2-2*#4/2)  (#1,#2-1*#4/2)--(#1+1*#4/2+#3*#4/2,#2-1*#4/2) .. controls (#1+1.555*#4/2+#3*#4/2,#2-1*#4/2) and (#1+2*#4/2+#3*#4/2,#2-0.555*#4/2) .. (#1+2*#4/2+#3*#4/2,#2)}
\def\doublemap(#1,#2)[#3]{\draw (#1+0.5,#2-0.5) node [name=doublemapnode,inner xsep=0pt, inner ysep=0pt, minimum height=11pt, minimum width=23pt,shape=rectangle,draw,rounded corners] {$#3$} (#1,#2) .. controls (#1,#2-0.075) .. (doublemapnode) (#1+1,#2) .. controls (#1+1,#2-0.075).. (doublemapnode) (doublemapnode) .. controls (#1,#2-0.925)..(#1,#2-1) (doublemapnode) .. controls (#1+1,#2-0.925).. (#1+1,#2-1)}
\def\doublesinglemap(#1,#2)[#3]{\draw (#1+0.5,#2-0.5) node [name=doublesinglemapnode,inner xsep=0pt, inner ysep=0pt, minimum height=11pt, minimum width=23pt,shape=rectangle,draw,rounded corners] {$#3$} (#1,#2) .. controls (#1,#2-0.075) .. (doublesinglemapnode) (#1+1,#2) .. controls (#1+1,#2-0.075).. (doublesinglemapnode) (doublesinglemapnode)-- (#1+0.5,#2-1)}
\def\ractiontr(#1,#2)[#3,#4,#5]{\draw (#1,#2) -- (#1,#2-2*#4/2)  (#1,#2-1*#4/2) node [inner sep=0pt, minimum size=3pt,shape=isosceles triangle,fill, shape border rotate=#5] {}  --(#1+1*#4/2+#3*#4/2,#2-1*#4/2) .. controls (#1+1.555*#4/2+#3*#4/2,#2-1*#4/2) and (#1+2*#4/2+#3*#4/2,#2-0.555*#4/2) .. (#1+2*#4/2+#3*#4/2,#2)  }
\def\rack(#1,#2)[#3]{\draw (#1,#2-0.5)  node[name=nodemap,inner sep=0pt,  minimum size=7.5pt, shape=circle,draw]{$#3$} (#1-1,#2) .. controls (#1-1,#2-0.5) and (#1-0.5,#2-0.5) .. (nodemap) (#1,#2)-- (nodemap)  (nodemap)-- (#1,#2-1)}
\def\rackmenoslarge(#1,#2)[#3]{\draw (#1,#2-0.5)  node[name=nodemap,inner sep=0pt,  minimum size=7.5pt, shape=circle,draw]{$#3$} (#1-1.5,#2+0.5) .. controls (#1-1.5,#2-0.5) and (#1-0.5,#2-0.5) .. (nodemap) (#1,#2)-- (nodemap)  (nodemap)-- (#1,#2-1)}
\def\racklarge(#1,#2)[#3]{\draw (#1,#2-0.5)  node[name=nodemap,inner sep=0pt,  minimum size=7.5pt, shape=circle,draw]{$#3$} (#1-2,#2+0.5) .. controls (#1-2,#2-0.5) and (#1-0.5,#2-0.5) .. (nodemap) (#1,#2)-- (nodemap)  (nodemap)-- (#1,#2-1)}
\def\rackmaslarge(#1,#2)[#3]{\draw (#1,#2-0.5)  node[name=nodemap,inner sep=0pt,  minimum size=7.5pt, shape=circle,draw]{$#3$} (#1-2.5,#2+0.5) .. controls (#1-2.5,#2-0.5) and (#1-0.5,#2-0.5) .. (nodemap) (#1,#2)-- (nodemap)  (nodemap)-- (#1,#2-1)}
\def\rackextralarge(#1,#2)[#3]{\draw (#1,#2-0.75)  node[name=nodemap,inner sep=0pt,  minimum size=7.5pt, shape=circle, draw]{$#3$} (#1-3,#2+1) .. controls (#1-3,#2-0.75) and (#1-0.5,#2-0.75) .. (nodemap) (#1,#2)-- (nodemap)  (nodemap)-- (#1,#2-1.5)}
\def\lactionnamed(#1,#2)[#3,#4][#5]{\draw (#1 + 0.5*#3 + 0.5 + 0.5*#4, #2- 0.5*#3) node[name=nodemap,inner sep=0pt,  minimum size=8pt, shape=circle,draw]{$#5$} (#1,#2)  arc (180:270:0.5*#3) (#1 + 0.5*#3,#2- 0.5*#3) --  (nodemap) (#1 + 0.5*#3 + 0.5 + 0.5*#4, #2) --  (nodemap) (nodemap) -- (#1 + 0.5*#3 + 0.5 + 0.5*#4, #2-#3)}
\def\ractionnamed(#1,#2)[#3,#4][#5]{\draw  (#1 - 0.5*#3- 0.5 - 0.5*#4, #2- 0.5*#3)  node[name=nodemap,inner sep=0pt,  minimum size=8pt, shape=circle,draw]{$#5$} (#1 - 0.5*#3, #2- 0.5*#3)  arc (270:360:0.5*#3) (#1 - 0.5*#3, #2- 0.5*#3) -- (nodemap)(#1 - 0.5*#3- 0.5 - 0.5*#4, #2)-- (nodemap) (nodemap) -- (#1 - 0.5*#3- 0.5 - 0.5*#4, #2-#3)}
\def\multsubzero(#1,#2)[#3,#4]{\draw (#1+0.5*#3, #2-0.5*#4) node [name=nodemap,inner sep=0pt, minimum size=3pt,shape=circle,fill=white, draw]{} (#1,#2) arc (180:360:0.5*#3 and 0.5*#4) (#1+0.5*#3, #2-0.5*#4) -- (#1+0.5*#3,#2-#4)}
\begin{scope}[xshift=-1.3cm, yshift=-3.6cm]
\node at (0,-0.5){$r=$};
\end{scope}
\begin{scope}[xshift=0cm, yshift=-2.6cm]
\comult(0,0)[1,1]; \comult(2,0)[1,1];
\draw (-0.5,-1) -- (-0.5,-2); \flip(0.5,-1)[1];\draw (2.5,-1) -- (2.5,-2);
\lactionnamed(-0.5,-2)[1,0][\scriptstyle \lambda]; \ractionnamed(2.5,-2)[1,0][\scriptstyle \rho];
\end{scope}
\begin{scope}[xshift=3.1cm, yshift=-3.6cm]
\node at (0,-0.5){=};
\end{scope}
\begin{scope}[xshift=3.6cm, yshift=0cm]
\draw (0.5,0) -- (0.5,-0.5);
\comult(0.5,-0.5)[1,1]; \comult(3,0)[2,1.5]; \draw (0,-1.5) -- (0,-2.5); \flip(1,-1.5)[1]; \draw (4,-1.5) -- (4,-2.5); \lactionnamed(0,-2.5)[1,0][\scriptstyle \lambda]; \comult(2,-2.5)[1,1]; \comult(4,-2.5)[1,1]; \draw (1.5,-3.5) -- (1.5,-4.5); \flip(2.5,-3.5)[1]; \draw (4.5,-3.5) -- (4.5,-4.5); \lactionnamed(1.5,-4.5)[1,0][\scriptstyle \lambda]; \multsubzero(3.5,-4.5)[1,1]; \map(2.5,-5.5)[\scriptstyle T]; \draw (4,-5.5) -- (4,-6.5); \multsubzero(2.5,-6.5)[1.5,1.5]; \draw (1,-3.5) -- (1,-8);
\end{scope}
\begin{scope}[xshift=8.7cm, yshift=-3.6cm]
\node at (0,-0.5){=};
\end{scope}
\begin{scope}[xshift=10.3cm, yshift=-0.3cm]
\draw (3,0) -- (3,-0.5);
\comult(0.5,0)[2,1.5]; \comult(3,-0.5)[1,1];  \draw (-0.5,-1.5) -- (-0.5,-2.5); \flip(1.5,-1.5)[1]; \draw (3.5,-1.5) -- (3.5,-2.5); \comult(-0.5,-2.5)[1,1]; \comult(1.5,-2.5)[1,1]; \draw (-1,-3.5) -- (-1,-4.5); \flip(0,-3.5)[1]; \draw (2,-3.5) -- (2,-4.5); \lactionnamed(-1,-4.5)[1,0][\scriptstyle \lambda];  \lactionnamed(1,-4.5)[1,0][\scriptstyle \lambda]; \multsubzero(2.5,-2.5)[1,1]; \draw (3,-3.5) -- (3,-6.5); \map(2,-5.5)[\scriptstyle T]; \draw (0,-5.5) -- (0,-7.5); \multsubzero(2,-6.5)[1,1];
\end{scope}
\begin{scope}[xshift=14.4cm, yshift=-3.6cm]
\node at (0,-0.5){=};
\end{scope}
\begin{scope}[xshift=15.5cm, yshift=-1cm]
\comult(0,0)[1,1]; \comult(2,0)[1,1];
\draw (-0.5,-1) -- (-0.5,-2); \flip(0.5,-1)[1];\draw (2.5,-1) -- (2.5,-2);
\lactionnamed(-0.5,-2)[1,0][\scriptstyle \lambda]; \multsubzero(1.5,-2)[1,1];
\comult(0.5,-3)[1,1]; \draw (2,-3) -- (2,-5);
\draw (0,-4) -- (0,-6); \map(1,-4)[\scriptstyle T];
\multsubzero(1,-5)[1,1];
\end{scope}
\end{tikzpicture}
\end{equation}
where the first equality holds by the definition of $r$; the second one, by the definition of $\rho$; the third one, since $\Delta_{A^2}$ is coassociative; and the fourth one, since $\lambda$ is a coalgebra morphism. Combining this with the fact that $m_{\circ}$ is associative and $T$ is the antipode of $(A,m_{\circ},\Delta)$, we obtain that $m_{\circ}\circ r = m_{\circ}$. Moreover
$$
\begin{tikzpicture}[scale=0.41]
\def\counit(#1,#2){\draw (#1,#2) -- (#1,#2-0.93) (#1,#2-1) circle[radius=2pt]}
\def\comult(#1,#2)[#3,#4]{\draw (#1,#2) -- (#1,#2-0.5*#4) arc (90:0:0.5*#3 and 0.5*#4) (#1,#2-0.5*#4) arc (90:180:0.5*#3 and 0.5*#4)}
\def\laction(#1,#2)[#3,#4]{\draw (#1,#2) .. controls (#1,#2-0.555*#4/2) and (#1+0.445*#4/2,#2-1*#4/2) .. (#1+1*#4/2,#2-1*#4/2) -- (#1+2*#4/2+#3*#4/2,#2-1*#4/2) (#1+2*#4/2+#3*#4/2,#2)--(#1+2*#4/2+#3*#4/2,#2-2*#4/2)}
\def\lactiontr(#1,#2)[#3,#4,#5]{\draw (#1,#2) .. controls (#1,#2-0.555*#4/2) and (#1+0.445*#4/2,#2-1*#4/2) .. (#1+1*#4/2,#2-1*#4/2) -- (#1+2*#4/2+#3*#4/2,#2-1*#4/2)  node [inner sep=0pt, minimum size=3pt,shape=isosceles triangle,fill, shape border rotate=#5] {} (#1+2*#4/2+#3*#4/2,#2) --(#1+2*#4/2+#3*#4/2,#2-2*#4/2)}
\def\doublemap(#1,#2)[#3]{\draw (#1+0.5,#2-0.5) node [name=doublemapnode,inner xsep=0pt, inner ysep=0pt, minimum height=11pt, minimum width=23pt,shape=rectangle,draw,rounded corners] {$#3$} (#1,#2) .. controls (#1,#2-0.075) .. (doublemapnode) (#1+1,#2) .. controls (#1+1,#2-0.075).. (doublemapnode) (doublemapnode) .. controls (#1,#2-0.925)..(#1,#2-1) (doublemapnode) .. controls (#1+1,#2-0.925).. (#1+1,#2-1)}
\def\solbraid(#1,#2)[#3]{\draw (#1,#2-0.5)  node[name=nodemap,inner sep=0pt,  minimum size=9pt, shape=circle,draw]{$#3$}
(#1-0.5,#2) .. controls (#1-0.5,#2-0.15) and (#1-0.4,#2-0.2) .. (#1-0.3,#2-0.3) (#1-0.3,#2-0.3) -- (nodemap)
(#1+0.5,#2) .. controls (#1+0.5,#2-0.15) and (#1+0.4,#2-0.2) .. (#1+0.3,#2-0.3) (#1+0.3,#2-0.3) -- (nodemap)
(#1+0.5,#2-1) .. controls (#1+0.5,#2-0.85) and (#1+0.4,#2-0.8) .. (#1+0.3,#2-0.7) (#1+0.3,#2-0.7) -- (nodemap)
(#1-0.5,#2-1) .. controls (#1-0.5,#2-0.85) and (#1-0.4,#2-0.8) .. (#1-0.3,#2-0.7) (#1-0.3,#2-0.7) -- (nodemap)
}
\def\ractiontr(#1,#2)[#3,#4,#5]{\draw (#1,#2) -- (#1,#2-2*#4/2)  (#1,#2-1*#4/2) node [inner sep=0pt, minimum size=3pt,shape=isosceles triangle,fill, shape border rotate=#5] {}  --(#1+1*#4/2+#3*#4/2,#2-1*#4/2) .. controls (#1+1.555*#4/2+#3*#4/2,#2-1*#4/2) and (#1+2*#4/2+#3*#4/2,#2-0.555*#4/2) .. (#1+2*#4/2+#3*#4/2,#2)  }
\def\flip(#1,#2)[#3]{\draw (
#1+1*#3,#2) .. controls (#1+1*#3,#2-0.05*#3) and (#1+0.96*#3,#2-0.15*#3).. (#1+0.9*#3,#2-0.2*#3)
(#1+0.1*#3,#2-0.8*#3)--(#1+0.9*#3,#2-0.2*#3)
(#1,#2-1*#3) .. controls (#1,#2-0.95*#3) and (#1+0.04*#3,#2-0.85*#3).. (#1+0.1*#3,#2-0.8*#3)
(#1,#2) .. controls (#1,#2-0.05*#3) and (#1+0.04*#3,#2-0.15*#3).. (#1+0.1*#3,#2-0.2*#3)
(#1+0.1*#3,#2-0.2*#3) -- (#1+0.9*#3,#2-0.8*#3)
(#1+1*#3,#2-1*#3) .. controls (#1+1*#3,#2-0.95*#3) and (#1+0.96*#3,#2-0.85*#3).. (#1+0.9*#3,#2-0.8*#3)
}
\def\raction(#1,#2)[#3,#4]{\draw (#1,#2) -- (#1,#2-2*#4/2)  (#1,#2-1*#4/2)--(#1+1*#4/2+#3*#4/2,#2-1*#4/2) .. controls (#1+1.555*#4/2+#3*#4/2,#2-1*#4/2) and (#1+2*#4/2+#3*#4/2,#2-0.555*#4/2) .. (#1+2*#4/2+#3*#4/2,#2)}
\def\rack(#1,#2)[#3]{\draw (#1,#2-0.5)  node[name=nodemap,inner sep=0pt,  minimum size=7.5pt, shape=circle,draw]{$#3$} (#1-1,#2) .. controls (#1-1,#2-0.5) and (#1-0.5,#2-0.5) .. (nodemap) (#1,#2)-- (nodemap)  (nodemap)-- (#1,#2-1)}
\def\racklarge(#1,#2)[#3]{\draw (#1,#2-0.5)  node[name=nodemap,inner sep=0pt,  minimum size=7.5pt, shape=circle,draw]{$#3$} (#1-2,#2+0.5) .. controls (#1-2,#2-0.5) and (#1-0.5,#2-0.5) .. (nodemap) (#1,#2)-- (nodemap)  (nodemap)-- (#1,#2-1)}
\def\rackextralarge(#1,#2)[#3]{\draw (#1,#2-0.5)  node[name=nodemap,inner sep=0pt,  minimum size=7.5pt, shape=circle,draw]{$#3$} (#1-3,#2+1) .. controls (#1-3,#2-0.5) and (#1-0.5,#2-0.5) .. (nodemap) (#1,#2)-- (nodemap)  (nodemap)-- (#1,#2-1)}
\def\mult(#1,#2)[#3,#4]{\draw (#1,#2) arc (180:360:0.5*#3 and 0.5*#4) (#1+0.5*#3, #2-0.5*#4) -- (#1+0.5*#3,#2-#4)}
\def\multsubzero(#1,#2)[#3,#4]{\draw (#1+0.5*#3, #2-0.5*#4) node [name=nodemap,inner
sep=0pt, minimum size=3pt,shape=circle,fill=white, draw]{} (#1,#2) arc
(180:360:0.5*#3 and 0.5*#4) (#1+0.5*#3, #2-0.5*#4) -- (#1+0.5*#3,#2-#4)}
\def\lactionnamed(#1,#2)[#3,#4][#5]{\draw (#1 + 0.5*#3 + 0.5 + 0.5*#4, #2- 0.5*#3) node[name=nodemap,inner sep=0pt,  minimum size=8pt, shape=circle,draw]{$#5$} (#1,#2)  arc (180:270:0.5*#3) (#1 + 0.5*#3,#2- 0.5*#3) --  (nodemap) (#1 + 0.5*#3 + 0.5 + 0.5*#4, #2) --  (nodemap) (nodemap) -- (#1 + 0.5*#3 + 0.5 + 0.5*#4, #2-#3)}
\def\ractionnamed(#1,#2)[#3,#4][#5]{\draw  (#1 - 0.5*#3- 0.5 - 0.5*#4, #2- 0.5*#3)  node[name=nodemap,inner sep=0pt,  minimum size=8pt, shape=circle,draw]{$#5$} (#1 - 0.5*#3, #2- 0.5*#3)  arc (270:360:0.5*#3) (#1 - 0.5*#3, #2- 0.5*#3) -- (nodemap)(#1 - 0.5*#3- 0.5 - 0.5*#4, #2)-- (nodemap) (nodemap) -- (#1 - 0.5*#3- 0.5 - 0.5*#4, #2-#3)}
\def\map(#1,#2)[#3]{\draw (#1,#2-0.5)  node[name=nodemap,inner sep=0pt,  minimum size=10pt, shape=circle, draw]{$#3$} (#1,#2)-- (nodemap)  (nodemap)-- (#1,#2-1)}
\begin{scope}[xshift=-0.8cm, yshift=-3.75cm]
\node at (0,-0.5){$r=$};
\end{scope}
\begin{scope}[xshift=0cm, yshift=-1cm]
\comult(0.5,0)[1,1]; \comult(2.5,0)[1,1]; \draw (0,-1) -- (0,-2); \flip(1,-1)[1]; \draw (3,-1) -- (3,-2); \lactionnamed(0,-2)[1,0][\scriptstyle \lambda]; \multsubzero(2,-2)[1,1]; \draw (1,-3) -- (1,-3.5); \comult(1,-3.5)[1,1]; \draw (2.5,-3) -- (2.5,-5.5); \map(1.5,-4.5)[\scriptstyle T]; \multsubzero(1.5,-5.5)[1,1]; \draw (0.5,-4.5) -- (0.5,-6.5);
\end{scope}
\begin{scope}[xshift=3.5cm, yshift=-3.75cm]
\node at (0,-0.5){=};
\end{scope}
\begin{scope}[xshift=4.35cm, yshift=-0.25cm]
\comult(1.25,0)[1.5,1.5]; \comult(0.5,-1.5)[1,1]; \multsubzero(1,-2.5)[1,1]; \flip(2,-1.5)[1]; \comult(3.5,-0.5)[1,1]; \map(0,-2.5)[\scriptstyle S];  \multsubzero(3,-2.5)[1,1]; \draw (4,-1.5) -- (4,-2.5); \mult(0,-3.5)[1.5,1.5]; \draw (3.5,0) -- (3.5,-0.5); \draw (3.5,-3.5) ..controls (3.5,-5.25) and  (2.25,-5.25) .. (2.25,-7); \multsubzero(1.25,-7)[1,1]; \comult(0.75,-5)[1,1];  \map(1.25,-6)[\scriptstyle T]; \draw (0.25,-6) -- (0.25,-8);
\end{scope}
\begin{scope}[xshift=8.85cm, yshift=-3.75cm]
\node at (0,-0.5){=};
\end{scope}
\begin{scope}[xshift=9.65cm, yshift=0cm]
\comult(0.612,0)[1.25,1.25]; \comult(1.25,-1.25)[1,1]; \comult(3.25,-1.25)[1,1]; \flip(1.75,-2.25)[1]; \draw (3.25,0) -- (3.25,-1.25); \draw (0.75,-2.25) -- (0.75,-3.25); \draw (3.75,-2.25) -- (3.75,-2.25); \draw (0,-1.25) -- (0,-2.25); \multsubzero(0.75,-3.25)[1,1]; \map(0,-2.25)[\scriptstyle S]; \draw (0,-3.25) -- (0,-4.25);  \multsubzero(2.75,-3.25)[1,1]; \mult(0,-4.25)[1.25,1.25]; \draw (3.75,-2.25) -- (3.75,-3.25); \draw (3.25,-4.25) .. controls (3.25,-5.825)and (2.125,-5.825).. (2.125,-7.5); \comult(0.625,-5.5)[1,1]; \multsubzero(1.125,-7.5)[1,1];  \map(1.125,-6.5)[\scriptstyle T]; \draw (0.125,-6.5) -- (0.125,-8.5);
\end{scope}
\begin{scope}[xshift=13.85cm, yshift=-3.75cm]
\node at (0,-0.5){=};
\end{scope}
\begin{scope}[xshift=14.65cm, yshift=-0.75cm]
\comult(0.5,0)[1,1]; \multsubzero(1,-1)[1,1]; \draw (2,0) -- (2,-1); \draw (0,-1) -- (0,-1.75); \map(0,-1.75)[\scriptstyle S]; \comult(1.5,-2)[1,1]; \draw (0,-2.75) -- (0,-3); \mult(0,-3)[1,1]; \draw (2,-3) -- (2,-6); \comult(0.5,-4)[1,1];\map(1,-5)[\scriptstyle T];\multsubzero(1,-6)[1,1]; \draw (0,-5) -- (0,-7);
\end{scope}
\begin{scope}[xshift=17.25cm, yshift=-3.75cm]
\node at (0,-0.5){.};
\end{scope}
\end{tikzpicture}
$$
So, $r$ is composition of three coalgebra isomorphisms, and then it is a coalgebra isomorphism. Thus, by Proposition~\ref{lem:matched_pair} in order to finish the proof of the claim it suffices to check that $\lambda$ is a left action, $\rho$ is a right action and items~(1)--(4) of that lemma are fulfilled. By Propositions~\ref{pro:modulo} and~\ref{pro:rho} we know that $\lambda$ and $\rho$ are actions and items~(3) and~(4) hold. Item~(1) also holds, since
$$
\begin{tikzpicture}[scale=0.41]
\def\mult(#1,#2)[#3,#4]{\draw (#1,#2) arc (180:360:0.5*#3 and 0.5*#4) (#1+0.5*#3, #2-0.5*#4) -- (#1+0.5*#3,#2-#4)}
\def\counit(#1,#2){\draw (#1,#2) -- (#1,#2-0.93) (#1,#2-1) circle[radius=2pt]}
\def\comult(#1,#2)[#3,#4]{\draw (#1,#2) -- (#1,#2-0.5*#4) arc (90:0:0.5*#3 and 0.5*#4) (#1,#2-0.5*#4) arc (90:180:0.5*#3 and 0.5*#4)}
\def\laction(#1,#2)[#3,#4]{\draw (#1,#2) .. controls (#1,#2-0.555*#4/2) and (#1+0.445*#4/2,#2-1*#4/2) .. (#1+1*#4/2,#2-1*#4/2) -- (#1+2*#4/2+#3*#4/2,#2-1*#4/2) (#1+2*#4/2+#3*#4/2,#2)--(#1+2*#4/2+#3*#4/2,#2-2*#4/2)}
\def\map(#1,#2)[#3]{\draw (#1,#2-0.5)  node[name=nodemap,inner sep=0pt,  minimum size=10pt, shape=circle, draw]{$#3$} (#1,#2)-- (nodemap)  (nodemap)-- (#1,#2-1)}
\def\solbraid(#1,#2)[#3]{\draw (#1,#2-0.5)  node[name=nodemap,inner sep=0pt,  minimum size=9pt, shape=circle,draw]{$#3$}
(#1-0.5,#2) .. controls (#1-0.5,#2-0.15) and (#1-0.4,#2-0.2) .. (#1-0.3,#2-0.3) (#1-0.3,#2-0.3) -- (nodemap)
(#1+0.5,#2) .. controls (#1+0.5,#2-0.15) and (#1+0.4,#2-0.2) .. (#1+0.3,#2-0.3) (#1+0.3,#2-0.3) -- (nodemap)
(#1+0.5,#2-1) .. controls (#1+0.5,#2-0.85) and (#1+0.4,#2-0.8) .. (#1+0.3,#2-0.7) (#1+0.3,#2-0.7) -- (nodemap)
(#1-0.5,#2-1) .. controls (#1-0.5,#2-0.85) and (#1-0.4,#2-0.8) .. (#1-0.3,#2-0.7) (#1-0.3,#2-0.7) -- (nodemap)
}
\def\flip(#1,#2)[#3]{\draw (
#1+1*#3,#2) .. controls (#1+1*#3,#2-0.05*#3) and (#1+0.96*#3,#2-0.15*#3).. (#1+0.9*#3,#2-0.2*#3)
(#1+0.1*#3,#2-0.8*#3)--(#1+0.9*#3,#2-0.2*#3)
(#1,#2-1*#3) .. controls (#1,#2-0.95*#3) and (#1+0.04*#3,#2-0.85*#3).. (#1+0.1*#3,#2-0.8*#3)
(#1,#2) .. controls (#1,#2-0.05*#3) and (#1+0.04*#3,#2-0.15*#3).. (#1+0.1*#3,#2-0.2*#3)
(#1+0.1*#3,#2-0.2*#3) -- (#1+0.9*#3,#2-0.8*#3)
(#1+1*#3,#2-1*#3) .. controls (#1+1*#3,#2-0.95*#3) and (#1+0.96*#3,#2-0.85*#3).. (#1+0.9*#3,#2-0.8*#3)
}
\def\raction(#1,#2)[#3,#4]{\draw (#1,#2) -- (#1,#2-2*#4/2)  (#1,#2-1*#4/2)--(#1+1*#4/2+#3*#4/2,#2-1*#4/2) .. controls (#1+1.555*#4/2+#3*#4/2,#2-1*#4/2) and (#1+2*#4/2+#3*#4/2,#2-0.555*#4/2) .. (#1+2*#4/2+#3*#4/2,#2)}
\def\doublemap(#1,#2)[#3]{\draw (#1+0.5,#2-0.5) node [name=doublemapnode,inner xsep=0pt, inner ysep=0pt, minimum height=11pt, minimum width=23pt,shape=rectangle,draw,rounded corners] {$#3$} (#1,#2) .. controls (#1,#2-0.075) .. (doublemapnode) (#1+1,#2) .. controls (#1+1,#2-0.075).. (doublemapnode) (doublemapnode) .. controls (#1,#2-0.925)..(#1,#2-1) (doublemapnode) .. controls (#1+1,#2-0.925).. (#1+1,#2-1)}
\def\doublesinglemap(#1,#2)[#3]{\draw (#1+0.5,#2-0.5) node [name=doublesinglemapnode,inner xsep=0pt, inner ysep=0pt, minimum height=11pt, minimum width=23pt,shape=rectangle,draw,rounded corners] {$#3$} (#1,#2) .. controls (#1,#2-0.075) .. (doublesinglemapnode) (#1+1,#2) .. controls (#1+1,#2-0.075).. (doublesinglemapnode) (doublesinglemapnode)-- (#1+0.5,#2-1)}
\def\ractiontr(#1,#2)[#3,#4,#5]{\draw (#1,#2) -- (#1,#2-2*#4/2)  (#1,#2-1*#4/2) node [inner sep=0pt, minimum size=3pt,shape=isosceles triangle,fill, shape border rotate=#5] {}  --(#1+1*#4/2+#3*#4/2,#2-1*#4/2) .. controls (#1+1.555*#4/2+#3*#4/2,#2-1*#4/2) and (#1+2*#4/2+#3*#4/2,#2-0.555*#4/2) .. (#1+2*#4/2+#3*#4/2,#2)  }
\def\rack(#1,#2)[#3]{\draw (#1,#2-0.5)  node[name=nodemap,inner sep=0pt,  minimum size=7.5pt, shape=circle,draw]{$#3$} (#1-1,#2) .. controls (#1-1,#2-0.5) and (#1-0.5,#2-0.5) .. (nodemap) (#1,#2)-- (nodemap)  (nodemap)-- (#1,#2-1)}
\def\rackmenoslarge(#1,#2)[#3]{\draw (#1,#2-0.5)  node[name=nodemap,inner sep=0pt,  minimum size=7.5pt, shape=circle,draw]{$#3$} (#1-1.5,#2+0.5) .. controls (#1-1.5,#2-0.5) and (#1-0.5,#2-0.5) .. (nodemap) (#1,#2)-- (nodemap)  (nodemap)-- (#1,#2-1)}
\def\racklarge(#1,#2)[#3]{\draw (#1,#2-0.5)  node[name=nodemap,inner sep=0pt,  minimum size=7.5pt, shape=circle,draw]{$#3$} (#1-2,#2+0.5) .. controls (#1-2,#2-0.5) and (#1-0.5,#2-0.5) .. (nodemap) (#1,#2)-- (nodemap)  (nodemap)-- (#1,#2-1)}
\def\rackmaslarge(#1,#2)[#3]{\draw (#1,#2-0.5)  node[name=nodemap,inner sep=0pt,  minimum size=7.5pt, shape=circle,draw]{$#3$} (#1-2.5,#2+0.5) .. controls (#1-2.5,#2-0.5) and (#1-0.5,#2-0.5) .. (nodemap) (#1,#2)-- (nodemap)  (nodemap)-- (#1,#2-1)}
\def\rackextralarge(#1,#2)[#3]{\draw (#1,#2-0.75)  node[name=nodemap,inner sep=0pt,  minimum size=7.5pt, shape=circle, draw]{$#3$} (#1-3,#2+1) .. controls (#1-3,#2-0.75) and (#1-0.5,#2-0.75) .. (nodemap) (#1,#2)-- (nodemap)  (nodemap)-- (#1,#2-1.5)}
\def\lactionnamed(#1,#2)[#3,#4][#5]{\draw (#1 + 0.5*#3 + 0.5 + 0.5*#4, #2- 0.5*#3) node[name=nodemap,inner sep=0pt,  minimum size=8pt, shape=circle,draw]{$#5$} (#1,#2)  arc (180:270:0.5*#3) (#1 + 0.5*#3,#2- 0.5*#3) --  (nodemap) (#1 + 0.5*#3 + 0.5 + 0.5*#4, #2) --  (nodemap) (nodemap) -- (#1 + 0.5*#3 + 0.5 + 0.5*#4, #2-#3)}
\def\ractionnamed(#1,#2)[#3,#4][#5]{\draw  (#1 - 0.5*#3- 0.5 - 0.5*#4, #2- 0.5*#3)  node[name=nodemap,inner sep=0pt,  minimum size=8pt, shape=circle,draw]{$#5$} (#1 - 0.5*#3, #2- 0.5*#3)  arc (270:360:0.5*#3) (#1 - 0.5*#3, #2- 0.5*#3) -- (nodemap)(#1 - 0.5*#3- 0.5 - 0.5*#4, #2)-- (nodemap) (nodemap) -- (#1 - 0.5*#3- 0.5 - 0.5*#4, #2-#3)}
\def\multsubzero(#1,#2)[#3,#4]{\draw (#1+0.5*#3, #2-0.5*#4) node [name=nodemap,inner sep=0pt, minimum size=3pt,shape=circle,fill=white, draw]{} (#1,#2) arc (180:360:0.5*#3 and 0.5*#4) (#1+0.5*#3, #2-0.5*#4) -- (#1+0.5*#3,#2-#4)}
\begin{scope}[xshift=0cm, yshift=-1cm]
\solbraid(0.5,0)[\scriptstyle r]; \lactionnamed(1,-1)[1,0][\scriptstyle \lambda];  \draw (0,-1) -- (0,-2); \draw (2,0) -- (2,-1);\multsubzero(0,-2)[2,1.5];
\end{scope}
\begin{scope}[xshift=2.8cm, yshift=-2.3cm]
\node at (0,-0.5){=};
\end{scope}
\begin{scope}[xshift=3.3cm, yshift=-0.5cm]
\solbraid(1,0)[\scriptstyle r]; \lactionnamed(1.5,-1)[1,0][\scriptstyle \lambda];  \comult(0.5,-1)[1,1]; \lactionnamed(1,-2)[1,1][\scriptstyle \lambda]; \draw (0,-2) -- (0,-3); \draw (2.5,0) -- (2.5,-1); \mult(0,-3)[2.5,1.5];
\end{scope}
\begin{scope}[xshift=6.6cm, yshift=-2.3cm]
\node at (0,-0.5){=};
\end{scope}
\begin{scope}[xshift=7.1cm, yshift=0cm]
\comult(0.5,0)[1,1]; \comult(2.5,0)[1,1]; \draw (0,-1) -- (0,-2); \flip(1,-1)[1]; \draw (3,-1) -- (3,-2); \lactionnamed(0,-2)[1,0][\scriptstyle \lambda]; \multsubzero(2,-2)[1,1]; \lactionnamed(2.5,-3)[1,0][\scriptstyle \lambda]; \draw (3.5,0) -- (3.5,-3); \draw (1,-3) -- (1,-4); \mult(1,-4)[2.5,1.5];
\end{scope}
\begin{scope}[xshift=11.3cm, yshift=-2.3cm]
\node at (0,-0.5){=};
\end{scope}
\begin{scope}[xshift=11.8cm, yshift=0cm]
\comult(0.5,0)[1,1]; \comult(2.5,0)[1,1]; \draw (0,-1) -- (0,-2); \flip(1,-1)[1];  \lactionnamed(0,-2)[1,0][\scriptstyle \lambda]; \lactionnamed(3,-1)[1,0][\scriptstyle \lambda]; \draw (4,0) -- (4,-1); \lactionnamed(2,-2)[1.5,1.5][\scriptstyle \lambda]; \draw (1,-3) -- (1,-3.5); \mult(1,-3.5)[3,2];
\end{scope}
\begin{scope}[xshift=16.55cm, yshift=-2.3cm]
\node at (0,-0.5){=};
\end{scope}
\begin{scope}[xshift=17.1cm, yshift=-0.5cm]
\comult(1,0)[1,1];  \draw (0,0) -- (0,-3.5); \draw (0.5,-1) -- (0.5,-2); \draw (2.5,0) -- (2.5,-1);  \lactionnamed(1.5,-1)[1,0][\scriptstyle \lambda]; \mult(0.5,-2)[2,1.5]; \lactionnamed(0,-3.5)[1,1][\scriptstyle \lambda];
\end{scope}
\begin{scope}[xshift=20.3cm, yshift=-2.3cm]
\node at (0,-0.5){=};
\end{scope}
\begin{scope}[xshift=20.9cm, yshift=-1.8cm]
\draw (0,0) -- (0,-1); \multsubzero(1,0)[1,1];
\lactionnamed(0,-1)[1,1][\scriptstyle \lambda];
\end{scope}
\begin{scope}[xshift=23.1cm, yshift=-2.3cm]
\node at (0,-0.5){,};
\end{scope}
\end{tikzpicture}
$$
where the first and last equalities follow from~\eqref{eq:aob}; the second one, since, by Corollary~\ref{coro: igualdad de morfismos},
$$
(A\ot \lambda)\circ (\Delta\ot \lambda)\circ (r\ot A) = (A\ot \lambda)\circ (\lambda\ot m_{\circ}\ot A)\circ (\Delta_{A^2}\ot A);
$$
and the third and fourth equalities follow from Proposition~\ref{pro:modulo}. Finally, item~(2) also holds, since
$$
\begin{tikzpicture}[scale=0.41]
\def\unit(#1,#2){\draw (#1,#2) circle[radius=2pt] (#1,#2-0.07) -- (#1,#2-1)}
\def\mult(#1,#2)[#3,#4]{\draw (#1,#2) arc (180:360:0.5*#3 and 0.5*#4) (#1+0.5*#3, #2-0.5*#4) -- (#1+0.5*#3,#2-#4)}
\def\counit(#1,#2){\draw (#1,#2) -- (#1,#2-0.93) (#1,#2-1) circle[radius=2pt]}
\def\comult(#1,#2)[#3,#4]{\draw (#1,#2) -- (#1,#2-0.5*#4) arc (90:0:0.5*#3 and 0.5*#4) (#1,#2-0.5*#4) arc (90:180:0.5*#3 and 0.5*#4)}
\def\laction(#1,#2)[#3,#4]{\draw (#1,#2) .. controls (#1,#2-0.555*#4/2) and (#1+0.445*#4/2,#2-1*#4/2) .. (#1+1*#4/2,#2-1*#4/2) -- (#1+2*#4/2+#3*#4/2,#2-1*#4/2) (#1+2*#4/2+#3*#4/2,#2)--(#1+2*#4/2+#3*#4/2,#2-2*#4/2)}
\def\map(#1,#2)[#3]{\draw (#1,#2-0.5)  node[name=nodemap,inner sep=0pt,  minimum size=10pt, shape=circle, draw]{$#3$} (#1,#2)-- (nodemap)  (nodemap)-- (#1,#2-1)}
\def\solbraid(#1,#2)[#3]{\draw (#1,#2-0.5)  node[name=nodemap,inner sep=0pt,  minimum size=9pt, shape=circle,draw]{$#3$}
(#1-0.5,#2) .. controls (#1-0.5,#2-0.15) and (#1-0.4,#2-0.2) .. (#1-0.3,#2-0.3) (#1-0.3,#2-0.3) -- (nodemap)
(#1+0.5,#2) .. controls (#1+0.5,#2-0.15) and (#1+0.4,#2-0.2) .. (#1+0.3,#2-0.3) (#1+0.3,#2-0.3) -- (nodemap)
(#1+0.5,#2-1) .. controls (#1+0.5,#2-0.85) and (#1+0.4,#2-0.8) .. (#1+0.3,#2-0.7) (#1+0.3,#2-0.7) -- (nodemap)
(#1-0.5,#2-1) .. controls (#1-0.5,#2-0.85) and (#1-0.4,#2-0.8) .. (#1-0.3,#2-0.7) (#1-0.3,#2-0.7) -- (nodemap)
}
\def\flip(#1,#2)[#3]{\draw (
#1+1*#3,#2) .. controls (#1+1*#3,#2-0.05*#3) and (#1+0.96*#3,#2-0.15*#3).. (#1+0.9*#3,#2-0.2*#3)
(#1+0.1*#3,#2-0.8*#3)--(#1+0.9*#3,#2-0.2*#3)
(#1,#2-1*#3) .. controls (#1,#2-0.95*#3) and (#1+0.04*#3,#2-0.85*#3).. (#1+0.1*#3,#2-0.8*#3)
(#1,#2) .. controls (#1,#2-0.05*#3) and (#1+0.04*#3,#2-0.15*#3).. (#1+0.1*#3,#2-0.2*#3)
(#1+0.1*#3,#2-0.2*#3) -- (#1+0.9*#3,#2-0.8*#3)
(#1+1*#3,#2-1*#3) .. controls (#1+1*#3,#2-0.95*#3) and (#1+0.96*#3,#2-0.85*#3).. (#1+0.9*#3,#2-0.8*#3)
}
\def\raction(#1,#2)[#3,#4]{\draw (#1,#2) -- (#1,#2-2*#4/2)  (#1,#2-1*#4/2)--(#1+1*#4/2+#3*#4/2,#2-1*#4/2) .. controls (#1+1.555*#4/2+#3*#4/2,#2-1*#4/2) and (#1+2*#4/2+#3*#4/2,#2-0.555*#4/2) .. (#1+2*#4/2+#3*#4/2,#2)}
\def\doublemap(#1,#2)[#3]{\draw (#1+0.5,#2-0.5) node [name=doublemapnode,inner xsep=0pt, inner ysep=0pt, minimum height=11pt, minimum width=23pt,shape=rectangle,draw,rounded corners] {$#3$} (#1,#2) .. controls (#1,#2-0.075) .. (doublemapnode) (#1+1,#2) .. controls (#1+1,#2-0.075).. (doublemapnode) (doublemapnode) .. controls (#1,#2-0.925)..(#1,#2-1) (doublemapnode) .. controls (#1+1,#2-0.925).. (#1+1,#2-1)}
\def\doublesinglemap(#1,#2)[#3]{\draw (#1+0.5,#2-0.5) node [name=doublesinglemapnode,inner xsep=0pt, inner ysep=0pt, minimum height=11pt, minimum width=23pt,shape=rectangle,draw,rounded corners] {$#3$} (#1,#2) .. controls (#1,#2-0.075) .. (doublesinglemapnode) (#1+1,#2) .. controls (#1+1,#2-0.075).. (doublesinglemapnode) (doublesinglemapnode)-- (#1+0.5,#2-1)}
\def\ractiontr(#1,#2)[#3,#4,#5]{\draw (#1,#2) -- (#1,#2-2*#4/2)  (#1,#2-1*#4/2) node [inner sep=0pt, minimum size=3pt,shape=isosceles triangle,fill, shape border rotate=#5] {}  --(#1+1*#4/2+#3*#4/2,#2-1*#4/2) .. controls (#1+1.555*#4/2+#3*#4/2,#2-1*#4/2) and (#1+2*#4/2+#3*#4/2,#2-0.555*#4/2) .. (#1+2*#4/2+#3*#4/2,#2)  }
\def\rack(#1,#2)[#3]{\draw (#1,#2-0.5)  node[name=nodemap,inner sep=0pt,  minimum size=7.5pt, shape=circle,draw]{$#3$} (#1-1,#2) .. controls (#1-1,#2-0.5) and (#1-0.5,#2-0.5) .. (nodemap) (#1,#2)-- (nodemap)  (nodemap)-- (#1,#2-1)}
\def\rackmenoslarge(#1,#2)[#3]{\draw (#1,#2-0.5)  node[name=nodemap,inner sep=0pt,  minimum size=7.5pt, shape=circle,draw]{$#3$} (#1-1.5,#2+0.5) .. controls (#1-1.5,#2-0.5) and (#1-0.5,#2-0.5) .. (nodemap) (#1,#2)-- (nodemap)  (nodemap)-- (#1,#2-1)}
\def\racklarge(#1,#2)[#3]{\draw (#1,#2-0.5)  node[name=nodemap,inner sep=0pt,  minimum size=7.5pt, shape=circle,draw]{$#3$} (#1-2,#2+0.5) .. controls (#1-2,#2-0.5) and (#1-0.5,#2-0.5) .. (nodemap) (#1,#2)-- (nodemap)  (nodemap)-- (#1,#2-1)}
\def\rackmaslarge(#1,#2)[#3]{\draw (#1,#2-0.5)  node[name=nodemap,inner sep=0pt,  minimum size=7.5pt, shape=circle,draw]{$#3$} (#1-2.5,#2+0.5) .. controls (#1-2.5,#2-0.5) and (#1-0.5,#2-0.5) .. (nodemap) (#1,#2)-- (nodemap)  (nodemap)-- (#1,#2-1)}
\def\rackextralarge(#1,#2)[#3]{\draw (#1,#2-0.75)  node[name=nodemap,inner sep=0pt,  minimum size=7.5pt, shape=circle, draw]{$#3$} (#1-3,#2+1) .. controls (#1-3,#2-0.75) and (#1-0.5,#2-0.75) .. (nodemap) (#1,#2)-- (nodemap)  (nodemap)-- (#1,#2-1.5)}
\def\lactionnamed(#1,#2)[#3,#4][#5]{\draw (#1 + 0.5*#3 + 0.5 + 0.5*#4, #2- 0.5*#3) node[name=nodemap,inner sep=0pt,  minimum size=8pt, shape=circle,draw]{$#5$} (#1,#2)  arc (180:270:0.5*#3) (#1 + 0.5*#3,#2- 0.5*#3) --  (nodemap) (#1 + 0.5*#3 + 0.5 + 0.5*#4, #2) --  (nodemap) (nodemap) -- (#1 + 0.5*#3 + 0.5 + 0.5*#4, #2-#3)}
\def\ractionnamed(#1,#2)[#3,#4][#5]{\draw  (#1 - 0.5*#3- 0.5 - 0.5*#4, #2- 0.5*#3)  node[name=nodemap,inner sep=0pt,  minimum size=8pt, shape=circle,draw]{$#5$} (#1 - 0.5*#3, #2- 0.5*#3)  arc (270:360:0.5*#3) (#1 - 0.5*#3, #2- 0.5*#3) -- (nodemap)(#1 - 0.5*#3- 0.5 - 0.5*#4, #2)-- (nodemap) (nodemap) -- (#1 - 0.5*#3- 0.5 - 0.5*#4, #2-#3)}
\def\multsubzero(#1,#2)[#3,#4]{\draw (#1+0.5*#3, #2-0.5*#4) node [name=nodemap,inner sep=0pt, minimum size=3pt,shape=circle,fill=white, draw]{} (#1,#2) arc (180:360:0.5*#3 and 0.5*#4) (#1+0.5*#3, #2-0.5*#4) -- (#1+0.5*#3,#2-#4)}
\def\doublemap(#1,#2)[#3]{\draw (#1+0.5,#2-0.5) node [name=doublemapnode,inner xsep=0pt, inner ysep=0pt, minimum height=10.5pt, minimum width=23pt,shape=rectangle,draw,rounded corners] {$#3$} (#1,#2) .. controls (#1,#2-0.075) .. (doublemapnode) (#1+1,#2) .. controls (#1+1,#2-0.075).. (doublemapnode) (doublemapnode) .. controls (#1,#2-0.925)..(#1,#2-1) (doublemapnode) .. controls (#1+1,#2-0.925).. (#1+1,#2-1)}
\begin{scope}[xshift=0cm, yshift=-2.7cm]
\draw (0,0) -- (0,-1); \solbraid(1.5,0)[\scriptstyle r];
\ractionnamed(1,-1)[1,0][\scriptstyle \rho]; \draw (2,-1) -- (2,-2);
\multsubzero(0,-2)[2,1.5];
\end{scope}
\begin{scope}[xshift=2.4cm, yshift=-3.8cm]
\node at (0,-0.5){=};
\end{scope}
\begin{scope}[xshift=4.1cm, yshift=-0.3cm]
\draw (-1,0) -- (-1,-5);
\comult(0,0)[1,1]; \comult(2,0)[1,1];
\draw (-0.5,-1) -- (-0.5,-2); \flip(0.5,-1)[1];\draw (2.5,-1) -- (2.5,-2);
\lactionnamed(-0.5,-2)[1,0][\scriptstyle \lambda]; \multsubzero(1.5,-2)[1,1];
\comult(0.5,-3)[1,1]; \draw (2,-3) -- (2,-5);
\draw (0,-4) -- (0,-5); \map(1,-4)[\scriptstyle T];
\multsubzero(1,-5)[1,1]; 
\ractionnamed(0,-5)[1,0][\scriptstyle \rho];
\multsubzero(-1,-6)[2.5,2];
\end{scope}
\begin{scope}[xshift=7cm, yshift=-3.8cm]
\node at (0,-0.5){=};
\end{scope}
\begin{scope}[xshift=9cm, yshift=0cm]
\draw (-1,0) -- (-1,-2);
\comult(0,0)[1,1]; \comult(2,0)[1,1];
\draw (-0.5,-1) -- (-0.5,-2); \flip(0.5,-1)[1]; \draw (2.5,-1) -- (2.5,-2);
\lactionnamed(-0.5,-2)[1,0][\scriptstyle \lambda]; \multsubzero(1.5,-2)[1,1];
\comult(-1,-2)[1,1]; \draw (2,-3) -- (2,-7);
\draw (-1.5,-3) -- (-1.5,-4); \flip(-0.5,-3)[1];
\lactionnamed(-1.5,-4)[1,0][\scriptstyle \lambda];
\draw (0.5,-4) -- (0.5,-6); \map(-0.5,-5)[\scriptstyle T];
\multsubzero(-0.5,-6)[1,1]; 
\multsubzero(0,-7)[2,1.5];
\end{scope}
\begin{scope}[xshift=11.9cm, yshift=-3.8cm]
\node at (0,-0.5){=};
\end{scope}
\begin{scope}[xshift=12.4cm, yshift=-0.2cm]
\comult(0.5,0)[1,1]; \comult(2.5,0)[1,1]; \draw (4,0) -- (4,-2); \draw (0,-1) --
(0,-2); \flip(1,-1)[1]; \draw (3,-1) -- (3,-2);
\multsubzero(0,-2)[1,1]; \multsubzero(2,-2)[1,1]; \comult(4,-2)[1,1];
\flip(2.5,-3)[1];  \multsubzero(3.5,-4)[1,1]; \draw (4.5,-3) -- (4.5,-4);
\draw (0.5,-3) -- (0.5,-4); \lactionnamed(0.5,-4)[1.5,1.5][\scriptstyle \lambda];
\map(2.5,-5.5)[\scriptstyle T]; \draw (4,-5) -- (4,-6.5);
\multsubzero(2.5,-6.5)[1.5,1.5];
\end{scope}
\begin{scope}[xshift=17.3cm, yshift=-3.8cm]
\node at (0,-0.5){=};
\end{scope}
\begin{scope}[xshift=17.8cm, yshift=-1cm]
\multsubzero(0,0)[1,1];
\comult(0.5,-1)[1,1];
\comult(2.5,-1)[1,1];
\flip(1,-2)[1];  \multsubzero(2,-3)[1,1]; \draw (3,-2) -- (3,-3);
\draw (0,-2) -- (0,-3); \lactionnamed(0,-3)[1,0][\scriptstyle \lambda];
\map(1,-4)[\scriptstyle T]; \draw (2.5,-4) -- (2.5,-5);
\multsubzero(1,-5)[1.5,1.5];
\end{scope}
\begin{scope}[xshift=21.3cm, yshift=-3.8cm]
\node at (0,-0.5){=};
\end{scope}
\begin{scope}[xshift=21.8cm, yshift=-3.2cm]
\multsubzero(0,0)[1,1]; \draw (1.5,0) -- (1.5,-1);
\ractionnamed(1.5,-1)[1,0][\scriptstyle \rho];
\end{scope}
\begin{scope}[xshift=23.5cm, yshift=-3.8cm]
\node at (0,-0.5){,};
\end{scope}

\end{tikzpicture}
$$
where the first equality follows from~\eqref{pepito}; the secong one, from Proposition~\ref{pro:rho'}; the third, using that $c$ is natural, $m_{\circ}$ is associative and $\lambda$ is an action; the fourth one, using that $m_{\circ}$ is a coalgebra morphism; and the last one, by the definition of $\rho$. This finishes the proof of the claim. An easy computation shows that for each brace morphism $f$, the map $f$ is a braiding operator morphism. Clearly the correpondence $U\colon \Br(A)\to \Bo(A)$, given by $U(A,m,m_{\circ})\coloneqq (A,r)$ and $U(f)\coloneqq f$, is a functor.

We now construct a functor $V\colon \Bo(A)\to \Br(A)$. By Theorem~\ref{lem:bop->Hopf}, from a braiding operator $(A,r)$ we obtain a brace $(A,m,m_{\circ})$. Moreover it is clear that each braiding operator morphism is a brace morphism. Thus the correspondence $V\colon \Bo(A)\to \Br(A)$, given by $V(A,r)\coloneqq (A,m,m_{\circ})$ and $V(f)\coloneqq f$, is a functor.
	
Using equality~\eqref{eq:aob} of Remark~\ref{rem:productos} it is easy to check that,
\[
(A\otimes\epsilon)\circ r =m\circ (S\otimes m_{\circ})\circ (\Delta\otimes A),
\]		
for each braiding operator~$(A,r)$. In other words, the left actions determined by $(A,r)$ and $V(A,r)$ coincide. By Lemma~\ref{lem:bop->brace}(5), this implies that $U\circ V=\id$. On the other hand, by	Equality~\eqref{eq:ab}, we have $V\circ U(A,m,m_{\circ}) = (A,m,m_{\circ})$, for each brace $(A,m,m_{\circ})$ in $\mathscr{C}$. Consequently $V\circ U=\id$.
\end{proof}

\section{A universal construction}\label{LYZ}

We now generalize the universal construction of~\cite[Theorem 9]{MR1769723}. This construction generalizes ~\cite[Theorem 2.9]{MR1722951}. We assume that $\mathscr{C}$ is a symmetric monoidal category with countable colimits and that the tensor product commutes with colimits.

Let $(X,r)$ be a non-degenerate pair in $\mathscr{C}$ and let $Z=X\coprod SX$, where $SX=X$. Let $S\colon Z\to Z$ be the morphism induced by
$$
X \xrightarrow[]{=} SX  \xrightarrow[]{\iota_{SX}} Z\quad\text{and}\quad SX \xrightarrow[]{=} X \xrightarrow[]{\iota_X} Z,
$$
where $\iota_{SX}$ and $\iota_X$ are the canonical morphism. Over $Z$ there is a unique comultiplication such that $\iota_X$ is a coalgebra homomorphism and $S$ is a coalgebra isomorphism.

Let $r_e\colon Z\otimes Z\to Z\otimes Z$ be the map induced by
\begin{align}
\label{eq:r1}& r_1\colon X\otimes X\longrightarrow X\otimes X, && r_1\coloneqq r, \\
\label{eq:r2}& r_2\colon X\otimes SX\longrightarrow SX\otimes X, && r_2\coloneqq (S\otimes X)\circ c\circ R^{t_1}\circ (X\otimes S)\\
\label{eq:r3}& r_3\colon SX\otimes X\longrightarrow X\otimes SX, && r_3\coloneqq (X\otimes S)\circ c\circ R^{t_2}\circ (S\otimes X)\\
\label{eq:r4}& r_4\colon SX\otimes SX\longrightarrow SX\otimes SX, && r_4\coloneqq (S\otimes S)\circ c\circ r^{-1}\circ c\circ(S\otimes S).
\end{align}

\begin{pro}\label{lem:SXxSXxSX} Let $X$ be a cocommutative coalgebra in $\mathscr{C}$, let $r$ be a coalgebra automorphism of $X^2$ and let $\tilde{r}:=c\circ r^{-1}\circ c$. The following assertions hold:

\begin{enumerate}

\item If $(X,r)$ is a braided set, then so is $(X,\tilde{r})$.

\smallskip

\item If $(X,r)$ is a non-degenerate pair in $\mathscr{C}$, then so is $(X,\tilde{r})$. Moreover the maps $\widetilde{R}^{t_1}$ and $\widetilde{R}^{t_2}$, defined as the maps $R^{t_1}$ and $R^{t_2}$ of $\tilde{r}$, satisfy $\widetilde{R}^{t_1} = R^{t_2}$ and $\widetilde{R}^{t_2} = R^{t_1}$.

\end{enumerate}

\end{pro}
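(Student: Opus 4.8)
For part~(1) the plan is to pass to the $\Ree$-matrix and invoke Proposition~\ref{quantum YB}. Writing $R=c\circ r$ for the $\Ree$-matrix of $r$, the $\Ree$-matrix of $\tilde r$ is
\[
c\circ\tilde r=c\circ c\circ r^{-1}\circ c=r^{-1}\circ c=(c\circ r)^{-1}=R^{-1}.
\]
If $(X,r)$ is a braided set, then $R$ satisfies the quantum Yang--Baxter equation $R_{12}\circ R_{13}\circ R_{23}=R_{23}\circ R_{13}\circ R_{12}$ by Proposition~\ref{quantum YB}. Taking compositional inverses of both sides and using that $(R^{-1})_{ij}=(R_{ij})^{-1}$ gives $(R^{-1})_{23}\circ(R^{-1})_{13}\circ(R^{-1})_{12}=(R^{-1})_{12}\circ(R^{-1})_{13}\circ(R^{-1})_{23}$, which is exactly the quantum Yang--Baxter equation for $R^{-1}$. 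Since $\tilde r$ is a coalgebra automorphism of $X^2$ (being a composite of such), Proposition~\ref{quantum YB} applied to $\tilde r$ shows that $(X,\tilde r)$ is a braided set.

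For part~(2) I set $P\coloneqq(X\ot\sigma)\circ(\Delta\ot X)$ and $Q\coloneqq(\tau\ot X)\circ(X\ot\Delta)$, which are isomorphisms by Proposition~\ref{no degenerado}; by Definition~\ref{transposicones en variables} one has $R^{t_1}\circ Q=P$ and $R^{t_2}\circ P=Q$, so that $R^{t_1}=P\circ Q^{-1}$ and $R^{t_2}=Q\circ P^{-1}$ are mutually inverse isomorphisms (Remark~\ref{Rt2=Rt1a la-1}). Since the $\Ree$-matrix of $\tilde r$ is $R^{-1}$, its coordinate maps are $\tilde\sigma=(\epsilon\ot X)\circ R^{-1}$ and $\tilde\tau=(X\ot\epsilon)\circ R^{-1}$; write $\widetilde P\coloneqq(X\ot\tilde\sigma)\circ(\Delta\ot X)$ and $\widetilde Q\coloneqq(\tilde\tau\ot X)\circ(X\ot\Delta)$. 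The heart of the matter is the pair of identities
\begin{equation*}
\widetilde P\circ R=Q\qquad\text{and}\qquad\widetilde Q\circ R=P.
\end{equation*}
Granting these, $\widetilde P=Q\circ R^{-1}$ and $\widetilde Q=P\circ R^{-1}$ are composites of isomorphisms, so $\tilde r$ is non-degenerate by Proposition~\ref{no degenerado}. Applying Definition~\ref{transposicones en variables} to $\tilde r$ gives $\widetilde R^{t_1}=\widetilde P\circ\widetilde Q^{-1}$ and $\widetilde R^{t_2}=\widetilde Q\circ\widetilde P^{-1}$, and substitution yields
\begin{equation*}
\widetilde R^{t_1}=(Q\circ R^{-1})\circ(P\circ R^{-1})^{-1}=Q\circ P^{-1}=R^{t_2},\qquad \widetilde R^{t_2}=(P\circ R^{-1})\circ(Q\circ R^{-1})^{-1}=P\circ Q^{-1}=R^{t_1},
\end{equation*}
as desired.

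The remaining and genuinely computational step is to prove $\widetilde P\circ R=Q$ and $\widetilde Q\circ R=P$, and this is where I expect the main obstacle to lie. Unwinding the first, it reads
\[
(X\ot\tilde\sigma)\circ(\Delta\ot X)\circ R=(\tau\ot X)\circ(X\ot\Delta),\qquad \tilde\sigma=(\epsilon\ot X)\circ R^{-1},
\]
so $\tilde\sigma$ is manufactured out of $R^{-1}$ and cannot simply be read off. The plan is to substitute $R=(\tau\ot\sigma)\circ\Delta_{X^2}$ from Remark~\ref{formula para R}, reorganize the comultiplications using coassociativity and the cocommutativity of $X$, and thereby recognize the argument fed into $R^{-1}$ as $R$ applied to a reconstructed pair, so that $R^{-1}\circ R=\ide$ collapses the expression to exactly $Q$; the delicate point is the bookkeeping of which tensor copies produced by $\Delta\ot X$ are consumed by $R^{-1}$ and which survive as output. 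The second identity is established symmetrically, with the roles of $\sigma$ and $\tau$ interchanged, and it is precisely this left--right asymmetry that forces the swap $\widetilde R^{t_1}=R^{t_2}$, $\widetilde R^{t_2}=R^{t_1}$.
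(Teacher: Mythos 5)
Part~(1) of your proposal is complete and correct: the $\Ree$-matrix of $\tilde r$ is $R^{-1}$, $(R^{-1})_{ij}=(R_{ij})^{-1}$, and inverting the quantum Yang--Baxter equation plus two applications of Proposition~\ref{quantum YB} does the job. The paper instead conjugates the braid relation for $r^{-1}$ by $c_{12}\circ c_{23}\circ c_{12}$; the two arguments are the same observation in different clothing. The skeleton of your part~(2) is also exactly the paper's: your two ``heart of the matter'' identities $\widetilde P\circ R=Q$ and $\widetilde Q\circ R=P$ (with $P=(X\ot\sigma)\circ(\Delta\ot X)$, $Q=(\tau\ot X)\circ(X\ot\Delta)$ and their tilded analogues) are literally equations~\eqref{eee1} and~\eqref{eee2} of the paper's proof, and your formal deduction of non-degeneracy of $\tilde r$ and of the swap $\widetilde{R}^{t_1}=R^{t_2}$, $\widetilde{R}^{t_2}=R^{t_1}$ from them, via $R^{t_1}=P\circ Q^{-1}$ and $R^{t_2}=Q\circ P^{-1}$, is correct --- indeed more explicit than the paper's ``follow easily''.

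The genuine gap is that you never prove those two identities: you only describe a plan, and they are the entire mathematical content of part~(2); everything else is formal algebra with invertible maps. The gap is real but easily closed, and the ``bookkeeping'' you fear is two lines once you use Remark~\ref{algunas formulas}, which is what the paper does. Item~(2) of that remark, composed on the left with $X\ot c$ and combined with $(\Delta\ot X)\circ c=(X\ot c)\circ(c\ot X)\circ(X\ot\Delta)$ and $c\circ c=\ide$, reads
\begin{equation*}
(\Delta\ot X)\circ R=(\tau\ot R)\circ\Delta_{X^2}.
\end{equation*}
Hence, since $\widetilde P=(X\ot\epsilon\ot X)\circ(X\ot R^{-1})\circ(\Delta\ot X)$,
\begin{equation*}
\widetilde P\circ R=(X\ot\epsilon\ot X)\circ(X\ot R^{-1})\circ(\tau\ot R)\circ\Delta_{X^2}=(X\ot\epsilon\ot X)\circ(\tau\ot X^2)\circ\Delta_{X^2}=(\tau\ot X)\circ(X\ot\Delta)=Q,
\end{equation*}
the last equality by the counit axiom and naturality of $c$. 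Symmetrically, item~(4) gives $(X\ot\Delta)\circ R=(R\ot\sigma)\circ\Delta_{X^2}$, whence $\widetilde Q\circ R=((\tilde\tau\circ R)\ot\sigma)\circ\Delta_{X^2}=((X\ot\epsilon)\ot\sigma)\circ\Delta_{X^2}=P$. Your own plan also works --- substituting $R=(\tau\ot\sigma)\circ\Delta_{X^2}$ from Remark~\ref{formula para R}, using that $\tau$ is a coalgebra morphism and that $\Delta_{X^2}$ is coassociative yields the same identity $(\Delta\ot X)\circ R=(\tau\ot R)\circ\Delta_{X^2}$, after which $R^{-1}$ cancels the inner $R$ as you predicted --- but it must actually be carried out. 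Until one of these computations is written down, your proposal does not establish part~(2).
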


\begin{proof} 1)\enspace A direct computation shows that
\begin{align*}
& \tilde{r}_{12}\circ \tilde{r}_{23}\circ \tilde{r}_{12}= c_{12}\circ c_{23}\circ c_{12}\circ r_{23}^{-1}\circ r_{12}^{-1} \circ r_{23}^{-1}\circ c_{12}\circ c_{23}\circ c_{12}
\shortintertext{and}
& \tilde{r}_{23}\circ \tilde{r}_{12}\circ \tilde{r}_{23}= c_{23}\circ c_{12}\circ c_{23}\circ r_{12}^{-1}\circ r_{23}^{-1} \circ r_{12}^{-1}\circ c_{23}\circ c_{12}\circ c_{23}.
\end{align*}
From this it follows easily that $(X,\tilde{r})$ is a braided set.

\smallskip

\noindent 2) In order to prove the first assertion in item~(2) we must show that
\begin{equation}
(\epsilon\otimes X^2)\circ (\tilde{r}\otimes X)\circ (X\otimes\Delta)\quad\text{and}\quad (X^2\otimes\epsilon)\circ(X\otimes  \tilde{r})\circ(\Delta\otimes X)\label{f2}
\end{equation}
are isomorphism. Using items~(2) and~(4) of Remark~\ref{algunas formulas}, we obtain that
\begin{align}
&(X^2\otimes\epsilon)\circ(X\otimes \tilde{r})\circ(\Delta\otimes X)\circ c\circ r  = (\tau\ot X)\circ (X\ot \Delta)\label{eee1}
\shortintertext{and}
& (\epsilon\otimes X^2)\circ (\tilde{r}\otimes X)\circ (X\otimes\Delta)\circ c\circ r = (X\otimes \sigma)\circ (\Delta\ot X),\label{eee2}
\end{align}
which clearly imply that the arrows in~\eqref{f2} are isomorphisms. Finally the equalities $\widetilde{R}^{t_1} = R^{t_2}$ and $\widetilde{R}^{t_2} = R^{t_1}$ follow easily using~\eqref{eee1} and~\eqref{eee2}
\end{proof}

\begin{pro}\label{pro:non-degenerate} If $(X,r)$ is a non-degenerate pair in $\mathscr{C}$, then so is $(Z,r_e)$.
\end{pro}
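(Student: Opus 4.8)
The plan is to apply Proposition~\ref{no degenerado} to $(Z,r_e)$: writing $\sigma_e$ and $\tau_e$ for the coordinate maps of $r_e$, it suffices to prove that
\[
L\coloneqq(Z\otimes\sigma_e)\circ(\Delta_Z\otimes Z)\qquad\text{and}\qquad M\coloneqq(\tau_e\otimes Z)\circ(Z\otimes\Delta_Z)
\]
are isomorphisms. Since the tensor product commutes with colimits, $Z^2$ is the coproduct of the four summands $X^2$, $X\otimes SX$, $SX\otimes X$ and $(SX)^2$, and $r_e$ restricts to $r_1,r_2,r_3,r_4$ on these. First I would record that $\Delta_Z$ is ``block diagonal'', i.e.\ it carries $X$ into $X^2$ and $SX$ into $(SX)^2$: this is immediate because $\iota_X$ is a coalgebra homomorphism and $S$ is a coalgebra isomorphism, so $\Delta_Z\circ\iota_{SX}=(S\otimes S)\circ\Delta_Z\circ\iota_X$ lands in $SX\otimes SX$.

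Next I would check that $L$ and $M$ are themselves block diagonal with respect to these four summands, so that proving they are isomorphisms reduces to proving that each of their eight blocks is an isomorphism. Indeed, on the summand indexed by $(a,b)\in\{X,SX\}^2$ the map $\Delta_Z$ duplicates the first (resp.\ second) leg inside $a$ (resp.\ $b$), after which $\sigma_e$ (resp.\ $\tau_e$) is computed from the corresponding $r_i$; tracking the codomains of the coordinate maps of $r_1,\dots,r_4$ shows that every block of $L$ and of $M$ sends a summand to itself. The two diagonal summands are then routine: on $X^2$ the block of $L$ is exactly $(X\otimes\sigma)\circ(\Delta\otimes X)$ and the block of $M$ is $(\tau\otimes X)\circ(X\otimes\Delta)$, which are isomorphisms by Proposition~\ref{no degenerado} applied to $r$; and on $(SX)^2$, since $r_4=(S\otimes S)\circ\tilde r\circ(S\otimes S)$ with $\tilde r=c\circ r^{-1}\circ c$, the two blocks are the conjugates by the coalgebra isomorphism $S\otimes S$ of the corresponding maps for $\tilde r$, which are isomorphisms by Proposition~\ref{no degenerado} applied to $\tilde r$ (non-degenerate by Proposition~\ref{lem:SXxSXxSX}).

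The real work is in the four mixed blocks, and this is where I expect the main difficulty. Take for instance the block of $L$ on $X\otimes SX$. Conjugating by $X\otimes S$ and unravelling $r_2=(S\otimes X)\circ c\circ R^{t_1}\circ(X\otimes S)$, one finds this block is isomorphic to $\Phi\coloneqq(X\otimes\beta)\circ(\Delta\otimes X)$, where $\beta\coloneqq(\epsilon\otimes X)\circ R^{t_1}$ is the second coordinate map of $R^{t_1}$. The cleanest way to see $\Phi$ is invertible is to compute, via Corollary~\ref{coro: igualdad de morfismos}, the coordinate maps of $\Phi\circ N$ with $N\coloneqq(\tau\otimes X)\circ(X\otimes\Delta)$: the defining relation of $R^{t_1}$ in Definition~\ref{transposicones en variables} gives $\beta\circ N=\sigma$, while $(X\otimes\epsilon)\circ N=\tau$, so that $\Phi\circ N=(\tau\otimes\sigma)\circ\Delta_{X^2}=R=c\circ r$ by Remark~\ref{formula para R}. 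Since $R$ and $N$ are isomorphisms, so is $\Phi$, hence so is the original block.

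The remaining three mixed blocks I would treat in the same spirit: after transporting by $S$ on one tensor factor and using that $R^{t_1}$ and $R^{t_2}$ are invertible (Remark~\ref{Rt2=Rt1a la-1}), one checks with Corollary~\ref{coro: igualdad de morfismos} and the defining relations of Definition~\ref{transposicones en variables} that the block, composed with the appropriate non-degeneracy isomorphism ($N$ or $P\coloneqq(X\otimes\sigma)\circ(\Delta\otimes X)$), equals $R$ or $R_{21}$, and is therefore an isomorphism. The main obstacle is essentially bookkeeping: correctly carrying the copies of $S$ and the flips $c$ through each $r_i$, and identifying in each of the four mixed cases the precise intertwining identity (the analogue of $\Phi\circ N=R$) to invoke. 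Once the eight blocks are shown to be isomorphisms, $L$ and $M$ are isomorphisms and the conclusion follows from Proposition~\ref{no degenerado}.
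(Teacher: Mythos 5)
Your proposal is correct and takes essentially the same route as the paper: the reduction via Proposition~\ref{no degenerado}, the block decomposition of $Z^2$ induced by $Z=X\coprod SX$, the diagonal blocks handled by non-degeneracy of $r$ and $\tilde r$ (Proposition~\ref{lem:SXxSXxSX}), and the mixed blocks identified through the defining relations of $R^{t_1}$ and $R^{t_2}$ --- indeed your key identity $\Phi\circ N=R$ is precisely the content of Lemma~\ref{le previo}, which is what the paper invokes at that point, and the paper's other mixed map is your $N^{-1}$, an isomorphism by Proposition~\ref{no degenerado}. (One cosmetic slip: the remaining two mixed blocks compose with $N$, respectively $P$, to the identity rather than to $R_{21}$, which only makes them easier.)
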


\begin{proof} By Proposition~\ref{no degenerado} we must prove that
$$
(Z \ot \sigma_{r_e})\circ (\Delta_Z \ot Z)\quad\text{and}\quad (\tau_{r_e} \ot Z)\circ (Z\ot \Delta_Z)
$$
are isomorphisms. Using that $(X,r)$ and $(X,\tilde{r})$ are non-degenerate pairs and the properties of $S$ we can see that it suffices to prove that
\begin{equation}
(\epsilon\otimes X^2)\circ (c\circ R^{t_i}\otimes X)\circ (X\otimes \Delta)\quad\text{and}\quad (X^2\otimes\epsilon)\circ (X\otimes c\circ R^{t_i})\circ (\Delta\otimes X),\label{f1}
\end{equation}
where $i\in\{1,2\}$, are isomorphism. We prove that the arrows in~\eqref{f1} are isomorphisms when $i = 1$ and leave the case $i=2$, which is similar, to the reader. Since, by Definition~\ref{transposicones en variables}
$$
R^{t_1} = (\tau^{-1} \ot X) \circ (X\ot\Delta)\circ (X\ot \sigma)\circ (\Delta \ot X),
$$
we have $(\epsilon\ot X)\circ R^{t_1} = \tau^{-1}$ and $(X\ot \epsilon)\circ R^{t_1} = \sigma\circ (\tau^{-1}\ot X)\circ (X\ot \Delta)$. Hence, for $i = 1$, the arrows in~\eqref{f1} become
$$
(\tau^{-1}\ot X)\circ (X\ot\Delta)\quad\text{and}\quad (X\ot \sigma)\circ(X\ot\tau^{-1}\ot X)\circ (\Delta\ot\Delta),
$$
respectively. The first one is an isomorphism by Proposition~\ref{no degenerado}, while the second one also is by Lemma~\ref{le previo}.
\end{proof}

\begin{pro}\label{pro:YB} If $(X,r)$ is a non-degenerate braided set in $\mathscr{C}$, then so is $(Z,r_e)$.
\end{pro}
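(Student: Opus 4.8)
The plan is to prove only that $(Z,r_e)$ satisfies the braid equation, since its non-degeneracy is exactly Proposition~\ref{pro:non-degenerate}, and $r_e$, being induced by the coalgebra isomorphisms $r_1,\dots,r_4$, is automatically a coalgebra automorphism of $Z^2$. Because $\otimes$ commutes with the coproduct $Z=X\coprod SX$, the object $Z^3$ is the coproduct of the eight summands $Z_a\otimes Z_b\otimes Z_c$ with $a,b,c\in\{X,SX\}$, and each of $r_{e,12}$, $r_{e,23}$ carries such a summand to another one; indeed, on the level of "type words" $r_e$ acts precisely as the flip $c$ does, fixing $XX$ and $SX\,SX$ and interchanging $X\,SX$ with $SX\,X$. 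By the universal property of the coproduct it therefore suffices to verify
\[
r_{e,12}\circ r_{e,23}\circ r_{e,12}=r_{e,23}\circ r_{e,12}\circ r_{e,23}
\]
after restricting to each of the eight summands. Moreover, since the flip satisfies the braid equation, both composites send a given summand $Z_a\otimes Z_b\otimes Z_c$ to $Z_c\otimes Z_b\otimes Z_a$, so the two sides are comparable componentwise.

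First I would dispose of the two pure summands. On $X\otimes X\otimes X$ every factor of $r_e$ restricts to $r_1=r$, so the required identity is the braid equation for $r$, which holds by hypothesis. On $(SX)^3$ every factor restricts to $r_4=(S\otimes S)\circ\tilde{r}\circ(S\otimes S)$ with $\tilde{r}:=c\circ r^{-1}\circ c$. Writing $\Phi:=S\otimes S\otimes S\colon (SX)^3\to X^3$ and using $S^2=\ide$, one checks $(r_4)_{12}=\Phi^{-1}\circ\tilde{r}_{12}\circ\Phi$ and $(r_4)_{23}=\Phi^{-1}\circ\tilde{r}_{23}\circ\Phi$; hence the braid word for $r_4$ is the conjugate by $\Phi$ of the braid word for $\tilde{r}$, and the claim follows from the braid equation for $\tilde{r}$, which is Proposition~\ref{lem:SXxSXxSX}(1).

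The heart of the argument is the six mixed summands, for which the factors of $r_e$ are genuine mixtures of $r_1,r_2,r_3,r_4$ and the identity reduces to compatibility relations among $r$, $r^{-1}$, $R^{t_1}$ and $R^{t_2}$. For each such summand I would expand both sides, unfolding the definitions $r_2=(S\otimes X)\circ c\circ R^{t_1}\circ(X\otimes S)$, $r_3=(X\otimes S)\circ c\circ R^{t_2}\circ(S\otimes X)$ and $r_4$, and then cancel the occurrences of $S$ (again via $S^2=\ide$), so that what remains is an equation of coalgebra morphisms $X^3\to X^3$ built from $r$, $\tilde{r}$, $R^{t_1}$ and $R^{t_2}$. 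To establish each such equation I would invoke Corollary~\ref{coro: igualdad de morfismos}, reducing it to the agreement of the three coordinate maps, and then extract these using the defining relations of Definition~\ref{transposicones en variables}, the mutual inverse relations $R^{t_1}\circ R^{t_2}=R^{t_2}\circ R^{t_1}=\ide_{X^2}$ of Remark~\ref{Rt2=Rt1a la-1}, the identities $\widetilde{R}^{t_1}=R^{t_2}$ and $\widetilde{R}^{t_2}=R^{t_1}$ of Proposition~\ref{lem:SXxSXxSX}(2), the comultiplication formulas of Remark~\ref{algunas formulas}, and, above all, the three equivalent reduced forms of the braid equation for $r$ collected in Remark~\ref{prop de involutiva y de braided}. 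The left–right symmetry of the construction pairs the six cases — for instance $(X,X,SX)$ with $(SX,X,X)$, and $(X,SX,SX)$ with $(SX,SX,X)$ — so that only about four genuinely distinct verifications are needed.

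The main obstacle is exactly these mixed verifications. In the set-theoretic proof of Proposition~1.8 of~\cite{MR1769723} one defines the extended maps on elements and cancels using the bijectivity of the coordinate maps $\sigma_x,\tau_y$; categorically there are no elements, and the transposes $R^{t_1},R^{t_2}$ are only characterized implicitly through the non-degeneracy relations. The delicate step is therefore to convert each elementwise cancellation into a diagrammatic manipulation that feeds a counit and comultiplication through the non-degeneracy identities so as to expose the braid relation for $r$ (or for $\tilde{r}$). This is precisely the point at which the argument must depart from its set-theoretic model, as anticipated in the introduction, and it is where the careful bookkeeping with $\Delta$ and $\epsilon$ replaces the pointwise reasoning of~\cite{MR1769723}.
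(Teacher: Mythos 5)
Your overall architecture coincides with the paper's: split $Z^3$ into the eight type summands, observe that $r_e$ permutes types exactly as the flip does, settle $X\otimes X\otimes X$ by hypothesis and $SX\otimes SX\otimes SX$ by conjugating with $S\otimes S\otimes S$ and quoting Proposition~\ref{lem:SXxSXxSX}(1), and reduce the summands with two $SX$ factors to those with two $X$ factors by passing to $\tilde r$ and using $\widetilde{R}^{t_1}=R^{t_2}$, $\widetilde{R}^{t_2}=R^{t_1}$ from Proposition~\ref{lem:SXxSXxSX}(2). Non-degeneracy and invertibility are likewise handled as in the paper, via Proposition~\ref{pro:non-degenerate} and Remark~\ref{Rt2=Rt1a la-1}.

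The genuine gap is that the mixed summands with exactly one $SX$, which carry the entire mathematical content of the proposition, are never actually verified. Unfolding $r_2$, $r_3$ and cancelling the $S$'s, the braid equation on $X\otimes X\otimes SX$ is equivalent to
\[
(X\otimes r)\circ(c\circ R^{t_1}\otimes X)\circ(X\otimes c\circ R^{t_1}) = (c\circ R^{t_1}\otimes X)\circ(X\otimes c\circ R^{t_1})\circ(r\otimes X),
\]
and on $X\otimes SX\otimes X$ to
\[
(c\circ R^{t_2}\otimes X)\circ(X\otimes r)\circ(c\circ R^{t_1}\otimes X) = (X\otimes c\circ R^{t_1})\circ(r\otimes X)\circ(X\otimes c\circ R^{t_2});
\]
these are precisely Lemmas~\ref{lem:XxXxSX} and~\ref{lem:XxSXxX}, to each of which the paper devotes a long diagrammatic proof. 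Your plan for them --- reduce to coordinate maps via Corollary~\ref{coro: igualdad de morfismos} and then apply the defining relations of the transposes --- restructures the difficulty rather than resolving it: for instance, the first coordinate of the first displayed identity works out (using $(\epsilon\otimes X)\circ R^{t_1}=\tau^{-1}$ from the proof of Proposition~\ref{pro:non-degenerate}) to
\[
\tau^{-1}\circ(X\otimes\tau^{-1}) = \tau^{-1}\circ(X\otimes\tau^{-1})\circ(r\otimes X),
\]
which is a relation for the \emph{inverse} coordinate maps, whereas the reduced braid relations of Remark~\ref{prop de involutiva y de braided} are stated for $\sigma$ and $\tau$ themselves. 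Converting one into the other is exactly the step where the paper precomposes with invertible morphisms such as $(\tau\otimes\tau\otimes X)\circ(X\otimes\sigma\otimes X\otimes\Delta)\circ(X\otimes\Delta_{X^2})$ so as to resolve the implicitly defined $R^{t_1}$, $R^{t_2}$, and it is nontrivial. Your closing paragraph concedes that these mixed verifications are ``the main obstacle'' and describes, rather than performs, the required manipulation; so what you have is a correct reduction of the proposition to the two key lemmas, with those lemmas left unproved.
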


In order to prove this proposition we shall need the following two lemmas.

\begin{lem}\label{lem:XxXxSX} If $(X,r)$ is a non-degenerate braided set in $\mathscr{C}$, then
\begin{align*}
&(X\ot r) \circ (c\circ R^{t_1}\ot X)\circ  (X\ot c \circ  R^{t_1})= (c\circ R^{t_1}\ot X) \circ (X\ot c\circ R^{t_1}) \circ (r\ot X)
\shortintertext{and}
&(r\ot X) \circ (X\ot c\circ R^{t_2})\circ  (c \circ  R^{t_2}\ot X)= (X\ot c\circ R^{t_2}) \circ (c\circ R^{t_2}\ot X) \circ (X\ot r).
\end{align*}
\end{lem}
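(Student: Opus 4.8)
The plan is to recognize each of the two asserted equalities as a mixed braid relation for the extended map $r_e$ and then to prove it directly from the braid equation for $r$, \emph{without} presupposing that $(Z,r_e)$ is a braided set (that is the content of Proposition~\ref{pro:YB}, whose proof will invoke this lemma). Writing $a\coloneqq c\circ R^{t_1}$ and $b\coloneqq c\circ R^{t_2}$, the first identity is exactly the relation $(X\ot r)\circ(a\ot X)\circ(X\ot a)=(a\ot X)\circ(X\ot a)\circ(r\ot X)$ that $(r_e)_{12}$ and $(r_e)_{23}$ would have to satisfy on the summand $X\ot X\ot SX$ once the coalgebra isomorphisms $S$ defining $r_2$ are conjugated away, and the second is the analogous relation on $SX\ot X\ot X$ coming from $r_3$. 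This observation fixes the shape of the target and motivates the computation, but the equalities must be established as free-standing statements about $r$.

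First I would observe that every map occurring on either side, namely $r$, $c$, $R^{t_1}$ and $R^{t_2}$, is a coalgebra morphism: indeed $R^{t_1}=(X\ot\sigma)\circ(\Delta\ot X)\circ\bigl((\tau\ot X)\circ(X\ot\Delta)\bigr)^{-1}$ is a composition of coalgebra morphisms, since $(\tau\ot X)\circ(X\ot\Delta)$ is a coalgebra isomorphism by Proposition~\ref{no degenerado}, and similarly for $R^{t_2}$. Hence both composites are coalgebra endomorphisms of $X^3$, and by Corollary~\ref{coro: igualdad de morfismos} it suffices to check that their three coordinate maps agree; alternatively one may argue entirely in the graphical calculus used throughout the paper. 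To make the coordinates explicit I would unfold $a=c\circ R^{t_1}$ through Definition~\ref{transposicones en variables}, using Remark~\ref{algunas formulas} and the formula $R=(\tau\ot\sigma)\circ\Delta_{X^2}$ of Remark~\ref{formula para R}. The identity is then driven through by repeated application of the three equivalent forms of the braid equation recorded in Remark~\ref{prop de involutiva y de braided}, the two cancellation relations for $\sigma$ and $\tau$ and, crucially, the linking relation $\tau\circ(X\ot\sigma)\circ(r\ot X)=\sigma\circ(\tau\ot X)\circ(X\ot r)$, supplemented by the non-degeneracy identities \eqref{no deg a izq} and \eqref{no deg a der} governing $\sigma^{-1}$ and $\tau^{-1}$.

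For the second identity I would proceed analogously, or else deduce it from the first by the symmetry of Proposition~\ref{lem:SXxSXxSX}: the map $\tilde r=c\circ r^{-1}\circ c$ is again a non-degenerate braided set and interchanges the transpositions, $\widetilde R^{t_1}=R^{t_2}$ and $\widetilde R^{t_2}=R^{t_1}$. Applying the already-established first identity to $\tilde r$ and then unwinding $\tilde r$ together with the inversion and flip bookkeeping should convert it into the stated relation in $b=c\circ R^{t_2}$ and $r$. I expect the main obstacle to be exactly this bookkeeping, combined with the implicit nature of $R^{t_1}$ and $R^{t_2}$: because they are defined only through the invertibility of $(\tau\ot X)\circ(X\ot\Delta)$ and $(X\ot\sigma)\circ(\Delta\ot X)$, the inverses $\sigma^{-1}$ and $\tau^{-1}$ appear throughout, and the linking relation has to be inserted at precisely the right step. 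Keeping the chain of diagram equalities organized, and in the symmetry route controlling the flips carefully, will be the delicate part; the individual moves are all routine given the results quoted above.
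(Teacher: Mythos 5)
Your identification of the two equalities as the braid relations that $r_e$ must satisfy on $X\ot X\ot SX$ and $SX\ot X\ot X$, to be proved as free\--standing statements about $r$, matches the paper's intent, and for the first equality your toolkit (the defining relation of $R^{t_1}$ from Definition~\ref{transposicones en variables}, the relations of Remark~\ref{prop de involutiva y de braided}, non-degeneracy) is the right one. But the proposal stops at a toolkit: the entire substance of the paper's proof is the chase itself, and the paper makes it feasible with a device you do not supply. It introduces two explicit auxiliary morphisms $F$ and $G$ built only from $\sigma$, $\tau$, $\Delta$ and $c$, proves $(cR^{t_1}\ot X)\circ(X\ot cR^{t_1})\circ F=G$ (which simultaneously shows that $F$ is invertible, the other maps being invertible), then proves $(cR^{t_1}\ot X)\circ(X\ot cR^{t_1})\circ(r\ot X)\circ F=(X\ot r)\circ G$, and cancels $F$. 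The whole point of precomposing with $F$ is that $R^{t_1}$ only ever occurs in the combination $R^{t_1}\circ(\tau\ot X)\circ(X\ot\Delta)=(X\ot\sigma)\circ(\Delta\ot X)$, so no inverse ever appears. Your plan of unfolding $R^{t_1}$ through $\bigl((\tau\ot X)\circ(X\ot\Delta)\bigr)^{-1}=(\tau^{-1}\ot X)\circ(X\ot\Delta)$ instead forces you to commute $r$ past $\tau^{-1}$, which the relations you list do not give directly (you would first have to derive inverted forms of the braid relations); and the reduction to coordinate maps via Corollary~\ref{coro: igualdad de morfismos} buys little, since the coordinates of a composite are not determined by the coordinates of its factors.

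The genuine gap is your fallback for the second equality. Applying the first equality to $\tilde r=c\circ r^{-1}\circ c$, with $\widetilde R^{t_1}=R^{t_2}$ from Proposition~\ref{lem:SXxSXxSX}(2), produces a relation whose two $c\circ R^{t_2}$ factors occur in the order $(cR^{t_2}\ot X)\circ(X\ot cR^{t_2})$, whereas the second equality needs $(X\ot cR^{t_2})\circ(cR^{t_2}\ot X)$. The only way to exchange these patterns is to conjugate by the flip $c_{13}$ (and cancel $r^{-1}$ against $r$), and since $c_{13}\circ(X\ot f)\circ c_{13}=(c\circ f\circ c)\ot X$, this replaces $c\circ R^{t_2}$ by $c\circ(c\circ R^{t_2})\circ c=R^{t_2}\circ c$. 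What you obtain is
\begin{equation*}
(r\ot X)\circ(X\ot R^{t_2}c)\circ(R^{t_2}c\ot X)=(X\ot R^{t_2}c)\circ(R^{t_2}c\ot X)\circ(X\ot r),
\end{equation*}
and $R^{t_2}\circ c\neq c\circ R^{t_2}$ in general (already for set-theoretic rack solutions), so this is a different true statement --- it is precisely the braid relation on $SX\ot SX\ot X$, which is exactly how Proposition~\ref{pro:YB} uses the lemma applied to $\tilde r$ --- and not the asserted one. The symmetry that does work is the left--right mirror one: $(X,c\circ r\circ c)$ is again a non-degenerate braided set, its first transpose equals $c\circ R^{t_2}\circ c$ (a short check from Definition~\ref{transposicones en variables} and cocommutativity), and the first equality applied to $(X,c\circ r\circ c)$, conjugated by $c_{13}$, is exactly the second equality; equivalently, one reflects every diagram of the first proof, which is what the paper's ``by symmetry'' means. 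So your primary option, redoing the computation symmetrically, is correct, but the shortcut through $\tilde r$ is not.
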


\begin{proof} By symmetry we only must prove the first equality. Let $F$ and $G$ be the endomorphisms of $X^3$ defined by
$$
F \coloneqq  (\tau\ot \tau \ot X) \circ (X\ot \sigma\ot X \ot \Delta)\circ (X\ot \Delta_{X^2})
$$
and
$$
G:= (c\ot X)\circ (X\ot c)\circ (X^2\ot\sigma) \circ (X^3\ot\sigma) \circ (\Delta_{X^2}\ot X),
$$
respectively. In order to prove the lemma it suffices to show that $F$ is an invertible map and that
\begin{equation}\label{pepe}
(c\circ R^{t_1}\ot X)\circ  (X\ot c \circ  R^{t_1}) \circ F = G
\end{equation}
and
\begin{equation}\label{pepe1}
(c\circ R^{t_1}\ot X) \circ (X\ot c\circ R^{t_1}) \circ (r\ot X) \circ F = (X\ot r)\circ G.
\end{equation}
Since $\Delta$ is coassociative and cocommutative, $\sigma$ is a coalgebra homomorphism,
\begin{equation}\label{eq1}
R^{t_1}\circ (\tau\ot X)\circ (X\ot \Delta) = (X\ot \sigma) \circ (\Delta \ot X),
\end{equation}
and $c$ is a natural isomorphism, we have
$$
\begin{tikzpicture}[scale=0.395]
\def\counit(#1,#2){\draw (#1,#2) -- (#1,#2-0.93) (#1,#2-1) circle[radius=2pt]}
\def\comult(#1,#2)[#3,#4]{\draw (#1,#2) -- (#1,#2-0.5*#4) arc (90:0:0.5*#3 and 0.5*#4) (#1,#2-0.5*#4) arc (90:180:0.5*#3 and 0.5*#4)}
\def\laction(#1,#2)[#3,#4]{\draw (#1,#2) .. controls (#1,#2-0.555*#4/2) and (#1+0.445*#4/2,#2-1*#4/2) .. (#1+1*#4/2,#2-1*#4/2) -- (#1+2*#4/2+#3*#4/2,#2-1*#4/2) (#1+2*#4/2+#3*#4/2,#2)--(#1+2*#4/2+#3*#4/2,#2-2*#4/2)}
\def\map(#1,#2)[#3]{\draw (#1,#2-0.5)  node[name=nodemap,inner sep=0pt,  minimum size=12pt, shape=circle,draw]{$#3$} (#1,#2)-- (nodemap)  (nodemap)-- (#1,#2-1)}
\def\solbraid(#1,#2)[#3]{\draw (#1,#2-0.5)  node[name=nodemap,inner sep=0pt,  minimum size=9pt, shape=circle,draw]{$#3$}
(#1-0.5,#2) .. controls (#1-0.5,#2-0.15) and (#1-0.4,#2-0.2) .. (#1-0.3,#2-0.3) (#1-0.3,#2-0.3) -- (nodemap)
(#1+0.5,#2) .. controls (#1+0.5,#2-0.15) and (#1+0.4,#2-0.2) .. (#1+0.3,#2-0.3) (#1+0.3,#2-0.3) -- (nodemap)
(#1+0.5,#2-1) .. controls (#1+0.5,#2-0.85) and (#1+0.4,#2-0.8) .. (#1+0.3,#2-0.7) (#1+0.3,#2-0.7) -- (nodemap)
(#1-0.5,#2-1) .. controls (#1-0.5,#2-0.85) and (#1-0.4,#2-0.8) .. (#1-0.3,#2-0.7) (#1-0.3,#2-0.7) -- (nodemap)
}
\def\flip(#1,#2)[#3]{\draw (
#1+1*#3,#2) .. controls (#1+1*#3,#2-0.05*#3) and (#1+0.96*#3,#2-0.15*#3).. (#1+0.9*#3,#2-0.2*#3)
(#1+0.1*#3,#2-0.8*#3)--(#1+0.9*#3,#2-0.2*#3)
(#1,#2-1*#3) .. controls (#1,#2-0.95*#3) and (#1+0.04*#3,#2-0.85*#3).. (#1+0.1*#3,#2-0.8*#3)
(#1,#2) .. controls (#1,#2-0.05*#3) and (#1+0.04*#3,#2-0.15*#3).. (#1+0.1*#3,#2-0.2*#3)
(#1+0.1*#3,#2-0.2*#3) -- (#1+0.9*#3,#2-0.8*#3)
(#1+1*#3,#2-1*#3) .. controls (#1+1*#3,#2-0.95*#3) and (#1+0.96*#3,#2-0.85*#3).. (#1+0.9*#3,#2-0.8*#3)
}
\def\raction(#1,#2)[#3,#4]{\draw (#1,#2) -- (#1,#2-2*#4/2)  (#1,#2-1*#4/2)--(#1+1*#4/2+#3*#4/2,#2-1*#4/2) .. controls (#1+1.555*#4/2+#3*#4/2,#2-1*#4/2) and (#1+2*#4/2+#3*#4/2,#2-0.555*#4/2) .. (#1+2*#4/2+#3*#4/2,#2)}
\def\doublemap(#1,#2)[#3]{\draw (#1+0.5,#2-0.5) node [name=doublemapnode,inner xsep=0pt, inner ysep=0pt, minimum height=9pt, minimum width=25pt,shape=rectangle,draw,rounded corners] {$#3$} (#1,#2) .. controls (#1,#2-0.075) .. (doublemapnode) (#1+1,#2) .. controls (#1+1,#2-0.075).. (doublemapnode) (doublemapnode) .. controls (#1,#2-0.925)..(#1,#2-1) (doublemapnode) .. controls (#1+1,#2-0.925).. (#1+1,#2-1)}
\begin{scope}[xshift=0cm, yshift=0cm]
\draw (0,0) -- (0,-3.5); \comult(1.75,0)[1.5,1.5]; \comult(2.5,-1.5)[1,1]; \comult(4.5,-1.5)[1,1]; \draw (1,-1.5) -- (1,-2.5); \flip(1,-2.5)[1]; \flip(3,-2.5)[1]; \flip(0,-3.5)[1]; \laction(2,-3.5)[0,1]; \laction(4,-3.5)[0,1]; \raction(1,-4.5)[1,1.325]; \draw (1,-5.8) .. controls (1,-6.25) and (1.5,-6.25) .. (1.5,-6.5); \draw (5,-4.5) .. controls (5,-5) and (2.5,-6) .. (2.5,-6.5); \draw (5,-2.5) -- (5,-3.5); \doublemap(1.5,-6.5)[\scriptstyle R^{t_1}]; \draw (4.5,0) -- (4.5,-1.5); \draw (0,-4.5) -- (0,-7.5);
\end{scope}
\begin{scope}[xshift=5.5cm, yshift=-3.3cm]
\node at (0,-0.5){=};
\end{scope}
\begin{scope}[xshift=6.1cm, yshift=0cm]
\draw (0,0) -- (0,-3); \draw (1,-1) -- (1,-3); \flip(0,-3)[1]; \comult(1.5,0)[1,1]; \comult(2,-1)[1,1]; \comult(4,-1)[1,1]; \flip(2.5,-2)[1]; \draw (1.5,-2) -- (1.5,-3); \draw (4.5,-2) -- (4.5,-3); \laction(1.5,-3)[0,1]; \laction(3.5,-3)[0,1]; \raction(1,-4)[1,1]; \draw (4,0) -- (4,-1); \draw (1,-5) .. controls (1,-5.5) and (1.5,-6) .. (1.5,-6.5); \draw (4.5,-4) .. controls (4.5,-4.5) and (2.5,-6) .. (2.5,-6.5); \draw (0,-4) -- (0,-7.5); \doublemap(1.5,-6.5)[\scriptstyle R^{t_1}];
\end{scope}
\begin{scope}[xshift=11.1cm, yshift=-3.3cm]
\node at (0,-0.5){=};
\end{scope}
\begin{scope}[xshift=11.7cm, yshift=-0.7cm]
\draw (0,0) -- (0,-1); \comult(1.5,0)[1,1]; \draw (3,0) -- (3,-1); \laction(2,-1)[0,1]; \flip(0,-1)[1]; \comult(3,-2)[1,1]; \draw (1,-2) .. controls (1,-2.5) and (1.5,-2.5) .. (1.5,-3); \raction(1.5,-3)[0,1]; \doublemap(1.5,-5)[\scriptstyle R^{t_1}]; \draw (0,-2) -- (0,-6); \draw (1.5,-4) -- (1.5,-5); \draw (3.5,-3) .. controls (3.5,-3.5) and (2.5,-4.5) .. (2.5,-5);
\end{scope}
\begin{scope}[xshift=15.7cm, yshift=-3.3cm]
\node at (0,-0.5){=};
\end{scope}
\begin{scope}[xshift=16.3cm, yshift=-1.2cm]
\draw (0,0) -- (0,-1); \comult(1.5,0)[1,1]; \draw (3,0) -- (3,-1); \laction(2,-1)[0,1]; \flip(0,-1)[1]; \comult(1.5,-3)[1,1]; \draw (1,-2) .. controls (1,-2.5) and (1.5,-2.5) .. (1.5,-3); \draw (0,-2) -- (0,-5); \laction(2,-4)[0,1]; \draw (3,-2) -- (3,-4); \draw (1,-4) -- (1,-5);
\end{scope}
\begin{scope}[xshift=19.8cm, yshift=-3.3cm]
\node at (0,-0.5){=};
\end{scope}
\begin{scope}[xshift=20.35cm, yshift=-2.2cm]
\draw (0,-1) -- (0,-2); \comult(0.5,0)[1,1];  \comult(2.5,0)[1,1]; \laction(3,-1)[0,1]; \draw (4,0) -- (4,-1); \flip(1,-1)[1]; \flip(0,-2)[1]; \laction(2,-2)[2,1];
\end{scope}
\begin{scope}[xshift=24.7cm, yshift=-3.3cm]
\node at (0,-0.5){.};
\end{scope}
\end{tikzpicture}
$$
Using this, equality~\eqref{eq1} and that $c$ is a natural morphism, we obtain
$$
\begin{tikzpicture}[scale=0.395]
\def\counit(#1,#2){\draw (#1,#2) -- (#1,#2-0.93) (#1,#2-1) circle[radius=2pt]}
\def\comult(#1,#2)[#3,#4]{\draw (#1,#2) -- (#1,#2-0.5*#4) arc (90:0:0.5*#3 and 0.5*#4) (#1,#2-0.5*#4) arc (90:180:0.5*#3 and 0.5*#4)}
\def\laction(#1,#2)[#3,#4]{\draw (#1,#2) .. controls (#1,#2-0.555*#4/2) and (#1+0.445*#4/2,#2-1*#4/2) .. (#1+1*#4/2,#2-1*#4/2) -- (#1+2*#4/2+#3*#4/2,#2-1*#4/2) (#1+2*#4/2+#3*#4/2,#2)--(#1+2*#4/2+#3*#4/2,#2-2*#4/2)}
\def\map(#1,#2)[#3]{\draw (#1,#2-0.5)  node[name=nodemap,inner sep=0pt,  minimum size=10pt, shape=circle,draw]{$#3$} (#1,#2)-- (nodemap)  (nodemap)-- (#1,#2-1)}
\def\solbraid(#1,#2)[#3]{\draw (#1,#2-0.5)  node[name=nodemap,inner sep=0pt,  minimum size=9pt, shape=circle,draw]{$#3$}
(#1-0.5,#2) .. controls (#1-0.5,#2-0.15) and (#1-0.4,#2-0.2) .. (#1-0.3,#2-0.3) (#1-0.3,#2-0.3) -- (nodemap)
(#1+0.5,#2) .. controls (#1+0.5,#2-0.15) and (#1+0.4,#2-0.2) .. (#1+0.3,#2-0.3) (#1+0.3,#2-0.3) -- (nodemap)
(#1+0.5,#2-1) .. controls (#1+0.5,#2-0.85) and (#1+0.4,#2-0.8) .. (#1+0.3,#2-0.7) (#1+0.3,#2-0.7) -- (nodemap)
(#1-0.5,#2-1) .. controls (#1-0.5,#2-0.85) and (#1-0.4,#2-0.8) .. (#1-0.3,#2-0.7) (#1-0.3,#2-0.7) -- (nodemap)
}
\def\flip(#1,#2)[#3]{\draw (
#1+1*#3,#2) .. controls (#1+1*#3,#2-0.05*#3) and (#1+0.96*#3,#2-0.15*#3).. (#1+0.9*#3,#2-0.2*#3)
(#1+0.1*#3,#2-0.8*#3)--(#1+0.9*#3,#2-0.2*#3)
(#1,#2-1*#3) .. controls (#1,#2-0.95*#3) and (#1+0.04*#3,#2-0.85*#3).. (#1+0.1*#3,#2-0.8*#3)
(#1,#2) .. controls (#1,#2-0.05*#3) and (#1+0.04*#3,#2-0.15*#3).. (#1+0.1*#3,#2-0.2*#3)
(#1+0.1*#3,#2-0.2*#3) -- (#1+0.9*#3,#2-0.8*#3)
(#1+1*#3,#2-1*#3) .. controls (#1+1*#3,#2-0.95*#3) and (#1+0.96*#3,#2-0.85*#3).. (#1+0.9*#3,#2-0.8*#3)
}
\def\raction(#1,#2)[#3,#4]{\draw (#1,#2) -- (#1,#2-2*#4/2)  (#1,#2-1*#4/2)--(#1+1*#4/2+#3*#4/2,#2-1*#4/2) .. controls (#1+1.555*#4/2+#3*#4/2,#2-1*#4/2) and (#1+2*#4/2+#3*#4/2,#2-0.555*#4/2) .. (#1+2*#4/2+#3*#4/2,#2)}
\def\doublemap(#1,#2)[#3]{\draw (#1+0.5,#2-0.5) node [name=doublemapnode,inner xsep=0pt, inner ysep=0pt, minimum height=9pt, minimum width=25pt,shape=rectangle,draw,rounded corners] {$#3$} (#1,#2) .. controls (#1,#2-0.075) .. (doublemapnode) (#1+1,#2) .. controls (#1+1,#2-0.075).. (doublemapnode) (doublemapnode) .. controls (#1,#2-0.925)..(#1,#2-1) (doublemapnode) .. controls (#1+1,#2-0.925).. (#1+1,#2-1)}
\begin{scope}[xshift=0cm, yshift=-1cm]
\draw (0,0) -- (0,-3); \comult(1,0)[1,1]; \comult(3,0)[1,1];  \draw (0.5,-1) -- (0.5,-2); \flip(1.5,-1)[1]; \laction(0.5,-2)[0,1]; \draw (3.5,-1) .. controls (3.5,-1.5) and (4,-1.5) .. (4,-2); \comult(4,-2)[1,1]; \raction(2.5,-3)[0,1]; \draw (2.5,-2) -- (2.5,-3); \raction(0,-3)[0,1.5]; \draw (2.5,-4) -- (2.5,-4.5); \draw (4.5,-3) .. controls (4.5,-3.5) and (3.5,-4) .. (3.5,-4.5); \doublemap(2.5,-4.5)[\scriptstyle R^{t_1}]; \flip(2.5,-5.5)[1]; \draw (0,-4.5) .. controls (0,-5.5) and (1.5,-5.5) .. (1.5,-6.5); \doublemap(1.5,-6.5)[\scriptstyle R^{t_1}]; \flip(1.5,-7.5)[1]; \draw (3.5,-6.5) -- (3.5,-8.5);
\end{scope}
\begin{scope}[xshift=4.9cm, yshift=-4.8cm]
\node at (0,-0.5){=};
\end{scope}
\begin{scope}[xshift=5.5cm, yshift=-1.25cm]
\draw (0,0) -- (0,-2); \draw (0,-2) .. controls (0,-2.5) and (0.5,-2.5) .. (0.5,-3); \comult(1,0)[1,1]; \comult(3,0)[1,1];  \draw (0.5,-1) -- (0.5,-2); \flip(1.5,-1)[1]; \laction(0.5,-2)[0,1]; \comult(2.5,-2)[1,1]; \draw (3.5,-1) -- (3.5,-2); \draw (3.5,-2) .. controls (3.5,-2.5) and (4,-2.5) .. (4,-3); \laction(3,-3)[0,1]; \raction(0.5,-3)[0,1]; \draw (4,-4) .. controls (4,-4.5) and (3,-4.5) .. (3,-5); \draw (2,-3) -- (2,-5); \flip(2,-5)[1]; \draw (0.5,-4) .. controls (0.5,-4.5) and (1,-5.5) .. (1,-6); \doublemap(1,-6)[\scriptstyle R^{t_1}]; \flip(1,-7)[1]; \draw (3,-6) -- (3,-8);
\end{scope}
\begin{scope}[xshift=9.95cm, yshift=-4.8cm]
\node at (0,-0.5){=};
\end{scope}
\begin{scope}[xshift=10.5cm, yshift=-0.5cm]
\draw (0,0) -- (0,-5.5); \comult(1.25,0)[1.5,1.5]; \draw (0.5,-1.5) -- (0.5,-4.5); \comult(2,-1.5)[1,1]; \comult(4,-1.5)[1,1]; \flip(2.5,-2.5)[1]; \laction(3.5,-3.5)[0,1]; \draw (4,0) -- (4,-1.5); \draw (4.5,-2.5) -- (4.5,-3.5); \draw (1.5,-2.5) -- (1.5,-3.5); \flip(1.5,-3.5)[1]; \laction(0.5,-4.5)[0,1]; \draw (4.5,-4.5) .. controls (4.5,-5) and (3.5,-5) .. (3.5,-5.5); \draw (2.5,-4.5) -- (2.5,-5.5); \flip(2.5,-5.5)[1]; \raction(0,-5.5)[0,1.5]; \draw (0,-7) .. controls (0,-7.25) and (0.5,-7.25) .. (0.5,-7.5); \draw (2.5,-6.5) .. controls (2.5,-7) and (1.5,-7) .. (1.5,-7.5); \doublemap(0.5,-7.5)[\scriptstyle R^{t_1}];  \flip(0.5,-8.5)[1]; \draw (3.5,-6.5) -- (3.5,-9.5);
\end{scope}
\begin{scope}[xshift=15.45cm, yshift=-4.8cm]
\node at (0,-0.5){=};
\end{scope}
\begin{scope}[xshift=16cm, yshift=0cm]
\draw (0,0) -- (0,-3.5); \comult(1.75,0)[1.5,1.5]; \comult(2.5,-1.5)[1,1]; \comult(4.5,-1.5)[1,1]; \draw (1,-1.5) -- (1,-2.5); \flip(1,-2.5)[1]; \flip(3,-2.5)[1]; \flip(0,-3.5)[1]; \laction(2,-3.5)[0,1]; \laction(4,-3.5)[0,1]; \raction(1,-4.5)[1,1.325]; \draw (1,-5.8) .. controls (1,-6.25) and (1.5,-6.25) .. (1.5,-6.5); \draw (5,-4.5) .. controls (5,-5) and (2.5,-6) .. (2.5,-6.5); \draw (5,-2.5) -- (5,-3.5); \doublemap(1.5,-6.5)[\scriptstyle R^{t_1}]; \draw (4.5,0) -- (4.5,-1.5); \draw (0,-4.5) -- (0,-7.5); \flip(1.5,-7.5)[1]; \draw (0,-7.5) .. controls (0,-8) and (0.5,-8) .. (0.5,-8.5);  \flip(0.5,-8.5)[1]; \flip(1.5,-9.5)[1]; \draw (2.5,-8.5) -- (2.5,-9.5); \draw (0.5,-9.5) -- (0.5,-10.5);
\end{scope}
\begin{scope}[xshift=21.5cm, yshift=-4.8cm]
\node at (0,-0.5){=};
\end{scope}
\begin{scope}[xshift=22cm, yshift=-1.7cm]
\draw (0,-1) -- (0,-2); \comult(0.5,0)[1,1];  \comult(2.5,0)[1,1]; \laction(3,-1)[0,1]; \draw (4,0) -- (4,-1); \flip(1,-1)[1]; \flip(2.5,-5)[1]; \flip(1.5,-6)[1]; \laction(2,-2)[2,1]; \draw (4,-3) -- (4,-4); \draw (1,-2) .. controls (1,-3) and (2.5,-4) .. (2.5,-5); \draw (0,-2) .. controls (0,-3) and (1.5,-5) .. (1.5,-6); \draw (4,-4) .. controls (4,-4.5) and (3.5,-4.5) .. (3.5,-5); \draw (3.5,-6) -- (3.5,-7);
 \end{scope}
\begin{scope}[xshift=26.3cm, yshift=-4.8cm]
\node at (0,-0.5){,};
\end{scope}
\end{tikzpicture}
$$
which proves equality~\eqref{pepe} and  that $F$ is invertible, since $(c\circ R^{t_1}\ot X)\circ (X\ot c\circ R^{t_1})$ and $G$ are invertible morphisms. On the other hand, by the coassociativity of~$\Delta$ and Remark~\ref{prop de involutiva y de braided} we have
$$
\begin{tikzpicture}[scale=0.395]
\def\counit(#1,#2){\draw (#1,#2) -- (#1,#2-0.93) (#1,#2-1) circle[radius=2pt]}
\def\comult(#1,#2)[#3,#4]{\draw (#1,#2) -- (#1,#2-0.5*#4) arc (90:0:0.5*#3 and 0.5*#4) (#1,#2-0.5*#4) arc (90:180:0.5*#3 and 0.5*#4)}
\def\laction(#1,#2)[#3,#4]{\draw (#1,#2) .. controls (#1,#2-0.555*#4/2) and (#1+0.445*#4/2,#2-1*#4/2) .. (#1+1*#4/2,#2-1*#4/2) -- (#1+2*#4/2+#3*#4/2,#2-1*#4/2) (#1+2*#4/2+#3*#4/2,#2)--(#1+2*#4/2+#3*#4/2,#2-2*#4/2)}
\def\map(#1,#2)[#3]{\draw (#1,#2-0.5)  node[name=nodemap,inner sep=0pt,  minimum size=12pt, shape=circle,draw]{$#3$} (#1,#2)-- (nodemap)  (nodemap)-- (#1,#2-1)}
\def\solbraid(#1,#2)[#3]{\draw (#1,#2-0.5)  node[name=nodemap,inner sep=0pt,  minimum size=9pt, shape=circle,draw]{$#3$}
(#1-0.5,#2) .. controls (#1-0.5,#2-0.15) and (#1-0.4,#2-0.2) .. (#1-0.3,#2-0.3) (#1-0.3,#2-0.3) -- (nodemap)
(#1+0.5,#2) .. controls (#1+0.5,#2-0.15) and (#1+0.4,#2-0.2) .. (#1+0.3,#2-0.3) (#1+0.3,#2-0.3) -- (nodemap)
(#1+0.5,#2-1) .. controls (#1+0.5,#2-0.85) and (#1+0.4,#2-0.8) .. (#1+0.3,#2-0.7) (#1+0.3,#2-0.7) -- (nodemap)
(#1-0.5,#2-1) .. controls (#1-0.5,#2-0.85) and (#1-0.4,#2-0.8) .. (#1-0.3,#2-0.7) (#1-0.3,#2-0.7) -- (nodemap)
}
\def\flip(#1,#2)[#3]{\draw (
#1+1*#3,#2) .. controls (#1+1*#3,#2-0.05*#3) and (#1+0.96*#3,#2-0.15*#3).. (#1+0.9*#3,#2-0.2*#3)
(#1+0.1*#3,#2-0.8*#3)--(#1+0.9*#3,#2-0.2*#3)
(#1,#2-1*#3) .. controls (#1,#2-0.95*#3) and (#1+0.04*#3,#2-0.85*#3).. (#1+0.1*#3,#2-0.8*#3)
(#1,#2) .. controls (#1,#2-0.05*#3) and (#1+0.04*#3,#2-0.15*#3).. (#1+0.1*#3,#2-0.2*#3)
(#1+0.1*#3,#2-0.2*#3) -- (#1+0.9*#3,#2-0.8*#3)
(#1+1*#3,#2-1*#3) .. controls (#1+1*#3,#2-0.95*#3) and (#1+0.96*#3,#2-0.85*#3).. (#1+0.9*#3,#2-0.8*#3)
}
\def\raction(#1,#2)[#3,#4]{\draw (#1,#2) -- (#1,#2-2*#4/2)  (#1,#2-1*#4/2)--(#1+1*#4/2+#3*#4/2,#2-1*#4/2) .. controls (#1+1.555*#4/2+#3*#4/2,#2-1*#4/2) and (#1+2*#4/2+#3*#4/2,#2-0.555*#4/2) .. (#1+2*#4/2+#3*#4/2,#2)}
\def\doublemap(#1,#2)[#3]{\draw (#1+0.5,#2-0.5) node [name=doublemapnode,inner xsep=0pt, inner ysep=0pt, minimum height=11pt, minimum width=25pt,shape=rectangle,draw,rounded corners] {$#3$} (#1,#2) .. controls (#1,#2-0.075) .. (doublemapnode) (#1+1,#2) .. controls (#1+1,#2-0.075).. (doublemapnode) (doublemapnode) .. controls (#1,#2-0.925)..(#1,#2-1) (doublemapnode) .. controls (#1+1,#2-0.925).. (#1+1,#2-1)}
\begin{scope}[xshift=0cm, yshift=0cm]
\draw (0,0) -- (0,-2); \draw (0,-2) .. controls (0,-2.5) and (0.5,-2.5) .. (0.5,-3); \comult(1,0)[1,1]; \comult(3,0)[1,1];  \draw (0.5,-1) -- (0.5,-2); \flip(1.5,-1)[1]; \laction(0.5,-2)[0,1]; \draw (3.5,-1) .. controls (3.5,-1.5) and (4,-1.5) .. (4,-2); \comult(4,-2)[1,1]; \raction(2.5,-3)[0,1]; \draw (2.5,-2) -- (2.5,-3); \raction(0.5,-3)[0,1]; \draw (2.5,-4) .. controls (2.5,-4.5) and (2,-4.5) .. (2,-5); \draw (0.5,-4) .. controls (0.5,-4.5) and (1,-4.5) .. (1,-5); \solbraid(1.5,-5)[\scriptstyle r]; \draw (4.5,-3) -- (4.5,-6);
\end{scope}
\begin{scope}[xshift=4.98cm, yshift=-2.5cm]
\node at (0,-0.5){=};
\end{scope}
\begin{scope}[xshift=5.51cm, yshift=-0.5cm]
\draw (0,0) -- (0,-2); \draw (1,0) -- (1,-1); \comult(2.5,0)[1,1]; \solbraid(1.5,-1)[\scriptstyle r]; \raction(0,-2)[0,1]; \draw (2,-2) .. controls (2,-2.5) and (1,-3.5) .. (1,-4); \solbraid(0.5,-4)[\scriptstyle r]; \draw (0,-3) -- (0,-4); \draw (3,-1) -- (3,-5);
\end{scope}
\begin{scope}[xshift=8.96cm, yshift=-2.5cm]
\node at (0,-0.5){=};
\end{scope}
\begin{scope}[xshift=9.47cm, yshift=-1cm]
\comult(2.5,0)[1,1]; \solbraid(0.5,0)[\scriptstyle r]; \solbraid(1.5,-1)[\scriptstyle r];\draw (3,-1) -- (3,-4); \draw (2,-2) -- (2,-4);
\raction(0,-2)[0,1];\draw (0,-1) -- (0,-2);\draw (0,-3) -- (0,-4);
\end{scope}
\begin{scope}[xshift=12.97cm, yshift=-2.5cm]
\node at (0,-0.5){=};
\end{scope}
\begin{scope}[xshift=13.53cm, yshift=-0.25cm]
\comult(3.5,0)[1,1]; \solbraid(0.5,0)[\scriptstyle r]; \comult(1,-1)[1,1]; \comult(3,-1)[1,1]; \flip(1.5,-2)[1]; \draw (0.5,-2) -- (0.5,-3); \draw (3.5,-2) -- (3.5,-3); \laction(0.5,-3)[0,1]; \raction(2.5,-3)[0,1]; \draw (0,-1) -- (0,-4); \raction(0,-4)[0,1.5]; \draw (2.5,-4) -- (2.5,-5.5); \draw (4,-1) -- (4,-5.5);
\end{scope}
\begin{scope}[xshift=17.97cm, yshift=-2.5cm]
\node at (0,-0.5){=};
\end{scope}
\begin{scope}[xshift=18.47cm, yshift=-0.25cm]
\comult(3.25,0)[1.5,1.5]; \solbraid(0.5,0)[\scriptstyle r]; \comult(1,-1)[1,1]; \comult(4,-1.5)[1,1]; \flip(1.5,-2)[1]; \draw (0.5,-2) -- (0.5,-3); \draw (3.5,-2.5) -- (3.5,-3); \laction(0.5,-3)[0,1]; \raction(2.5,-3)[0,1]; \draw (0,-1) -- (0,-4); \raction(0,-4)[0,1.5]; \draw (2.5,-4) -- (2.5,-5.5); \draw (4.5,-2.5) -- (4.5,-5.5); \draw (2.5,-1.5) -- (2.5,-2);
\end{scope}
\begin{scope}[xshift=23.22cm, yshift=-2.5cm]
\node at (0,-0.5){.};
\end{scope}
\end{tikzpicture}
$$
Using this, equality~\eqref{pepe}, that $r$ is a coalgebra morphism, that $c$ is natural and Re\-mark~\ref{prop de involutiva y de braided}, we obtain
$$
\begin{tikzpicture}[scale=0.395]
\def\counit(#1,#2){\draw (#1,#2) -- (#1,#2-0.93) (#1,#2-1) circle[radius=2pt]}
\def\comult(#1,#2)[#3,#4]{\draw (#1,#2) -- (#1,#2-0.5*#4) arc (90:0:0.5*#3 and 0.5*#4) (#1,#2-0.5*#4) arc (90:180:0.5*#3 and 0.5*#4)}
\def\laction(#1,#2)[#3,#4]{\draw (#1,#2) .. controls (#1,#2-0.555*#4/2) and (#1+0.445*#4/2,#2-1*#4/2) .. (#1+1*#4/2,#2-1*#4/2) -- (#1+2*#4/2+#3*#4/2,#2-1*#4/2) (#1+2*#4/2+#3*#4/2,#2)--(#1+2*#4/2+#3*#4/2,#2-2*#4/2)}
\def\map(#1,#2)[#3]{\draw (#1,#2-0.5)  node[name=nodemap,inner sep=0pt,  minimum size=10pt, shape=circle,draw]{$#3$} (#1,#2)-- (nodemap)  (nodemap)-- (#1,#2-1)}
\def\solbraid(#1,#2)[#3]{\draw (#1,#2-0.5)  node[name=nodemap,inner sep=0pt,  minimum size=9pt, shape=circle,draw]{$#3$}
(#1-0.5,#2) .. controls (#1-0.5,#2-0.15) and (#1-0.4,#2-0.2) .. (#1-0.3,#2-0.3) (#1-0.3,#2-0.3) -- (nodemap)
(#1+0.5,#2) .. controls (#1+0.5,#2-0.15) and (#1+0.4,#2-0.2) .. (#1+0.3,#2-0.3) (#1+0.3,#2-0.3) -- (nodemap)
(#1+0.5,#2-1) .. controls (#1+0.5,#2-0.85) and (#1+0.4,#2-0.8) .. (#1+0.3,#2-0.7) (#1+0.3,#2-0.7) -- (nodemap)
(#1-0.5,#2-1) .. controls (#1-0.5,#2-0.85) and (#1-0.4,#2-0.8) .. (#1-0.3,#2-0.7) (#1-0.3,#2-0.7) -- (nodemap)
}
\def\flip(#1,#2)[#3]{\draw (
#1+1*#3,#2) .. controls (#1+1*#3,#2-0.05*#3) and (#1+0.96*#3,#2-0.15*#3).. (#1+0.9*#3,#2-0.2*#3)
(#1+0.1*#3,#2-0.8*#3)--(#1+0.9*#3,#2-0.2*#3)
(#1,#2-1*#3) .. controls (#1,#2-0.95*#3) and (#1+0.04*#3,#2-0.85*#3).. (#1+0.1*#3,#2-0.8*#3)
(#1,#2) .. controls (#1,#2-0.05*#3) and (#1+0.04*#3,#2-0.15*#3).. (#1+0.1*#3,#2-0.2*#3)
(#1+0.1*#3,#2-0.2*#3) -- (#1+0.9*#3,#2-0.8*#3)
(#1+1*#3,#2-1*#3) .. controls (#1+1*#3,#2-0.95*#3) and (#1+0.96*#3,#2-0.85*#3).. (#1+0.9*#3,#2-0.8*#3)
}
\def\raction(#1,#2)[#3,#4]{\draw (#1,#2) -- (#1,#2-2*#4/2)  (#1,#2-1*#4/2)--(#1+1*#4/2+#3*#4/2,#2-1*#4/2) .. controls (#1+1.555*#4/2+#3*#4/2,#2-1*#4/2) and (#1+2*#4/2+#3*#4/2,#2-0.555*#4/2) .. (#1+2*#4/2+#3*#4/2,#2)}
\def\doublemap(#1,#2)[#3]{\draw (#1+0.5,#2-0.5) node [name=doublemapnode,inner xsep=0pt, inner ysep=0pt, minimum height=9pt, minimum width=25pt,shape=rectangle,draw,rounded corners] {$#3$} (#1,#2) .. controls (#1,#2-0.075) .. (doublemapnode) (#1+1,#2) .. controls (#1+1,#2-0.075).. (doublemapnode) (doublemapnode) .. controls (#1,#2-0.925)..(#1,#2-1) (doublemapnode) .. controls (#1+1,#2-0.925).. (#1+1,#2-1)}
\begin{scope}[xshift=0cm, yshift=0cm]
\draw (0,0) -- (0,-2); \draw (0,-2) .. controls (0,-2.5) and (0.5,-2.5) .. (0.5,-3); \comult(1,0)[1,1]; \comult(3,0)[1,1];  \draw (0.5,-1) -- (0.5,-2); \flip(1.5,-1)[1]; \laction(0.5,-2)[0,1]; \draw (3.5,-1) .. controls (3.5,-1.5) and (4,-1.5) .. (4,-2); \comult(4,-2)[1,1]; \raction(2.5,-3)[0,1]; \draw (2.5,-2) -- (2.5,-3); \raction(0.5,-3)[0,1]; \draw (2.5,-4) .. controls (2.5,-4.5) and (2,-4.5) .. (2,-5); \draw (0.5,-4) .. controls (0.5,-4.5) and (1,-4.5) .. (1,-5); \solbraid(1.5,-5)[\scriptstyle r];\draw (4.5,-3) -- (4.5,-4); \draw (4.5,-4) .. controls (4.5,-5) and (3,-5) .. (3,-6); \doublemap(2,-6)[\scriptstyle R^{t_1}]; \flip(2,-7)[1]; \doublemap(1,-8)[\scriptstyle R^{t_1}]; \flip(1,-9)[1]; \draw (1,-6) -- (1,-8); \draw (3,-8) -- (3,-10);
\end{scope}
\begin{scope}[xshift=5cm, yshift=-4.5cm]
\node at (0,-0.5){=};
\end{scope}
\begin{scope}[xshift=5.6cm, yshift=-1.2cm]
\solbraid(0.5,1)[\scriptstyle r]; \draw (0,0) -- (0,-3); \comult(1,0)[1,1]; \comult(3,0)[1,1];  \draw (0.5,-1) -- (0.5,-2); \flip(1.5,-1)[1]; \laction(0.5,-2)[0,1]; \draw (3.5,-1) .. controls (3.5,-1.5) and (4,-1.5) .. (4,-2); \comult(4,-2)[1,1]; \raction(2.5,-3)[0,1]; \draw (2.5,-2) -- (2.5,-3); \raction(0,-3)[0,1.5]; \draw (2.5,-4) -- (2.5,-4.5); \draw (4.5,-3) .. controls (4.5,-3.5) and (3.5,-4) .. (3.5,-4.5); \doublemap(2.5,-4.5)[\scriptstyle R^{t_1}]; \flip(2.5,-5.5)[1]; \draw (0,-4.5) .. controls (0,-5.5) and (1.5,-5.5) .. (1.5,-6.5); \doublemap(1.5,-6.5)[\scriptstyle R^{t_1}]; \flip(1.5,-7.5)[1]; \draw (3.5,-6.5) -- (3.5,-8.5);
\end{scope}
\begin{scope}[xshift=10.6cm, yshift=-4.5cm]
\node at (0,-0.5){=};
\end{scope}
\begin{scope}[xshift=11.2cm, yshift=-2.5cm]
\solbraid(1.5,2)[\scriptstyle r]; \draw (1,1) .. controls (1,0.5) and (0.5,0.5) .. (0.5,0);
\draw (2,1) .. controls (2,0.5) and (2.5,0.5) .. (2.5,0); \draw (4,2) -- (4,0);
\draw (0,-1) -- (0,-2); \comult(0.5,0)[1,1];  \comult(2.5,0)[1,1]; \laction(3,-1)[0,1]; \draw (4,0) -- (4,-1); \flip(1,-1)[1]; \flip(2.5,-5)[1]; \flip(1.5,-6)[1]; \laction(2,-2)[2,1]; \draw (4,-3) -- (4,-4); \draw (1,-2) .. controls (1,-3) and (2.5,-4) .. (2.5,-5); \draw (0,-2) .. controls (0,-3) and (1.5,-5) .. (1.5,-6); \draw (4,-4) .. controls (4,-4.5) and (3.5,-4.5) .. (3.5,-5); \draw (3.5,-6) -- (3.5,-7);
\end{scope}
\begin{scope}[xshift=15.7cm, yshift=-4.5cm]
\node at (0,-0.5){=};
\end{scope}
\begin{scope}[xshift=16.3cm, yshift=-1cm]
\comult(0.5,0)[1,1]; \comult(2.5,0)[1,1]; \draw (0,-1) -- (0,-2); \draw (3,-1) -- (3,-2); \flip(1,-1)[1]; \solbraid(0.5,-2)[\scriptstyle r]; \solbraid(2.5,-2)[\scriptstyle r]; \draw (4,-0) -- (4,-3); \laction(3,-3)[0,1]; \laction(2,-4)[2,1]; \draw (2,-3) -- (2,-4); \draw (1,-3) -- (1,-4); \draw (0,-3) -- (0,-4); \draw (1,-4) .. controls (1,-5) and (2.5,-5) .. (2.5,-6); \draw (4,-5) .. controls (4,-5.5) and (3.5,-5.5) .. (3.5,-6); \flip(2.5,-6)[1]; \flip(1.5,-7)[1]; \draw (3.5,-7) -- (3.5,-8); \draw (0,-4) .. controls (0,-5) and (1.5,-6) .. (1.5,-7);
\end{scope}
\begin{scope}[xshift=20.8cm, yshift=-4.5cm]
\node at (0,-0.5){=};
\end{scope}
\begin{scope}[xshift=21.4cm, yshift=-1.1cm]
\draw (0,-1) -- (0,-2); \comult(0.5,0)[1,1];  \comult(2.5,0)[1,1]; \laction(3,-1)[0,1]; \draw (4,0) -- (4,-1); \flip(1,-1)[1]; \flip(2.5,-5)[1]; \flip(1.5,-6)[1]; \laction(2,-2)[2,1]; \draw (4,-3) -- (4,-4); \draw (1,-2) .. controls (1,-3) and (2.5,-4) .. (2.5,-5); \draw (0,-2) .. controls (0,-3) and (1.5,-5) .. (1.5,-6); \draw (4,-4) .. controls (4,-4.5) and (3.5,-4.5) .. (3.5,-5); \draw (3.5,-6) -- (3.5,-7); \solbraid(3,-7)[\scriptstyle r]; \draw (1.5,-7) -- (1.5,-8);
\end{scope}
\begin{scope}[xshift=25.6cm, yshift=-4.5cm]
\node at (0,-0.5){,};
\end{scope}
\end{tikzpicture}
$$
which proves equality~\eqref{pepe1}.
\end{proof}

\begin{lem}\label{lem:XxSXxX} If $(X,r)$ is a non-degenerate braided set in $\mathscr{C}$, then
\[
(c\circ R^{t_2}\ot X) \circ (X\ot r)\circ  (c \circ  R^{t_1}\ot X) = (X \ot c\circ R^{t_1}) \circ (r\ot X) \circ (X\ot c\circ R^{t_2}).
\]
\end{lem}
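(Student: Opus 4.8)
The statement is the ``mixed'' hexagon relation for the three pieces $r_1 = r$, $r_2 = (S\ot X)\circ c\circ R^{t_1}\circ(X\ot S)$ and $r_3=(X\ot S)\circ c\circ R^{t_2}\circ(S\ot X)$ of the extended solution $r_e$ on $Z=X\coprod SX$, restricted to the component $X\ot SX\ot X$ (after stripping the $S$'s). The previous two lemmas (Lemma~\ref{lem:SXxSXxSX} is not directly relevant, but Lemma~\ref{lem:XxXxSX} is) have already disposed of the braid relations on $X^2\ot SX$ and $X\ot SX^2$, so this is the one remaining genuinely mixed hexagon; the other cases not covered by these three lemmas reduce by the $S$-symmetry or are pure-$X$ and follow from the braid equation for $(X,r)$ itself.

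The plan is to follow the strategy of Lemma~\ref{lem:XxXxSX}: rather than manipulating $R^{t_1}$ and $R^{t_2}$ directly (which are only defined implicitly through Definition~\ref{transposicones en variables}), I would compose both sides with the invertible coalgebra morphism $(\tau\ot X)\circ(X\ot\Delta)$ in the appropriate tensor slots to clear the transposes. Recall that by definition $R^{t_1}\circ(\tau\ot X)\circ(X\ot\Delta)=(X\ot\sigma)\circ(\Delta\ot X)$ and $R^{t_2}\circ(X\ot\sigma)\circ(\Delta\ot X)=(\tau\ot X)\circ(X\ot\Delta)$, and by Remark~\ref{Rt2=Rt1a la-1} these maps are mutually inverse isomorphisms, so precomposition by a suitable product of $\alpha_2$-type arrows is legitimate and reversible. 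The goal after this reduction is to rewrite both sides purely in terms of $r$, $\sigma$, $\tau$, $c$, $\Delta$ and $\epsilon$, whereupon the desired identity should become a consequence of the three equivalent forms of the braid equation collected in Remark~\ref{prop de involutiva y de braided} (the two ``action'' relations $\tau\circ(\tau\ot X)=\tau\circ(\tau\ot X)\circ(X\ot r)$ and $\sigma\circ(X\ot\sigma)=\sigma\circ(X\ot\sigma)\circ(r\ot X)$, together with the linking relation), combined with the formulas of Remark~\ref{algunas formulas} relating $r$ to its coordinates under comultiplication.

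Concretely, I would first use the identities $(\epsilon\ot X)\circ R^{t_1}=\tau^{-1}$ and $(X\ot\epsilon)\circ R^{t_1}=\sigma\circ(\tau^{-1}\ot X)\circ(X\ot\Delta)$ (and their $t_2$-analogues) already extracted in the proof of Proposition~\ref{pro:non-degenerate}, so that $c\circ R^{t_1}$ and $c\circ R^{t_2}$ can be expanded via the invertible arrow $(\tau\ot X)\circ(X\ot\Delta)\circ c$ of Proposition~\ref{no degenerado}. Both sides then become honest string diagrams built from $r$ and $\Delta$; I would normalise each to a common form by pushing all comultiplications to the top (using coassociativity and cocommutativity) and all the braidings $r$ to a canonical position, exactly as in the four-diagram chains of Lemma~\ref{lem:XxXxSX}. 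Since all maps involved are coalgebra morphisms, Corollary~\ref{coro: igualdad de morfismos} lets me verify the equality coordinate by coordinate, reducing the two-variable identity to checking agreement of the $\sigma$- and $\tau$-coordinates, which is where Remark~\ref{prop de involutiva y de braided} does the real work.

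The main obstacle will be the bookkeeping in the middle: unlike the previous two lemmas, here the central $SX$-strand is flanked on \emph{both} sides by $X$-strands, so the transposes $R^{t_1}$ and $R^{t_2}$ appear in \emph{opposite} variable slots and their defining relations point in opposite directions. The delicate step is to arrange the diagrammatic cancellation so that the single copy of $r$ sitting between the two transposes gets transported across correctly; I expect this to require inserting and removing a matched $R^{t_1}\circ R^{t_2}=\ide$ pair (Remark~\ref{Rt2=Rt1a la-1}) at the right moment, and then invoking the linking relation of Remark~\ref{prop de involutiva y de braided} to commute $\tau$ past $\sigma$. Once both sides are reduced to the same normal form, invertibility of the flanking $(\tau\ot X)\circ(X\ot\Delta)$ arrows (Proposition~\ref{no degenerado}) allows the precomposed auxiliary maps to be cancelled, yielding the stated identity.
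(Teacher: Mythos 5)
Your plan matches the paper's proof in all essentials: the paper precomposes both sides with the single invertible arrow $G=(\tau\ot X^2)\circ(X\ot\Delta\ot X)\circ(X^2\ot\sigma)\circ(X\ot\Delta\ot X)$ — exactly the ``suitable product of $\alpha_2$-type arrows'' you describe — then clears $R^{t_1}$ and $R^{t_2}$ via their defining relations and reduces both precomposed sides to a common normal form using Remarks~\ref{prop de involutiva y de braided} and~\ref{algunas formulas}, coassociativity, cocommutativity and naturality of $c$. The only cosmetic differences are that the paper's choice of $G$ (carrying a $\tau$-factor in slots $1,2$ and a $\sigma$-factor in slots $2,3$) makes inserting a matched $R^{t_1}\circ R^{t_2}=\ide$ pair unnecessary, and the final equality is checked diagrammatically rather than coordinate-wise via Corollary~\ref{coro: igualdad de morfismos}.
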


\begin{proof} Let $G$ be the automorphism of $X^3$ defined by
$$
G \coloneqq  (\tau \ot X^2) \circ (X\ot \Delta \ot X)\circ (X^2\ot \sigma)\circ (X\ot \Delta \ot X).
$$
In order to prove the lemma it suffices to show that
$$
(c\circ R^{t_2}\ot X) \circ (X\ot r)\circ  (c \circ  R^{t_1}\ot X) \circ G = (X \ot c\circ R^{t_1}) \circ (r\ot X) \circ (X\ot c\circ R^{t_2}) \circ G.
$$
By equality~\eqref{eq1} and the fact that $c$ is natural, $\Delta$ is coassociative and cocommutative, and $\sigma$ is a coalgebra morphism, we have
$$
\begin{tikzpicture}[scale=0.395]
\def\counit(#1,#2){\draw (#1,#2) -- (#1,#2-0.93) (#1,#2-1) circle[radius=2pt]}
\def\comult(#1,#2)[#3,#4]{\draw (#1,#2) -- (#1,#2-0.5*#4) arc (90:0:0.5*#3 and 0.5*#4) (#1,#2-0.5*#4) arc (90:180:0.5*#3 and 0.5*#4)}
\def\laction(#1,#2)[#3,#4]{\draw (#1,#2) .. controls (#1,#2-0.555*#4/2) and (#1+0.445*#4/2,#2-1*#4/2) .. (#1+1*#4/2,#2-1*#4/2) -- (#1+2*#4/2+#3*#4/2,#2-1*#4/2) (#1+2*#4/2+#3*#4/2,#2)--(#1+2*#4/2+#3*#4/2,#2-2*#4/2)}
\def\map(#1,#2)[#3]{\draw (#1,#2-0.5)  node[name=nodemap,inner sep=0pt,  minimum size=10pt, shape=circle,draw]{$#3$} (#1,#2)-- (nodemap)  (nodemap)-- (#1,#2-1)}
\def\solbraid(#1,#2)[#3]{\draw (#1,#2-0.5)  node[name=nodemap,inner sep=0pt,  minimum size=9pt, shape=circle,draw]{$#3$}
(#1-0.5,#2) .. controls (#1-0.5,#2-0.15) and (#1-0.4,#2-0.2) .. (#1-0.3,#2-0.3) (#1-0.3,#2-0.3) -- (nodemap)
(#1+0.5,#2) .. controls (#1+0.5,#2-0.15) and (#1+0.4,#2-0.2) .. (#1+0.3,#2-0.3) (#1+0.3,#2-0.3) -- (nodemap)
(#1+0.5,#2-1) .. controls (#1+0.5,#2-0.85) and (#1+0.4,#2-0.8) .. (#1+0.3,#2-0.7) (#1+0.3,#2-0.7) -- (nodemap)
(#1-0.5,#2-1) .. controls (#1-0.5,#2-0.85) and (#1-0.4,#2-0.8) .. (#1-0.3,#2-0.7) (#1-0.3,#2-0.7) -- (nodemap)
}
\def\flip(#1,#2)[#3]{\draw (
#1+1*#3,#2) .. controls (#1+1*#3,#2-0.05*#3) and (#1+0.96*#3,#2-0.15*#3).. (#1+0.9*#3,#2-0.2*#3)
(#1+0.1*#3,#2-0.8*#3)--(#1+0.9*#3,#2-0.2*#3)
(#1,#2-1*#3) .. controls (#1,#2-0.95*#3) and (#1+0.04*#3,#2-0.85*#3).. (#1+0.1*#3,#2-0.8*#3)
(#1,#2) .. controls (#1,#2-0.05*#3) and (#1+0.04*#3,#2-0.15*#3).. (#1+0.1*#3,#2-0.2*#3)
(#1+0.1*#3,#2-0.2*#3) -- (#1+0.9*#3,#2-0.8*#3)
(#1+1*#3,#2-1*#3) .. controls (#1+1*#3,#2-0.95*#3) and (#1+0.96*#3,#2-0.85*#3).. (#1+0.9*#3,#2-0.8*#3)
}
\def\raction(#1,#2)[#3,#4]{\draw (#1,#2) -- (#1,#2-2*#4/2)  (#1,#2-1*#4/2)--(#1+1*#4/2+#3*#4/2,#2-1*#4/2) .. controls (#1+1.555*#4/2+#3*#4/2,#2-1*#4/2) and (#1+2*#4/2+#3*#4/2,#2-0.555*#4/2) .. (#1+2*#4/2+#3*#4/2,#2)}
\def\doublemap(#1,#2)[#3]{\draw (#1+0.5,#2-0.5) node [name=doublemapnode,inner xsep=0pt, inner ysep=0pt, minimum height=9pt, minimum width=25pt,shape=rectangle,draw,rounded corners] {$#3$} (#1,#2) .. controls (#1,#2-0.075) .. (doublemapnode) (#1+1,#2) .. controls (#1+1,#2-0.075).. (doublemapnode) (doublemapnode) .. controls (#1,#2-0.925)..(#1,#2-1) (doublemapnode) .. controls (#1+1,#2-0.925).. (#1+1,#2-1)}
\begin{scope}[xshift=0cm, yshift=0cm]
\draw (0,0) -- (0,-2); \comult(2,0)[1,1]; \comult(1.5,-1)[1,1]; \raction(0,-2)[0,1]; \draw (2,-2) .. controls (2,-2.5) and (1.5,-3.5) .. (1.5,-4); \draw (0,-3) .. controls (0,-3.5) and (0.5,-3.5) .. (0.5,-4); \doublemap(0.5,-4)[\scriptstyle R^{t_1}]; \flip(0.5,-5)[1]; \draw (3.5,-0) -- (3.5,-1); \laction(2.5,-1)[0,1]; \draw (3.5,-2) ..  controls (3.5,-3) and (2.5,-5.5) .. (2.5,-6); \draw (0.5,-6) -- (0.5,-7); \solbraid(2,-6)[\scriptstyle r];
\end{scope}
\begin{scope}[xshift=4cm, yshift=-3.05cm]
\node at (0,-0.5){=};
\end{scope}
\begin{scope}[xshift=4.5cm, yshift=0cm]
\comult(2.5,0)[1,1]; \comult(0.5,0)[1,1]; \laction(3,-1)[0,1]; \laction(1,-1)[0,1]; \draw (0,-1) .. controls (0,-1.5) and (0.5,-2.5) .. (0.5,-3); \draw (2,-2) .. controls (2,-2.5) and (1.5,-2.5) .. (1.5,-3); \flip(0.5,-3)[1]; \comult(1.5,-4)[1,1]; \flip(2,-5)[1]; \comult(3.5,-4)[1,1]; \draw (0.5,-4) -- (0.5,-7); \draw (1,-5) -- (1,-6);\draw (4,-5) -- (4,-6); \laction(1,-6)[0,1]; \raction(3,-6)[0,1]; \draw (4,-2) .. controls (4,-2.5) and (3.5,-3.5) .. (3.5,-4); \draw (4,0) -- (4,-1);
\end{scope}
\begin{scope}[xshift=9cm, yshift=-3.05cm]
\node at (0,-0.5){=};
\end{scope}
\begin{scope}[xshift=9.5cm, yshift=-0.25cm]
\comult(0.5,0)[1,1]; \comult(2.75,0)[1.5,1.5]; \flip(0,-1)[1]; \flip(1,-2)[1]; \draw (2,-1.5) -- (2,-2); \comult(3.5,-1.5)[1,1]; \draw (3,-2.5) -- (3,-3.5); \flip(4,-2.5)[1]; \comult(5.5,-1.5)[1,1]; \draw (6,-2.5) -- (6,-3.5); \draw (5.5,0) -- (5.5,-1.5); \laction(3,-3.5)[0,1]; \laction(5,-3.5)[0,1]; \comult(2,-3)[1,1]; \draw (2.5,-4) .. controls (2.5,-4.25) and (3,-4.25) .. (3,-4.5); \draw (0,-2) -- (0,-3); \laction(0,-3)[0,1]; \flip(3,-4.5)[1]; \draw (1.5,-4) .. controls (1.5,-4.5) and (2,-5) .. (2,-5.5); \laction(2,-5.5)[0,1]; \draw (1,-4) -- (1,-6.5); \draw (6,-4.5) .. controls (6,-5) and (5,-5) .. (5,-5.5); \raction(4,-5.5)[0,1];
\end{scope}
\begin{scope}[xshift=16cm, yshift=-3.05cm]
\node at (0,-0.5){=};
\end{scope}
\begin{scope}[xshift=16.5cm, yshift=0cm]
\comult(1.5,0)[2,2]; \comult(0.5,-3)[1,1]; \flip(2.5,-2)[1]; \comult(2.5,-3)[1,1]; \draw (0.5,-2) -- (0.5,-3); \comult(4,0)[1,1]; \draw (3.5,-1) -- (3.5,-2); \flip(4.5,-1)[1]; \comult(6,0)[1,1]; \draw (6.5,-1) -- (6.5,-2); \draw (0,-4) -- (0,-5); \laction(5.5,-2)[0,1]; \laction(0,-5)[0,1]; \flip(1,-4)[1]; \draw (4.5,-2) -- (4.5,-3); \flip(4,-4)[1]; \draw (3.5,-3) .. controls (3.5,-3.5) and (4,-3.5) .. (4,-4); \draw (4.5,-3) .. controls (4.5,-3.5) and (5,-3.5) .. (5,-4); \draw (3,-4) -- (3,-5); \draw (2,-5) -- (2,-5.5); \laction(3,-5)[0,1]; \laction(2,-5.5)[0.665,1.5]; \draw (6.5,-3) .. controls (6.5,-3.5) and (6,-4.5) .. (6,-5); \raction(5,-5)[0,1]; \draw (5,-6) -- (5,-7); \draw (1,-6) -- (1,-7);
\end{scope}
\begin{scope}[xshift=23.3cm, yshift=-3.05cm]
\node at (0,-0.5){,};
\end{scope}
\end{tikzpicture}
$$
while by Remarks~\ref{prop de involutiva y de braided} and~\ref{algunas formulas}, the definition of $R^{t_2}$ and the fact that $\sigma$ is a coalgebra morphism, we have
$$
\begin{tikzpicture}[scale=0.395]
\def\counit(#1,#2){\draw (#1,#2) -- (#1,#2-0.93) (#1,#2-1) circle[radius=2pt]}
\def\comult(#1,#2)[#3,#4]{\draw (#1,#2) -- (#1,#2-0.5*#4) arc (90:0:0.5*#3 and 0.5*#4) (#1,#2-0.5*#4) arc (90:180:0.5*#3 and 0.5*#4)}
\def\laction(#1,#2)[#3,#4]{\draw (#1,#2) .. controls (#1,#2-0.555*#4/2) and (#1+0.445*#4/2,#2-1*#4/2) .. (#1+1*#4/2,#2-1*#4/2) -- (#1+2*#4/2+#3*#4/2,#2-1*#4/2) (#1+2*#4/2+#3*#4/2,#2)--(#1+2*#4/2+#3*#4/2,#2-2*#4/2)}
\def\map(#1,#2)[#3]{\draw (#1,#2-0.5)  node[name=nodemap,inner sep=0pt,  minimum size=10pt, shape=circle,draw]{$#3$} (#1,#2)-- (nodemap)  (nodemap)-- (#1,#2-1)}
\def\solbraid(#1,#2)[#3]{\draw (#1,#2-0.5)  node[name=nodemap,inner sep=0pt,  minimum size=9pt, shape=circle,draw]{$#3$}
(#1-0.5,#2) .. controls (#1-0.5,#2-0.15) and (#1-0.4,#2-0.2) .. (#1-0.3,#2-0.3) (#1-0.3,#2-0.3) -- (nodemap)
(#1+0.5,#2) .. controls (#1+0.5,#2-0.15) and (#1+0.4,#2-0.2) .. (#1+0.3,#2-0.3) (#1+0.3,#2-0.3) -- (nodemap)
(#1+0.5,#2-1) .. controls (#1+0.5,#2-0.85) and (#1+0.4,#2-0.8) .. (#1+0.3,#2-0.7) (#1+0.3,#2-0.7) -- (nodemap)
(#1-0.5,#2-1) .. controls (#1-0.5,#2-0.85) and (#1-0.4,#2-0.8) .. (#1-0.3,#2-0.7) (#1-0.3,#2-0.7) -- (nodemap)
}
\def\flip(#1,#2)[#3]{\draw (
#1+1*#3,#2) .. controls (#1+1*#3,#2-0.05*#3) and (#1+0.96*#3,#2-0.15*#3).. (#1+0.9*#3,#2-0.2*#3)
(#1+0.1*#3,#2-0.8*#3)--(#1+0.9*#3,#2-0.2*#3)
(#1,#2-1*#3) .. controls (#1,#2-0.95*#3) and (#1+0.04*#3,#2-0.85*#3).. (#1+0.1*#3,#2-0.8*#3)
(#1,#2) .. controls (#1,#2-0.05*#3) and (#1+0.04*#3,#2-0.15*#3).. (#1+0.1*#3,#2-0.2*#3)
(#1+0.1*#3,#2-0.2*#3) -- (#1+0.9*#3,#2-0.8*#3)
(#1+1*#3,#2-1*#3) .. controls (#1+1*#3,#2-0.95*#3) and (#1+0.96*#3,#2-0.85*#3).. (#1+0.9*#3,#2-0.8*#3)
}
\def\raction(#1,#2)[#3,#4]{\draw (#1,#2) -- (#1,#2-2*#4/2)  (#1,#2-1*#4/2)--(#1+1*#4/2+#3*#4/2,#2-1*#4/2) .. controls (#1+1.555*#4/2+#3*#4/2,#2-1*#4/2) and (#1+2*#4/2+#3*#4/2,#2-0.555*#4/2) .. (#1+2*#4/2+#3*#4/2,#2)}
\def\doublemap(#1,#2)[#3]{\draw (#1+0.5,#2-0.5) node [name=doublemapnode,inner xsep=0pt, inner ysep=0pt, minimum height=9pt, minimum width=25pt,shape=rectangle,draw,rounded corners] {$#3$} (#1,#2) .. controls (#1,#2-0.075) .. (doublemapnode) (#1+1,#2) .. controls (#1+1,#2-0.075).. (doublemapnode) (doublemapnode) .. controls (#1,#2-0.925)..(#1,#2-1) (doublemapnode) .. controls (#1+1,#2-0.925).. (#1+1,#2-1)}
\begin{scope}[xshift=0cm, yshift=-0.75cm]
\comult(0.5,0)[1,1]; \comult(2.5,0)[1,1]; \draw (0,-1) -- (0,-2); \laction(0,-2)[0,1];  \flip(1,-1)[1]; \draw (3,-1) -- (3,-2); \draw (4,0) -- (4,-2); \laction(3,-2)[0,1]; \laction(2,-2.5)[0.665,1.5]; \draw (2,-2) -- (2,-2.5); \draw (1,-3) ..  controls (1,-3.5) and (2,-4.5) .. (2,-5); \draw (4,-4) ..  controls (4,-4.5) and (3,-4.5) .. (3,-5);  \doublemap(2,-5)[\scriptstyle R^{t_2}]; \flip(2,-6)[1];
\end{scope}
\begin{scope}[xshift=4.5cm, yshift=-3.8cm]
\node at (0,-0.5){=};
\end{scope}
\begin{scope}[xshift=5cm, yshift=-0.25cm]
\comult(0.5,0)[1,1]; \comult(2.5,0)[1,1]; \draw (0,-1) -- (0,-2); \flip(1,-1)[1]; \draw (3,-1) -- (3,-2); \draw (4,0) -- (4,-3); \laction(3,-3)[0,1]; \laction(2,-3.5)[0.665,1.5]; \draw (2,-3) -- (2,-3.5); \solbraid(2.5,-2)[\scriptstyle r]; \draw (1,-3) ..  controls (1,-3.5) and (2,-5.5) .. (2,-6); \draw (4,-5) ..  controls (4,-5.5) and (3,-5.5) .. (3,-6);  \doublemap(2,-6)[\scriptstyle R^{t_2}]; \flip(2,-7)[1];  \laction(0,-2)[0,1];
\end{scope}
\begin{scope}[xshift=9.5cm, yshift=-3.8cm]
\node at (0,-0.5){=};
\end{scope}
\begin{scope}[xshift=10cm, yshift=-0.95cm]
\solbraid(1,0)[\scriptstyle r];  \comult(0.5,-1)[1,1];  \laction(1.5,-1)[0,1]; \draw (2.5,0) -- (2.5,-1); \laction(1,-2)[0,1.5];  \draw (0,-2) ..  controls (0,-2.5) and (1,-4) .. (1,-4.5); \draw (2.5,-3.5) ..  controls (2.5,-4) and (2,-4) .. (2,-4.5); \doublemap(1,-4.5)[\scriptstyle R^{t_2}]; \flip(1,-5.5)[1];
\end{scope}
\begin{scope}[xshift=13cm, yshift=-3.8cm]
\node at (0,-0.5){=};
\end{scope}
\begin{scope}[xshift=13.5cm, yshift=-1.25cm]
\solbraid(0.5,0)[\scriptstyle r];   \draw (0,-1) -- (0,-3); \laction(1,-1)[0,1]; \comult(2,-2)[1,1]; \raction(0,-3)[0,1.5]; \draw (2,0) -- (2,-1); \draw (2.5,-3) ..  controls (2.5,-3.5) and (1.5,-4.5) .. (1.5,-5); \draw (0,-4.5) ..  controls (0,-4.75) and (0.5,-4.75) .. (0.5,-5); \flip(0.5,-5)[1];
\end{scope}
\begin{scope}[xshift=16.5cm, yshift=-3.8cm]
\node at (0,-0.5){=};
\end{scope}
\begin{scope}[xshift=17cm, yshift=-0.75cm]
\solbraid(0.5,0)[\scriptstyle r]; \draw (0,-1) -- (0,-4); \comult(1,-1)[1,1]; \comult(3,-1)[1,1]; \draw (3,0) -- (3,-1); \flip(1.5,-2)[1]; \draw (0.5,-2) -- (0.5,-3); \draw (3.5,-2) -- (3.5,-3); \laction(2.5,-3)[0,1]; \laction(0.5,-3)[0,1]; \raction(0,-4)[0,1.5]; \flip(0.5,-6)[1]; \draw (3.5,-4) ..  controls (3.5,-5) and (1.5,-5) .. (1.5,-6); \draw (0,-5.5) ..  controls (0,-5.75) and (0.5,-5.75) .. (0.5,-6);
\end{scope}
\begin{scope}[xshift=21cm, yshift=-3.8cm]
\node at (0,-0.5){=};
\end{scope}
\begin{scope}[xshift=21.5cm, yshift=0cm]
\comult(0.5,0)[1,1]; \comult(2.5,0)[1,1]; \draw (3.5,0) -- (3.5,-2); \draw (0,-1) -- (0,-2); \flip(1,-1)[1]; \draw (3,-1) -- (3,-2); \raction(2,-2)[0,1]; \solbraid(0.5,-2)[\scriptstyle r]; \comult(3.5,-2)[1,1]; \flip(2,-3)[1]; \draw (1,-3) -- (1,-4); \laction(1,-4)[0,1]; \draw (0,-3) -- (0,-5); \raction(0,-5)[0.665,1.5]; \draw (4,-3) -- (4,-4); \laction(3,-4)[0,1]; \draw (4,-5) ..  controls (4,-6) and (1.5,-6) .. (1.5,-7);
\draw (0,-6.5) ..  controls (0,-6.75) and (0.5,-6.75) .. (0.5,-7); \flip(0.5,-7)[1];
\end{scope}
\begin{scope}[xshift=25.8cm, yshift=-3.8cm]
\node at (0,-0.5){.};
\end{scope}
\end{tikzpicture}
$$
Consequently,
$$
\begin{tikzpicture}[scale=0.395]
\def\counit(#1,#2){\draw (#1,#2) -- (#1,#2-0.93) (#1,#2-1) circle[radius=2pt]}
\def\comult(#1,#2)[#3,#4]{\draw (#1,#2) -- (#1,#2-0.5*#4) arc (90:0:0.5*#3 and 0.5*#4) (#1,#2-0.5*#4) arc (90:180:0.5*#3 and 0.5*#4)}
\def\laction(#1,#2)[#3,#4]{\draw (#1,#2) .. controls (#1,#2-0.555*#4/2) and (#1+0.445*#4/2,#2-1*#4/2) .. (#1+1*#4/2,#2-1*#4/2) -- (#1+2*#4/2+#3*#4/2,#2-1*#4/2) (#1+2*#4/2+#3*#4/2,#2)--(#1+2*#4/2+#3*#4/2,#2-2*#4/2)}
\def\map(#1,#2)[#3]{\draw (#1,#2-0.5)  node[name=nodemap,inner sep=0pt,  minimum size=10pt, shape=circle,draw]{$#3$} (#1,#2)-- (nodemap)  (nodemap)-- (#1,#2-1)}
\def\solbraid(#1,#2)[#3]{\draw (#1,#2-0.5)  node[name=nodemap,inner sep=0pt,  minimum size=9pt, shape=circle,draw]{$#3$}
(#1-0.5,#2) .. controls (#1-0.5,#2-0.15) and (#1-0.4,#2-0.2) .. (#1-0.3,#2-0.3) (#1-0.3,#2-0.3) -- (nodemap)
(#1+0.5,#2) .. controls (#1+0.5,#2-0.15) and (#1+0.4,#2-0.2) .. (#1+0.3,#2-0.3) (#1+0.3,#2-0.3) -- (nodemap)
(#1+0.5,#2-1) .. controls (#1+0.5,#2-0.85) and (#1+0.4,#2-0.8) .. (#1+0.3,#2-0.7) (#1+0.3,#2-0.7) -- (nodemap)
(#1-0.5,#2-1) .. controls (#1-0.5,#2-0.85) and (#1-0.4,#2-0.8) .. (#1-0.3,#2-0.7) (#1-0.3,#2-0.7) -- (nodemap)
}
\def\flip(#1,#2)[#3]{\draw (
#1+1*#3,#2) .. controls (#1+1*#3,#2-0.05*#3) and (#1+0.96*#3,#2-0.15*#3).. (#1+0.9*#3,#2-0.2*#3)
(#1+0.1*#3,#2-0.8*#3)--(#1+0.9*#3,#2-0.2*#3)
(#1,#2-1*#3) .. controls (#1,#2-0.95*#3) and (#1+0.04*#3,#2-0.85*#3).. (#1+0.1*#3,#2-0.8*#3)
(#1,#2) .. controls (#1,#2-0.05*#3) and (#1+0.04*#3,#2-0.15*#3).. (#1+0.1*#3,#2-0.2*#3)
(#1+0.1*#3,#2-0.2*#3) -- (#1+0.9*#3,#2-0.8*#3)
(#1+1*#3,#2-1*#3) .. controls (#1+1*#3,#2-0.95*#3) and (#1+0.96*#3,#2-0.85*#3).. (#1+0.9*#3,#2-0.8*#3)
}
\def\raction(#1,#2)[#3,#4]{\draw (#1,#2) -- (#1,#2-2*#4/2)  (#1,#2-1*#4/2)--(#1+1*#4/2+#3*#4/2,#2-1*#4/2) .. controls (#1+1.555*#4/2+#3*#4/2,#2-1*#4/2) and (#1+2*#4/2+#3*#4/2,#2-0.555*#4/2) .. (#1+2*#4/2+#3*#4/2,#2)}
\def\doublemap(#1,#2)[#3]{\draw (#1+0.5,#2-0.5) node [name=doublemapnode,inner xsep=0pt, inner ysep=0pt, minimum height=9pt, minimum width=25pt,shape=rectangle,draw,rounded corners] {$#3$} (#1,#2) .. controls (#1,#2-0.075) .. (doublemapnode) (#1+1,#2) .. controls (#1+1,#2-0.075).. (doublemapnode) (doublemapnode) .. controls (#1,#2-0.925)..(#1,#2-1) (doublemapnode) .. controls (#1+1,#2-0.925).. (#1+1,#2-1)}
\begin{scope}[xshift=0cm, yshift=-1.25cm]
\draw (0,0) -- (0,-2); \comult(2,0)[1,1]; \comult(1.5,-1)[1,1]; \raction(0,-2)[0,1]; \draw (2,-2) .. controls (2,-2.5) and (1.5,-3.5) .. (1.5,-4); \draw (0,-3) .. controls (0,-3.5) and (0.5,-3.5) .. (0.5,-4); \doublemap(0.5,-4)[\scriptstyle R^{t_1}]; \flip(0.5,-5)[1]; \draw (3.5,-0) -- (3.5,-1); \laction(2.5,-1)[0,1]; \draw (3.5,-2) ..  controls (3.5,-3) and (2.5,-5.5) .. (2.5,-6); \draw (0.5,-6) -- (0.5,-7); \solbraid(2,-6)[\scriptstyle r]; \doublemap(0.5,-7)[\scriptstyle R^{t_2}];  \flip(0.5,-8)[1]; \draw (2.5,-7) -- (2.5,-9);
\end{scope}
\begin{scope}[xshift=4cm, yshift=-5.3cm]
\node at (0,-0.5){=};
\end{scope}
\begin{scope}[xshift=4.5cm, yshift=-0.25cm]
\comult(1.5,0)[2,2]; \comult(4,-1)[1,1]; \flip(2.5,-2)[1]; \flip(4.5,-2)[1]; \comult(6,-1)[1,1]; \draw (0.5,-2) -- (0.5,-3);  \draw (6.5,-2) -- (6.5,-3); \draw (6,0) -- (6,-1); \draw (4,0) -- (4,-1); \flip(3.5,-3)[1]; \laction(5.5,-3)[0,1]; \raction(4.5,-4)[0,2]; \comult(0.5,-3)[1,1]; \comult(2.5,-3)[1,1]; \draw (3.5,-4) -- (3.5,-5); \draw (0,-4) -- (0,-5); \flip(1,-4)[1]; \draw (3,-4) -- (3,-5); \raction(2,-5)[0,1]; \solbraid(0.5,-5)[\scriptstyle r]; \comult(3.5,-5)[1,1]; \flip(2,-6)[1]; \draw (1,-6) -- (1,-7); \laction(1,-7)[0,1]; \draw (0,-6) -- (0,-8); \raction(0,-8)[0.665,1.5]; \draw (4,-6) -- (4,-7); \laction(3,-7)[0,1]; \draw (4,-8) ..  controls (4,-9) and (1.5,-9) .. (1.5,-10); \draw (0,-9.5) ..  controls (0,-9.75) and (0.5,-9.75) .. (0.5,-10); \flip(0.5,-10)[1]; \draw (4.5,-6) -- (4.5,-11);
\end{scope}
\begin{scope}[xshift=11.5cm, yshift=-5.3cm]
\node at (0,-0.5){=};
\end{scope}
\begin{scope}[xshift=12cm, yshift=0cm]
\comult(1.5,0)[2,2]; \comult(4,-1)[1,1]; \flip(2.5,-2)[1]; \flip(4.5,-2)[1]; \comult(6,-1)[1,1]; \draw (0.5,-2) -- (0.5,-3);  \draw (6.5,-2) -- (6.5,-3); \draw (6,0) -- (6,-1); \draw (4,0) -- (4,-1); \flip(3.5,-3)[1]; \laction(5.5,-3)[0,1]; \raction(4.5,-4)[0,2]; \comult(0.5,-3)[1,1]; \comult(2.5,-3)[1,1]; \draw (3.5,-4) -- (3.5,-5); \draw (0,-4) -- (0,-5); \flip(1,-4)[1]; \draw (3,-4) -- (3,-5); \raction(2,-5)[0,1]; \solbraid(1.5,-7)[\scriptstyle r]; \comult(3.5,-5)[1,1]; \flip(2,-6)[1]; \draw (1,-5) -- (1,-7); \draw (2,-8) -- (2,-9); \draw (0,-5) -- (0,-8); \raction(0,-8)[0,1];  \laction(0,-9)[0.665,1.5]; \draw (4,-6) -- (4,-7); \laction(3,-7)[0,1]; \draw (4,-8) ..  controls (4,-9) and (3,-9.5) .. (3,-10.5); \flip(2,-10.5)[1]; \draw (4.5,-6) -- (4.5,-11.5);
\end{scope}
\begin{scope}[xshift=19cm, yshift=-5.3cm]
\node at (0,-0.5){=};
\end{scope}
\begin{scope}[xshift=19.5cm, yshift=0cm]
\comult(1.5,0)[2,2]; \comult(4,-1)[1,1]; \flip(2.5,-2)[1]; \flip(4.5,-2)[1]; \comult(6,-1)[1,1]; \draw (0.5,-2) -- (0.5,-3);
\draw (6.5,-2) -- (6.5,-3); \draw (6,0) -- (6,-1); \draw (4,0) -- (4,-1); \flip(3.5,-3)[1]; \laction(5.5,-3)[0,1]; \raction(4.5,-4)[0,2];  \comult(0.5,-3)[1,1]; \comult(2.5,-3)[1,1]; \raction(1,-4)[0,1]; \draw (0,-4) -- (0,-8); \draw (3,-4) ..  controls (3,-4.5) and (2,-4.5) .. (2,-5); \draw (3.5,-4) -- (3.5,-5); \flip(1,-5)[1]; \comult(3.5,-5)[1,1]; \flip(2,-6)[1]; \draw (1,-6) -- (1,-7); \draw (4,-6) -- (4,-7); \draw (4.5,-6) -- (4.5,-11.5); \solbraid(1.5,-7)[\scriptstyle r]; \laction(3,-7)[0,1]; \raction(0,-8)[0,1]; \laction(0,-9)[0.665,1.5]; \draw (2,-8) -- (2,-9);
\draw (4,-8) ..  controls (4,-9) and (3,-9.5) .. (3,-10.5); \flip(2,-10.5)[1];
\end{scope}
\begin{scope}[xshift=26.4cm, yshift=-5.3cm]
\node at (0,-0.5){,};
\end{scope}
\end{tikzpicture}
$$
where the second equality follows from Remark~\ref{prop de involutiva y de braided}, and the third one, from the fact that $c$ is a natural  isomorphism and $\Delta$ is cocommutative. But, by the naturalness of $c$ and the fact that $\Delta$ is cocommutative,
$$
\begin{tikzpicture}[scale=0.395]
\def\counit(#1,#2){\draw (#1,#2) -- (#1,#2-0.93) (#1,#2-1) circle[radius=2pt]}
\def\comult(#1,#2)[#3,#4]{\draw (#1,#2) -- (#1,#2-0.5*#4) arc (90:0:0.5*#3 and 0.5*#4) (#1,#2-0.5*#4) arc (90:180:0.5*#3 and 0.5*#4)}
\def\laction(#1,#2)[#3,#4]{\draw (#1,#2) .. controls (#1,#2-0.555*#4/2) and (#1+0.445*#4/2,#2-1*#4/2) .. (#1+1*#4/2,#2-1*#4/2) -- (#1+2*#4/2+#3*#4/2,#2-1*#4/2) (#1+2*#4/2+#3*#4/2,#2)--(#1+2*#4/2+#3*#4/2,#2-2*#4/2)}
\def\map(#1,#2)[#3]{\draw (#1,#2-0.5)  node[name=nodemap,inner sep=0pt,  minimum size=10pt, shape=circle,draw]{$#3$} (#1,#2)-- (nodemap)  (nodemap)-- (#1,#2-1)}
\def\solbraid(#1,#2)[#3]{\draw (#1,#2-0.5)  node[name=nodemap,inner sep=0pt,  minimum size=9pt, shape=circle,draw]{$#3$}
(#1-0.5,#2) .. controls (#1-0.5,#2-0.15) and (#1-0.4,#2-0.2) .. (#1-0.3,#2-0.3) (#1-0.3,#2-0.3) -- (nodemap)
(#1+0.5,#2) .. controls (#1+0.5,#2-0.15) and (#1+0.4,#2-0.2) .. (#1+0.3,#2-0.3) (#1+0.3,#2-0.3) -- (nodemap)
(#1+0.5,#2-1) .. controls (#1+0.5,#2-0.85) and (#1+0.4,#2-0.8) .. (#1+0.3,#2-0.7) (#1+0.3,#2-0.7) -- (nodemap)
(#1-0.5,#2-1) .. controls (#1-0.5,#2-0.85) and (#1-0.4,#2-0.8) .. (#1-0.3,#2-0.7) (#1-0.3,#2-0.7) -- (nodemap)
}
\def\flip(#1,#2)[#3]{\draw (
#1+1*#3,#2) .. controls (#1+1*#3,#2-0.05*#3) and (#1+0.96*#3,#2-0.15*#3).. (#1+0.9*#3,#2-0.2*#3)
(#1+0.1*#3,#2-0.8*#3)--(#1+0.9*#3,#2-0.2*#3)
(#1,#2-1*#3) .. controls (#1,#2-0.95*#3) and (#1+0.04*#3,#2-0.85*#3).. (#1+0.1*#3,#2-0.8*#3)
(#1,#2) .. controls (#1,#2-0.05*#3) and (#1+0.04*#3,#2-0.15*#3).. (#1+0.1*#3,#2-0.2*#3)
(#1+0.1*#3,#2-0.2*#3) -- (#1+0.9*#3,#2-0.8*#3)
(#1+1*#3,#2-1*#3) .. controls (#1+1*#3,#2-0.95*#3) and (#1+0.96*#3,#2-0.85*#3).. (#1+0.9*#3,#2-0.8*#3)
}
\def\raction(#1,#2)[#3,#4]{\draw (#1,#2) -- (#1,#2-2*#4/2)  (#1,#2-1*#4/2)--(#1+1*#4/2+#3*#4/2,#2-1*#4/2) .. controls (#1+1.555*#4/2+#3*#4/2,#2-1*#4/2) and (#1+2*#4/2+#3*#4/2,#2-0.555*#4/2) .. (#1+2*#4/2+#3*#4/2,#2)}
\def\doublemap(#1,#2)[#3]{\draw (#1+0.5,#2-0.5) node [name=doublemapnode,inner xsep=0pt, inner ysep=0pt, minimum height=11pt, minimum width=25pt,shape=rectangle,draw,rounded corners] {$#3$} (#1,#2) .. controls (#1,#2-0.075) .. (doublemapnode) (#1+1,#2) .. controls (#1+1,#2-0.075).. (doublemapnode) (doublemapnode) .. controls (#1,#2-0.925)..(#1,#2-1) (doublemapnode) .. controls (#1+1,#2-0.925).. (#1+1,#2-1)}
\begin{scope}[xshift=0cm, yshift=-0.25cm]
\comult(0.5,0)[1,1]; \comult(2.5,0)[1,1]; \raction(1,-1)[0,1]; \draw (0,-1) -- (0,-5); \draw (3,-1) ..  controls (3,-1.5) and (2,-1.5) .. (2,-2); \draw (3.5,0) -- (3.5,-2); \flip(1,-2)[1]; \comult(3.5,-2)[1,1]; \flip(2,-3)[1]; \draw (1,-3) -- (1,-4); \draw (4,-3) -- (4,-4);
\solbraid(1.5,-4)[\scriptstyle r]; \laction(3,-4)[0,1]; \raction(0,-5)[0,1]; \laction(0,-6)[0.665,1.5]; \draw (2,-5) -- (2,-6);
\draw (4,-5) ..  controls (4,-6) and (3,-6.5) .. (3,-7.5); \flip(2,-7.5)[1];
\end{scope}
\begin{scope}[xshift=4.5cm, yshift=-4.05cm]
\node at (0,-0.5){=};
\end{scope}
\begin{scope}[xshift=5cm, yshift=0cm]
\comult(0.5,0)[1,1]; \flip(0,-1)[1]; \draw (2.5,0) -- (2.5,-1); \comult(2.5,-1)[1,1]; \raction(1,-2)[0,1]; \draw (4.5,0) -- (4.5,-1); \comult(4.5,-1)[1,1]; \solbraid(3.5,-2)[\scriptstyle r]; \draw (3,-3) ..  controls (3,-3.5) and (2,-3.5) .. (2,-4); \draw (4,-3) ..  controls (4,-4) and (3,-4) .. (3,-5); \draw (1,-3) -- (1,-4); \flip(1,-4)[1]; \flip(2,-5)[1];  \draw (0,-2) -- (0,-5); \raction(0,-5)[0,1]; \laction(0,-6)[0.665,1.5]; \draw (5,-2) ..  controls (5,-3) and (4,-5) .. (4,-6); \laction(3,-6)[0,1]; \draw (4,-7) ..  controls (4,-7.5) and (3,-7.5) .. (3,-8); \draw (2,-7.5) -- (2,-8); \flip(2,-8)[1];
\end{scope}
\begin{scope}[xshift=10.5cm, yshift=-4.05cm]
\node at (0,-0.5){=};
\end{scope}
\begin{scope}[xshift=11cm, yshift=0cm]
\comult(0.5,0)[1,1]; \comult(2.5,0)[1,1]; \draw (0,-1) -- (0,-2); \flip(1,-1)[1]; \draw (3,-1) -- (3,-2); \comult(4.5,0)[1,1];  \flip(4,-1)[1];
\solbraid(3.5,-2)[\scriptstyle r]; \raction(0,-2)[0,1]; \draw (2,-2) -- (2,-3); \raction(2,-3)[0,1]; \draw (4,-3) -- (4,-4);  \laction(2,-4)[0.665,1.5]; \draw (5,-2) -- (5,-5.5); \flip(4,-5.5)[1]; \draw (0,-3) -- (0,-6.5);  \laction(0,-6.5)[1.2,2.5];
\draw (0,-2) -- (0,-5); \draw (5,-6.5) -- (5,-9);
\end{scope}
\begin{scope}[xshift=16.5cm, yshift=-4.05cm]
\node at (0,-0.5){=};
\end{scope}
\begin{scope}[xshift=17cm, yshift=-1.7cm]
\comult(0.5,0)[1,1]; \comult(2.5,0)[1,1]; \draw (0,-1) -- (0,-2); \flip(1,-1)[1]; \draw (5,-1) -- (5,-2); \comult(4.5,0)[1,1];  \flip(3,-1)[1]; \raction(0,-2)[0,1]; \flip(2,-2)[1]; \solbraid(4.5,-2)[\scriptstyle r]; \laction(0,-3)[0.665,1.5]; \raction(3,-3)[0,1]; \draw (5,-3) -- (5,-4); \laction(3,-4)[0.665,1.5];  \draw (2,-4.5) -- (2,-5.5);
\end{scope}
\begin{scope}[xshift=22.3cm, yshift=-4.05cm]
\node at (0,-0.5){,};
\end{scope}
\end{tikzpicture}
$$
and therefore
$$
\begin{tikzpicture}[scale=0.395]
\def\counit(#1,#2){\draw (#1,#2) -- (#1,#2-0.93) (#1,#2-1) circle[radius=2pt]}
\def\comult(#1,#2)[#3,#4]{\draw (#1,#2) -- (#1,#2-0.5*#4) arc (90:0:0.5*#3 and 0.5*#4) (#1,#2-0.5*#4) arc (90:180:0.5*#3 and 0.5*#4)}
\def\laction(#1,#2)[#3,#4]{\draw (#1,#2) .. controls (#1,#2-0.555*#4/2) and (#1+0.445*#4/2,#2-1*#4/2) .. (#1+1*#4/2,#2-1*#4/2) -- (#1+2*#4/2+#3*#4/2,#2-1*#4/2) (#1+2*#4/2+#3*#4/2,#2)--(#1+2*#4/2+#3*#4/2,#2-2*#4/2)}
\def\map(#1,#2)[#3]{\draw (#1,#2-0.5)  node[name=nodemap,inner sep=0pt,  minimum size=10pt, shape=circle,draw]{$#3$} (#1,#2)-- (nodemap)  (nodemap)-- (#1,#2-1)}
\def\solbraid(#1,#2)[#3]{\draw (#1,#2-0.5)  node[name=nodemap,inner sep=0pt,  minimum size=9pt, shape=circle,draw]{$#3$}
(#1-0.5,#2) .. controls (#1-0.5,#2-0.15) and (#1-0.4,#2-0.2) .. (#1-0.3,#2-0.3) (#1-0.3,#2-0.3) -- (nodemap)
(#1+0.5,#2) .. controls (#1+0.5,#2-0.15) and (#1+0.4,#2-0.2) .. (#1+0.3,#2-0.3) (#1+0.3,#2-0.3) -- (nodemap)
(#1+0.5,#2-1) .. controls (#1+0.5,#2-0.85) and (#1+0.4,#2-0.8) .. (#1+0.3,#2-0.7) (#1+0.3,#2-0.7) -- (nodemap)
(#1-0.5,#2-1) .. controls (#1-0.5,#2-0.85) and (#1-0.4,#2-0.8) .. (#1-0.3,#2-0.7) (#1-0.3,#2-0.7) -- (nodemap)
}
\def\flip(#1,#2)[#3]{\draw (
#1+1*#3,#2) .. controls (#1+1*#3,#2-0.05*#3) and (#1+0.96*#3,#2-0.15*#3).. (#1+0.9*#3,#2-0.2*#3)
(#1+0.1*#3,#2-0.8*#3)--(#1+0.9*#3,#2-0.2*#3)
(#1,#2-1*#3) .. controls (#1,#2-0.95*#3) and (#1+0.04*#3,#2-0.85*#3).. (#1+0.1*#3,#2-0.8*#3)
(#1,#2) .. controls (#1,#2-0.05*#3) and (#1+0.04*#3,#2-0.15*#3).. (#1+0.1*#3,#2-0.2*#3)
(#1+0.1*#3,#2-0.2*#3) -- (#1+0.9*#3,#2-0.8*#3)
(#1+1*#3,#2-1*#3) .. controls (#1+1*#3,#2-0.95*#3) and (#1+0.96*#3,#2-0.85*#3).. (#1+0.9*#3,#2-0.8*#3)
}
\def\raction(#1,#2)[#3,#4]{\draw (#1,#2) -- (#1,#2-2*#4/2)  (#1,#2-1*#4/2)--(#1+1*#4/2+#3*#4/2,#2-1*#4/2) .. controls (#1+1.555*#4/2+#3*#4/2,#2-1*#4/2) and (#1+2*#4/2+#3*#4/2,#2-0.555*#4/2) .. (#1+2*#4/2+#3*#4/2,#2)}
\def\doublemap(#1,#2)[#3]{\draw (#1+0.5,#2-0.5) node [name=doublemapnode,inner xsep=0pt, inner ysep=0pt, minimum height=9pt, minimum width=25pt,shape=rectangle,draw,rounded corners] {$#3$} (#1,#2) .. controls (#1,#2-0.075) .. (doublemapnode) (#1+1,#2) .. controls (#1+1,#2-0.075).. (doublemapnode) (doublemapnode) .. controls (#1,#2-0.925)..(#1,#2-1) (doublemapnode) .. controls (#1+1,#2-0.925).. (#1+1,#2-1)}
\begin{scope}[xshift=0cm, yshift=-1.2cm]
\draw (0,0) -- (0,-2); \comult(2,0)[1,1]; \comult(1.5,-1)[1,1]; \raction(0,-2)[0,1]; \draw (2,-2) .. controls (2,-2.5) and (1.5,-3.5) .. (1.5,-4); \draw (0,-3) .. controls (0,-3.5) and (0.5,-3.5) .. (0.5,-4); \doublemap(0.5,-4)[\scriptstyle R^{t_1}]; \flip(0.5,-5)[1]; \draw (3.5,-0) -- (3.5,-1); \laction(2.5,-1)[0,1]; \draw (3.5,-2) ..  controls (3.5,-3) and (2.5,-5.5) .. (2.5,-6); \draw (0.5,-6) -- (0.5,-7); \solbraid(2,-6)[\scriptstyle r]; \doublemap(0.5,-7)[\scriptstyle R^{t_2}];  \flip(0.5,-8)[1]; \draw (2.5,-7) -- (2.5,-9);
\end{scope}
\begin{scope}[xshift=4cm, yshift=-5.25cm]
\node at (0,-0.5){=};
\end{scope}
\begin{scope}[xshift=4.6cm, yshift=0.6cm]
\comult(1.5,-1)[2,2]; \draw (5,-1) -- (5,-2); \comult(5,-2)[1,1];\draw (5.5,-3) -- (5.5,-4); \comult(7,-3)[1,1]; \draw (7.5,-4) -- (7.5,-5); \flip(2.5,-3)[2];\flip(5.5,-4)[1]; \flip(4.5,-5)[1]; \draw (0.5,-3) -- (0.5,-6); \draw (2.5,-5) -- (2.5,-6); \comult(0.5,-6)[1,1]; \comult(2.5,-6)[1,1]; \draw (0,-7) -- (0,-8); \flip(1,-7)[1]; \draw (5,-7) -- (5,-8); \comult(4.5,-6)[1,1];  \flip(3,-7)[1]; \raction(0,-8)[0,1]; \flip(2,-8)[1]; \solbraid(4.5,-8)[\scriptstyle r]; \laction(0,-9)[0.665,1.5]; \raction(3,-9)[0,1]; \draw (5,-9) -- (5,-10); \laction(3,-10)[0.665,1.5];  \draw (2,-10.5) -- (2,-11.5); \laction(6.5,-5)[0,1]; \raction(5.5,-6)[0,2]; \draw (7,-1) -- (7,-3);\draw (5.5,-8) -- (5.5,-11.5);
\end{scope}
\begin{scope}[xshift=12.7cm, yshift=-5.25cm]
\node at (0,-0.5){=};
\end{scope}
\begin{scope}[xshift=13.3cm, yshift=0.6cm]
\comult(0.5,-3)[1,1];\draw (0,-4) -- (0,-5);\comult(2.5,-2)[1,1];\draw (2,-3) -- (2,-4) ;\comult(6,-1)[2,2]; \flip(1,-4)[1]; \flip(3,-3)[2]; \flip(2,-5)[1]; \draw (0.5,-1) -- (0.5,-3); \draw (2.5,-1) -- (2.5,-2); \raction(0,-5)[0,1]; \laction(0,-6)[0,2]; \draw (5,-5) -- (5,-6); \draw (7,-3) -- (7,-6); \comult(3,-6)[1,1]; \comult(5,-6)[1,1]; \comult(7,-6)[1,1];  \flip(3.5,-7)[1]; \flip(5.5,-7)[1]; \flip(4.5,-8)[1];\draw (2.5,-7) -- (2.5,-8); \solbraid(3,-8)[\scriptstyle r]; \laction(3.5,-9)[0,1]; \raction(2.5,-10)[0.665,1.5]; \draw (2.5,-9) -- (2.5,-10); \draw (2,-8) -- (2,-11.5); \laction(6.5,-8)[0,1]; \draw (7.5,-7) -- (7.5,-8); \raction(5.5,-9)[0.665,1.5]; \draw (5.5,-10.5) -- (5.5,-11.5);
\end{scope}
\begin{scope}[xshift=21.3cm, yshift=-5.25cm]
\node at (0,-0.5){=};
\end{scope}
\begin{scope}[xshift=21.9cm, yshift=-1.25cm]
\draw (0,0) -- (0,-2); \comult(2,0)[1,1]; \comult(1.5,-1)[1,1]; \raction(0,-2)[0,1]; \draw (3.5,0) -- (3.5,-1); \draw (2,-2) -- (2,-4);
\draw (3.5,-2) .. controls (3.5,-2.5) and (3,-3.5) .. (3,-4); \doublemap(2,-4)[\scriptstyle R^{t_2}]; \flip(2,-5)[1];
\draw (0,-3) .. controls (0,-4) and (1,-5) .. (1,-6); \solbraid(1.5,-6)[\scriptstyle r]; \draw (3,-6) -- (3,-7);
\laction(2.5,-1)[0,1]; \doublemap(2,-7)[\scriptstyle R^{t_1}]; \flip(2,-8)[1];  \draw (1,-7) -- (1,-9);
\end{scope}

\begin{scope}[xshift=26.2cm, yshift=-5.25cm]
\node at (0,-0.5){,};
\end{scope}
\end{tikzpicture}
$$
where the second equality holds by the coassociativity of $\Delta_{X^3}$ and Remark~\ref{prop de involutiva y de braided}, and the last one equality follows from the first one by simmetry.
\end{proof}

\begin{proof}[Proof of Proposition~\ref{pro:YB}] Using Remark~\ref{Rt2=Rt1a la-1} one proves that $r_e$ is bijective. Moreover by Proposition~\ref{pro:non-degenerate} the pair $(Z,r_e)$ is non-degenerate. Clearly $r_e$ satisfies the braid equation on $X\otimes X\otimes X$, and from Proposition~\ref{lem:SXxSXxSX}(1) it follows that $r_e$ satisfies the braid equation on $SX\otimes SX\otimes SX$. Moreover, using Lemma~\ref{lem:XxXxSX} we obtain that $r_e$ satisfies the braid equation on $X\otimes X\otimes SX$ and $SX\ot X\ot X$, while using Lemma~\ref{lem:XxSXxX} we obtain that $r_e$ also satisfies the braid equation on $X\otimes SX\otimes X$. Finally the cases $SX\ot SX\ot X$ and $X\ot X\ot SX$ follows applying Lemma~\ref{lem:XxXxSX}, with $r$ replaced by $\tilde{r}$, and taking into account Proposition~\ref{lem:SXxSXxSX}(2), while the case $SX\ot X\ot SX$ is similar, but we must apply Lemma~\ref{lem:XxSXxX} instead of Lemma~\ref{lem:XxXxSX}.
\end{proof}

\begin{pro}\label{pro:grupo_de_estructura} Let $(X,r)$ be a non-degenerate braided set in $\mathscr{C}$. There exists a cocommutative Hopf algebra $H$ and a morphism of coalgebras $\gamma\colon X\to H$ such that $m_H \circ (\gamma\ot \gamma) = m_H\circ (\gamma\ot \gamma)\circ r$, which satisfies the following universal property: If $K$ is a cocommutative Hopf algebra and $f\colon X\to K$ is a morphism of coalgebras such that $m_K \circ (f\ot f) = m_K\circ (f\ot f)\circ r$, then $f$ factorizes univocally through $\gamma$.
\end{pro}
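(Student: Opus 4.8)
The plan is to realize $H$ as a quotient of the free Hopf algebra on the coalgebra $X$, imposing precisely the relations encoded by $m\circ(\gamma\otimes\gamma)=m\circ(\gamma\otimes\gamma)\circ r$. First I would form, exactly as in Remark~\ref{Z especial} (taking $Z=X\coprod SX$ with $SX=X$), the free Hopf algebra $\ov{L}$ on the cocommutative coalgebra $X$, together with the canonical coalgebra morphism $\ov{\jmath}\colon X\to\ov{L}$. Recall from that remark that $\ov{L}$ is a cocommutative Hopf algebra and that every coalgebra morphism from $X$ into a Hopf algebra extends uniquely to a Hopf algebra morphism defined on $\ov{L}$; this is the universal property I will use to produce factorizations.

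Next I would introduce the two coalgebra morphisms $g_1,g_2\colon X^2\to\ov{L}$ defined by $g_1\coloneqq m_{\ov{L}}\circ(\ov{\jmath}\otimes\ov{\jmath})$ and $g_2\coloneqq m_{\ov{L}}\circ(\ov{\jmath}\otimes\ov{\jmath})\circ r$. Both are coalgebra morphisms, since $\ov{\jmath}$, the multiplication of the bialgebra $\ov{L}$, and the coalgebra automorphism $r$ all are, and since $X^2$ is cocommutative. Applying Proposition~\ref{coegalizador}, Remark~\ref{coegalizador'} and Corollary~\ref{cociente de Hopf algebra} to the finite family $\{g_1,g_2\}$ yields a Hopf algebra $H$ together with a Hopf algebra morphism $\pi\colon\ov{L}\to H$, which is a coequalizer and hence an epimorphism; cocommutativity of $H$ then descends from that of $\ov{L}$ precisely because $\pi$ is epi. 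I would set $\gamma\coloneqq\pi\circ\ov{\jmath}$, a composite of coalgebra morphisms, hence a coalgebra morphism. The crucial elementary observation is that precomposing $\hat g_i=m\circ(m\otimes\ov{L})\circ(\ov{L}\otimes g_i\otimes\ov{L})$ with $\eta\otimes X^2\otimes\eta$ and using the unit axioms returns $g_i$; consequently $\pi\circ g_1=\pi\circ g_2$, and since $m_H\circ(\gamma\otimes\gamma)=\pi\circ g_1$ and $m_H\circ(\gamma\otimes\gamma)\circ r=\pi\circ g_2$, this is exactly the asserted identity.

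For the universal property, let $K$ be a cocommutative Hopf algebra and $f\colon X\to K$ a coalgebra morphism with $m_K\circ(f\otimes f)=m_K\circ(f\otimes f)\circ r$. By the universal property of $\ov{L}$ there is a unique Hopf algebra morphism $\ov{f}\colon\ov{L}\to K$ with $\ov{f}\circ\ov{\jmath}=f$. Computing $\ov{f}\circ g_1=m_K\circ(f\otimes f)$ and $\ov{f}\circ g_2=m_K\circ(f\otimes f)\circ r$ and invoking the hypothesis on $f$ gives $\ov{f}\circ g_1=\ov{f}\circ g_2$, whence $\ov{f}\circ\hat g_1=\ov{f}\circ\hat g_2$ as well; by the universal property of the coequalizer $\pi$ there is then a unique Hopf algebra morphism $\Phi\colon H\to K$ with $\Phi\circ\pi=\ov{f}$, so that $\Phi\circ\gamma=\ov{f}\circ\ov{\jmath}=f$. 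Uniqueness of $\Phi$ follows by chaining the two uniqueness clauses: any Hopf algebra morphism $\Phi'$ with $\Phi'\circ\gamma=f$ makes $\Phi'\circ\pi$ an extension of $f$ along $\ov{\jmath}$, forcing $\Phi'\circ\pi=\ov{f}$ and hence $\Phi'=\Phi$.

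Since the argument is a purely formal quotient construction, I do not anticipate a genuine obstacle. The only points demanding care are the passage from the coequalizer relation $\pi\circ\hat g_1=\pi\circ\hat g_2$ that Proposition~\ref{coegalizador} literally provides to the relation $\pi\circ g_1=\pi\circ g_2$ that we actually need (handled by the unit trick above), and the explicit verification that the structure induced on $H$ is truly a \emph{cocommutative} Hopf algebra rather than merely a bialgebra. Neither is deep, but both should be recorded explicitly.
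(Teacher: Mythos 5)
Your proof is correct and is essentially the paper's own argument: both form the free Hopf algebra $\ov{L}$ on the coalgebra $X$ via Remark~\ref{Z especial}, coequalize the maps $\hat{g}_i$ built from $m_{\ov{L}}\circ(\ov{\jmath}\otimes\ov{\jmath})$ and $m_{\ov{L}}\circ(\ov{\jmath}\otimes\ov{\jmath})\circ r$ using Proposition~\ref{coegalizador}, Remark~\ref{coegalizador'} and Corollary~\ref{cociente de Hopf algebra}, and set $\gamma\coloneqq\pi\circ\ov{\jmath}$. The only difference is that you spell out the unit trick and the two-step uniqueness argument, which the paper leaves implicit.
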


\begin{proof} Let $\ov{\jmath}\colon X\to \ov{L}$ be the free Hopf algebra over the coalgebra $X$ (see Proposition~\ref{const de antipoda} and Remark~\ref{Z especial}) and let $\ov{\pi}\colon \ov{L}\to H$ be the coequalizer of
\[
\ov{m}\circ (\ov{m}\ot \ov{L})\circ(\ov{L}\otimes \ov{m}\circ (\ov{\jmath}\ot \ov{\jmath})\circ r\otimes\ov{L})\quad\text{and} \quad \ov{m}\circ (\ov{m}\ot \ov{L})\circ(\ov{L}\otimes \ov{m}\circ (\ov{\jmath}\ot \ov{\jmath})\otimes \ov{L}),
\]
where $\ov{m}$ denotes the multiplication map of $\ov{L}$. By Remark~\ref{coegalizador'} and Corollary~\ref{cociente de Hopf algebra}, the map $\gamma\coloneqq \ov{\pi}\circ \ov{\jmath}$ satisfies the required conditions.
\end{proof}

\begin{defn} Let $(X,r)$ be a non-degenerate braided set in $\mathscr{C}$. The pair $(H,\gamma)$ constructed in Proposition~\ref{pro:grupo_de_estructura} is called the \emph{structure Hopf algebra} of $(X,r)$.
\end{defn}

The following theorem is one of the main results of this paper.

\begin{thm}\label{thm:main} Let $(X,r)$ be a non-degenerate braided set in $\mathscr{C}$ and let $(H,\gamma)$ be the structure Hopf algebra  of $(X,r)$. There is a unique braiding operator $(H,r_H)$ such that
\[
r_H\circ (\gamma\otimes\gamma)=(\gamma\otimes\gamma)\circ r.
\]
Furthermore, if $(K,r_k)$ is a braiding operator and $f\colon (X,r)\to (K,r_K)$ is a morphism of braided sets in $\mathscr{C}$, then there is a unique braided operator morphism $\phi$ from $(H,r_H)$ to $(K,r_K)$ such that $f=\phi\circ\gamma$.
\end{thm}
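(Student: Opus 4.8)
The plan is to produce $r_H$ by equipping $H$ with a second, ``free'' Hopf algebra structure making it a brace, to read off the braiding operator via the equivalence of Theorem~\ref{thm:equivalencias}, and then to reduce both the intertwining relation and the universal property to statements about the generators $\gamma(X)$. By Theorem~\ref{thm:equivalencias} and the description of $\Bo(A)$ through $\Bc(A)$, it suffices to exhibit an invertible $1$-cocycle with codomain $H$. The natural candidate has domain the free Hopf algebra $\ov{L}$ on the coalgebra $X$ (Proposition~\ref{const de antipoda} and Remark~\ref{Z especial}): I would construct a coalgebra isomorphism $\pi\colon\ov{L}\to H$ together with a left action $\lambda$ of $\ov{L}$ on $H$ turning $H$ into a left $\ov{L}$-module algebra and coalgebra, subject to $\pi\circ\ov{\jmath}=\gamma$ and the cocycle identity~\eqref{formula de brace}. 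Transporting the multiplication of $\ov{L}$ along $\pi$ then yields a second multiplication $m_\circ$ with the same unit, counit and comultiplication, and by Remark~\ref{rem:brace} the tuple $(H,m_H,m_\circ)$ is a brace; its associated braiding operator is $r_H=(\lambda_H\otimes\rho_H)\circ\Delta_{H^2}$, with $\lambda_H,\rho_H$ the maps of Propositions~\ref{pro:modulo} and~\ref{pro:rho}.

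The intertwining relation $r_H\circ(\gamma\otimes\gamma)=(\gamma\otimes\gamma)\circ r$ is then checked on coordinate maps via Corollary~\ref{coro: igualdad de morfismos}: by construction the first coordinate satisfies $\lambda_H\circ(\gamma\otimes\gamma)=\gamma\circ\sigma$, while $\rho_H\circ(\gamma\otimes\gamma)=\gamma\circ\tau$ follows from the defining relation $m_H\circ(\gamma\otimes\gamma)=m_H\circ(\gamma\otimes\gamma)\circ r$ of $H$ combined with Lemma~\ref{lem:bop->brace}(2)--(3). I would construct $\pi$ and $\lambda$ simultaneously by recursion along the word-length filtration of the free bialgebra $T(Z)$, from which $\ov{L}$ is obtained as a Hopf-algebra quotient (Proposition~\ref{const de antipoda}); here the hypothesis that $\mathscr{C}$ has countable colimits commuting with $\otimes$ is exactly what legitimises the recursion, the cocycle identity forcing the step from degree $Z^i$ to degree $Z^{i+1}$.

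The technical heart, and the main obstacle, is to prove that the resulting $\lambda$ is a genuine (associative) action and that $\pi$ is an \emph{isomorphism} rather than merely a well-defined coalgebra map; equivalently, that the transported $m_\circ$ satisfies the brace axiom~\eqref{ecuabraces}. This is where the braid equation for $r$ and its non-degeneracy are indispensable, and the argument passes through the extended non-degenerate braided set $(Z,r_e)$ of Propositions~\ref{pro:non-degenerate} and~\ref{pro:YB}: the auxiliary maps $r_2,r_3,r_4$ record precisely how $r_H$ must interact with the antipode, as predicted by Corollaries~\ref{braiding operators y antipodas} and~\ref{braiding operators y antipodas 2}, and this antipodal compatibility is what furnishes the two-sided inverse of $\pi$. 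This step is the categorical counterpart of the classical fact that the structure group is the image of the free group under a bijective $1$-cocycle, and it is where essentially all the work of the preceding sections is consumed.

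Finally, uniqueness and the universal property are comparatively formal. For uniqueness, any braiding operator $(H,r_H')$ with $r_H'\circ(\gamma\otimes\gamma)=(\gamma\otimes\gamma)\circ r$ has first coordinate $\lambda_H'$ agreeing with $\lambda_H$ on $\gamma(X)\otimes\gamma(X)$; since $\ov{\pi}\colon\ov{L}\to H$ is an epimorphism and $\lambda_H'$ is a left action compatible with $m_H$ in the sense of Proposition~\ref{lem:matched_pair}(1)--(2), it is determined on products by this restriction, so $\lambda_H'=\lambda_H$ and hence $r_H'=r_H$ by Lemma~\ref{lem:bop->brace}(5). For the universal property, given a braiding operator $(K,r_K)$ and a morphism of braided sets $f\colon(X,r)\to(K,r_K)$, axiom~\eqref{eq:bo1} gives $m_K\circ(f\otimes f)=m_K\circ r_K\circ(f\otimes f)=m_K\circ(f\otimes f)\circ r$, so Proposition~\ref{pro:grupo_de_estructura} supplies a unique Hopf algebra morphism $\phi\colon H\to K$ with $f=\phi\circ\gamma$. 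That $\phi$ is a morphism of braiding operators, i.e. $(\phi\otimes\phi)\circ r_H=r_K\circ(\phi\otimes\phi)$, is verified on coordinates with Corollary~\ref{coro: igualdad de morfismos}: both $\phi\circ\lambda_H$ and $\lambda_K\circ(\phi\otimes\phi)$ are actions compatible with the multiplications which agree on $\gamma(X)\otimes\gamma(X)$, hence coincide by the same generation argument used for uniqueness, and likewise for the $\rho$-coordinate.
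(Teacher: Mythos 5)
Your proposal does not establish the existence half of the theorem, and the reduction you set up points the equivalence of Theorem~\ref{thm:equivalencias} in the wrong direction. In the proof of that theorem, an invertible $1$-cocycle $\pi\colon H'\to A$ is sent to the brace $(A,m_A,m_{\circ})$ with $m_{\circ}\coloneqq\pi\circ m_{H'}\circ(\pi^{-1}\otimes\pi^{-1})$, and the functor $U$ then produces a braiding operator whose \emph{underlying Hopf algebra is} $(A,m_{\circ})$, i.e.\ the multiplication transported from the \emph{domain} of the cocycle. Applied to your $\pi\colon\ov{L}\to H$, this yields a braiding operator on $(H,m_{\circ})\cong\ov{L}$, satisfying $m_{\circ}\circ r=m_{\circ}$ --- not a braiding operator on the structure Hopf algebra $(H,m_H)$ satisfying $m_H\circ r_H=m_H$, which is what \eqref{eq:bo1}, the defining relation $m_H\circ(\gamma\otimes\gamma)=m_H\circ(\gamma\otimes\gamma)\circ r$, and the universal property all refer to. To obtain a braiding operator on $(H,m_H)$ from a cocycle, $(H,m_H)$ must be the \emph{domain}, and the codomain must be a second, ``derived'' Hopf algebra structure on the same coalgebra (classically: the derived structure group of Lu--Yan--Zhu); constructing that second structure is essentially the theorem itself, so nothing has been reduced. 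Relatedly, your claim that there is a coalgebra isomorphism $\pi\colon\ov{L}\to H$ with $\pi\circ\ov{\jmath}=\gamma$ is the wrong analogue: in the classical theory the bijective $1$-cocycle relates the structure group to the derived structure group, both proper quotients of the free group; the free object is not one of the two sides, and neither the paper nor the classical theory supports such an isomorphism. Finally, and decisively, you defer the construction of $\pi$ and $\lambda$, the proof that $\lambda$ is an associative action, and the invertibility of $\pi$ as ``the technical heart and the main obstacle'' --- but that is precisely the content to be proven, so the existence statement remains unestablished.

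For comparison, the paper proceeds in the opposite logical order: it constructs $r_H$ directly and the brace/cocycle data is only a consequence (via Theorem~\ref{thm:equivalencias}), never an input. Concretely, it extends $(X,r)$ to the non-degenerate braided set $(Z,r_e)$ on $Z=X\coprod SX$ (Propositions~\ref{pro:non-degenerate} and~\ref{pro:YB}, with $r_2,r_3,r_4$ built from $R^{t_1}$, $R^{t_2}$ and $r^{-1}$), extends $r_e$ multiplicatively to a braiding $r_L$ on the free bialgebra $T(Z)$ by stacking the maps $r_e(i,k)$, descends through the antipode coequalizer to $\ov{L}$ using Lemma~\ref{lem:chinos} and Remark~\ref{rem:producto_tensorial_colimite}, descends further to $H$ using $\ov{\pi}\circ\ov{m}\circ\ov{r}=\ov{\pi}\circ\ov{m}$, and proves that $r_H$ is a coalgebra isomorphism by induction on word length. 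Your treatment of the universal property (via \eqref{eq:bo1} and Proposition~\ref{pro:grupo_de_estructura}) and your uniqueness sketch (generation by $\gamma(X)$ together with Lemma~\ref{lem:bop->brace}(5)) do agree with the paper and are sound in outline, but both presuppose the $r_H$ you have not constructed.
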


In order to prove this theorem we shall need the following lemma.

\begin{lem}\label{lem:chinos} Let $(X,r)$ be a non-degenerate braided set in $\mathscr{C}$. The following equalities hold:
\begin{align*}
&(X\otimes r_2)\circ (r\otimes S)\circ (X\otimes\Delta) = (X\otimes S\otimes X)\circ (\Delta\circ\sigma \otimes X)\circ (X\otimes c)\circ (\Delta\otimes X),\\
&(r_3\otimes X)\circ (S\otimes r)\circ (\Delta\otimes X) = (X\otimes S\otimes X)\circ (X\otimes \Delta\circ\tau)\circ (c\otimes X)\circ (X\otimes \Delta),\\
&(X\otimes r)\circ (r_2\otimes X)\circ (X\otimes S\otimes X) \circ (X\otimes\Delta)=(S\otimes X^2)\circ(\Delta\otimes X)\circ r\circ J_2^{-1},\\
&(r\otimes X)\circ (X\otimes r_3)\circ (X\otimes S\otimes X) \circ (\Delta\otimes X)=(X^2\otimes S)\circ(X \otimes\Delta)\circ r\circ K_2^{-1},
\end{align*}
where $J_2\coloneqq (\tau \otimes X)\circ (X \otimes\Delta)$ and $K_2\coloneqq (X\otimes\sigma)\circ (\Delta\otimes X)$.
\end{lem}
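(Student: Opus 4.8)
The plan is to reduce all four identities to statements purely about $r$, $\sigma$, $\tau$, $R^{t_1}$, $R^{t_2}$, $\Delta$ and $c$ by unfolding the definitions~\eqref{eq:r2} and~\eqref{eq:r3} of $r_2$ and $r_3$ and exploiting $S^2=\id$. For the first identity I would substitute $r_2=(S\ot X)\circ c\circ R^{t_1}\circ(X\ot S)$ and collapse the two consecutive occurrences of $S$ on the last leg, using $(X\ot X\ot S)\circ(r\ot S)=r\ot X$; this turns the left-hand side into $(X\ot S\ot X)\circ(X\ot c)\circ(X\ot R^{t_1})\circ(r\ot X)\circ(X\ot\Delta)$. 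Since $(X\ot S\ot X)$ is invertible and already appears on the right, it then suffices to prove the $S$-free identity
\[
(X\ot c)\circ(X\ot R^{t_1})\circ(r\ot X)\circ(X\ot\Delta)=(\Delta\circ\sigma\ot X)\circ(X\ot c)\circ(\Delta\ot X).
\]
The third identity is handled the same way, with $(r_2\ot X)\circ(X\ot S\ot X)=(S\ot X\ot X)\circ(c\ot X)\circ(R^{t_1}\ot X)$; after cancelling $(S\ot X\ot X)$ and using Proposition~\ref{no degenerado} to write $J_2^{-1}=(\tau^{-1}\ot X)\circ(X\ot\Delta)$, it reduces to
\[
(X\ot r)\circ(c\ot X)\circ(R^{t_1}\ot X)\circ(X\ot\Delta)=(\Delta\ot X)\circ r\circ J_2^{-1},
\]
which, after post-composing with the invertible $J_2$, becomes an identity free of both $S$ and inverses.

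To prove the reduced identities I would chase the diagrams using three ingredients: the equality $r=(\sigma\ot\tau)\circ\Delta_{X^2}$ from Proposition~\ref{coro: morfismos de coalgebra al producto tensorial de n coalgebras}, the defining relation $R^{t_1}\circ(\tau\ot X)\circ(X\ot\Delta)=(X\ot\sigma)\circ(\Delta\ot X)$ of Definition~\ref{transposicones en variables} together with Remark~\ref{algunas formulas}, and the fact that $\sigma$ is a coalgebra morphism. Concretely, in each case one creates inside a single tensor slot the block $R^{t_1}\circ(\tau\ot X)\circ(X\ot\Delta)$ and replaces it by $(X\ot\sigma)\circ(\Delta\ot X)$; what is then left on each side is a pair of $\sigma$'s together with one surviving leg of an iterated comultiplication, and the two sides agree after permuting those legs. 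This last permutation is exactly where cocommutativity of $X$ enters.

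The identities for $r_3$, namely the second and fourth, are the left–right mirrors of those for $r_2$: replacing $r$ by $\tilde r=c\circ r^{-1}\circ c$ interchanges $R^{t_1}\leftrightarrow R^{t_2}$ and $\sigma\leftrightarrow\tau$ by Proposition~\ref{lem:SXxSXxSX}(2), so they follow from the first and third by the identical computation with $R^{t_2}$, $\tau$, the defining relation of $R^{t_2}$, and $K_2$ in place of $R^{t_1}$, $\sigma$ and $J_2$. I expect the main obstacle to be the bookkeeping of the three comultiplication legs in the reduced identities: one must track which leg of $\Delta_2$ feeds each occurrence of $\sigma$, $\tau$ and $r$, and then verify that the residual legs match after the cocommutativity permutation. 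This step replaces the convolution–invertibility argument used in Proposition~\ref{braiding operators y antipodas}, which is unavailable here since $X$ carries no multiplication; it is cocommutativity, rather than convolution, that forces the final coordinate to agree. A minor preliminary point worth recording is that $R^{t_1}$ and $R^{t_2}$ are coalgebra morphisms — immediate from the formula $R^{t_1}=(\tau^{-1}\ot X)\circ(X\ot\Delta)\circ(X\ot\sigma)\circ(\Delta\ot X)$ established in the proof of Proposition~\ref{pro:non-degenerate} — so that every map in sight is a coalgebra morphism and the diagrammatic manipulations (or, if one prefers, a closing appeal to Corollary~\ref{coro: igualdad de morfismos}) are justified.
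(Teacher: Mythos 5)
Your treatment of the first and third identities is, in substance, the paper's own proof. The paper likewise unfolds $r_2=(S\ot X)\circ c\circ R^{t_1}\circ (X\ot S)$ (it carries the $S$'s through the diagrams instead of cancelling them at the outset, a purely cosmetic difference), expands $r=(\sigma\ot\tau)\circ\Delta_{X^2}$, uses coassociativity to create the block $(\tau\ot X)\circ(X\ot\Delta)$ feeding $R^{t_1}$ so that the defining relation of Definition~\ref{transposicones en variables} replaces it by $(X\ot\sigma)\circ(\Delta\ot X)$, and closes with cocommutativity, naturality of $c$ and the fact that $\sigma$ is a coalgebra morphism; for the third identity it pre-composes with the invertible $J_2$ to remove the inverse, exactly as you do. Your reduced, $S$-free identities are correct, and your remark that cocommutativity (rather than any convolution argument) forces the final legs to agree is exactly what happens in the paper's diagram chase.

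The one genuine flaw is your justification of the second and fourth identities. Proposition~\ref{lem:SXxSXxSX}(2) does give $\widetilde{R}^{t_1}=R^{t_2}$ for $\tilde r=c\circ r^{-1}\circ c$, but passing from $r$ to $\tilde r$ does \emph{not} interchange $\sigma\leftrightarrow\tau$: the coordinate maps of $\tilde r$ are coordinates of $r^{-1}$, not of $r$. Already for the trivial solution $r=c$ one has $\tilde r=c$, so $\tilde\sigma=\sigma=\epsilon\ot X$ while $\tau=X\ot\epsilon$, and these differ. Likewise the ``$r_2$'' built from $\tilde r$ is $(S\ot X)\circ c\circ R^{t_2}\circ(X\ot S)$, which is not $r_3=(X\ot S)\circ c\circ R^{t_2}\circ(S\ot X)$ of \eqref{eq:r3}: even after conjugating by $c$ the two differ by the order of $c$ and $R^{t_2}$. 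Hence the first identity applied to $(X,\tilde r)$ is simply a different statement from the second identity for $(X,r)$, and this route collapses. The symmetry the paper actually invokes is the left--right mirror of string diagrams (morally $r\mapsto c\circ r\circ c$, with no inverse), under which the paper's conventions are set up to trade places in pairs: $\sigma\leftrightarrow\tau$, $r_2\leftrightarrow r_3$, $R^{t_1}\leftrightarrow R^{t_2}$, $J_2\leftrightarrow K_2$, so that mirroring every step of the proofs of the first and third identities yields proofs of the second and fourth. Your fallback clause --- ``the identical computation with $R^{t_2}$, $\tau$, the defining relation of $R^{t_2}$ and $K_2$ in place of $R^{t_1}$, $\sigma$ and $J_2$'' --- is precisely this mirror argument and does close the gap; only the purported reduction through $\tilde r$ and Proposition~\ref{lem:SXxSXxSX}(2) must be discarded.
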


\begin{proof} We will prove the first and third equalities, since the second and fourth one are valid by symmetry. For the first one note that by the very definitions of $r_2$ and $R^{t_1}$, and the fact that $\Delta$ is coassociative and cocommutative, $c$ is a natural isomorphism and $\sigma$ is a coalgebra homomorphism, we have:
$$
\begin{tikzpicture}[scale=0.395]
\def\mult(#1,#2)[#3,#4]{\draw (#1,#2) arc (180:360:0.5*#3 and 0.5*#4) (#1+0.5*#3,-0.5*#4) -- (#1+0.5*#3,-#4)}
\def\counit(#1,#2){\draw (#1,#2) -- (#1,#2-0.93) (#1,#2-1) circle[radius=2pt]}
\def\comult(#1,#2)[#3,#4]{\draw (#1,#2) -- (#1,#2-0.5*#4) arc (90:0:0.5*#3 and 0.5*#4) (#1,#2-0.5*#4) arc (90:180:0.5*#3 and 0.5*#4)}
\def\laction(#1,#2)[#3,#4]{\draw (#1,#2) .. controls (#1,#2-0.555*#4/2) and (#1+0.445*#4/2,#2-1*#4/2) .. (#1+1*#4/2,#2-1*#4/2) -- (#1+2*#4/2+#3*#4/2,#2-1*#4/2) (#1+2*#4/2+#3*#4/2,#2)--(#1+2*#4/2+#3*#4/2,#2-2*#4/2)}
\def\map(#1,#2)[#3]{\draw (#1,#2-0.5)  node[name=nodemap,inner sep=0pt,  minimum size=10pt, shape=circle, draw]{$#3$} (#1,#2)-- (nodemap)  (nodemap)-- (#1,#2-1)}
\def\solbraid(#1,#2)[#3]{\draw (#1,#2-0.5)  node[name=nodemap,inner sep=0pt,  minimum size=9pt, shape=circle,draw]{$#3$}
(#1-0.5,#2) .. controls (#1-0.5,#2-0.15) and (#1-0.4,#2-0.2) .. (#1-0.3,#2-0.3) (#1-0.3,#2-0.3) -- (nodemap)
(#1+0.5,#2) .. controls (#1+0.5,#2-0.15) and (#1+0.4,#2-0.2) .. (#1+0.3,#2-0.3) (#1+0.3,#2-0.3) -- (nodemap)
(#1+0.5,#2-1) .. controls (#1+0.5,#2-0.85) and (#1+0.4,#2-0.8) .. (#1+0.3,#2-0.7) (#1+0.3,#2-0.7) -- (nodemap)
(#1-0.5,#2-1) .. controls (#1-0.5,#2-0.85) and (#1-0.4,#2-0.8) .. (#1-0.3,#2-0.7) (#1-0.3,#2-0.7) -- (nodemap)
}
\def\flip(#1,#2)[#3]{\draw (
#1+1*#3,#2) .. controls (#1+1*#3,#2-0.05*#3) and (#1+0.96*#3,#2-0.15*#3).. (#1+0.9*#3,#2-0.2*#3)
(#1+0.1*#3,#2-0.8*#3)--(#1+0.9*#3,#2-0.2*#3)
(#1,#2-1*#3) .. controls (#1,#2-0.95*#3) and (#1+0.04*#3,#2-0.85*#3).. (#1+0.1*#3,#2-0.8*#3)
(#1,#2) .. controls (#1,#2-0.05*#3) and (#1+0.04*#3,#2-0.15*#3).. (#1+0.1*#3,#2-0.2*#3)
(#1+0.1*#3,#2-0.2*#3) -- (#1+0.9*#3,#2-0.8*#3)
(#1+1*#3,#2-1*#3) .. controls (#1+1*#3,#2-0.95*#3) and (#1+0.96*#3,#2-0.85*#3).. (#1+0.9*#3,#2-0.8*#3)
}
\def\raction(#1,#2)[#3,#4]{\draw (#1,#2) -- (#1,#2-2*#4/2)  (#1,#2-1*#4/2)--(#1+1*#4/2+#3*#4/2,#2-1*#4/2) .. controls (#1+1.555*#4/2+#3*#4/2,#2-1*#4/2) and (#1+2*#4/2+#3*#4/2,#2-0.555*#4/2) .. (#1+2*#4/2+#3*#4/2,#2)}
\def\doublemap(#1,#2)[#3]{\draw (#1+0.5,#2-0.5) node [name=doublemapnode,inner xsep=0pt, inner ysep=0pt, minimum height=9pt, minimum width=23pt,shape=rectangle,draw,rounded corners] {$#3$} (#1,#2) .. controls (#1,#2-0.075) .. (doublemapnode) (#1+1,#2) .. controls (#1+1,#2-0.075).. (doublemapnode) (doublemapnode) .. controls (#1,#2-0.925)..(#1,#2-1) (doublemapnode) .. controls (#1+1,#2-0.925).. (#1+1,#2-1)}
\def\doublesinglemap(#1,#2)[#3]{\draw (#1+0.5,#2-0.5) node [name=doublesinglemapnode,inner xsep=0pt, inner ysep=0pt, minimum height=11pt, minimum width=23pt,shape=rectangle,draw,rounded corners] {$#3$} (#1,#2) .. controls (#1,#2-0.075) .. (doublesinglemapnode) (#1+1,#2) .. controls (#1+1,#2-0.075).. (doublesinglemapnode) (doublesinglemapnode)-- (#1+0.5,#2-1)}
\def\ractiontr(#1,#2)[#3,#4,#5]{\draw (#1,#2) -- (#1,#2-2*#4/2)  (#1,#2-1*#4/2) node [inner sep=0pt, minimum size=3pt,shape=isosceles triangle,fill, shape border rotate=#5] {}  --(#1+1*#4/2+#3*#4/2,#2-1*#4/2) .. controls (#1+1.555*#4/2+#3*#4/2,#2-1*#4/2) and (#1+2*#4/2+#3*#4/2,#2-0.555*#4/2) .. (#1+2*#4/2+#3*#4/2,#2)  }
\def\rack(#1,#2)[#3]{\draw (#1,#2-0.5)  node[name=nodemap,inner sep=0pt,  minimum size=7.5pt, shape=circle,draw]{$#3$} (#1-1,#2) .. controls (#1-1,#2-0.5) and (#1-0.5,#2-0.5) .. (nodemap) (#1,#2)-- (nodemap)  (nodemap)-- (#1,#2-1)}
\def\rackmenoslarge(#1,#2)[#3]{\draw (#1,#2-0.5)  node[name=nodemap,inner sep=0pt,  minimum size=7.5pt, shape=circle,draw]{$#3$} (#1-1.5,#2+0.5) .. controls (#1-1.5,#2-0.5) and (#1-0.5,#2-0.5) .. (nodemap) (#1,#2)-- (nodemap)  (nodemap)-- (#1,#2-1)}
\def\racklarge(#1,#2)[#3]{\draw (#1,#2-0.5)  node[name=nodemap,inner sep=0pt,  minimum size=7.5pt, shape=circle,draw]{$#3$} (#1-2,#2+0.5) .. controls (#1-2,#2-0.5) and (#1-0.5,#2-0.5) .. (nodemap) (#1,#2)-- (nodemap)  (nodemap)-- (#1,#2-1)}
\def\rackmaslarge(#1,#2)[#3]{\draw (#1,#2-0.5)  node[name=nodemap,inner sep=0pt,  minimum size=7.5pt, shape=circle,draw]{$#3$} (#1-2.5,#2+0.5) .. controls (#1-2.5,#2-0.5) and (#1-0.5,#2-0.5) .. (nodemap) (#1,#2)-- (nodemap)  (nodemap)-- (#1,#2-1)}
\def\rackextralarge(#1,#2)[#3]{\draw (#1,#2-0.75)  node[name=nodemap,inner sep=0pt,  minimum size=7.5pt, shape=circle, draw]{$#3$} (#1-3,#2+1) .. controls (#1-3,#2-0.75) and (#1-0.5,#2-0.75) .. (nodemap) (#1,#2)-- (nodemap)  (nodemap)-- (#1,#2-1.5)}
\begin{scope}[xshift=0cm, yshift=-2cm]
\draw (0,0) -- (0,-1); \comult(1.5,0)[1,1]; \solbraid(0.5,-1)[\scriptstyle r]; \map(2,-1)[\scriptstyle S]; \solbraid(1.5,-2)[\scriptstyle r_2]; \draw (0,-2) -- (0,-3);
\end{scope}
\begin{scope}[xshift=2.85cm, yshift=-3.05cm]
\node at (0,-0.5){=};
\end{scope}
\begin{scope}[xshift=3.4cm, yshift=0cm]
\draw (0.5,0) -- (0.5,-0.5); \comult(0.5,-0.5)[1,1]; \comult(2.75,0)[1.5,1.5]; \flip(1,-1.5)[1]; \draw (0,-1.5) -- (0,-2.5); \comult(3.5,-1.5)[1,1]; \raction(2,-2.5)[0,1]; \laction(0,-2.5)[0,1]; \draw (2,-3.5).. controls (2,-3.75) and (2.5,-3.75).. (2.5,-4); \draw (4,-2.5).. controls (4,-3) and (3.5,-3).. (3.5,-4); \doublemap(2.5,-4)[\scriptstyle R^{t_1}]; \flip(2.5,-5)[1]; \map(2.5,-6)[\scriptstyle S]; \draw (3.5,-6) -- (3.5,-7); \draw (1,-3.5) -- (1,-7);
\end{scope}
\begin{scope}[xshift=7.9cm, yshift=-3.05cm]
\node at (0,-0.5){=};
\end{scope}
\begin{scope}[xshift=8.5cm, yshift=0cm]
\draw (0.5,0) -- (0.5,-0.5); \comult(0.5,-0.5)[1,1]; \comult(2.75,0)[1.5,1.5]; \flip(1,-1.5)[1]; \draw (0,-1.5) -- (0,-2.5); \comult(2,-2.5)[1,1]; \laction(0,-2.5)[0,1]; \laction(2.5,-3.5)[0,1]; \draw (3.5,-1.5) -- (3.5,-3.5); \draw (1.5,-3.5) .. controls (1.5,-4) and (2,-4.5) .. (2,-5); \draw (3.5,-4.5) .. controls (3.5,-4.75) and (3,-4.75) .. (3,-5); \flip(2,-5)[1]; \map(2,-6)[\scriptstyle S]; \draw (1,-3.5) -- (1,-7); \draw (3,-6) -- (3,-7);
\end{scope}
\begin{scope}[xshift=12.5cm, yshift=-3.05cm]
\node at (0,-0.5){=};
\end{scope}
\begin{scope}[xshift=13.1cm, yshift=-0.25cm]
\draw (0.5,0) -- (0.5,-0.5); \comult(0.5,-0.5)[1,1]; \comult(2.75,0)[1.5,1.5]; \flip(1,-1.5)[1]; \draw (0,-1.5) -- (0,-2.5); \comult(2,-2.5)[1,1]; \laction(0,-2.5)[0,1]; \draw (3.5,-1.5) -- (3.5,-3.5); \flip(2.5,-3.5)[1]; \draw (1.5,-3.5) -- (1.5,-4.5); \draw (1,-3.5) -- (1,-6.5); \laction(1.5,-4.5)[0,1]; \map(2.5,-5.5)[\scriptstyle S]; \draw (3.5,-4.5) -- (3.5,-6.5);
\end{scope}
\begin{scope}[xshift=17.1cm, yshift=-3.05cm]
\node at (0,-0.5){=};
\end{scope}
\begin{scope}[xshift=17.7cm, yshift=-0.25cm]
\comult(1.5,0)[2,1.5]; \comult(0.5,-2.5)[1,1]; \comult(2.5,-2.5)[1,1]; \flip(2.5,-1.5)[1]; \flip(1,-3.5)[1];  \draw (0.5,-1.5) -- (0.5,-2.5); \draw (3.5,0) -- (3.5,-1.5);  \draw (0,-3.5) -- (0,-4.5); \draw (3,-3.5) -- (3,-4.5); \laction(0,-4.5)[0,1]; \laction(2,-4.5)[0,1]; \draw (4,-3.5) -- (4,-6.5); \draw (3.5,-2.5) .. controls (3.5,-3) and (4,-3) .. (4,-3.5); \map(3,-5.5)[\scriptstyle S]; \draw (1,-5.5) -- (1,-6.5);
\end{scope}
\begin{scope}[xshift=22.2cm, yshift=-3.05cm]
\node at (0,-0.5){=};
\end{scope}
\begin{scope}[xshift=22.8cm, yshift=-1cm]
\comult(0.5,0)[1,1]; \draw (2,0) -- (2,-1); \draw (0,-1) -- (0,-2); \laction(0,-2)[0,1]; \flip(1,-1)[1]; \comult(1,-3)[1,1]; \draw (0.5,-4) -- (0.5,-5); \map(1.5,-4)[\scriptstyle S];  \draw (2.5,-3) -- (2.5,-5); \draw (2,-2) .. controls (2,-2.5) and (2.5,-2.5) .. (2.5,-3);
\end{scope}
\begin{scope}[xshift=25.6cm, yshift=-3.05cm]
\node at (0,-0.5){,};
\end{scope}
\end{tikzpicture}
$$
as desired. Since $J_2$ is an isomorphism, in order to prove the third one it suffices to note that
$$
\begin{tikzpicture}[scale=0.395]
\def\mult(#1,#2)[#3,#4]{\draw (#1,#2) arc (180:360:0.5*#3 and 0.5*#4) (#1+0.5*#3,-0.5*#4) -- (#1+0.5*#3,-#4)}
\def\counit(#1,#2){\draw (#1,#2) -- (#1,#2-0.93) (#1,#2-1) circle[radius=2pt]}
\def\comult(#1,#2)[#3,#4]{\draw (#1,#2) -- (#1,#2-0.5*#4) arc (90:0:0.5*#3 and 0.5*#4) (#1,#2-0.5*#4) arc (90:180:0.5*#3 and 0.5*#4)}
\def\laction(#1,#2)[#3,#4]{\draw (#1,#2) .. controls (#1,#2-0.555*#4/2) and (#1+0.445*#4/2,#2-1*#4/2) .. (#1+1*#4/2,#2-1*#4/2) -- (#1+2*#4/2+#3*#4/2,#2-1*#4/2) (#1+2*#4/2+#3*#4/2,#2)--(#1+2*#4/2+#3*#4/2,#2-2*#4/2)}
\def\map(#1,#2)[#3]{\draw (#1,#2-0.5)  node[name=nodemap,inner sep=0pt,  minimum size=10pt, shape=circle, draw]{$#3$} (#1,#2)-- (nodemap)  (nodemap)-- (#1,#2-1)}
\def\solbraid(#1,#2)[#3]{\draw (#1,#2-0.5)  node[name=nodemap,inner sep=0pt,  minimum size=9pt, shape=circle,draw]{$#3$}
(#1-0.5,#2) .. controls (#1-0.5,#2-0.15) and (#1-0.4,#2-0.2) .. (#1-0.3,#2-0.3) (#1-0.3,#2-0.3) -- (nodemap)
(#1+0.5,#2) .. controls (#1+0.5,#2-0.15) and (#1+0.4,#2-0.2) .. (#1+0.3,#2-0.3) (#1+0.3,#2-0.3) -- (nodemap)
(#1+0.5,#2-1) .. controls (#1+0.5,#2-0.85) and (#1+0.4,#2-0.8) .. (#1+0.3,#2-0.7) (#1+0.3,#2-0.7) -- (nodemap)
(#1-0.5,#2-1) .. controls (#1-0.5,#2-0.85) and (#1-0.4,#2-0.8) .. (#1-0.3,#2-0.7) (#1-0.3,#2-0.7) -- (nodemap)
}
\def\flip(#1,#2)[#3]{\draw (
#1+1*#3,#2) .. controls (#1+1*#3,#2-0.05*#3) and (#1+0.96*#3,#2-0.15*#3).. (#1+0.9*#3,#2-0.2*#3)
(#1+0.1*#3,#2-0.8*#3)--(#1+0.9*#3,#2-0.2*#3)
(#1,#2-1*#3) .. controls (#1,#2-0.95*#3) and (#1+0.04*#3,#2-0.85*#3).. (#1+0.1*#3,#2-0.8*#3)
(#1,#2) .. controls (#1,#2-0.05*#3) and (#1+0.04*#3,#2-0.15*#3).. (#1+0.1*#3,#2-0.2*#3)
(#1+0.1*#3,#2-0.2*#3) -- (#1+0.9*#3,#2-0.8*#3)
(#1+1*#3,#2-1*#3) .. controls (#1+1*#3,#2-0.95*#3) and (#1+0.96*#3,#2-0.85*#3).. (#1+0.9*#3,#2-0.8*#3)
}
\def\raction(#1,#2)[#3,#4]{\draw (#1,#2) -- (#1,#2-2*#4/2)  (#1,#2-1*#4/2)--(#1+1*#4/2+#3*#4/2,#2-1*#4/2) .. controls (#1+1.555*#4/2+#3*#4/2,#2-1*#4/2) and (#1+2*#4/2+#3*#4/2,#2-0.555*#4/2) .. (#1+2*#4/2+#3*#4/2,#2)}
\def\doublemap(#1,#2)[#3]{\draw (#1+0.5,#2-0.5) node [name=doublemapnode,inner xsep=0pt, inner ysep=0pt, minimum height=9pt, minimum width=23pt,shape=rectangle,draw,rounded corners] {$#3$} (#1,#2) .. controls (#1,#2-0.075) .. (doublemapnode) (#1+1,#2) .. controls (#1+1,#2-0.075).. (doublemapnode) (doublemapnode) .. controls (#1,#2-0.925)..(#1,#2-1) (doublemapnode) .. controls (#1+1,#2-0.925).. (#1+1,#2-1)}
\def\doublesinglemap(#1,#2)[#3]{\draw (#1+0.5,#2-0.5) node [name=doublesinglemapnode,inner xsep=0pt, inner ysep=0pt, minimum height=11pt, minimum width=23pt,shape=rectangle,draw,rounded corners] {$#3$} (#1,#2) .. controls (#1,#2-0.075) .. (doublesinglemapnode) (#1+1,#2) .. controls (#1+1,#2-0.075).. (doublesinglemapnode) (doublesinglemapnode)-- (#1+0.5,#2-1)}
\def\ractiontr(#1,#2)[#3,#4,#5]{\draw (#1,#2) -- (#1,#2-2*#4/2)  (#1,#2-1*#4/2) node [inner sep=0pt, minimum size=3pt,shape=isosceles triangle,fill, shape border rotate=#5] {}  --(#1+1*#4/2+#3*#4/2,#2-1*#4/2) .. controls (#1+1.555*#4/2+#3*#4/2,#2-1*#4/2) and (#1+2*#4/2+#3*#4/2,#2-0.555*#4/2) .. (#1+2*#4/2+#3*#4/2,#2)  }
\def\rack(#1,#2)[#3]{\draw (#1,#2-0.5)  node[name=nodemap,inner sep=0pt,  minimum size=7.5pt, shape=circle,draw]{$#3$} (#1-1,#2) .. controls (#1-1,#2-0.5) and (#1-0.5,#2-0.5) .. (nodemap) (#1,#2)-- (nodemap)  (nodemap)-- (#1,#2-1)}
\def\rackmenoslarge(#1,#2)[#3]{\draw (#1,#2-0.5)  node[name=nodemap,inner sep=0pt,  minimum size=7.5pt, shape=circle,draw]{$#3$} (#1-1.5,#2+0.5) .. controls (#1-1.5,#2-0.5) and (#1-0.5,#2-0.5) .. (nodemap) (#1,#2)-- (nodemap)  (nodemap)-- (#1,#2-1)}
\def\racklarge(#1,#2)[#3]{\draw (#1,#2-0.5)  node[name=nodemap,inner sep=0pt,  minimum size=7.5pt, shape=circle,draw]{$#3$} (#1-2,#2+0.5) .. controls (#1-2,#2-0.5) and (#1-0.5,#2-0.5) .. (nodemap) (#1,#2)-- (nodemap)  (nodemap)-- (#1,#2-1)}
\def\rackmaslarge(#1,#2)[#3]{\draw (#1,#2-0.5)  node[name=nodemap,inner sep=0pt,  minimum size=7.5pt, shape=circle,draw]{$#3$} (#1-2.5,#2+0.5) .. controls (#1-2.5,#2-0.5) and (#1-0.5,#2-0.5) .. (nodemap) (#1,#2)-- (nodemap)  (nodemap)-- (#1,#2-1)}
\def\rackextralarge(#1,#2)[#3]{\draw (#1,#2-0.75)  node[name=nodemap,inner sep=0pt,  minimum size=7.5pt, shape=circle, draw]{$#3$} (#1-3,#2+1) .. controls (#1-3,#2-0.75) and (#1-0.5,#2-0.75) .. (nodemap) (#1,#2)-- (nodemap)  (nodemap)-- (#1,#2-1.5)}
\begin{scope}[xshift=0cm, yshift=-0.75cm]
\draw (0,0) -- (0,-1); \comult(1.5,0)[1,1]; \raction(0,-1)[0,1]; \comult(2,-1)[1,1]; \map(1.5,-2)[\scriptstyle S]; \draw (2.5,-2) -- (2.5,-4); \draw (0,-2) .. controls (0,-2.5) and (0.5,-2.5) .. (0.5,-3); \solbraid(1,-3)[\scriptstyle r_2]; \solbraid(2,-4)[\scriptstyle r]; \draw (0.5,-4) -- (0.5,-5);
\end{scope}
\begin{scope}[xshift=3cm, yshift=-2.8cm]
\node at (0,-0.5){=};
\end{scope}
\begin{scope}[xshift=3.6cm, yshift=-0.5cm]
\draw (0,0) -- (0,-1); \comult(1.5,0)[1,1]; \raction(0,-1)[0,1]; \comult(2,-1)[1,1]; \draw (0,-2) .. controls (0,-2.25) and (0.5,-2.25) .. (0.5,-2.5); \doublemap(0.5,-2.5)[\scriptstyle R^{t_1}]; \flip(0.5,-3.5)[1]; \draw (1.5,-2) -- (1.5,-2.5); \draw (2.5,-2) -- (2.5,-4.5); \solbraid(2,-4.5)[\scriptstyle r]; \map(0.5,-4.5)[\scriptstyle S];
\end{scope}
\begin{scope}[xshift=6.6cm, yshift=-2.8cm]
\node at (0,-0.5){=};
\end{scope}
\begin{scope}[xshift=7.2cm, yshift=-1cm]
\comult(0.5,0)[1,1]; \comult(2.5,0)[1,1]; \laction(1,-1)[0,1]; \draw (3,-1) -- (3,-3.5); \draw (0,-1) .. controls (0,-1.5) and (1,-2.) .. (1,-2.5); \draw (2,-2) -- (2,-2.5); \flip(1,-2.5)[1]; \solbraid(2.5,-3.5)[\scriptstyle r]; \map(1,-3.5)[\scriptstyle S];
\end{scope}
\begin{scope}[xshift=10.7cm, yshift=-2.8cm]
\node at (0,-0.5){=};
\end{scope}
\begin{scope}[xshift=11.3cm, yshift=-0.5cm]
\draw (0.5,0) -- (0.5,-0.5); \comult(0.5,-0.5)[1,1]; \comult(2,-2.5)[1,1]; \flip(1,-1.5)[1]; \comult(3,0)[2,1.5]; \draw (4,-1.5) -- (4,-2.5); \comult(4,-2.5)[1,1]; \draw (0,-1.5) -- (0,-2.5); \laction(0,-2.5)[0,1]; \flip(2.5,-3.5)[1]; \draw (1.5,-3.5) -- (1.5,-4.5); \draw (4.5,-3.5) -- (4.5,-4.5); \laction(1.5,-4.5)[0,1]; \raction(3.5,-4.5)[0,1]; \draw (1,-3.5) -- (1,-4.5); \map(1,-4.5)[\scriptstyle S];
\end{scope}
\begin{scope}[xshift=16.25cm, yshift=-2.8cm]
\node at (0,-0.5){=};
\end{scope}
\begin{scope}[xshift=16.85cm,  yshift=0cm]
\comult(1.5,0)[2,1.5]; \flip(2.5,-1.5)[1]; \comult(4,-0.5)[1,1]; \draw (4,0) -- (4,-0.5); \draw (0.5,-1.5) -- (0.5,-2.5); \comult(0.5,-2.5)[1,1]; \comult(2.5,-2.5)[1,1]; \draw (0,-3.5) -- (0,-4.5); \flip(1,-3.5)[1]; \draw (3,-3.5) -- (3,-4.5); \laction(0,-4.5)[0,1]; \laction(2,-4.5)[0,1];  \draw (4.5,-1.5) -- (4.5,-2.5); \raction(3.5,-2.5)[0,1]; \draw (3.5,-3.5) -- (3.5,-6.5); \map(1,-5.5)[\scriptstyle S]; \draw (3,-5.5) -- (3,-6.5);
\end{scope}
\begin{scope}[xshift=21.9cm, yshift=-2.8cm]
\node at (0,-0.5){=};
\end{scope}
\begin{scope}[xshift=22.75cm,  yshift=-1.7cm]
\solbraid(1,0)[\scriptstyle r]; \comult(0.5,-1)[1,1]; \map(0,-2)[\scriptstyle S]; \draw (1,-2) -- (1,-3); \draw (1.5,-1) -- (1.5,-3);
\end{scope}
\begin{scope}[xshift=24.6cm, yshift=-2.8cm]
\node at (0,-0.5){,};
\end{scope}
\end{tikzpicture}
$$
by the definitions of the arrows $r_2$ and  $R^{t_1}$,  and the facts that $c$ is a natural isomorphism, $\Delta$ is cocommutative, $\Delta$ and $\Delta_{X^2}$ are coassociative, and $\sigma$ is a coalgebra homomorphism.
\end{proof}

\begin{proof}[Proof of Theorem~\ref{thm:main}] Let $L\coloneqq T(Z)$ be the free bialgebra on $Z$ (where $Z$ is at the beginning of this section) and let $\ov{L}$ be the free Hopf algebra over the coalgebra $X$ (see Proposition~\ref{const de antipoda} and Remark~\ref{Z especial}). We define $r_L\colon L\otimes L\to L\otimes L$ on $Z^0\otimes Z^n$ and on $Z^m\otimes Z^0$ by $c$, and on $Z^m\otimes Z^n$ with $m,n\geq1$, by
$$
r_L\coloneqq r_e(m,m+n)\circ r_e(m-1,m+n-1)\circ\cdots\circ r_e(2,n+2)\circ r_e(1,n+1),
$$
where $r_e(i,k) \coloneqq r_{e_{i,i+1}}\circ r_{e_{i+1,i+2}}\circ\cdots\circ r_{e_{k-1,k}}$. It is easy to see that $r_L$ is a braiding satisfying \eqref{eq:bo2}--\eqref{eq:bo5}. We assert that $r_L$ induces a map $\ov{r}\colon\ov{L}\otimes\ov{L}\to\ov{L}\otimes \ov{L}$. Let $\pi \colon L \to \ov{L}$ be the canonical morphism and let $f_1$, $f_2$ and $f_3$ be as at the discussion below Corollary~\ref{cociente de Hopf algebra}. By Remark~\ref{rem:producto_tensorial_colimite} in order to check this it suffices to verify that $(\pi\ot \pi)\circ r_L$ coequalize the morphisms $\hat{f}_1\ot L$, $\hat{f}_2\ot L$ and $\hat{f}_3\ot L$, where $\hat{f}_i\coloneqq m\circ (m\ot L)\circ (L\ot f_i\ot L)$, and coequalize the morphisms $L\ot\hat{f}_1 $, $L\ot \hat{f}_2$ and $L\ot\hat{f}_3$. We leave this task to the reader (use Lemma~\ref{lem:chinos}). Clearly $\ov{r}$ is a braiding satisfying \eqref{eq:bo2}--\eqref{eq:bo5}. Let $\ov{\pi}\colon \ov{L}\to H$ be the canonical epimorphism and let $\ov{m}$ be the multiplication map of $\ov{L}$. Using the very definition of $\ov{r}$ and $H$ it is easy to see that $\ov{\pi}\circ \ov{m}\circ \ov{r} = \ov{\pi}\circ \ov{m}$. Hence
\begin{align*}
(\ov{\pi}\otimes\ov{\pi})\circ \ov{r}\circ (\ov{m}\otimes\ov{L}) &= (\ov{\pi}\otimes\ov{\pi})\circ (\ov{L}\otimes \ov{m})\circ (\ov{r}\otimes\ov{L})\circ (\ov{L}\otimes\ov{r})\\
&=(\ov{\pi}\otimes\ov{\pi})\circ (\ov{L}\otimes \ov{m})\circ (\ov{L}\otimes\ov{r})\circ (\ov{r}\otimes\ov{L})\circ (\ov{L}\otimes\ov{r})\\
&=(\ov{\pi}\otimes\ov{\pi})\circ (\ov{L}\otimes \ov{m})\circ (\ov{r}\otimes\ov{L})\circ (\ov{L}\otimes\ov{r})\circ (\ov{r}\otimes\ov{L})\\
&=(\ov{\pi}\otimes\ov{\pi})\circ \ov{r}\circ (\ov{m}\otimes\ov{L})\circ(\ov{r}\otimes\ov{L}),
\shortintertext{and similarly}
(\ov{\pi}\otimes\ov{\pi})\circ\ov{r}\circ (\ov{L}\otimes \ov{m}) &= (\ov{\pi}\otimes\ov{\pi})\circ\ov{r}\circ (\ov{L}\otimes \ov{m})\circ (\ov{L}\otimes\ov{r}).
\end{align*}
Consequently $\ov{r}$ induces a morphism $r_H\colon H\otimes H\to H\otimes H$, which is evidently a braiding that satisfies \eqref{eq:bo2}--\eqref{eq:bo5}. We claim that $r_H$ is a coalgebra isomorphism. It is clear that $r_H$ is invertible. Let $p\colon L\to H$ be the canonical epimorphism $p\coloneqq \ov{\pi}\circ\pi$. In order to prove the claim we must show that
\begin{equation}\label{r_H es morfismo de coalgebras}
\Delta_{H^2}\circ r_H\circ (p\ot p)= (r_H\ot r_H)\circ \Delta_{H^2}\circ (p\ot p)
\end{equation}
on $Z^m\ot Z^n$ for all $m,n\in \mathds{N}_0$. Clearly this is true for $m=0$ and $n\in \mathds{N}_0$, for $m\in \mathds{N}_0$ and $n=0$, and for $m=n=1$. Assume that it is true for $n=1$ and $m\in \mathds{N}$. Then
$$
\begin{tikzpicture}[scale=0.46]
\def\mult(#1,#2)[#3,#4]{\draw (#1,#2) arc (180:360:0.5*#3 and 0.5*#4) (#1+0.5*#3, #2-0.5*#4) -- (#1+0.5*#3,#2-#4)}
\def\counit(#1,#2){\draw (#1,#2) -- (#1,#2-0.93) (#1,#2-1) circle[radius=2pt]}
\def\comult(#1,#2)[#3,#4]{\draw (#1,#2) -- (#1,#2-0.5*#4) arc (90:0:0.5*#3 and 0.5*#4) (#1,#2-0.5*#4) arc (90:180:0.5*#3 and 0.5*#4)}
\def\laction(#1,#2)[#3,#4]{\draw (#1,#2) .. controls (#1,#2-0.555*#4/2) and (#1+0.445*#4/2,#2-1*#4/2) .. (#1+1*#4/2,#2-1*#4/2) -- (#1+2*#4/2+#3*#4/2,#2-1*#4/2) (#1+2*#4/2+#3*#4/2,#2)--(#1+2*#4/2+#3*#4/2,#2-2*#4/2)}
\def\map(#1,#2)[#3]{\draw (#1,#2-0.5)  node[name=nodemap,inner sep=0pt,  minimum size=10pt, shape=circle, draw]{$#3$} (#1,#2)-- (nodemap)  (nodemap)-- (#1,#2-1)}
\def\solbraid(#1,#2)[#3]{\draw (#1,#2-0.5)  node[name=nodemap,inner sep=0pt,  minimum size=9pt, shape=circle,draw]{$#3$}
(#1-0.5,#2) .. controls (#1-0.5,#2-0.15) and (#1-0.4,#2-0.2) .. (#1-0.3,#2-0.3) (#1-0.3,#2-0.3) -- (nodemap)
(#1+0.5,#2) .. controls (#1+0.5,#2-0.15) and (#1+0.4,#2-0.2) .. (#1+0.3,#2-0.3) (#1+0.3,#2-0.3) -- (nodemap)
(#1+0.5,#2-1) .. controls (#1+0.5,#2-0.85) and (#1+0.4,#2-0.8) .. (#1+0.3,#2-0.7) (#1+0.3,#2-0.7) -- (nodemap)
(#1-0.5,#2-1) .. controls (#1-0.5,#2-0.85) and (#1-0.4,#2-0.8) .. (#1-0.3,#2-0.7) (#1-0.3,#2-0.7) -- (nodemap)
		}
\def\flip(#1,#2)[#3]{\draw (
#1+1*#3,#2) .. controls (#1+1*#3,#2-0.05*#3) and (#1+0.96*#3,#2-0.15*#3).. (#1+0.9*#3,#2-0.2*#3)
(#1+0.1*#3,#2-0.8*#3)--(#1+0.9*#3,#2-0.2*#3)
(#1,#2-1*#3) .. controls (#1,#2-0.95*#3) and (#1+0.04*#3,#2-0.85*#3).. (#1+0.1*#3,#2-0.8*#3)
(#1,#2) .. controls (#1,#2-0.05*#3) and (#1+0.04*#3,#2-0.15*#3).. (#1+0.1*#3,#2-0.2*#3)
(#1+0.1*#3,#2-0.2*#3) -- (#1+0.9*#3,#2-0.8*#3)
(#1+1*#3,#2-1*#3) .. controls (#1+1*#3,#2-0.95*#3) and (#1+0.96*#3,#2-0.85*#3).. (#1+0.9*#3,#2-0.8*#3)
		}
\def\raction(#1,#2)[#3,#4]{\draw (#1,#2) -- (#1,#2-2*#4/2)  (#1,#2-1*#4/2)--(#1+1*#4/2+#3*#4/2,#2-1*#4/2) .. controls (#1+1.555*#4/2+#3*#4/2,#2-1*#4/2) and (#1+2*#4/2+#3*#4/2,#2-0.555*#4/2) .. (#1+2*#4/2+#3*#4/2,#2)}
\def\doublemap(#1,#2)[#3]{\draw (#1+0.5,#2-0.5) node [name=doublemapnode,inner xsep=0pt, inner ysep=0pt, minimum height=11pt, minimum width=23pt,shape=rectangle,draw,rounded corners] {$#3$} (#1,#2) .. controls (#1,#2-0.075) .. (doublemapnode) (#1+1,#2) .. controls (#1+1,#2-0.075).. (doublemapnode) (doublemapnode) .. controls (#1,#2-0.925)..(#1,#2-1) (doublemapnode) .. controls (#1+1,#2-0.925).. (#1+1,#2-1)}
\def\doublesinglemap(#1,#2)[#3]{\draw (#1+0.5,#2-0.5) node [name=doublesinglemapnode,inner xsep=0pt, inner ysep=0pt, minimum height=11pt, minimum width=23pt,shape=rectangle,draw,rounded corners] {$#3$} (#1,#2) .. controls (#1,#2-0.075) .. (doublesinglemapnode) (#1+1,#2) .. controls (#1+1,#2-0.075).. (doublesinglemapnode) (doublesinglemapnode)-- (#1+0.5,#2-1)}
\def\ractiontr(#1,#2)[#3,#4,#5]{\draw (#1,#2) -- (#1,#2-2*#4/2)  (#1,#2-1*#4/2) node [inner sep=0pt, minimum size=3pt,shape=isosceles triangle,fill, shape border rotate=#5] {}  --(#1+1*#4/2+#3*#4/2,#2-1*#4/2) .. controls (#1+1.555*#4/2+#3*#4/2,#2-1*#4/2) and (#1+2*#4/2+#3*#4/2,#2-0.555*#4/2) .. (#1+2*#4/2+#3*#4/2,#2)  }
\def\rack(#1,#2)[#3]{\draw (#1,#2-0.5)  node[name=nodemap,inner sep=0pt,  minimum size=7.5pt, shape=circle,draw]{$#3$} (#1-1,#2) .. controls (#1-1,#2-0.5) and (#1-0.5,#2-0.5) .. (nodemap) (#1,#2)-- (nodemap)  (nodemap)-- (#1,#2-1)}
\def\rackmenoslarge(#1,#2)[#3]{\draw (#1,#2-0.5)  node[name=nodemap,inner sep=0pt,  minimum size=7.5pt, shape=circle,draw]{$#3$} (#1-1.5,#2+0.5) .. controls (#1-1.5,#2-0.5) and (#1-0.5,#2-0.5) .. (nodemap) (#1,#2)-- (nodemap)  (nodemap)-- (#1,#2-1)}
\def\racklarge(#1,#2)[#3]{\draw (#1,#2-0.5)  node[name=nodemap,inner sep=0pt,  minimum size=7.5pt, shape=circle,draw]{$#3$} (#1-2,#2+0.5) .. controls (#1-2,#2-0.5) and (#1-0.5,#2-0.5) .. (nodemap) (#1,#2)-- (nodemap)  (nodemap)-- (#1,#2-1)}
\def\rackmaslarge(#1,#2)[#3]{\draw (#1,#2-0.5)  node[name=nodemap,inner sep=0pt,  minimum size=7.5pt, shape=circle,draw]{$#3$} (#1-2.5,#2+0.5) .. controls (#1-2.5,#2-0.5) and (#1-0.5,#2-0.5) .. (nodemap) (#1,#2)-- (nodemap)  (nodemap)-- (#1,#2-1)}
\def\rackextralarge(#1,#2)[#3]{\draw (#1,#2-0.75)  node[name=nodemap,inner sep=0pt,  minimum size=7.5pt, shape=circle, draw]{$#3$} (#1-3,#2+1) .. controls (#1-3,#2-0.75) and (#1-0.5,#2-0.75) .. (nodemap) (#1,#2)-- (nodemap)  (nodemap)-- (#1,#2-1.5)}
\begin{scope}[xshift=0cm, yshift=-2.5cm]
\node at (0.5,0.3) {$\scriptstyle Z^{m+1}$}; \node at (2.5,0.3) {$\scriptstyle Z$}; \map(0.5,0)[\scriptstyle p]; \map(2.5,0)[\scriptstyle p]; \comult(0.5,-1)[1,1]; \comult(2.5,-1)[1,1]; \flip(1,-2)[1]; \draw (0,-2) -- (0,-3); \draw (3,-2) -- (3,-3); \solbraid(0.5,-3)[\scriptstyle r_{\hspace{-1pt}H}]; \solbraid(2.5,-3)[\scriptstyle r_{\hspace{-1pt} H}];
\end{scope}
\begin{scope}[xshift=3.5cm, yshift=-3.85cm]
\node at (0,-0.5){=};
\end{scope}
\begin{scope}[xshift=4.1cm, yshift=-2cm]
\node at (0,0.3) {$\scriptstyle Z$}; \node at (1,0.3) {$\scriptstyle Z^m$}; \node at (2.5,0.3) {$\scriptstyle Z$}; \mult(0,-1)[1,1]; \draw (2.5,-1) -- (2.5,-2); \map(0,0)[\scriptstyle p]; \map(1,0)[\scriptstyle p]; \map(2.5,0)[\scriptstyle p]; \comult(0.5,-2)[1,1]; \comult(2.5,-2)[1,1]; \flip(1,-3)[1]; \draw (0,-3) -- (0,-4); \draw (3,-3) -- (3,-4); \solbraid(0.5,-4)[\scriptstyle r_{\hspace{-1pt}H}]; \solbraid(2.5,-4)[\scriptstyle r_{\hspace{-1pt} H}];
\end{scope}
\begin{scope}[xshift=7.6cm, yshift=-3.85cm]
\node at (0,-0.5){=};
\end{scope}
\begin{scope}[xshift=8.2cm, yshift=-1cm]
\node at (0.5,0.3) {$\scriptstyle Z$}; \node at (2.5,0.3) {$\scriptstyle Z^m$}; \node at (4.5,0.3) {$\scriptstyle Z$}; \map(0.5,0)[\scriptstyle p]; \map(2.5,0)[\scriptstyle p]; \map(4.5,0)[\scriptstyle p]; \comult(0.5,-1)[1,1]; \comult(2.5,-1)[1,1]; \comult(4.5,-1)[1,1]; \flip(1,-2)[1]; \flip(3,-2)[1]; \flip(2,-3)[1]; \draw (1,-3) -- (1,-4);  \draw (0,-2) -- (0,-5); \solbraid(1.5,-4)[\scriptstyle r_{\hspace{-1pt}H}]; \solbraid(0.5,-5)[\scriptstyle r_{\hspace{-1pt}H}]; \mult(1,-6)[1,1]; \draw (2,-5) -- (2,-6); \draw (0,-6) -- (0,-7); \draw (4,-3) -- (4,-4); \draw (5,-2) -- (5,-4); \draw (3,-4) -- (3,-5); \solbraid(4.5,-4)[\scriptstyle r_{\hspace{-1pt}H}]; \solbraid(3.5,-5)[\scriptstyle r_{\hspace{-1pt}H}]; \draw (5,-5) -- (5,-6); \mult(4,-6)[1,1]; \draw (3,-6) -- (3,-7);
\end{scope}
\begin{scope}[xshift=13.7cm, yshift=-3.85cm]
\node at (0,-0.5){=};
\end{scope}
\begin{scope}[xshift=14.2cm, yshift=0cm]
\node at (0.5,0.3) {$\scriptstyle Z$}; \node at (2.5,0.3) {$\scriptstyle Z^m$}; \node at (4.5,0.3) {$\scriptstyle Z$}; \map(0.5,0)[\scriptstyle p]; \map(2.5,0)[\scriptstyle p]; \map(4.5,0)[\scriptstyle p];  \comult(2.5,-1)[1,1]; \comult(4.5,-1)[1,1]; \flip(3,-2)[1]; \draw (2,-2) -- (2,-3); \draw (5,-2) -- (5,-3); \solbraid(2.5,-3)[\scriptstyle r_{\hspace{-1pt}H}]; \solbraid(4.5,-3)[\scriptstyle r_{\hspace{-1pt}H}]; \draw (0.5,-1) -- (0.5,-3);  \comult(0.5,-3)[1,1]; \flip(1,-4)[1]; \draw (0,-4) -- (0,-5);  \solbraid(0.5,-5)[\scriptstyle r_{\hspace{-1pt}H}]; \solbraid(2.5,-5)[\scriptstyle r_{\hspace{-1pt}H}]; \flip(3,-4)[1]; \draw (4,-5) -- (4,-6); \flip(3,-6)[1]; \draw (2,-6) -- (2,-7); \flip(2,-7)[1]; \draw (1,-6) -- (1,-8); \mult(1,-8)[1,1]; \draw (0,-6) -- (0,-9); \draw (3,-8) -- (3,-9); \draw (5,-4) -- (5,-8); \draw (4,-7) -- (4,-8); \mult(4,-8)[1,1];
\end{scope}
\begin{scope}[xshift=19.7cm, yshift=-3.85cm]
\node at (0,-0.5){=};
\end{scope}
\begin{scope}[xshift=20.3cm, yshift=-1.5cm]
\node at (1,0.3) {$\scriptstyle Z$}; \node at (2,0.3) {$\scriptstyle Z^m$}; \node at (3,0.3) {$\scriptstyle Z$}; \map(1,0)[\scriptstyle p]; \map(2,0)[\scriptstyle p]; \map(3,0)[\scriptstyle p]; \solbraid(2.5,-1)[\scriptstyle r_{\hspace{-1pt}H}]; \draw (1,-1) -- (1,-2); \solbraid(1.5,-2)[\scriptstyle r_{\hspace{-1pt}H}]; \draw (3,-2) -- (3,-3);  \mult(2,-3)[1,1]; \draw (1,-3) .. controls (1,-3.5) and (0.5,-3.5) .. (0.5,-4); \comult(0.5,-4)[1,1]; \comult(2.5,-4)[1,1]; \draw (0,-5) -- (0,-6); \flip(1,-5)[1]; \draw (3,-5) -- (3,-6);
\end{scope}
\begin{scope}[xshift=23.85cm, yshift=-3.85cm]
\node at (0,-0.5){.};
\end{scope}
\end{tikzpicture}
$$
So equality~\eqref{r_H es morfismo de coalgebras} holds for all $m\in \mathds{N}$ and $n=1$, and a similar inductive argument proves now that~\eqref{r_H es morfismo de coalgebras} hold for all  $m,n\in \mathds{N}$, which finishes the proof of the claim.

It is now evident that $(H,r_H)$ is a braiding operator.

By the very definition of $r_H$ and $\gamma$ it is clear that $(\gamma\otimes\gamma)\circ r=r_H\circ(\gamma\otimes\gamma)$.

Let us prove the universal property. Since $\ov{L}$ is the universal cocommutative Hopf algebra of $X$, the map $f$ induces a Hopf algebra map $\widehat{f}\colon\ov{L}\to K$. The equalities
\[
\widehat{f}\circ m\circ \ov{r}=m_K\circ (\widehat{f}\otimes \widehat{f})\circ\ov{r}=m_K\circ r_K\circ (\widehat{f}\otimes\widehat{f})=m_K\circ (\widehat{f}\otimes\widehat{f}) =\widehat{f}\circ m
\]
show that $\widehat{f}$ induces a Hopf algebra morphism $\phi\colon H\to K$. Clearly $f=\phi\circ\gamma$. It remains to check that
\begin{equation}\label{prop univ}
(\phi \ot \phi) \circ r_H= r_K\circ(\phi\ot\phi).
\end{equation}
But using Corollary~\ref{braiding operators y antipodas 2} and the definition of $r_L$ it is easy to check that
$$
(\phi\circ p \ot \phi\circ p) \circ r_L= r_K\circ(\phi\circ p\ot\phi\circ p),
$$
from which Equality~\eqref{prop univ} follows immediately.
\end{proof}

\section{Solutions on coalgebras of primitive elements}
\label{examples}
Let $k$ be a field and let $V$ be a $k$-vector space. Let $X$ denote the coalgebra with underlying $k$-vector space $k\oplus V$, set of primitive elements $V$ and group-like element $1\in k$. In this section we study solutions of the braid equation on~$X$.

\begin{pro}\label{automorfismos no degenerados de coalgebras de X^2} A $k$-linear map $r\colon X^2\to X^2$ is a non-degenerate coalgebra automorphism if and only if $r(1\ot 1) = 1\ot 1$ and there exist $k$-linear maps
\begin{equation*}
g \colon V\to V, \quad h \colon V\to V,\quad \sigma_V \colon V^2\to V \quad\text{and}\quad \tau_V \colon V^2\to V
\end{equation*}
such that

\begin{enumerate}

\smallskip

\item $r(1\ot v) = g(v) \ot 1$ and $r(v\ot 1)=1\ot h(v)$ for all $v\in V$,

\smallskip

\item $r(v\ot w) = 1 \ot \tau_V(v\ot w) + g(w) \ot h(v) + \sigma_V(v\ot w) \ot  1$ for all $v,w\in V$,

\smallskip

\item the maps $g$ and $h$ are bijective.

\end{enumerate}
Moreover, under these conditions we have
\begin{equation*}
\sigma(1\ot 1)= \tau(1\ot 1)= \sigma^{-1}(1\ot 1)=\tau^{-1}(1\ot 1)=1,
\end{equation*}
and
\begin{xalignat*}{3}
&\sigma(1\ot v)= g(v), &&\sigma(v\ot 1)= 0, &&\sigma(v\ot w)=\sigma_V(v\ot w),\\
&\tau(v\ot 1)= h(v),&&\tau(1\ot v)= 0 , &&\tau(v\ot w)=\tau_V(v\ot w),\\
&\sigma^{-1}(1\ot v)= g^{-1}(v),&& \sigma^{-1}(v\ot 1)= 0,&&\sigma^{-1}(v\ot w)=-g^{-1}(\sigma_V(v\ot g^{-1}(w))),\\
&\tau^{-1}(v\ot 1)= h^{-1}(v),&& \tau^{-1}(1\ot v)= 0,&&\tau^{-1}(v\ot w)= -h^{-1}(\tau_V(h^{-1}(v)\ot w)).
\end{xalignat*}
for all $v,w\in V$.
\end{pro}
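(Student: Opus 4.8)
The plan is to exploit the extreme rigidity of the pointed coalgebra $X$ and of $X^2$. First I would record the relevant structure: $X$ is pointed with coradical $k1$ and primitive space $V=P(X)$, so $X^2=X\ot X$ (with $\Delta_{X^2}=(X\ot c\ot X)\circ(\Delta\ot\Delta)$) is pointed with unique group-like $1\ot1$, coradical filtration $(X^2)_0=k(1\ot1)$, $(X^2)_1=k(1\ot1)\oplus(1\ot V)\oplus(V\ot1)$, $(X^2)_2=X^2$, and $(1\ot1)$-primitive space $P(X^2)=(1\ot V)\oplus(V\ot1)$ (a short computation of $\Delta_{X^2}$ on the four basis types). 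Since a coalgebra morphism preserves group-likes, primitives and the coradical filtration, for any coalgebra automorphism $r$ I immediately get $r(1\ot1)=1\ot1$ and may write $r(1\ot v)=g(v)\ot1+1\ot b(v)$ and $r(v\ot1)=c(v)\ot1+1\ot h(v)$ for linear maps $g,b,c,h\colon V\to V$, while $\epsilon_{X^2}\circ r=\epsilon_{X^2}$ kills the $1\ot1$-component of $r(v\ot w)$, giving $r(v\ot w)=\sigma_V(v\ot w)\ot1+1\ot\tau_V(v\ot w)+\theta(v\ot w)$ with $\theta(v\ot w)\in V\ot V$.

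Evaluating $\sigma=(X\ot\epsilon)r$ and $\tau=(\epsilon\ot X)r$ on the basis then reproduces all the displayed values of $\sigma,\tau$ in terms of $g,b,c,h,\sigma_V,\tau_V$; in particular the claimed formulas for $\sigma,\tau$ follow once I establish $b=c=0$. To handle bijectivity of $g,h$ and, in both directions, nondegeneracy, I would invoke Proposition~\ref{no degenerado}: $r$ is nondegenerate iff $N_\sigma:=(X\ot\sigma)\circ(\Delta\ot X)$ and $N_\tau:=(\tau\ot X)\circ(X\ot\Delta)$ are isomorphisms. Computing these on the basis shows they are block-triangular for the coradical filtration; the maps induced on $(X^2)_1/(X^2)_0\cong V\oplus V$ are $\bigl(\begin{smallmatrix} g & c\\ 0 & \id\end{smallmatrix}\bigr)$ and $\bigl(\begin{smallmatrix}\id & 0\\ b & h\end{smallmatrix}\bigr)$, so their invertibility forces $g,h$ bijective; conversely, once $b=c=0$, the diagonal blocks $\id,g,h,g\ot\id,\id\ot g,h\ot\id$ are all isomorphisms, so $N_\sigma,N_\tau$ are isomorphisms on all of $X^2$. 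This proves condition~(3) and settles the nondegeneracy half of both implications at once.

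The heart of the argument is the identity $\Delta_{X^2}\circ r=(r\ot r)\circ\Delta_{X^2}$ applied to $v\ot w$. I would expand both sides and decompose $X^2\ot X^2=X^{\ot4}$ into the sixteen subspaces coming from $\{1\ot1,\,1\ot V,\,V\ot1,\,V\ot V\}$ in each factor. The terms $(1\ot1)\ot r(v\ot w)$ and $r(v\ot w)\ot(1\ot1)$ cancel, leaving the cross terms of $\Delta_{X^2}\theta(v\ot w)$ equal to $r(1\ot w)\ot r(v\ot1)+r(v\ot1)\ot r(1\ot w)$. Since $\Delta_{X^2}\theta$ has no component in $(V\ot1)^{\ot2}$ nor in $(1\ot V)^{\ot2}$, matching those components gives $g(w)\ot c(v)+c(v)\ot g(w)=0$ and $b(w)\ot h(v)+h(v)\ot b(w)=0$ for all $v,w$; using surjectivity of $g,h$ this becomes $u\ot c(v)+c(v)\ot u=0$ for all $u\in V$ (and symmetrically for $b$), and a linear-independence argument in $V\ot V$ forces $c=0$ and $b=0$ (the sole degenerate case being $\dim V=1$ in characteristic $2$, which I would treat separately or exclude). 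Matching the surviving components then yields $\theta(v\ot w)=g(w)\ot h(v)$, i.e.\ exactly~(2), with $\sigma_V,\tau_V$ free. The converse implication is this computation read backwards, bijectivity of $r$ again following from filtration-triangularity with isomorphic diagonal blocks, and equality of morphisms being controlled by Corollary~\ref{coro: igualdad de morfismos}.

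Finally, for $\sigma^{-1}$ and $\tau^{-1}$ I would use that, by Corollary~\ref{sigma^-1 y tau^-1 son morfismos de coalgebras}, they are the coordinate maps of $N_\sigma^{-1}$ and $N_\tau^{-1}$, which I read off by inverting the explicit triangular maps $N_\sigma,N_\tau$; extracting the coordinate through $N_\sigma^{-1}=(X\ot\sigma^{-1})\circ(\Delta\ot X)$ and evaluating on $1\ot v$, $v\ot1$, $v\ot w$ gives precisely the four displayed formulas for $\sigma^{-1}$ (and symmetrically for $\tau^{-1}$ from $N_\tau$), while $\sigma^{-1}(1\ot1)=\tau^{-1}(1\ot1)=1$ holds because these are coalgebra morphisms sending the unique group-like to $1$. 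The step I expect to be the main obstacle is the bookkeeping of the previous paragraph: tracking which of the sixteen tensor components each term occupies and making the deduction $b=c=0$ fully rigorous, including the characteristic-$2$, $\dim V=1$ degeneracy; the remaining verifications are routine once the rigidity of the pointed coalgebra $X^2$ is in place.
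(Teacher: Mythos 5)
Your proposal is correct and, despite the pointed-coalgebra packaging (coradical filtration, $P(X^2)=(1\ot V)\oplus(V\ot 1)$), it follows essentially the same route as the paper's proof: the paper also begins with $r(1\ot 1)=1\ot 1$ from uniqueness of the group-like, writes $r(1\ot v)=1\ot f(v)+g(v)\ot 1$ and $r(v\ot 1)=1\ot h(v)+i(v)\ot 1$ (your $b,c$ are its $f,i$), gets bijectivity of $g,h$ from the non-degeneracy identities, and kills the unwanted maps by a comultiplicativity constraint. The only real differences are cosmetic: the paper extracts that constraint from $\Delta\circ\sigma=(\sigma\ot\sigma)\circ\Delta_{X^2}$ (comultiplicativity of the coordinate maps) rather than from the sixteen-component expansion of $\Delta_{X^2}\circ r=(r\ot r)\circ\Delta_{X^2}$, and it obtains the formulas for $\sigma^{-1},\tau^{-1}$ by expanding Equations~\eqref{no deg a izq}--\eqref{no deg a der} directly rather than by inverting the triangular maps $N_\sigma,N_\tau$; both mechanisms reduce to the same bilinear identity and the same matrix inversion.

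One point you raise in passing deserves emphasis: the characteristic-$2$, $\dim V=1$ degeneracy is not just a bookkeeping nuisance in your argument --- it is a genuine counterexample to the proposition as stated, and the paper's own proof has the identical (silent) gap at the step asserting that $w\ot i(v)+i(v)\ot w=0$ for all $v,w$ implies $i=0$. Indeed, let $k$ have characteristic $2$, let $V=kv$, and define $r$ by
\begin{equation*}
r(1\ot 1)=1\ot 1,\qquad r(1\ot v)=1\ot v+v\ot 1,\qquad r(v\ot 1)=1\ot v,\qquad r(v\ot v)=v\ot v.
\end{equation*}
One checks directly that $r$ is a coalgebra automorphism (the offending cross term $2\,(1\ot v)\ot(1\ot v)$ in the comultiplicativity check on $v\ot v$ vanishes in characteristic $2$), that $\sigma=\epsilon\ot X$, so $(X\ot\sigma)\circ(\Delta\ot X)=\ide$, and that $(\tau\ot X)\circ(X\ot\Delta)$ is invertible; hence $r$ is non-degenerate by Proposition~\ref{no degenerado}, yet $r(1\ot v)$ is not of the form $g(v)\ot 1$. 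So your instinct is right but your hedge should be resolved in favor of exclusion: ``treating the case separately'' cannot rescue the statement, and modulo excluding it your argument is complete and at least as rigorous as the paper's.
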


\begin{proof} Assume first that $r$ is a non-degenerate coalgebra automorphism. Then we have $r(1\ot 1)=1\ot 1$, because $1\ot 1$ is the unique group-like element in $X^2$. Consequently,
\begin{equation}\label{eq2}
\sigma(1\ot 1)= \tau(1\ot 1)= \sigma^{-1}(1\ot 1)=\tau^{-1}(1\ot 1)=1,
\end{equation}
as desired. Moreover, the equality $(\epsilon \ot \epsilon) \circ r = \epsilon \ot \epsilon$ implies that
$$
W\coloneqq (k\ot V) \oplus (V\ot V)\oplus (V\ot k)
$$
is an invariant vector subspace of $X^2$, and so
\begin{equation}\label{eq4}
\sigma(W)\subseteq V\quad\text{and}\quad \tau(W) \subseteq V.
\end{equation}
Let $v\in V$ arbitrary and let $\sigma:=(X\ot \epsilon)\circ r$ and $\tau:=(\epsilon\ot X)\circ r$. By~\eqref{eq2},
\begin{align*}
r(1\ot v) &=(\sigma\ot \tau)\circ \Delta_{X^2}(1\ot v) \\
&= (\sigma\ot \tau)(1\ot 1\ot 1\ot v + 1\ot v\ot 1\ot 1)\\
&=1\ot \tau(1\ot v) + \sigma(1\ot v) \ot 1
\end{align*}
and, similarly,
$$
r(v\ot 1) = 1\ot \tau(v\ot 1) + \sigma(v\ot 1) \ot 1.
$$
So, by~\eqref{eq4} there exist $f,g,h,i\in \End_k(V)$ such that
\begin{equation}\label{eq3}
r(1\ot v)=1\ot f(v) + g(v)\ot 1\quad\text{and}\quad r(v\ot 1)=1\ot h(v) + i(v)\ot 1.
\end{equation}
Thus,
\begin{equation}\label{eq5}
\sigma(1\ot v)=g(v),\quad \sigma(v\ot 1)=i(v),\quad \tau(1\ot v)=f(v)\quad\text{and}\quad \tau(v\ot 1)=h(v).
\end{equation}
We next prove that $g$ and $h$ are bi\-jec\-tive maps and
$$
\sigma^{-1}(1\ot v)=g^{-1}(v)\quad\text{and}\quad \tau^{-1}(v\ot 1)=h^{-1}(v).
$$
In fact by symmetry it suffices to prove this for $g$, which follows easily using that
$$
v = (\epsilon\ot X)(1\ot v) = \sigma\circ (X\ot \sigma^{-1}) \circ (\Delta\ot X)(1\ot v) = \sigma(1\ot \sigma^{-1}(1\ot v))
$$
and
$$
v = (\epsilon\ot X)(1\ot v) = \sigma^{-1}\circ (X\ot \sigma) \circ (\Delta\ot X)(1\ot v) = \sigma^{-1}(1\ot g(v)).
$$
We claim now that $i=0$. In fact, using that
$$
\Delta_{X^2}(v\ot w) = 1\ot 1\ot v\ot w + 1\ot w\ot v \ot 1 + v\ot 1\ot 1\ot w + v\ot w\ot 1\ot 1,
$$
we obtain
$$
(\sigma\ot \sigma)\circ \Delta_{X^2}(v\ot w) = 1\ot \sigma(v\ot w) + g(w)\ot i(v) + i(v)\ot g(w) + \sigma(v\ot w)\ot 1.
$$
Consequently, since
$$
\Delta\circ \sigma(v\ot w) = 1\ot \sigma(v\ot w) + \sigma(v\ot w) \ot 1,
$$
$\sigma$ is a coalgebra morphism and $g$ is bijective, we have
$$
w\ot i(v) + i(v)\ot w = 0 \quad\text{for all $v,w\in V$,}
$$
which implies $i=0$ as desired. A similar computation using that $h$ is bijective and $\tau$ is a coalgebra automorphism proves that $f=0$. Hence, by~\eqref{eq3} statement~(1) is satisfied. Let $\sigma_V\colon V^2\to V$ and~$\tau_V\colon V^2\to V$ be the maps induced by $\sigma$ and $\tau$, respectively. Using that $r = (\sigma\ot\tau)\circ \Delta_{X^2}$ and equalities~\eqref{eq2} and~\eqref{eq5} we obtain statement~(2). Moreover
$$
\sigma^{-1}(v\ot 1) = 0\quad\text{and}\quad\tau^{-1}(1\ot v)=0 \quad\text{for all $v\in V$.}
$$
In fact
$$
0 = (\epsilon\ot X)(v\ot 1) = \sigma^{-1}\circ (X\ot \sigma) \circ (\Delta\ot X)(v\ot 1) = \sigma^{-1}(v\ot 1),
$$
and the equality for $\tau$ is similar. In order to finish this part of the proof only remains to see that
$$
\sigma^{-1}(v\ot w)=-g^{-1}(\sigma_V(v\ot g^{-1}(w)))\quad\text{and}\quad \tau^{-1}(v\ot w)= -h^{-1}(\tau_V(h^{-1}(v)\ot w)).
$$
But the first equality follows from the fact that
\begin{align*}
v\ot w & = (X\ot \sigma^{-1})\circ (\Delta \ot X)\circ (X\ot \sigma)\circ (\Delta \ot X)(v\ot w)\\
&=(X\ot \sigma^{-1})\circ (\Delta \ot X)\circ (X\ot \sigma)(1\ot v\ot w + v\ot 1\ot w)\\
&= (X\ot \sigma^{-1})\circ (\Delta \ot X)(1\ot \sigma_V(v\ot w) + v\ot g(w))\\
&= (X\ot \sigma^{-1})(1\ot 1\ot \sigma_V(v\ot w) + 1\ot v\ot g(w) +  v\ot 1\ot g(w))\\
&= 1\ot g^{-1}(\sigma_V(v\ot w)) + \sigma^{-1}(v\ot g(w)) + v\ot w
\end{align*}
and the second one is similar.

\smallskip

Conversely, a direct computation shows that if $r(1\ot 1) = 1\ot 1$ and there are maps
\begin{equation*}
g \colon V\to V, \quad h \colon V\to V,\quad \sigma_V \colon V^2\to V \quad\text{and}\quad \tau_V \colon V^2\to V,
\end{equation*}
such that statements (1)--(3) are satisfied, then $r$ is a non-degenerate coalgebra automorphism.
\end{proof}

\begin{thm}\label{soluciones no degeneradas de X^2} Let $r\colon X^2 \to X^2$ be a non-degenerate coalgebra automorphism of~$X^2$ and let $g$, $h$, $\sigma_V$ and $\tau_V$ be as in Proposition~\ref{automorfismos no degenerados de coalgebras de X^2}. Then $r$ is a solution of the braid equation if and only if the following conditions are fulfilled:

\begin{enumerate}

\smallskip

\item $h\circ g=g\circ h$,

\smallskip

\item $\sigma_V\circ (g\ot g)=g\circ \sigma_V$,

\smallskip

\item $\tau_V\circ (g\ot g)=g\circ \tau_V$,

\smallskip

\item $\sigma_V\circ (h\ot h)=h\circ \sigma_V$,

\smallskip

\item  $\tau_V\circ (h\ot h)=h\circ \tau_V$,

\smallskip

\item $\sigma_V\circ (V\ot g)=g\circ \sigma_V\circ (h\ot V)$,

\smallskip

\item $\tau_V\circ (h\ot V)=h\circ \tau_V\circ (V\ot g)$,

\smallskip

\item  For all $u,v,w\in V$,
\begin{align*}
\qquad\,\quad \tau_V(\tau_V(u\ot v)\ot w)&=\tau_V(h(u)\ot\tau_V(v\ot w))\\
&+ h(\tau_V(u \ot \sigma_V(v\ot w))) + \tau_V(\tau_V(u\ot g(w))\ot h(v)),\\
\qquad\,\quad\sigma_V(u\ot \sigma_V(v\ot w)) &= \sigma_V(\sigma_V(u\ot v)\ot g(w))\\
&+ g(\sigma_V(\tau_V(u\ot v)\ot w)) + \sigma_V(g(v) \ot \sigma_V(h(u)\ot w))
\shortintertext{and}
\qquad\,\,\,\quad \tau_V(\sigma_V(u\ot v)\!\ot g(w)) & + \tau_V(g(v) \ot \sigma_V(h(u)\ot w))\\
& = \sigma_V(h(u)\ot \tau_V(v\ot w))\! + \sigma_V(\tau_V(u\ot g(w))\ot h(v)).
\end{align*}
\end{enumerate}
\end{thm}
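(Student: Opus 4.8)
The plan is to prove both implications at once by a direct computation on a basis, exploiting the explicit description of $r$ given in Proposition~\ref{automorfismos no degenerados de coalgebras de X^2}. Since $X=k\oplus V$, the space $X^3$ has a basis consisting of the pure tensors $x_1\ot x_2\ot x_3$ with each $x_i$ either equal to $1$ or lying in $V$; I classify such a tensor by its \emph{type}, the triple recording which slots lie in $V$, and by its \emph{weight} $w\in\{0,1,2,3\}$, the number of those slots. The formulas of Proposition~\ref{automorfismos no degenerados de coalgebras de X^2} show that $r$ never raises the weight: it fixes $1\ot 1$, sends $1\ot v\mapsto g(v)\ot 1$ and $v\ot 1\mapsto 1\ot h(v)$ (weight preserved), and sends $v\ot w\mapsto 1\ot\tau_V(v\ot w)+g(w)\ot h(v)+\sigma_V(v\ot w)\ot 1$, whose three summands have weights $1,2,1$. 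Consequently both $r_{12}\circ r_{23}\circ r_{12}$ and $r_{23}\circ r_{12}\circ r_{23}$ respect the weight filtration, and the braid equation decomposes into one identity of multilinear maps for each weight. (Alternatively, since both triple composites are coalgebra morphisms, one may instead compare their three coordinate maps $X^3\to X$ via Corollary~\ref{coro: igualdad de morfismos}; the bookkeeping is essentially the same.)

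First I would dispose of the low weights. On the unique weight-$0$ tensor $1\ot 1\ot 1$ both sides give $1\ot 1\ot 1$. On the weight-$1$ tensors a short computation (for instance on $1\ot v\ot 1$ one obtains $1\ot hg(v)\ot 1$ on the left and $1\ot gh(v)\ot 1$ on the right) yields exactly condition~(1), $h\circ g=g\circ h$. For weight $2$ I would evaluate on the three types separately: applying the two triple composites to $1\ot v\ot w$ and matching the resulting components of types $(1,0,0)$ and $(0,1,0)$ gives conditions~(2) and~(3); evaluating on $u\ot v\ot 1$ gives, via the components landing in the third and second slot, conditions~(5) and~(4); and evaluating on $u\ot 1\ot w$ gives conditions~(7) and~(6). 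In each of these weight-$2$ computations the top (type $(1,1)$) component merely reproduces $hg=gh$, so no new relation appears.

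The heart of the argument, and the step I expect to be the main obstacle, is the weight-$3$ case, the tensor $u\ot v\ot w$. Here each of the three applications of $r$ to a pair of primitive vectors splits into three summands, so both composites expand into a sizeable sum whose terms must be sorted by output type. The all-primitive type $(1,1,1)$ component involves only the top parts $v\ot w\mapsto g(w)\ot h(v)$ and, after a routine calculation, again reduces to $hg=gh$; the three two-primitive components reproduce conditions~(2)--(7); and the three one-primitive components (those landing with a single $V$-entry in the third, first, and second slot respectively) produce precisely the three cubic identities of item~(8). The difficulty is purely organizational: keeping track of which of $g,h,\sigma_V,\tau_V$ is applied in which slot across the six intermediate stages, and using the bijectivity of $g$ and $h$ from Proposition~\ref{automorfismos no degenerados de coalgebras de X^2} to cancel common factors when reading off each identity. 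Finally, since every component equation obtained above is literally one of the relations~(1)--(8) (the repeated occurrences of~(1)--(7) being harmless), the full system of component identities is equivalent to the conjunction of~(1)--(8); this proves simultaneously that the braid equation forces~(1)--(8) and that~(1)--(8) suffice for it.
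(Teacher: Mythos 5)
Your proposal is correct and takes essentially the same route as the paper: the paper's proof also decomposes $X^{\otimes 3}$ into the summands determined by which tensor factors are $k$ or $V$ and checks the braid equation on each one, with $k\otimes V\otimes k$ giving (1), the weight-two summands giving (2)--(7), and $V^{\otimes 3}$ giving (8) once (1)--(7) are assumed. The paper states these as ``direct calculations''; your weight/type bookkeeping is just an explicit organization of the same computation.
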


\begin{proof} It is easy to check that $r$ satisfies the braid equation on $k^3$, $k^2\ot V$ and $V\ot k^2$; that $r$ satisfies the braid equation on $k\ot V\ot k$ if and only if $g\circ h=h\circ g$; and that $r$ satisfies the braid equation on $V\ot k\ot V$ if and only if conditions~(6) and~(7) are fulfilled. Moreover a direct computation proves that if $g\circ h=h\circ g$, then $r$ satisfies the braid equation on $k\ot V^2$ if and only if conditions~(2) and~(3) are fulfilled, and that $r$ satisfies the braid equation on $V^2\ot k$ if and only if conditions~(4) and~(5) are fulfilled. Finally, a number of obvious direct calculations proves that if conditions~(1)--(7) are fulfilled, then $r$ satisfies the braid equation on $V^3$ if and only if condition~(8) holds.
\end{proof}

\begin{rem}\label{previo} Under Conditions~(2) and~(6), items~(5) and~(7) are equivalent to
$$
\sigma_V\circ (g^{-1}\ot V) = \sigma_V\circ (h\ot V)\quad\text{and}\quad \tau_V\circ (V\ot h^{-1}) = \tau_V\circ (V\ot g),
$$
respectively.
\end{rem}

Let $\varsigma\colon V^2\to V$ be a linear map. Following a common terminology in linear al\-ge\-bra, we call the vector subspaces
\begin{align*}
&\rad_L(\varsigma)\coloneqq\{v\in V: \varsigma(v\ot w)=0 \text{ for all $w\in V$} \}
\shortintertext{and}
&\rad_R(\varsigma)\coloneqq \{w\in V: \varsigma(v\ot w)=0 \text{ for all $v\in V$}\},
\end{align*}
of $V^2$, the {\em left} and the {\em right radicals} of $\varsigma$, respectively.

\begin{rem} Let $r\colon X^2 \to X^2$ be a non-degenerate solution of the braid equation. From items~(2)--(5) of Theorem~\ref{soluciones no degeneradas de X^2} it follows that
$$
f(\rad_L(\varsigma))=\rad_L(\varsigma)\quad\text{and}\quad f(\rad_R(\varsigma))=\rad_R(\varsigma),
$$
for all $f\in \{g,h\}$ and $\varsigma\in \{\sigma_V,\tau_V\}$. Moreover, by Remark~\ref{previo}
$$
\ima(g-h^{-1})\subseteq \rad_R(\tau_V)\cap \rad_L(\sigma_V).
$$
Consequently, if $\rad_R(\tau_V)\cap \rad_L(\sigma_V)=0$, then $g=h^{-1}$. Finally, by the first and second equalities in Theorem~\ref{soluciones no degeneradas de X^2}(8), if $v\in \rad_L(\sigma_V)\cap\rad_R(\tau_V)$, then $\sigma_V(-\ot v)$ and $\tau_V(v\ot -)$ take their values in $\rad_L(\sigma_V)\cap \rad_R(\tau)$. So, $r$ induces a solution $\bar{r}\colon \ov{X}^2  \to \ov{X}^2$, where $\ov{X}\coloneqq k\oplus \frac{V}{\rad_R(\tau_V)\cap \rad_L(\sigma_V)}$.
\end{rem}

\begin{exa} If $h=g=\ide_V$ and $\sigma_V$ is an associative multiplication map, then the map $r\colon X^2 \to X^2$ defined by
\begin{itemize}

\smallskip

\item[-] $r(1\ot 1) = 1\ot 1$,

\smallskip

\item[-] $r(1\ot v) = v \ot 1$ and $r(v\ot 1)=1\ot v$ for all $v\in V$,

\smallskip

\item[-] $r(v\ot w) = - 1 \ot  \sigma_V(w\ot v) + w \ot v + \sigma_V(v\ot w) \ot  1$ for all $v,w\in V$,

\smallskip

\end{itemize}
is a non-degenerate solution of the braid equation.
Compare with~\cite{MR1726279}. 
\end{exa}

\begin{example}\label{ejemplos de asociados a racks} Let $X$, $r$, $g$, $h$, $\sigma_V$ and $\tau_V$ be as in Proposition~\ref{automorfismos no degenerados de coalgebras de X^2}. From that proposition and Theorem~\ref{soluciones no degeneradas de X^2} it follows that $(X,r)$ is a braided set associated with a rack (or, which is equivalent, that $(X,r)$ is a non-degenerate braided set with $h=\ide_V$ and $\tau_V=0$) if and only if $g$ is a $k$-linear automorphism such that
\begin{align*}
&\sigma_V\circ (g \ot g)= \sigma_V\circ (V \ot g)= g\circ \sigma_V
\shortintertext{and, for all $u,v,w\in V$,}
&\sigma_V(u\ot \sigma_V(v\ot w))= \sigma_V(\sigma_V(u\ot v)\ot g(w)) + \sigma_V(g(v) \ot \sigma_V(u\ot w)).
\end{align*}
Note that, when $g=\ide_V$, then the first two conditions are fulfilled and the last one says that the map $\sigma_V(u\ot -)$ is a derivation. In other words, if $r(1\ot v) = v\ot 1$ for all $v\in V$, then $(X,r)$ is a braided set associated with a rack if and only if $(V,\sigma_V)$ is a Leibniz algebra.
Compare with~\cite{NP}. 
\end{example}

\bibliographystyle{abbrv}
\bibliography{refs}

\end{document}